\documentclass{amsart}

\usepackage[utf8]{inputenc}

\title{Enriched $\infty$-Categories via Nonsymmetric $\infty$-Operads}
\author{David Gepner}
\address{Department of Mathematics, Purdue University, West Lafayette,
  Indiana, USA}
\urladdr{http://www.math.purdue.edu/~dgepner}
\author{Rune Haugseng}
\address{Max-Planck-Institut für Mathematik, Bonn, Germany}
\email{haugseng@mpim-bonn.mpg.de}
\urladdr{http://people.mpim-bonn.mpg.de/haugseng}


\date{\today}

\usepackage{amsmath,amssymb,amsthm}
\usepackage[utf8]{inputenc}
\usepackage{url}
\usepackage[shortlabels]{enumitem}
  \setitemize[1]{leftmargin=2em}
  \setenumerate[1]{leftmargin=*}
\usepackage[alphabetic]{amsrefs}

\usepackage[colorlinks=true,linktocpage=true,citecolor=blue]{hyperref}

 \headheight=8pt
 \topmargin=0pt
 \textheight=624pt
 \textwidth=432pt
 \oddsidemargin=18pt
 \evensidemargin=18pt

\usepackage{palatino}
\usepackage{mathpazo}
\usepackage{eucal}

\usepackage{mbboard}
\usepackage{todonotes}

\theoremstyle{theorem}
\newtheorem{thm}{Theorem}[subsection]
\newtheorem{lemma}[thm]{Lemma}
\newtheorem{propn}[thm]{Proposition}
\newtheorem{cor}[thm]{Corollary}

\theoremstyle{definition}
\newtheorem{defn}[thm]{Definition}
\newtheorem{ex}[thm]{Example}
\newtheorem{exs}[thm]{Examples}

\newtheorem{warning}[thm]{Warning}

\newtheorem{remark}[thm]{Remark}

\theoremstyle{remark}

\newcommand{\blank}{\text{\textendash}}

\newcommand{\defterm}[1]{\emph{#1}}
\newcommand{\isoto}{\xrightarrow{\sim}}

\newcommand{\IFF}{if and only if}

\newcommand{\catname}[1]{\ensuremath{\text{\textup{#1}}}}
\newcommand{\txt}[1]{\ensuremath{\text{\textup{#1}}}}
\newcommand{\Set}{\catname{Set}}
\newcommand{\sSet}{\Set_{\Delta}}
\newcommand{\Cat}{\catname{Cat}}
\newcommand{\CatI}{\catname{Cat}_\infty}
\newcommand{\Gpd}{\catname{Gpd}}
\newcommand{\LCatI}{\widehat{\catname{Cat}}_\infty}

\newcommand{\Sp}{\catname{Sp}}

\newcommand{\Fun}{\txt{Fun}}

\newcommand{\Map}{\txt{Map}}
\newcommand{\Hom}{\txt{Hom}}

\newcommand{\op}{\txt{op}}

\newcommand{\icat}{$\infty$-category}
\newcommand{\icats}{$\infty$-categories}
\newcommand{\icatl}{$\infty$-categorical}
\newcommand{\itcat}{$(\infty,2)$-category}

\newcommand{\xto}[1]{\xrightarrow{#1}}

\usepackage{color}

\newcommand{\simp}{\bbDelta}

\usepackage{tikz}
\usetikzlibrary{matrix,arrows}

\newcommand{\csquare}[8]{ %
\[ %
\begin{tikzpicture} %
\matrix (m) [matrix of math nodes,row sep=3em,column sep=2.5em,text height=1.5ex,text depth=0.25ex] %
{ #1 \pgfmatrixnextcell #2 \\ %
  #3 \pgfmatrixnextcell #4 \\ }; %
\path[->,font=\footnotesize] %
(m-1-1) edge node[auto] {$#5$} (m-1-2)%
(m-1-1) edge node[left] {$#6$} (m-2-1)%
(m-1-2) edge node[auto] {$#7$} (m-2-2)%
(m-2-1) edge node[below] {$#8$} (m-2-2);%
\end{tikzpicture}%
\]%
}

\newcommand{\nodispcsquare}[8]{ %
\begin{tikzpicture} %
\matrix (m) [matrix of math nodes,row sep=3em,column sep=2.5em,text height=1.5ex,text depth=0.25ex] %
{ #1 \pgfmatrixnextcell #2 \\ %
  #3 \pgfmatrixnextcell #4 \\ }; %
\path[->,font=\footnotesize] %
(m-1-1) edge node[auto] {$#5$} (m-1-2)%
(m-1-1) edge node[left] {$#6$} (m-2-1)%
(m-1-2) edge node[auto] {$#7$} (m-2-2)%
(m-2-1) edge node[below] {$#8$} (m-2-2);%
\end{tikzpicture}%
}

\newcommand{\smallcsquare}[8]{ %
\[ %
\begin{tikzpicture} %
\matrix (m) [matrix of math nodes,row sep=1.5em,column sep=1.25em,text height=1.5ex,text depth=0.25ex] %
{ #1 \pgfmatrixnextcell #2 \\ %
  #3 \pgfmatrixnextcell #4 \\ }; %
\path[->,font=\footnotesize] %
(m-1-1) edge node[auto] {$#5$} (m-1-2)%
(m-1-1) edge node[left] {$#6$} (m-2-1)%
(m-1-2) edge node[auto] {$#7$} (m-2-2)%
(m-2-1) edge node[below] {$#8$} (m-2-2);%
\end{tikzpicture}%
\]%
}

\newcommand{\nolabelcsquare}[4]{\csquare{#1}{#2}{#3}{#4}{}{}{}{}}
\newcommand{\nolabelsmallcsquare}[4]{\smallcsquare{#1}{#2}{#3}{#4}{}{}{}{}}
\newcommand{\nodispnolabelcsquare}[4]{\nodispcsquare{#1}{#2}{#3}{#4}{}{}{}{}}

\newcommand{\opctriangle}[6]{ %
\[ %
\begin{tikzpicture} %
\matrix (m) [matrix of math nodes,row sep=3em,column sep=1.2em,text height=1.5ex,text depth=0.25ex] %
{  #1 \pgfmatrixnextcell \pgfmatrixnextcell #2 \\ %
  \pgfmatrixnextcell #3 \pgfmatrixnextcell \\ %
}; %
\path[->,font=\footnotesize] %
(m-1-1) edge node[above] {$#4$} (m-1-3)%
(m-1-1) edge node[below left] {$#5$} (m-2-2)%
(m-1-3) edge node[below right] {$#6$} (m-2-2);%
\end{tikzpicture}%
\]%
}

\newcommand{\factortriangle}[6]{ %
\[ %
\begin{tikzpicture} %
\matrix (m) [matrix of math nodes,row sep=3em,column sep=1.2em,text height=1.5ex,text depth=0.25ex] %
{  #1 \pgfmatrixnextcell \pgfmatrixnextcell #2 \\ %
  \pgfmatrixnextcell #3 \pgfmatrixnextcell \\ %
}; %
\path[->,font=\footnotesize] %
(m-1-1) edge node[above] {$#4$} (m-1-3)%
(m-1-1) edge node[left] {$#5$} (m-2-2)%
(m-2-2) edge node[right] {$#6$} (m-1-3);%
\end{tikzpicture}%
\]%
}

\newcommand{\nolabelopctriangle}[3]{\opctriangle{#1}{#2}{#3}{}{}{}}

\newcommand{\id}{\txt{id}}

\DeclareMathOperator{\colimP}{colim}
\newcommand{\colim}{\mathop{\colimP}}

\newcommand{\CatIV}{\CatI^{\mathcal{V}}}

\newcommand{\Simp}{\txt{Simp}}

\newcommand{\Seg}{\txt{Seg}}

\newcommand{\SegI}{\Seg_{\infty}}

\newcommand{\Mon}{\txt{Mon}}
\newcommand{\MonS}{\txt{Mon}^{\Sigma}}
\newcommand{\MonI}{\Mon_{\infty}}

\DeclareMathOperator{\ob}{ob}

\newcommand{\GraphIV}{\catname{Graph}_{\infty}(\mathcal{V})}

\newcommand{\Alg}{\catname{Alg}}

\newcommand{\AlgCat}{\Alg_{\txt{cat}}}
\newcommand{\LAlgCat}{\widehat{\Alg}_{\txt{cat}}}
\newcommand{\AlgCatco}{\Alg_{\txt{cat,co}}}
\newcommand{\AlgCatV}{\AlgCat(\mathcal{V})}
\newcommand{\AlgCatS}{\AlgCat(\mathcal{S})}

\newcommand{\Algco}{\Alg_{\txt{co}}}

\newcommand{\AlgS}{\Alg^{\Sigma}}

\newcommand{\Opd}{\catname{Opd}}
\newcommand{\OpdI}{\Opd_{\infty}}

\newcommand{\OPD}{\catname{OPD}}
\newcommand{\OPDI}{\OPD_{\infty}}

\newcommand{\OpdIns}{\OpdI^{\txt{ns}}}
\newcommand{\OpdInsg}{\OpdI^{\txt{ns},\txt{gen}}}
\newcommand{\OPDIns}{\OPDI^{\txt{ns}}}
\newcommand{\OPDInsg}{\OPDI^{\txt{ns},\txt{gen}}}
\newcommand{\OpdIS}{\OpdI^{\Sigma}}

\newcommand{\MonPr}{\MonI^{\txt{Pr}}}

\newcommand{\PresI}{\catname{Pres}_{\infty}}

\newcommand{\iopd}{$\infty$-operad}
\newcommand{\iopds}{$\infty$-operads}

\newcommand{\nsiopd}{non-symmetric $\infty$-operad}
\newcommand{\nsiopds}{non-symmetric $\infty$-operads}

\newcommand{\gnsiopd}{generalized non-symmetric $\infty$-operad}
\newcommand{\gnsiopds}{generalized non-symmetric $\infty$-operads}

\newcommand{\triv}{\txt{triv}}

\newcommand{\Algtriv}{\Alg_{\triv}}

\newcommand{\CoCart}{\txt{CoCart}}

\newcommand{\Po}{\catname{Po}}
\DeclareMathOperator{\Str}{Str}
\newcommand{\StrM}{\Str \mathcal{M}}
\newcommand{\StrMen}{\StrM^{\txt{en}}}

\newcommand{\BM}{\txt{BM}}
\newcommand{\LM}{\txt{LM}}
\newcommand{\RM}{\txt{RM}}

\newcommand{\EnrLur}{\txt{Enr}_{\txt{Lur}}}
\newcommand{\EnrLurV}{\txt{Enr}_{\txt{Lur}}^{\mathcal{V}}}

\setcounter{tocdepth}{2}

\makeatletter
\def\@tocline#1#2#3#4#5#6#7{\relax
  \ifnum #1>\c@tocdepth 
  \else
    \par \addpenalty\@secpenalty\addvspace{#2}%
    \begingroup \hyphenpenalty\@M
    \@ifempty{#4}{%
      \@tempdima\csname r@tocindent\number#1\endcsname\relax
    }{%
      \@tempdima#4\relax
    }%
    \parindent\z@ \leftskip#3\relax \advance\leftskip\@tempdima\relax
    \rightskip\@pnumwidth plus4em \parfillskip-\@pnumwidth
    #5\leavevmode\hskip-\@tempdima
      \ifcase #1
       \or \hskip -1em \or \hskip 1em \or \hskip 3em \else \hskip 5em \fi%
      #6\nobreak\relax
    \hfill\hbox to\@pnumwidth{\@tocpagenum{#7}}
      \par
    \nobreak
    \endgroup
  \fi}
\makeatother



\begin{document}

\begin{abstract}
  We set up a general theory of weak or homotopy-coherent enrichment
  in an arbitrary monoidal $\infty$-category $\mathcal{V}$. Our theory
  of enriched $\infty$-categories has many desirable properties; for
  instance, if the enriching $\infty$-category $\mathcal{V}$ is
  presentably symmetric monoidal then
  $\mathrm{Cat}^\mathcal{V}_\infty$ is as well. These features render
  the theory useful even when an $\infty$-category of enriched
  $\infty$-categories comes from a model category (as is often the
  case in examples of interest, e.g. dg-categories, spectral
  categories, and $(\infty,n)$-categories). This is analogous to the
  advantages of $\infty$-categories over more rigid models such as
  simplicial categories --- for example, the resulting
  $\infty$-categories of functors between enriched $\infty$-categories
  automatically have the correct homotopy type.
  
  We construct the homotopy theory of $\mathcal{V}$-enriched
  $\infty$-categories as a certain full subcategory of the
  $\infty$-category of ``many-object associative algebras'' in
  $\mathcal{V}$. The latter are defined using a non-symmetric version
  of Lurie's $\infty$-operads, and we develop the basics of this
  theory, closely following Lurie's treatment of symmetric
  $\infty$-operads. While we may regard these ``many-object'' algebras
  as enriched $\infty$-categories, we show that it is precisely the
  full subcategory of ``complete'' objects (in the sense of Rezk,
  i.e. those whose spaces of objects are equivalent to their spaces of
  equivalences) which are local with respect to the class of fully
  faithful and essentially surjective functors. We also consider an
  alternative model of enriched \icats{} as certain presheaves of
  spaces satisfying analogues of the ``Segal condition'' for Rezk's
  Segal spaces. Lastly, we present some applications of our theory,
  most notably the identification of associative algebras in
  $\mathcal{V}$ as a coreflective subcategory of pointed
  $\mathcal{V}$-enriched $\infty$-categories as well as a proof of a
  strong version of the Baez-Dolan stabilization hypothesis.
\end{abstract}

\maketitle
\tableofcontents

\section{Introduction}

Over the past decade, taking the higher-categorical nature of various
mathematical structures seriously has turned out to be a very fruitful
idea in several areas of mathematics. In particular, the theory of
\emph{\icats{}} (or more precisely \emph{$(\infty,1)$-categories}) has
found many applications in algebraic topology and in other
fields. However, despite the large amount of work that has been
carried out on the foundations of \icat{} theory, above all by Joyal
and Lurie, the theory is in many ways still in its infancy, and the
analogues of many concepts from ordinary category theory remain to be
explored. In this paper we begin to study the natural analogue in the
\icatl{} context of one such concept, namely that of \emph{enriched
  categories}.

Enriched categories in the usual sense are ubiquitous in modern
mathematics: the morphisms between objects in naturally occurring
categories often have more structure than just that of a set. However,
there are a number of important situations where the classical theory
of enriched categories has turned out to be insufficient in ways that
lead us towards considering the higher-categorical version of
enrichment. In algebraic topology, for example, the categories that
arise typically have a \emph{space} of morphisms between any two
objects --- but it is usually only the (weak) homotopy types of these
spaces that matter. Naïvely, we might guess that this means we should
consider these categories as enriched in the \emph{homotopy category}
of spaces, but this turns out to lose information that is important
for most applications. We are therefore forced to consider the
homotopy theory of categories enriched in topological spaces (or any
other model for the homotopy theory of spaces, such as simplicial
sets), with respect to the appropriate notion of weak equivalences,
which takes us outside the usual theory of enriched categories.
It is possible to consider this homotopy theory in the context of
Quillen's model categories (as was originally done by Bergner
\cite{BergnerSimpCat} for simplicial categories), but the resulting
model structures are in some ways not very well-behaved, essentially
because these ``strictly enriched'' categories are in a sense too
rigid. This makes it hard to understand the correct homotopy types of
the spaces of functors between them, and also makes homotopy-invariant
constructions (such as homotopy limits and colimits) problematic to
set up.

An additional problem is that many naturally occurring composition
laws between spaces are not strictly associative, but only associative
up to coherent homotopy. This makes them difficult to model as
simplicial or topological categories. It is therefore often more
convenient to work with a notion of ``category enriched in spaces''
where composition of morphisms is only associative up to coherent
homotopy.  This is the idea behind the theory of
\emph{$\infty$-categories}. Roughly speaking, the notion of
$\infty$-category is a generalization of the notion of category where
in addition to objects and morphisms we also have homotopies between
morphisms, homotopies between homotopies, and so on, and composition
is only associative up to a coherent choice of higher
homotopies. There are several ways to make this idea precise, such as
\emph{Segal categories}, \emph{complete Segal spaces}, and
\emph{quasicategories}. It turns out that working with \icats{} also
avoids the other problems with simplicial or topological categories
mentioned above, such as the difficulty of constructing functor
categories.

A similar situation arises in other areas of mathematics, such as
algebraic geometry or representation theory, where there are many
examples of \emph{derived} categories. These have traditionally been
thought of as \emph{additive} categories, which is to say categories
enriched in abelian groups, equipped with the additional data of a
\emph{triangulation}. Recently, however, it has been found that
derived categories, and other triangulated categories, are not rich
enough for many applications --- the extra structure of the
triangulation must be replaced by the more refined and intrinsic
notion of a \emph{differential graded} structure, i.e. an enrichment
in chain complexes. The correct notion of an equivalence between these
\emph{dg-categories} does not require a dg-functor to be given by
isomorphisms on chain complexes of maps, however --- instead, the
functor need only induce \emph{quasi-isomorphisms}. On the other hand,
it is again not enough to consider these categories as simply enriched
in the homotopy category of chain complexes (i.e. the derived category
of abelian groups) --- just as a differential graded algebra (or more
generally an $A_{\infty}$-algebra) is a much richer and more subtle
object than a homotopy associative multiplication on a chain complex,
the composition in a dg-category contains far more information than an
enrichment in the homotopy category of chain complexes. 

Homotopy-coherent compositions also occur in this context --- a key
example here is the \emph{Fukaya categories} of symplectic
geometry. These can often be described using
\emph{$A_{\infty}$-categories}, but unfortunately the theory of
$A_{\infty}$-categories is not as well-behaved as a replacement for
that of dg-categories as \icats{} are as a replacement for simplicial
categories.



A third example of this type is \emph{spectral} categories (or
categories enriched in spectra), of which there are many interesting
examples in algebraic topology. These are much more general than
dg-categories, and tend to arise in examples where the mapping spectra
can only be extracted up to homotopy. To emphasize the subtleties of
the situation, the very existence of a symmetric monoidal model for
the homotopy theory of spectra (under the smash product) was only
fairly recently resolved, after being an open question for several
decades. Moreover, in this context no notion of homotopy-coherent
enrichment has so far been proposed; this is a problem, for example
because many important functors that are known to preserve
$A_{\infty}$-structures, such as algebraic K-theory or topological
Hochschild homology, cannot be realized as lax monoidal functors to a
model category of spectra.


Now, just as spaces are the higher-categorical analogue of sets,
spectra are the higher categorical analogue of abelian groups or chain
complexes, and the sophisticated nature of these objects means that we
require a more conceptual and less ad hoc approach to the homotopy
theory of spectral categories than what is often sufficient in the
theory of dg-categories. One way to do this is to set up model
category structures on enriched categories --- it is possible to treat
the homotopy theory of dg-categories \cite{TabuadaDGCat}, spectral
categories \cite{TabuadaSpCat}, or even categories enriched in other
sufficiently nice monoidal model categories
\cite{HTT,BergerMoerdijkEnr,StanculescuEnr,MuroEnr} in this way. However, the
resulting model categories suffer from the same problems as that of
simplicial categories. In the case of dg-categories, for example, the
correct spaces of dg-functors have only recently been explicitly
described by Toën \cite{ToenMorita}, using a fairly complex construction;
there are earlier constructions of functor categories between $A_{\infty}$-categories \cite{LyubashenkoAInfty}, but these are also problematic.

In this paper we propose a different approach, namely to set up a
general theory of weak or homotopy-coherent enrichment. Specifically,
we will define and study \icats{} enriched in \emph{monoidal
  \icats{}}, which are \icats{} equipped with a tensor product that is
associative and unital up to coherent homotopy. This theory
encompasses, for example, analogues of spectral categories and
dg-categories where composition is only associative up to coherent
homotopy.  For the former we consider \icats{} enriched in the \icat{}
of spectra, while for the latter we enrich in the \emph{derived
  \icat{}} of abelian groups, in the sense of \cite[\S 1.3.2]{HA},
i.e. the \icat{} obtained by inverting the quasi-isomorphisms between
chain complexes of abelian gropus. The resulting homotopy theories of
enriched \icats{} are much better behaved than those of strictly
enriched categories --- for example, we have naturally defined enriched
$\infty$-categories of functors between enriched
$\infty$-categories. Moreover, the resulting homotopy theories are
\emph{equivalent} to those of ordinary enriched categories, as is
proved in \cite{enrcomp}. Thus, our theory gives a more flexible
approach to the homotopy theory of dg-categories and spectral
categories, which we expect will make many construction in these
settings easier to carry out.

The idea of ``weak'' enrichment is also implicit in the concept of
higher category theory itself: an $n$-category should have
$k$-morphisms between $(k-1)$-morphisms for $k = 1,\ldots, n$, so
there is an $(n-1)$-category of maps between any two objects. As is
well known, however, to obtain a good notion of $n$-category for $n >
2$ it is not sufficient to just consider $n$-categories as
\emph{strictly} enriched in $(n-1)$-categories, as in most naturally
occurring examples composition is only associative up to invertible
higher morphisms. We can avoid this issue by instead applying our
\icatl{} theory of enrichment: iterating the enrichment procedure
starting with the category of sets gives an inductively defined notion
of fully weak $n$-category. Starting instead with spaces we obtain a
theory of $(\infty,n)$-categories, and we can also similarly define
(weak) $(n,k)$-categories, which are $n$-categories where the
$i$-morphisms are all invertible for $i > k$. Moreover, the resulting
homotopy theories are equivalent to those of existing models for
$n$-categories and $(\infty,n)$-categories (as is also proved in
\cite{enrcomp}).

Thanks to the foundational work of Lurie, we are able to set up our
theory of enrichment entirely within the context of \icats{} (rather
than working with model categories, say). Apart from greater
generality, working in this setting gives a theory with many good
properties, including the following:
\begin{enumerate}[(a)]
\item Weak (or homotopy-coherent) enrichment is the only
  natural notion of enrichment which is possible in this language,
  which allows us to define our enriched \icats{} in the obvious way
  as ``many-object associative algebras'' in a given monoidal
  $\infty$-category.  (In other words, the \icatl{} analogue of
  ``strictly enriched'' categories automatically results in the
  appropriate ``weakly enriched'' theory.)
\item It is both easy and natural to consider enriched categories with
  \emph{spaces} of objects rather than just sets of objects, which
  turns out to make the resulting homotopy theory both nicer and
  simpler to set up, analogous to the way in which (complete) Segal
  spaces are better behaved than Segal categories.
\item We automatically get very good naturality properties, some of
  which would have been difficult even to formulate in a
  model-categorical framework --- for example, our \icats{} are
  natural with respect to functors between monoidal \icats{} that are
  lax monoidal in the appropriate \icatl{} sense. This means that we
  can easily apply functors such as group completion, algebraic
  K-theory, and topological Hochschild homology (which are lax
  monoidal as functors of \icats{}, but do not arise from lax monoidal
  functors between model categories) to construct spectral \icats{}.
\item We obtain the correct $\infty$-categories of enriched functors
  between enriched $\infty$-categories simply as the internal Hom
  objects right adjoint to the natural tensor product of enriched
  \icats{}. From the point of view of the model-categorical approach
  to enrichment this is in some sense the most subtle and useful
  feature --- subtle because the homotopically correct internal Hom
  must be invariant under enriched equivalences (the primary defect of
  simplicial categories as a model for $\infty$-categories) and useful
  because the existence of these functor $\infty$-categories makes
  constructions in, and the further development of, enriched higher
  category theory possible.
\item Beyond just constructing a homotopy theory, our theory gives a
  good setting in which to develop \icatl{} analogues of many concepts
  from enriched category theory, as we hope to demonstrate in future
  work.
\end{enumerate}

In addition to setting up the homotopy theory of enriched \icats{}, we
also construct several non-trivial examples: We show that Lurie's
\emph{stable \icats{}} from \cite[\S 1.1]{HA} are all enriched in the
\icat{} of spectra, and that the \emph{$R$-linear \icats{}} of
\cite[\S 6]{DAG7} are enriched in the \icat{} of $R$-modules, where
$R$ is an $\mathbb{E}_{2}$-ring spectrum. Moreover, we prove that
every closed monoidal \icat{} is enriched in itself. This gives us,
for example, the natural $n$-category of functors between any two
$n$-categories, generalizing the familiar fact that the category of
categories is enriched over itself.

We also discuss a number of simple applications of the theory. As
mentioned above, we provide a reasonable definition of the
$\infty$-category of weak $(n,m)$-categories for any $n$ and $m$,
which has the advantage of not relying on families of diagrams
parametrizing coherence conditions and which agrees with those of
Barwick, Bergner-Rezk, Joyal, and others. In this context we give a
proof of ``Baez-Dolan stabilization'' for (weak) $n$-categories
(generalizing that of Lurie for $(n,1)$-categories). This is the idea
that, for $m \geq n+2$, an $m$-tuply monoidal weak $n$-category is
precisely an $(n+2)$-tuply monoidal weak $n$-category (for example,
putting two compatible monoidal structures on a category makes it
a \emph{braided} monoidal category, while three or more monoidal
structures makes it \emph{symmetric} monoidal). We also show that (for
$m \leq \infty$ and $m \geq k \neq \infty$) an
$\mathbb{E}_{n}$-monoidal $(m,k)$-category is the same thing as an
$(m+n,k+n)$-category with a single (distinguished) object and a single
$j$-morphism for $j = 1,\ldots,n-1$. Finally, we show
that we can easily construct, as spectral \icats{}, certain homotopy
theories of presheaves that are expected
to be equivalent to familiar stable $\infty$-categories, such as the
description of genuine $G$-spectra as the $\infty$-category of modules
for the spectral $\infty$-category of stable orbits, or as modules
over a spectral Burnside $\infty$-category.

The theory we set up in this article is the first completely general
theory of weak enrichment. Weak enrichment in {\em Cartesian} monoidal
model categories has previously been defined as \emph{Segal enriched
 categories} as studied by Pellissier~\cite{Pellissier},
Lurie~\cite{LurieGoodwillie}, and Simpson~\cite{SimpsonSegCats}
(generalizing Bergner's model structure on Segal
categories~\cite{Bergner3Mod}).
It is important to note that many of the interesting examples of
enriched categories are cases (such as abelian groups, chain
complexes, and spectra) in which the monoidal structure is not
Cartesian; so, while more complicated to describe, allowing for
non-Cartesian enrichment is necessary to support the standard examples
of interest.

In the non-Cartesian case, there is a theory of
$A_{\infty}$-categories, which gives a notion of weak enrichment in
chain complexes, and more recently
Bacard~\cite{BacardSegEnrI,BacardToward} has set up a
model-categorical theory of weak enrichment in a class of symmetric
monoidal model categories that can be applied to many interesting
examples.  A definition of enriched \icats{} different from ours has
also been given by Lurie~\cite[Definition 4.2.1.28]{HA}, but he does
not develop this theory beyond defining the objects. We will see in
\S\ref{sec:selfenr} that in many cases we can extract an enriched
\icat{} in our sense from one of Lurie's, and we hope to be able to
extend this construction to a comparison between our theory and
Lurie's in the future.

\subsection{Overview}
In \S\ref{sec:fromto} we introduce our definition of enriched \icats{}
in terms of (generalized) non-symmetric \iopds{}, and motivate it by
explaining how it relates to ordinary enriched categories. 

In \S\ref{sec:NSOP} we briefly describe the non-symmetric version of
Lurie's theory of (generalized) \iopds{}, and prove some
(straightforward, for the most part) extensions of Lurie's
results. The most technical results, particularly those building towards
the construction of colimits of algebras, have been relegated to
Appendix~\ref{sec:algcolims}.

The theory of \iopds{} lets us define, for a monoidal \icat{}
$\mathcal{V}$, an \icat{} $\AlgCat(\mathcal{V})$ of
$\mathcal{V}$-enriched \icats{}; this is our object of study in
\S\ref{sec:algcat}. The main result is that if the \icat{}
$\mathcal{V}$ is presentable and its tensor product preserves colimits
in each variable, then this \icat{} is also presentable. We also
compare this model of enriched \icats{} to a certain \icat{} of
presheaves that satisfy analogues of the Segal condition for Segal
spaces.

In \S\ref{sec:CatIV} we construct the correct \icat{} of enriched
\icats{} by inverting the fully faithful and essentially surjective
functors in $\AlgCat(\mathcal{V})$. Here we prove the main theorem of
this article: we can obtain this localization as the full subcategory
of $\AlgCat(\mathcal{V})$ spanned by the \emph{complete}
$\mathcal{V}$-\icats{} --- those $\mathcal{V}$-\icats{} $\mathcal{C}$
such that the underlying space of objects in $\mathcal{C}$ is
equivalent to the classifying space of equivalences in
$\mathcal{C}$. We also prove that the resulting \icat{} has the
expected naturality properties.

In \S\ref{sec:appl} we describe some simple applications of our
construction: First we set up a theory of $(n,k)$-categories and prove
the ``homotopy hypothesis'' in this setting. We then prove that
enriching in an $(n,1)$-category gives an $(n+1,1)$-category of
enriched \icats{}; from this the Baez-Dolan stabilization hypothesis
for $k$-tuply monoidal $n$-categories follows easily if we define
$n$-categories to be $(\infty,n)$-categories enriched in sets. We also
show that $\mathbb{E}_{n}$-algebras in an $\mathbb{E}_{n}$-monoidal
\icat{} $\mathcal{V}$ embed fully faithfully into {\em pointed}
$\mathcal{V}$-enriched $(\infty,n)$-categories. This last result has a
number of interesting corollaries, such as a description of
$\mathbb{E}_{n}$-monoidal \icats{} as $(\infty,n+1)$-categories with a
single object and a single $j$-morphism for $j < n$, and a simple
construction of endomorphism algebras.

In \S\ref{sec:selfenr} we construct an important class of examples of
enriched \icats{}: If an \icat{} $\mathcal{C}$ is right-tensored over
a monoidal \icat{} $\mathcal{V}$ in such a way that the tensor product
$C \otimes (\blank)$ has a right adjoint $F(C,\blank) \in \mathcal{V}$
for all $C \in \mathcal{C}$, we show that $\mathcal{C}$ is enriched in
$\mathcal{V}$ with the maps from $C$ to $D$ given by $F(C,D)$. There
are several interesting special cases: a closed monoidal \icat{} is
enriched in itself, and all stable \icats{} are enriched in the
\icat{} of spectra. We prove this result by considering Lurie's
definition of enriched \icats{} and observing that we can extract an
enriched \icat{} in our sense by means of Lurie's construction of an
\icat{} of ``enriched strings''.

Finally, in Appendix~\ref{sec:algcolims} we prove some more technical
results about \nsiopds{}.

\subsection{Notation and Terminology}
In this article we will work throughout in the setting of
$(\infty,1)$-categories, by which we mean (heuristically) higher
categories in which the $n$-morphisms are invertible for
$n>1$. Specifically, we will make use of the theory of
quasicategories, as, due to the work of Joyal and Lurie, it is
currently by far the most highly developed theory of
$(\infty,1)$-categories. Following Lurie we will refer to these
objects as $\infty$-categories, however.  We generally recycle the
notation and terminology used by Lurie in \cite{HTT,HA}; here are some
exceptions and reminders:
\begin{itemize}
\item $\simp$ is the simplicial indexing category, with objects the
  non-empty finite totally ordered sets $[n] := \{0, 1, \ldots, n\}$
  and morphisms order-preserving functions between them.
\item $\bbGamma^{\op}$ is the category of pointed finite sets (so, by
  our convention, $\bbGamma$ is the {\em opposite} of the category of
  pointed finite sets).
\item Generic categories are generally denoted by single capital
  bold-face letters ($\mathbf{A},\mathbf{B},\mathbf{C}$) and generic
  \icats{} by single caligraphic letters
  ($\mathcal{A},\mathcal{B},\mathcal{C}$). Specific categories and
  \icats{} both get names in the normal text font: thus the category
  of small $\mathbf{V}$-categories is denoted $\Cat^{\mathbf{V}}$ and
  the \icat{} of small $\mathcal{V}$-\icats{} is denoted
  $\CatI^{\mathcal{V}}$.
\item $\sSet$ is the category of simplicial sets, i.e. the category
  $\Fun(\simp^{\op}, \Set)$ of set-valued presheaves on $\simp$.
\item $\mathcal{S}$ is the \icat{} of spaces; this can be defined as
  the coherent nerve $\mathrm{N}\sSet^{\circ}$ of the full simplicial
  subcategory $\sSet^{\circ}$ of $\sSet$ spanned by the Kan complexes.
\item We say a class of morphisms in an \icat{} \emph{satisfies the
    2-out-of-3 property} if given morphisms $f \colon x \to y$ and $g
  \colon y \to z$, if any two out of $f$, $g$, $g\circ f$ are in the
  class, so is the third.
\item If $\mathcal{C}$ is an \icat{} and $A$ and $B$ are objects of
  $\mathcal{C}$, then we write $\Map_{\mathcal{C}}(A, B)$ (or just
  $\Map(A,B)$ if the \icat{} $\mathcal{C}$ is obvious from the
  context) for the space of maps from $A$ to $B$ in $\mathcal{C}$. In
  the context of quasicategories there are a number of explicit models
  for these mapping spaces as simplicial sets (cf. \cite[\S
    1.2.2]{HTT}, \cite{DuggerSpivakMap}), but for our purposes it suffices
  to think of $\Map_{\mathcal{C}}(A, B)$ as an object of the \icat{}
  of spaces. Constructions of such a ``mapping space functor''
  $\Map_{\mathcal{C}} \colon \mathcal{C}^{\op} \times \mathcal{C} \to
  \mathcal{S}$ can be found in \cite[\S 5.1.3]{HTT} and
  \cite[\S 5.2.1]{HA}.
\item To distinguish the \icats{} of non-symmetric \iopds{} and their
  algebras from their symmetric counterparts we use a superscript
  ``$\txt{ns}$'' for the non-symmetric versions and a superscript
  ``$\Sigma$'' for the symmetric versions. Thus the \icat{} of
  non-symmetric \iopds{} is denoted $\OpdIns$ and the \icat{} of
  symmetric \iopds{} $\OpdIS$.  However, we take the non-symmetric
  versions to be the default ones in this paper and thus often do not
  include the superscript --- for example, if $\mathcal{O}$
  and $\mathcal{P}$ are \nsiopds{} we will generally denote
  the \icat{} of $\mathcal{O}$-algebras in
  $\mathcal{P}$ by
  $\Alg_{\mathcal{O}}(\mathcal{P})$.
\item We make use of the elegant theory of \emph{Grothendieck
    universes} to allow us to define ($\infty$-)categories without
  being limited by set-theoretical size issues; specifically, we fix
  three nested universes, and refer to sets contained in them as
  \emph{small}, \emph{large}, and \emph{very large}. When $\mathcal{C}$
  is an \icat{} of small objects of a certain type, we generally refer
  to the corresponding \icat{} of large objects as
  $\widehat{\mathcal{C}}$, without explicitly defining this
  object. For example, $\CatI$ is the (large) \icat{} of small
  \icats{}, and $\widehat{\Cat}_{\infty}$ is the (very large) \icat{}
  of large \icats{}.
\item If $\mathcal{C}$ is an \icat{}, we write $\iota \mathcal{C}$ for
  the \emph{interior} or \emph{underlying space} of $\mathcal{C}$,
  i.e. the largest subspace of $\mathcal{C}$ that is a Kan complex.
\item We write $\txt{LFib}(\mathcal{C})$ for the \icat{} of left
  fibrations over $\mathcal{C}$ (for example obtained from the
  covariant model structure on $(\sSet)_{/\mathcal{C}}$). Similarly, we
  write $\txt{Cart}(\mathcal{C})$ and $\CoCart(\mathcal{C})$ for the
  \icats{} of Cartesian and coCartesian fibrations to $\mathcal{C}$,
  respectively, i.e. the \icats{} associated to the Cartesian and
  coCartesan model structures on $(\sSet^{+})_{/\mathcal{C}}$.
\item We denote by $\PresI$ the \icat{} of presentable \icats{} and
  colimit-preserving functors.
\item If $f \colon \mathcal{C} \to \mathcal{D}$ is left adjoint to a
  functor $g \colon \mathcal{D} \to \mathcal{C}$, we will refer to the
  adjunction as $f \dashv g$.
\item If $K$ is a simplicial set we write $K^{\triangleleft} :=
  \Delta^{0} \star K$ and $K^{\triangleright} := K \star \Delta^{0}$,
  where $\star$ is the \emph{join} operation. If $\mathcal{C}$ is an
  \icat{}, we can interpret $\mathcal{C}^{\triangleleft}$ and
  $\mathcal{C}^{\triangleright}$ as the \icats{} obtained by freely
  adjoining an initial object and a final object to $\mathcal{C}$,
  respectively. We denote the ``cone points'' coming from $\Delta^{0}$
  in $K^{\triangleleft}$ and $K^{\triangleright}$ by $-\infty$ and
  $\infty$, respectively.
\item A simplicial set $K$ is \emph{sifted} if it is non-empty and the
  diagonal map $K \to K \times K$ is cofinal; see \cite[\S 5.5.8]{HTT}
  for alternative characterizations. The key point is that
  sifted colimits are generated by filtered colimits and colimits of
  simplicial objects, and small colimits are generated by sifted
  colimits and finite coproducts.
\end{itemize}

\begin{warning}
  As far as possible we argue using the ``high-level'' language of
  \icats{}, without referring to their specific implementation as
  quasicategories. Following this philosophy we have generally not
  distinguished notationally between categories and their nerves,
  since categories are a special kind of \icat{}. However, we do
  indicate the nerve (using $\mathrm{N}$) when we think of the nerve
  of a category as being a specific simplicial set; by the same
  principle we always indicate the nerves of simplicial
  categories. This should hopefully not cause any confusion.
\end{warning}

\subsection{Acknowledgements}
\emph{David:} I would like to thank John Francis, Charles Rezk, and
Markus Spitzweck for useful discussions and suggestions.

\emph{Rune:} Many of the results of this paper formed part of my
Ph.D. thesis, and I thank Haynes Miller for being a great advisor
(even when his students display an unhealthy interest in higher
category theory). I also thank Clark Barwick for many helpful and
inspiring conversations, and Jacob Lurie for patiently answering
several technical questions about his work. During much of the time
this work was carried out I was partially funded by the
American-Scandinavian Foundation and the Norway-America Association,
and I thank them for their support.

We thank Jeremy Hahn for suggesting the presheaf model for enriched
\icats{} discussed in \S\ref{subsec:presheafalgcat}. This paper would
not have been possible without the extensive foundations for \iopds{}
developed in \cite{HA}. Our work also owes a lot to Rezk's
paper~\cite{RezkCSS}, which inspired many of the arguments in
\S\ref{sec:CatIV}.

\section{From Enriched Categories to Enriched
  $\infty$-Categories}\label{sec:fromto}

The goal of this section is to introduce our definition of enriched
\icats{}, and to motivate it by explaining how it relates to ordinary
enriched categories. In the process, we also give an expository
introduction to (non-symmetric) \iopds{}.

\subsection{Multicategories and Enrichment}\label{subsec:MulticatEnr}
Recall the usual definition of an enriched category: if $\mathbf{V}$
is a monoidal category, a \emph{$\mathbf{V}$-enriched category} (or
\emph{$\mathbf{V}$-category}) $\mathbf{C}$ consists of:
\begin{itemize}
\item a set $\ob \mathbf{C}$ of objects,
\item for all pairs $X,Y \in \ob \mathbf{C}$ an object
$\mathbf{C}(X,Y)$ in $\mathbf{V}$,
\item composition maps $\mathbf{C}(X,Y) \otimes \mathbf{C}(Y,Z) \to
  \mathbf{C}(X,Z)$,
\item units $\id_{X} \colon I \to \mathbf{C}(X,X)$.
\end{itemize}
The composition must be associative (this involves the associator
isomorphism for $\mathbf{V}$) and unital. When formulated in this way,
it is not obvious how this notion ought to be generalized in the
setting of \icats{}. We should therefore look for an alternative, more
conceptual, way of defining enriched categories --- this is provided
by the theory of \emph{multicategories}.

A multicategory is, roughly speaking, a category where a morphism has a
\emph{list} of objects as its source. More precisely, a
\emph{multicategory} (or \emph{non-symmetric coloured operad})
$\mathbf{M}$ consists of
\begin{itemize}
\item a set $\ob \mathbf{M}$ of objects,
\item for objects $X_{1}, \ldots, X_{n}, Y$ (where $n$ can be $0$) a
  set $\mathbf{M}(X_{1},\ldots, X_{n}; Y)$ of ``multimorphisms'' from
  $(X_{1}, \ldots, X_{n})$ to $Y$,
\item an identity multimorphism $\id_{X}\colon (X) \to X$ for all
  objects $X$,
\item an associative and unital composition law, in the sense that we
  can compose multimorphisms
  \[ (Z_{1}, \ldots, Z_{i_{1}}) \to Y_{1},  \quad \ldots, \quad (Z_{i_{n-1}+1}, \ldots,
  Z_{i_{n}}) \to Y_{n}\] with a multimorphism $(Y_{1}, \ldots, Y_{n})
  \to X$ to get a composite multimorphism $(Z_{1}, \ldots, Z_{i_{n}})
  \to X$.
\end{itemize}
A multicategory with a single object is precisely a non-symmetric
operad.\footnote{Note that later we will refer to the \icatl{} version
  of (non-symmetric) coloured operads as just (non-symmetric)
  \iopds{}, for consistency with the terminology used by
  Lurie~\cite{HA} and Barwick~\cite{BarwickOpCat}.} 

If $\mathbf{M}$ and $\mathbf{N}$ are multicategories, a \emph{multifunctor}
 $F \colon \mathbf{M} \to \mathbf{N}$ assigns an
object $F(X)$ in $\mathbf{N}$ to each object $X$ of $\mathbf{M}$, and
to each multimorphism $(X_{1},\ldots,X_{n}) \to Y$ in $\mathbf{M}$ a
multimorphism \[(F(X_{1}),\ldots,F(X_{n})) \to F(Y)\] in $\mathbf{N}$ so that this
assignment is compatible with units and composition. We can view a
monoidal category $\mathbf{V}$ as a multicategory by defining
\[ \mathbf{V}(X_{1}, \ldots, X_{n}; Y) :=
\mathbf{V}(X_{1}\otimes\cdots\otimes X_{n}, Y).\] An \emph{algebra}
for a multicategory $\mathbf{M}$ in a monoidal category $\mathbf{V}$
is then just a multifunctor from $\mathbf{M}$ to
$\mathbf{V}$ viewed as a multicategory.

Given a set $S$, there is a simple multicategory $\mathbf{O}_{S}$ such
that $\mathbf{O}_{S}$-algebras in a monoidal category $\mathbf{V}$ are
precisely $\mathbf{V}$-categories with set of objects $S$: the set of
objects of $\mathbf{O}_{S}$ is $S \times S$, and the multimorphism
sets are defined by \[ \mathbf{O}_{S}((X_{0}, Y_{1}), (X_{1}, Y_{2}),
\ldots, (X_{n-1}, Y_{n}); (Y_{0}, X_{n})) :=
\begin{cases}
  *, & \txt{if $Y_{i} = X_{i}$, $i = 0,\ldots, n$,} \\
  \emptyset, & \txt{otherwise.}
\end{cases}
\]
Thus an $\mathbf{O}_{S}$-algebra $\mathbf{C}$ in $\mathbf{V}$ assigns
an object $\mathbf{C}(X,Y)$ to each pair $(X,Y)$ of elements of $S$,
with a unit $I \to \mathbf{C}(X,X)$ from the unique map $() \to
(X,X)$, and a composition map $\mathbf{C}(X,Y) \otimes
\mathbf{C}(Y,Z) \to \mathbf{C}(X,Z)$ from the unique multimorphism
$((X,Y), (Y,Z)) \to (X,Z)$. Looking at triples of pairs we see that
this composition is associative, and it is also clearly unital, so
$\mathbf{C}$ is a $\mathbf{V}$-category. If $\mathbf{C}$ and
$\mathbf{D}$ are $\mathbf{V}$-categories, with sets of objects $S$ and
$T$, respectively, then from this perspective a $\mathbf{V}$-functor
$\mathbf{C} \to \mathbf{D}$ consists of a function $f \colon S \to T$
and a multicategorical natural transformation from $\mathbf{C}$ to
$f^{*}\mathbf{D}$ of multifunctors $\mathbf{O}_{S} \to \mathbf{V}$, where
$f^{*}\mathbf{D}$ denotes the composite of $\mathbf{D}$ with the
obvious multifunctor $\mathbf{O}_{S} \to \mathbf{O}_{T}$ induced by $f$:
this natural transformation precisely assigns to each pair $X,Y \in S$
a morphism $\mathbf{C}(X,Y) \to \mathbf{D}(f(X), f(Y))$ compatible
with units and composition.

\begin{remark}
  This definition of enriched categories via multicategories is
  certainly classical, and it is not clear to us who first introduced
  it. In more recent work it can be seen, for example, as a starting
  point for Leinster's theory of enrichment in
  \textbf{fc}-multicategories and more general classes of
  multicategories associated to Cartesian
  monads~\cite{LeinsterGenEnr}.
\end{remark}

This construction suggests that we can use an \icatl{} version of
multicategories to define enriched \icats{}. In the next subsection we
will describe such an \icatl{} theory of multicategories, namely a
non-symmetric version of Lurie's \emph{\iopds{}}; this includes as a
special case a notion of monoidal \icat{}, and if $\mathcal{V}$ is a
monoidal \icat{} we will see that we can define a
$\mathcal{V}$-enriched \icat{} with set of objects $S$ as an
$\mathbf{O}_{S}$-algebra in $\mathcal{V}$.

\subsection{$\infty$-Operads}\label{subsec:FTIopds}
To generalize multicategories to the \icatl{} setting it is possible
to use \emph{simplicial multicategories}, i.e. multicategories
enriched in simplicial sets. However, these suffer from the same
technical problems as simplicial categories considered as a model for
\icats{} (most notably, it is difficult to compute the correct space
of simplicial multifunctors between simplicial multicategories in this
rigid setting). Just as for \icats{}, it is better to use a model
where composition is only associative up to coherent homotopy. We will
now introduce one such definition, namely a non-symmetric variant of
Lurie's \iopds{}.\footnote{An alternative approach to \iopds{} is the
  theory of \emph{dendroidal sets} introduced by Moerdijk and Weiss
  \cite{MoerdijkWeiss}, which we will not discuss here.}

Before we state the definition, it is helpful to consider an
alternative definition of ordinary multicategories:
\begin{defn}
  If $\mathbf{M}$ is a multicategory, then the \defterm{category of
    operators} $\mathbf{M}^{\otimes}$ of $\mathbf{M}$ has objects
  lists $(X_{1}, \ldots, X_{n})$ of objects $X_{i}
  \in \mathbf{M}$, $n = 0, 1,\ldots$, and a morphism \[(X_{1},\ldots,X_{n}) \to
  (Y_{1},\ldots, Y_{m})\] is given by a morphism $\phi \colon [m] \to [n]$ in
  $\simp$ and for each $j = 1,\ldots, m$ a multimorphism
  \[ (X_{\phi(j-1)+1}, X_{\phi(j-1)+2}, \ldots, X_{\phi(j)}) \to Y_{j}\]
  in $\mathbf{M}$. There is an obvious projection
  $\mathbf{M}^{\otimes} \to \simp^{\op}$, sending $(X_{1},\ldots,
  X_{n})$ to $[n]$.
\end{defn}
\begin{remark}
  This is the non-symmetric version of the category of operators of a
  symmetric operad introduced by May and Thomason~\cite{MayThomason}.
\end{remark}

We can characterize those categories over $\simp^{\op}$ that are
equivalent to categories of operators of multicategories; to state
this characterization it is convenient to first introduce some
notation:
\begin{defn}
  We say that a morphism $\phi \colon [n] \to [m]$ in $\simp$ is
  \emph{inert} if it is the inclusion of a sub-interval of $[m]$,
  i.e. if $\phi(i) = \phi(0)+i$ for $i = 0,\ldots,n$. We denote the
  inert morphism $[1] \to [n]$ given by the inclusions $\{i-1, i\}
  \hookrightarrow [n]$ by $\rho_{i}$ for $i = 1, \ldots, n$.
\end{defn}

\begin{defn}
  Let $\Cat_{/\simp^{\op}}^{\txt{mult}}$ denote the subcategory of
  $\Cat_{/\simp^{\op}}$ defined as follows: The objects of
  $\Cat_{/\simp^{\op}}^{\txt{mult}}$ are functors $\pi \colon
  \mathbf{C} \to \simp^{\op}$ such that the following conditions hold:
  \begin{enumerate}[(i)]
  \item For every inert morphism $\phi \colon [m] \to [n]$  in
    $\simp^{\op}$ and every $X \in \mathbf{C}_{[n]}$ there exists
    a $\pi$-coCartesian morphism $X \to \phi_{!}X$ over $\phi$.
  \item For every $[n] \in \simp^{\op}$ the functor \[\mathbf{C}_{[n]}
    \to \mathbf{C}_{[1]}^{\times n}\] induced by the coCartesian arrows
    over the inert maps $\rho_{i}$ ($i = 1,\ldots,n$) is an
    equivalence of categories.
  \item For every morphism $\phi \colon [n] \to [m]$ in $\simp^{\op}$
    and $Y \in \mathbf{C}_{[m]}$, composition with coCartesian
    morphisms $Y \to Y_{i}$ over the inert morphisms $\rho_{i}$ gives
    an isomorphism
    \[ \Hom_{\mathbf{C}}^{\phi}(X, Y) \isoto \prod_{i}
    \Hom_{\mathbf{C}}^{\rho_{i} \circ \phi}(X, Y_{i}),\] where
    $\Hom_{\mathbf{C}}^{\phi}(X,Y)$ denotes the subset of
    $\Hom_{\mathbf{C}}(X,Y)$ of morphisms that map to $\phi$ in
    $\simp^{\op}$.
  \end{enumerate}
  The morphisms of $\Cat_{/\simp^{\op}}^{\txt{mult}}$ from
  $\mathbf{C} \to \simp^{\op}$ to $\mathbf{D} \to \simp^{\op}$ are
  the functors $\mathbf{C} \to \mathbf{D}$ over $\simp^{\op}$ that
  preserve the coCartesian morphisms over inert morphisms in
  $\simp^{\op}$.
\end{defn}

\begin{propn}\label{propn:multicatvscatofop}
  The functor $(\blank)^{\otimes}$ from multicategories to categories
  over $\simp^{\op}$ gives an equivalence between the category of
  multicategories and $\Cat_{/\simp^{\op}}^{\txt{mult}}$.
\end{propn}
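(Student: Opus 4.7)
The plan is to construct an explicit inverse to $(\blank)^{\otimes}$ by a Grothendieck-style unstraightening on the ``multicategorical'' fibres, and then to check that both composites are canonically isomorphic to the identity. The forward direction---that $\mathbf{M}^{\otimes}$ lies in $\Cat^{\txt{mult}}_{/\simp^{\op}}$---is essentially a matter of unwinding the definition of the category of operators; the real content lies in recovering a multicategory from any $\pi \colon \mathbf{C} \to \simp^{\op}$ satisfying (i)--(iii).

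First I would verify the three axioms for $\mathbf{M}^{\otimes}$. A coCartesian lift of an inert $\phi \colon [m] \to [n]$ in $\simp^{\op}$ at $(X_{1}, \ldots, X_{n})$ is given by the subtuple of objects indexed by the corresponding sub-interval, with identity multimorphisms as components; the universal property is immediate from the definition of morphisms in $\mathbf{M}^{\otimes}$. The Segal condition (ii) is trivial since each fibre is the discrete set $(\ob \mathbf{M})^{n}$, and condition (iii) is literally the definition of morphisms in $\mathbf{M}^{\otimes}$ as tuples of multimorphisms parametrised by the inert components $\rho_{i} \circ \phi$. For the inverse, given $\pi \colon \mathbf{C} \to \simp^{\op}$ satisfying (i)--(iii), I define $\mathbf{M}_{\mathbf{C}}$ to have object set $\ob \mathbf{C}_{[1]}$, and for $X_{1}, \ldots, X_{n}, Y \in \mathbf{C}_{[1]}$ I pick (using (ii)) an object $X \in \mathbf{C}_{[n]}$ with Segal components $X_{i}$ and set
\[ \mathbf{M}_{\mathbf{C}}(X_{1}, \ldots, X_{n}; Y) := \Hom^{\alpha_{n}}_{\mathbf{C}}(X, Y), \]
where $\alpha_{n} \colon [n] \to [1]$ is the unique active morphism in $\simp^{\op}$. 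Units come from identity morphisms in $\mathbf{C}$, and composition is defined by using (iii) to assemble a tuple of multimorphisms into a single morphism in $\mathbf{C}$, then composing with the given morphism out of the assembled target.

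The verification that these two constructions are mutually inverse is then a book-keeping exercise: applying the inverse construction to $\mathbf{M}^{\otimes}$ recovers $\mathbf{M}$ on the nose, since objects, multimorphism sets, units and composition are defined identically on both sides, while in the other direction one obtains a canonical comparison functor $(\mathbf{M}_{\mathbf{C}})^{\otimes} \to \mathbf{C}$ over $\simp^{\op}$ that is essentially surjective on fibres by (ii) and bijective on each $\Hom^{\phi}$ by (iii), and therefore an equivalence. The main obstacle is verifying that the composition law on $\mathbf{M}_{\mathbf{C}}$ is associative and independent (up to the canonical identifications from (ii)) of the chosen representatives $X \in \mathbf{C}_{[n]}$: this should reduce, by applying (iii) to composites $[k] \to [n] \to [m]$ in $\simp^{\op}$, to the associativity of composition in $\mathbf{C}$ together with the functoriality of coCartesian transport along inert morphisms. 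Naturality of the two constructions in morphisms of multicategories (respectively, of objects of $\Cat_{/\simp^{\op}}^{\txt{mult}}$) is then immediate from the construction.
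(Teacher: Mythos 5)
Your proposal is correct and follows essentially the same route as the paper: the axiom check for $\mathbf{M}^{\otimes}$ and the reconstruction of a multicategory from $\pi \colon \mathbf{C} \to \simp^{\op}$ via $\Hom^{\alpha_n}_{\mathbf{C}}$ (using (ii) for objects and (iii) for multimorphisms and composition) are exactly the paper's constructions. The only difference is packaging --- you exhibit an explicit inverse functor, while the paper verifies full faithfulness and essential surjectivity separately --- which amounts to the same verifications.
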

\begin{proof}
  It is easy to see that the category of operators of a multicategory
  $\mathbf{M}$ satisfies conditions (i)--(iii):
  \begin{enumerate}[(i)]
  \item The coCartesian map from $(X_{1},\ldots,X_{n})$ over an inert
    map $\phi \colon [m] \to [n]$ in $\simp$ is the projection
    $(X_{1},\ldots,X_{n}) \to (X_{\phi(1)},\ldots,X_{\phi(n)})$
    determined by the identity maps of the $X_{i}$'s.
  \item Clearly $\mathbf{M}^{\otimes}_{[n]}$ is equivalent to
    $(\mathbf{M}_{[1]}^{\otimes})^{\times n}$ via these projections.
  \item This is immediate from the definition of the morphisms in
    $\mathbf{M}^{\otimes}$.
  \end{enumerate}
  Moreover, any functor of multicategories $F \colon \mathbf{M} \to \mathbf{N}$
  induces a functor $\mathbf{M}^{\otimes} \to \mathbf{N}^{\otimes}$
  that preserves coCartesian arrows over inert maps: this simply says
  that $(X_{1},\ldots, X_{n})$ is sent to $(F(X_{1}), \ldots,
  F(X_{n}))$. Thus the functor $(\blank)^{\otimes}$ does indeed factor
  through $\Cat_{/\simp^{\op}}^{\txt{mult}}$.

  Conversely, if $\phi \colon \mathbf{M}^{\otimes} \to
  \mathbf{N}^{\otimes}$ is a functor over $\simp^{\op}$ that preserves
  these coCartesian morphisms, then condition (iii) implies that
  $\phi$ is completely determined by the maps
  $\mathbf{M}(X_{1},\ldots,X_{n}; Y) \to
  \mathbf{N}(\phi(X_{1}),\ldots,\phi(X_{n}); \phi(Y))$, and so comes
  from a functor of multicategories. This shows that
  $(\blank)^{\otimes}$ is fully faithful.

  It remains to show that the functor is essentially
  surjective. Suppose $\pi \colon \mathbf{C} \to \simp^{\op}$ is an
  object of $\Cat_{/\simp^{\op}}^{\txt{mult}}$. Then we can define a
  multicategory $\mathbf{M}_{\pi}$ as follows:
  \begin{itemize}
  \item The objects of $\mathbf{M}_{\pi}$ are the objects of
    $\mathbf{C}_{[1]}$. 
  \item By condition (ii) we can think of the objects of $\mathbf{C}_{[n]}$ as
    lists $(X_{1}, \ldots, X_{n})$ where the $X_{i}$'s are objects of
    $\mathbf{C}_{[1]}$. We define the multimorphism set
    $\mathbf{M}_{\pi}(X_{1},\ldots,X_{n}; Y)$ to be
    $\Hom_{\mathbf{C}}^{\alpha_{n}}((X_{1},\ldots,X_{n}), Y)$ where
    $\alpha_{n}$ denotes the map $[1] \to [n]$ that sends $0$ to $0$ and
    $1$ to $n$.
  \item The identity $\id_{X} \in \mathbf{M}_{\pi}(X; X)$ is just the
    identity map of $X$ in $\mathbf{C}_{[1]}$.
  \item To define the composition 
    \[ \mathbf{M}_{\pi}(X_{1}, \ldots, X_{n_{1}}; Y_{1}) \times \cdots
    \times \mathbf{M}_{\pi}(X_{n_{k-1}+1}, \ldots, X_{n_{k}}; Y_{k})
    \times \mathbf{M}_{\pi}(Y_{1},\ldots Y_{k}; Z) \to
    \mathbf{M}_{\pi}(X_{1},\ldots,X_{n_{k}}; Z)\]
    observe that by (iii) we can describe the source as
    \[\Hom_{\mathbf{C}}^{\beta}((X_{1},\ldots,X_{n_{k}}),
    (Y_{1},\ldots,Y_{k})) \times
    \Hom_{\mathbf{C}}^{\alpha_{k}}((Y_{1},\ldots, Y_{k}), Z),\]
    where $\beta \colon [k] \to [n_{k}]$ sends $0$ to $0$ and $i$ to
    $n_{i}$ for $i > 0$. Thus composition in $\mathbf{C}$ gives the
    desired composition in $\mathbf{C}$.
  \item To see that the composition is associative and unital, we
    apply the equivalences from (iii) similarly, and use the
    associativity and unitality of composition in $\mathbf{C}$.
  \end{itemize}
  It is then easy to check that the category of operators
  $\mathbf{M}_{\pi}^{\otimes}$ is equivalent to $\mathbf{C}$ over
  $\simp^{\op}$. Thus the functor $(\blank)^{\otimes}$ is essentially
  surjective, which completes the proof.
\end{proof}

We can thus equivalently \emph{define} a multicategory to be a functor
$\mathbf{C} \to \simp^{\op}$ satisfying (i)--(iii). Using the theory
developed in \cite{HTT}, these conditions moreover have obvious
\icatl{} analogues, which leads us to the following definition:
\begin{defn}\label{defn:nsiopd1}
  A \defterm{non-symmetric $\infty$-operad} is an inner fibration $\pi
  \colon \mathcal{O} \to \simp^{\op}$ such that:
  \begin{enumerate}[(i)]
  \item For every inert morphism $\phi \colon [m] \to [n]$  in
    $\simp^{\op}$ and every $X \in \mathcal{O}_{[n]}$ there exists
    a $\pi$-coCartesian morphism $X \to \phi_{!}X$ over $\phi$.
  \item For every $[n] \in \simp^{\op}$ the functor
    \[\mathcal{O}_{[n]} \to
    (\mathcal{O}_{[1]})^{\times n}\] induced by the
    coCartesian arrows over the inert maps $\rho_{i}$ ($i =
    1,\ldots,n$) is an equivalence of \icats{}.
  \item For every morphism $\phi \colon [n] \to [m]$ in $\simp^{\op}$
    and $Y \in \mathcal{O}_{[m]}$, composition with 
    coCartesian morphisms $Y \to Y_{i}$ over the inert morphisms
    $\rho_{i}$ gives an equivalence
    \[ \Map_{\mathcal{O}}^{\phi}(X, Y) \isoto \prod_{i}
    \Map_{\mathcal{O}}^{\rho_{i} \circ \phi}(X, Y_{i}),\]
    where $\Map_{\mathcal{O}}^{\phi}(X,Y)$ denotes the
    subspace of $\Map_{\mathcal{O}}(X,Y)$ of morphisms that
    map to $\phi$ in $\simp^{\op}$.
  \end{enumerate}
\end{defn}

\begin{remark}
  This is a special case of Barwick's notion of an \iopd{} over an
  operator category \cite{BarwickOpCat}, namely the case where the
  operator category is the category of finite ordered sets.
\end{remark}

\begin{remark}
  Being an inner fibration is a technical condition that does not have
  an analogue for ordinary categories; among other things it implies
  that the simplicial set $\mathcal{O}$ must be an \icat{}.
  Every functor of \icats{} can be replaced by an equivalent one that
  is an inner fibration.
\end{remark}

\begin{remark}
  The proof of Proposition~\ref{propn:multicatvscatofop} indicates how
  to interpret a \nsiopd{} $\mathcal{O} \to \simp^{\op}$ as a
  multicategory ``weakly enriched in spaces'':
  \begin{itemize}
  \item By condition (ii), the objects of $\mathcal{O}$ can be
    identified with lists $(X_{1},\ldots,X_{n})$ where the $X_{i}$'s
    are objects of $\mathcal{O}_{[1]}$ (which we think of as the
    underlying \icat{} of the multicategory)
  \item By condition (iii), the spaces of maps in $\mathcal{O}$ are
    determined by the mapping spaces of the form
    \[ \Map_{\mathcal{O}}^{\alpha_{n}}((X_{1},\ldots,X_{n}), Y), \]
    which we think of as the space  of multimorphisms in $\mathcal{O}$
    from $(X_{1},\ldots,X_{n})$ to $Y$.
  \item The composition of these multimorphisms is determined using
    condition (iii) by ordinary composition of morphisms in
    $\mathcal{O}$, as in the proof of
    Proposition~\ref{propn:multicatvscatofop}.
  \end{itemize}
  Since our definition takes place in the context of \icats{}, which
  already encode the notion of coherently homotopy-associative
  composition of morphisms, this means that the composition of
  multimorphisms in $\mathcal{O}$ is also coherently
  homotopy-associative, as expected.
\end{remark}

\begin{defn}
  If $\mathcal{O}$ and $\mathcal{P}$ are
  \nsiopds{}, a \defterm{morphism of \nsiopds{}} from
  $\mathcal{O}$ to $\mathcal{P}$ is a commutative
  diagram
  \opctriangle{\mathcal{O}}{\mathcal{P}}{\simp^{\op}}{\phi}{}{}
  such that $\phi$ carries coCartesian morphisms in
  $\mathcal{O}$ that map to inert morphisms in $\simp^{\op}$
  to coCartesian morphisms in $\mathcal{P}$. We will also
  refer to a morphism of \nsiopds{} $\mathcal{O} \to
  \mathcal{P}$ as an \emph{$\mathcal{O}$-algebra}
  in $\mathcal{P}$.
\end{defn}

\begin{remark}
  One advantage of working with \iopds{} over simplicial or
  topological multicategories is that they can be described as the
  fibrant objects in a model category where every object is
  cofibrant. This means that we can work with simple objects like the
  associative operad rather than having to use a cofibrant
  replacement, i.e. an $A_{\infty}$-operad: the \icat{} of algebras
  for the associative operad in a \nsiopd{} is always equivalent to
  the \icat{} of $A_{\infty}$-algebras.
\end{remark}

We now want to define monoidal \icats{} as a special class of
\nsiopds{}. The appropriate definition is suggested by the following
observation:
\begin{lemma}\ 
  \begin{enumerate}[(i)]
  \item An object $\pi \colon \mathbf{C} \to \simp^{\op}$ in
    $\Cat_{/\simp^{\op}}^{\txt{mult}}$ is equivalent to the category of
    operators of the multicategory associated to a monoidal category
    \IFF{} $\pi$ is a Grothendieck opfibration.
  \item A morphism $\phi \colon
    \mathcal{C} \to \mathcal{D}$ between two such objects corresponds to
    a lax monoidal functor between the associated monoidal categories.
  \item Under this correspondence the (strong) monoidal functors give
    precisely the morphisms that preserve \emph{all} coCartesian
    morphisms.
  \end{enumerate}
\end{lemma}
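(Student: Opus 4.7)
The plan is to translate the definition of a monoidal category directly into the structure of an opfibration $\mathbf{C} \to \simp^{\op}$, and then check that each part of the lemma follows from recognizing where this additional opfibration data (beyond the inert coCartesian lifts already required for multicategories) lives.

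First, for the easy direction of (i), let $\mathbf{V}$ be a monoidal category and consider its category of operators $\mathbf{V}^{\otimes}$. Given any map $\phi \colon [m] \to [n]$ in $\simp$ (not merely inert) and a list $(X_1, \ldots, X_n) \in \mathbf{V}^{\otimes}_{[n]}$, there is a coCartesian lift to $(X_{\phi(0)+1} \otimes \cdots \otimes X_{\phi(1)},\, \ldots,\, X_{\phi(m-1)+1} \otimes \cdots \otimes X_{\phi(m)})$ determined by the identity multimorphisms out of each tensor factor. This exhibits $\mathbf{V}^{\otimes} \to \simp^{\op}$ as a Grothendieck opfibration. Conversely, suppose $\pi \colon \mathbf{C} \to \simp^{\op}$ lies in $\Cat_{/\simp^{\op}}^{\txt{mult}}$ and is an opfibration. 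The crucial non-inert maps are the \emph{active} maps $\alpha_n \colon [1] \to [n]$ sending $0 \mapsto 0,\, 1 \mapsto n$, together with the unique map $[0] \to [1]$. Using the equivalence $\mathbf{C}_{[n]} \simeq \mathbf{C}_{[1]}^{\times n}$ from condition (ii), coCartesian pushforward along $\alpha_n^{\op}$ sends $(X_1, \ldots, X_n)$ to an object of $\mathbf{C}_{[1]}$ that we define as $X_1 \otimes \cdots \otimes X_n$, while pushforward along $([0] \to [1])^{\op}$ sends the unique object of $\mathbf{C}_{[0]}$ to the unit $\mathbf{1}$. Associativity and unit coherences follow from the simplicial identities relating compositions of active maps, together with the uniqueness up to canonical isomorphism of coCartesian lifts; one then checks that the multicategory $\mathbf{M}_{\pi}$ from Proposition~\ref{propn:multicatvscatofop} is precisely the one associated to this monoidal structure.

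For (ii), a morphism $\phi \colon \mathcal{C} \to \mathcal{D}$ between two such objects is already a multifunctor by Proposition~\ref{propn:multicatvscatofop}. The coCartesian lift of $\alpha_n^{\op}$ starting at $(X_1, \ldots, X_n)$ is a multimorphism $(X_1, \ldots, X_n) \to X_1 \otimes \cdots \otimes X_n$ in the source multicategory; applying $\phi$ produces a multimorphism $(\phi X_1, \ldots, \phi X_n) \to \phi(X_1 \otimes \cdots \otimes X_n)$, equivalently a morphism
\[
\phi X_1 \otimes \cdots \otimes \phi X_n \to \phi(X_1 \otimes \cdots \otimes X_n)
\]
in the target monoidal category, which together with the image of the lift of $([0] \to [1])^{\op}$ supplies the lax monoidal structure maps. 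The coherence diagrams follow automatically from functoriality of $\phi$ applied to compositions of active maps in $\simp^{\op}$.

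For (iii), the lax structure map displayed above is an isomorphism precisely when the $\phi$-image of the coCartesian arrow over $\alpha_n^{\op}$ is again coCartesian. Thus strong monoidal functors correspond exactly to morphisms preserving coCartesian lifts over active maps, and since every map in $\simp$ factors (uniquely) as an active map followed by an inert map, combining this with the preservation of inert coCartesian lifts already required yields preservation of \emph{all} coCartesian morphisms. The main technical obstacle I expect is the verification of the pentagon and triangle axioms in the reverse direction of (i): this amounts to carefully translating the relations among iterated active maps $[1] \to [m] \to [n]$ into the monoidal coherence axioms via the universal property of coCartesian pushforwards, and will be the most bookkeeping-heavy part of the argument.
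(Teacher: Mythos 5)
Your proposal is correct and follows essentially the same route as the paper's proof: extract $\otimes_n$ from coCartesian lifts of the active maps $\alpha_n \colon [1] \to [n]$ and the unit from the lift over the degeneracy, derive the coherences from the various factorizations of active maps together with uniqueness of coCartesian lifts, read off the lax structure maps in (ii) from the image of the coCartesian arrow over $\alpha_n$, and characterize strong monoidal functors in (iii) via preservation of coCartesian lifts over active (hence, by the active--inert factorization, all) morphisms. Your explicit verification that $\mathbf{V}^{\otimes} \to \simp^{\op}$ is an opfibration in the forward direction of (i) is a small addition the paper leaves implicit; the only blemish is the notation ``$([0]\to[1])^{\op}$'' for the degeneracy, which should be stated as the map $[0] \to [1]$ in $\simp^{\op}$ corresponding to $s_0 \colon [1] \to [0]$ in $\simp$.
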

\begin{proof}
  Let $\mathbf{M}$ be the multicategory corresponding to $\pi \colon
  \mathbf{C} \to \simp^{\op}$, and write $\mathbf{M}_{0} \cong
  \mathbf{C}_{[1]}$ for its underlying category. The existence of
  coCartesian morphisms for $\alpha_{n} \colon [1] \to [n]$ implies
  that there is a functor $\otimes_{n} \colon \mathbf{C}_{[1]}^{\times
    n} \simeq \mathbf{C}_{[n]} \to \mathbf{C}_{[1]}$ such that
  $\mathbf{M}(X_{1},\ldots,X_{n}; Y) \cong
  \mathbf{M}_{0}(\otimes_{n}(X_{1},\ldots,X_{n}), Y)$. But writing
  $\alpha_{n}$ as a composite of elementary face maps in $\simp$ in
  various ways, we get canonical equivalences between $\otimes_{n}$
  and the various ways of successively applying $\otimes_{2}$ to
  adjacent elements. Moreover, the coCartesian morphism over the
  degeneracy $[1] \to [0]$ in $\simp$ gives a map $* \simeq
  \mathbf{C}_{[0]} \to \mathbf{C}_{[1]}$, which amounts to a unit $I
  \in \mathbf{M}_{0}$. This implies that if we define $X \otimes Y :=
  \otimes_{2}(X,Y)$ then $\otimes$ is a monoidal structure on
  $\mathbf{M}_{0}$ such that $\mathbf{M}$ is the multicategory
  associated to this monoidal category. This proves (i). (ii) is then
  clear, since lax monoidal functors clearly correspond to functors
  between the assocaited multicategories, and (iii) follows since a
  functor preserves all coCartesian arrows precisely if we have
  natural isomorphisms $F(X) \otimes F(Y) \cong F(X \otimes Y)$ and
  $F(I) \cong I$.
\end{proof}

In the \icatl{} case we therefore make the following definitions:
\begin{defn}
  A \defterm{monoidal \icat{}} is a \nsiopd{} $\mathcal{V}^{\otimes}
  \to \simp^{\op}$ that is also a coCartesian fibration. We will
  generally denote the fibre $\mathcal{V}^{\otimes}_{[1]}$ by
  $\mathcal{V}$; by abuse of notation we will allow ourselves to say
  ``let $\mathcal{V}$ be a monoidal \icat{}'' as shorthand
  for ``let $\mathcal{V}^{\otimes} \to \simp^{\op}$ be a monoidal
  \icat{}''.
\end{defn}

\begin{defn}
  If $\mathcal{V}^{\otimes}$ and $\mathcal{W}^{\otimes}$ are monoidal
  \icats{}, we will refer to a morphism of \nsiopds{} from
  $\mathcal{V}^{\otimes}$ to $\mathcal{W}^{\otimes}$ as a \defterm{lax
    monoidal functor}. A \defterm{monoidal functor} from
  $\mathcal{V}^{\otimes}$ to $\mathcal{W}^{\otimes}$ is a commutative
  diagram
  \opctriangle{\mathcal{V}^{\otimes}}{\mathcal{W}^{\otimes}}{\simp^{\op}}{\phi}{}{}
  such that $\phi$ preserves \emph{all} coCartesian morphisms.
\end{defn}

\begin{remark}\label{rmk:monicatsegcond}
  For a coCartesian fibration $\pi \colon \mathcal{C} \to
  \simp^{\op}$, condition (iii) in the definition of \nsiopds{}
  follows from condition (ii), since the coCartesian morphisms in
  $\mathcal{C}$ allow us to identify the space of maps over $\phi
  \colon [n] \to [m]$ in $\simp^{\op}$ with a space of maps in
  $\mathcal{C}_{[n]}$, which decomposes as a product due to condition
  (ii). This means that, under the equivalence between coCartesian
  fibrations over $\simp^{\op}$ and functors $\simp^{\op} \to \CatI$,
  monoidal \icats{} precisely correspond to simplicial \icats{}
  $\mathcal{C}_{\bullet}$ that satisfy the \emph{Segal condition}: the
  map $\mathcal{C}_{n} \to \mathcal{C}_{1}^{\times n}$ induced by the
  maps $\rho_{i} \colon [n] \to [1]$ in $\simp^{\op}$ are
  equivalences. The idea that simplicial objects satisfying this
  condition give a model for $A_{\infty}$-algebras goes back to Segal
  (as an unpublished variant of the definition of
  $E_{\infty}$-algebras using $\bbGamma$-spaces in \cite{SegalCatCohlgy})
  --- thus we can interpret monoidal \icats{} as $A_{\infty}$-algebras
  (or just associative algebras, since we are working in the ``fully
  weak'' context of \icats{}) in $\CatI$.
\end{remark}

\begin{remark}
  A monoidal \icat{} $\mathcal{V}^{\otimes}$ corresponds to the data of a
  homotopy-coherently associative tensor product on $\mathcal{V}$. To
  see this, let us unpack the data we get from a monoidal \icat{},
  interpreted as a simplical \icat{} $\mathcal{V}_{\bullet}$ satisfying the
  Segal condition:
  \begin{itemize}
  \item The map $d_{1} \colon [2] \to [1]$ gives a tensor product
    $\otimes \colon \mathcal{V}^{\times 2} \simeq \mathcal{V}_{2}
    \to \mathcal{V}$.
  \item The map $s_{0} \colon [0] \to [1]$ gives a unit $* \simeq
    \mathcal{V}_{0} \to \mathcal{V}$.
  \item The map $\alpha_{3} \colon [3] \to [1]$ gives a map
    $\otimes_{3} \colon \mathcal{V}^{\times 3} \simeq \mathcal{V}_{3}
    \to \mathcal{V}$. The two factorizations $\alpha_{3} = d_{1} \circ
    d_{1} = d_{1} \circ d_{2}$ give 2-simplices in $\CatI$ that can be
    interpreted as natural equivalences between $\otimes_{3}(A,B,C)$
    and the composites $(A \otimes B) \otimes C$ and $A \otimes (B
    \otimes C)$, respectively. Composing these gives the expected
    natural associator equivalence $(A \otimes B) \otimes C \simeq A
    \otimes (B \otimes C)$.
  \item Similarly, the different ways of decomposing $\alpha_{4}
    \colon [4] \to [1]$ as a composite of 3 face maps gives
    3-simplices in $\CatI$ that determine homotopies between the
    different ways of using the associator to pass between different
    4-fold tensor products.
  \item In general, the different ways of decomposing $\alpha_{n}$ as
    a composite of $n-1$ face maps gives $(n-1)$-simplices in $\CatI$
    that determine the coherence data for $n$-fold tensor products.
  \end{itemize}
\end{remark}

If $\mathbf{M}$ is an ordinary multicategory, then it is clear that
(the nerve of) its category of operators $\mathbf{M}^{\otimes}$ is a
\nsiopd{} --- by abuse of notation we will also refer to this
\nsiopd{} as $\mathbf{M}$ in contexts where this does not cause
confusion. We can then define enriched \icats{} as follows:
\begin{defn}
  If $S$ is a set and $\mathcal{V}^{\otimes}$ is a monoidal \icat{}, a
  \emph{$\mathcal{V}$-enriched \icat{}} (or
  \emph{$\mathcal{V}$-\icat{}}) with set of objects $S$ is an
  $\mathbf{O}_{S}$-algebra in $\mathcal{V}$, i.e. a morphism of
  \nsiopds{} $\mathbf{O}_{S}^{\otimes} \to \mathcal{V}^{\otimes}$. If
  $\mathcal{C}$ and $\mathcal{D}$ are $\mathcal{V}$-\icats{} with sets
  of objects $S$ and $T$, respectively, then a
  \emph{$\mathcal{V}$-functor}
  from $\mathcal{C}$ to $\mathcal{D}$ consists of a function $f \colon
  S \to T$ and a natural transformation $\eta \colon \mathcal{C} \to
  f^{*}\mathcal{D}$ of functors $\mathbf{O}_{S}^{\otimes} \to
  \mathcal{V}^{\otimes}$, where $f^{*}\mathcal{D}$ denotes the
  composite of $\mathcal{D}$ with the functor
  $\mathbf{O}_{S}^{\otimes} \to \mathbf{O}_{T}^{\otimes}$ induced by
  $f$.
\end{defn}

\begin{ex}
  For a one-element set, $\mathbf{O}_{*}$ is just the associative
  operad, and $\mathbf{O}_{*}^{\otimes}$ is $\simp^{\op}$. Thus
  one-object $\mathcal{V}$-\icats{} are precisely \icatl{} associative
  algebras, i.e. $A_{\infty}$-algebras, just as we would expect.
\end{ex}

\begin{remark}
  We saw at the end of \S\ref{subsec:MulticatEnr} that
  $\mathbf{O}_{S}$-algebras in a monoidal category $\mathbf{V}$
  correspond to $\mathbf{V}$-enriched categories with $S$ as their set
  of objects. Similarly, an $\mathbf{O}_{S}$-algebra $\mathcal{C}$ in a monoidal
  \icat{} $\mathcal{V}$ corresponds to the data we would expect to
  have in an enriched \icat{}. Speaking somewhat informally, to make
  the underlying ideas clearer, we have for example the following data:
  \begin{itemize}
  \item The object $(X,Y)$ in $\mathbf{O}_{S}^{\otimes}$ is sent to an object
    $\mathcal{C}(X,Y) \in \mathcal{V}$.
  \item The morphism $((X,Y), (Y,Z)) \to (X,Z)$ in
    $\mathbf{O}_{S}^{\otimes}$ is sent to a morphism
    $\mu_{X,Y,Z} \colon \mathcal{C}((X,Y), (Y,Z)) \to \mathcal{C}(X,Z)$ in
    $\mathcal{V}^{\otimes}$. Since $\mathcal{C}$ preserves coCartesian
    morphisms over inert maps in $\simp^{\op}$, under the equivalence
    $\mathcal{V}^{\otimes}_{[2]} \simeq \mathcal{V}^{\times 2}$ the
    object $\mathcal{C}((X,Y), (Y,Z))$ is equivalent to
    $(\mathcal{C}(X,Y), \mathcal{C}(Y,Z))$, and so using the
    coCartesian morphism over the map $d_{1} \colon [2] \to [1]$, we
    can interpret this as a composition morphism
    $\mathcal{C}(X,Y)\otimes \mathcal{C}(Y,Z) \to \mathcal{C}(X,Z)$.
  \item Similarly, the morphism $() \to (X,X)$ is sent to a morphism
    we may interpret as a map $I \to \mathcal{C}(X,X)$ where $I$ is
    the unit of the tensor product on $\mathcal{V}$.
  \item The morphism $((X,Y), (Y,Z), (Z, W)) \to (X, W)$ in
    $\mathbf{O}_{S}^{\otimes}$ factors as $((X,Y), (Y,Z), (Z,W)) \to
    ((X,Z), (Z,W)) \to (X,W)$ and also as $((X,Y), (Y,Z), (Z,W)) \to
    ((X,Y), (Y,W)) \to (X,W)$. Pushing the associated data in
    $\mathcal{V}^{\otimes}$ into $\mathcal{V}$ using the coCartesian
    morphisms, this gives:
    \begin{itemize}
    \item an object $\otimes_{3}(\mathcal{C}(X,Y), \mathcal{C}(Y,Z),
      \mathcal{C}(Z,W))$ with equivalences $\alpha$ to $\mathcal{C}(X,Y)
      \otimes (\mathcal{C}(Y,Z) \otimes \mathcal{C}(Z,W))$ and $\beta$ to
      $(\mathcal{C}(X,Y) \otimes (\mathcal{C}(Y,Z)) \otimes
      \mathcal{C}(Z,W)$
    \item a morphism $\mu_{X,Y,Z,W} \colon \otimes_{3}(\mathcal{C}(X,Y), \mathcal{C}(Y,Z),
      \mathcal{C}(Z,W)) \to \mathcal{C}(X,W)$
    \item homotopies between $\mu_{X,Y,Z,W} \circ \alpha^{-1}$ and
      $\mu_{X,Y,W} \circ (\id \otimes \mu_{Y,Z,W})$ and between
      $\mu_{X,Y,Z,W} \circ \beta^{-1}$ and $\mu_{X,Z,W} \circ
      (\mu_{X,Y,Z} \otimes \id)$.
    \end{itemize}
    The latter two homotopies can then be composed to get a homotopy
    between $\mu_{X,Y,W} \circ (\id \otimes \mu_{Y,Z,W})$ and
    $\mu_{X,Z,W} \circ (\mu_{X,Y,Z} \otimes \id)$, which is the first
    homotopy-coherence data for the associativity of the composition
    operation.
  \item Similarly, the data derived from the different decompositions
    of $((X,Y), (Y,Z), (Z,W), (W,V)) \to (X,V)$ as composites of
    ``face maps'' gives the coherence data for 3-fold compositions,
    and so forth.
  \end{itemize}
\end{remark}

If $\mathcal{O}$ and $\mathcal{P}$ are \nsiopds{}, we get an \icat{}
$\Alg_{\mathcal{O}}(\mathcal{P})$ of $\mathcal{O}$-algebras in
$\mathcal{P}$ by taking the full subcategory spanned by the morphisms
of \nsiopds{} in the \icat{}
$\Fun_{\simp^{\op}}(\mathcal{O}, \mathcal{P})$ of
functors over $\simp^{\op}$.  By abuse of notation, if $\mathcal{O}$
is a \nsiopd{} and $\mathcal{V}^{\otimes}$ is a monoidal \icat{} we
will usually write $\Alg_{\mathcal{O}}(\mathcal{V})$ instead of
$\Alg_{\mathcal{O}}(\mathcal{V}^{\otimes})$.

In \S\ref{subsec:icatiopds} we will construct an \icat{} $\OpdIns$ of
non-symmetric \iopds{} and see that the \icat{}
$\Alg_{\mathcal{O}}(\mathcal{P})$ is functorial in
$\mathcal{O}$ and $\mathcal{P}$. This allows us to
construct a Cartesian fibration \[\Alg(\mathcal{P}) \to
\OpdIns\] whose fibre at $\mathcal{O}$ is
$\Alg_{\mathcal{O}}(\mathcal{P})$. Pulling this
back along the functor $\Set \to \OpdIns$ that sends a set $S$ to
$\mathbf{O}_{S}^{\otimes}$ we get an \icat{}
$\AlgCat(\mathcal{P})$ with a projection to $\Set$. If
$\mathcal{V}$ is a monoidal \icat{}, the objects of
$\AlgCat(\mathcal{V})$ are clearly $\mathcal{V}$-enriched
\icats{} and the morphisms are precisely $\mathcal{V}$-functors.

A $\mathcal{V}$-functor $\mathcal{C} \to \mathcal{D}$ is given by a
function $f \colon S \to T$ of sets of objects and a morphism $\eta
\colon \mathcal{C} \to f^{*}\mathcal{D}$ of
$\mathbf{O}_{S}$-algebras. This morphism is an equivalence in
$\AlgCat(\mathcal{V})$ \IFF{} $f$ is a \emph{bijection} of sets and
$\eta$ is an equivalence of $\mathbf{O}_{S}$-algebras (i.e. the
morphism is \emph{fully faithful}). This is obviously not the correct
notion of equivalence for $\mathcal{V}$-\icats{} --- we want the
equivalences to be the morphisms that are fully faithful and
\emph{essentially surjective} (in the usual sense that every object of
$\mathcal{D}$ is \emph{equivalent} to an object in the image of $f$;
we will define this precisely below in \S\ref{subsec:FFES} after
discussing equivalences in enriched \icats{} in
\S\ref{subsec:equiv}). We therefore want to invert these morphisms. In
the \icatl{} setting it is always possible to formally invert any
collection of morphisms, but to understand the resulting localization
we need it to be an \emph{accessible} localization. This is the
\icatl{} analogue of left Bousfield localization of model categories,
and means that we can find the localized \icat{} as the full
subcategory of \emph{local} objects inside the original
\icat{}. However, this is easily seen to be impossible using our
current definition of enriched \icats{}: For example, if we enrich in
the monoidal category of sets with the Cartesian product, then
$\AlgCat(\Set)$ is just the ordinary category of small categories and
functors. But if we invert the fully faithful and essentially
surjective functors we get the $(2,1)$-category of categories,
functors, and natural equivalences, which obviously cannot be a full
subcategory of an ordinary category.

To avoid this problem we need another definition of enriched \icats{}
for which this localization is well-behaved. It will turn out that we
get a much nicer \icat{} of enriched \icats{} if we allow them to have
\emph{spaces} of objects rather than just sets --- this is also
aligned with the philosophy of higher category theory, whereby spaces
should be thought of as the \icatl{} analogue of sets in ordinary
category theory. One way to do this would be to define simplicial
multicategories $\mathbf{O}_{S}$ where $S$ is now a simplicial
groupoid, and then work with the associated \iopds{}. We will, in
fact, define such simplicial multicategories and briefly make use of
them below in \S\ref{subsec:OX}, but it turns out that there is an
easier and more natural way to carry out this generalization: We will
base our theory of enriched \icats{} on the \icatl{} version of a
slightly different approach to enriched categories, using
\emph{virtual double categories} instead of multicategories, which we
describe in the next subsection.

\subsection{Virtual Double Categories and Enrichment}
\defterm{Virtual double categories}\footnote{Also known as
  $\mathbf{fc}$-multicategories; note that, for consistency with
  Lurie's terminology, we will refer to their \icatl{} generalization
  as \emph{generalized non-symmetric \iopds{}}.} are a common
generalization of double categories and multicategories. Roughly
speaking, a virtual double category has objects and vertical and
horizontal morphisms between them, but in addition to a collection of
``squares'' there are cells with a list of vertical arrows as source;
we refer the reader to \cite{CruttwellShulman} or
\cite{LeinsterHigherOpds} for an explicit definition along this point
of view.

Here, we will instead consider virtual double categories from the category
of operators perspective: they are exactly what we get if we allow the
fibre $\mathbf{O}_{[0]}$ at $[0]$ in a category of operators
to be non-trivial, and require
$\mathbf{O}_{[n]}$ to be the $n$-fold iterated
fibre product \[\mathbf{O}_{[1]}
\times_{\mathbf{O}_{[0]}} \cdots
\times_{\mathbf{O}_{[0]}}
\mathbf{O}_{[1]}.\] To state the precise definition we
first introduce some notation:
\begin{defn}
  Let $\simp^{\op}_{\txt{int}}$ denote the subcategory of
  $\simp^{\op}$ where the morphisms are the inert morphisms in
  $\simp^{\op}$. We write $\mathcal{G}^{\simp}$ for the full subcategory
  of $\simp^{\op}_{\txt{int}}$ spanned by the objects $[0]$ and $[1]$,
  and $\mathcal{G}^{\simp}_{[n]/}$ for the category
  $(\simp^{\op}_{\txt{int}})_{[n]/} \times_{\simp^{\op}}
  \mathcal{G}^{\simp}$ of inert morphisms from $[n]$ to $[1]$ and
  $[0]$.
\end{defn}

\begin{defn}
  A \defterm{virtual double category} is a functor $\pi \colon \mathbf{M}
  \to \simp^{\op}$ such that:

  \begin{enumerate}[(i)]
  \item For every inert morphism $\phi \colon [m] \to [n]$  in
    $\simp^{\op}$ and every $X \in \mathbf{M}_{[n]}$ there exists
    a $\pi$-coCartesian morphism $X \to \phi_{!}X$ over $\phi$.
  \item For every $[n] \in \simp^{\op}$ the functor \[\mathbf{M}_{[n]}
    \to \lim_{[n] \to [i] \in \mathcal{G}^{\simp}_{[n]/}}
    \mathbf{M}_{[i]} \simeq \mathbf{M}_{[1]} \times_{\mathbf{M}_{[0]}}
    \cdots\times_{\mathbf{M}_{[0]}} \mathbf{M}_{[1]}\] induced by the
    coCartesian arrows over the inert maps in
    $\mathcal{G}^{\simp}_{[n]/}$ is an equivalence of categories.
  \item For every morphism $\phi \colon [n] \to [m]$ in $\simp^{\op}$
    and $Y \in \mathbf{M}_{[m]}$, composition with coCartesian
    morphisms $Y \to Y_{\alpha}$ over the inert morphisms $\alpha
    \colon [m] \to [i]$ in $\mathcal{G}^{\simp}_{[m]/}$ gives
    an isomorphism
    \[ \Hom_{\mathbf{M}}^{\phi}(X, Y) \isoto \lim_{\alpha \in
      \mathcal{G}^{\simp}_{[m]/}} \Hom_{\mathbf{M}}^{\alpha \circ
      \phi}(X, Y_{\alpha}),\] where $\Hom_{\mathbf{M}}^{\phi}(X,Y)$
    denotes the subset of $\Hom_{\mathbf{M}}(X,Y)$ of morphisms that
    map to $\phi$ in $\simp^{\op}$.
  \end{enumerate}
\end{defn}
\begin{remark}
  A virtual double category $\mathbf{M} \to \simp^{\op}$ corresponds
  to a double category precisely when this functor is a Grothendieck
  opfibration.
\end{remark}

\begin{defn}
  If $\mathbf{M} \to \simp^{\op}$ and $\mathbf{N} \to \simp^{\op}$ are
  virtual double categories, a \defterm{functor of virtual double
    categories} from $\mathbf{M}$ to $\mathbf{N}$ is a functor $F
  \colon \mathbf{M} \to \mathbf{N}$ over $\simp^{\op}$ that preserves
  coCartesian morphisms over inert morphisms in $\simp^{\op}$.
\end{defn}

Given a set $S$, we can define a double category with set of objects
$S$ where the vertical morphisms are trivial, and there is a unique
horizontal morphism between any two elements of $S$. In terms of
categories of operators, this corresponds to the category
$\simp^{\op}_{S}$ whose objects are non-empty sequences $(X_{0},
\ldots, X_{n})$ of elements $X_{i} \in S$, with a unique
morphism \[(X_{0}, \ldots, X_{n}) \to (X_{\phi(0)}, \ldots,
X_{\phi(m)})\] for each $\phi \colon [m] \to [n]$ in $\simp$. If
$\mathbf{V}$ is a monoidal category, and $\mathbf{V}^{\otimes}$ is its
category of operators, a functor of virtual double categories
$\mathbf{C} \colon \simp^{\op}_{S} \to \mathbf{V}^{\otimes}$ is a
functor over $\simp^{\op}$ such that $\mathbf{C}(X_{0}, \ldots, X_{n})
= (\mathbf{C}(X_{0}, X_{1}), \ldots, \mathbf{C}(X_{n-1},
X_{n}))$. This is precisely a $\mathbf{V}$-category with set of
objects $S$: for each $X \in S$ the unique map $X \to (X,X)$ gives an
identity $I \to \mathbf{C}(X,X)$, and for objects $X,Y,Z \in S$ the
map $(X,Y,Z) \to (X,Z)$ over $d_{1}\colon [2] \to [1]$ gives a
composition map $\mathbf{C}(X,Y) \otimes \mathbf{C}(Y,Z) \to
\mathbf{C}(X,Z)$, which is associative because the two composite maps
$(X,Y,Z,W) \to (X,Y,W) \to (X,W)$ and $(X,Y,Z,W) \to (X,Z,W) \to
(X,W)$ are equal.

A functor between $\mathbf{V}$-categories $\mathbf{C}$ and
$\mathbf{D}$ with sets of objects $S$ and $T$, respectively, can then
be described as a function $f \colon S \to T$ together with a natural
transformation $\mathbf{C} \to f^{*}\mathbf{D}$ of functors
$\simp^{\op}_{S} \to \mathbf{V}^{\otimes}$, where $f^{*}\mathbf{D}$
denotes the composite of $\mathbf{D}$ with the functor
$\simp^{\op}_{f} \colon \simp^{\op}_{S} \to \simp^{\op}_{T}$ induced
by $f$: this natural transformation precisely gives maps
$\mathbf{C}(X,Y) \to \mathbf{D}(f(X), f(Y))$ compatible with units and
composition.

\begin{remark}
  Using the virtual double categories $\simp^{\op}_{S}$ to define
  enrichment gives the right notion also when considering enrichment
  in more general settings, such as enrichment in double categories or
  in general virtual double categories (cf. \cite{LeinsterGenEnr}).
\end{remark}

\subsection{Generalized $\infty$-Operads}\label{subsec:FTgeniopd}
It is now clear how to generalize the notion of virtual double
category to the \icatl{} setting, analogously to our definition of
non-symmetric \iopds{} above:
\begin{defn}\label{defn:gnsiopd1}
  A \defterm{generalized non-symmetric $\infty$-operad} is an inner
  fibration $\pi \colon \mathcal{M} \to \simp^{\op}$ such
  that:
  \begin{enumerate}[(i)]
  \item For every inert morphism $\phi \colon [m] \to [n]$  in
    $\simp^{\op}$ and every $X \in \mathcal{M}_{[n]}$ there exists
    a $\pi$-coCartesian morphism $X \to \phi_{!}X$ over $\phi$.
  \item For every $[n] \in \simp^{\op}$ the functor
    \[\mathcal{M}_{[n]}
    \to \lim_{[n] \to [i] \in \mathcal{G}^{\simp}_{[n]/}}
    \mathcal{M}_{[i]} \simeq \mathcal{M}_{[1]}
    \times_{\mathcal{M}_{[0]}} \cdots \times_{\mathcal{M}_{[0]}}
    \mathcal{M}_{[1]}\] induced by the coCartesian arrows over the
    inert maps in $\mathcal{G}^{\simp}_{[n]/}$ is an equivalence of \icats{}.
  \item For every morphism $\phi \colon [n] \to [m]$ in $\simp^{\op}$
    and $Y \in \mathcal{M}_{[m]}$, composition with
    coCartesian morphisms $Y \to Y_{\alpha}$ over the inert morphisms $\alpha \colon [m] \to
    [i]$ in $\mathcal{G}^{\simp}_{[m]/}$ 
    gives an equivalence
    \[ \Map_{\mathcal{M}}^{\phi}(X, Y) \isoto \lim_{\alpha \in
      \mathcal{G}^{\simp}_{[m]/}} \Map_{\mathcal{M}}^{\alpha \circ \phi}(X,
    Y_{\alpha}),\] where $\Map_{\mathcal{M}}^{\phi}(X,Y)$ denotes the
    subspace of $\Map_{\mathcal{M}}(X,Y)$ of morphisms that map to
    $\phi$ in $\simp^{\op}$.
  \end{enumerate}
\end{defn}

\begin{defn}
  If $\mathcal{M}$ and $\mathcal{N}$ are
  \gnsiopds{}, a \defterm{morphism of \gnsiopds{}} from
  $\mathcal{M}$ to $\mathcal{N}$ is a commutative
  diagram
  \opctriangle{\mathcal{M}}{\mathcal{N}}{\simp^{\op}}{\phi}{}{}
  such that $\phi$ carries coCartesian morphisms in
  $\mathcal{M}$ that map to inert morphisms in $\simp^{\op}$
  to coCartesian morphisms in $\mathcal{N}$. We will also
  refer to a morphism of \gnsiopds{} $\mathcal{M} \to
  \mathcal{N}$ as an \emph{$\mathcal{M}$-algebra}
  in $\mathcal{N}$.
\end{defn}

\begin{defn}
  A \defterm{double \icat{}} is a \gnsiopd{} $\mathcal{M}
  \to \simp^{\op}$ that is also a coCartesian fibration. 
\end{defn}

\begin{remark}\label{rmk:doubleicatsegcond}
  Again, as in Remark~\ref{rmk:monicatsegcond}, for a coCartesian fibration
  condition (iii) in the definition of a \gnsiopd{} is implied by
  condition (ii). Thus, under the equivalence between coCartesian
  fibrations to $\simp^{\op}$ and functors $\simp^{\op} \to \CatI$,
  double \icats{} correspond to simplicial \icats{}
  $\mathcal{C}_{\bullet}$ satisfying the ``Rezk-Segal
  condition'':
  \[ \mathcal{C}_{n} \to \mathcal{C}_{1} \times_{\mathcal{C}_{0}}
  \cdots \times_{\mathcal{C}_{0}} \mathcal{C}_{1}\] is an
  equivalence. In general simplicial objects in an \icat{}
  $\mathcal{X}$ with finite limits satisfying this condition can be
  thought of as \emph{internal categories} in $\mathcal{X}$ --- in
  particular, taking $\mathcal{X}$ to be the \icat{} of spaces these
  are precisely the \emph{Segal spaces} introduced by
  Rezk~\cite{RezkCSS} as a model for \icats{}. This justifies the
  term double \icat{}, since double categories are precisely internal
  categories in $\Cat$.
\end{remark}

We can now introduce a generalization of the virtual double categories
$\simp^{\op}_{S}$: If $S \in \mathcal{S}$ is a space, there is a
functor $\simp^{\op} \to \mathcal{S}$ that sends $[n]$ to $S^{\times
  n}$, face maps to projections to the corresponding factors, and
degeneracies to the corresponding diagonal maps; a more precise
definition will be given in \S\ref{subsec:DeltaOpX}. It is easy to see
that this simplicial space satisifes the Rezk-Segal condition, so if
we let $\simp^{\op}_{S} \to \simp^{\op}$ be a left fibration
corresponding to this functor then this is a double \icat{} by
Remark~\ref{rmk:doubleicatsegcond}. When $S$ is a set this obviously
agrees with the previous definition.

Using this we can state our improved definition of enriched \icats{}:
\begin{defn}
  Let $S \in \mathcal{S}$ be a space and let $\mathcal{V}$
  be a monoidal \icat{}. A \defterm{$\mathcal{V}$-enriched \icat{}}
  (or \emph{$\mathcal{V}$-\icat{}}) with space of objects $S$ is a
  $\simp^{\op}_{S}$-algebra in $\mathcal{V}$.
\end{defn}

\begin{ex}
  Any associative algebra object in $\mathcal{V}$ can be
  regarded as a $\mathcal{V}$-\icat{} with a contractible space of
  objects. In particular, the unit $I$ of the tensor product in
  $\mathcal{V}$ has a unique associative algebra structure (by
  Proposition~\ref{propn:UnitAlg}) so we can regard $I$
  as a $\mathcal{V}$-\icat{} with a single object whose endomorphisms
  are given by $I$.
\end{ex}

\begin{remark}
  We will define the \gnsiopds{} $\simp^{\op}_{S}$ more carefully
  below in \S\ref{subsec:DeltaOpX}. It will sometimes be useful, for example to
  distinguish our definition from other possible definitions of
  enriched \icats{}, to refer to a $\simp^{\op}_{S}$-algebra in
  $\mathcal{V}$ as a \defterm{categorical algebra} in
  $\mathcal{V}$ with space of objects $S$.
\end{remark}

\begin{remark}
  This definition clearly does not require $\mathcal{V}$ to
  be a monoidal \icat{} --- we can define \icats{} enriched in any
  \gnsiopd{} as $\simp^{\op}_{S}$-algebras. This gives an \icatl{}
  version of Leinster's notion of enrichment in an
  $\mathbf{fc}$-multicategory~\cite{LeinsterGenEnr}. However, as there
  are technical obstacles in the theory of \iopds{} to extending most
  of our results beyond the case of monoidal \icats{}, we will not
  consider this generalization here.
\end{remark}

\begin{defn}
  Suppose $\mathcal{V}$ is a monoidal \icat{}, and
  $\mathcal{C}$ and $\mathcal{D}$ are $\mathcal{V}$-\icats{} with
  spaces of objects $S$ and $T$, respectively. A
  \defterm{$\mathcal{V}$-functor} from $\mathcal{C}$ to $\mathcal{D}$
  consists of a morphism of spaces $f \colon S \to T$ and a natural
  transformation $\mathcal{C} \to f^{*}\mathcal{D}$, where
  $f^{*}\mathcal{D}$ denotes the composite of $\mathcal{D}$ with the
  morphism $\simp^{\op}_{f}\colon \simp^{\op}_{S} \to
  \simp^{\op}_{T}$ induced by $f$.
\end{defn}

If $\mathcal{M}$ and $\mathcal{N}$ are \gnsiopds{} we get an \icat{}
$\Alg_{\mathcal{M}}(\mathcal{N})$ of $\mathcal{M}$-algebras in
$\mathcal{N}$ by taking the full subcategory of the \icat{}
$\Fun_{\simp^{\op}}(\mathcal{M}, \mathcal{N})$ of functors over
$\simp^{\op}$ that is spanned by the morphisms of \gnsiopds{}. Just as
for \iopds{}, we will construct (in \S\ref{subsec:icatiopds}) an \icat{}
$\OpdInsg$ of \gnsiopds{}, and the \icats{}
$\Alg_{\mathcal{M}}(\mathcal{N})$ are functorial in $\mathcal{M}$
and $\mathcal{N}$. As before, we then get a Cartesian fibration
$\Alg(\mathcal{N}) \to \OpdInsg$ whose fibre at $\mathcal{M}$ is
$\Alg_{\mathcal{M}}(\mathcal{N})$. We can pull this back along the
functor $\mathcal{S} \to \OpdInsg$ that sends $S \in \mathcal{S}$ to
$\simp^{\op}_{S}$ to get an \icat{} $\AlgCat(\mathcal{N})$. If
$\mathcal{V}$ is a monoidal \icat{}, the objects of
$\AlgCat(\mathcal{V})$ are $\mathcal{V}$-\icats{} and the
morphisms are $\mathcal{V}$-functors. 

\begin{remark}
  We refer to the \icat{} $\AlgCat(\mathcal{V})$ (which we
  will construct more carefully below in \S\ref{subsec:catalg}) as the
  \emph{\icat{} of categorical algebras} in $\mathcal{V}$,
  reserving the name \emph{\icat{} of $\mathcal{V}$-\icats{}} for the
  localization of this at the fully faithful and essentially
  surjective functors.
\end{remark}

We will prove in \S\ref{subsec:FFES} that inverting the fully faithful
and essentially surjective functors in the \icat{}
$\AlgCat(\mathcal{V})$ as we have just defined it gives the same
\icat{} as inverting them in the version considered above where we
only allowed sets of objects. Now, however, we can find the localized
\icat{} as a full subcategory of $\AlgCat(\mathcal{V})$. The local
objects turn out to be the \emph{complete} $\mathcal{V}$-\icats{},
which are those whose space of objects is equivalent to their
classifying space of equivalences, in a sense we will make precise
below in \S\ref{subsec:equiv}. If we write $\CatIV$ for the full
subcategory of $\AlgCat(\mathcal{V})$ spanned by these complete
$\mathcal{V}$-\icats{}, the main result of this article is the
following:
\begin{thm}
  Let $\mathcal{V}$ be a monoidal \icat{}. The inclusion
  \[\CatIV \hookrightarrow \AlgCat(\mathcal{V})\] has a left
  adjoint, and this exhibits $\CatIV$ as the localization of
  $\AlgCat(\mathcal{V})$ with respect to the fully
  faithful and essentially surjective functors.
\end{thm}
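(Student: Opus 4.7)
The plan is to mirror Rezk's strategy for complete Segal spaces, now in the enriched setting. Since $\AlgCat(\mathcal{V})$ has been shown to be presentable in \S\ref{sec:algcat}, it suffices to construct an accessible reflective localization $L \colon \AlgCat(\mathcal{V}) \to \CatIV$ and then identify the class of morphisms inverted by $L$ as precisely the fully faithful and essentially surjective (FFES) functors. The formal existence of $L$ will follow from the adjoint functor theorem once complete $\mathcal{V}$-\icats{} are identified with the local objects for a small set of maps; the real content lies in verifying that the unit $\eta_{\mathcal{C}} \colon \mathcal{C} \to L\mathcal{C}$ is always FFES, and in relating FFES to $L$-equivalences.

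The key intermediate statement is the lemma that an FFES functor $f \colon \mathcal{C} \to \mathcal{D}$ between \emph{complete} $\mathcal{V}$-\icats{} is an equivalence in $\AlgCat(\mathcal{V})$. The argument is that fully faithfulness induces an equivalence on each mapping object, hence on the subspaces of equivalences between pairs of objects, and therefore on the classifying spaces of equivalences $\iota\mathcal{C} \isoto \iota\mathcal{D}$ once combined with essential surjectivity. By the completeness of both $\mathcal{C}$ and $\mathcal{D}$, the spaces of objects of $\mathcal{C}$ and $\mathcal{D}$ are canonically identified with $\iota\mathcal{C}$ and $\iota\mathcal{D}$, so $f$ induces an equivalence on spaces of objects, and combining this with fully faithfulness yields an equivalence of categorical algebras.

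Granted the key lemma and an FFES unit, the theorem follows formally. One first establishes the 2-out-of-3 property for FFES morphisms, which is immediate from 2-out-of-3 applied to mapping objects and to classifying spaces of equivalences. Then in the naturality square built from $f$, $L(f)$, $\eta_{\mathcal{C}}$ and $\eta_{\mathcal{D}}$: if $f$ is FFES, then since $\eta_{\mathcal{C}}, \eta_{\mathcal{D}}$ are FFES, 2-out-of-3 shows $L(f)$ is FFES, and the key lemma forces $L(f)$ to be an equivalence; conversely, if $L(f)$ is an equivalence, 2-out-of-3 in the same square shows $f$ is FFES. This is exactly the characterization of $\CatIV$ as the localization of $\AlgCat(\mathcal{V})$ at the FFES morphisms.

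The main obstacle is the construction of $L$ together with the verification that its unit is FFES. The natural approach is to exhibit a ``walking equivalence'' categorical algebra $E \in \AlgCat(\mathcal{V})$, analogous to Rezk's interval object, whose local objects (with respect to the map from two points to $E$) are precisely the complete $\mathcal{V}$-\icats{}; then $L$ arises from an accessible reflective localization for this single map and the unit is FFES essentially by construction. Executing this requires a careful treatment of the classifying space of equivalences introduced in \S\ref{subsec:equiv} and its interaction with the enriched structure --- in particular showing that the localization does not alter mapping objects (fully faithfulness of $\eta$) while adjoining the right formal inverses (essential surjectivity of $\eta$). This is the technical heart of the argument and is where most of the work will be concentrated.
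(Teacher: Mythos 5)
Your overall architecture matches the paper's: the key lemma that a fully faithful and essentially surjective functor between complete objects is an equivalence (Corollary~\ref{cor:ffescompliseq}), the 2-out-of-3 property for FFES functors (Proposition~\ref{propn:ffes2of3}), and the formal comparison of $f$ with $L(f)$ via the naturality square built from the units are exactly how the paper closes the argument in Corollary~\ref{cor:ffeseqloceq}. But there is a genuine gap at the step you defer. The claim that the unit of the accessible localization at $E^{1} \to E^{0}$ is FFES ``essentially by construction'' is not correct: the abstract Bousfield localization only guarantees that $\eta_{\mathcal{C}}$ is a \emph{local} equivalence, and nothing formal makes it fully faithful --- establishing that is the entire content of the completion theorem. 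The paper does not use the abstractly-constructed unit for this; it builds an explicit completion $\widehat{\mathcal{C}} = |\mathcal{C}^{E^{\bullet}}|$ (Theorem~\ref{thm:completion}), shows each $\mathcal{C}^{E^{m}} \to \mathcal{C}^{E^{n}}$ is a categorical equivalence (hence simultaneously FFES and a local equivalence), deduces that $\mathcal{C} \to \widehat{\mathcal{C}}$ is a local equivalence because local equivalences are closed under colimits, and then proves fully faithfulness of $\mathcal{C} \to \widehat{\mathcal{C}}$ by passing to the Segal-presheaf model of \S\ref{subsec:presheafalgcat} and running a descent argument in the $\infty$-topos $\mathcal{S}$; completeness of $\widehat{\mathcal{C}}$ is a further nontrivial argument using effectivity of groupoid objects. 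You correctly identify this as ``the technical heart'' but supply none of it, so the proposal does not actually establish that FFES functors coincide with local equivalences, which is what the theorem asserts.

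A second, smaller gap: your opening premise that $\AlgCat(\mathcal{V})$ is presentable holds only when $\mathcal{V}$ is \emph{presentably} monoidal, whereas the theorem is stated for an arbitrary monoidal \icat{}. The paper first proves the presentable case and then deduces the general one by embedding $\mathcal{V}$ monoidally into $\widehat{\mathcal{P}}(\mathcal{V})$ in a larger universe and checking that the completion functor there preserves the essential image of $\AlgCat(\mathcal{V})$ (the mapping objects of $\widehat{L}\mathcal{C}$ stay in $\mathcal{V}$ because the unit is fully faithful, and $\iota_{0}\widehat{L}\mathcal{C} \simeq \iota\mathcal{C}$ is small). Without some such device your argument does not cover the stated generality.
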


\subsection{Enriched Categories as Presheaves}
As discussed above, our main construction of the \icat{} $\CatIV$ of
\icats{} enriched in $\mathcal{V}$ will be as a localization of
$\AlgCatV$, an \icat{} of algebras for a family of (generalized)
\iopds{}. Although useful for many purposes --- for example, it is
easy to relate $\AlgCatV$ to model categories of strictly enriched
categories (cf. \cite{enrcomp}) --- when working with a
presentable \icat{} it can also often be useful to have a construction
of it as an explicit localization of an \icat{} of presheaves on a
small \icat{} of generators. In much the same way as $\CatI$ itself
embeds into $\mathcal{P}(\simp)$ as the full subcategory of complete
Segal spaces, one might imagine that $\CatIV$ embeds into presheaves
on a $\mathcal{V}$-enriched version of $\simp$ whose objects
classify ``composable strings of morphisms'' in a
$\mathcal{V}$-enriched $\infty$-category $\mathcal{C}$.

In fact, this $\mathcal{V}$-enriched version of $\simp$ nearly comes
to us for free from our monoidal \icat{} 
$p \colon \mathcal{V}^{\otimes} \to \simp^{\op}$. The functor $p$ is a
coCartesian fibration, and so arises as the unstraightening of a functor
$\simp^{\op} \to \CatI$ which satisfies the usual Segal condition.  But
we may also unstraighten $p$ to a Cartesian fibration
$q \colon \mathcal{V}^{\vee}_{\otimes}\to \simp$ --- this is our desired
$\mathcal{V}$-enriched version of $\simp$.  Roughly speaking, the
objects of $\mathcal{V}^{\vee}_{\otimes}$ are ordered tuples $(V_{1},\ldots,V_{n})$
of objects of $\mathcal{V}$, which we can interpret as the free
$\mathcal{V}$-enriched $\infty$-category on the $\mathcal{V}$-enriched
graph
\[
0\xto{V_{1}} 1 \xto{V_{2}} 2 \to \cdots \to n-1 \xto{V_{n}} n,
\]
which we denote $\Delta^{(V_1,\ldots,V_n)}$. The free $\mathcal{V}$-\icat{} on this graph has composition
determined by the monoidal structure on $\mathcal{V}$, so for example
the maps from $i-1$ to $j$ in $\Delta^{(V_1,\ldots,V_n)}$ are given by $V_{i} \otimes V_{i+1} \otimes
\cdots \otimes V_{j}$.

A $\mathcal{V}$-enriched $\infty$-category $\mathcal{C}$ then
determines a presheaf
\[
\Map_{\CatIV}(\blank,\mathcal{C}) \colon (\mathcal{V}^{\vee}_{\otimes})^{\op}\longrightarrow\mathcal{S}
\]
by sending $\Delta^{(V_0,\ldots,V_n)}$ to the space of $\mathcal{V}$-enriched functors from $\Delta^{(V_0,\ldots,V_n)}$ to $\mathcal{C}$.
This construction induces a functor
\[
\CatIV\longrightarrow\mathcal{P}(\mathcal{V}^{\vee}_{\otimes}).
\]
We will rigorously construct this in \ref{subsec:presheafalgcat} and
show that it is fully faithful, from which it follows almost
immediately that $\CatIV$ is an accessible localization of
$\mathcal{P}(\mathcal{V}^{\vee}_{\otimes})$.  Moreover, the essential
image of this embedding can be identified with the ``complete Segal
spaces'' in a sense entirely analgous to that of Rezk \cite{RezkCSS},
and the categorical algebras $\AlgCat(\mathcal{V})$ embed in
$\mathcal{P}(\mathcal{V}^{\vee}_{\otimes})$ as analogues of the Segal
spaces.  We use this ambient presheaf $\infty$-category in
\ref{subsec:completion} to prove a crucial technical result about the
``completion'' functor $\AlgCatV \to \CatIV$.

\section{Non-Symmetric $\infty$-Operads}\label{sec:NSOP}
In this section we give the definitions and results we need about
(generalized) \nsiopds{}. These are a special case of Barwick's
\iopds{} over an operator category \cite{BarwickOpCat}, and are also
studied by Lurie in \cite[\S 4.7.1]{HA} (though in a somewhat
\emph{ad hoc} manner). 

For the most part the theory of non-symmetric \iopds{} is completely
analogous to Lurie's theory of (symmetric) \iopds{} developed in
\cite{HA}, with the category $\bbGamma^{\op}$ of pointed finite sets
replaced by the category $\simp^{\op}$. In order to keep this
article to a reasonable length we only give references to the
corresponding results in \cite{HA} when the proofs are essentially the
same.

\subsection{Basic Definitions Revisited}
In this subsection we restate, in a slightly more technical form, the
basic definitions of (generalized) \nsiopds{} --- see
\S\ref{sec:fromto} for some motivation for these definitions. We begin
by describing a factorization system on the category $\simp^{\op}$.

\begin{defn}\label{defn:activeinert}
  Let $\simp$ be the usual simplicial indexing category. A morphism $f
  \colon [n] \to [m]$ in $\simp$ is \defterm{inert} if it is the
  inclusion of a sub-interval of $[m]$, i.e. $f(i) = f(0)+i$ for all
  $i$, and \defterm{active} if it preserves the extremal elements,
  i.e.  $f(0) = 0$ and $f(n) = m$. We say a morphism in $\simp^{\op}$
  is \emph{active} or \emph{inert} if it is so when considered as a
  morphism in $\simp$, and write $\simp^{\op}_{\txt{act}}$ and
  $\simp^{\op}_{\txt{int}}$ for the subcategories of $\simp^{\op}$
  with active and inert morphisms, respectively. We write $\rho_{i}
  \colon [n] \to [1]$ for the inert map in $\simp^{\op}$ corresponding
  to the inclusion $\{i-1,i\} \hookrightarrow [n]$.
\end{defn}

\begin{lemma}
  The active and inert morphisms form a factorization system on
  $\simp^{\op}$.
\end{lemma}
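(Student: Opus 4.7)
The plan is to verify directly the three axioms of an orthogonal factorization system: closure under composition (with identities included), existence of a factorization, and uniqueness (equivalently, the orthogonality lifting property). It is convenient to work in $\simp$ and then pass to $\simp^{\op}$ at the end, since a morphism is active or inert by definition when viewed in $\simp$.

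First I would observe that each class is closed under composition in $\simp$. If $f\colon [n]\to[k]$ and $g\colon[k]\to[m]$ both preserve extremal elements then clearly so does $g\circ f$, so actives are closed. If $f$ and $g$ are each inclusions of sub-intervals $f(i)=f(0)+i$ and $g(j)=g(0)+j$, then $(g\circ f)(i)=g(0)+f(0)+i$, so inerts are closed. Both classes contain the identities, which are in fact the only isomorphisms in $\simp$. This immediately transfers to $\simp^{\op}$.

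Next I would produce the factorization. Given any $f\colon[n]\to[m]$ in $\simp$, set $k:=f(n)-f(0)$ and define $\alpha\colon[n]\to[k]$ by $\alpha(i)=f(i)-f(0)$ and $\iota\colon[k]\to[m]$ by $\iota(j)=j+f(0)$. Then $\alpha$ is active (it sends $0$ to $0$ and $n$ to $k$), $\iota$ is inert (it is a sub-interval inclusion), and $f=\iota\circ\alpha$. Passing to $\simp^{\op}$, every morphism factors as an inert morphism followed by an active one.

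Finally I would check the orthogonality/uniqueness condition. The heart of the matter is the following rigidity lemma in $\simp$: if $f=\iota\circ\alpha=\iota'\circ\alpha'$ are two active–then–inert factorizations of the same map $[n]\to[m]$, then the intermediate object must be $[f(n)-f(0)]$ because an active map to it is surjective onto extrema, and the inert $\iota$ is pinned down by $\iota(0)=f(0)$; hence $\iota=\iota'$ and then $\alpha=\alpha'$. Using this, I would verify the lifting property: given a square in $\simp^{\op}$ with an inert on the left and an active on the right, translating to $\simp$ gives a commutative square with an inert on one side and an active on the other, and a diagonal filler is constructed by taking the canonical active–then–inert factorization of either composite around the square and using the rigidity above to extract the unique diagonal making both triangles commute. (This is the standard observation that a class with the unique–factorization property together with the closure properties above automatically gives a factorization system; cf.\ the argument for $\bbGamma^{\op}$ in \cite{HA}.)

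The steps are essentially bookkeeping, so no step is genuinely difficult. The place that requires a moment's care is the orthogonality lifting, where one must verify that the diagonal produced from the factorizations of the two legs actually makes both triangles commute; this reduces to the rigidity of the active–inert factorization, so the whole argument ultimately rests on the explicit formula for the factorization stated above.
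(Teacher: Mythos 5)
Your proof is correct. The paper does not give an argument at all: it simply cites \cite[Lemma 8.3]{BarwickOpCat} and remarks that the claim "is also easy to check by hand" --- your verification (closure under composition, the explicit factorization $f(i) = \iota(\alpha(i))$ with $\alpha(i)=f(i)-f(0)$, and rigidity of the factorization forcing orthogonality) is exactly that hand-check, so there is nothing to compare beyond noting that you have supplied the details the authors chose to omit.
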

\begin{proof}
  This is a special case of \cite[Lemma 8.3]{BarwickOpCat}; it is
  also easy to check by hand.
\end{proof}

\begin{defn}\label{defn:nsiopd2}
  A \defterm{non-symmetric $\infty$-operad} is an inner fibration $\pi
  \colon \mathcal{O} \to \simp^{\op}$ such that:
  \begin{enumerate}[(i)]
  \item For each inert map  $\phi \colon [n] \to [m]$ in $\simp^{\op}$ and
    every $X \in \mathcal{O}$ such that $\pi(X) = [n]$, there exists
    a $\pi$-coCartesian edge $X \to \phi_{!}X$ over $\phi$.
  \item For every $[n]$ in $\simp^{\op}$, the functor
    \[ \mathcal{O}_{[n]} \to \prod_{i = 1}^{n}
    \mathcal{O}_{[1]} \] induced by the inert maps $\rho_{i}
    \colon [n] \to [1]$ in $\simp^{\op}$ is an equivalence.
  \item Given $C \in \mathcal{O}_{[n]}$ and a coCartesian map
    $C \to C_{i}$ over each inert map
    $\rho_{i} \colon [n] \to [1]$, the object $C$ is a $\pi$-limit of
    the $C_{i}$'s.
  \end{enumerate}
\end{defn}

\begin{remark}
  It is immediate from the definition of relative limits in
  \cite[\S 4.3.1]{HTT} that Definition~\ref{defn:nsiopd2} is
  equivalent to Definition~\ref{defn:nsiopd1}: Recall that a diagram
  $\bar{p} \colon K^{\triangleleft} \to \mathcal{O}$ is a $\pi$-limit
  \IFF{} the natural map \[\lambda \colon \mathcal{O}_{/\bar{p}} \to \mathcal{O}_{/p}
  \times_{\simp^{\op}_{/\pi p}} \simp^{\op}_{/\pi\bar{p}}\]
  is a categorical equivalence, where $p := \bar{p}|_{K}$. But the
  projections $\mathcal{O}_{/\bar{p}} \to \mathcal{O}$ and $\mathcal{O}_{/p}
  \times_{\simp^{\op}_{/\pi p}} \simp^{\op}_{/\pi\bar{p}} \to
  \mathcal{O}$ are both right fibrations, so the map $\lambda$ is an
  equivalence \IFF{} the induced map on fibres over any $o \in
  \mathcal{O}$ is an equivalence. Since $K^{\triangleleft}$ has an
  initial object, we may identify $\mathcal{O}_{/\bar{p}}$ with
  $\mathcal{O}_{/x}$ where $x = \bar{p}(-\infty)$ and
  $\simp^{\op}_{/\pi\bar{p}}$ with $\simp^{\op}_{/[n]}$ where $[n] =
  \pi(x)$. If $[m] = \pi(o)$ then the induced map on fibres is therefore
  \[ \Map_{\mathcal{O}}(o, x) \to \Map_{\simp^{\op}}([m],[n])
  \times_{\lim_{k \in K} \Map_{\simp^{\op}}([m], \pi p(k))} \lim_{k
    \in K} \Map_{\mathcal{O}}(o, p(k)).\]
  This is an equivalence \IFF{} the commutative square
  \nolabelcsquare{\Map_{\mathcal{O}}(o, x)}{\lim_{k
      \in K} \Map_{\mathcal{O}}(o,
    p(k))}{\Map_{\simp^{\op}}([m],[n])}{\lim_{k \in K}
    \Map_{\simp^{\op}}([m], \pi p(k))}
  is Cartesian, i.e. \IFF{} for every map $\phi \colon [m] \to [n]$
  the map on fibres over $\phi$
  \[ \Map_{\mathcal{O}}^{\phi}(o, x) \to \lim_{k \in K}
  \Map_{\mathcal{O}}^{\bar{p}(\psi_{k})\circ \phi}(o, p(k))\] is an
  equivalence, where $\psi_{k}$ is the unique map $-\infty \to k$ in
  $K^{\triangleleft}$. Applying this to the coCartesian projections
  $c \to c_{i}$ for some $c \in \mathcal{O}_{[n]}$, we get that $c$ is a
  $\pi$-limit of the $c_{i}$'s \IFF{} for every $o \in
  \mathcal{O}_{[m]}$ and every map $\phi \colon [m] \to [n]$ in
  $\simp^{\op}$, the map
  \[ \Map_{\mathcal{O}}^{\phi}(o, c) \to \prod_{i = 1}^{n}
  \Map_{\mathcal{O}}^{\rho_{i}\phi}(o, c_{i})\] is an equivalence,
  which was the condition used in
  Definition~\ref{defn:nsiopd1}. Similarly,
  Definition~\ref{defn:gnsiopd} below is equivalent to
  Definition~\ref{defn:gnsiopd1}.
\end{remark}

\begin{remark}
  We will see below in \S\ref{subsec:symnonsym} that there is a
  natural map $c \colon \simp^{\op} \to \bbGamma^{\op}$ such that if
  $\mathcal{O} \to \bbGamma^{\op}$ is a (generalized) symmetric
  \iopd{}, in the sense of \cite{HA}, then the pullback
  $c^{*}\mathcal{O} \to \simp^{\op}$ along $c$ is a (generalized)
  \nsiopd{}. Moreover, if $\mathcal{O}$ is a symmetric monoidal
  \icat{} then $c^{*}\mathcal{O}$ is a monoidal \icat{}. We will
  occasionally refer to the pullback $c^{*}\mathcal{O}$ also as
  $\mathcal{O}$. For example, if $\mathcal{C}$ is an \icat{} with
  finite products we will denote the monoidal \icat{} pulled back from
  the Cartesian symmetric monoidal structure $\mathcal{C}^{\times} \to
  \bbGamma^{\op}$ by $\mathcal{C}^{\times}$ too.
\end{remark}

A useful way of constructing \nsiopds{} is taking the nerve of the
category of operators associated to a simplicial multicategory:
\begin{defn}\label{defn:simplmulticat}
  A \defterm{simplicial multicategory} $\mathbf{O}$ consists of a set
  $\ob \mathbf{O}$ of objects and simplicial sets
  $\mathbf{O}(X_{1}, \ldots, X_{n};, Y)$ of multimorphisms for all
  $X_{1}, \ldots, X_{n}, Y \in \ob \mathbf{O}$, together with
  composition maps
  \[ \mathbf{O}(X_{1}^{1}, \ldots, X_{n_{1}}^{1}; Y_{1}) \times
  \cdots \times \mathbf{O}(X_{1}^{k}, \ldots, X_{n_{k}}^{k}; Y_{k})
  \times \mathbf{O}(Y_{1}, \ldots, Y_{k}; Z) \to
  \mathbf{O}(X_{1}^{1}, \ldots, X^{k}_{n_{k}}; Z),\] satisfying the
  usual associativity law for multicategories. A simplicial
  multicategory $\mathbf{O}$ is \defterm{fibrant} if all the
  simplicial sets $\mathbf{O}((X_{1}, \ldots, X_{n}), Y)$ are Kan
  complexes.
\end{defn}

\begin{defn}
  Let $\mathbf{O}$ be a simplicial multicategory. Define
  $\mathbf{O}^{\otimes}$ to be the simplicial
  category with objects finite lists $(X_{1}, \ldots, X_{n})$ ($n = 0,
  1,\ldots$) of objects of $\mathbf{O}$ and morphisms given by
  \[ \mathbf{O}^{\otimes}((X_{1}, \ldots, X_{n}), (Y_{1}, \ldots,
  Y_{m})) = \coprod_{\phi \colon [m] \to [n]} \prod_{i = 1}^{m}
  \mathbf{O}(X_{\phi(i-1)+1}, \ldots, X_{\phi(i)}; Y_{i}),\] with
  composition defined using composition in $\mathbf{O}$. The
  simplicial category $\mathbf{O}^{\otimes}$ has an obvious
  projection to $\simp^{\op}$.
\end{defn}

\begin{lemma}\label{lem:multicatopd}
  Suppose $\mathbf{O}$ is a fibrant simplicial multicategory. Then
  the projection $\mathrm{N}\mathbf{O}^{\otimes} \to \simp^{\op}$ is
  a \nsiopd{}.
\end{lemma}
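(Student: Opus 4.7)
The plan is to verify the four conditions in Definition~\ref{defn:nsiopd2} (equivalently, Definition~\ref{defn:nsiopd1}) directly, using the explicit description of $\mathbf{O}^{\otimes}$ as a simplicial category.

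First, I would check that $\mathrm{N}\mathbf{O}^{\otimes} \to \simp^{\op}$ is an inner fibration. Since $\simp^{\op}$ is viewed as a constant (discrete) simplicial category, every component of the map of simplicial mapping spaces $\mathbf{O}^{\otimes}((X_1,\ldots,X_n),(Y_1,\ldots,Y_m)) \to \Hom_{\simp^{\op}}([n],[m])$ is a map from a Kan complex (a product of $\mathbf{O}(-;-)$, which are Kan by fibrancy of $\mathbf{O}$) to a discrete set, hence a Kan fibration. By the standard criterion (see \cite[A.3.2]{HTT}) this implies the coherent nerve is a categorical, and therefore inner, fibration.

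Second, for condition (i) of Definition~\ref{defn:nsiopd2}, given an inert map $\phi \colon [n] \to [m]$ in $\simp^{\op}$ corresponding to a sub-interval inclusion, and $(X_1,\ldots,X_n) \in \mathbf{O}^{\otimes}_{[n]}$, the candidate lift is the morphism to $(X_{\phi(1)},\ldots,X_{\phi(m)})$ whose component in the $\phi$-summand is the tuple of identity multimorphisms $(\id_{X_{\phi(i)}})_{i=1}^{m}$ (noting that inertness forces each source list to be a singleton). To see it is $\pi$-coCartesian, I would compute both mapping spaces involved in the coCartesian criterion: for any $(Z_1,\ldots,Z_k)$ and any $\psi \colon [k] \to [m]$ in $\simp^{\op}$, postcomposition with identities is literally the identity bijection between two products of the form $\prod_i \mathbf{O}(X_{\psi(i-1)+1},\ldots,X_{\psi(i)}; Z_i)$. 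This gives an equivalence of fibers over each component, hence on total mapping spaces.

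Third, for the Segal condition (ii), I observe that as a simplicial category $\mathbf{O}^{\otimes}_{[n]}$ (the fiber over $[n]$, consisting of morphisms lying over $\id_{[n]}$) decomposes strictly as $\prod_{i=1}^{n} \mathbf{O}^{\otimes}_{[1]}$: an object is an $n$-tuple of objects of $\mathbf{O}$ and the mapping space is $\prod_{i=1}^{n} \mathbf{O}(X_i; Y_i)$. Since the coherent nerve commutes with finite products of fibrant simplicial categories, taking $\mathrm{N}$ gives the required equivalence of $\infty$-categories, and one checks that it is induced by the coCartesian lifts over the $\rho_i$ constructed in the previous step.

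Fourth, for condition (iii) — equivalently the mapping-space condition of Definition~\ref{defn:nsiopd1} — one uses that for any $\phi \colon [n] \to [m]$ in $\simp^{\op}$,
\[ \Map^{\phi}_{\mathrm{N}\mathbf{O}^{\otimes}}\bigl((X_1,\ldots,X_n),(Y_1,\ldots,Y_m)\bigr) \simeq \prod_{i=1}^{m} \mathbf{O}\bigl(X_{\phi(i-1)+1},\ldots,X_{\phi(i)};\,Y_i\bigr), \]
by the explicit definition of $\mathbf{O}^{\otimes}$ together with the fact that the coherent nerve realizes the simplicial mapping spaces as mapping spaces in the $\infty$-category. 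Postcomposition with the coCartesian lifts over the $\rho_i$ (given by identities, as above) is then manifestly the identity of this product, so the map to $\prod_i \Map^{\rho_i \circ \phi}_{\mathrm{N}\mathbf{O}^{\otimes}}(X,Y_i)$ is an equivalence.

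I do not expect a serious obstacle: all four verifications are essentially bookkeeping once one has the explicit description of the mapping spaces in $\mathbf{O}^{\otimes}$. The only subtlety worth writing out carefully is the identification of mapping spaces in $\mathrm{N}\mathbf{O}^{\otimes}$ with those in the simplicial category (to justify the formula used in step four), and the computation showing that the ``identity'' lifts of step two really are coCartesian in the $\infty$-categorical sense.
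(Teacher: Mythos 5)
Your proof is correct, and it takes essentially the same route as the paper: the paper simply cites \cite[Proposition 2.1.1.27]{HA}, whose argument is exactly this direct verification (inner fibration via Kan fibrations on mapping spaces, identity multimorphisms as the coCartesian lifts over inert maps, and the strict product decomposition of fibers and of $\Hom^{\phi}$), transported from $\bbGamma^{\op}$ to $\simp^{\op}$. The two subtleties you flag at the end --- identifying mapping spaces in the coherent nerve with the simplicial mapping spaces, and verifying coCartesianness via \cite[Proposition 2.4.1.10]{HTT} --- are indeed the only points requiring care, and your fiberwise reduction over the discrete morphism sets of $\simp^{\op}$ handles them correctly.
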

\begin{proof}
  As \cite[Proposition 2.1.1.27]{HA}.
\end{proof}

\begin{remark}
  A non-symmetric variant (using planar trees) of the work of Cisinski
  and Moerdijk~\cite{CisinskiMoerdijkSimplOpd} should give a model
  category structure on simplicial multicategories whose fibrant
  objects are the fibrant simplicial multicategories. The resulting
  homotopy theory of simplicial multicategories is (partially) known
  to be equivalent to that of \iopds{}, at least in the symmetric
  case, but currently the only known relation is via the homotopy
  theory of dendroidal sets: Cisinski and
  Moerdijk~\cite{CisinskiMoerdijkSimplOpd} construct a Quillen
  equivalence between simplicial symmetric multicategories and
  dendroidal sets, and Heuts, Hinich, and
  Moerdijk~\cite{HeutsHinichMoerdijkDendrComp} construct a zig-zag of
  Quillen equivalences between dendroidal sets and symmetric \iopds{}
  (but unfortunately their comparison is currently restricted to the
  special case of \iopds{} without nullary operations). No doubt a
  version of dendroidal sets defined using planar trees would lead to
  a similar comparison between simplicial multicategories and
  \nsiopds{}.
\end{remark}


\begin{defn}
  A \defterm{monoidal \icat{}} is a non-symmetric \iopd{}
  $\mathcal{V}^{\otimes}\to \simp^{\op}$ that is also a
  coCartesian fibration.
\end{defn}

\begin{remark}
  We will see below in \S\ref{subsec:symnonsym} that this is
  equivalent to Lurie's definition of monoidal \icats{} in \cite{HA}.
\end{remark}

\begin{ex}
  Suppose $\mathcal{V}^{\otimes}$ is a monoidal \icat{}. Then $d_{1}
  \colon [2] \to [1]$ induces a functor $d_{1,!} \colon \mathcal{V}
  \times \mathcal{V} \simeq \mathcal{V}^{\otimes}_{[2]} \to
  \mathcal{V}$ --- a tensor product on $\mathcal{V}$. Similarly $s_{0}
  \colon [0] \to [1]$ gives a functor $s_{0,!} \colon * \simeq
  \mathcal{V}^{\otimes}_{[0]} \to \mathcal{V}$ which picks out a unit
  object $I_{\mathcal{V}} := s_{0,!}*$ in $\mathcal{V}$.
\end{ex}

\begin{defn}\label{defn:gnsiopd}
  A \defterm{generalized non-symmetric $\infty$-operad}
  is an inner fibration $\pi \colon \mathcal{M} \to
  \simp^{\op}$ such that:
  \begin{enumerate}[(i)]
  \item For each inert map  $\phi \colon [n] \to [m]$ in $\simp^{\op}$ and
    every $X \in \mathcal{M}$ such that $\pi(X) = [n]$, there exists
    a $\pi$-coCartesian edge $X \to \phi_{!}X$ over $\phi$.
  \item For every $[n]$ in $\simp^{\op}$, the map
    \[ \mathcal{M}_{[n]} \to \mathcal{M}_{[1]}
    \times_{\mathcal{M}_{[0]}} \cdots \times_{\mathcal{M}_{[0]}} \mathcal{M}_{[1]}
    \]
    induced by the inert maps $[n] \to [1],[0]$ is an equivalence.
  \item Given $C \in \mathcal{M}_{[n]}$ and a coCartesian map $C \to
    C_{\alpha}$ over each inert map $\alpha$ in
    $\mathcal{G}^{\simp}_{[n]/}$ (i.e. each inert map from $[n]$ to
    $[1]$ and $[0]$), the object $C$ is a $\pi$-limit of the
    $C_{\alpha}$'s.
  \end{enumerate}
\end{defn}


\begin{defn}\label{defn:doubleicat}
  A \defterm{double \icat{}} is a generalized non-symmetric \iopd{}
  that is also a coCartesian fibration.
\end{defn}

\begin{defn}
  Let $\pi \colon \mathcal{M} \to \simp^{\op}$ be a
  (generalized) non-symmetric \iopd{}. We say that
  a morphism $f$ in $\mathcal{M}$ is \defterm{inert} if it
  is coCartesian and $\pi(f)$ is an inert morphism in
  $\simp^{\op}$. We say that $f$ is \defterm{active} if
  $\pi(f)$ is an active morphism in $\simp^{\op}$.
\end{defn}

\begin{lemma}
  The active and inert morphisms form a factorization system on any
  \gnsiopd{}.
\end{lemma}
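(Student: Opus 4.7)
The plan is to lift the factorization system on $\simp^{\op}$ from the preceding lemma along $\pi \colon \mathcal{M} \to \simp^{\op}$, exploiting that inert morphisms in $\simp^{\op}$ admit $\pi$-coCartesian lifts by Definition~\ref{defn:gnsiopd}(i). I would organize the proof into three steps: closure, factorization, and orthogonality.

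\emph{Closure.} Equivalences in $\mathcal{M}$ project to equivalences in $\simp^{\op}$ (which are both inert and active) and are trivially coCartesian, so they belong to both classes. Since composites of $\pi$-coCartesian morphisms are $\pi$-coCartesian, and both inert and active morphisms in $\simp^{\op}$ are closed under composition by the preceding lemma, so are the inert and active morphisms in $\mathcal{M}$.

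\emph{Factorization.} Given $f \colon X \to Y$, factor $\pi(f) = \alpha \circ \iota$ with $\iota \colon \pi(X) \to [k]$ inert and $\alpha \colon [k] \to \pi(Y)$ active. By Definition~\ref{defn:gnsiopd}(i) there exists an inert lift $\tilde{\iota} \colon X \to X'$ of $\iota$. The universal property of the $\pi$-coCartesian edge $\tilde{\iota}$ then produces an essentially unique morphism $\tilde{\alpha} \colon X' \to Y$ over $\alpha$ together with an equivalence $f \simeq \tilde{\alpha} \circ \tilde{\iota}$; since $\pi(\tilde{\alpha}) = \alpha$ is active, so is $\tilde{\alpha}$.

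\emph{Orthogonality.} For inert $\tilde{\iota} \colon A \to B$ and active $\tilde{\alpha} \colon C \to D$ in $\mathcal{M}$, I must show that the comparison map
\[
\Map_{\mathcal{M}}(B, C) \longrightarrow \Map_{\mathcal{M}}(A, C) \times_{\Map_{\mathcal{M}}(A, D)} \Map_{\mathcal{M}}(B, D)
\]
is an equivalence. Projecting to the analogous square of mapping spaces in $\simp^{\op}$, which is a pullback by orthogonality in the base, it suffices to check the claim fiberwise over each pair of compatible maps in $\simp^{\op}$. But because $\tilde{\iota}$ is $\pi$-coCartesian, for any object $Z$ and any morphism $\eta \colon \pi(B) \to \pi(Z)$ the induced map
\[
\Map^{\eta}_{\mathcal{M}}(B, Z) \longrightarrow \Map^{\eta \circ \pi(\tilde{\iota})}_{\mathcal{M}}(A, Z)
\]
is an equivalence. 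Applied with $Z = C$ and $Z = D$, this makes both vertical maps on fibers equivalences, so the fiber square is trivially Cartesian.

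The main obstacle is really just bookkeeping: carefully identifying the fibers of the mapping-space square over the corresponding base square and then invoking the coCartesian mapping-space property one fiber at a time. Once that is set up, orthogonality is a formal consequence of the coCartesianness of inert lifts together with the factorization system on $\simp^{\op}$, and the existence of factorizations reduces to the same coCartesian lifting property combined with the factorization on the base.
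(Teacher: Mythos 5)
Your argument is correct. The paper itself gives no argument here: it simply cites \cite[Proposition 2.1.2.5]{HA}, which in turn rests on a general lemma about lifting a factorization system $(S_L,S_R)$ on the base of an inner fibration to the total $\infty$-category, provided morphisms in $S_L$ admit coCartesian lifts. What you have written is precisely a self-contained unwinding of that lemma in the case at hand: the factorization step uses the universal property of the coCartesian lift of the inert part of $\pi(f)$, and the orthogonality step reduces, fiberwise over the (already Cartesian) mapping-space square in $\simp^{\op}$, to the observation that a square whose two vertical maps are equivalences is a pullback --- those vertical maps being equivalences is exactly the mapping-space characterization of $\pi$-coCartesian edges. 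So your proof buys self-containedness at the cost of redoing Lurie's bookkeeping, while the paper's citation buys brevity. One small definitional point: the notion of factorization system in \cite[\S 5.2.8]{HTT} also requires both classes to be stable under retracts, which you do not address explicitly; for the active morphisms this is immediate (they are the preimage of a retract-closed class in $\simp^{\op}$), and for the inert morphisms it follows formally once orthogonality and factorization are established, since the left class is then recovered as the class of morphisms left orthogonal to all active morphisms. This is a routine addendum rather than a gap.
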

\begin{proof}
  This is a special case of \cite[Proposition 2.1.2.5]{HA}.
\end{proof}

\begin{defn}
  A morphism of (generalized) \nsiopds{} is a commutative diagram
  \opctriangle{\mathcal{M}}{\mathcal{N}}{\simp^{\op}}{\phi}{}{} such
  that $\phi$ carries inert morphisms in $\mathcal{M}$ to inert
  morphisms in $\mathcal{N}$. We will also refer to a morphism of
  (generalized) \nsiopds{} $\mathcal{M} \to \mathcal{N}$ as an
  \emph{$\mathcal{M}$-algebra} in $\mathcal{N}$; we write
  $\Alg_{\mathcal{M}}(\mathcal{N})$ for the full subcategory of the
  \icat{} $\Fun_{\simp^{\op}}(\mathcal{M},\mathcal{N})$ of functors
  over $\simp^{\op}$ spanned by the morphisms of (generalized)
  \nsiopds{}.
\end{defn}

\begin{propn}\label{propn:UnitAlg}
  Suppose $\mathcal{V}$ is a monoidal
  \icat{}. Then $\Alg_{\simp^{\op}}(\mathcal{V})$ has an
  initial object $I_{\mathcal{V}} \colon \simp^{\op} \to
  \mathcal{V}^{\otimes}$, which is the unique associative algebra
  structure on the unit object $I_{\mathcal{V}}$ of $\mathcal{V}$.
\end{propn}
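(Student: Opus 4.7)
The plan is to exhibit $I_{\mathcal{V}}$ concretely as a coCartesian section of the monoidal structure $\mathcal{V}^{\otimes}\to\simp^{\op}$ starting from the essentially unique object of $\mathcal{V}^{\otimes}_{[0]}$, and then verify its initiality by combining the universal property of coCartesian morphisms with the fact that $[0]$ is initial in $\simp^{\op}$ (since $[0]$ is terminal in $\simp$, and every map $[n]\to[0]$ in $\simp$ is unique).

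First I would observe that the Segal condition (condition (ii) of Definition~\ref{defn:nsiopd2}) applied to $n=0$ identifies $\mathcal{V}^{\otimes}_{[0]}$ with the empty product of copies of $\mathcal{V}$, so $\mathcal{V}^{\otimes}_{[0]}\simeq *$. Fix the essentially unique object $\ast\in\mathcal{V}^{\otimes}_{[0]}$. Since $[0]$ is initial in $\simp^{\op}$ and $\mathcal{V}^{\otimes}\to\simp^{\op}$ is a coCartesian fibration, a standard argument (the non-symmetric analogue of \cite[Corollary 3.2.2.5]{HA}, ultimately a consequence of \cite[Proposition 2.4.1.5]{HTT}) produces a coCartesian section $I_{\mathcal{V}}\colon\simp^{\op}\to\mathcal{V}^{\otimes}$ with $I_{\mathcal{V}}([0])=\ast$, sending each $[n]$ to the coCartesian pushforward of $\ast$ along the unique morphism $[0]\to[n]$ in $\simp^{\op}$. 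Every morphism in the image of $I_{\mathcal{V}}$ is coCartesian, so inert morphisms in $\simp^{\op}$ are carried to coCartesian lifts over inert maps, i.e.\ to inert morphisms of $\mathcal{V}^{\otimes}$; thus $I_{\mathcal{V}}$ is a morphism of \nsiopds{} and is an associative algebra. Its underlying object in $\mathcal{V}\simeq\mathcal{V}^{\otimes}_{[1]}$ is by construction the unit $I_{\mathcal{V}}$ defined via the coCartesian lift of $[0]\to[1]$.

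For initiality, given any other algebra $A\colon\simp^{\op}\to\mathcal{V}^{\otimes}$, I would compute $\Map_{\Alg_{\simp^{\op}}(\mathcal{V})}(I_{\mathcal{V}},A)$ as the space of natural transformations $I_{\mathcal{V}}\Rightarrow A$ over $\simp^{\op}$. Restriction to $[0]$ lands in $\Map_{\mathcal{V}^{\otimes}_{[0]}}(\ast,A([0]))$, which is contractible since $\mathcal{V}^{\otimes}_{[0]}\simeq\ast$. It then suffices to show that the fibre of this restriction over the essentially unique component at $[0]$ is contractible; this follows from the universal property of the coCartesian morphism $\ast\to I_{\mathcal{V}}([n])$ in $\mathcal{V}^{\otimes}$, which identifies a lift $I_{\mathcal{V}}([n])\to A([n])$ over $\id_{[n]}$ with a morphism $\ast\to A([n])$ over $[0]\to[n]$ — and the latter is uniquely determined, namely as the value of $A$ on the unique morphism $[0]\to[n]$ in $\simp^{\op}$. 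Thus componentwise the transformation is essentially unique.

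The main obstacle is not the pointwise calculation but assembling these into a coherent natural transformation, i.e.\ verifying compatibility on higher simplices of the mapping space. The cleanest formal approach is to identify $I_{\mathcal{V}}$ with the operadic $p$-left Kan extension of the unique functor $\{[0]\}\to\mathcal{V}^{\otimes}$ along the inclusion $\{[0]\}\hookrightarrow\simp^{\op}$ (the non-symmetric analogue of \cite[Proposition 3.1.3.3]{HA}). The required coherence then becomes the universal property of $p$-left Kan extensions, which reduces to the observation that for each $[n]$ the overcategory $\{[0]\}\times_{\simp^{\op}}(\simp^{\op})_{/[n]}$ is contractible (as $[0]$ is initial), so the operadic colimit defining the extension reduces to taking the coCartesian pushforward of $\ast$. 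This also immediately gives the uniqueness clause of the statement, since the forgetful functor $\Alg_{\simp^{\op}}(\mathcal{V})\to\mathcal{V}$ evaluated at the initial algebra produces the unit object $I_{\mathcal{V}}$, and any other algebra structure on $I_{\mathcal{V}}$ would produce a map out of the initial algebra that is an equivalence on underlying objects, hence (by the Segal condition levelwise) an equivalence of algebras.
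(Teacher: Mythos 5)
The paper itself offers no argument here---its proof is the single line ``As [HA, Corollary 3.2.1.9]''---and your proposal is essentially a correct non-symmetric transcription of the proof of that cited result: construct $I_{\mathcal{V}}$ as the relative left Kan extension of the unique object of $\mathcal{V}^{\otimes}_{[0]}\simeq *$ along $\{[0]\}\hookrightarrow\simp^{\op}$, observe that the relevant comma categories are points so the extension is computed by coCartesian pushforward, and read off initiality from the adjunction (restriction to $[0]$) $\dashv$ (relative Kan extension), since left adjoints preserve initial objects and the fibre over $[0]$ is contractible. You even get a small simplification over the symmetric case: $[0]$ is genuinely initial in $\simp^{\op}$ (not merely a zero object, as $\langle 0\rangle$ is in $\bbGamma^{\op}$), so ordinary relative Kan extensions along a coCartesian fibration suffice and the full operadic-colimit machinery is not needed. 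Your identification of the resulting section as $[n]\mapsto(I,\ldots,I)$ and the check that it preserves all coCartesian edges (hence is an algebra) are both fine.

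The one soft spot is the final uniqueness clause. You assert that any algebra structure $A$ on the unit object ``would produce a map out of the initial algebra that is an equivalence on underlying objects, hence (by the Segal condition levelwise) an equivalence of algebras.'' The second implication is fine (a map of algebras over $\simp^{\op}$ that is an equivalence on $[1]$-components is an equivalence, by the Segal condition and Lemma~\ref{lem:AlgCons}-type reasoning), but the first is exactly the point that needs proof: the map $I_{\mathcal{V}}\to A$ has underlying map the unit $\eta\colon I\to A([1])$, and the fact that $A([1])$ is abstractly equivalent to $I$ does not by itself make $\eta$ an equivalence. What does make it an equivalence is unitality: the composite $A([1])\simeq I\otimes A([1])\xrightarrow{\eta\otimes\id}A([1])\otimes A([1])\xrightarrow{\mu}A([1])$ is homotopic to the identity, and since $A([1])\simeq I$ the map $\eta\otimes\id_{A([1])}$ is conjugate to $\eta$ itself, so $\eta$ admits a one-sided inverse; the other unit axiom supplies the other side. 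With that sentence added the uniqueness clause is complete; this is precisely the ``moreover'' part of [HA, Corollary 3.2.1.9] that the paper is implicitly invoking.
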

\begin{proof}
  As \cite[Corollary 3.2.1.9]{HA}.
\end{proof}

\begin{defn}
  A map of (generalized) \nsiopds{} is a \defterm{fibration of
    (generalized) \nsiopds{}} if it is also a categorical fibration
  and a \defterm{coCartesian fibration of (generalized) \nsiopds{}} if
  it is also a coCartesian fibration.
\end{defn}

\begin{defn}
  We will also refer to a map of \nsiopds{} between
  monoidal \icats{} as a \defterm{lax monoidal functor}. A
  \defterm{monoidal functor} is a lax monoidal functor that
  preserves \emph{all} coCartesian arrows. If $\mathcal{V}$
  and $\mathcal{W}$ are monoidal \icats{}, we denote the full
  subcategory of $\Fun_{\simp^{\op}}(\mathcal{V}^{\otimes},
  \mathcal{W}^{\otimes})$ spanned by the monoidal functors by
  $\Fun^{\otimes}(\mathcal{V}^{\otimes}, \mathcal{W}^{\otimes})$. We
  also use the same notation for the analogous \icat{} of functors
  between double \icats{} that preserve all coCartesian morphisms.
\end{defn}

It will be useful to know that monoidal \icats{} are well-behaved with
respect to certain localizations:
\begin{defn}
  Let $\mathcal{V}$ be a monoidal \icat{} and suppose $\mathcal{W}$ is
  a full subcategory of $\mathcal{V}$ such that the inclusion $i
  \colon \mathcal{W} \hookrightarrow \mathcal{V}$ has a left adjoint
  $L \colon \mathcal{V} \to \mathcal{W}$. We say that the localization
  $L$ is \emph{monoidal} if the tensor product of two $L$-equivalences is
  again an $L$-equivalence.
\end{defn}

\begin{propn}\label{propn:moncomploc}
  Let $\mathcal{V}$ be a monoidal \icat{} and suppose $L
  \colon \mathcal{V} \to \mathcal{W}$ is a monoidal localization with
  fully faithful right adjoint $i \colon \mathcal{W} \hookrightarrow
  \mathcal{V}$. Write $\mathcal{W}^{\otimes}$ for the full subcategory
  of objects $X$ of $\mathcal{V}^{\otimes}$ such that $\rho_{i,!}X \in
  \mathcal{W}$ for $i = 1,\ldots,n$ (if $X \in \mathcal{V}^{\otimes}_{[n]}$). Then
  \begin{enumerate}[(i)]
  \item The inclusion $i^{\otimes} \colon \mathcal{W}^{\otimes} \hookrightarrow
    \mathcal{V}^{\otimes}$ has a left adjoint $L^{\otimes} \colon
    \mathcal{V}^{\otimes} \to \mathcal{W}^{\otimes}$ over $\simp^{\op}$.
  \item The projection $\mathcal{W}^{\otimes} \to \simp^{\op}$
    exhibits $\mathcal{W}^{\otimes}$ as a monoidal \icat{}.
  \item The inclusion $i^{\otimes}$ is a lax monoidal functor and 
    $L^{\otimes}$ is a monoidal functor.
  \end{enumerate}
\end{propn}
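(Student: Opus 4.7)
The plan is to construct $L^{\otimes}$ as a relative left adjoint to $i^{\otimes}$ over $\simp^{\op}$ using a standard fiberwise criterion, and then deduce (ii) and (iii) as consequences. The key input is that a \emph{monoidal} localization extends fiberwise to a localization of $\mathcal{V}^{\otimes}$.

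First, I would identify the fiberwise situation. By the Segal condition, the equivalences $\mathcal{V}^{\otimes}_{[n]} \simeq \mathcal{V}^{\times n}$ and (by definition) $\mathcal{W}^{\otimes}_{[n]} \simeq \mathcal{W}^{\times n}$ identify the inclusion $i^{\otimes}_{[n]}$ with $i^{\times n}$, which has left adjoint $L^{\times n}$. To promote these into a single left adjoint $L^{\otimes}$ over $\simp^{\op}$, I would invoke the criterion (as in \cite[Proposition 7.3.2.6]{HA}): it suffices to show that for every morphism $\phi \colon [n] \to [m]$ in $\simp^{\op}$ and every $V \in \mathcal{V}^{\otimes}_{[n]}$, the pushforward $\phi_{!}$ carries the unit $V \to i^{\otimes}_{[n]}L^{\otimes}_{[n]}V$ to an $L^{\otimes}_{[m]}$-equivalence.

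Next I would verify this criterion using the active/inert factorization. For an inert $\phi$, the functor $\phi_{!}$ is (up to equivalence) a product of coordinate projections under the Segal identifications, hence trivially preserves pointwise $L$-equivalences. For an active $\phi \colon [n] \to [m]$ in $\simp^{\op}$, the Segal equivalences identify $\phi_{!}$ with the functor $\mathcal{V}^{\times n} \to \mathcal{V}^{\times m}$ whose $j$th factor ($1 \leq j \leq m$) tensors together a consecutive block of coordinates (for $m=0$ or for empty blocks, it produces the unit $I_{\mathcal{V}}$, which is trivially local). The hypothesis that $L$ is a monoidal localization means exactly that the binary tensor product preserves $L$-equivalences in each variable; by induction this extends to $k$-fold tensor products for every $k \geq 0$. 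Hence each factor of $\phi_{!}$ preserves $L$-equivalences, verifying the criterion. This proves (i).

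For (ii), once $L^{\otimes}$ is constructed as a relative left adjoint, $\mathcal{W}^{\otimes} \to \simp^{\op}$ acquires all required coCartesian lifts: given $W \in \mathcal{W}^{\otimes}_{[n]}$ and $\phi \colon [n]\to [m]$, the composite $W \to i^{\otimes}W \to \phi_{!}i^{\otimes}W \to L^{\otimes}\phi_{!}i^{\otimes}W$ provides a coCartesian lift of $\phi$ (the relative adjunction shows this morphism has the required universal property). Combined with the Segal identification $\mathcal{W}^{\otimes}_{[n]} \simeq \mathcal{W}^{\times n}$ built into the definition of $\mathcal{W}^{\otimes}$, this realises $\mathcal{W}^{\otimes}$ as a monoidal \icat{}. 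For (iii), $i^{\otimes}$ is a functor over $\simp^{\op}$, and by the previous step inert coCartesian morphisms in $\mathcal{V}^{\otimes}$ are built from Segal projections, which land inside $\mathcal{W}^{\otimes}$ and are again coCartesian there; hence $i^{\otimes}$ is a morphism of \nsiopds{}, i.e. a lax monoidal functor. Finally, the coCartesian lifts in $\mathcal{W}^{\otimes}$ constructed above are by definition the images under $L^{\otimes}$ of coCartesian lifts in $\mathcal{V}^{\otimes}$, so $L^{\otimes}$ preserves \emph{all} coCartesian morphisms and is therefore monoidal.

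The main obstacle is the verification of the relative adjoint criterion; once phrased as "$\phi_{!}$ preserves $L$-equivalences fiberwise", this reduces cleanly via the active/inert factorization and the Segal decomposition to the monoidality hypothesis on $L$. Everything after that is a formal consequence of the relative adjunction together with the definition of $\mathcal{W}^{\otimes}$.
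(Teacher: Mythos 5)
Your argument is correct and is essentially the proof the paper intends: the paper simply cites \cite[Proposition 2.2.1.9]{HA}, whose proof proceeds exactly as you do — fiberwise localizations assembled into a relative left adjoint over the base via the criterion of \cite[Proposition 7.3.2.6]{HA}, with the key verification (that $\phi_{!}$ preserves $L$-equivalences) reduced through the active/inert factorization to the hypothesis that the localization is monoidal. The only difference is cosmetic: you work over $\simp^{\op}$ rather than $\bbGamma^{\op}$, which is precisely the translation the paper's one-line proof is asserting.
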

\begin{proof}
  As \cite[Proposition 2.2.1.9]{HA}.
\end{proof}

\begin{defn}
  Suppose $\mathcal{V}$ is a
  monoidal \icat{}. If $K$ is a simplicial set, we say that
  $\mathcal{V}$ is \defterm{compatible with $K$-indexed
    colimits} if
  \begin{enumerate}[(1)]
  \item the \icat{} $\mathcal{V}$ has $K$-indexed
    colimits (hence so does $\mathcal{V}^{\otimes}_{[n]} \simeq \prod
    \mathcal{V}$ and $\phi_{!}$ preserves them for any
    inert map $\phi$),
  \item for all (active) maps $\phi \colon [n] \to [m]$ in
    $\simp^{\op}$, the map
     \[ \phi_{!} \colon \prod \mathcal{V} \simeq
     \mathcal{V}^{\otimes}_{[n]} \to \mathcal{V}^{\otimes}_{[m]}\]
     preserves $K$-indexed colimits separately in each variable.
   \end{enumerate}
 \end{defn}

Recall that the \icat{} $\PresI$ of presentable \icats{} and
colimit-preserving functors has a symmetric monoidal structure,
constructed by Lurie in \cite[\S 4.8.1]{HA}. The tensor product has
the universal property that a colimit-preserving functor $\mathcal{C}
\otimes \mathcal{D} \to \mathcal{E}$ corresponds to a functor
$\mathcal{C} \times \mathcal{D} \to \mathcal{E}$ that preserves
colimits separately in each variable. The unit for the tensor product
is thus the \icat{} $\mathcal{S}$ of spaces.
\begin{defn}\label{defn:MonPr}
  Let $\MonPr$ be the \icat{} $\Alg_{\simp^{\op}}(\PresI)$ of
  associative algebra objects in $\PresI$ equipped with the tensor
  product of presentable \icats{}. Thus $\MonPr$ is the \icat{} of
  monoidal \icats{} $\mathcal{C}^{\otimes}$ compatible with small
  colimits such that $\mathcal{C}$ is presentable, with 1-morphisms
  monoidal functors that preserve colimits. We will refer to the
  objects of $\MonPr$ as \defterm{presentably monoidal \icats{}}.
\end{defn}
\begin{remark}\label{rmk:MonPrInitial}
  By Proposition~\ref{propn:UnitAlg} the \icat{} $\MonPr$ has an
  initial object given by the unique presentably monoidal structure on
  the unit $\mathcal{S}$, which is clearly the Cartesian monoidal structure.
\end{remark}

\subsection{The $\infty$-Category of $\infty$-Operads}\label{subsec:icatiopds}
Our goal in this subsection is to construct \icats{} and
$(\infty,2)$-categories of (generalized) \nsiopds{}. For this we make
use of Lurie's theory of \emph{categorical patterns} from \cite[\S
B]{HA}.

A number of important objects in higher category theory can be
regarded as forming (non-full) subcategories of slice categories of
the \icat{} $\CatI$ of \icats{} --- in particular, we have seen above
that this is the case for (non-symmetric) \iopds{} and monoidal
\icats{}, which form subcategories of $(\CatI)_{/\simp^{\op}}$. The
theory of categorical patterns provides a machine for generating model
structures describing \icats{} of this kind. Specifically, these are
model structures on the slice category of marked simplicial sets over
some fixed marked simplicial set --- the marking, which is a
collection of 1-simplices in a simplicial set, allows us to easily
consider subcategories of slice categories where some type of map must
be preserved (the \emph{inert} maps in the case of \iopds{}, and the
\emph{coCartesian} maps in the case of monoidal \icats{}). Although we
could construct the desired \icats{} of \iopds{} or monoidal \icats{}
directly as subcategories of $(\CatI)_{/\simp^{\op}}$, having the
model structure around makes it easy to see that these \icats{} have
all colimits, and indeed are presentable, and also allows us to
construct certain functors as Quillen adjunctions.



\begin{defn}
  A categorical pattern $\mathfrak{P} = (\mathcal{C}, S,
  \{p_{\alpha}\})$ consists of 
  \begin{itemize}
  \item an \icat{} $\mathcal{C}$,
  \item a marking of $\mathcal{C}$, i.e. a collection $S$ of
    1-simplices in $\mathcal{C}$ that includes all the degenerate
    ones,
  \item a collection of diagrams of \icats{} $p_{\alpha} \colon
    K_{\alpha}^{\triangleleft} \to \mathcal{C}$ such that $p_{\alpha}$
    takes every edge in $K_{\alpha}^{\triangleleft}$ to a marked edge
    of $\mathcal{C}$.
  \end{itemize}
\end{defn}

\begin{remark}
  Lurie's definition of a categorical pattern in \cite[\S B]{HA} is
  more general than this: in particular, he includes the data of a
  \emph{scaling} of the simplicial set $\mathcal{C}$, i.e. a
  collection $T$ of 2-simplices in $\mathcal{C}$ that includes all the
  degenerate ones. In all the examples we consider, however, the
  scaling consists of \emph{all} 2-simplices of the simplicial set
  $\mathcal{C}$. We restrict ourselves to this special case as it
  gives a clearer description of the $\mathfrak{P}$-fibrant objects,
  and also simplifies the notation.
\end{remark}

From a categorical pattern, Lurie constructs a model category that
encodes the \icat{} of $\mathfrak{P}$-fibrant objects, in the
following sense:
\begin{defn}
  Suppose $\mathfrak{P} = (\mathcal{C}, S, \{p_{\alpha}\})$ is a
  categorical pattern. A map of simplicial sets $X \to \mathcal{C}$ is
  \emph{$\mathfrak{P}$-fibrant} if the following criteria are satisfied:
  \begin{enumerate}[(1)]
  \item The underlying map $\pi \colon Y \to \mathcal{C}$ is an inner
    fibration. (In particular, $Y$ is an \icat{}.)
  \item $Y$ has all $\pi$-coCartesian edges over the morphisms in $S$.
  \item For every $\alpha$, the coCartesian fibration $\pi_{\alpha}
    \colon Y \times_{\mathcal{C}} K_{\alpha}^{\triangleleft} \to
    K_{\alpha}^{\triangleleft}$, obtained by pulling back $\pi$ along
    $p_{\alpha}$, is classified by a limit diagram
    $K_{\alpha}^{\triangleleft} \to \CatI$.
  \item For every $\alpha$, the composite of any coCartesian section
    $s \colon K_{\alpha}^{\triangleleft} \to Y \times_{\mathcal{C}}
    K_{\alpha}^{\triangleleft}$ of $\pi_{\alpha}$ with the projection
    $Y \times_{\mathcal{C}} K_{\alpha}^{\triangleleft} \to Y$ is a
    $\pi$-limit diagram.
  \end{enumerate}
\end{defn}

\begin{exs}\ 
  \begin{enumerate}[(i)]
  \item Let $\mathfrak{O}_{\txt{ns}}$ be the categorical
    pattern \[(\simp^{\op}, I_{\txt{ns}}, \{p_{[n]} \colon
    K_{[n]}^{\triangleleft} \to \simp^{\op}\}),\] where $I_{\txt{ns}}$
    is the set of inert morphisms and $K_{[n]}$ is the set of inert
    morphisms $[n] \to [1]$ in $\simp^{\op}$. It is immediate from
    Definition~\ref{defn:nsiopd2} that a map $Y \to \simp^{\op}$ is
    $\mathfrak{O}_{\txt{ns}}$-fibrant precisely if it is a \nsiopd{}.
  \item Let $\mathfrak{M}$ denote the categorical
    pattern \[(\simp^{\op}, \mathrm{N}\simp^{\op}_{1}, \{p_{[n]}
    \colon K_{[n]}^{\triangleleft} \to \simp^{\op}\}).\] Then a map $Y
    \to \simp^{\op}$ is $\mathfrak{M}$-fibrant precisely if $Y \to
    \simp^{\op}$ is a monoidal \icat{}.
  \item Let $\mathfrak{O}^{\txt{gen}}_{\txt{ns}}$ be the categorical
    pattern \[(\simp^{\op}, I_{\txt{ns}}, \{
    (\mathcal{G}^{\simp})_{[n]/}^{\triangleleft} \to \simp^{\op}\}).\]
    It is immediate from Definition~\ref{defn:gnsiopd} that a map $Y
    \to \simp^{\op}$ is $\mathfrak{O}^{\txt{gen}}_{\txt{ns}}$-fibrant \IFF{} $Y \to
    \simp^{\op}$ is a \gnsiopd{}.
  \item Let $\mathfrak{D}$ denote the categorical
    pattern \[(\simp^{\op}, \mathrm{N}\simp^{\op}_{1}, \{
    (\mathcal{G}^{\simp})_{[n]/}^{\triangleleft} \to \simp^{\op}\}).\]
    Then a map $Y \to \simp^{\op}$ is $\mathfrak{D}$-fibrant \IFF{} $Y
    \to \simp^{\op}$ is a double \icat{}.
  \end{enumerate}
\end{exs}

\begin{thm}[Lurie, {\cite[Theorem B.0.20]{HA}}]\label{thm:catpatternmodstr}
  Let $\mathfrak{P} = (\mathcal{C}, S, \{p_{\alpha}\})$ be a
  categorical pattern, and let $\overline{\mathcal{C}}$ denote the
  marked simplicial set $(\mathcal{C}, S)$. There is a left proper
  combinatorial simplicial model structure on the category
  $(\sSet^{+})_{/\overline{\mathcal{C}}}$ such that:
  \begin{enumerate}[(1)]
  \item The cofibrations are the morphisms whose underlying maps of
    simplicial sets are monomorphisms. In particular, all objects are
    cofibrant.
  \item An object $(X, T) \to \overline{\mathcal{C}}$ is fibrant
    \IFF{} $X \to \mathcal{C}$ is $\mathfrak{P}$-fibrant and $T$ is
    precisely the collection of coCartesian morphisms over the
    morphisms in $S$.
  \end{enumerate}
  We denote the category $(\sSet^{+})_{/\overline{\mathcal{C}}}$
  equipped with this model structure by $(\sSet^{+})_{\mathfrak{P}}$.
\end{thm}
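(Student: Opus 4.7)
The plan is to construct the model structure by Smith's recognition theorem for combinatorial model categories, then identify the fibrant objects. The enrichment and left properness are added at the end as formal consequences.

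First I would write down explicit sets of generating (trivial) cofibrations. For cofibrations, take boundary inclusions $(\partial\Delta^n)^\flat \hookrightarrow (\Delta^n)^\flat$ and the marking extension $(\Delta^1)^\flat \to (\Delta^1)^\sharp$, each equipped with all possible structure maps to $\overline{\mathcal{C}}$ (there is a set of such maps, up to isomorphism, since $\mathcal{C}$ is small for purposes of the argument, or one uses the usual cardinality bound on the relevant simplicial sets). For generating trivial cofibrations one needs four families, corresponding to the four conditions defining $\mathfrak{P}$-fibrancy:
\begin{enumerate}
\item Inner horn inclusions $(\Lambda^n_i)^\flat \hookrightarrow (\Delta^n)^\flat$ for $0<i<n$, enforcing that $Y \to \mathcal{C}$ is an inner fibration.
\item The ``coCartesian edge'' inclusions, e.g. $(\Lambda^n_0, \{0,1\})^\sharp \hookrightarrow (\Delta^n, \{0,1\})^\sharp$ for structure maps sending $\{0,1\}$ to a morphism in $S$, enforcing existence of coCartesian lifts and that the marking of the target consists precisely of such lifts.
\item For each $p_\alpha \colon K_\alpha^{\triangleleft} \to \mathcal{C}$, inclusions that encode that the pullback along $p_\alpha$ is classified by a limit diagram $K_\alpha^\triangleleft \to \CatI$; concretely, these are maps of the form $(K_\alpha \star \partial\Delta^n)^{\natural} \cup_{\ldots} (\ldots) \to (K_\alpha \star \Delta^n)^\natural$ over $p_\alpha$, using the straightening-unstraightening equivalence to translate the limit-cone condition into a lifting problem.
\item Similar inclusions that force coCartesian sections over $K_\alpha^\triangleleft$ to be $\pi$-limit diagrams in $Y$.
\end{enumerate}

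Next I would apply Jeff Smith's theorem: one must verify that the class of would-be weak equivalences (defined, say, as maps inducing equivalences on mapping spaces into every object that has the right lifting property against the generating trivial cofibrations) is an accessible class, is closed under retracts and satisfies the two-out-of-three property, and contains all the generating trivial cofibrations; and that a map with the right lifting property against the generating cofibrations is a weak equivalence. Accessibility of the weak equivalences follows because the class is determined by lifting against a set, and retract/two-out-of-three are formal. The key point is that injective trivial fibrations coincide with the maps having the right lifting property against the generating cofibrations, which is straightforward from the marked-simplicial definitions. Left properness follows since all objects are cofibrant.

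The identification of fibrant objects reduces to checking that an object $X \to \overline{\mathcal{C}}$ has the right lifting property against all generators of family (1)--(4) above if and only if it satisfies conditions (1)--(4) in the definition of $\mathfrak{P}$-fibrant and its marking consists exactly of coCartesian arrows over $S$. Families (1) and (2) are standard from Lurie's treatment of the coCartesian model structure on $(\sSet^+)_{/\mathcal{C}}$; families (3) and (4) require translating the limit-diagram conditions into lifting problems, using the fact that for a coCartesian fibration $Y \times_{\mathcal{C}} K_\alpha^\triangleleft \to K_\alpha^\triangleleft$, being classified by a limit diagram is equivalent to a relative mapping-space condition, and that $\pi$-limits admit a purely lifting-theoretic description via slice categories. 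Finally, the simplicial enrichment is the standard one on marked simplicial sets, and one checks the pushout-product axiom against the generating cofibrations by a direct combinatorial verification, as in the proof of the coCartesian model structure.

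The main obstacle is the translation of conditions (3) and (4) into lifting properties against an explicit small set of morphisms; this requires the careful manipulation of marked joins $K_\alpha \star \Delta^n$ and a clean interaction between the straightening equivalence and the categorical pattern structure. Given that machinery, the rest of the proof is a formal application of Smith's theorem together with the existence of the ambient coCartesian model structure.
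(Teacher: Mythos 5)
The paper offers no proof of this statement: it is quoted directly from Lurie (\cite[Theorem B.0.20]{HA}, in the special case where the scaling consists of all $2$-simplices), so the only comparison available is with Lurie's Appendix B. Your overall architecture --- generating (trivial) cofibrations organized by the four fibrancy conditions, a recognition theorem for combinatorial model categories, identification of the fibrant objects by lifting, with the enrichment and left properness added as formal consequences --- does match the shape of that argument, which runs through the class of $\mathfrak{P}$-anodyne morphisms and the general existence criterion of \cite[Proposition A.2.6.13]{HTT}.

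There are, however, two genuine gaps. First, your justification of accessibility is wrong as stated: the weak equivalences are defined by inducing equivalences on mapping spaces into fibrant objects, and such a class is not ``determined by lifting against a set'' --- classes defined by right lifting properties are the (trivial) fibrations, not the weak equivalences. What is actually needed is that the relevant mapping-space functor is accessible, i.e.\ compatible with suitably filtered colimits; this is exactly one of the hypotheses that Lurie's criterion packages and that must be verified for $(\sSet^{+})_{/\overline{\mathcal{C}}}$. Second, essentially all of the difficulty of the theorem is concentrated in the step you defer: producing a \emph{set} of trivial cofibrations whose lifting property encodes conditions (3) and (4) of $\mathfrak{P}$-fibrancy. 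Lurie does not do this by translating the limit-cone condition through straightening into horn-filling; he instead adjoins the cone-point inclusions $\{v\}^{\sharp} \hookrightarrow (K_{\alpha}^{\triangleleft})^{\sharp}$ (with structure map $p_{\alpha}$) to the generating class of $\mathfrak{P}$-anodyne maps and then proves, in a lengthy argument involving pushout-products of $\mathfrak{P}$-anodyne maps with cofibrations, that the objects with the right lifting property against the resulting class are precisely the $\mathfrak{P}$-fibrant ones. That equivalence is a theorem rather than a definition-chase, and your sketch names the obstacle without supplying the idea that resolves it.
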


Applying this in the case $\mathfrak{P} = \mathfrak{O}_{\txt{ns}}$, we
get:
\begin{cor}
  There is a left proper combinatorial simplicial model structure on
  $(\sSet^{+})_{/(\simp^{\op}, I_{\txt{ns}})}$ such that
  \begin{enumerate}[(1)]
  \item The cofibrations are the morphisms whose underlying maps of
    simplicial sets are monomorphisms. In particular, all objects are
    cofibrant.
  \item An object $(X, T) \to \simp^{\op}$ is fibrant
    \IFF{} $X \to \simp^{\op}$ is a \nsiopd{} and $T$ is precisely the
    collection of inert morphisms of $X$.
  \end{enumerate}
  We call this the \emph{\nsiopd{} model structure}.
\end{cor}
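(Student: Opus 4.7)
The plan is to simply invoke Lurie's general existence theorem for model structures associated to categorical patterns (Theorem~\ref{thm:catpatternmodstr}) applied to the specific categorical pattern $\mathfrak{O}_{\txt{ns}}$. Since that theorem has already been stated, essentially all the work has been offloaded; the corollary is just an instance of it.

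More precisely, I would take $\mathfrak{P} = \mathfrak{O}_{\txt{ns}} = (\simp^{\op}, I_{\txt{ns}}, \{p_{[n]}\colon K_{[n]}^{\triangleleft}\to\simp^{\op}\})$, so that $\overline{\mathcal{C}} = (\simp^{\op}, I_{\txt{ns}})$. Theorem~\ref{thm:catpatternmodstr} directly produces a left proper combinatorial simplicial model structure on $(\sSet^{+})_{/(\simp^{\op}, I_{\txt{ns}})}$ in which the cofibrations are the monomorphisms on underlying simplicial sets (so all objects are cofibrant) and in which the fibrant objects are precisely the $\mathfrak{O}_{\txt{ns}}$-fibrant maps $X \to \simp^{\op}$ with marking equal to the coCartesian lifts of the morphisms in $I_{\txt{ns}}$.

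The one remaining point is to identify the $\mathfrak{O}_{\txt{ns}}$-fibrant objects with \nsiopds{}, together with identifying their distinguished edges with the inert morphisms. This is precisely the content of the first example after the definition of $\mathfrak{P}$-fibrancy: unwinding the four conditions in the definition of $\mathfrak{P}$-fibrancy against the choices of $S = I_{\txt{ns}}$ and the diagrams $p_{[n]}$ recovers exactly conditions (i)--(iii) of Definition~\ref{defn:nsiopd2}. Condition (1) of $\mathfrak{P}$-fibrancy gives the inner fibration hypothesis, condition (2) yields existence of coCartesian lifts over inert morphisms (Definition~\ref{defn:nsiopd2}(i)), condition (3) yields the Segal-type equivalence (Definition~\ref{defn:nsiopd2}(ii)), and condition (4) yields the $\pi$-limit condition (Definition~\ref{defn:nsiopd2}(iii)). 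The marked edges of a fibrant object, by part (2) of Theorem~\ref{thm:catpatternmodstr}, are exactly the coCartesian lifts of maps in $I_{\txt{ns}}$, i.e.\ the inert morphisms of $X$.

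There is no real obstacle: the work is entirely in Theorem~\ref{thm:catpatternmodstr}, which is quoted. The only ``check'' is the translation between the $\mathfrak{O}_{\txt{ns}}$-fibrancy conditions and the definition of a \nsiopd{}, which is mechanical and has already been done in the example preceding the theorem. Thus the proof reduces to a one-line appeal: ``Apply Theorem~\ref{thm:catpatternmodstr} to the categorical pattern $\mathfrak{O}_{\txt{ns}}$.''
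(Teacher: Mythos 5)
Your proposal is correct and matches the paper exactly: the corollary is stated there as an immediate specialization of Theorem~\ref{thm:catpatternmodstr} to the categorical pattern $\mathfrak{O}_{\txt{ns}}$, with the identification of the $\mathfrak{O}_{\txt{ns}}$-fibrant objects with \nsiopds{} already recorded in the example preceding the theorem. Your unwinding of the four fibrancy conditions against Definition~\ref{defn:nsiopd2} is the same (mechanical) check the paper declares ``immediate.''
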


\begin{defn}
  The \icat{} $\OpdIns$ of \nsiopds{} is the \icat{} associated to the
  simplicial model category $(\sSet^{+})_{\mathfrak{O}_{\txt{ns}}}$,
  i.e. the coherent nerve of the simplicial category of fibrant
  objects. Thus the objects of $\OpdIns$ can be identified with
  \nsiopds{}. Moreover, since the maps between these in
  $(\sSet^{+})_{\mathfrak{O}_{\txt{ns}}}$ are precisely the maps that
  preserve inert morphisms, it is also easy to see that the space of
  maps from $\mathcal{O}$ to $\mathcal{P}$ in $\OpdIns$ is equivalent
  the subspace of $\Map_{\simp^{\op}}(\mathcal{O}, \mathcal{P})$ given
  by the components corresponding to inert-morphism-preserving maps,
  as expected. This justifies calling $\OpdIns$ the \emph{\icat{} of
    \nsiopds{}}.
\end{defn}

\begin{remark}
  This \icat{} of \nsiopds{} is a special case of the \icats{} of
  \iopds{} over an operator category constructed by Barwick in
  \cite[Theorem 8.15]{BarwickOpCat}. By
  \cite[Proposition 8.17]{BarwickOpCat} a morphism $\mathcal{O} \to
  \mathcal{P}$ in $(\sSet^{+})_{\mathfrak{O}_{\txt{ns}}}$ between
  \nsiopds{} marked by their inert morphisms is a weak equivalence
  \IFF{} the underlying morphism $\mathcal{O} \to \mathcal{P}$ is an
  equivalence of \icats{}, as we would expect.
\end{remark}

\begin{defn}
  Similarly, applying Theorem~\ref{thm:catpatternmodstr} to the
  categorical patterns $\mathfrak{M}$,
  $\mathfrak{O}_{\txt{ns}}^{\txt{gen}}$, and $\mathfrak{D}$ gives
  simplicial model categories $(\sSet^{+})_{\mathfrak{M}}$,
  $(\sSet^{+})_{\mathfrak{O}^{\txt{gen}}_{\txt{ns}}}$, and
  $(\sSet^{+})_{\mathfrak{D}}$ whose fibrant objects are,
  respectively, monoidal \icats{}, \gnsiopds{}, and double
  \icats{}. We write $\MonI$, $\OpdInsg$, and $\txt{Dbl}_{\infty}$ for the \icats{} associated to these
  simplicial model categories, and refer to them as the \emph{\icats{} of
    monoidal \icats{}, \gnsiopds{}, and double \icats{}}.
\end{defn}

\begin{defn}
  The morphisms in $\MonI$ are the (strong) monoidal functors between
  monoidal \icats{}. We write $\MonI^{\txt{lax}}$ for the \icat{} of
  monoidal \icats{} and lax monoidal functors, i.e. the full
  subcategory of $\OpdIns$ spanned by the monoidal \icats{}.
\end{defn}

\begin{exs}
  Several other \icats{} we will encounter can be constructed using
  model categories coming from categorical patterns:
  \begin{itemize}
  \item If $\mathcal{C}$ is an \icat{}, let
    $\mathfrak{P}_{\mathcal{C}}^{\txt{coCart}}$ be the categorical
    pattern $(\mathcal{C}, \mathcal{C}_{1}, \emptyset)$. Then
    $(\mathcal{E}, T) \to \mathcal{C}^{\sharp}$ is
    $\mathfrak{P}_{\mathcal{C}}^{\txt{coCart}}$-fibrant \IFF{} $\pi
    \colon \mathcal{E} \to \mathcal{C}$ is a coCartesian fibration,
    and $T$ is the set of $\pi$-coCartesian edges in
    $\mathcal{E}$. The model category
    $(\sSet^{+})_{\mathfrak{P}_{\mathcal{C}}^{\txt{coCart}}}$ is the
    coCartesian model structure on
    $(\sSet^{+})_{/\mathcal{C}^{\sharp}}$. Thus the associated \icat{}
    is the \icat{} $\txt{CoCart}(\mathcal{C})$ of coCartesian
    fibrations over $\mathcal{C}$, which is equivalent to
    $\Fun(\mathcal{C}, \CatI)$.
  \item If $\mathcal{C}$ is an \icat{}, let
    $\mathfrak{P}_{\mathcal{C}}^{\txt{eq}}$ be the categorical pattern
    $(\mathcal{C}, \iota\mathcal{C}_{1}, \emptyset)$. Then
    $(\mathcal{E}, T) \to \mathcal{C}^{\natural}$ is
    $\mathfrak{P}_{\mathcal{C}}^{\txt{eq}}$-fibrant \IFF{}
    $\mathcal{E}$ is an \icat{}, the map $\pi \colon \mathcal{E} \to
    \mathcal{C}$ is a categorical fibration, and $T$ is the set of
    equivalences in $\mathcal{E}$. (This follows from the description
    of categorical fibrations to \icats{} in \cite[Corollary
      2.4.6.5]{HTT}.) The model category
    $(\sSet^{+})_{\mathfrak{P}_{\mathcal{C}}^{\txt{eq}}}$ is the
    over-category model structure on
    $(\sSet^{+})_{/\mathcal{C}^{\natural}}$ from the model structure
    on $\sSet^{+}$. The associated \icat{} is thus the over-category
    $(\CatI)_{/\mathcal{C}}$.
  \item If $\mathcal{C}$ is an \icat{} and $\mathcal{D}$ is a
    subcategory of $\mathcal{C}$, let
    $\mathfrak{P}_{\mathcal{C},\mathcal{D}}^{\txt{coCart}}$ be the
    categorical pattern $(\mathcal{C}, \mathcal{D}_{1},
    \emptyset)$. Then $(\mathcal{E}, T) \to (\mathcal{C},
    \mathcal{D}_{1})$ is
    $\mathfrak{P}_{\mathcal{C},\mathcal{D}}^{\txt{coCart}}$-fibrant
    \IFF{} $\mathcal{E}$ is an \icat{}, the map $\pi \colon
    \mathcal{E} \to \mathcal{C}$ is an inner fibration, $\mathcal{E}$
    has all $\pi$-coCartesian edges over morphisms in $\mathcal{D}$,
    and $T$ consists precisely of these coCartesian edges. The model
    category
    $(\sSet^{+})_{\mathfrak{P}_{\mathcal{C},\mathcal{D}}^{\txt{coCart}}}$
    gives an \icat{} of functors $\mathcal{E} \to \mathcal{C}$ that
    have coCartesian morphisms over the morphisms in $\mathcal{D}$; we
    write $\txt{CoCart}(\mathcal{C},\mathcal{D})$ for this \icat{}.
\end{itemize}
\end{exs}


\begin{remark}
  For any categorical pattern $\mathfrak{P}$, the model category
  $(\sSet^{+})_{\mathfrak{P}}$ is enriched in the model category of
  marked simplicial sets --- this follows from \cite[Remark
    B.2.5]{HA} (taking $\mathfrak{P}'$ to be the trivial categorical
  pattern on $\Delta^{0}$). Passing to the subcategories of fibrant
  objects we therefore get fibrant marked simplicial categories of
  (generalized) \nsiopds{}. Marked simplicial categories are one model
  for the theory of $(\infty,2)$-categories, so we get
  $(\infty,2)$-categories $\OPDIns$ and $\OPDInsg$ with underlying
  \icats{} $\OpdIns$ and $\OpdInsg$. If $\mathcal{M}$ and
  $\mathcal{N}$ are (generalized) \nsiopds{}, we can identify the
  \icat{} $\Alg_{\mathcal{M}}(\mathcal{N})$ with the \icat{} of maps
  from $\mathcal{M}$ to $\mathcal{N}$ in the fibrant marked simplicial
  category $\OPDInsg$.
\end{remark}

\begin{propn}
  The identity is a left (marked simplicially enriched) Quillen
  functor $(\sSet^{+})_{\mathfrak{O}^{\txt{gen}}_{\txt{ns}}} \to
  (\sSet^{+})_{\mathfrak{O}_{\txt{ns}}}$.
\end{propn}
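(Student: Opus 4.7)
The plan is to exploit the fact that both model structures live on the same underlying category, with the same cofibrations, and that their classes of fibrant objects are nested.

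First I observe that $\mathfrak{O}^{\txt{gen}}_{\txt{ns}}$ and $\mathfrak{O}_{\txt{ns}}$ have the same underlying marked simplicial set $(\simp^{\op}, I_{\txt{ns}})$, so the two model structures given by Theorem~\ref{thm:catpatternmodstr} are two model structures on the one category $(\sSet^{+})_{/(\simp^{\op}, I_{\txt{ns}})}$; by clause (1) of that theorem they share the same class of cofibrations (the monomorphisms on underlying simplicial sets). In particular the identity functor preserves cofibrations. Moreover, the marked simplicial enrichment from \cite[Remark B.2.5]{HA} depends only on the underlying marked simplicial set, not on the diagrams $\{p_{\alpha}\}$, so compatibility of the identity with tensoring and cotensoring by marked simplicial sets is automatic. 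It therefore remains only to show that the identity preserves trivial cofibrations.

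The crucial input is that every $\mathfrak{O}_{\txt{ns}}$-fibrant object is also $\mathfrak{O}^{\txt{gen}}_{\txt{ns}}$-fibrant, i.e.\ every \nsiopd{} is a \gnsiopd{}. Condition (i) of Definition~\ref{defn:nsiopd2} coincides with condition (i) of Definition~\ref{defn:gnsiopd}. For an \nsiopd{} $\mathcal{O}$, condition (ii) of Definition~\ref{defn:nsiopd2} applied with $n=0$ gives an equivalence $\mathcal{O}_{[0]} \simeq \ast$, so the product $\prod_{i=1}^{n} \mathcal{O}_{[1]}$ is naturally equivalent to the fibre product $\mathcal{O}_{[1]} \times_{\mathcal{O}_{[0]}} \cdots \times_{\mathcal{O}_{[0]}} \mathcal{O}_{[1]}$; this gives condition (ii) of Definition~\ref{defn:gnsiopd}, and a parallel contraction of the limit indexed by $(\mathcal{G}^{\simp})_{[n]/}$ down to the indexing by $K_{[n]}$ gives condition (iii).

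Finally, I invoke the standard mapping space characterization of weak equivalences: in a left proper combinatorial (marked) simplicial model category in which every object is cofibrant, a morphism $f \colon X \to Y$ is a weak equivalence if and only if $\Map(Y,Z) \to \Map(X,Z)$ is a weak equivalence for every fibrant $Z$. Because all objects of $(\sSet^{+})_{\mathfrak{P}}$ are cofibrant by Theorem~\ref{thm:catpatternmodstr}(1) in either pattern $\mathfrak{P}$, this criterion applies to both model structures. Since the $\mathfrak{O}_{\txt{ns}}$-fibrant objects form a subclass of the $\mathfrak{O}^{\txt{gen}}_{\txt{ns}}$-fibrant objects, every $\mathfrak{O}^{\txt{gen}}_{\txt{ns}}$-weak equivalence is automatically an $\mathfrak{O}_{\txt{ns}}$-weak equivalence. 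Combined with the coincidence of cofibrations, this says the identity sends (trivial) cofibrations to (trivial) cofibrations, so it is a left Quillen functor. The main obstacle is the careful invocation of the mapping-space criterion in the marked simplicial enrichment; if one prefers to avoid this, an equivalent route is to inspect Lurie's explicit generating trivial cofibrations from the proof of Theorem~\ref{thm:catpatternmodstr} (which differ between the two patterns only in the pieces indexed by the diagrams $(\mathcal{G}^{\simp})_{[n]/}^{\triangleleft}$ versus $K_{[n]}^{\triangleleft}$) and verify directly that each generator for $\mathfrak{O}^{\txt{gen}}_{\txt{ns}}$ has the left lifting property against every $\mathfrak{O}_{\txt{ns}}$-fibration, using the fact that \nsiopds{} automatically satisfy the stronger generalized Segal and $\pi$-limit conditions.
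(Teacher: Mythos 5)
Your argument is correct, and it is essentially the argument the paper delegates to its one-line citation of \cite[Corollary 2.3.2.6]{HA}: the two model structures share cofibrations and enrichment, every \nsiopd{} is a \gnsiopd{} (since $\mathcal{O}_{[0]}\simeq *$ collapses the fibre products and the $\mathcal{G}^{\simp}_{[n]/}$-indexed limits to ordinary products), and weak equivalences in a categorical-pattern model structure are detected by mapping into fibrant objects, so the inclusion of fibrant classes gives the required inclusion of weak equivalences. Nothing further is needed.
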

\begin{proof}
  As \cite[Corollary 2.3.2.6]{HA}.
\end{proof}

\begin{cor}\label{cor:GenOpdLoc}
  The inclusion $\OpdIns \to \OpdInsg$ has a left adjoint
  $L_{\txt{gen}} \colon \OpdInsg \to \OpdIns$.
\end{cor}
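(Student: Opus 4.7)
The plan is to promote the Quillen adjunction supplied by the previous proposition to an adjunction between the associated $\infty$-categories. Since the previous proposition identifies the identity as a left Quillen functor $(\sSet^{+})_{\mathfrak{O}^{\txt{gen}}_{\txt{ns}}} \to (\sSet^{+})_{\mathfrak{O}_{\txt{ns}}}$, its right adjoint, also the identity, is automatically right Quillen. Because both model structures are combinatorial and (marked) simplicially enriched, the standard result relating Quillen adjunctions to adjunctions of underlying $\infty$-categories (cf.~\cite[Proposition 5.2.4.6]{HTT}) will then supply an adjunction
\[ L_{\txt{gen}} \colon \OpdInsg \rightleftarrows \OpdIns : R, \]
where $L_{\txt{gen}}$ is induced by the identity viewed as a left Quillen functor (so in practice is computed by fibrant replacement in the \nsiopd{} model structure), and $R$ is induced by the identity viewed as a right Quillen functor.

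The remaining step is to identify $R$ with the inclusion $\OpdIns \hookrightarrow \OpdInsg$. Since $R$ is induced by the identity on underlying marked simplicial sets, the only thing to verify is that every fibrant object of $(\sSet^{+})_{\mathfrak{O}_{\txt{ns}}}$ is already fibrant in $(\sSet^{+})_{\mathfrak{O}^{\txt{gen}}_{\txt{ns}}}$, i.e.\ that every \nsiopd{} is a \gnsiopd{}. This is immediate from Definitions~\ref{defn:nsiopd2} and~\ref{defn:gnsiopd}: for a \nsiopd{} $\mathcal{O}$, condition (ii) forces $\mathcal{O}_{[0]} \simeq *$ (it is the empty product), so the limit over $\mathcal{G}^{\simp}_{[n]/}$ appearing in condition (ii) for \gnsiopds{} collapses to the product $\mathcal{O}_{[1]}^{\times n}$; conditions (i) and (iii) are literally the same.

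There is no real obstacle here: the bulk of the content has already been packaged into the previous proposition, and the remaining comparison is formal. If anything, the only point demanding care is to ensure that the $\infty$-categorical adjunction produced from the simplicial Quillen adjunction is indeed the one whose right adjoint agrees on the nose with the tautological inclusion $\OpdIns \hookrightarrow \OpdInsg$ — but this follows from the fact that in both simplicial model categories the mapping spaces between fibrant-cofibrant objects are computed by the same marked simplicial enrichment on the shared underlying category.
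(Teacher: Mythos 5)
Your proposal is correct and is essentially the argument the paper intends: the corollary is stated without proof as an immediate consequence of the preceding proposition, via the standard passage from a (simplicial, combinatorial) Quillen adjunction to an adjunction of underlying $\infty$-categories, with the right adjoint identified as the inclusion because the right Quillen functor (the identity) preserves fibrant objects. The only cosmetic imprecision is the claim that condition (iii) is ``literally the same'' in Definitions~\ref{defn:nsiopd2} and~\ref{defn:gnsiopd} --- the generalized version also indexes over inert maps to $[0]$, but those terms are contractible once $\mathcal{O}_{[0]} \simeq *$, so nothing is lost.
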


\subsection{Filtered Colimits of $\infty$-Operads}
Colimits of (generalized) \nsiopds{} are in general difficult to
describe explicitly. However, we will now show that filtered colimits
can be computed in $\CatI$:

\begin{thm}\label{thm:OpdFiltColim}
  The forgetful functors $\OpdIns$, $\OpdInsg \to \CatI$ detect
  filtered colimits.
\end{thm}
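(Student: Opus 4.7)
The plan is to prove the statement by directly exhibiting the colimit in $\OpdIns$ (resp.\ $\OpdInsg$). Given a filtered diagram $F\colon \mathcal{I} \to \OpdIns$, I form the colimit $\mathcal{O} := \colim_{i\in\mathcal{I}} F(i)$ in $(\CatI)_{/\simp^{\op}}$ and verify that $\pi\colon\mathcal{O}\to\simp^{\op}$ satisfies the axioms of Definition~\ref{defn:nsiopd2}, and that the tautological cone is a colimit cone in $\OpdIns$. Since the underlying \icat{} of such a colimit is, by construction, $\colim_{i}F(i)$ computed in $\CatI$, this simultaneously proves that $\OpdIns$ admits filtered colimits and that the forgetful functor preserves (hence detects) them. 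The generalized case is entirely parallel.

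The central technical input is that both $\CatI$ and $\mathcal{S}$ are compactly generated, so filtered colimits commute with finite limits in each. In particular, if $\mathcal{C} = \colim_{i}\mathcal{C}_{i}$ is a filtered colimit of \icats{} and $X, Y \in \mathcal{C}$ are represented by objects of some $\mathcal{C}_{i_{0}}$, then $\Map_{\mathcal{C}}(X,Y) \simeq \colim_{i_{0}\to i} \Map_{\mathcal{C}_{i}}(X,Y)$. With this in hand, each axiom is handled as follows. The map $\pi$ is an inner fibration because inner fibrations are characterized by a right lifting property against the compact simplicial sets $\Lambda^{n}_{k}$ ($0<k<n$), which is preserved by filtered colimits. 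For condition~(i), given an inert $\phi\colon[n]\to[m]$ and $X\in\mathcal{O}_{[n]}$, I lift $X$ to some $X_{i}\in F(i)_{[n]}$, take a $\pi_{i}$-coCartesian edge $X_{i}\to\phi_{!}X_{i}$ in $F(i)$, and push forward; coCartesian-ness is detected by equivalences of fibers of mapping spaces, a finite-limit condition in $\mathcal{S}$, and so is inherited by the colimit.

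For condition~(ii), pullback along $\{[n]\}\hookrightarrow\simp^{\op}$ is a finite limit in $\CatI$, so $\mathcal{O}_{[n]}\simeq\colim_{i}F(i)_{[n]}$, and similarly for the target $\mathcal{O}_{[1]}^{\times n}$ (and for the iterated fiber product $\mathcal{O}_{[1]}\times_{\mathcal{O}_{[0]}}\cdots\times_{\mathcal{O}_{[0]}}\mathcal{O}_{[1]}$ in the generalized case). The required Segal comparison is therefore a filtered colimit of equivalences, hence an equivalence. For condition~(iii), the remark following Definition~\ref{defn:nsiopd2} rewrites the $\pi$-limit condition as a collection of equivalences between finite limits of mapping spaces, so it too is preserved by the filtered colimit.

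Finally, a morphism of (generalized) \nsiopds{} is a map over $\simp^{\op}$ carrying inerts to inerts, a condition detected on mapping spaces and hence preserved by the filtered colimit; this yields both the \nsiopd{}-structure on $\mathcal{O}$ and the universal property. The main obstacle is purely bookkeeping: one must confirm that \emph{every} part of the definition of a (generalized) \nsiopd{} --- inner fibration, existence of inert lifts, the Segal equivalence, and the $\pi$-limit axiom --- can be rephrased purely in terms of finite limits of spaces, at which point the compact generation of $\CatI$ and $\mathcal{S}$ does all the work. The remark after Definition~\ref{defn:nsiopd2} already accomplishes the most delicate such translation, for condition~(iii); the generalized case uses the indexing category $\mathcal{G}^{\simp}_{[n]/}$ in place of the set of maps $\rho_{i}$ but is otherwise identical.
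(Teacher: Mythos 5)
Your proposal is correct and follows essentially the same route as the paper: the core of both arguments is the verification that the colimit computed in $(\CatI)_{/\simp^{\op}}$ still satisfies conditions (i)--(iii), using that filtered colimits commute with finite limits in $\mathcal{S}$ and $\CatI$ and that mapping spaces in a filtered colimit of \icats{} are the corresponding filtered colimits of mapping spaces (this is exactly the content of the paper's Proposition~\ref{propn:FltColimCoCart} and the body of its proof of the theorem). The only real difference is presentational: where you argue the universal property in $\OpdIns$ directly from the fact that preservation of inert morphisms is detected on mapping spaces, the paper gets this for free by observing that the categorical-pattern model structure is a left Bousfield localization of the coCartesian one, so that a colimit in $(\CatI)_{/\simp^{\op}}$ which happens to already be an \iopd{} is automatically the colimit in $\OpdIns$ --- a cleaner packaging of the same bookkeeping, and one that sidesteps your (unnecessary, since inner fibrancy is not an invariant condition and can always be arranged by replacement) appeal to lifting properties against compact simplicial sets.
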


For this we need some preliminary technical results:
\begin{propn}\label{propn:FltColimCoCart}
  Let $p \colon \mathcal{I} \to (\CatI)_{/\mathcal{B}}$ be a filtered
  diagram, and let $f \colon B \to B'$ be a morphism in $\mathcal{B}$
  such that for each $\alpha \in \mathcal{I}$ the functor 
  $p(\alpha) \colon \mathcal{C}_{\alpha} \to \mathcal{B}$ has
  $p(\alpha)$-coCartesian morphisms $C \to f_{!}C$ over $f$ for each
  $C \in (\mathcal{C}_{\alpha})_{B}$, and the functors $p(\phi)$
  preserve these for all morphisms $\phi \colon \alpha \to \beta$ in
  $\mathcal{I}$. Then:
  \begin{enumerate}[(i)]
  \item The colimit $\mathcal{C} \to \mathcal{B}$ of $p$
    also has coCartesian morphisms over $f$.
  \item The functors $\mathcal{C}_{\alpha} \to \mathcal{C}$ preserve
    these coCartesian morphisms for all $\alpha \in \mathcal{I}$.
  \item A functor $\mathcal{C} \to
    \mathcal{D}$ over $\mathcal{B}$ preserves coCartesian morphisms over
    $f$ \IFF{} all the composites $\mathcal{C}_{\alpha} \to \mathcal{C}
    \to \mathcal{D}$ do so.
  \end{enumerate}
\end{propn}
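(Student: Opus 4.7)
The plan is to reduce the statement to the mapping-space characterization of coCartesian morphisms, and then exploit the fact that filtered colimits in $\CatI$ commute with finite limits. The key technical inputs are: (a) the forgetful functor $(\CatI)_{/\mathcal{B}} \to \CatI$ preserves connected colimits, so $\mathcal{C}$ is computed as a filtered colimit in $\CatI$; (b) every object of such a filtered colimit is (up to equivalence) the image of an object from some stage $\mathcal{C}_{\alpha}$; and (c) for objects $C, D \in \mathcal{C}$ both arising from $\mathcal{C}_{\alpha}$ as $C_{\alpha}, D_{\alpha}$, one has
\[
\Map_{\mathcal{C}}(C, D) \simeq \colim_{\phi \colon \alpha \to \beta} \Map_{\mathcal{C}_{\beta}}(\phi_{!}C_{\alpha}, \phi_{!}D_{\alpha}),
\]
a decomposition that descends to the fibres over any fixed morphism in $\mathcal{B}$.

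For (i), given $C \in \mathcal{C}$ over $B$, I would first choose $\alpha$ and $C_{\alpha} \in (\mathcal{C}_{\alpha})_{B}$ mapping to $C$, then take the given coCartesian lift $g_{\alpha} \colon C_{\alpha} \to f_{!}C_{\alpha}$ in $\mathcal{C}_{\alpha}$, and let $g \colon C \to C'$ denote its image in $\mathcal{C}$. To verify $g$ is $\pi$-coCartesian, I must show that for every $D \in \mathcal{C}$ over some $B''$ and every $h \colon B' \to B''$, composition with $g$ induces an equivalence $\Map_{\mathcal{C}}^{h}(C', D) \simeq \Map_{\mathcal{C}}^{hf}(C, D)$. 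After enlarging $\alpha$ so that $D$ also arises from $\mathcal{C}_{\alpha}$, both sides are presented as filtered colimits over $\mathcal{I}_{\alpha/}$ of the analogous mapping spaces in $\mathcal{C}_{\beta}$; at each stage the equivalence holds because the hypothesis guarantees $\phi_{!}(g_{\alpha})$ remains coCartesian in $\mathcal{C}_{\beta}$, and filtered colimits of equivalences are equivalences.

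Parts (ii) and (iii) are then essentially formal. For (ii), the construction above already shows that the image in $\mathcal{C}$ of any coCartesian lift $g_{\alpha}$ of the specified form is coCartesian, which is precisely the content of (ii). For (iii), the forward direction is immediate from (ii): if $F \colon \mathcal{C} \to \mathcal{D}$ preserves coCartesian morphisms over $f$, then so does $F$ composed with each $i_{\alpha} \colon \mathcal{C}_{\alpha} \to \mathcal{C}$, since $i_{\alpha}$ does by (ii). For the reverse direction, given an arbitrary $\pi$-coCartesian morphism $g \colon C \to C'$ in $\mathcal{C}$ over $f$, I would use that any two coCartesian lifts of $f$ with the same source are equivalent via an equivalence over $\id_{B'}$ to replace $g$ by one of the distinguished lifts $i_{\alpha}(g_{\alpha})$ constructed in (i); then $F(g)$ is equivalent to the image of $g_{\alpha}$ under the composite $F \circ i_{\alpha}$, which is coCartesian by assumption.

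The main obstacle is the careful execution of the mapping-space argument: specifically, verifying that the filtered-colimit description of $\Map_{\mathcal{C}}(C, D)$ is compatible with the projection to $\Map_{\mathcal{B}}(B, \pi D)$ so that one really can read off an equivalence on fibres over a prescribed morphism in $\mathcal{B}$. This reduces to the exactness of filtered colimits with respect to finite limits of spaces, but the setup must be arranged so that the indexing category used for the colimit is cofinal in $\mathcal{I}_{\alpha/}$ uniformly in $C$ and $D$, which is where the filteredness hypothesis on $\mathcal{I}$ is essential.
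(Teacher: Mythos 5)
Your proposal is correct and follows essentially the same route as the paper's proof: choose a stage $\alpha$ where the source (and, after enlarging, the target) lives, push forward the coCartesian lift, present $\Map_{\mathcal{C}}(C,D)$ as the filtered colimit of mapping spaces over $\mathcal{I}_{\alpha/}$, and conclude from the commutation of filtered colimits with finite limits in $\mathcal{S}$; your fibrewise formulation of coCartesianness is equivalent to the pullback-square criterion the paper uses. The only (harmless) difference is that you spell out the reverse direction of (iii) via uniqueness of coCartesian lifts, where the paper simply declares it clear from the description of the coCartesian morphisms.
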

\begin{proof}
  For $\alpha \in \mathcal{I}$, let $r_{\alpha,!}$ denote the
  canonical functor $\mathcal{C}_{\alpha} \to \mathcal{C}$. Suppose $X
  \in \mathcal{C}_{B}$; then there exists $\alpha \in \mathcal{I}$ and
  $X' \in (\mathcal{C}_{\alpha})_{B}$ such that $X \simeq
  r_{\alpha,!}X'$. Let $\bar{f} \colon X' \to f_{!}X'$ be a
  coCartesian morphism over $f$; we wish to prove that
  $r_{\alpha,!}\bar{f}$ is coCartesian in $\mathcal{C}$. To see this
  we must show that for all $Y \in \mathcal{C}_{A}$ the commutative
  square \nolabelcsquare{\Map_{\mathcal{C}}(r_{\alpha,!}f_{!}X',
    Y)}{\Map_{\mathcal{C}}(X, Y)}{\Map_{\mathcal{B}}(B',
    A)}{\Map_{\mathcal{B}}(B, A)} is a pullback diagram.  Changing
  $\alpha$ if necessary, we may without loss of generality assume
  there is a $Y' \in \mathcal{C}_{\alpha}$ such that $r_{\alpha,!}Y'
  \simeq Y$. Since filtered colimits commute with finite limits in
  spaces, and the mapping space $\Map_{\mathcal{C}}(X, Y)$ is the
  fibre of the projection \[\Fun(\Delta^{1}, \mathcal{C}) \simeq
  \colim_{\alpha} \Fun(\Delta^{1}, \mathcal{C}_{\alpha}) \to
  \colim_{\alpha} \mathcal{C}_{\alpha} \times \mathcal{C}_{\alpha}
  \simeq \mathcal{C} \times \mathcal{C}\] at $(X, Y)$, it is easy to
  see that we can describe $\Map_{\mathcal{C}}(X,Y)$ as the filtered
  colimit
  \[\colim_{\phi \colon \alpha \to \beta \in
    \mathcal{I}_{\alpha/}}\Map_{\mathcal{C}_{\beta}}(\phi_{!}X',
  \phi_{!}Y'),
  \]
  and the commutative square as the colimit square
  \nolabelcsquare{\displaystyle{\colim_{\phi \colon \alpha \to \beta
        \in
        \mathcal{I}_{\alpha/}}}\Map_{\mathcal{C}_{\beta}}(\phi_{!}f_{!}X',
    \phi_{!}Y')}{\displaystyle{\colim_{\phi \colon \alpha \to \beta
        \in
        \mathcal{I}_{\alpha/}}}\Map_{\mathcal{C}_{\beta}}(\phi_{!}X',
    \phi_{!}Y')}{\displaystyle{\colim_{\phi \colon \alpha \to \beta
        \in \mathcal{I}_{\alpha/}}}\Map_{\mathcal{B}}(B',
    A)}{\displaystyle{\colim_{\phi \colon \alpha \to \beta \in
        \mathcal{I}_{\alpha/}}}\Map_{\mathcal{B}}(B, A).} Each of the
  squares in this colimit are pullback squares since by assumption
  $\phi_{!}\bar{f}$ is coCartesian in $\mathcal{C}_{\beta}$ for all
  $\phi \colon \alpha \to \beta$. Hence, since filtered colimits in
  $\mathcal{S}$ commute with finite limits, it follows that the
  colimit square is also a pullback. Thus $r_{\alpha,!}\bar{f}$ is
  coCartesian in $\mathcal{C}$, as required. This proves claims (i) and (ii), and (iii) is then clear from this description of the coCartesian
  morphisms in $\mathcal{C}$.
\end{proof}

\begin{cor}\label{cor:CoCartForgerPrFiltColim}
  The forgetful functor $\CoCart(\mathcal{C}) \to
  (\CatI)_{/\mathcal{C}}$ detects filtered colimits.
\end{cor}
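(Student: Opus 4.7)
The plan is to deduce this directly from Proposition~\ref{propn:FltColimCoCart}. Recall that $\CoCart(\mathcal{C})$ sits inside $(\CatI)_{/\mathcal{C}}$ as a subcategory: its objects are the coCartesian fibrations, and its morphisms are those functors over $\mathcal{C}$ that preserve coCartesian edges. Because being coCartesian-preserving is a property of a morphism, the mapping spaces $\Map_{\CoCart(\mathcal{C})}(\mathcal{X},\mathcal{Y})$ are precisely the unions of connected components of $\Map_{(\CatI)_{/\mathcal{C}}}(\mathcal{X},\mathcal{Y})$ corresponding to such functors.

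First I would fix a filtered diagram $p \colon \mathcal{I} \to \CoCart(\mathcal{C})$ and a cocone $\bar{p} \colon \mathcal{I}^{\triangleright} \to \CoCart(\mathcal{C})$ with tip $\mathcal{E} \to \mathcal{C}$, and suppose that the image in $(\CatI)_{/\mathcal{C}}$ is a colimit cocone there. Applying Proposition~\ref{propn:FltColimCoCart}(i) to every morphism $f$ of $\mathcal{C}$ shows that $\mathcal{E} \to \mathcal{C}$ has coCartesian lifts for each such $f$, hence is itself a coCartesian fibration. Part~(ii) of the proposition shows that each leg $\mathcal{E}_\alpha \to \mathcal{E}$ of the cocone preserves coCartesian morphisms, so the cocone lies entirely inside $\CoCart(\mathcal{C})$.

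It remains to verify the universal property in $\CoCart(\mathcal{C})$: for every $\mathcal{D} \in \CoCart(\mathcal{C})$ we must show
\[
\Map_{\CoCart(\mathcal{C})}(\mathcal{E}, \mathcal{D}) \isoto \lim_{\alpha \in \mathcal{I}^{\op}} \Map_{\CoCart(\mathcal{C})}(\mathcal{E}_\alpha, \mathcal{D}).
\]
By the assumption, the corresponding equivalence holds in $(\CatI)_{/\mathcal{C}}$. Since both mapping spaces are unions of components of the $(\CatI)_{/\mathcal{C}}$-mapping spaces, it suffices to check that the selected components on the two sides correspond to each other under the equivalence; this is exactly what Proposition~\ref{propn:FltColimCoCart}(iii) says, since a functor $\mathcal{E} \to \mathcal{D}$ preserves coCartesian morphisms iff each composite $\mathcal{E}_\alpha \to \mathcal{E} \to \mathcal{D}$ does. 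The same reasoning applied to functors $K \to \Map(\blank,\mathcal{D})$ out of an arbitrary simplicial set $K$ (where the condition of preserving coCartesian edges remains a componentwise condition) shows the equivalence of full mapping spaces, not just sets of components.

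There is no serious obstacle here: the only point requiring care is recognizing that $\CoCart(\mathcal{C}) \hookrightarrow (\CatI)_{/\mathcal{C}}$ is a (non-full) subcategory inclusion whose mapping spaces are collections of components, so that the universal property transfers once Proposition~\ref{propn:FltColimCoCart}(iii) identifies which components lie in $\CoCart(\mathcal{C})$.
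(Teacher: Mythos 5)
Your proof is correct and follows the same route as the paper: identify $\CoCart(\mathcal{C})$ as the (non-full) subcategory of $(\CatI)_{/\mathcal{C}}$ whose objects are coCartesian fibrations and whose morphisms are the coCartesian-edge-preserving functors, and then apply parts (i)--(iii) of Proposition~\ref{propn:FltColimCoCart}. The paper simply asserts that ``the result then follows'' from that proposition, so your spelling out of how the universal property transfers via the components picked out by part (iii) is a harmless elaboration of the same argument (the only thing the paper adds that you omit is the model-categorical justification that $\CoCart(\mathcal{C})$ really is that subcategory).
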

\begin{proof}
  We can describe $\CoCart(\mathcal{D})$ as the subcategory of
  $(\CatI)_{/\mathcal{D}}$ whose objects are the coCartesian
  fibrations and whose morphisms are the functors that preserve
  coCartesian morphisms. This is clear if we consider the functor of
  fibrant simplicial categories induced by the functor from the
  coCartesian model structure on $(\sSet^{+})_{/\mathcal{C}}$ to the
  over-category model structure on
  $(\sSet^{+})_{/\mathcal{C}^{\natural}}$ that forgets the markings
  that do not map to equivalences in $\mathcal{C}$. The result then
  follows from   Proposition~\ref{propn:FltColimCoCart}.
\end{proof}

\begin{cor}\label{cor:ForgetCoCartSubcatFltColim}
  Let $\mathcal{C}$ be an \icat{} and $\mathcal{D}$ a subcategory of
  $\mathcal{C}$. The forgetful functor $\CoCart(\mathcal{C},
  \mathcal{D}) \to (\CatI)_{/\mathcal{C}}$ detects filtered colimits
\end{cor}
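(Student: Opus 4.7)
The plan is to follow the same strategy as the proof of Corollary~\ref{cor:CoCartForgerPrFiltColim}, replacing the single coCartesian condition (over all of $\mathcal{C}$) by the family of conditions indexed by the morphisms of $\mathcal{D}$, and applying Proposition~\ref{propn:FltColimCoCart} componentwise.

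First, I would describe $\CoCart(\mathcal{C},\mathcal{D})$ concretely as a (non-full) subcategory of $(\CatI)_{/\mathcal{C}}$. Passing to the subcategory of fibrant objects in the simplicial model category $(\sSet^{+})_{\mathfrak{P}_{\mathcal{C},\mathcal{D}}^{\txt{coCart}}}$ and comparing with the slice model structure on $(\sSet^{+})_{/\mathcal{C}^{\natural}}$, one sees that the objects of $\CoCart(\mathcal{C},\mathcal{D})$ are the inner fibrations $\pi \colon \mathcal{E} \to \mathcal{C}$ admitting $\pi$-coCartesian lifts over every morphism of $\mathcal{D}$, with marking precisely these lifts, and that the morphisms are the functors over $\mathcal{C}$ preserving such lifts. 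In particular, the forgetful functor to $(\CatI)_{/\mathcal{C}}$ is the inclusion of this subcategory.

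Next, let $p \colon \mathcal{I} \to \CoCart(\mathcal{C},\mathcal{D})$ be a filtered diagram, write $\mathcal{E}_{\alpha} := p(\alpha)$, and let $\mathcal{E} \to \mathcal{C}$ be its colimit computed in $(\CatI)_{/\mathcal{C}}$. For each morphism $f$ of $\mathcal{D}$, the hypotheses of Proposition~\ref{propn:FltColimCoCart} (with $\mathcal{B}=\mathcal{C}$) are satisfied since each $\mathcal{E}_{\alpha}$ has coCartesian lifts over $f$ and these are preserved by every transition functor $\mathcal{E}_{\alpha} \to \mathcal{E}_{\beta}$. Parts (i) and (ii) of that proposition then give that $\mathcal{E} \to \mathcal{C}$ also admits coCartesian lifts over $f$, manufactured from those in the $\mathcal{E}_{\alpha}$, and that the canonical maps $\mathcal{E}_{\alpha} \to \mathcal{E}$ preserve them. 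Running this argument over every $f \in \mathcal{D}$ shows that $\mathcal{E}$ lies in $\CoCart(\mathcal{C},\mathcal{D})$ and that the natural cocone factors through $\CoCart(\mathcal{C},\mathcal{D})$.

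Finally, I would check the universal property. For any $\mathcal{E}' \in \CoCart(\mathcal{C},\mathcal{D})$, a morphism $g \colon \mathcal{E} \to \mathcal{E}'$ in $(\CatI)_{/\mathcal{C}}$ lies in $\CoCart(\mathcal{C},\mathcal{D})$ \IFF{} it preserves coCartesian lifts over every $f \in \mathcal{D}$. By part (iii) of Proposition~\ref{propn:FltColimCoCart} applied separately for each such $f$, this happens \IFF{} each composite $\mathcal{E}_{\alpha} \to \mathcal{E} \to \mathcal{E}'$ preserves coCartesian lifts over $f$, i.e.\ \IFF{} the restricted cocone lands in $\CoCart(\mathcal{C},\mathcal{D})$. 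Since the underlying cocone is colimiting in $(\CatI)_{/\mathcal{C}}$ by construction, this identifies $\Map_{\CoCart(\mathcal{C},\mathcal{D})}(\mathcal{E},\mathcal{E}')$ with $\lim_{\alpha} \Map_{\CoCart(\mathcal{C},\mathcal{D})}(\mathcal{E}_{\alpha}, \mathcal{E}')$, as required. No genuine obstacle arises beyond organizing Proposition~\ref{propn:FltColimCoCart} over the family of morphisms in $\mathcal{D}$; the argument is a straightforward pointwise extension of the special case $\mathcal{D} = \mathcal{C}$.
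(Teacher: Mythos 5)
Your proof is correct and follows essentially the same route as the paper: both arguments reduce to Proposition~\ref{propn:FltColimCoCart} applied over the morphisms of $\mathcal{D}$, the paper phrasing the identification of $\CoCart(\mathcal{C},\mathcal{D})$ via the pullback $\CoCart(\mathcal{D}) \times_{(\CatI)_{/\mathcal{D}}}(\CatI)_{/\mathcal{C}}$ where you describe the subcategory of $(\CatI)_{/\mathcal{C}}$ directly. Your more explicit verification of the universal property using part (iii) is a harmless elaboration of what the paper leaves implicit.
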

\begin{proof}
  The \icat{} $\CoCart(\mathcal{C}, \mathcal{D})$ can be identified
  with the full subcategory of the pullback $\CoCart(\mathcal{D})
  \times_{(\CatI)_{/\mathcal{D}}}(\CatI)_{/\mathcal{C}}$ spanned by
  those maps $\mathcal{E} \to \mathcal{C}$ that have coCartesian
  arrows over the morphisms in $\mathcal{D}$ --- this is clear from
  the definition of the mapping spaces in the fibrant simplicial
  categories associated to the corresponding model categories.  The
  result therefore follows from
  Proposition~\ref{propn:FltColimCoCart}.
\end{proof}

\begin{lemma}\label{lem:ladjcompact}
  Suppose $F \colon \mathcal{C} \rightleftarrows \mathcal{D} :\! U$ is
  an adjunction. Then:
  \begin{enumerate}[(i)]
  \item If the right adjoint $U$ preserves $\kappa$-filtered colimits,
    then $F$ preserves $\kappa$-compact objects.
  \item If in addition $\mathcal{C}$ is $\kappa$-accessible, then $U$
    preserves $\kappa$-filtered colimits \IFF{} $F$ preserves
    $\kappa$-compact objects.
  \end{enumerate}
\end{lemma}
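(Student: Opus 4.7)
My plan is that this lemma is a standard formal consequence of the adjunction together with the characterization of $\kappa$-compact objects in terms of preservation of $\kappa$-filtered colimits of mapping spaces.

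For part (i), I would argue by a direct chain of equivalences. Let $c \in \mathcal{C}$ be $\kappa$-compact and let $p \colon \mathcal{I} \to \mathcal{D}$ be a $\kappa$-filtered diagram. By definition $F(c)$ is $\kappa$-compact precisely when the natural map $\colim_{i} \Map_{\mathcal{D}}(F(c), p(i)) \to \Map_{\mathcal{D}}(F(c), \colim_{i} p(i))$ is an equivalence. Using the adjunction $F \dashv U$, this map is identified with
\[ \colim_{i} \Map_{\mathcal{C}}(c, U p(i)) \to \Map_{\mathcal{C}}(c, U \colim_{i} p(i)). \]
Since $U$ preserves $\kappa$-filtered colimits, the target is equivalent to $\Map_{\mathcal{C}}(c, \colim_{i} U p(i))$, and the map then becomes the canonical one exhibiting $c$ as $\kappa$-compact, which is an equivalence by hypothesis.

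For part (ii), assume $\mathcal{C}$ is $\kappa$-accessible and that $F$ preserves $\kappa$-compact objects. Let $p \colon \mathcal{I} \to \mathcal{D}$ again be $\kappa$-filtered; I want to show that the canonical map $\colim_{i} U p(i) \to U\colim_{i} p(i)$ is an equivalence. Because $\mathcal{C}$ is $\kappa$-accessible, the Yoneda functor restricted to $\mathcal{C}^{\kappa}$ is jointly conservative, so it suffices to check that for every $\kappa$-compact $c \in \mathcal{C}$ the induced map on $\Map_{\mathcal{C}}(c, -)$ is an equivalence. Now the right-hand side is
\[ \Map_{\mathcal{C}}(c, U\colim_{i} p(i)) \simeq \Map_{\mathcal{D}}(F(c), \colim_{i} p(i)) \simeq \colim_{i} \Map_{\mathcal{D}}(F(c), p(i)) \simeq \colim_{i} \Map_{\mathcal{C}}(c, U p(i)), \]
using the adjunction twice and the $\kappa$-compactness of $F(c)$ in the middle step. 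Since $c$ is $\kappa$-compact, the left-hand side maps equivalently to the same colimit, and a diagram chase shows the two identifications are compatible with the natural comparison map.

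I do not expect any real obstacle here; the only mild subtlety is the compatibility check at the end of part (ii), which amounts to unwinding that all the equivalences invoked are natural in $i$ and in $c$ — this is immediate from the naturality of the adjunction unit/counit. The rest is a purely formal manipulation of mapping spaces.
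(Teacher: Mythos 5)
Your proposal is correct and follows essentially the same argument as the paper: part (i) by the chain of equivalences of mapping spaces via the adjunction, and part (ii) by testing against $\kappa$-compact objects (which detect equivalences since $\mathcal{C}$ is $\kappa$-accessible) and using the same adjunction manipulation. The compatibility/naturality check you flag at the end is indeed routine and the paper leaves it implicit as well.
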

\begin{proof}\ 
 For the first claim, suppose $X \in \mathcal{C}$ is a $\kappa$-compact object and
    $p \colon K \to \mathcal{D}$ is a $\kappa$-filtered diagram. Then
    we have \[
  \begin{split}
    \Map_{\mathcal{D}}(F(X), \colim p) &  \simeq \Map_{\mathcal{C}}(X, G(\colim p)) \simeq
  \Map_{\mathcal{C}}(X, \colim G\circ p) \\
  & \simeq \colim \Map_{\mathcal{C}}(X, G \circ p) \simeq \colim
  \Map_{\mathcal{D}}(F(X), p).
  \end{split}
\]
Thus $\Map_{\mathcal{D}}(F(X), \blank)$ preserves $\kappa$-filtered
colimits, i.e. $F(X)$ is $\kappa$-compact.
For the second claim, suppose $F$ preserves $\kappa$-compact objects, and $p \colon K
  \to \mathcal{D}$ is a $\kappa$-filtered diagram; we wish to prove
  that the natural map $\colim G \circ p \to G(\colim p)$ is an
  equivalence. Since $\mathcal{C}$ is $\kappa$-accessible, to prove
  this it suffices to show that the induced map
  \[ \Map_{\mathcal{C}}(X, \colim G \circ p) \to \Map_{\mathcal{C}}(X,
  G(\colim p))\]
  is an equivalence for all $\kappa$-compact objects $X \in
  \mathcal{C}$. But when $X$ is $\kappa$-compact, we have equivalences
  \[ 
  \begin{split}
    \Map_{\mathcal{C}}(X, G(\colim p)) & \simeq
    \Map_{\mathcal{D}}(F(X), \colim p) \simeq
    \colim \Map_{\mathcal{D}}(F(X), p) \\
  & \simeq \colim \Map_{\mathcal{C}}(X, G \circ p) \simeq \Map_{\mathcal{C}}(X, \colim G\circ p),
  \end{split}
  \]
  so this is true.\qedhere
\end{proof}

\begin{lemma}\label{lem:overreflectcolim}
  Let $\mathcal{C}$ be an \icat{} and let $C$ be an object of
  $\mathcal{C}$. Then the forgetful functor $F \colon \mathcal{C}_{/C}
  \to \mathcal{C}$ reflects colimits, i.e. a diagram $\bar{p} \colon
  K^{\triangleright} \to \mathcal{C}_{/C}$ is a colimit diagram if the
  composite $F \circ \bar{p} \colon K^{\triangleright} \to
  \mathcal{C}$ is a colimit diagram. Moreover, if $\mathcal{C}$ has
  finite products, then $F$ creates colimits, i.e. $\bar{p}$ is a
  colimit diagram \IFF{} $F \circ \bar{p}$ is a colimit diagram.
\end{lemma}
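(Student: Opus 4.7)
The plan is to argue directly using the standard pullback formula for mapping spaces in an over-category: for objects $(X, f)$ and $(Y, g)$ of $\mathcal{C}_{/C}$ there is a natural equivalence
\[
\Map_{\mathcal{C}_{/C}}((X,f), (Y,g)) \simeq \Map_{\mathcal{C}}(X,Y) \times_{\Map_{\mathcal{C}}(X,C)} \{f\},
\]
exhibiting the left-hand side as the fibre at $f$ of post-composition with $g$; this is immediate from the construction of slice \icats{} via joins.

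For the reflection statement, suppose $\bar{p} \colon K^{\triangleright} \to \mathcal{C}_{/C}$ has cone point $(X, f)$ and $F \circ \bar{p}$ is a colimit diagram in $\mathcal{C}$. I would verify that $\bar{p}$ is a colimit by checking that for every $(Y, g) \in \mathcal{C}_{/C}$ the canonical map
\[
\Map_{\mathcal{C}_{/C}}((X,f), (Y,g)) \longrightarrow \lim_{\alpha \in K^{\op}} \Map_{\mathcal{C}_{/C}}(\bar{p}(\alpha), (Y,g))
\]
is an equivalence. Applying the pullback formula termwise on the right and using that limits in $\mathcal{S}$ commute with pullbacks, the target rewrites as
\[
\Bigl(\lim_{\alpha} \Map_{\mathcal{C}}(F\bar{p}(\alpha), Y)\Bigr) \times_{\lim_{\alpha} \Map_{\mathcal{C}}(F\bar{p}(\alpha), C)} \{(f_\alpha)\},
\]
where $(f_\alpha)$ denotes the coherent point of the limit given by the structure maps of $\bar{p}$. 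Since $F \circ \bar{p}$ is a colimit, both these limits identify with $\Map_{\mathcal{C}}(X, Y)$ and $\Map_{\mathcal{C}}(X, C)$ respectively, and under these identifications $(f_\alpha)$ corresponds to the structure map $f$; the pullback therefore reduces to $\Map_{\mathcal{C}_{/C}}((X,f), (Y,g))$, as desired. Strictly speaking one should upgrade this pointwise statement to the assertion that $(\mathcal{C}_{/C})_{\bar{p}/} \to (\mathcal{C}_{/C})_{p/}$ is a trivial Kan fibration, but this follows routinely from the mapping-space identification.

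For the creation statement the reflection direction is already handled, so it remains to show $F$ preserves colimits when $\mathcal{C}$ has finite products. In that case the functor $R \colon \mathcal{C} \to \mathcal{C}_{/C}$ sending $Y$ to the projection $Y \times C \to C$ is right adjoint to $F$: the mapping-space formula identifies maps $(X, f) \to R(Y)$ over $C$ with maps $X \to Y$ in $\mathcal{C}$ via the universal property of the product. Hence $F$ is a left adjoint and preserves all colimits. No step in this plan presents a serious obstacle; the only mildly subtle point is the reflection argument, whose content is essentially the stability of pullbacks under limits in $\mathcal{S}$ together with the assumed colimit property of $F\circ\bar{p}$.
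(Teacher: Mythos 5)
Your proposal is correct and follows essentially the same route as the paper: the reflection statement via the pullback description of mapping spaces in $\mathcal{C}_{/C}$ together with commutation of limits with pullbacks, and the preservation statement via the product functor $Y \mapsto Y \times C$. The only cosmetic difference is that you package the second half as the adjunction $F \dashv (\blank) \times C$ and invoke preservation of colimits by left adjoints, whereas the paper unwinds the same computation by hand with $D = X \times C$ and naturality.
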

\begin{proof}
  Write $C'$ for $\bar{p}(\infty)$ and $p$ for $\bar{p}|_{K}$. For any
  map $f \colon D \to C$ we have a commutative square 
  \nolabelcsquare{\displaystyle{\lim_{x \in K} \Map_{\mathcal{C}}(p(x),
    D)}}{\Map_{\mathcal{C}}(C', D)}{\displaystyle{\lim_{x \in K}
    \Map_{\mathcal{C}}(p(x), C)}}{\Map_{\mathcal{C}}(C', C).}
  If $F \circ \bar{p}$ is a colimit diagram in $\mathcal{C}$ then the
  horizontal morphisms in this square are both equivalences, hence so
  are all induced maps on fibres. But for any object
  $g \colon X \to C$ in $\mathcal{C}_{/C}$ the space
  $\Map_{\mathcal{C}_{/C}}(X, D)$ is the pullback
  \nolabelcsquare{\Map_{\mathcal{C}_{/C}}(X,
    D)}{\Map_{\mathcal{C}}(X, D)}{\{g\}}{\Map_{\mathcal{C}}(X, C),}
  and so since limits commute one map on fibres is
  \[ \lim_{x \in K} \Map_{\mathcal{C}_{/C}}(p(x), D) \to
  \Map_{\mathcal{C}_{/C}}(C', D).\]
  Thus this is an equivalence for all $D \to C$ if $F \circ \bar{p}$
  is a colimit diagram in $\mathcal{C}$, which shows that $\bar{p}$ is a colimit
  diagram in $\mathcal{C}_{/C}$  if $F \circ \bar{p}$ is a colimit
  diagram.

  Conversely, suppose $\bar{p}$ is a colimit diagram, so that 
  \[ \lim_{x \in K} \Map_{\mathcal{C}_{/C}}(p(x), D) \to
  \Map_{\mathcal{C}_{/C}}(C', D).\]
  is an equivalence for all $D \to C$. If $\mathcal{C}$ has finite
  products, then for any $Y \to C$ in
  $\mathcal{C}_{/C}$ and any $X \in \mathcal{C}$ 
  we have a natural equivalence
  \[ \Map_{\mathcal{C}_{/C}}(Y, X \times C) \simeq
  \Map_{\mathcal{C}}(Y, X) \]
  where $X \times C \to C$ is the product projection. Thus, taking $D$
  to be $X \times C$  we get by naturality an equivalence
  \[ \lim_{x \in K} \Map_{\mathcal{C}}(p(x), X) \isoto
  \Map_{\mathcal{C}}(C', X),\]
  and thus $F \circ \bar{p}$ is a colimit diagram in $\mathcal{C}$.
\end{proof}

\begin{propn}\label{propn:AccOverCompact}
  Suppose $\mathcal{C}$ is a $\kappa$-accessible \icat{} with
  finite products such that the Cartesian product
  preserves $\kappa$-filtered colimits separately in each
  variable. Then an object $X \to C$ is $\kappa$-compact in
  $\mathcal{C}_{/C}$ \IFF{} $X$ is a $\kappa$-compact object of
  $\mathcal{C}$.
\end{propn}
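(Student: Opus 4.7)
The plan is to exhibit the forgetful functor $F \colon \mathcal{C}_{/C} \to \mathcal{C}$ as a left adjoint and then deduce both implications from the preceding two lemmas. The right adjoint $G \colon \mathcal{C} \to \mathcal{C}_{/C}$ should be $D \mapsto (D \times C \to C)$, the product projection; this is essentially the content of the natural equivalence $\Map_{\mathcal{C}_{/C}}(Y, X \times C) \simeq \Map_{\mathcal{C}}(Y, X)$ that appeared in the proof of Lemma~\ref{lem:overreflectcolim}, and verifying it rigorously is a routine check (which I would spell out in one line).

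For the direction ``$X \to C$ is $\kappa$-compact in $\mathcal{C}_{/C}$ implies $X$ is $\kappa$-compact in $\mathcal{C}$'', I would apply Lemma~\ref{lem:ladjcompact}(i) to the adjunction $F \dashv G$. By hypothesis the Cartesian product in $\mathcal{C}$ preserves $\kappa$-filtered colimits separately in each variable, so $G = (\blank) \times C$ preserves $\kappa$-filtered colimits; hence its left adjoint $F$ preserves $\kappa$-compact objects, and in particular $F(X \to C) = X$ is $\kappa$-compact in $\mathcal{C}$.

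For the reverse direction I would not try to use Lemma~\ref{lem:ladjcompact}(ii) (which would require knowing $\mathcal{C}_{/C}$ is itself $\kappa$-accessible), but instead argue directly. Let $p \colon K \to \mathcal{C}_{/C}$ be a $\kappa$-filtered diagram with colimit $Y \to C$; by Lemma~\ref{lem:overreflectcolim} the forgetful functor $F$ creates colimits under the finite-products assumption, so $Y \simeq \colim_K F \circ p$ in $\mathcal{C}$. Writing the slice mapping space as the pullback
\[ \Map_{\mathcal{C}_{/C}}(X \to C,\, Z \to C) \simeq \Map_{\mathcal{C}}(X, Z) \times_{\Map_{\mathcal{C}}(X, C)} \{X \to C\}, \]
and using that $\kappa$-filtered colimits commute with finite limits in $\mathcal{S}$ together with the $\kappa$-compactness of $X$ in $\mathcal{C}$, the natural map $\colim_K \Map_{\mathcal{C}_{/C}}(X \to C, p(k)) \to \Map_{\mathcal{C}_{/C}}(X \to C, Y \to C)$ is an equivalence, proving that $X \to C$ is $\kappa$-compact in $\mathcal{C}_{/C}$.

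There is no serious obstacle: the whole argument is really a packaging of Lemmas~\ref{lem:ladjcompact} and~\ref{lem:overreflectcolim} through the adjunction $F \dashv (\blank) \times C$. The only point requiring even mild care is verifying that this right adjoint is indeed given by the product with $C$, and noting that the hypothesis on compatibility of $\kappa$-filtered colimits with products is exactly what makes this right adjoint preserve $\kappa$-filtered colimits.
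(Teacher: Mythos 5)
Your proposal is correct and follows essentially the same route as the paper: both directions hinge on the adjunction between the forgetful functor and $(\blank)\times C$, with Lemma~\ref{lem:ladjcompact} giving the forward implication and a direct pullback argument (using that $\kappa$-filtered colimits commute with finite limits in $\mathcal{S}$) giving the converse. The only step you compress is the justification that $(\blank)\times C \colon \mathcal{C} \to \mathcal{C}_{/C}$ preserves $\kappa$-filtered colimits, which the paper spells out by noting that the composite with the forgetful functor does and that the forgetful functor creates colimits by Lemma~\ref{lem:overreflectcolim}.
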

\begin{proof}
  The forgetful functor $r_{!} \colon \mathcal{C}_{/C} \to
  \mathcal{C}$ creates colimits by Lemma~\ref{lem:overreflectcolim}
  and admits a right adjoint $r^{*} \colon \mathcal{C} \to
  \mathcal{C}_{/C}$ given by sending $X \in \mathcal{C}$ to the
  projection $X \times C \to C$. By assumption the composite
  $r_{!}r^{*}$, which sends $X$ to $X \times C$, preserves
  $\kappa$-filtered colimits, hence so does $r^{*}$. By
  Lemma~\ref{lem:ladjcompact} the left adjoint $r_{!}$ preserves
  $\kappa$-compact objects. Thus if $X \to C$ is $\kappa$-compact in
  $\mathcal{C}_{/C}$, then $X$ is $\kappa$-compact in $\mathcal{C}$.

  Conversely, suppose $X \to C$ is an object of $\mathcal{C}_{/C}$
  such that $X$ is $\kappa$-compact in $\mathcal{C}$, and $p \colon K
  \to \mathcal{C}_{/C}$ is a $\kappa$-filtered diagram in
  $\mathcal{C}_{/C}$. We then have a diagram \nolabelcsquare{\colim
    \Map_{\mathcal{C}}(X, r_{!}\circ p)}{\Map_{\mathcal{C}}(X, \colim
    r_{!}\circ p)}{\colim \Map_{\mathcal{C}}(X,
    C)}{\Map_{\mathcal{C}}(X, C)} where the horizontal maps are
  equivalences. Since $\kappa$-filtered colimits commute with
  $\kappa$-small limits in $\mathcal{S}$, hence in particular finite
  limits, we have a pullback diagram \nolabelcsquare{\colim
    \Map_{\mathcal{C}_{/C}}(X, p)}{\colim \Map_{\mathcal{C}}(X,
    p)}{\colim *}{\colim \Map_{\mathcal{C}}(X, C)} where the obvious
  map $\colim * \to *$ is an equivalence. Thus the canonical map
  \[\colim \Map_{\mathcal{C}_{/C}}(X, p) \to \Map_{\mathcal{C}_{/C}}(X,
  \colim p)\] can be identified with the pullback along the inclusion
  $\{X \to C\} \to \Map_{\mathcal{C}}(X, C)$ of an equivalence and so
  is itself an equivalence. Hence $X \to C$ is indeed $\kappa$-compact
  in $\mathcal{C}_{/C}$.
\end{proof}

\begin{cor}\label{cor:PullbackFltColim}
  Suppose $\mathcal{C}$ is a $\kappa$-accessible \icat{} with finite
  limits, such that the Cartesian product preserves
  $\kappa$-filtered colimits separately in each variable. Then for
  every morphism $f \colon C \to D$ in $\mathcal{C}$ the pullback
  functor $f^{*} \colon \mathcal{C}_{/D} \to \mathcal{C}_{/C}$
  preserves $\kappa$-filtered colimits.
\end{cor}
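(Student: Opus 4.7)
The plan is to apply Lemma \ref{lem:ladjcompact}(ii) to the adjunction
\[ f_{!} \colon \mathcal{C}_{/C} \rightleftarrows \mathcal{C}_{/D} :\! f^{*}, \]
where $f_{!}$ is the functor given by postcomposition with $f$ (this left adjoint exists since $\mathcal{C}$ has finite limits, so $f^{*}$ is defined). Once we know that $\mathcal{C}_{/C}$ is $\kappa$-accessible, the lemma reduces the claim that $f^{*}$ preserves $\kappa$-filtered colimits to the claim that $f_{!}$ preserves $\kappa$-compact objects. But by Proposition \ref{propn:AccOverCompact}, applied to both $\mathcal{C}_{/C}$ and $\mathcal{C}_{/D}$, an object is $\kappa$-compact in either slice if and only if its underlying object in $\mathcal{C}$ is $\kappa$-compact. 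Since $f_{!}$ sends $X \to C$ to the composite $X \to D$ and thus preserves the underlying object of $\mathcal{C}$, it automatically preserves $\kappa$-compact objects, and we are done.

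The only substantive step is therefore to verify that $\mathcal{C}_{/C}$ is $\kappa$-accessible. By Proposition \ref{propn:AccOverCompact} the $\kappa$-compact objects of $\mathcal{C}_{/C}$ are those morphisms $X \to C$ whose source $X$ is $\kappa$-compact in $\mathcal{C}$; this is an essentially small collection since $\mathcal{C}$ is $\kappa$-accessible. To see that every object $g \colon X \to C$ is a $\kappa$-filtered colimit of such, I would write $X \simeq \colim_{\alpha \in \mathcal{I}} X_{\alpha}$ as a $\kappa$-filtered colimit of $\kappa$-compact objects of $\mathcal{C}$ and consider the $\kappa$-filtered \icat{} $(\mathcal{C}^{\kappa})_{/g}$ of $\kappa$-compact objects of $\mathcal{C}_{/C}$ over $g$; the forgetful functor to $\mathcal{C}$ creates $\kappa$-filtered colimits by Lemma \ref{lem:overreflectcolim} (using the finite product hypothesis), so the colimit of the tautological diagram $(\mathcal{C}^{\kappa})_{/g} \to \mathcal{C}_{/C}$ is $g$ itself.

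The main obstacle, such as it is, lies in this accessibility verification, but it is really a routine packaging of Proposition \ref{propn:AccOverCompact} with Lemma \ref{lem:overreflectcolim}; the conceptual content of the corollary is entirely carried by the adjunction $f_{!} \dashv f^{*}$ together with the characterization of $\kappa$-compact objects in slices.
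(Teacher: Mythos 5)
Your proposal is correct and follows essentially the same route as the paper: identify $f^{*}$ as the right adjoint of $f_{!}$, use Proposition~\ref{propn:AccOverCompact} to see that $f_{!}$ preserves $\kappa$-compact objects, and conclude via Lemma~\ref{lem:ladjcompact}. The only difference is that you spell out the verification that $\mathcal{C}_{/C}$ is $\kappa$-accessible (needed to invoke part (ii) of the lemma), which the paper leaves implicit; that added detail is sound.
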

\begin{proof}
  The functor $f^{*}$ is right adjoint to the functor $f_{!} \colon
  \mathcal{C}_{/C} \to \mathcal{C}_{/D}$ given by composition with
  $f$.  By Proposition~\ref{propn:AccOverCompact} the functor $f_{!}$
  preserves $\kappa$-compact objects, and so by
  Lemma~\ref{lem:ladjcompact} the right adjoint $f^{*}$ preserves
  $\kappa$-filtered colimits.
\end{proof}

\begin{proof}[Proof of Theorem~\ref{thm:OpdFiltColim}]
  We consider first the case of the forgetful functor $\OpdIns \to
  \CatI$. For any categorical pattern $\mathfrak{P} = (X, S,
  \{p_{\alpha}\})$, it follows from the proof of \cite[Theorem
    B.0.20]{HA} that the model category $(\sSet^{+})_{\mathfrak{P}}$ is a
  left Bousfield localization of the model category
  $(\sSet^{+})_{\mathfrak{P}^{-}}$, where $\mathfrak{P}^{-}$ be the
  categorical pattern $(X,S,\emptyset)$. Thus the \icat{} $\OpdIns$ is
  a localization of $\CoCart(\simp^{\op},\simp^{\op}_{\txt{int}})$,
  and by Corollary~\ref{cor:ForgetCoCartSubcatFltColim} the forgetful
  functor $\CoCart(\simp^{\op},\simp^{\op}_{\txt{int}}) \to
  (\CatI)_{/\simp^{\op}}$ detects filtered colimits. It follows that
  the colimit of a filtered diagram of \iopds{} is the localization of
  the colimit of the corresponding diagram in
  $\CoCart(\simp^{\op},\simp^{\op}_{\txt{int}})$, and this colimit can
  be computed in $(\CatI)_{/\simp^{\op}}$ or equivalently in $\CatI$, by
  Lemma~\ref{lem:overreflectcolim}. Thus, to show that the forgetful functor
  from $\OpdIns$ to $\CatI$ preserves filtered colimits it suffices to
  show that the colimit in $(\CatI)_{/\simp^{\op}}$ of such a diagram
  is also an \iopd{}.
  
  Let $p \colon \mathcal{I} \to \OpdIns, \alpha \mapsto
  \mathcal{O}_{\alpha}$ be a filtered diagram, and let
  $\mathcal{O}$ be the colimit in $\CatI$ of the diagram
  obtained by composing with the forgetful functor. By
  Proposition~\ref{propn:FltColimCoCart} the induced map $\mathcal{O} \to \simp^{\op}$ has
  coCartesian arrows over inert morphisms in $\simp^{\op}$, so it suffices
  to prove that the two other conditions for being an \iopd{} are
  satisfied.

  Since pullbacks in $\CatI$ preserve filtered colimits by
  Corollary~\ref{cor:PullbackFltColim}, and these commute with finite
  limits in $\CatI$, we have a commutative diagram
  \nolabelcsquare{\mathcal{O}_{[n]}}{\colim_{\alpha}
    \mathcal{O}_{\alpha,
      [n]}}{(\mathcal{O}_{[1]})^{\times n}}{\colim_{\alpha}
    (\mathcal{O}_{\alpha,[1]})^{\times n}} where all but the
  left vertical map are known to be equivalences, hence this is also
  an equivalence.

  Now suppose $Y$ is an object of $\mathcal{O}_{[n]}$ and
  $\eta_{i} \colon Y \to Y_{i}$ are coCartesian arrows over the inert
  maps $\rho_{i} \colon [n] \to [1]$ in $\simp^{\op}$. We must show
  that for every $X \in \mathcal{O}_{[m]}$ and every map
  $\phi \colon [m] \to [n]$ in $\simp^{\op}$, the morphism
  \[ \Map^{\phi}_{\mathcal{O}}(X, Y) \to \prod_{i}
  \Map^{\rho_{i}\phi}_{\mathcal{O}}(X, Y_{i})\]
  is an equivalence. We can choose $\alpha \in
  \mathcal{I}$ and objects $X_{\alpha}$ and $Y_{\alpha}$ in
  $\mathcal{O}_{\alpha}$ that map to $X$ and $Y$;
  coCartesian morphisms $Y_{\alpha} \to \rho_{i,!}Y_{\alpha}$ over $\rho_{i}$
  will then map to $\eta_{i}$. 
  As in the proof of Proposition~\ref{propn:FltColimCoCart}, since
  $\mathcal{O}$ is a filtered colimit in $\CatI$ we get a diagram
  \nolabelcsquare{\Map^{\phi}_{\mathcal{O}}(X, Y)}{\displaystyle{\prod_{i}}\,
    \Map^{\rho_{i}\phi}_{\mathcal{O}}(X, Y_{i})}{\displaystyle{\colim_{\psi \colon \alpha \to \beta \in \mathcal{I}_{\alpha/}}}
  \Map^{\phi}_{\mathcal{O}_{\beta}}(\psi_{!}X_{\alpha},
  \psi_{!}Y_{\alpha})}{\displaystyle{\prod_{i}}\,\displaystyle{\colim_{\psi \colon \alpha \to \beta \in \mathcal{I}_{\alpha/}}}
  \Map^{\rho_{i}\phi}_{\mathcal{O}_{\beta}}(\psi_{!}X_{\alpha},
  \psi_{!}\rho_{i,!}Y_{\alpha})}
  where the vertical maps are equivalences. But since filtered
  colimits commute with finite limits in $\mathcal{S}$, the bottom
  horizontal map is also an equivalence, as
  $\mathcal{O}_{\beta}$ is an \iopd{} for all $\beta$. It
  follows that the top horizontal map is also an equivalence, which
  completes the proof that $\mathcal{O}$ is an \iopd{}.
  
  The proof for $\OpdInsg$ is similar --- the only difference is the
  we replace the finite products with limits over the categories
  $\mathcal{G}^{\simp}_{[n]/}$, which are also finite.
\end{proof}

\subsection{Trivial $\infty$-Operads}
In this subsection we will associate to any \nsiopd{}
$\mathcal{O}$ a \emph{trivial} \iopd{}
$\mathcal{O}_{\triv}$ with a map
$\mathcal{O}_{\triv} \to \mathcal{O}$, such that
for any \iopd{} $\mathcal{P}$ the \icat{}
$\Alg_{\mathcal{O}_{\triv}}(\mathcal{P})$ of
$\mathcal{O}_{\triv}$-algebras in $\mathcal{P}$ is
equivalent to the functor \icat{} $\Fun(\mathcal{O}_{[1]}, \mathcal{P}_{[1]})$; an
analogous result also holds for generalized \nsiopds{}.

\begin{defn}
  Let $\mathcal{M}$ be a \gnsiopd{}. Define the \gnsiopd{}
  $\mathcal{M}_{\triv}$ by the pullback diagram
  \csquare{\mathcal{M}_{\triv}}{\mathcal{M}}{\simp^{\op}_{\txt{int}}}{\simp^{\op}}{\tau_{\mathcal{M}}}{}{}{}
  This is the \defterm{trivial \gnsiopd{} over $\mathcal{M}$}.
\end{defn}

\begin{defn}
  Let $\mathfrak{O}_{\txt{ns}}^{\triv}$ denote the categorical
  pattern \[(\simp^{\op}_{\txt{int}},
  \mathrm{N}(\simp^{\op}_{\txt{int}})_{1}, \{
  (\mathcal{G}^{\simp})_{[n]/}^{\triangleleft} \to
  \simp^{\op}\}).\]
\end{defn}
\begin{remark}
  An object $(X, S)$ of $(\sSet^{+})_{/(\simp^{\op}_{\txt{int}},
    \mathrm{N}(\simp^{\op}_{\txt{int}})_{1})}$ is 
  $\mathfrak{O}_{\txt{ns}}^{\txt{triv}}$-fibrant if $X \to
  \simp^{\op}_{\txt{int}}$ is a coCartesian fibration, $S$ is the set
  of coCartesian edges, and the Segal morphisms $X_{[n]} \to X_{[1]}
  \times_{X_{[0]}} \cdots \times_{X_{[0]}} X_{[1]}$ are equivalences.
\end{remark}
Under the equivalence between coCartesian fibrations and functors the
\icat{} associated to the model category
$(\sSet^{+})_{\mathfrak{O}_{\txt{ns}}^{\txt{triv}}}$
corresponds to the full subcategory of $\Fun(\simp^{\op}_{\txt{triv}},
\CatI)$ spanned by the functors that are right Kan extensions along
the inclusion $\gamma \colon \mathcal{G}^{\simp} \to
\simp^{\op}_{\txt{int}}$. Thus we have proved the following:
\begin{lemma}
  The \icat{} associated to the model category
  $(\sSet^{+})_{\mathfrak{O}_{\txt{ns}}^{\txt{triv}}}$ is equivalent
  to $\Fun(\mathcal{G}^{\simp}, \CatI)$.
\end{lemma}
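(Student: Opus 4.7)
The plan is to identify the model category $(\sSet^{+})_{\mathfrak{O}_{\txt{ns}}^{\txt{triv}}}$ as a Bousfield localization of the coCartesian model structure on $(\sSet^{+})_{/(\simp^{\op}_{\txt{int}})^{\sharp}}$ and then translate the localizing condition into a right Kan extension condition, which in turn gives the claimed equivalence. Concretely, by the proof of \cite[Theorem B.0.20]{HA} (which was also used in the proof of Theorem~\ref{thm:OpdFiltColim}), the model category associated to $\mathfrak{O}_{\txt{ns}}^{\triv} = (\simp^{\op}_{\txt{int}}, \mathrm{N}(\simp^{\op}_{\txt{int}})_1, \{(\mathcal{G}^{\simp})_{[n]/}^{\triangleleft} \to \simp^{\op}\})$ is a left Bousfield localization of the coCartesian model structure on $(\sSet^{+})_{/(\simp^{\op}_{\txt{int}})^{\sharp}}$, whose associated \icat{} is $\CoCart(\simp^{\op}_{\txt{int}}) \simeq \Fun(\simp^{\op}_{\txt{int}}, \CatI)$ by straightening.

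Next, I would identify the localizing condition explicitly. The only extra condition beyond being a coCartesian fibration (with all edges marked) is condition (3) in the definition of $\mathfrak{P}$-fibrancy for the diagrams $p_{[n]} \colon (\mathcal{G}^{\simp})_{[n]/}^{\triangleleft} \to \simp^{\op}_{\txt{int}}$: namely that the pulled-back coCartesian fibration is classified by a \emph{limit} diagram $(\mathcal{G}^{\simp})_{[n]/}^{\triangleleft} \to \CatI$. Under straightening, this means that the functor $F \colon \simp^{\op}_{\txt{int}} \to \CatI$ classifying the coCartesian fibration satisfies the Rezk--Segal condition
\[
F([n]) \xrightarrow{\sim} \lim_{([n] \to [i]) \in \mathcal{G}^{\simp}_{[n]/}} F([i]).
\]
Condition (4) in the definition of $\mathfrak{P}$-fibrancy is automatic here because there is no extra scaling involved and the morphisms in $\simp^{\op}_{\txt{int}}$ act by genuine coCartesian transport.

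The final step is to recognize this Segal condition as exactly the condition that $F$ is a right Kan extension of its restriction $F|_{\mathcal{G}^{\simp}}$ along the inclusion $\gamma \colon \mathcal{G}^{\simp} \hookrightarrow \simp^{\op}_{\txt{int}}$. Indeed, by \cite[Proposition 4.3.2.15]{HTT}, $(\gamma_* G)([n])$ is computed as the limit of $G$ over the comma category $\mathcal{G}^{\simp} \times_{\simp^{\op}_{\txt{int}}} (\simp^{\op}_{\txt{int}})_{[n]/} = \mathcal{G}^{\simp}_{[n]/}$, so $F$ is a right Kan extension along $\gamma$ precisely when the Segal maps above are equivalences. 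Invoking \cite[Proposition 4.3.2.15]{HTT} once more (or rather the fact that right Kan extensions along a fully faithful inclusion yield a fully faithful functor onto the subcategory of right Kan extended functors), the restriction functor induces an equivalence
\[
\Fun^{\txt{RKE}}(\simp^{\op}_{\txt{int}}, \CatI) \xrightarrow{\sim} \Fun(\mathcal{G}^{\simp}, \CatI),
\]
where the left-hand side is the full subcategory of right Kan extensions. Combining these identifications yields the desired equivalence between the \icat{} associated to $(\sSet^{+})_{\mathfrak{O}_{\txt{ns}}^{\triv}}$ and $\Fun(\mathcal{G}^{\simp}, \CatI)$.

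The only point requiring mild care (and hence the main, but modest, obstacle) is checking that the localization condition really reduces to the limit/Segal condition on the classifying functor, which amounts to verifying that condition (4) in the definition of $\mathfrak{P}$-fibrancy adds nothing extra in this setting --- this is because all edges of $\simp^{\op}_{\txt{int}}$ are marked, so coCartesian sections are automatically $\pi$-limit diagrams once the underlying Segal condition holds. Everything else follows from standard \icatl{} machinery that is already invoked elsewhere in the paper.
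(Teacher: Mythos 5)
Your proposal is correct and follows essentially the same route as the paper: identify the fibrant objects of $(\sSet^{+})_{\mathfrak{O}_{\txt{ns}}^{\txt{triv}}}$ as coCartesian fibrations over $\simp^{\op}_{\txt{int}}$ satisfying the Segal/limit condition, recognize under straightening that these are exactly the functors $\simp^{\op}_{\txt{int}} \to \CatI$ that are right Kan extensions along $\gamma \colon \mathcal{G}^{\simp} \hookrightarrow \simp^{\op}_{\txt{int}}$, and conclude via the equivalence between $\Fun(\mathcal{G}^{\simp}, \CatI)$ and the full subcategory of right Kan extended functors. The paper compresses all of this into a preceding remark and one sentence, and in particular does not comment on condition (4) of $\mathfrak{P}$-fibrancy; your observation that it is automatic for a coCartesian fibration with all edges marked is the same point the paper makes elsewhere (cf.\ Remark~\ref{rmk:monicatsegcond}) and is a reasonable filling-in of that gap rather than a divergence.
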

The obvious map of categorical patterns
$\mathfrak{O}_{\txt{ns}}^{\txt{triv}} \to
\mathfrak{O}_{\txt{ns}}^{\txt{gen}}$ then induces an adjoint pair
of functors
\[ \gamma_{!} \colon \Fun(\mathcal{G}^{\simp}, \CatI)
\rightleftarrows \OpdInsg \colon \gamma^{*}.\] Since composition with
the inclusion $\simp^{\op}_{\txt{int}} \to \simp^{\op}$ takes
$\mathfrak{O}_{\txt{ns}}^{\txt{triv}}$-fibrant objects to
$\mathfrak{O}_{\txt{ns}}^{\txt{gen}}$-fibrant objects, the left
adjoint $\gamma_{!}$ sends a functor $\mathcal{G}^{\simp} \to
\CatI$ to its right Kan extension to $\simp^{\op}_{\txt{int}} \to
\CatI$, then to the composite $\mathcal{E} \to \simp^{\op}_{\txt{int}}
\to \simp^{\op}$, where $\mathcal{E} \to \simp^{\op}_{\txt{int}}$ is
the associated coCartesian fibration. In particular, if $\mathcal{M}$
is a \gnsiopd{}, then $\mathcal{M}_{\triv}$ is
$\gamma_{!}\gamma^{*}\mathcal{M}$, and the natural map
$\mathcal{M}_{\triv} \to \mathcal{M}$ is the adjunction morphism.

Taking the $(\infty,2)$-categories associated to the categorical
patterns into account, we get the following:
\begin{propn}\label{propn:TrivAlgEq}
  Let $F \colon \mathcal{G}^{\simp} \to \CatI$ be a functor, and
  $\mathcal{F} \to \mathcal{G}^{\simp}$ the associated
  coCartesian fibration. If $\mathcal{M}$ is a \gnsiopd{} let
  $\mathcal{M}_{\txt{glob}}$ denote the pullback of $\mathcal{M}$
  along $\mathcal{G}^{\simp} \to \simp^{\op}$. Then there is a
  natural equivalence between $\Alg_{\gamma_{!}F}(\mathcal{M})$ and
  the full subcategory
  $\Fun^{\txt{coCart}}_{\mathcal{G}^{\simp}}(\mathcal{F},
  \mathcal{M}_{\txt{glob}})$ of
  $\Fun_{\mathcal{G}^{\simp}}(\mathcal{F},
  \mathcal{M}_{\txt{glob}})$ spanned by functors that preserve
  coCartesian arrows. In particular, if $\mathcal{O}$ is a
  \nsiopd{}, then $\Alg_{\gamma_{!}F}(\mathcal{O}) \simeq
  \Fun(F([1]), \mathcal{O}_{[1]})$.
\end{propn}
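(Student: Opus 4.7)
The plan is to apply the $(\infty,2)$-categorical refinement of the adjunction $\gamma_! \dashv \gamma^*$ from Corollary~\ref{cor:GenOpdLoc} and the paragraph preceding the proposition. First observe that since $\gamma_!F \to \simp^{\op}$ factors through the inclusion $\simp^{\op}_{\txt{int}} \hookrightarrow \simp^{\op}$, every morphism in $\gamma_!F$ lies over an inert morphism, and its coCartesian-over-inert morphisms are precisely its coCartesian morphisms viewed over $\simp^{\op}_{\txt{int}}$. Setting $\mathcal{M}_{\txt{int}} := \mathcal{M} \times_{\simp^{\op}} \simp^{\op}_{\txt{int}}$, a map of \gnsiopds{} $\gamma_!F \to \mathcal{M}$ is therefore the same datum as a coCartesian-preserving functor $\gamma_!F \to \mathcal{M}_{\txt{int}}$ over $\simp^{\op}_{\txt{int}}$, giving
\[
\Alg_{\gamma_!F}(\mathcal{M}) \simeq \Fun^{\txt{coCart}}_{\simp^{\op}_{\txt{int}}}(\gamma_!F, \mathcal{M}_{\txt{int}}).
\]

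Next I would identify both $\gamma_!F$ and $\mathcal{M}_{\txt{int}}$ as coCartesian fibrations over $\simp^{\op}_{\txt{int}}$ arising from right Kan extension along $\gamma$. By construction $\gamma_!F \to \simp^{\op}_{\txt{int}}$ classifies $\txt{Ran}_\gamma F$. On the other hand, condition (ii) of Definition~\ref{defn:gnsiopd} precisely expresses that the functor $\simp^{\op}_{\txt{int}} \to \CatI$ classifying $\mathcal{M}_{\txt{int}}$ is the right Kan extension along $\gamma$ of the functor $G\colon \mathcal{G}^{\simp} \to \CatI$ classifying $\mathcal{M}_{\txt{glob}}$. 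Under the straightening equivalence $\CoCart(\simp^{\op}_{\txt{int}}) \simeq \Fun(\simp^{\op}_{\txt{int}}, \CatI)$ (compatible with the marked-simplicial enrichments), coCartesian-preserving functors between these correspond to natural transformations, so
\[
\Fun^{\txt{coCart}}_{\simp^{\op}_{\txt{int}}}(\gamma_!F, \mathcal{M}_{\txt{int}}) \simeq \Map_{\Fun(\simp^{\op}_{\txt{int}}, \CatI)}(\txt{Ran}_\gamma F, \txt{Ran}_\gamma G).
\]
Since $\gamma$ is fully faithful, the counit $\gamma^{*}\txt{Ran}_\gamma \to \id$ is an equivalence, whence by the adjunction this last mapping space agrees with $\Map_{\Fun(\mathcal{G}^{\simp}, \CatI)}(F, G)$. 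Unstraightening over $\mathcal{G}^{\simp}$ then identifies this with $\Fun^{\txt{coCart}}_{\mathcal{G}^{\simp}}(\mathcal{F}, \mathcal{M}_{\txt{glob}})$, establishing the first claim.

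For the final sentence, when $\mathcal{O}$ is a \nsiopd{} the fibre $\mathcal{O}_{[0]}$ is contractible by Definition~\ref{defn:nsiopd2}(ii), so the fibre of $\mathcal{O}_{\txt{glob}}$ over $[0] \in \mathcal{G}^{\simp}$ is contractible. A coCartesian-preserving functor $\mathcal{F} \to \mathcal{O}_{\txt{glob}}$ over $\mathcal{G}^{\simp}$ is therefore determined, up to contractible choice, by its restriction to the fibre over $[1]$, yielding $\Fun(F([1]), \mathcal{O}_{[1]})$.

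The main technical point requiring care is the compatibility between the two straightening equivalences (once over $\simp^{\op}_{\txt{int}}$, once over $\mathcal{G}^{\simp}$) and the Kan extension/restriction adjunction---concretely, that pulling back the coCartesian fibration classifying $\txt{Ran}_\gamma F$ along $\gamma$ recovers $\mathcal{F}$. This follows from full faithfulness of $\gamma$ together with the standard stability of straightening under base change, together with the marked-simplicial enrichment of the categorical-pattern model structure that upgrades the Quillen adjunction to the $(\infty,2)$-categorical level.
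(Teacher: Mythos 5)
Your proof is correct and is essentially the paper's argument made explicit: the paper simply invokes the marked-simplicially enriched Quillen adjunction between the categorical-pattern model structures $(\sSet^{+})_{\mathfrak{O}_{\txt{ns}}^{\txt{triv}}}$ and $(\sSet^{+})_{\mathfrak{O}_{\txt{ns}}^{\txt{gen}}}$ together with the identification of the $\mathfrak{O}_{\txt{ns}}^{\txt{triv}}$-fibrant objects with $\Fun(\mathcal{G}^{\simp},\CatI)$, and your steps (factoring through $\mathcal{M}_{\txt{int}}$, straightening over $\simp^{\op}_{\txt{int}}$, the $\gamma^{*}\dashv\txt{Ran}_{\gamma}$ adjunction with $\gamma$ fully faithful, unstraightening over $\mathcal{G}^{\simp}$) are precisely that adjunction unwound at the $\infty$-categorical level. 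The key observation — that condition (ii) in the definition of a \gnsiopd{} says the straightening of $\mathcal{M}_{\txt{int}}$ is a right Kan extension along $\gamma$ — is the same one the paper uses.
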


\subsection{Monoid and Category Objects} 
We will now observe that if $\mathcal{V}$ is an \icat{} with finite
products and $\mathcal{M}$ is a (generalized) \nsiopd{}, then the
$\mathcal{M}$-algebras in the Cartesian monoidal \icat{}
$\mathcal{V}^{\times}$ are equivalent to a certain class of functors
$\mathcal{M} \to \mathcal{V}$, namely
the $\mathcal{M}$-\emph{monoids}.

\begin{defn}
  Suppose $\mathcal{M}$ is a generalized \nsiopd{} and $\mathcal{V}$
  an \icat{} with finite limits. An
  \defterm{$\mathcal{M}$-monoid object} in $\mathcal{V}$ is
  a functor $F \colon \mathcal{M} \to \mathcal{V}$ such that
  its restriction $F|_{\mathcal{M}_{\triv}}$ is a
  right Kan extension of $F|_{\mathcal{M}_{[1]}}$ along the inclusion
  $\mathcal{M}_{[1]} \hookrightarrow \mathcal{M}_{\triv}$. Write
  $\Mon_{\mathcal{M}}(\mathcal{V})$
  for the full subcategory of $\Fun(\mathcal{M},
  \mathcal{V})$ spanned by the $\mathcal{M}$-monoid
  objects.
\end{defn}

\begin{defn}
  Suppose $\mathcal{M}$ is a generalized \nsiopd{} and $\mathcal{V}$
  is an \icat{} with finite limits. An \defterm{$\mathcal{M}$-category
    object} in $\mathcal{V}$ is a functor $F \colon \mathcal{M} \to
  \mathcal{V}$ such that its restriction $F|_{\mathcal{M}_{\triv}}$ is
  a right Kan extension of $F|_{\mathcal{M}_{\txt{glob}}}$ along the
  inclusion $\mathcal{M}_{\txt{glob}} \hookrightarrow
  \mathcal{M}_{\triv}$. Write
  $\catname{Cat}_{\mathcal{M}}(\mathcal{V})$ for the full subcategory
  of $\Fun(\mathcal{M}, \mathcal{V})$ spanned by the
  $\mathcal{M}$-category objects. When $\mathcal{M}$ is $\simp^{\op}$
  we refer to $\simp^{\op}$-category objects as just \defterm{category
    objects}.
\end{defn}

\begin{propn}\label{propn:monoideqalg}
  Suppose $\mathcal{V}$ is an \icat{} with finite products, and
  consider $\mathcal{V}$ as a monoidal \icat{} via the pullback of the
  Cartesian symmetric monoidal structure. Then for
  any \gnsiopd{} $\mathcal{M}$ we have
  $\Alg_{\mathcal{M}}(\mathcal{V}) \simeq
  \Mon_{\mathcal{M}}(\mathcal{V})$.
\end{propn}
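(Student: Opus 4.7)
The plan is to follow the strategy of \cite[Proposition 2.4.2.5]{HA}, adapted to the non-symmetric setting. First I unpack the explicit structure of $\mathcal{V}^{\times} \to \simp^{\op}$, which is the pullback along $c \colon \simp^{\op} \to \bbGamma^{\op}$ of the Cartesian symmetric monoidal structure on $\mathcal{V}$: the fiber over $[n]$ is $\mathcal{V}^{\times n}$, the fiber over $[0]$ is the terminal \icat{}, coCartesian pushforward along $\rho_{i}$ is projection onto the $i$-th factor, and a morphism $(V_{1},\ldots,V_{n}) \to (W_{1},\ldots,W_{m})$ over $\phi \colon [m] \to [n]$ is given by a tuple of maps $\prod_{k=\phi(j-1)+1}^{\phi(j)} V_{k} \to W_{j}$ in $\mathcal{V}$.

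Next I construct a comparison functor $\Theta \colon \Alg_{\mathcal{M}}(\mathcal{V}) \to \Fun(\mathcal{M}, \mathcal{V})$ by postcomposing an algebra $F^{\otimes}$ with the natural ``total product'' functor $\pi \colon \mathcal{V}^{\times} \to \mathcal{V}$ that sends $(V_{1},\ldots,V_{n})$ to $V_{1} \times \cdots \times V_{n}$ and acts on morphisms via the universal property of products (and restricts to the identity on $\mathcal{V}^{\times}_{[1]} \simeq \mathcal{V}$). For $X \in \mathcal{M}_{[n]}$ with $\rho_{i}$-coCartesian lifts $X \to X_{i}$, inert-preservation of $F^{\otimes}$ gives $F^{\otimes}(X) \simeq (F(X_{1}),\ldots,F(X_{n}))$ in $\mathcal{V}^{\times n}$ with $F := \Theta(F^{\otimes})|_{\mathcal{M}_{[1]}}$, so $\Theta(F^{\otimes})(X) \simeq F(X_{1}) \times \cdots \times F(X_{n})$. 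Since the overcategory of inert morphisms from $X$ to objects of $\mathcal{M}_{[1]}$ inside $\mathcal{M}_{\triv}$ is equivalent to the discrete set $\{\rho_{1},\ldots,\rho_{n}\}$ (every inert endomorphism of an object of $\mathcal{M}_{[1]}$ being an equivalence), this product is precisely the value at $X$ of the right Kan extension of $F$ along $\mathcal{M}_{[1]} \hookrightarrow \mathcal{M}_{\triv}$; for $X \in \mathcal{M}_{[0]}$ the corresponding overcategory is empty so the right Kan extension is terminal, matching the image of the unique object of $\mathcal{V}^{\times}_{[0]}$ under $\pi$. Hence $\Theta$ factors through $\Mon_{\mathcal{M}}(\mathcal{V})$.

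To produce an inverse, given a monoid $G \colon \mathcal{M} \to \mathcal{V}$, I build $G^{\otimes} \colon \mathcal{M} \to \mathcal{V}^{\times}$ over $\simp^{\op}$ by sending $X \in \mathcal{M}_{[n]}$ to $(G(X_{1}),\ldots,G(X_{n}))$, and sending $\psi \colon X \to Y$ over $\phi \colon [m] \to [n]$ to the morphism whose $j$-th component $\prod_{k=\phi(j-1)+1}^{\phi(j)} G(X_{k}) \to G(Y_{j})$ is $G$ applied to the composite of $\psi$ with a $\rho_{j}$-inert lift, after using the monoid condition at the intermediate coCartesian pushforward of $X$ along the sub-interval inclusion to identify its $G$-value with the required product $\prod_{k} G(X_{k})$. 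Inert-preservation is immediate from the definition, and the constructions $F^{\otimes} \mapsto \Theta(F^{\otimes})$ and $G \mapsto G^{\otimes}$ are clearly mutually inverse up to natural equivalence on objects.

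The main obstacle is implementing this inverse rigorously as a functor of \icats{} with all higher coherences, rather than just as a map of homotopy categories. This is handled exactly as in the symmetric case: one exploits the universal property characterizing $\mathcal{V}^{\times}$ from \cite[\S 2.4.1]{HA}, so that specifying a functor $\mathcal{M} \to \mathcal{V}^{\times}$ over $\simp^{\op}$ satisfying the inert-preservation condition is equivalent to specifying its underlying functor $\mathcal{M} \to \mathcal{V}$ together with the Segal/monoid compatibility used above. Alternatively, one may deduce the statement by pulling back the symmetric analogue along $c \colon \simp^{\op} \to \bbGamma^{\op}$, since $\mathcal{V}^{\times}$ is by definition $c^{*}$ of the symmetric Cartesian structure and the two sides of the comparison transform compatibly under this pullback; I would use the direct approach, as it stays wholly within the non-symmetric setting set up in this section.
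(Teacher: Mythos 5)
Your argument is exactly the one the paper intends: the paper's proof is simply the citation ``As \cite[Proposition 2.4.2.5]{HA}'', i.e.\ Lurie's comparison of algebras over a Cartesian structure with monoids, transported from $\bbGamma^{\op}$ to $\simp^{\op}$, and your proposal is a correct unpacking of that argument (postcomposition with the lax Cartesian structure $\pi$, identification of the right Kan extension condition with the Segal product decomposition, and the coherence of the inverse handled by the universal property of $\mathcal{V}^{\times}$ from \cite[\S 2.4.1]{HA}). The only caveats are cosmetic: the comma category computing the right Kan extension is not literally discrete (it decomposes as a disjoint union indexed by the $\rho_{i}$, each piece having the coCartesian lift as initial object), and the alternative route via pullback along $c$ is less routine than you suggest for \emph{generalized} non-symmetric $\infty$-operads, but you rightly opt for the direct argument.
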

\begin{proof}
  As \cite[Proposition 2.4.2.5]{HA}.
\end{proof}

\begin{propn}\label{propn:moncateqmnd}
  We have equivalences $\MonI \simeq
  \Mon_{\simp^{\op}}(\CatI)$ and
  $\catname{Dbl}_{\infty} \simeq
  \Cat_{\simp^{\op}}(\CatI)$.
\end{propn}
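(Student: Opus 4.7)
The plan is to prove both equivalences in parallel by passing through the straightening-unstraightening equivalence. Both $\MonI$ and $\txt{Dbl}_{\infty}$ arise by construction as full subcategories of $\CoCart(\simp^{\op})$: since the categorical patterns $\mathfrak{M}$ and $\mathfrak{D}$ have marking equal to the whole 1-skeleton $\mathrm{N}\simp^{\op}_{1}$, the identity is a left Quillen functor from $(\sSet^{+})_{\mathfrak{P}_{\simp^{\op}}^{\txt{coCart}}}$ to each of $(\sSet^{+})_{\mathfrak{M}}$ and $(\sSet^{+})_{\mathfrak{D}}$. Fibrancy in the source means being a coCartesian fibration, and the extra Segal/Rezk-Segal conditions cut out the respective full subcategories.

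Straightening then identifies $\CoCart(\simp^{\op})$ with $\Fun(\simp^{\op},\CatI)$, and by Remark~\ref{rmk:monicatsegcond} (resp.\ Remark~\ref{rmk:doubleicatsegcond}) this carries $\MonI$ (resp.\ $\txt{Dbl}_{\infty}$) to the full subcategory of functors $V \colon \simp^{\op} \to \CatI$ such that the map $V([n]) \to V([1])^{\times n}$ induced by the $\rho_{i}$ is an equivalence (resp.\ such that $V([n]) \to V([1]) \times_{V([0])} \cdots \times_{V([0])} V([1])$ is an equivalence). The morphisms also match up, since in each case they are precisely the natural transformations between the corresponding functors.

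It remains to identify these Segal-type conditions with the right-Kan-extension conditions in the definitions of $\Mon_{\simp^{\op}}(\CatI)$ and $\Cat_{\simp^{\op}}(\CatI)$. Note that when $\mathcal{M}=\simp^{\op}$, viewed as a double \icat{} over itself, we have $\mathcal{M}_{\triv}=\simp^{\op}_{\txt{int}}$, $\mathcal{M}_{[1]}=\{[1]\}$, and $\mathcal{M}_{\glob}=\mathcal{G}^{\simp}$. Applying the pointwise formula to the inclusion $\{[1]\}\hookrightarrow \simp^{\op}_{\txt{int}}$, the comma category $\{[1]\}\times_{\simp^{\op}_{\txt{int}}} (\simp^{\op}_{\txt{int}})_{[n]/}$ is discrete with objects the $n$ inert maps $\rho_{1},\ldots,\rho_{n} \colon [n]\to[1]$, so the right Kan extension at $[n]$ computes to $V([1])^{\times n}$; thus the monoid condition is exactly the Segal condition. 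For the category case, the analogous comma category $\mathcal{G}^{\simp}_{[n]/}$ is the ``spine'' poset whose nerve is the spine of $\Delta^{n}$, and the limit over it computes the iterated fibre product $V([1])\times_{V([0])}\cdots\times_{V([0])} V([1])$, matching the Rezk-Segal condition.

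The bulk of the work is thus purely bookkeeping; the only mildly technical step is the identification of the comma category $\mathcal{G}^{\simp}_{[n]/}$ with the spine indexing diagram and the verification that limits over it compute iterated fibre products, but this is a standard fact about Segal-type conditions and can be done by direct inspection of the inert morphisms from $[n]$ to $[0]$ and $[1]$ in $\simp^{\op}$.
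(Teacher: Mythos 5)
Your proof is correct and follows essentially the same route as the paper's: identify $\MonI$ and $\txt{Dbl}_{\infty}$ as full subcategories of $\CoCart(\simp^{\op})$, straighten, and match the Segal/Rezk--Segal conditions with the monoid and category object conditions. The paper's own proof is just a terser version of this, leaving implicit the comma-category computation of the right Kan extensions that you spell out.
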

\begin{proof}
  We can identify $\MonI$ with the full
  subcategory of the \icat{} of coCartesian fibrations over
  $\simp^{\op}$ spanned by the
  monoidal \icats{}. Under the equivalence between coCartesian
  fibrations over $\simp^{\op}$ and functors $\simp^{\op} \to \CatI$
  these correspond precisely to those functors satisfying the
  condition for a monoid object. Similarly, the double \icats{}
  correspond to the category objects.
\end{proof}

\subsection{The Algebra Fibration}\label{subsec:AlgFib}
In this subsection we define, given a \nsiopd{} $\mathcal{O}$, a
Cartesian fibration $\Alg(\mathcal{O}) \to \OpdIns$ with fibre
$\Alg_{\mathcal{P}}(\mathcal{O})$ at $\mathcal{P} \in \OpdIns$ --- the
objects of $\Alg(\mathcal{O})$ are thus pairs $(\mathcal{P}, A)$ where
$\mathcal{P}$ is a \nsiopd{} and $A$ is a $\mathcal{P}$-algebra in
$\mathcal{O}$. We then study the \icat{} $\Alg(\mathcal{V})$ in the
special case when $\mathcal{V}$ is a monoidal \icat{} and consider its
behaviour as we vary the monoidal \icat{} $\mathcal{V}$.

\begin{defn}
  Let $\mathcal{O}$ be a \nsiopd{}. Recall that
  $(\sSet^{+})^{\op}_{\mathfrak{O}_{\txt{ns}}}$ is a
  marked simplicial model category, so we have a functor
  \[(\sSet^{+})^{\op}_{\mathfrak{O}_{\txt{ns}}} \to
  \sSet^{+}\]
  represented by $\mathcal{O}$. This restricts to a functor
  between the fibrant objects in these marked simplicial model
  categories; forgetting from the marked simplicial enrichment down to
  enrichment in simplicial sets (by forgetting the unmarked
  1-simplices) and taking nerves we get a functor
  \[ (\OpdIns)^{\op} \to \CatI; \]
  this sends a \nsiopd{} $\mathcal{P}$ to
  $\Alg_{\mathcal{P}}(\mathcal{O})$. We define
  \[ \Alg(\mathcal{O}) \to \OpdIns\]
  to be a Cartesian fibration corresponding to this functor.
\end{defn}

\begin{remark}
  We could also construct $\Alg(\mathcal{O})$ as a full
  subcategory of the source of a Cartesian fibration associated to the
  functor $(\CatI)_{/\simp^{\op}} \to \CatI$ that sends $\mathcal{C}
  \to \simp^{\op}$ to $\Fun_{\simp^{\op}}(\mathcal{C},
  \mathcal{O})$.
\end{remark}

\begin{remark}\label{rmk:MonoidFib}
  Let $\mathcal{V}$ be an \icat{} with finite products. Then we
  can similarly define a fibration $\Mon(\mathcal{V}) \to
  \OpdIns$ with fibre
  $\Mon_{\mathcal{O}}(\mathcal{V})$ at
  $\mathcal{O}$. The proof of \cite[Proposition 2.4.1.7]{HA}
  implies that the equivalence
  $\Alg_{\mathcal{O}}(\mathcal{V}) \simeq
  \Mon_{\mathcal{O}}(\mathcal{V})$ is natural in
  $\mathcal{O}$, which gives an equivalence
  $\Alg(\mathcal{V}) \isoto \Mon(\mathcal{V})$ when $\mathcal{V}$ is
  considered as a monoidal \icat{} via the Cartesian product.
\end{remark}

\begin{defn}
  For $\mathcal{O}$ a \nsiopd{}, let
  \[\Algtriv(\mathcal{O}) \to \OpdIns\] be the pullback of
  $\Alg(\mathcal{O})$ along the functor
  $\gamma_{!}\gamma^{*}$ from $\OpdIns$ to
  itself that sends $\mathcal{P}$ to
  $\mathcal{P}_{\triv}$. The natural maps
  $\tau^{*}_{\mathcal{P}} \colon \mathcal{P}_{\triv} \to \mathcal{P}$ then
  induce a functor \[\tau^{*} \colon \Alg(\mathcal{O}) \to
  \Algtriv(\mathcal{O}).\]
\end{defn}

\begin{remark}\label{rmk:AlgTrivPullback}
  The natural equivalence $\Alg_{\mathcal{P}_{\txt{triv}}}(\mathcal{V}) \simeq \Fun(\mathcal{P}_{[1]}, \mathcal{V})$ of
  Proposition~\ref{propn:TrivAlgEq} implies that there is a pullback
  diagram
  \nolabelcsquare{\Algtriv(\mathcal{V})}{\mathcal{F}_{\mathcal{V}}}{\OpdIns}{\CatI,}
  where the lower horizontal map sends an \iopd{}
  $\mathcal{O}$ to $\mathcal{O}_{[1]}$, and the right vertical map
  is a Cartesian fibration associated to the functor $\CatI^{\op} \to
  \CatI$ that sends $\mathcal{C}$ to $\Fun(\mathcal{C}, \mathcal{V})$.
\end{remark}

\begin{lemma}\label{lem:AlgCoCart}
  Suppose $\mathcal{V}$ is a monoidal \icat{} compatible
  with small colimits. Then the projection
  $\Alg(\mathcal{V}) \to \OpdIns$ is both
  Cartesian and coCartesian.
\end{lemma}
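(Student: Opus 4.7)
The plan is as follows. The projection $\Alg(\mathcal{V}) \to \OpdIns$ is a Cartesian fibration by construction, so the only thing to prove is that it is also coCartesian. By the standard criterion (cf.\ \cite[\S 5.2.2]{HTT}), this reduces to showing that for every morphism $\phi \colon \mathcal{O} \to \mathcal{P}$ of \nsiopds{}, the restriction functor
\[
\phi^{*} \colon \Alg_{\mathcal{P}}(\mathcal{V}) \to \Alg_{\mathcal{O}}(\mathcal{V})
\]
admits a left adjoint $\phi_{!}$, and that these left adjoints interact compatibly under composition of morphisms in $\OpdIns$.

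The natural candidate for $\phi_{!}$ is the \emph{operadic left Kan extension} along $\phi$, the non-symmetric analogue of the construction from \cite[\S 3.1.2]{HA}. Concretely, I would define $\phi_{!}A$ on an object $P \in \mathcal{P}_{[1]}$ by taking a colimit over the active slice \icat{} $\mathcal{O} \times_{\mathcal{P}} (\mathcal{P}^{\txt{act}}_{\txt{act}/P})$ of the composite that first applies the algebra $A$ to each object $(X_{1},\ldots,X_{n}) \in \mathcal{O}$ lying over some active map $\phi(X_{1},\ldots,X_{n}) \to P$ in $\mathcal{P}$, and then pushes forward through the active structure map of $\mathcal{V}^{\otimes}$ associated to that multimorphism (i.e.\ tensors the factors together). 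By the factorization system on any (generalized) \nsiopd{}, specifying such a lift is controlled solely by its behaviour on inert maps, so one recovers a morphism of \nsiopds{} rather than merely a functor over $\simp^{\op}$.

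The main technical obstacle is verifying that these colimits exist and that the resulting functor $\phi_{!}A$ really is an $\mathcal{O}$-algebra in $\mathcal{V}$. Here the hypothesis that $\mathcal{V}$ is compatible with small colimits is used twice: first to ensure the indexing active slices (which are small) give rise to well-defined colimits in $\mathcal{V}$, and second to ensure that the tensor-product functors $\phi_{!} \colon \mathcal{V}^{\times n} \to \mathcal{V}$ associated to active morphisms in $\simp^{\op}$ preserve these colimits separately in each variable, so that the Segal-style condition defining morphisms of \iopds{} is preserved. Once this pointwise construction is set up, one checks by a standard mapping-space computation that $\phi_{!} \dashv \phi^{*}$, giving the desired coCartesian lifts. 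The compatibility of these adjoints with composition, needed to upgrade pointwise existence to the full coCartesian fibration statement, follows formally from the universal property of operadic left Kan extensions, so the real content of the proof is the existence and algebra-preservation of $\phi_{!}$, which is what the compatibility hypothesis on $\mathcal{V}$ is designed to supply. (I would expect the technical details of this step to be relegated to Appendix~\ref{sec:algcolims}, consistent with what the introduction promises.)
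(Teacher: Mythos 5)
Your proposal matches the paper's argument: the projection is Cartesian by construction, and by \cite[Corollary 5.2.2.5]{HTT} it is coCartesian as soon as each restriction functor $f^{*} \colon \Alg_{\mathcal{P}}(\mathcal{V}) \to \Alg_{\mathcal{O}}(\mathcal{V})$ admits a left adjoint, which the paper supplies via the free-algebra/operadic left Kan extension machinery of Theorem~\ref{thm:FreeFtrExists} in Appendix~\ref{sec:algcolims}, exactly as you anticipated. The only cosmetic difference is that the compatibility under composition you mention is already built into the cited criterion and needs no separate verification.
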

\begin{proof}
  By \cite[Corollary 5.2.2.5]{HTT} it suffices to prove that for each
  map $f \colon \mathcal{O} \to \mathcal{P}$ in $\OpdIns$ the map
  $f^{*} \colon \Alg^{\txt{ns}}_{\mathcal{P}}(\mathcal{V}) \to
  \Alg^{\txt{ns}}_{\mathcal{O}}(\mathcal{V})$ has a left adjoint. This
  is precisely the content of Theorem~\ref{thm:FreeFtrExists}.
\end{proof}

\begin{lemma}
  Suppose $\mathcal{V}$ is a monoidal \icat{} compatible
  with small colimits. Then the functor $\tau^{*}$ has a left
  adjoint \[\tau_{!} \colon \Algtriv(\mathcal{V}) \to
  \Alg(\mathcal{V})\] relative to $\OpdIns$.
\end{lemma}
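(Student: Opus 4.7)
The plan is to invoke the relative adjoint functor theorem (the Cartesian dual of \cite[7.3.2.11]{HA}): if $G : \mathcal{X} \to \mathcal{Y}$ is a map of Cartesian fibrations over a base $\mathcal{S}$ that preserves Cartesian morphisms and each fibre functor $G_{s}$ admits a left adjoint, then $G$ admits a left adjoint relative to $\mathcal{S}$. So I must verify three things for $\tau^{*} \colon \Alg(\mathcal{V}) \to \Algtriv(\mathcal{V})$ over $\OpdIns$.

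First, I would check that $\tau^{*}$ preserves Cartesian morphisms. A Cartesian morphism in $\Alg(\mathcal{V})$ over $f \colon \mathcal{O} \to \mathcal{P}$ is given by the restriction functor $f^{*} \colon \Alg_{\mathcal{P}}(\mathcal{V}) \to \Alg_{\mathcal{O}}(\mathcal{V})$, and similarly for $\Algtriv(\mathcal{V})$ using the induced $f_{\triv} \colon \mathcal{O}_{\triv} \to \mathcal{P}_{\triv}$. Preservation then amounts to the tautological commutation $\tau_{\mathcal{O}}^{*} \circ f^{*} \simeq f_{\triv}^{*} \circ \tau_{\mathcal{P}}^{*}$, which is clear from naturality of $\tau$ in the operad variable. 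Second, I would construct the fibrewise left adjoint. Using Proposition~\ref{propn:TrivAlgEq} to identify $\Alg_{\mathcal{P}_{\triv}}(\mathcal{V}) \simeq \Fun(\mathcal{P}_{[1]}, \mathcal{V})$, the functor $\tau_{\mathcal{P}}^{*}$ becomes restriction along $\mathcal{P}_{[1]} \hookrightarrow \mathcal{P}$, and its left adjoint $\tau_{\mathcal{P},!}$ is the free-algebra / operadic left Kan extension functor, whose existence is guaranteed (for $\mathcal{V}$ compatible with small colimits) by Theorem~\ref{thm:FreeFtrExists} — exactly as used in the proof of Lemma~\ref{lem:AlgCoCart} applied to $\tau_{\mathcal{P}}$.

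The main obstacle is the hypothesis implicit in the relative adjoint functor theorem, namely a Beck--Chevalley compatibility: for each $f \colon \mathcal{O} \to \mathcal{P}$ in $\OpdIns$, the canonical mate
\[
\tau_{\mathcal{O},!} \circ f_{\triv}^{*} \longrightarrow f^{*} \circ \tau_{\mathcal{P},!}
\]
should be an equivalence. Abstractly both sides are left adjoint to the same functor $\tau_{\mathcal{O}}^{*} \circ f^{*} \simeq f_{\triv}^{*} \circ \tau_{\mathcal{P}}^{*}$, so the issue is purely to check that the natural comparison arrow is the candidate equivalence. The way I would verify this is via the explicit pointwise formula: at $X \in \mathcal{O}_{[1]}$, evaluation of either side is a colimit over the $\infty$-category of active morphisms in the relevant operad ending at $X$, and the functor $f$ induces a cofinal comparison between these indexing categories, giving an equivalence of colimits. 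This is the content of the cofinality arguments for operadic Kan extensions developed in the appendix. Once the mate is an equivalence, the dual of \cite[7.3.2.11]{HA} assembles the fibrewise adjoints into the desired left adjoint $\tau_{!}$ relative to $\OpdIns$.
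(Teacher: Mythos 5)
Your first two paragraphs reproduce the paper's proof exactly: the paper cites \cite[Proposition 7.3.2.6]{HA} and verifies precisely that $\tau^{*}$ admits fibrewise left adjoints (Theorem~\ref{thm:FreeFtrExists}) and that it preserves Cartesian morphisms (clear, since it is the functor associated to a natural transformation between the corresponding functors to $\CatI$). That proposition --- the relative adjoint functor theorem for \emph{Cartesian} fibrations and maps preserving Cartesian edges --- has no further hypotheses. The reason is that a left adjoint relative to the base is not required to preserve Cartesian morphisms; the relative adjunction is assembled from the fibrewise adjunctions together with the commutation $\tau_{\mathcal{O}}^{*}\circ f^{*}\simeq f_{\triv}^{*}\circ\tau_{\mathcal{P}}^{*}$, and nothing more.

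Your third paragraph is a genuine error. The condition you single out as the main obstacle --- that the mate $\tau_{\mathcal{O},!}\circ f_{\triv}^{*}\to f^{*}\circ\tau_{\mathcal{P},!}$ be an equivalence --- is not a hypothesis of the result being applied, and it is false in general: taking $f$ to be $\tau_{\simp^{\op}}\colon(\simp^{\op})_{\triv}\to\simp^{\op}$ (so that $f_{\triv}$ is an identity), the left-hand side is the identity of $\mathcal{V}$, while by Proposition~\ref{propn:FreeAlgMonad} the right-hand side sends $V$ to the underlying object $\coprod_{n\geq 0}V^{\otimes n}$ of the free associative algebra, the mate being the inclusion of the $n=1$ summand. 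Correspondingly, the comparison of indexing $\infty$-categories of active morphisms that you would need to be cofinal is not cofinal, so the verification you sketch cannot go through. The justification that ``both sides are left adjoint to the same functor'' is also incorrect: each side is a composite of a left adjoint with a right adjoint, and the actual left adjoint of $\tau_{\mathcal{O}}^{*}\circ f^{*}\simeq f_{\triv}^{*}\circ\tau_{\mathcal{P}}^{*}$ is $f_{!}\circ\tau_{\mathcal{O},!}\simeq\tau_{\mathcal{P},!}\circ f_{\triv,!}$, which is a different (and genuinely canonically determined) functor. The repair is simply to delete this step and cite \cite[Proposition 7.3.2.6]{HA} rather than the locally coCartesian variant \cite[Proposition 7.3.2.11]{HA}, whose extra compatibility condition is what you were, unnecessarily and unsuccessfully, trying to check.
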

\begin{proof}
  By \cite[Proposition 7.3.2.6]{HA} it suffices to prove that
  $\tau^{*}$ admits fibrewise left adjoints, which we showed in
  Theorem~\ref{thm:FreeFtrExists}, and that $\tau^{*}$ preserves
  Cartesian arrows, which is clear since it is the functor
  associated to a natural transformation between the corresponding
  functors to $\CatI$.
\end{proof}

\begin{lemma}\label{lem:AlgOprlim}
  The functor $\Alg_{(\blank)}(\mathcal{V}) \colon
  (\OpdIns)^{\op} \to \CatI$ takes colimits in $\OpdIns$ to limits.
\end{lemma}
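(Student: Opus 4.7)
The plan is to identify $\Alg_{(\blank)}(\mathcal{V})$ with the derived enriched mapping functor into $\mathcal{V}^{\otimes}$ in the marked-simplicially enriched model category $(\sSet^+)_{\mathfrak{O}_{\txt{ns}}}$ presenting $\OpdIns$, and then to invoke a standard enriched model-categorical principle.

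As noted in the paper, $(\sSet^+)_{\mathfrak{O}_{\txt{ns}}}$ is enriched over marked simplicial sets with the Cartesian model structure (which models $\CatI$), giving rise to the $(\infty,2)$-category $\OPDIns$ whose hom-\icats{} are precisely $\Alg_{\mathcal{O}}(\mathcal{P})$ for fibrant objects. Thus by construction, $\Alg_{(\blank)}(\mathcal{V}) \simeq R\Map(\blank, \mathcal{V}^{\otimes})$, where $R\Map$ denotes the derived enriched mapping functor in this model category. The key input is then the general fact that, in any combinatorial marked-simplicially enriched model category $\mathcal{M}$ in which every object is cofibrant, for any fibrant object $Y$ the derived enriched mapping functor $R\Map(\blank, Y) \colon \mathcal{M}^{\op} \to \sSet^+$ carries homotopy colimits in $\mathcal{M}$ to homotopy limits in $\sSet^+$. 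This is a standard consequence of the pushout-product axiom $SM7$ in the marked-simplicial setting, together with universal cofibrancy (which removes the need for cofibrant replacement). Both hypotheses hold for $(\sSet^+)_{\mathfrak{O}_{\txt{ns}}}$ by Theorem~\ref{thm:catpatternmodstr}, so applying the standard dictionary translating homotopy (co)limits in combinatorial model categories into (co)limits in their associated \icats{} yields the desired conclusion.

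The main technical subtlety lies in verifying this enriched model-category principle rigorously in the marked-simplicial setting, along with carefully matching the marked-simplicial mapping object between two fibrant objects $\mathcal{O}$, $\mathcal{P}$ with $\Alg_{\mathcal{O}}(\mathcal{P})$ as an object of $\CatI$ (essentially an unwinding of definitions: the vertices are inert-preserving functors, the marked edges are natural equivalences, and so on). A more hands-on alternative, which bypasses the enriched model-categorical machinery, would be to handle filtered colimits separately using Theorem~\ref{thm:OpdFiltColim} combined with the fact that $\Fun_{\simp^{\op}}(\blank, \mathcal{V}^{\otimes})$ takes colimits in $(\CatI)_{/\simp^{\op}}$ to limits in $\CatI$, and then reduce finite colimits in $\OpdIns$ to finite colimits in $\CoCart(\simp^{\op}, \simp^{\op}_{\txt{int}})$ via the accessible localization coming from the left Bousfield localization interpretation of Theorem~\ref{thm:catpatternmodstr}; in both cases the full-subcategory condition cutting out algebras from all functors over $\simp^{\op}$ is preserved under the relevant limits.
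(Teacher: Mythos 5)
Your proposal is correct and follows essentially the same route as the paper: the paper also rests on the marked-simplicial enrichment of $(\sSet^{+})_{\mathfrak{O}_{\txt{ns}}}$, citing \cite[Remark B.2.5]{HA} for the left Quillen bifunctor property of the tensoring (which is exactly the ``SM7 plus universal cofibrancy'' input you gesture at), and then deduces the statement by observing that $\Alg_{(\blank)}(\blank)$ is right adjoint to the induced colimit-preserving product $\CatI \times \OpdIns \to \OpdIns$ and running the standard Yoneda computation. Your packaging via a general derived-mapping-functor principle and the paper's packaging via the $\infty$-categorical adjunction are the same argument in different clothing.
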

\begin{proof}
  For any categorical pattern $\mathfrak{P}$, the product
  \[ \sSet^{+} \times (\sSet^{+})_{\mathfrak{P}} \to
  (\sSet^{+})_{\mathfrak{P}}\] is a left Quillen bifunctor by
  \cite[Remark B.2.5]{HA}. Thus the induced functor of \icats{}
  preserves colimits in each variable. In particular, the product
  \[ \CatI \times \OpdIns \to \OpdIns \] preserves colimits in each
  variable. Now $\Alg_{(\blank)}(\blank)$ is defined as a right
  adjoint to this, so for any \icat{} $\mathcal{C}$ we have
  \[
  \begin{split}
\Map_{\CatI}(\mathcal{C}, \Alg_{\colim_{\alpha}
    \mathcal{O}_{\alpha}}(\mathcal{P})) &   \simeq \Map_{\OpdIns}(\mathcal{C} \times
  \colim_{\alpha} \mathcal{O}_{\alpha}, \mathcal{P}) \\ & \simeq
  \Map_{\OpdIns}(\colim_{\alpha} (\mathcal{C} \times \mathcal{O}_{\alpha}), \mathcal{P})
  \\ & \simeq \lim_{\alpha} \Map_{\OpdIns}(\mathcal{C} \times \mathcal{O}_{\alpha},
  \mathcal{P}) \\ & \simeq \lim_{\alpha} \Map_{\CatI}(\mathcal{C},
  \Alg_{\mathcal{O}_{\alpha}}(\mathcal{P})) \\ & \simeq \Map_{\CatI}(\mathcal{C},
  \lim_{\alpha} \Alg_{\mathcal{O}_{\alpha}}(\mathcal{P})).
  \end{split}\]
  Thus $\Alg_{\colim \mathcal{O}_{\alpha}}(\mathcal{P}) \simeq \lim_{\alpha}
  \Alg_{\mathcal{O}_{\alpha}}(\mathcal{P})$.
\end{proof}

\begin{propn}\label{propn:AlgColim}
  Suppose $\mathcal{V}$ is a monoidal \icat{} compatible
  with small colimits. Then $\Alg(\mathcal{V})$
  admits small colimits. 
\end{propn}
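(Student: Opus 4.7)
The plan is to apply the general criterion for the existence of colimits in the total space of a coCartesian fibration: if $p \colon \mathcal{E} \to \mathcal{B}$ is a coCartesian fibration, then $\mathcal{E}$ admits $K$-indexed colimits provided that (a) $\mathcal{B}$ admits $K$-indexed colimits, (b) each fibre $\mathcal{E}_{b}$ admits $K$-indexed colimits, and (c) every coCartesian pushforward $f_{!} \colon \mathcal{E}_{b} \to \mathcal{E}_{b'}$ preserves $K$-indexed colimits (cf.\ \cite[Corollary 4.3.1.11]{HTT} for the dual statement). I would verify these three hypotheses for the coCartesian fibration $p \colon \Alg(\mathcal{V}) \to \OpdIns$ supplied by Lemma~\ref{lem:AlgCoCart}, with $K$ an arbitrary small simplicial set.

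For (a), the base $\OpdIns$ is presentable, since it is the $\infty$-category associated to the combinatorial simplicial model category $(\sSet^{+})_{\mathfrak{O}_{\txt{ns}}}$; in particular it admits all small colimits. For (c), the coCartesian pushforward $f_{!}$ along a morphism $f \colon \mathcal{O} \to \mathcal{P}$ in $\OpdIns$ is, by the very construction in the proof of Lemma~\ref{lem:AlgCoCart}, a left adjoint to the restriction functor $f^{*}$; left adjoints preserve all colimits that exist, so (c) is automatic.

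The remaining hypothesis (b) — the existence of small colimits in each fibre $\Alg_{\mathcal{O}}(\mathcal{V})$ — is the substantive point, and I expect it to be the main obstacle. However, this is precisely one of the main technical results of Appendix~\ref{sec:algcolims} (the same appendix to which the proof of Lemma~\ref{lem:AlgCoCart} appealed via Theorem~\ref{thm:FreeFtrExists}), and so may be invoked here. It depends crucially on the compatibility of the monoidal structure of $\mathcal{V}$ with small colimits, which is used to construct free algebras and, more generally, operadic left Kan extensions.

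Granting (a)--(c), the colimit of a diagram $q \colon K \to \Alg(\mathcal{V})$ is then built by the standard recipe: one forms $\mathcal{O}^{*} := \colim_{K}(p \circ q)$ in $\OpdIns$, uses coCartesian lifts along the structure maps $\mathcal{O}_{\alpha} \to \mathcal{O}^{*}$ to transport $q$ to a $K$-indexed diagram in the single fibre $\Alg_{\mathcal{O}^{*}}(\mathcal{V})$, and takes its colimit there; this object, viewed in $\Alg(\mathcal{V})$, is the desired colimit of $q$.
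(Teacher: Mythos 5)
Your proposal is correct and follows essentially the same route as the paper: the paper's proof likewise observes that $\pi \colon \Alg(\mathcal{V}) \to \OpdIns$ is coCartesian (Lemma~\ref{lem:AlgCoCart}), that its fibres admit small colimits (Corollary~\ref{cor:AlgOcolim}), and that the pushforwards $f_{!}$ preserve colimits as left adjoints, before invoking the general fibrewise criterion you describe (cited there as \cite[Lemma 9.8]{freepres}). Your identification of the fibrewise colimits as the substantive hypothesis, handled in Appendix~\ref{sec:algcolims}, matches the paper exactly.
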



\begin{proof}[Proof of Proposition~\ref{propn:AlgColim}]
  By Lemma~\ref{lem:AlgCoCart}, the fibration $\pi \colon
  \Alg(\mathcal{V}) \to \OpdIns$ is coCartesian. Moreover, its fibres
  have all colimits by Corollary~\ref{cor:AlgOcolim} and the functors
  $f_{!}$ induced by morphisms $f$ in $\OpdIns$ preserve colimits,
  being left adjoints. Thus $\pi$ satisfies the conditions of
  \cite[Lemma 9.8]{freepres}.
\end{proof}

\begin{propn}\label{propn:StrMonColim2}
  Let $\mathcal{V}$ and $\mathcal{W}$ be monoidal
  \icats{} compatible with small colimits. Suppose $F \colon
  \mathcal{V}^{\otimes} \to \mathcal{W}^{\otimes}$ is a
  monoidal functor such that $F_{[1]} \colon \mathcal{V} \to \mathcal{W}$
  preserves colimits. Then $F_{*} \colon
  \Alg(\mathcal{V}) \to
  \Alg(\mathcal{W})$ preserves colimits.
\end{propn}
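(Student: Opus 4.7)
The plan is to work via the description of colimits in $\Alg(\mathcal{V})$ produced in the proof of Proposition~\ref{propn:AlgColim}: given a diagram $p\colon K \to \Alg(\mathcal{V})$, form the colimit $\mathcal{O}$ of $\pi\circ p$ in $\OpdIns$, transport $p$ along coCartesian lifts to a diagram $\tilde p \colon K \to \Alg_{\mathcal{O}}(\mathcal{V})$, and take its fibrewise colimit. Since $F_*$ is a functor over $\OpdIns$ and $\Alg(\mathcal{W}) \to \OpdIns$ is again a coCartesian fibration by Lemma~\ref{lem:AlgCoCart}, applying \cite[Lemma 9.8]{freepres} to $F_*$ reduces the problem to checking that (a) $F_*$ preserves coCartesian edges and (b) for each $\mathcal{O}$ the induced functor $F_*\colon \Alg_{\mathcal{O}}(\mathcal{V}) \to \Alg_{\mathcal{O}}(\mathcal{W})$ preserves small colimits.

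For (a), the coCartesian edge over $f\colon \mathcal{O}\to\mathcal{P}$ is the unit of the free/forgetful adjunction $f_!^{\mathcal{V}} \dashv (f^*)^{\mathcal{V}}$ from Theorem~\ref{thm:FreeFtrExists}. Because $F$ is \emph{strong} (not merely lax) monoidal, postcomposition by $F$ commutes on the nose with restriction functors, giving a natural equivalence $(f^*)^{\mathcal{W}} \circ F_* \simeq F_* \circ (f^*)^{\mathcal{V}}$. The resulting Beck--Chevalley transformation $F_* \circ f_!^{\mathcal{V}} \to f_!^{\mathcal{W}} \circ F_*$ is then an equivalence by the explicit pointwise formula for $f_!$ produced in the proof of Theorem~\ref{thm:FreeFtrExists}, which expresses $f_!$ as a colimit of iterated tensor products in $\mathcal{V}$: the hypothesis that $F_{[1]}$ preserves colimits handles the colimits, and strong monoidality of $F$ handles the tensor products.

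For (b), we argue as in Proposition~\ref{propn:AlgColim}: by Corollary~\ref{cor:AlgOcolim}, small colimits in $\Alg_{\mathcal{O}}(\mathcal{V})$ are generated by sifted colimits, which are computed on underlying objects in $\Fun(\mathcal{O}, \mathcal{V})$ and are therefore preserved by $F_*$ because $F_{[1]}$ preserves sifted colimits pointwise, together with colimits built from free algebra constructions, whose preservation reduces to the Beck--Chevalley equivalence established in (a). The principal technical burden is therefore that Beck--Chevalley equivalence: concretely, one must inspect the pointwise formula for $f_!$ and verify termwise that strong monoidality plus colimit preservation of $F_{[1]}$ allow $F$ to interchange with every tensor product and every colimit appearing in the construction. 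Once this is in hand, the three-stage description of colimits in $\Alg(\mathcal{V})$ immediately yields that $F_*$ preserves the colimit of any small diagram.
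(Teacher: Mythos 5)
Your proposal is correct and follows essentially the same route as the paper: reduce to showing that $F_{*}$ preserves coCartesian edges and preserves colimits fibrewise, handle the former via preservation of free algebras (Lemma~\ref{lem:StrMonPrFree}, which the paper deduces from preservation of operadic colimit diagrams, Corollary~\ref{cor:StrMonPrOCD}, rather than from an ``explicit pointwise formula'' for $f_{!}$ along a general map of operads --- the paper only records such a formula for the trivial-operad case in Proposition~\ref{propn:FreeAlgMonad}), and handle the latter by the sifted-colimits-plus-free-resolutions argument of Proposition~\ref{propn:StrMonColim1}. The substance of your Beck--Chevalley step is exactly Corollary~\ref{cor:StrMonPrOCD}, so nothing essential is missing.
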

\begin{proof}
  Since $\mathcal{V}$ and $\mathcal{W}$ are
  compatible with small colimits, the projections
  \[\Alg(\mathcal{V}),\,\,
  \Alg(\mathcal{W}) \to \OpdIns\] are coCartesian
  fibrations. Thus a diagram in $\Alg(\mathcal{W})$ is a
  colimit diagram \IFF{} it is a relative colimit diagram whose
  projection to $\OpdIns$ is a colimit diagram.

  It therefore suffices to prove that $F_{*}$ preserves coCartesian
  arrows and preserves colimits fibrewise. The former follows from
  Lemma \ref{lem:StrMonPrFree}, and the latter from
  Proposition~\ref{propn:StrMonColim1}.
\end{proof}

\begin{propn}\label{propn:AlgVPres}
  Suppose $\mathcal{V}$ is a presentably monoidal \icat{}. Then the
  \icat{} $\Alg(\mathcal{V})$ is presentable and the projection
  $\Alg(\mathcal{V}) \to \OpdIns$ is an accessible functor.
\end{propn}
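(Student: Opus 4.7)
The plan is to verify the two defining properties of presentability (cocompleteness and accessibility) and then deduce accessibility of the projection as an easy consequence. Cocompleteness of $\Alg(\mathcal{V})$ is already established by Proposition~\ref{propn:AlgColim}, so the essential work is in showing accessibility.

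First, I would observe that the projection $\pi \colon \Alg(\mathcal{V}) \to \OpdIns$ is a coCartesian fibration by Lemma~\ref{lem:AlgCoCart}, with coCartesian pushforward $f_!$ along $f \colon \mathcal{O} \to \mathcal{P}$ given by the free functor left adjoint to $f^*$ (whose existence is Theorem~\ref{thm:FreeFtrExists}). In particular each $f_!$ preserves colimits. The base $\OpdIns$ is presentable since it is the underlying \icat{} of the left proper combinatorial model category $(\sSet^+)_{\mathfrak{O}_{\txt{ns}}}$ of Theorem~\ref{thm:catpatternmodstr}. Each fibre $\Alg_{\mathcal{O}}(\mathcal{V})$ is presentable: it is a full subcategory of $\Fun_{\simp^{\op}}(\mathcal{O},\mathcal{V}^{\otimes})$ cut out by the requirement of preserving inert coCartesian edges, and this is an accessible localization (these are conditions of the form that certain morphisms be equivalences or certain limit cones be preserved), while $\Fun_{\simp^{\op}}(\mathcal{O},\mathcal{V}^{\otimes})$ is presentable because $\mathcal{V}^{\otimes}$ is (being compatible with small colimits over a presentable base).

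Next, I would invoke the standard criterion (cf.\ \cite[Proposition 5.5.3.11]{HTT} in the Cartesian formulation, suitably dualised) that a coCartesian fibration $\mathcal{E} \to \mathcal{B}$ with $\mathcal{B}$ presentable, with presentable fibres, and with coCartesian pushforwards preserving colimits, has presentable total space $\mathcal{E}$ provided the classifying functor $\mathcal{B} \to \PrL$ is accessible. The classifying functor here sends $\mathcal{O} \mapsto \Alg_{\mathcal{O}}(\mathcal{V})$; to verify its accessibility, pick a regular cardinal $\kappa$ such that $\mathcal{V}$ is $\kappa$-presentable, its tensor product preserves $\kappa$-filtered colimits in each variable, and the model structure on $(\sSet^+)_{\mathfrak{O}_{\txt{ns}}}$ is $\kappa$-combinatorial. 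For a $\kappa$-filtered diagram $\alpha \mapsto \mathcal{O}_{\alpha}$ in $\OpdIns$, Theorem~\ref{thm:OpdFiltColim} identifies the colimit with the colimit in $\CatI$, and combining this with the product-limit description of algebras together with the commutativity of $\kappa$-filtered colimits with finite limits in $\mathcal{S}$ shows that $\Alg_{\colim_{\alpha} \mathcal{O}_{\alpha}}(\mathcal{V}) \simeq \lim_{\alpha} \Alg_{\mathcal{O}_{\alpha}}(\mathcal{V})$, which is Lemma~\ref{lem:AlgOprlim} in the special case of filtered diagrams. Transferring this limit to the colimit side in $\PrL$ (via the adjoint equivalence $\PrL \simeq (\PrR)^{\op}$) yields the desired accessibility.

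Finally, accessibility of $\pi$ is automatic: $\pi$ preserves $\kappa$-filtered colimits because $\Alg(\mathcal{V})$ is a coCartesian fibration whose fibres are $\kappa$-accessible with pushforwards preserving $\kappa$-filtered colimits, so by the description of colimits in the total space of such a fibration (as in the proof of Proposition~\ref{propn:AlgColim} via \cite[Lemma 9.8]{freepres}), the projection of a $\kappa$-filtered colimit is the corresponding colimit in $\OpdIns$. The main obstacle I expect is pinning down the cardinal $\kappa$ uniformly, so that both the fibrewise presentability data and the accessibility of the classifying functor $\OpdIns \to \PrL$ hold at the same level; this is essentially a bookkeeping matter, but requires care because the bound must simultaneously control the size of free algebras, the accessibility of the pushforwards, and the combinatoriality of the model structure producing $\OpdIns$.
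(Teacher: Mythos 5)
Your proposal is, in substance, the paper's own proof with the black box opened: the paper simply cites the general criterion \cite[Theorem 9.3]{freepres} for presentability of the total space of such a fibration, fed with exactly the three inputs you assemble by hand --- the free functors of Theorem~\ref{thm:FreeFtrExists} (making $\pi$ coCartesian with colimit-preserving pushforwards), presentability of the fibres, and Lemma~\ref{lem:AlgOprlim} (the classifying functor takes colimits in $\OpdIns$ to limits). Your reconstruction of that criterion, and your treatment of the accessibility of $\pi$ via the fibrewise description of colimits, are fine.

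The one step I would not let stand is your justification of fibre presentability: you assert that $\Alg_{\mathcal{O}}(\mathcal{V}) \subseteq \Fun_{\simp^{\op}}(\mathcal{O},\mathcal{V}^{\otimes})$ is an ``accessible localization'' because it is cut out by asking that inert edges go to coCartesian edges. It is not clear that this inclusion admits a left adjoint, and the subcategory is not closed under colimits in the ambient functor category (coproducts of algebras are not computed on underlying functors --- only sifted colimits are, by Corollary~\ref{cor:AlgSiftedColim}), so neither standard route to presentability applies as stated. The fact you need is true, but the paper's argument for it goes through the monadic adjunction $\tau_{\mathcal{O},!} \dashv \tau_{\mathcal{O}}^{*}$ over the trivial $\infty$-operad together with Proposition~\ref{propn:MonadPres}; you should simply cite Corollary~\ref{cor:AlgOcolim} here rather than improvise. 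With that substitution the argument is complete and matches the paper's.
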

\begin{proof}
  This follows from \cite[Theorem 9.3]{freepres} together with
  Theorem~\ref{thm:FreeFtrExists}, Corollary~\ref{cor:AlgOcolim}, and
  Lemma~\ref{lem:AlgOprlim}.
\end{proof}

Next we observe that the \icat{} $\Alg(\mathcal{O})$ is
functorial in $\mathcal{O}$:
\begin{defn}
  Since the model category $(\sSet^{+})_{\mathfrak{O}_{\txt{ns}}}$
  is enriched in marked simplicial sets, the enriched Yoneda functor
 \[ \mathfrak{H} \colon (\sSet^{+})_{\mathfrak{O}_{\txt{ns}}}^{\op} \times
  (\sSet^{+})_{\mathfrak{O}_{\txt{ns}}} \to \sSet^{+}\]
  sending $(\mathcal{O}, \mathcal{P})$ to
  $\Alg_{\mathcal{O}}(\mathcal{P})$ induces a
  functor of \icats{} $(\OpdIns)^{\op} \times
  \OpdIns \to \CatI$. Let $\Algco \to \OpdIns \times
  (\OpdIns)^{\op}$ be a Cartesian fibration corresponding to this functor.
\end{defn}

The fibre of $\Algco$ at $\mathcal{O}$ in the second
component is $\Alg(\mathcal{O})$. The composite $\Algco \to
(\OpdIns)^{\op}$ with projection to the second factor is then a
Cartesian fibration corresponding to a functor $\OpdIns \to \CatI$
that sends $\mathcal{O}$ to
$\Alg(\mathcal{O})$. Thus we see that
$\Alg(\mathcal{O})$ is functorial in
$\mathcal{O}$.

\begin{defn}
  Let $\Alg \to \OpdIns$ be a coCartesian fibration corresponding to
  the functor $\mathcal{O} \mapsto \Alg(\mathcal{O})$.
\end{defn}

Next we show that the algebra fibration is compatible with products of
\nsiopds{}:
\begin{propn}\label{propn:AlgLaxMon}
  $\Alg(\blank)$ is lax monoidal with respect to the Cartesian
  product of \nsiopds{}.
\end{propn}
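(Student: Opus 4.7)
The plan is to derive the lax monoidal structure from the basic fact that $\Alg_{\mathcal{P}}(\blank)$ preserves products in its $\mathcal{O}$-variable for each \nsiopd{} $\mathcal{P}$.

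First, I would verify that $\OpdIns$ admits finite products, realized as fiber products over $\simp^{\op}$: the defining conditions of Definition~\ref{defn:nsiopd2} are preserved under fiber product over $\simp^{\op}$, and a functor over $\simp^{\op}$ preserves inert morphisms into a fiber product iff each of its projections does. Upgrading this universal property to the \icatl{} level yields, for each \nsiopd{} $\mathcal{P}$, a natural equivalence
\[ \Alg_{\mathcal{P}}(\mathcal{O}_1 \times \cdots \times \mathcal{O}_n) \isoto \Alg_{\mathcal{P}}(\mathcal{O}_1) \times \cdots \times \Alg_{\mathcal{P}}(\mathcal{O}_n), \]
so $\Alg_{\mathcal{P}}(\blank) \colon \OpdIns \to \CatI$ preserves finite products.

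Given this, the lax monoidal structure on $\Alg$ should be the \emph{external product}: on objects,
\[ ((\mathcal{P}_i, A_i))_i \in \prod_i \Alg(\mathcal{O}_i) \;\longmapsto\; \Bigl(\prod_j \mathcal{P}_j,\; A_1 \boxtimes \cdots \boxtimes A_n\Bigr) \in \Alg\Bigl(\prod_i \mathcal{O}_i\Bigr), \]
where $A_1 \boxtimes \cdots \boxtimes A_n$ is the image of the tuple $(A_i \circ \pi_i)_i$ under the equivalence of the first paragraph applied with $\mathcal{P} = \prod_j \mathcal{P}_j$. The cleanest way to promote this to a coherent lax symmetric monoidal structure is to factor $\Alg$ as the composite
\[ \OpdIns \xto{F} \Fun(\OpdIns^{\op}, \CatI) \xto{\int} \CatI, \]
where $F(\mathcal{O}) = \Alg_{(\blank)}(\mathcal{O})$ is the presheaf classifying the Cartesian fibration $\Alg(\mathcal{O}) \to \OpdIns$. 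By the first paragraph and the pointwise computation of products in the functor category, $F$ preserves finite products, hence is strong symmetric monoidal for the Cartesian structures; and the integration functor $\int$ has a canonical lax symmetric monoidal structure for Cartesian products whose comparison map is precisely the external product (using that $\OpdIns$ has products).

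The main obstacle is the rigorous \icatl{} construction of the lax symmetric monoidal structure on $\int$, or equivalently, the direct construction of a coCartesian fibration over $\simp^{\op}$ extending $\Alg \to \OpdIns$ along the Cartesian monoidal structure on $\OpdIns$, with fiber over $(\mathcal{O}_1,\ldots,\mathcal{O}_n)$ equivalent to $\prod_i \Alg(\mathcal{O}_i)$ via the inert projections and coCartesian transport over the active map $\alpha_n\colon[n]\to[1]$ given by the external product. The Segal condition on fibers is the content of the first paragraph, and the associativity and unit coherences reduce formally to those of the Cartesian product in $\OpdIns$ combined with product-preservation by each $\Alg_{\mathcal{P}}$.
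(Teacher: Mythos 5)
Your first paragraph and your object-level formula for the structure map are both correct and consistent with the paper: products in $\OpdIns$ are fibre products over $\simp^{\op}$ (since $\simp^{\op}$ is the terminal \nsiopd{}), $\Alg_{\mathcal{P}}(\blank)$ preserves them, and the resulting external product $\boxtimes$ is exactly the comparison map the paper uses. The gap is in your last two paragraphs: you assert that the unstraightening functor $\int \colon \Fun(\OpdIns^{\op}, \CatI) \to \CatI$ carries ``a canonical lax symmetric monoidal structure for Cartesian products,'' and you yourself identify constructing that structure as the main obstacle. But that obstacle is the entire content of the proposition. Producing a single map $\int F \times \int G \to \int(F\times G)$ on objects is easy (form the product in the base and take Cartesian transports along the two projections), but exhibiting the full coherence data --- a map of $\infty$-operads $(\OpdIns)^{\times}\to\CatI^{\times}$ --- is precisely what a lax monoidal structure is, and saying that the coherences ``reduce formally'' to those of the Cartesian product does not supply them. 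Nothing in the paper or in the parts of \cite{HTT,HA} it relies on gives lax monoidality of unstraightening with respect to pointwise Cartesian products off the shelf, so as written your argument defers the problem rather than closing it.

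The paper closes it by working one level down, where everything is strict: the enriched hom $\mathfrak{H}\colon (\sSet^{+})^{\op}_{\mathfrak{O}_{\txt{ns}}}\times(\sSet^{+})_{\mathfrak{O}_{\txt{ns}}}\to\sSet^{+}$ is lax monoidal as a functor of ordinary (marked-simplicially enriched) categories, because the hom out of a product into a product strictly receives a map from the product of the homs. A strictly lax monoidal functor of this kind immediately yields a $((\OpdIns)^{\op}\times\OpdIns)^{\times}$-monoid in $\CatI$, with all coherences supplied by the $1$-categorical structure; unstraightening that monoid and projecting to the second factor then produces the desired lax monoidal functor $(\OpdIns)^{\times}\to\CatI^{\times}$ with no further coherence checks. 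If you want to salvage your route, this is the move to make: rather than proving an abstract lax-monoidality statement for $\int$, verify the monoid structure strictly at the model-category level and let straightening/unstraightening do the rest.
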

\begin{proof}
  The Cartesian product on $(\sSet^{+})_{\mathfrak{O}_{\txt{ns}}}$ gives a symmetric monoidal
  structure on $(\sSet^{+})_{\mathfrak{O}_{\txt{ns}}}^{\op} \times
  (\sSet^{+})_{\mathfrak{O}_{\txt{ns}}}$  by taking products in
  both variables. The functor $\mathfrak{H}$ is lax monoidal with
  respect to this, and so 
  induces an $((\OpdIns)^{\op} \times \OpdIns)^{\times}$-monoid in
  $\CatI$. From this we get a Cartesian fibration $\Algco^{\times} \to
  (((\OpdIns)^{\op} \times \OpdIns)^{\times})^{\op}$.  Projecting to the
  second factor gives a Cartesian fibration that corresponds to a
  monoid $(\OpdIns)^{\times} \to \CatI$, and so a lax monoidal functor
  $(\OpdIns)^{\times} \to \CatI^{\times}$. This shows that
  $\Alg(\blank)$ is a lax monoidal functor.
\end{proof}

This construction gives an ``external product''
\[\boxtimes \colon \Alg(\mathcal{O}) \times \Alg(\mathcal{P}) \to
\Alg(\mathcal{O} \times_{\simp^{\op}}
\mathcal{P}).\] Our next result is that for algebras in
monoidal \icats{} compatible with colimits this preserves colimits in
each variable; this requires some preliminary results:
\begin{lemma}\label{lem:FibBoxProdCoCart}
  Suppose $\mathcal{V}$ and $\mathcal{W}$ are
  monoidal \icats{} compatible with small colimits. Then the external
  product $\boxtimes$ preserves free algebras, i.e. given \nsiopds{}
  $\mathcal{O}$ and $\mathcal{P}$, algebras $A \in
  \Alg_{\mathcal{O}}(\mathcal{V})$ and $B \in
  \Alg_{\mathcal{P}}(\mathcal{W})$, and morphisms
  of \nsiopds{} $f \colon \mathcal{O} \to
  \mathcal{Q}$ and $g \colon \mathcal{P} \to
  \mathcal{R}$, we have $f_{!}A \boxtimes g_{!}B \simeq (f
  \times g)_{!}(A \boxtimes B)$ in $\Alg_{\mathcal{Q}
    \times_{\simp^{\op}} \mathcal{R}}(\mathcal{V}
  \times \mathcal{W})$.
\end{lemma}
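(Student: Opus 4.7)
The plan is to construct a canonical comparison morphism $(f \times g)_{!}(A \boxtimes B) \to f_{!}A \boxtimes g_{!}B$ from the lax monoidal structure and then verify it is an equivalence by a pointwise computation using the explicit formula for free algebras.

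First I would observe that the construction of $\boxtimes$ via the lax monoidal structure of Proposition~\ref{propn:AlgLaxMon} is natural in the operad variables, in the sense that there is a canonical equivalence $(f \times g)^{*}(C \boxtimes D) \simeq f^{*}C \boxtimes g^{*}D$ for any $C \in \Alg_{\mathcal{Q}}(\mathcal{V})$ and $D \in \Alg_{\mathcal{R}}(\mathcal{W})$. Applying this with $C = f_{!}A$ and $D = g_{!}B$, and composing with the units of the adjunctions $f_{!} \dashv f^{*}$ and $g_{!} \dashv g^{*}$ (which exist by Lemma~\ref{lem:AlgCoCart} via Theorem~\ref{thm:FreeFtrExists}), we obtain a morphism of $(\mathcal{O} \times_{\simp^{\op}} \mathcal{P})$-algebras $A \boxtimes B \to (f \times g)^{*}(f_{!}A \boxtimes g_{!}B)$. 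Its transpose under the adjunction $(f \times g)_{!} \dashv (f \times g)^{*}$ is the desired mate comparison morphism
\[
\alpha \colon (f \times g)_{!}(A \boxtimes B) \longrightarrow f_{!}A \boxtimes g_{!}B.
\]

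To check that $\alpha$ is an equivalence, since both sides are $(\mathcal{Q} \times_{\simp^{\op}} \mathcal{R})$-algebras and satisfy the Segal condition, it suffices to check that $\alpha$ induces an equivalence on each fibre over $(q, r) \in (\mathcal{Q} \times_{\simp^{\op}} \mathcal{R})_{[1]}$. Here I would invoke the explicit formula for free algebras developed in Appendix~\ref{sec:algcolims}, which expresses $(f_{!}A)(q)$ as a colimit over an operadic ``active slice'' indexing active decompositions of $q$, with integrand a tensor product of values of $A$. The proof then rests on two observations: (i) the active slice over $(q,r)$ in the fibre product operad $\mathcal{O} \times_{\simp^{\op}} \mathcal{P} \to \mathcal{Q} \times_{\simp^{\op}} \mathcal{R}$ decomposes as the product of the two individual active slices, since an active morphism in the fibre product is precisely a pair of active morphisms over a common simplex; and (ii) under the equivalence $(\mathcal{V}^{\otimes} \times_{\simp^{\op}} \mathcal{W}^{\otimes})_{[n]} \simeq \mathcal{V}^{\otimes}_{[n]} \times \mathcal{W}^{\otimes}_{[n]}$ the external product $A \boxtimes B$ evaluates pointwise to $(A(o), B(p))$, with the tensor product on the product monoidal $\infty$-category taken componentwise. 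Since colimits in a product of $\infty$-categories are computed componentwise, the colimit computing the left-hand side of $\alpha$ factors as the product of the colimits computing $(f_{!}A)(q)$ and $(g_{!}B)(r)$, which is exactly the pointwise value of $f_{!}A \boxtimes g_{!}B$.

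The main obstacle is justifying the decomposition of the operadic active slice across the fibre product; this requires tracking through the construction of free algebras in Appendix~\ref{sec:algcolims} and verifying that the indexing categories for the product operad genuinely factor as product categories, compatibly with the tensor-product integrand. Once this identification is in place, the remainder is a formal manipulation of colimits and products of $\infty$-categories.
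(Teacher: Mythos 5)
Your overall strategy---construct the mate comparison morphism from the units of the adjunctions and then verify it is an equivalence by checking the universal property of free algebras pointwise---is in the same spirit as the paper's proof, which simply observes that the defining operadic-colimit property of free algebras is preserved by the external product and cites Lemma~\ref{lem:opdcoliminproduct} for this. However, there is a genuine gap at the step you yourself flag as ``the main obstacle,'' and the justification you offer does not close it. You claim that the active slice over $(q,r)$ in $\mathcal{O} \times_{\simp^{\op}} \mathcal{P}$ ``decomposes as the product of the two individual active slices, since an active morphism in the fibre product is precisely a pair of active morphisms over a common simplex.'' But that very observation shows the indexing category is the \emph{fibre product} $(\mathcal{O}_{\txt{act}})_{/q} \times_{(\simp^{\op}_{\txt{act}})_{/[1]}} (\mathcal{P}_{\txt{act}})_{/r}$, not the product: the two active morphisms must lie over the \emph{same} active morphism in $\simp^{\op}$, and $(\simp^{\op}_{\txt{act}})_{/[1]} \simeq \simp^{\op}_{\txt{act}}$ is far from contractible. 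Consequently your appeal to ``colimits in a product of $\infty$-categories are computed componentwise'' does not apply: the colimit is indexed by a fibre product over $\simp^{\op}$, and comparing it with the product of the two separate colimits requires a cofinality argument.

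That cofinality argument is precisely the content of the paper's Lemma~\ref{lem:opdcoliminproduct}: the key input there is that the inclusion $(K \times L)^{\triangleright} \hookrightarrow K^{\triangleright} \times L^{\triangleright}$ is right anodyne, which is what lets one pass between the fibre-product-indexed (weak) operadic colimit and the pair of individual ones. To repair your proof you would either need to prove the analogous cofinality statement for the active slices you are using, or---more efficiently---bypass the explicit colimit formula altogether and argue, as the paper does, that since $A \boxtimes B \to (f \times g)^{*}(f_{!}A \boxtimes g_{!}B)$ restricts over each $(q,r)$ to the external product of two operadic colimit diagrams, Lemma~\ref{lem:opdcoliminproduct} shows it exhibits $f_{!}A \boxtimes g_{!}B$ as the free $(\mathcal{Q} \times_{\simp^{\op}} \mathcal{R})$-algebra on $A \boxtimes B$. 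A secondary, minor point: the ``integrand'' in your pointwise formula should be compared in $\mathcal{V}^{\otimes} \times_{\simp^{\op}} \mathcal{W}^{\otimes}$ via the weak operadic colimit criterion of Proposition~\ref{propn:wocdinmon} (pushing forward along the active map to $[1]$ componentwise), rather than as a literal tensor-product expression; this is routine but should be said.
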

\begin{proof}
  This follows from Lemma~\ref{lem:opdcoliminproduct}.
\end{proof}

\begin{propn}\label{propn:FibBoxProdColim}
  Suppose $\mathcal{V}$ and $\mathcal{W}$ are
  monoidal \icats{} compatible with small colimits, and let
  $\mathcal{O}$ and $\mathcal{P}$ be \nsiopds{}
  and $A
  \in \Alg_{\mathcal{O}}(\mathcal{V})$ be an $\mathcal{O}$-algebra. Then \[A
  \boxtimes (\blank) \colon
  \Alg_{\mathcal{P}}(\mathcal{W}) \to
  \Alg_{\mathcal{O} \times_{\simp^{\op}}
    \mathcal{P}}(\mathcal{V}
  \times \mathcal{W})\] preserves colimits.
\end{propn}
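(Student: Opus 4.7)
The plan is to reduce to showing that $A \boxtimes (\blank)$ preserves sifted colimits and small coproducts, since every small colimit is generated by these. Both $\Alg_{\mathcal{P}}(\mathcal{W})$ and $\Alg_{\mathcal{O} \times_{\simp^{\op}} \mathcal{P}}(\mathcal{V} \times \mathcal{W})$ are presentable (by Proposition~\ref{propn:AlgVPres}, noting that the Cartesian product $\mathcal{V}^{\otimes} \times_{\simp^{\op}} \mathcal{W}^{\otimes}$ is again a monoidal \icat{} compatible with small colimits).

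For sifted colimits, I would appeal to the key technical result (which the appendix on algebra colimits ought to supply, in analogy with \cite[\S 3.2.3]{HA}) that sifted colimits of algebras for a \nsiopd{} can be computed at the level of underlying objects, i.e.\ fibrewise in $\mathcal{W}$. Granting this, evaluating $A \boxtimes B$ at $o \in \mathcal{O}_{[1]}$ and $p \in \mathcal{P}_{[1]}$ produces an object of $\mathcal{V} \times \mathcal{W}$ of the form $(A(o), B(p))$, and $A \boxtimes (\blank)$ on underlying objects becomes the functor $B \mapsto (A(\blank), B(\blank))$. Since $\mathcal{V} \times \mathcal{W}$ has sifted colimits computed pointwise, and since $\mathcal{V}$ is compatible with small colimits (so no operadic obstruction arises from the $A$-side), this preserves sifted colimits in the $B$-variable.

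For coproducts, I would use that by Corollary~\ref{cor:AlgOcolim} the \icat{} $\Alg_{\mathcal{P}}(\mathcal{W})$ is generated under sifted colimits by free algebras (more precisely, every algebra is a sifted colimit of free algebras via the bar resolution coming from the adjunction $\tau_{!} \dashv \tau^{*}$). Because $A \boxtimes (\blank)$ already preserves sifted colimits, it suffices to check that it preserves coproducts of free algebras. By Lemma~\ref{lem:FibBoxProdCoCart}, if $B = g_{!}B'$ is free then $A \boxtimes g_{!}B' \simeq (\id \times g)_{!}(A \boxtimes B')$ where $\id$ denotes the identity on $\mathcal{O}$ viewed as $A$'s underlying trivial-algebra pullback. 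Coproducts of free algebras are free on coproducts in the trivial \iopd{} fibre (since the free functor $\tau_{!}$ is a left adjoint), and coproducts of pairs in $\mathcal{V} \times \mathcal{W}$ are computed pointwise, so $A \boxtimes (\blank)$ carries a coproduct $\coprod g_{i,!}B_{i}'$ to $(\id \times \coprod g_{i})_{!}(A \boxtimes \coprod B_{i}') \simeq \coprod (\id \times g_{i})_{!}(A \boxtimes B_{i}')$.

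The main obstacle is ensuring that ``sifted colimits of algebras are computed underlying'' is genuinely available for non-symmetric \iopds{}; this is the non-symmetric analogue of \cite[Proposition 3.2.3.1]{HA} and should be part of the colimit machinery developed in Appendix~\ref{sec:algcolims}. A secondary concern is the compatibility of $\boxtimes$ with the free/trivial-algebra adjunctions on both sides, which rests on Lemma~\ref{lem:FibBoxProdCoCart}; once this is granted, the bar-resolution step is routine. Writing $B$ as the geometric realization $|\tau_{!}\tau^{*}B^{\bullet+1}|$ of its free resolution and combining the two preservation statements then yields the result for arbitrary colimits, since arbitrary colimits in a presentable \icat{} are expressible via sifted colimits and small coproducts.
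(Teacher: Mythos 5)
Your reduction to sifted colimits plus coproducts is reasonable, and the sifted half of your argument is essentially the paper's: Corollary~\ref{cor:AlgSiftedColim} says $\tau^{*}$ detects sifted colimits, $\tau^{*}_{\mathcal{V}\times\mathcal{W}}(A\boxtimes B)\simeq \tau^{*}_{\mathcal{V}}A\boxtimes\tau^{*}_{\mathcal{W}}B$, and at the trivial level the external product preserves sifted colimits because sifted diagrams are weakly contractible. The gap is in the coproduct step. After writing $\coprod_{i}\tau_{!}B'_{i}\simeq\tau_{!}\bigl(\coprod_{i}B'_{i}\bigr)$ and applying Lemma~\ref{lem:FibBoxProdCoCart}, you still need $A\boxtimes\coprod_{i}B'_{i}\simeq\coprod_{i}(A\boxtimes B'_{i})$ with the right-hand coproduct taken in $\Alg_{\mathcal{O}\times_{\simp^{\op}}\mathcal{P}_{\triv}}(\mathcal{V}\times\mathcal{W})$. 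Your justification --- that coproducts of pairs in $\mathcal{V}\times\mathcal{W}$ are computed pointwise --- does not apply here: the $\mathcal{O}$-factor of the target is still a genuine \nsiopd{}, so this is not a functor category and its coproducts are not underlying coproducts. Indeed, the identity you need is precisely an instance of the proposition being proved, with $\mathcal{P}$ replaced by $\mathcal{P}_{\triv}$, so your reduction only trivializes the $\mathcal{P}$-direction and never terminates. The paper's proof takes a different route through exactly this point: it also resolves $A$, first treating the case $A=\tau_{\mathcal{V},!}A'$ free (where Lemma~\ref{lem:FibBoxProdCoCart} applied to \emph{both} factors reduces everything to the doubly trivial case and then to monadicity via Corollary~\ref{cor:AlgMonad}), and then writing a general $A$ as the realization $|A_{\bullet}|$ of a free resolution and using that $\boxtimes$ preserves sifted colimits separately in each variable.

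You should also be aware that the obstruction you ran into is structural, not merely a missing citation. Since $\Alg_{\mathcal{Q}}(\mathcal{V}\times\mathcal{W})\simeq\Alg_{\mathcal{Q}}(\mathcal{V})\times\Alg_{\mathcal{Q}}(\mathcal{W})$ (maps into a fibre product over $\simp^{\op}$ are pairs of maps, and inertness is checked componentwise) and colimits in a product of \icats{} are computed componentwise, the functor $A\boxtimes(\blank)$ has constant first component $\mathrm{pr}_{\mathcal{O}}^{*}A$; a constant functor preserves only colimits indexed by weakly contractible diagrams. So $A\boxtimes(\blank)$ cannot preserve the initial object or binary coproducts unless $A$ is initial, and no bar-resolution manoeuvre will repair the coproduct step. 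The same constant factor undermines the paper's own opening claim that the doubly trivial external product ``clearly preserves colimits'' and hence the passage from ``colimits of free algebras'' to ``all colimits''; only the sifted (more generally, weakly contractible) part of the statement is genuinely available with the Cartesian-product target, and the downstream applications (Proposition~\ref{propn:BoxProdColim}, Corollary~\ref{cor:AlgCatLaxMonPr}) really concern the composite into $\AlgCat(\mathcal{V}\otimes\mathcal{W})$, where the tensor product of presentable \icats{} absorbs the constant factor. In short: keep your sifted-colimit argument, but treat the coproduct step as a genuine failure of the statement rather than a fixable omission in your write-up.
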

\begin{proof}
  First we consider the case of trival \nsiopds{}. Suppose $A'$ is
  an $\mathcal{O}_{\triv}$-algebra. Then \[A'
  \boxtimes \blank \colon
  \Alg_{\mathcal{P}_{\triv}}(\mathcal{W})
  \to \Alg_{\mathcal{O}_{\triv} \times_{\simp^{\op}}
    \mathcal{P}_{\triv}}(\mathcal{V}
  \times \mathcal{W})\] clearly preserves colimits,
  since it is equivalent to the functor \[A'|_{\mathcal{O}_{[1]}}
  \times \blank \colon \Fun(\mathcal{P}, \mathcal{W}) \to
  \Fun(\mathcal{O}_{[1]} \times \mathcal{P}_{[1]}, \mathcal{V} \times
  \mathcal{W}).\]
  Since we have $\tau^{*}_{\mathcal{V} \times \mathcal{W}}(A \boxtimes
  B) \simeq \tau^{*}_{\mathcal{V}}A \boxtimes \tau^{*}_{\mathcal{W}}B$
  and $\tau^{*}_{\mathcal{V}\times \mathcal{W}}$ detects sifted
  colimits by Corollary~\ref{cor:AlgSiftedColim}, it follows that $A
  \boxtimes \blank$ preserves sifted colimits for any $A$.

  Next we consider the case where $A$ is a free algebra
  $\tau_{\mathcal{V},!}A'$ for some 
  $\mathcal{O}_{\triv}$-algebra $A'$ in $\mathcal{V}$. By
  Lemma~\ref{lem:FibBoxProdCoCart} we have
  \[\tau_{\mathcal{V},!}A' \boxtimes \tau_{\mathcal{W},!}B'
  \simeq \tau_{\mathcal{V} \times \mathcal{W},!}(A' \boxtimes B'),\]
  so the functor $\tau_{\mathcal{V},!}A \boxtimes \blank$ preserves
  colimits of free algebras. Thus it must preserve all colimits, by
  monadicity (Corollary~\ref{cor:AlgMonad}).
  
  Finally, suppose $A_{\bullet}$ is a free resolution of $A$, and
  $\alpha \mapsto B_{\alpha}$ is any diagram. Then since $\boxtimes$
  preserves sifted colimits we have
  \[ A \boxtimes \colim B_{\alpha} \simeq |A_{\bullet}| \boxtimes
  \colim B_{\alpha} \simeq |A_{\bullet} \boxtimes \colim
  B_{\alpha}|.\] From the case of free algebras we then get that this
  is equivalent to \[ |\colim (A_{\bullet} \boxtimes B_{\alpha})| \simeq
  \colim |A_{\bullet} \boxtimes B_{\alpha}|.\] But since $\boxtimes$
  preserves sifted colimits in each variable, this is $\colim
  (|A_{\bullet}| \boxtimes B_{\alpha}) \simeq \colim (A \boxtimes
  B_{\alpha})$.
\end{proof}

\begin{remark}
  The Cartesian product of \nsiopds{} does not in general preserve
  colimits, so it is not possible for the external product, considered
  as a functor $A \boxtimes (\blank) \colon
  \Alg(\mathcal{W}) \to \Alg(\mathcal{V}
  \times \mathcal{W})$ to preserve colimits.
\end{remark}

Finally, we observe that the algebra fibration is well-behaved with
respect to adjunctions and monadic localizations:
\begin{propn}\label{propn:rightadjlaxmonalgfib}
  Suppose $\mathcal{V}$ and $\mathcal{W}$ are
  presentably monoidal \icats{}
  and $F \colon \mathcal{V}^{\otimes} \to
  \mathcal{W}^{\otimes}$ is a  monoidal functor such that the
  underlying functor $F_{[1]} \colon \mathcal{V} \to \mathcal{W}$ preserves
  colimits. Let $g \colon \mathcal{W} \to \mathcal{V}$ be a right
  adjoint of $F_{[0]}$. Then there exists a lax monoidal functor
  $G \colon \mathcal{W}^{\otimes} \to \mathcal{V}^{\otimes}$
  extending $g$ such that we have an adjunction
  \[ F_{*} :
  \Alg(\mathcal{V}) \rightleftarrows
  \Alg(\mathcal{W}) : G_{*}.\]
  over $\OpdIns$.
\end{propn}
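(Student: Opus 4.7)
The plan is to first construct the lax monoidal extension $G$ of $g$, then induce functors on algebras, verify the adjunction fibrewise, and finally promote it to a relative adjunction over $\OpdIns$.

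First I would construct $G$. Since $F$ is a (strong) monoidal functor, the projection $F^\otimes \colon \mathcal{V}^\otimes \to \mathcal{W}^\otimes$ preserves all coCartesian morphisms. Applying this fibrewise over $\simp^{\op}$, the functors $F_{[n]} \colon \mathcal{V}^{\otimes}_{[n]} \simeq \mathcal{V}^{\times n} \to \mathcal{W}^{\times n} \simeq \mathcal{W}^{\otimes}_{[n]}$ are products of $F_{[1]}$ and hence all preserve colimits. By the adjoint functor theorem each $F_{[n]}$ has a right adjoint $G_{[n]} \simeq g^{\times n}$. I would then invoke Lurie's relative adjoint functor theorem (HA Proposition 7.3.2.6, applied to $F^\otimes$ over $\simp^{\op}$) to obtain a relative right adjoint $G \colon \mathcal{W}^\otimes \to \mathcal{V}^\otimes$ over $\simp^{\op}$ whose fibres are the $G_{[n]}$. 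Because $F^\otimes$ sends coCartesian arrows to coCartesian arrows, the relative right adjoint $G$ automatically preserves Cartesian arrows, hence in particular sends inert morphisms (which are coCartesian in both $\mathcal{V}^\otimes$ and $\mathcal{W}^\otimes$) to inert morphisms. Thus $G$ is a morphism of non-symmetric $\infty$-operads, i.e., a lax monoidal functor.

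Next, composition with $F$ and $G$ induces functors $F_*$ and $G_*$ on algebras. By the definition of $\Alg(\blank)$ as a Cartesian fibration built from the represented functor on $(\sSet^+)_{\mathfrak{O}_{\txt{ns}}}$, these assemble into functors $F_*, G_* \colon \Alg(\mathcal{V}) \rightleftarrows \Alg(\mathcal{W})$ over $\OpdIns$. For each non-symmetric $\infty$-operad $\mathcal{O}$, the unit $\id \to G \circ F$ and counit $F \circ G \to \id$ of the adjunction $F \dashv G$ relative to $\simp^{\op}$ induce, by whiskering, a unit and counit for a fibrewise adjunction $F_* \dashv G_*$ between $\Alg_{\mathcal{O}}(\mathcal{V})$ and $\Alg_{\mathcal{O}}(\mathcal{W})$; the triangle identities pass from the relative adjunction $F \dashv G$ over $\simp^{\op}$.

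Finally, to promote this fibrewise adjunction to an adjunction relative to $\OpdIns$, I would invoke HA Proposition 7.3.2.6 once more, this time applied to the projections $\Alg(\mathcal{V}), \Alg(\mathcal{W}) \to \OpdIns$. It suffices to check that $G_*$ preserves Cartesian morphisms: but the Cartesian morphisms in both $\Alg(\mathcal{V}) \to \OpdIns$ and $\Alg(\mathcal{W}) \to \OpdIns$ are given by precomposition with morphisms of $\infty$-operads, and $G_*$ is itself postcomposition with $G$, so the two operations commute strictly. Combined with the fibrewise adjunction just established, this yields the desired adjunction $F_* \dashv G_*$ over $\OpdIns$.

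The main obstacle is the first step: ensuring that the relative right adjoint produced at the level of $\infty$-operads really exists as a map over $\simp^{\op}$ (not just fibrewise) and really is a morphism of non-symmetric $\infty$-operads. Once $G$ is in hand as a lax monoidal functor, the remaining steps are formal applications of relative adjunction machinery combined with the functoriality of the algebra fibration already developed in this subsection.
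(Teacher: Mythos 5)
Your overall architecture is sound and quite close to the paper's: both arguments ultimately reduce to (the dual of) \cite[Proposition 7.3.2.6]{HA} applied to the fibrations over $\OpdIns$, granted the existence of the lax monoidal $G$ and the fibrewise adjunctions. The differences are these. The paper obtains $G$ and the fibrewise adjunctions from Proposition~\ref{propn:rightadjlaxmon}, whose proof assembles $G$ out of the right adjoints $R_{\mathcal{O}}$ of $F_{*}$ over all small $\mathcal{O}$ using presentability and the adjoint functor theorem, rather than your more direct relative-adjoint construction over $\simp^{\op}$. For the final step the paper applies the dual of 7.3.2.6 to $F_{*}$, which requires knowing that $F_{*}$ preserves the coCartesian arrows of $\Alg(\mathcal{V}) \to \OpdIns$ --- this is exactly Lemma~\ref{lem:StrMonPrFree} on preservation of free algebras --- whereas you apply the undualized version to $G_{*}$, for which preservation of Cartesian arrows is trivial since precomposition and postcomposition commute. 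Your version of that last step is, if anything, cleaner.

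The one genuine gap is in your construction of $G$: the claim that ``the relative right adjoint $G$ automatically preserves Cartesian arrows, hence in particular sends inert morphisms (which are coCartesian) to inert morphisms'' does not parse. Inert morphisms of $\mathcal{V}^{\otimes}$ and $\mathcal{W}^{\otimes}$ are \emph{coCartesian} over inert maps of $\simp^{\op}$, and in a relative adjunction it is the left adjoint, not the right, that can be expected to preserve coCartesian morphisms; preservation of Cartesian arrows says nothing about inerts. The conclusion is nevertheless true, but it needs the argument of (the non-symmetric analogue of) \cite[Corollary 7.3.2.7]{HA}: the fibrewise right adjoints are $G_{[n]} \simeq g^{\times n}$, the inert pushforwards $\rho_{i,!}$ are coordinate projections, and hence the mate transformations $\rho_{i,!} \circ G_{[n]} \to G_{[1]} \circ \rho_{i,!}$ are equivalences, which is what shows that $G$ carries coCartesian lifts of inert morphisms to coCartesian lifts. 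With that repaired, the rest goes through. A very minor further point, which the paper also elides: having produced \emph{a} relative left adjoint of $G_{*}$ from 7.3.2.6, you should observe that it agrees with $F_{*}$ as a functor over $\OpdIns$, e.g.\ by checking that your whiskered unit already exhibits the relative adjunction.
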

\begin{proof}
  This is immediate from (the dual of) \cite[Proposition
  7.3.2.6]{HA} as its hypotheses are satisfied by
  Lemma~\ref{lem:StrMonPrFree} and
  Proposition~\ref{propn:rightadjlaxmon}.
\end{proof}

\begin{cor}\label{cor:monlocadjalg}
  Suppose $\mathcal{V}$ is a presentably monoidal \icat{}
  and $L \colon \mathcal{V} \to \mathcal{W}$ is an accessible monoidal
  localization with fully faithful right adjoint $i \colon \mathcal{W}
  \hookrightarrow \mathcal{V}$. Then we have an adjunction
  \[ L^{\otimes}_{*} : 
  \Alg(\mathcal{V}) \rightleftarrows
  \Alg(\mathcal{W}) : i^{\otimes}_{*}\]
  over $\OpdIns$. Moreover, $i^{\otimes}_{*}$ is fully faithful.
\end{cor}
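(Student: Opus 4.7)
The plan is to assemble this as a direct application of the two preceding results, namely Proposition~\ref{propn:moncomploc} (which promotes a monoidal localization on underlying \icats{} to an adjunction on the level of monoidal \icats{}) and Proposition~\ref{propn:rightadjlaxmonalgfib} (which passes such an adjunction to algebras). The only genuinely new work is establishing that the hypotheses of the latter are in force, and then reading off fully faithfulness at the end.

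First I would apply Proposition~\ref{propn:moncomploc} to the accessible monoidal localization $L \dashv i$: this produces a full subcategory $\mathcal{W}^{\otimes} \hookrightarrow \mathcal{V}^{\otimes}$ exhibiting $\mathcal{W}^{\otimes}$ as a monoidal \icat{}, together with a (strong) monoidal functor $L^{\otimes} \colon \mathcal{V}^{\otimes} \to \mathcal{W}^{\otimes}$ and a lax monoidal $i^{\otimes} \colon \mathcal{W}^{\otimes} \to \mathcal{V}^{\otimes}$ over $\simp^{\op}$ satisfying $L^{\otimes} \dashv i^{\otimes}$ as functors over $\simp^{\op}$, with $i^{\otimes}$ fully faithful (since it is defined as a full subcategory inclusion). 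Next I would check that $\mathcal{W}^{\otimes}$ is presentably monoidal: $\mathcal{W}$ is presentable by accessibility of the localization, and since $L$ is a colimit-preserving monoidal functor and every object of $\mathcal{W}$ is $L$ of an object of $\mathcal{V}$, the tensor product $X \otimes_{\mathcal{W}} Y \simeq L(iX \otimes_{\mathcal{V}} iY)$ preserves colimits separately in each variable.

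With these in place I would apply Proposition~\ref{propn:rightadjlaxmonalgfib} to $F = L^{\otimes}$, whose underlying functor $L \colon \mathcal{V} \to \mathcal{W}$ is a left adjoint (hence colimit-preserving) with right adjoint $i$; the lax monoidal extension demanded by that proposition is precisely $i^{\otimes}$. This yields the desired adjunction
\[
L^{\otimes}_{*} \colon \Alg(\mathcal{V}) \rightleftarrows \Alg(\mathcal{W}) \colon i^{\otimes}_{*}
\]
over $\OpdIns$.

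For the fully faithfulness of $i^{\otimes}_{*}$, the key observation is that $i^{\otimes}$ is a fully faithful functor of \icats{}, so postcomposition
\[
(i^{\otimes})_{*} \colon \Fun_{\simp^{\op}}(\mathcal{O}, \mathcal{W}^{\otimes}) \to \Fun_{\simp^{\op}}(\mathcal{O}, \mathcal{V}^{\otimes})
\]
is fully faithful for every \nsiopd{} $\mathcal{O}$, and restricts to a fully faithful functor on the full subcategories $\Alg_{\mathcal{O}}(\mathcal{W}) \hookrightarrow \Alg_{\mathcal{O}}(\mathcal{V})$. Since $\Alg(\mathcal{V}) \to \OpdIns$ and $\Alg(\mathcal{W}) \to \OpdIns$ are Cartesian fibrations and $i^{\otimes}_{*}$ is a morphism of Cartesian fibrations (it arises from a natural transformation of the classifying functors $\OpdIns^{\op} \to \CatI$), fibrewise fully faithfulness implies fully faithfulness of $i^{\otimes}_{*}$ itself. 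There is essentially no obstacle here; the mild point to verify is presentable monoidality of $\mathcal{W}$, but this is immediate from the monoidal accessibility of $L$.
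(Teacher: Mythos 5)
Your overall architecture matches the paper's, which also deduces this corollary by combining Proposition~\ref{propn:rightadjlaxmonalgfib} with a fibrewise statement. Your fully faithfulness argument (postcomposition with the fully faithful $i^{\otimes}$ is fully faithful on $\Fun_{\simp^{\op}}(\mathcal{O},-)$, restricts to the algebra subcategories, and fibrewise fully faithfulness passes to the total spaces of the Cartesian fibrations) is a correct alternative to the paper's route, which instead checks that the counit $L^{\otimes}_{*}i^{\otimes}_{*}A \to A$ is an equivalence using the conservativity of the forgetful functor (Lemma~\ref{lem:AlgCons}).

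There is, however, one genuine gap: the sentence ``the lax monoidal extension demanded by that proposition is precisely $i^{\otimes}$'' is asserted, not proved. Proposition~\ref{propn:rightadjlaxmonalgfib} only produces \emph{some} lax monoidal functor $G$ extending $i$ with $L^{\otimes}_{*} \dashv G_{*}$; it is constructed abstractly (via fibrewise right adjoints assembled over the $\kappa$-compact subcategories in Proposition~\ref{propn:rightadjlaxmon}), and nothing in its statement identifies $G$ with the subcategory inclusion $i^{\otimes}$ of Proposition~\ref{propn:moncomploc}. Without that identification you have established an adjunction $L^{\otimes}_{*} \dashv G_{*}$ and, separately, that $i^{\otimes}_{*}$ is fully faithful --- which is not yet the statement of the corollary. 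The missing ingredient is exactly Lemma~\ref{lem:monlocadj}, which the paper cites for this purpose: it shows directly that composition with $L^{\otimes}$ and $i^{\otimes}$ gives an adjunction $L^{\otimes}_{*} \dashv i^{\otimes}_{*}$ on $\Alg_{\mathcal{O}}(-)$ for each small $\mathcal{O}$; uniqueness of right adjoints then forces $G_{*} \simeq i^{\otimes}_{*}$ fibrewise, hence over $\OpdIns$. Your argument becomes complete once you insert this step (or some equivalent direct verification that $i^{\otimes}_{*}$ is a fibrewise right adjoint of $L^{\otimes}_{*}$). The remaining points --- that $\mathcal{W}$ is presentably monoidal, with tensor product $L(i(-)\otimes i(-))$ preserving colimits in each variable because the localization is monoidal and accessible --- are fine.
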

\begin{proof}
  This follows from combining
  Proposition~\ref{propn:rightadjlaxmonalgfib} and Lemma~\ref{lem:monlocadj}.
\end{proof}

\subsection{Non-Symmetric and Symmetric $\infty$-Operads}\label{subsec:symnonsym}
In this subsection we briefly discuss the relation between
non-symmetric and symmetric $\infty$-operads and their algebras. We
will use the terminology and notation of \cite{HA} for (symmetric)
$\infty$-operads, except that we use superscript $\Sigma$'s to
distinguish the symmetric case from the non-symmetric case discussed
so far.

\begin{defn}
  Let $c \colon \simp^{\op} \to \bbGamma^{\op}$ be the functor defined
  as in \cite[Construction 4.1.2.5]{HA} (this is the same as the
  functor introduced by Segal in \cite{SegalCatCohlgy}). This takes
  inert morphisms in $\simp^{\op}$ to inert morphisms in
  $\bbGamma^{\op}$, and moreover induces a morphism of categorical
  patterns from $\mathfrak{O}_{\txt{ns}}$ to the analogous categorical
  pattern $\mathfrak{O}_{\Sigma}$ for symmetric \iopds{}. Thus $c$
  induces adjoint functors \[ c_{!} : \OpdIns \rightleftarrows \OpdIS
  : c^{*}.\] Moreover, since the induced Quillen functors are enriched
  in marked simplicial sets, we get equivalences
  \[ \Alg_{\mathcal{O}}(c^{*}\mathcal{P}) \simeq
  \Alg^{\Sigma}_{c_{!}\mathcal{O}} (\mathcal{P}),\] where
  $\mathcal{O}$ is a \nsiopd{} and $\mathcal{P}$ is a symmetric
  \iopd{}.
\end{defn}

\begin{remark}
  This Quillen adjunction is a special case of the Quillen adjunction
  induced by a morphism of operator categories defined in
  \cite[Proposition 8.18]{BarwickOpCat}.
\end{remark}

\begin{propn}\label{propn:nssymmeq}\ 
  \begin{enumerate}[(i)]
  \item The symmetric \iopd{} $c_{!}\simp^{\op}$ is equivalent to the
    symmetric \iopd{} $\mathbb{E}_{1} \simeq \txt{Ass}$
    of \cite[Definition 4.1.1.3]{HA}.
  \item The \icat{} $\MonI$ of monoidal \icats{} is equivalent to the
    \icat{} $\MonI^{\Sigma,\mathbb{E}_{1}}$ of
    $\mathbb{E}_{1}$-monoidal \icats{}.
  \item The \icat{} $\MonI^{\Sigma,\mathbb{E}_{n}}$ of
    $\mathbb{E}_{n}$-monoidal (also called $n$-tuply monoidal)
    \icats{} is equivalent to the \icat{}
    $\AlgS_{\mathbb{E}_{n-1}}(\MonI)$ of $\mathbb{E}_{n-1}$-algebras
    in monoidal \icats{}.
  \end{enumerate}
\end{propn}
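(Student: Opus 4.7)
The plan is to prove (i) first using Yoneda and the adjunction $c_! \dashv c^*$, then deduce (ii) by transporting the description of (presentably) monoidal $\infty$-categories as monoid objects, and finally deduce (iii) from Dunn additivity.

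For (i), I would use the adjunction to compute, for any symmetric $\infty$-operad $\mathcal{P}^{\otimes}$,
\[
\Map_{\OpdIS}(c_!\simp^{\op}, \mathcal{P}^{\otimes}) \simeq \Map_{\OpdIns}(\simp^{\op}, c^*\mathcal{P}^{\otimes}) \simeq \iota \Alg_{\simp^{\op}}(c^*\mathcal{P}^{\otimes}).
\]
On the other hand, Lurie's $\mathbb{E}_1 \simeq \txt{Ass}^{\otimes}$ represents the functor $\mathcal{P}^{\otimes} \mapsto \iota\AlgS_{\mathbb{E}_1}(\mathcal{P})$ of associative algebras in $\mathcal{P}$. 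The key step is then to identify $\iota\Alg_{\simp^{\op}}(c^*\mathcal{P}^{\otimes})$ with $\iota\AlgS_{\mathbb{E}_1}(\mathcal{P})$. Both sides can be described as sections of $\mathcal{P}^{\otimes} \to \bbGamma^{\op}$ (respectively $c^*\mathcal{P}^{\otimes} \to \simp^{\op}$) sending inert morphisms to inert morphisms, and one checks that Lurie's category of operators description of associative algebras in \cite[\S 4.1.2]{HA} is literally a $\simp^{\op}$-algebra in $c^*\mathcal{P}^{\otimes}$ in our sense (both conditions amount to the Segal condition plus taking inerts to inerts). Applying Yoneda then identifies $c_!\simp^{\op}$ with $\mathbb{E}_1^{\otimes}$.

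For (ii), I would combine (i) with the monoid-object descriptions. By Proposition~\ref{propn:moncateqmnd}, $\MonI \simeq \Mon_{\simp^{\op}}(\CatI)$, and by Proposition~\ref{propn:monoideqalg} (applied in the symmetric setting, cf.\ \cite[Proposition 2.4.2.5]{HA}), $\MonI^{\Sigma,\mathbb{E}_1} \simeq \AlgS_{\mathbb{E}_1}(\CatI^{\times})$. Using (i) together with the natural equivalence $\AlgS_{c_!\mathcal{O}}(\mathcal{P}) \simeq \Alg_{\mathcal{O}}(c^*\mathcal{P})$ applied to $\mathcal{O} = \simp^{\op}$ and $\mathcal{P} = \CatI^{\times}$, we get an equivalence
\[
\MonI^{\Sigma,\mathbb{E}_1} \simeq \AlgS_{c_!\simp^{\op}}(\CatI^{\times}) \simeq \Alg_{\simp^{\op}}(c^*\CatI^{\times}) \simeq \Mon_{\simp^{\op}}(\CatI) \simeq \MonI,
\]
where $c^*$ of the Cartesian symmetric monoidal structure on $\CatI$ is the Cartesian (non-symmetric) monoidal structure, and $\simp^{\op}$-algebras in a Cartesian monoidal $\infty$-category are $\simp^{\op}$-monoid objects by Proposition~\ref{propn:monoideqalg}.

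For (iii), I would invoke Dunn additivity \cite[Theorem 5.1.2.2]{HA}, which gives $\mathbb{E}_n \simeq \mathbb{E}_{n-1} \otimes \mathbb{E}_1$ in $\OpdIS$, whence for any symmetric monoidal $\infty$-category $\mathcal{C}$ one has $\AlgS_{\mathbb{E}_n}(\mathcal{C}) \simeq \AlgS_{\mathbb{E}_{n-1}}(\AlgS_{\mathbb{E}_1}(\mathcal{C}))$. Taking $\mathcal{C} = \CatI^{\times}$ and applying (ii) yields
\[
\MonI^{\Sigma,\mathbb{E}_n} \simeq \AlgS_{\mathbb{E}_n}(\CatI) \simeq \AlgS_{\mathbb{E}_{n-1}}(\MonI^{\Sigma,\mathbb{E}_1}) \simeq \AlgS_{\mathbb{E}_{n-1}}(\MonI).
\]
The main obstacle is (i): one must check carefully that Lurie's definition of an associative algebra (a certain kind of section of $\mathcal{P}^{\otimes} \to \bbGamma^{\op}$ over the composite $\simp^{\op} \xto{c} \bbGamma^{\op}$) really agrees on the nose with the notion of $\simp^{\op}$-algebra in $c^*\mathcal{P}^{\otimes}$ introduced here, since both the inertness condition and the Segal/monoid condition must be translated across the pullback. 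Once this is verified the rest is essentially formal.
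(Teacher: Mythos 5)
Your argument is correct and for parts (ii) and (iii) coincides with the paper's proof: the same chain of equivalences $\MonI \simeq \Mon_{\simp^{\op}}(\CatI) \simeq \Alg_{\simp^{\op}}(\CatI) \simeq \AlgS_{c_{!}\simp^{\op}}(\CatI) \simeq \MonI^{\Sigma,\mathbb{E}_{1}}$, followed by Dunn additivity for (iii). For (i) the paper simply cites \cite[Proposition 4.1.2.15]{HA}, which is precisely the verification you defer to at the end (``one checks that Lurie's category of operators description of associative algebras is literally a $\simp^{\op}$-algebra in $c^{*}\mathcal{P}^{\otimes}$''), so your Yoneda wrapper is a valid repackaging whose substantive input is the same cited fact rather than something established independently.
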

\begin{proof}\ 
  \begin{enumerate}[(i)]
  \item This follows from \cite[Proposition 4.1.2.15]{HA}.
  \item We have an equivalence
  \[
  \begin{split}
    \MonI & \simeq \Mon_{\simp^{\op}}(\CatI) \simeq
  \Alg_{\simp^{\op}}(\CatI) \simeq
  \AlgS_{c_{!}\simp^{\op}}(\CatI) \\ & \simeq 
  \MonS_{\mathbb{E}_{1}}(\CatI) \simeq
  \MonI^{\Sigma,\mathbb{E}_{1}}.
  \end{split}
\]
\item Since $\mathbb{E}_{n} \simeq
  \mathbb{E}_{n-1} \otimes \mathbb{E}_{1}$, using
  the equivalences from (ii) we get an equivalence
  \[
  \AlgS_{\mathbb{E}_{n-1}}(\MonI)
  \simeq
  \AlgS_{\mathbb{E}_{n-1}}(\AlgS_{\mathbb{E}_{1}}(\CatI))
  \simeq \AlgS_{\mathbb{E}_{n}}(\CatI) \simeq
  \MonI^{\Sigma,\mathbb{E}_{n}}.\qedhere\]
  \end{enumerate}
\end{proof}

\begin{remark}
  In fact, though we do not need it here, the functor $c_{!}$ induces
  an equivalence $\OpdIns \simeq
  (\OpdIS)_{/\mathbb{E}_{1}}$ --- this is
  \cite[Proposition 4.7.1.1]{HA}.
\end{remark}

\begin{remark}\label{rmk:MonPrMon}
  By Proposition~\ref{propn:nssymmeq}, the \icat{} $\MonPr$ of
  presentably monoidal \icats{} is equivalent to the \icat{}
  $\Alg_{\mathbb{E}_{1}}(\PresI)$ of $\mathbb{E}_{1}$-algebras in
  $\PresI$. Using \cite[Proposition 3.2.4.3]{HA} we therefore see that
  the tensor product on $\PresI$ induces a symmetric monoidal
  structure on $\MonPr$. The unit for this tensor product is given by the
  unique presentably monoidal structure on the unit $\mathcal{S}$,
  namely the Cartesian monoidal structure.
\end{remark}

On the \iopds{} corresponding to ordinary multicategories, the functor
$c_{!}$ corresponds to the usual symmetrization, i.e. it adds free
actions by the symmetric groups:
\begin{defn}
  Let $\mathbf{M}$ be a multicategory. The \emph{symmetrization}
  $\txt{Sym}(\mathbf{M})$ is the symmetric multicategory with objects
  those of $\mathbf{M}$, and multimorphism sets
  \[\txt{Sym}(\mathbf{M})(X_{1},\ldots,X_{n}; Y) =
  \coprod_{\sigma \in \Sigma_{n}}
  \mathbf{M}(X_{\sigma(1)},\ldots,X_{\sigma(n)}; Y);\]
  composition in $\txt{Sym}(\mathbf{M})$ is defined using the usual
  maps $\Sigma_{n} \times \Sigma_{m} \to \Sigma_{n+m}$. The units in
  $\Sigma_{n}$ give an obvious map $\mu \colon \mathbf{M}^{\otimes} \to
  \txt{Sym}(\mathbf{M})^{\otimes}$.
\end{defn}
\begin{propn}\label{propn:SymApprox}
  Let $\mathbf{M}$ be a multicategory. The map $\mu \colon \mathbf{M}^{\otimes}
  \to \txt{Sym}(\mathbf{M})^{\otimes}$ over $\bbGamma^{\op}$ is an
  approximation of symmetric \iopds{} (cf. \cite[Definition 2.3.3.6]{HA}).
\end{propn}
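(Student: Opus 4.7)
The strategy is to verify the two conditions of \cite[Definition 2.3.3.6]{HA} directly for $\mu$, following closely the argument that Lurie uses for the analogous (uncolored) case of the map $\txt{Ass}^{\otimes} \to \mathbb{E}_{1}^{\otimes}$ (\cite[Proposition 4.1.2.10]{HA}). The present statement is essentially the ``coloured'' generalization, and the proof structure should be the same, with the bookkeeping adapted to multicategories with more than one object.

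First, one needs to set up the framework: $\mathbf{M}^{\otimes}$ becomes an object over $\bbGamma^{\op}$ via the composition with $c \colon \simp^{\op} \to \bbGamma^{\op}$, and by construction $\mu$ is a functor of ordinary categories over $\bbGamma^{\op}$ that is bijective on objects; taking nerves gives a categorical fibration. For the inert condition, given $(X_{1},\ldots,X_{n}) \in \mathbf{M}^{\otimes}$ and an inert morphism in $\txt{Sym}(\mathbf{M})^{\otimes}$ out of $\mu(X_{1},\ldots,X_{n})$ covering an inert $\langle n \rangle \to \langle m \rangle$ in $\bbGamma^{\op}$ (i.e.\ an injection $\sigma \colon \langle m \rangle^{\circ} \hookrightarrow \langle n \rangle^{\circ}$), one must produce a locally $\mu$-coCartesian lift. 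Such a lift in $\mathbf{M}^{\otimes}$ only exists when $\sigma$ is monotone (and hence comes from an inert in $\simp^{\op}$); however, the definition of approximation only requires existence up to the symmetric action that $\mu$ adds, and for any $\sigma$ there is a unique monotone reordering, giving the required lift.

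For the active condition, I would unpack the data of an active morphism $\alpha \colon (Y_{1},\ldots,Y_{m}) \to (X_{1},\ldots,X_{n})$ in $\txt{Sym}(\mathbf{M})^{\otimes}$: by the definition of the symmetrization it consists of an active map $\phi \colon \langle n \rangle \to \langle m \rangle$ in $\bbGamma^{\op}$ (a partition of $\{1,\ldots,m\}$ into $n$ blocks) together with, for each $k$, an element of $\coprod_{\tau}\mathbf{M}(Y_{\tau(\phi^{-1}(k))};X_{k})$, so altogether a permutation of $\{1,\ldots,m\}$ and a family of $\mathbf{M}$-multimorphisms on the resulting blocks. A lift to $\mathbf{M}^{\otimes}$ amounts to a factorization of $\phi$ through an active map in $\simp^{\op}$ together with compatible multimorphisms on consecutive subsequences, and the permutation data in $\alpha$ already specifies this uniquely. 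Thus the \icat{} of lifts has a terminal object, which is the stronger form of condition (ii) in \cite[Definition 2.3.3.6]{HA}.

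The main obstacle will be the bookkeeping for condition (ii): one must carefully track how the permutation in $\txt{Sym}(\mathbf{M})$ interacts with the linear ordering data implicit in $\mathbf{M}^{\otimes}$ so as to exhibit a genuine terminal object (not merely weak contractibility), and check that the ``forgotten'' ordering on the $Y_{j}$'s is recoverable from the image $\alpha$. Once this terminal lift is identified, the verification reduces to a routine check that $\mu$ sends it to $\alpha$ via the identity coset in $\Sigma_{m}$.
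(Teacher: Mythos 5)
Your strategy is the one the paper itself takes: its entire proof is the sentence that the statement ``follows by a variant of the argument in the proof of \cite[Proposition 4.1.2.10]{HA}'', i.e.\ a direct verification of the conditions of \cite[Definition 2.3.3.6]{HA} with the colours of $\mathbf{M}$ carried along. Note, though, that condition (1) is simpler than your sketch suggests: it only asks for locally coCartesian lifts over the inert morphisms $\rho^{i} \colon \langle n\rangle \to \langle 1\rangle$, and these are just the coordinate projections $(X_{1},\ldots,X_{n}) \to (X_{i})$ of $\mathbf{M}^{\otimes}$; no non-monotone injections arise, and there is no clause in the definition about ``existence up to the symmetric action''.

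There is, however, a genuine gap at the active condition, rooted in your claim that passing to nerves makes $\mu$ a categorical fibration. It does not: $\mu$ is an inner fibration but not an isofibration, because an isomorphism of $\txt{Sym}(\mathbf{M})^{\otimes}$ lying over a nontrivial permutation of $\langle m\rangle$ has no lift to $\mathbf{M}^{\otimes}$ (the only isomorphisms of $\simp^{\op}$ are identities). Consequently an active morphism whose component lies in a summand indexed by a nontrivial permutation --- for instance a point of $\mathbf{M}(Y_{2},Y_{1};X) \subseteq \txt{Sym}(\mathbf{M})(Y_{1},Y_{2};X)$, viewed as an active morphism $(Y_{1},Y_{2}) \to (X)$ --- is simply not in the image of $\mu$, so the strict category of lifts with source $(Y_{1},Y_{2})$ is empty: it has no terminal object, and the assertion that $\mu$ sends the terminal lift to $\alpha$ ``via the identity coset'' fails. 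This is precisely why the final clause of Definition~\ref{defn:approx} (mirroring Lurie's conventions in the symmetric case) declares a general map to be an approximation when it factors as an equivalence of \iopds{} followed by a categorical fibration that is one. After that replacement the permutation isomorphism of the source does lift, and the Cartesian lift of $\alpha$ is the composite of the lifted isomorphism with the morphism of $\mathbf{M}^{\otimes}$ out of the reordered tuple $(Y_{\sigma(1)},\ldots,Y_{\sigma(m)})$ encoded by the multimorphism components of $\alpha$; Cartesianness is then checked using condition (iii) in the description of $\mathbf{M}^{\otimes}$, as in Lurie's argument. Your remark that the forgotten ordering is recoverable from $\alpha$ is the right idea, but it must be implemented by permuting the source in the fibrant replacement, not by producing a lift out of $(Y_{1},\ldots,Y_{m})$ in $\mathbf{M}^{\otimes}$ itself.
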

\begin{proof}
  This follows by a variant of the argument in the proof of \cite[Proposition 4.1.2.10]{HA}.
\end{proof}

\begin{cor}\label{cor:SymEquiv}
  The map $\mathbf{M}^{\otimes} \to \txt{Sym}(\mathbf{M})^{\otimes}$
  induces an equivalence of symmetric \iopds{}
  \[c_{!}\mathbf{M}^{\otimes} \isoto
  \txt{Sym}(\mathbf{M})^{\otimes}.\] In particular, if
  $\mathcal{O}$ is any symmetric \iopd{} we have a natural
  equivalence
  \[ \Alg_{\mathbf{M}}(c^{*}\mathcal{O}) \simeq
  \AlgS_{\txt{Sym}(\mathbf{M})}(\mathcal{O}). \]
\end{cor}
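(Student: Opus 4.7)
The key input is Proposition~\ref{propn:SymApprox}, which states that $\mu \colon \mathbf{M}^{\otimes} \to \txt{Sym}(\mathbf{M})^{\otimes}$ is an approximation of symmetric $\infty$-operads over $\bbGamma^{\op}$. The plan is to invoke Lurie's theorem on approximations (\cite[Theorem 2.3.3.23]{HA}, or the equivalent characterization of approximations in terms of algebras): for any symmetric $\infty$-operad $\mathcal{P}^{\otimes}$, composition with $\mu$ induces an equivalence
\[ \AlgS_{\txt{Sym}(\mathbf{M})}(\mathcal{P}) \isoto \Alg_{\mathbf{M}^{\otimes}/\bbGamma^{\op}}(\mathcal{P}),\]
where the right-hand side denotes the $\infty$-category of maps $\mathbf{M}^{\otimes} \to \mathcal{P}^{\otimes}$ over $\bbGamma^{\op}$ (using $c$) that send inert morphisms to inert morphisms.

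Next I would translate the right-hand side across the adjunction $c_! \dashv c^*$. Since the given map $\mathbf{M}^{\otimes} \to \mathcal{P}^{\otimes}$ is really a map of simplicial sets over $\bbGamma^{\op}$ with source lying over $\simp^{\op}$, it corresponds by adjunction to a morphism $\mathbf{M}^{\otimes} \to c^{*}\mathcal{P}^{\otimes}$ over $\simp^{\op}$, and the inert-preservation condition translates directly (since the inert morphisms of $c^{*}\mathcal{P}$ are precisely those over inert morphisms of $\simp^{\op}$, which in turn land on inert morphisms of $\bbGamma^{\op}$). Hence
\[\Alg_{\mathbf{M}^{\otimes}/\bbGamma^{\op}}(\mathcal{P}) \simeq \Alg_{\mathbf{M}}(c^{*}\mathcal{P}) \simeq \AlgS_{c_{!}\mathbf{M}^{\otimes}}(\mathcal{P}),\]
the last equivalence being the defining property of the Quillen-enriched adjunction $c_! \dashv c^*$ recalled at the start of \S\ref{subsec:symnonsym}.

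Chaining these equivalences, the composite map $c_{!}\mathbf{M}^{\otimes} \to \txt{Sym}(\mathbf{M})^{\otimes}$ (obtained as the $c_!\dashv c^*$ adjoint of $\mu$) induces, for every symmetric $\infty$-operad $\mathcal{P}^{\otimes}$, an equivalence $\AlgS_{\txt{Sym}(\mathbf{M})}(\mathcal{P}) \isoto \AlgS_{c_{!}\mathbf{M}^{\otimes}}(\mathcal{P})$, naturally in $\mathcal{P}$. By the Yoneda lemma applied to the $\infty$-category $\OpdIS$, this map is an equivalence of symmetric $\infty$-operads. The ``in particular'' statement is then immediate: applying the adjunction $c_! \dashv c^*$ once more to the equivalence $c_{!}\mathbf{M}^{\otimes} \simeq \txt{Sym}(\mathbf{M})^{\otimes}$ gives
\[\AlgS_{\txt{Sym}(\mathbf{M})}(\mathcal{O}) \simeq \AlgS_{c_{!}\mathbf{M}^{\otimes}}(\mathcal{O}) \simeq \Alg_{\mathbf{M}}(c^{*}\mathcal{O}).\]

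The only real obstacle is bookkeeping: verifying that the ``algebras over $\bbGamma^{\op}$'' produced by the approximation theorem really coincide with algebras transferred via $c^*$, and that all of the natural equivalences are compatible with the canonical comparison map (so that the Yoneda argument yields an equivalence, not just abstractly but implemented by $\mu$). Once that identification is in hand everything follows formally from Proposition~\ref{propn:SymApprox} and the $c_! \dashv c^*$ adjunction.
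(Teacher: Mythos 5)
Your proposal is correct and matches the route the paper intends: the corollary is stated without proof precisely because it is the standard consequence of Proposition~\ref{propn:SymApprox} via Lurie's approximation theorem \cite[Theorem 2.3.3.23]{HA}, combined with the enriched adjunction $c_{!} \dashv c^{*}$ recalled at the start of \S\ref{subsec:symnonsym} and a Yoneda argument in $\OpdIS$. Your write-up simply makes explicit the bookkeeping (identifying algebras over $\bbGamma^{\op}$ with algebras in $c^{*}\mathcal{P}$, and checking the equivalences are implemented by the comparison map) that the paper leaves implicit.
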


\section{Categorical Algebras}\label{sec:algcat}
Our main goal in this section is to define the \icat{}
$\AlgCat(\mathcal{V})$ of categorical algebras in a monoidal \icat{}
$\mathcal{V}$ and prove that this has various good properties. First,
in \S\ref{subsec:DeltaOpX}, we carefully define the double \icats{}
$\simp^{\op}_{S}$ for $S$ a space, and make some observations about
the functor $S \mapsto \simp^{\op}_{S}$.  Next, in \S\ref{subsec:OX},
we identify the \nsiopd{} associated to $\simp^{\op}_{S}$ as one
arising from a certain simplicial multicategory; this allows us to
prove a crucial property of the double \icats{} $\simp^{\op}_{S}$. We
are then ready, in \S\ref{subsec:catalg}, to use the algebra fibration
from \S\ref{subsec:AlgFib} to construct the \icats{}
$\AlgCat(\mathcal{V})$ and study these; in particular, we will prove
that $\AlgCat(\mathcal{V})$ is a lax monoidal functor of
$\mathcal{V}$, and that it is presentable if $\mathcal{V}$ is
presentable and equipped with a colimit-preserving monoidal
product. In \S\ref{subsec:catalgspace} we then prove that categorical
algebras in spaces are equivalent to Segal spaces, which will prove
useful in the next section as it allows us to reduce several proofs to
the known case of Segal spaces. Finally, in \S\ref{subsec:presheafalgcat}
we show that categorical algebras are equivalent to an alternative
model for enriched \icats{} as certain presheaves.

\subsection{The Double $\infty$-Categories $\simp^\op_{S}$}\label{subsec:DeltaOpX}

We begin with an abstract definition of double \icats{}
$\simp^{\op}_{\mathcal{C}}$, where $\mathcal{C}$ is any \icat{}:
\begin{defn}
  Let $i$ denote the inclusion $\{[0]\} \hookrightarrow
  \simp^{\op}$. Taking right Kan extensions along $i$ gives a functor
  $i_{*} \colon \CatI \to \Fun(\simp^{\op}, \CatI)$. If $\mathcal{C}$
  is an \icat{}, we write $\simp^{\op}_{\mathcal{C}} \to \simp^{\op}$
  for a coCartesian fibration corresponding to the functor
  $i_{*}\mathcal{C}$. 
\end{defn}

\begin{remark}
  If $\mathcal{C}$ is an \icat{}, then $i_{*}\mathcal{C}$ is the
  simplicial \icat{} with $n$th space $\mathcal{C}^{\times n+1}$, face
  maps given by the appropriate projections, and degeneracies by the
  appropriate diagonal maps.
\end{remark}

\begin{lemma}
  Let $\mathcal{C}$ be an \icat{}. The coCartesian fibration
  $\simp^{\op}_{\mathcal{C}} \to \simp^{\op}$ is a double \icat{}.
\end{lemma}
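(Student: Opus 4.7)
The plan is to invoke Remark~\ref{rmk:doubleicatsegcond}: since $\simp^{\op}_{\mathcal{C}} \to \simp^{\op}$ is by construction a coCartesian fibration, to verify it is a double \icat{} it suffices to check that the classifying functor $i_{*}\mathcal{C} \colon \simp^{\op} \to \CatI$ satisfies the Rezk-Segal condition, i.e. that for each $[n]$ the natural map
\[
(i_{*}\mathcal{C})_{[n]} \to (i_{*}\mathcal{C})_{[1]} \times_{(i_{*}\mathcal{C})_{[0]}} \cdots \times_{(i_{*}\mathcal{C})_{[0]}} (i_{*}\mathcal{C})_{[1]}
\]
induced by the inert morphisms in $\mathcal{G}^{\simp}_{[n]/}$ is an equivalence.

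To do this I would first unpack the pointwise formula for right Kan extension along the inclusion $i$ of a single object: the comma category $(\{[0]\}_{/[n]})$ computed in $\simp^{\op}$ is the discrete set of morphisms $[n] \to [0]$ in $\simp^{\op}$, equivalently the $n+1$ vertex inclusions $[0] \to [n]$ in $\simp$. Hence $(i_{*}\mathcal{C})_{[n]} \simeq \mathcal{C}^{\times(n+1)}$, as already noted in the preceding remark, and naturality of the pointwise formula identifies the maps induced by the $n+1$ inert morphisms $[n] \to [0]$ with the $n+1$ product projections, and the inert morphism $\rho_{i} \colon [n] \to [1]$ with the projection onto the $(i-1,i)$-pair of factors.

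Under these identifications the Rezk-Segal map becomes the tautological equivalence
\[
\mathcal{C}^{\times(n+1)} \isoto \mathcal{C}^{\times 2} \times_{\mathcal{C}} \mathcal{C}^{\times 2} \times_{\mathcal{C}} \cdots \times_{\mathcal{C}} \mathcal{C}^{\times 2},
\]
so the condition holds. There is no real obstacle here; the only point that requires any care is the explicit identification of the inert structure maps on $\mathcal{C}^{\times(n+1)}$, and this is immediate from the functoriality of the pointwise Kan extension formula.
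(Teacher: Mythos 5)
Your proof is correct and follows essentially the same route as the paper, which simply observes that $i_{*}\mathcal{C}$ is a category object (i.e. satisfies the Rezk--Segal condition) and cites the equivalence between double $\infty$-categories and category objects in $\CatI$; you have just spelled out, via the pointwise right Kan extension formula, why the Segal maps $\mathcal{C}^{\times(n+1)} \to \mathcal{C}^{\times 2} \times_{\mathcal{C}} \cdots \times_{\mathcal{C}} \mathcal{C}^{\times 2}$ are equivalences, which the paper leaves as "clear."
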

\begin{proof}
  It is clear that $i_{*}\mathcal{C}$ is a category object, hence
  $\simp^{\op}_{\mathcal{C}}$ is a double \icat{} by
  Proposition~\ref{propn:moncateqmnd}.
\end{proof}

\begin{remark}\label{rmk:DopSdescr}
  We can also give a more explicit description of the simplicial sets
  $\simp^{\op}_{\mathcal{C}}$, as follows: Consider the forgetful
  functor $\simp \to \Set$ that sends $[n]$ to the set $\{0,\ldots,
  n\}$, and let $\mathbf{P} \to \simp^{\op}$ be an associated
  Grothendieck fibration. Then define $E_{\mathcal{C}} \to \simp^{\op}$ to be
  the simplicial set satisfying the universal property
  \[ \Hom_{\simp^{\op}}(K, E_{\mathcal{C}}) \cong \Hom(\mathbf{P}
  \times_{\simp^{\op}} K, \mathcal{C}) \] The map $E_{\mathcal{C}} \to
  \simp^{\op}$ is a coCartesian fibration by \cite[Proposition
  3.2.2.13]{HTT}, and the corresponding functor is that sending $[n]$
  to $\Fun(\mathbf{P}_{[n]}, \mathcal{C}) \simeq \mathcal{C}^{\times
    (n+1)}$ by \cite[Proposition 7.3]{freepres}. Thus
  $E_{\mathcal{C}} \to \simp^{\op}$ is the same as the
  coCartesian fibration $\simp^{\op}_{\mathcal{C}} \to \simp^{\op}$.
\end{remark}




\begin{remark}\label{rmk:simpopXrightadj}
  The functor \[\simp^{\op}_{(\blank)} \colon \CatI \to \OpdInsg\] is a
  right adjoint to the functor $\OpdInsg \to \CatI$ that sends a
  \gnsiopd{} $\mathcal{M}$ to its fibre $\mathcal{M}_{[0]}$ at $[0]$:
  it is a composite of the right Kan extension functor $i_{*} \colon
  \CatI \to \txt{Dbl}_{\infty}$, which is right adjoint to the
  fibre-at-$[0]$ functor, and the inclusion $\txt{Dbl}_{\infty}
  \hookrightarrow \OpdInsg$, right adjoint to the monoidal envelope
  functor, which preserves fibres at $[0]$
  (cf. \S\ref{subsec:monenv}).
\end{remark}

\begin{remark}
  It follows from Remark~\ref{rmk:simpopXrightadj} that the functor
  $\simp^{\op}_{(\blank)} \colon \CatI \to \OpdInsg$ is fully
  faithful, since using the adjunction we have
  \[ \Map(\simp^{\op}_{\mathcal{C}}, \simp^{\op}_{\mathcal{D}}) \simeq
  \Map((\simp^{\op}_{\mathcal{C}})_{[0]}, \mathcal{D}) \simeq
  \Map(\mathcal{C}, \mathcal{D}).\]
\end{remark}

\begin{propn}\label{propn:DopXfilt}
  The functor $\simp^{\op}_{(\blank)} \colon \CatI \to \OpdInsg$
  preserves filtered colimits.
\end{propn}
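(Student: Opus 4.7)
The plan is to reduce the claim to a statement in $\CatI$ and then use that finite products commute with filtered colimits there. Let $\alpha \mapsto \mathcal{C}_\alpha$ be a filtered diagram in $\CatI$ with colimit $\mathcal{C}$; we want the comparison map
\[
\colim_\alpha \simp^{\op}_{\mathcal{C}_\alpha} \longrightarrow \simp^{\op}_{\mathcal{C}}
\]
to be an equivalence in $\OpdInsg$. By Theorem~\ref{thm:OpdFiltColim}, filtered colimits in $\OpdInsg$ are detected by the forgetful functor $\OpdInsg \to \CatI$, so it suffices to verify the corresponding equivalence of underlying \icats{}.

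First I would note that each projection $\simp^{\op}_{\mathcal{C}_\alpha} \to \simp^{\op}$ is, by construction, a coCartesian fibration classified by the functor $i_*\mathcal{C}_\alpha \colon [n] \mapsto \mathcal{C}_\alpha^{\times(n+1)}$, and that the transition maps in the diagram arise from a natural transformation of such functors and so preserve coCartesian morphisms. Applying Proposition~\ref{propn:FltColimCoCart} (or, equivalently, Corollary~\ref{cor:CoCartForgerPrFiltColim}), the colimit in $(\CatI)_{/\simp^{\op}}$ of the diagram $\alpha \mapsto \simp^{\op}_{\mathcal{C}_\alpha}$ is again a coCartesian fibration over $\simp^{\op}$, and it corresponds under straightening to the pointwise colimit
\[
[n] \longmapsto \colim_\alpha \mathcal{C}_\alpha^{\times(n+1)}.
\]
Meanwhile, $\simp^{\op}_{\mathcal{C}}$ corresponds to the functor $i_*\mathcal{C} \colon [n] \mapsto \mathcal{C}^{\times(n+1)}$. (By Lemma~\ref{lem:overreflectcolim}, computing the colimit in $(\CatI)_{/\simp^{\op}}$ agrees with computing it in $\CatI$, which is what Theorem~\ref{thm:OpdFiltColim} gives us.)

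Thus the proof is reduced to the statement that for each $n$ the natural map
\[
\colim_\alpha \mathcal{C}_\alpha^{\times(n+1)} \isoto \bigl(\colim_\alpha \mathcal{C}_\alpha\bigr)^{\times(n+1)}
\]
is an equivalence, i.e., finite products in $\CatI$ commute with filtered colimits. This is the one genuine ingredient required from outside, but it follows from cartesian closedness of $\CatI$: for any $\mathcal{D} \in \CatI$ the functor $\mathcal{D} \times (\blank) \colon \CatI \to \CatI$ admits the right adjoint $\Fun(\mathcal{D}, \blank)$ and so preserves all colimits; combining this with the fact that the diagonal of a filtered category is cofinal (filtered categories being sifted) yields the desired commutation by a straightforward induction on $n$. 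The main thing to keep an eye on is the reduction steps themselves --- nothing here is technically difficult, but one must be careful to match filtered colimits computed in the several guises ($\OpdInsg$, $\CoCart(\simp^{\op})$, $(\CatI)_{/\simp^{\op}}$, and $\Fun(\simp^{\op},\CatI)$) via the results already established in this section.
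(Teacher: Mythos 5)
Your proposal is correct and follows essentially the same route as the paper: reduce via Theorem~\ref{thm:OpdFiltColim} to computing the colimit in $\CatI$, identify the colimit of the coCartesian fibrations with the pointwise colimit of the classifying functors using Corollary~\ref{cor:ForgetCoCartSubcatFltColim}, and conclude because $\mathcal{D} \mapsto \mathcal{D}^{\times(n+1)}$ preserves filtered (indeed sifted) colimits since the Cartesian product preserves colimits in each variable. The paper simply cites \cite[Proposition 5.5.8.6]{HTT} for this last point where you spell out the cartesian-closedness-plus-siftedness argument.
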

\begin{proof}
  Suppose we have a filtered diagram of \icats{} $p \colon \mathcal{I}
  \to \CatI$ with colimit $\mathcal{C}$. Since
  $\simp^{\op}_{\mathcal{C}}$ is a \gnsiopd{}, by
  Theorem~\ref{thm:OpdFiltColim} it suffices to show that
  $\simp^{\op}_{\mathcal{C}}$ is the colimit of
  $\simp^{\op}_{p(\blank)}$ in $\CatI$. Now this composite functor \[
  \CatI \xto{\simp^{\op}_{(\blank)}} \OpdInsg \to \CatI\] factors as 
  \[ \CatI \xto{i_{*}} \Fun(\simp^{\op}, \CatI) \isoto
  \txt{CoCart}(\simp^{\op}) \xto{q} \CatI, \] where
  $\txt{CoCart}(\simp^{\op})$ is the \icat{} of coCartesian fibrations
  over $\simp^{\op}$ and the rightmost functor $q$ is the forgetful
  functor that sends a fibration $\mathcal{E} \to \simp^{\op}$ to the
  \icat{} $\mathcal{E}$. The functor $q$ preserves filtered colimits
  by Corollary~\ref{cor:ForgetCoCartSubcatFltColim}, so it suffices to prove
  that $i_{*}$ preserves them. Colimits in functor categories are
  computed pointwise, so to see this it suffices to show that for each
  $[n]$ the composite functor $\CatI \to \CatI$ induced by composing
  with evaluation at $[n]$ preserves filtered colimits. This functor
  sends $\mathcal{D}$ to the product $\mathcal{D}^{\times (n+1)}$, and
  so preserves filtered (and even sifted) colimits by
  \cite[Proposition 5.5.8.6]{HTT}, since the Cartesian product of
  \icats{} preserves colimits separately in each variable.
\end{proof}

\subsection{The $\infty$-Operad Associated to
  $\simp^{\op}_{S}$}\label{subsec:OX}
By Corollary~\ref{cor:GenOpdLoc} there is a universal \nsiopd{}
$L_{\txt{gen}}\simp^{\op}_{S}$ receiving a map from the double \icat{}
$\simp^{\op}_{S}$. In this subsection we describe a concrete model for
$L_{\txt{gen}}\simp^{\op}_{S}$ as the \iopd{} associated to a
simplicial multicategory. We will use this below in \S\ref{subsec:FFES} to
see that our theory of enriched \icats{} is equivalent to the definition
sketched in \S\ref{subsec:FTIopds}, and it will also allow us to
conclude that the functor that sends $S$ to
$L_{\txt{gen}}\simp^{\op}_{S}$ preserves products.

\begin{remark}
  Although it is obvious that the functor $\simp^{\op}_{(\blank)}$
  preserves products, since it's a right adjoint by
  Remark~\ref{rmk:simpopXrightadj}, it is not clear that the
  localization functor \[L_{\txt{gen}} \colon \OpdInsg \to \OpdIns\]
  preserves products --- in fact, this may well be false in general.
\end{remark}

First we define simplicial categories $\mathcal{D}(\mathcal{C})$ that
model $\simp^{\op}_{\mathrm{N}\mathcal{C}}$ when $\mathcal{C}$ is a
simplicial category:
\begin{defn}
  Given a simplicial category $\mathcal{C}$, the simplicial category
  $\mathcal{D}(\mathcal{C})$ has objects finite sequences $(c_{0},
  \ldots, c_{n})$ of objects of $\mathcal{C}$; morphisms are given by
  \[ \mathcal{D}(\mathcal{C})((c_{0}, \ldots, c_{n}), (d_{0}, \ldots,
  d_{m})) := \coprod_{\phi \colon [m] \to [n]} \prod_{i = 0}^{m}
  \mathcal{C}(c_{\phi(i)}, d_{i}),\]
  with the obvious composition maps induced by those in $\mathcal{C}$.
\end{defn}

\begin{propn} Suppose $\mathcal{C}$ is a fibrant simplicial category. Then:
  \begin{enumerate}[(i)]
  \item The projection $\mathrm{N}\mathcal{D}(\mathcal{C}) \to
    \mathrm{N}\simp^{\op}$ is a coCartesian fibration.
  \item The fibre $\mathrm{N}\mathcal{D}(\mathcal{C})_{[0]}$ is
    equivalent to $\mathrm{N}\mathcal{C}$.
  \item There is a natural map $\mathrm{N}\mathcal{D}(\mathcal{C}) \to
    \simp^{\op}_{\mathrm{N}\mathcal{C}}$, and this preserves
    coCartesian edges.
  \item This map is an equivalence of \icats{}.
  \end{enumerate}
\end{propn}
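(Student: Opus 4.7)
The plan is to establish (i) and (ii) by direct inspection of the morphism spaces defining $\mathcal{D}(\mathcal{C})$, and then to deduce (iii) and (iv) together by computing the functor $\simp^{\op}\to\CatI$ classifying the coCartesian fibration from (i) and recognizing it as $i_{*}\mathrm{N}\mathcal{C}$, which by construction is the functor classifying $\simp^{\op}_{\mathrm{N}\mathcal{C}}$.

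For (i), given $\phi\colon [m]\to[n]$ in $\simp$ and $(c_{0},\ldots,c_{n})\in\mathcal{D}(\mathcal{C})_{[n]}$, I would propose as the coCartesian lift the morphism to $(c_{\phi(0)},\ldots,c_{\phi(m)})$ lying over $\phi$ whose components in
\[ \coprod_{\psi\colon [m]\to [n]}\,\prod_{i=0}^{m}\mathcal{C}(c_{\psi(i)},c_{\phi(i)}) \]
are the identities $\id_{c_{\phi(i)}}$ in the $\phi$-summand. Since $\mathcal{C}$ is fibrant, the nerve $\mathrm{N}\mathcal{D}(\mathcal{C})\to\mathrm{N}\simp^{\op}$ is an inner fibration by Lemma \ref{lem:multicatopd}-style reasoning, and the mapping-space criterion for coCartesianness reduces to showing that for any $(e_{0},\ldots,e_{k})$ and $\psi\colon [k]\to [m]$ the composition map
\[ \prod_{i=0}^{k}\mathcal{C}(c_{\phi\psi(i)},e_{i})\longrightarrow \prod_{i=0}^{k}\mathcal{C}(c_{\phi\psi(i)},e_{i}) \]
from the $\psi$-component of $\mathcal{D}(\mathcal{C})((c_{\phi(0)},\ldots,c_{\phi(m)}),(e_{0},\ldots,e_{k}))$ to the $\phi\psi$-component of $\mathcal{D}(\mathcal{C})((c_{0},\ldots,c_{n}),(e_{0},\ldots,e_{k}))$ is the identity, which is tautological. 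Claim (ii) is immediate from the definition: objects of the fibre over $[0]$ are length-one sequences $(c_{0})$, and the coproduct of products defining morphisms over $\id_{[0]}$ collapses to $\mathcal{C}(c_{0},d_{0})$.

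For (iii) and (iv), I next identify the functor $F\colon\simp^{\op}\to\CatI$ classified by the coCartesian fibration of (i). The fibre $\mathrm{N}\mathcal{D}(\mathcal{C})_{[n]}$ is, by the same reasoning as in (ii) applied degreewise, canonically equivalent to $\mathrm{N}\mathcal{C}^{\times(n+1)}$, since morphisms over $\id_{[n]}$ are products of morphisms in $\mathcal{C}$; the description of coCartesian lifts from (i) then shows that $F(\phi)\colon\mathrm{N}\mathcal{C}^{\times(n+1)}\to\mathrm{N}\mathcal{C}^{\times(m+1)}$ sends $(c_{0},\ldots,c_{n})$ to $(c_{\phi(0)},\ldots,c_{\phi(m)})$. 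This is precisely the functor $i_{*}\mathrm{N}\mathcal{C}$. Under the equivalence between coCartesian fibrations over $\simp^{\op}$ and functors $\simp^{\op}\to\CatI$ we therefore obtain a natural equivalence $\mathrm{N}\mathcal{D}(\mathcal{C})\simeq\simp^{\op}_{\mathrm{N}\mathcal{C}}$ over $\simp^{\op}$ that preserves coCartesian edges, yielding both (iii) and (iv) simultaneously.

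The main obstacle I anticipate is the bookkeeping in (i), in particular keeping straight the indexing by $\phi$- and $\phi\psi$-components of the coproducts-of-products once everything is unwound to the level of simplicial sets; everything else is essentially formal given the description of $F$ as $i_{*}\mathrm{N}\mathcal{C}$.
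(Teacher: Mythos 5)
Your proof is correct and follows the same essential computations as the paper's: the same coCartesian lifts in (i), the same collapse of the coproduct-of-products in (ii), and the same identification of the fibres with $\mathrm{N}\mathcal{C}^{\times(n+1)}$. The one organizational difference is in (iii)--(iv): the paper first produces the comparison map canonically as the unit of the adjunction $i^{*}\dashv i_{*}$ applied to the identification $i^{*}\mathrm{N}\mathcal{D}(\mathcal{C})\simeq\mathrm{N}\mathcal{C}$ from (ii), and only then checks it is an equivalence fibrewise via \cite[Corollary 2.4.4.4]{HTT}; you instead identify the classifying functor outright as $i_{*}\mathrm{N}\mathcal{C}$ and invoke un/straightening. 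Your route works, but as written it is slightly loose: matching the values $F([n])\simeq\mathrm{N}\mathcal{C}^{\times(n+1)}$ and the transition functors does not by itself pin down $F$ up to coherent equivalence --- you should either argue that $F$ satisfies the Segal condition and hence is a right Kan extension of $F|_{\{[0]\}}$ (so the universal property of $i_{*}$ supplies the equivalence, which is exactly what the paper's adjunction argument packages), or revert to constructing the map first and checking fibres. Two smaller points: your reduction of the coCartesianness check to a component-wise identity implicitly uses that the relevant maps of mapping simplicial sets are Kan fibrations, which is where fibrancy of $\mathcal{C}$ enters (the paper makes this explicit via \cite[Proposition 2.4.1.10]{HTT}); and in (ii) the strict fibre agrees with the homotopy fibre only because the projection is already known to be a (co)Cartesian fibration, which the paper also records.
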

\begin{proof}\
  \begin{enumerate}[(i)]
  \item It is clear that $\mathcal{D}(\mathcal{C}) \to \simp^{\op}$ is
    a fibration in the model structure on simplicial categories; since
    $\mathrm{N}$ is a right Quillen functor, it follows that
    $\mathrm{N}\mathcal{D}(\mathcal{C}) \to \mathrm{N}\simp^{\op}$ is
    a categorical fibration. It therefore suffices to check that
    $\mathrm{N}\mathcal{D}(\mathcal{C})$ has coCartesian morphisms.
    Given an object $C = (c_{0},\ldots, c_{n})$ in
    $\mathcal{D}(\mathcal{C})$ and a map $\phi \colon [m] \to [n]$ in
    $\simp$, let $\overline{\phi}$ denote the obvious map $C \to C' =
    (c_{\phi(0)}, \ldots, c_{\phi(m)})$ in
    $\mathcal{D}(\mathcal{C})$. We apply the criterion of
    \cite[Proposition 2.4.1.10]{HTT} to see that $\overline{\phi}$ is
    coCartesian in $\mathrm{N}\mathcal{D}(\mathcal{C})$; thus we need
    to show that for every $X \in \mathcal{D}(\mathcal{C})$ over $[k]
    \in \simp^{\op}$ the commutative diagram
    \nolabelcsquare{\mathcal{D}(\mathcal{C})(C',
      X)}{\mathcal{D}(\mathcal{C})(C, X)}{\Hom_{\simp^{\op}}([m],
      [k])}{\Hom_{\simp^{\op}}([n], [k])} is a homotopy Cartesian
    square of simplicial sets. Since the simplicial category
    $\mathcal{C}$ is fibrant, so is $\mathcal{D}(\mathcal{C})$, hence
    the vertical maps are Kan fibrations. It therefore suffices to
    show that the induced maps on fibres are equivalences, which is
    clear from the definition of $\mathcal{D}(\mathcal{C})$.

  \item We have a pullback diagram of simplicial categories
    \nolabelcsquare{\mathcal{C}}{\mathcal{D}(\mathcal{C})}{\{[0]\}}{\simp^{\op}.}
    Since the simplicial nerve is a right adjoint, it follows that
    $\mathrm{N}\mathcal{C}$ is the fibre of the map of simplicial sets
    $\mathrm{N}\mathcal{D}(\mathcal{C}) \to \simp^{\op}$ at
    $[0]$. Since this map is a coCartesian fibration, by
    \cite[Corollary 3.3.1.4]{HTT} $\mathrm{N}\mathcal{C}$ is also the
    homotopy fibre in the Joyal model structure.

  \item By definition $\simp^{\op}_{\mathrm{N}\mathcal{C}}$
    corresponds to the right Kan extension
    $i_{*}\mathrm{N}\mathcal{C}$ of $\mathrm{N}\mathcal{C}$ along the
    inclusion $i \colon \{[0]\} \hookrightarrow \simp^{\op}$. The
    functor $i_{*}$ is right adjoint to the fibre-at-$[0]$ functor
    $i^{*}$, and from (ii) we know that
    $i^{*}\mathrm{N}\mathcal{D}(\mathcal{C}) \simeq
    \mathrm{N}\mathcal{C}$. The adjunction $i^{*} \dashv i_{*}$ then
    gives the required map $\mathcal{D}(\mathcal{C}) \to
    \simp^{\op}_{\mathrm{N}\mathcal{C}}$ (which preserves coCartesian
    edges since by definition $i_{*}$ lands in the \icat{} of
    coCartesian fibrations and coCartesian-morphism-preserving
    functors).

  \item By \cite[Corollary 2.4.4.4]{HTT} it suffices to show that for
  each $[n]$ in $\simp^{\op}$ the induced map on fibres
  \[ (\mathrm{N}\mathcal{D}(\mathcal{C}))_{[n]} \to
  (\simp^{\op}_{\mathrm{N}\mathcal{C}})_{[n]}\] is a categorical
  equivalence. As in (ii) we can identify the fibre
  $(\mathrm{N}\mathcal{D}(\mathcal{C}))_{[n]}$ with
  $\mathrm{N}\mathcal{C}^{\times n}$, via the Segal maps, so by
  naturality we have a commutative diagram
  \nolabelcsquare{(\mathrm{N}\mathcal{D}(\mathcal{C}))_{[n]}}{(\simp^{\op}_{\mathrm{N}\mathcal{C}})_{[n]}}{\mathrm{N}\mathcal{C}^{\times
      n}}{\mathrm{N}\mathcal{C}^{\times n},} where all but the top
  horizontal map are known to be categorical equivalences. Hence this
  must also be a categorical equivalence, by the 2-out-of-3
  property. \qedhere
  \end{enumerate}
\end{proof}

\begin{defn}
  Let $\mathcal{C}$ be a simplicial category. The simplicial
  multicategory $\mathcal{O}_{\mathcal{C}}$ has objects $\ob
  \mathcal{C} \times \ob \mathcal{C}$ and multimorphism spaces defined
  by
 \[
 \mathcal{O}_{\mathcal{C}}((x_{0},y_{1}),
  \ldots, (x_{n-1}, y_{n}); (y_{0},x_{n})) := \mathcal{C}(y_{0}, x_{0}) \times
  \mathcal{C}(y_{1},x_{1}) \times \cdots \times \mathcal{C}(y_{n-1},
  x_{n-1}) \times \mathcal{C}(y_{n}, x_{n}).
  \]
 
  Composition is defined in the obvious way, using composition in $\mathcal{C}$.
  Write $\mathcal{O}^{\otimes}_{\mathcal{C}}$ for the associated
  simplicial category of operators over $\simp^{\op}$.
\end{defn}
If $\mathcal{C}$ is a fibrant simplicial category then
$\mathcal{O}_{\mathcal{C}}$ is a fibrant simplicial multicategory in
the sense of Definition~\ref{defn:simplmulticat}, and so
$\mathrm{N}\mathcal{O}^{\otimes}_{\mathcal{C}}$ is a \nsiopd{} by
Lemma~\ref{lem:multicatopd}.

\begin{remark}
  If $S$ is a set (regarded as a category with no non-identity
  morphisms), then the multicategory $\mathcal{O}_{S}$ is clearly the
  same as $\mathbf{O}_{S}$ as defined in \S\ref{subsec:MulticatEnr}.
\end{remark}

The simplicial multicategory $\mathcal{O}_{\mathcal{C}}$ is only a
model for $\simp^{\op}_{\mathrm{N}\mathcal{C}}$ when
$\mathrm{N}\mathcal{C}$ is a space, but is easier to define than the
version that works more generally. Indeed there is not even a natural
map from $\mathcal{D}(\mathcal{C})$ to $\mathcal{O}^{\otimes}_{\mathcal{C}}$ in
general; however, we can construct one if we restrict ourselves to
simplicial \emph{groupoids}.

A simplicial category can be viewed as a simplicial object in
categories whose simplicial set of objects is constant, so by analogy
we take a \emph{simplicial groupoid} to be a simplicial object in
groupoids with constant set of objects. There is a model structure on
simplicial groupoids, due to Dwyer and Kan~\cite[Theorem
2.5]{DwyerKanSimplGpd}, where the weak equivalences are the usual
Dwyer-Kan equivalences of simplicial categories, restricted to
groupoids. The simplicial nerve functor restricts to a right Quillen
equivalence from this to the usual model structure on simplicial sets
by \cite[Theorem 3.3]{DwyerKanSimplGpd}. In particular, it follows
that every space is modelled by a fibrant object in simplicial
groupoids, which is a simplicial groupoid whose mapping spaces are Kan
complexes.

Since a simplicial category can be viewed as a simplicial object in
categories with constant set of objects, a simplicial groupoid
$\mathcal{G}$ can also be regarded as a simplicial category with an
involution $i \colon \mathcal{G} \to \mathcal{G}^{\op}$ such that
$i^{\op} \circ i = \id_{\mathcal{G}}$, which sends a morphism to its
inverse. Using this we can construct a functor
$\mathcal{D}(\mathcal{G}) \to \mathcal{O}^{\otimes}_{\mathcal{G}}$:
\begin{defn}
  Suppose $\mathcal{G}$ is a simplicial groupoid. Let $\Phi \colon
  \mathcal{D}_{\mathcal{G}} \to \mathcal{O}^{\otimes}_{\mathcal{G}}$
  be the functor that sends an object $(c_{0}, \ldots, c_{n})$ of
  $\mathcal{D}(\mathcal{G})$ to $((c_{0}, c_{1}),(c_{1},c_{2}),\ldots,
  (c_{n-1},c_{n}))$ and is given on morphisms by applying $i$ on the
  first factor and inserting identities into the factors that are
  missing in $\mathcal{D}(\mathcal{G})$ in the obvious way.
\end{defn}

\begin{thm}\label{thm:opdlocDopX}
  Let $\mathcal{G}$ be a fibrant simplicial groupoid. Then the map
  \[\mathrm{N}\Phi \colon \mathrm{N}\mathcal{D}(\mathcal{G}) \to
  \mathrm{N}\mathcal{O}^{\otimes}_{\mathcal{G}}\] exhibits
  $\mathrm{N}\mathcal{O}^{\otimes}_{\mathcal{G}}$ as the operadic
  localization $L_{\txt{gen}}\mathrm{N}\mathcal{D}(\mathcal{G})$ of
  $\mathrm{N}\mathcal{D}(\mathcal{G})$.
\end{thm}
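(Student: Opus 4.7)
Plan: The strategy is to show that $\mathrm{N}\Phi$ satisfies the universal property defining $L_{\txt{gen}}$: for every non-symmetric $\infty$-operad $\mathcal{P}$, composition with $\mathrm{N}\Phi$ induces an equivalence
\[
(\mathrm{N}\Phi)^{*} \colon \Alg_{\mathrm{N}\mathcal{O}^{\otimes}_{\mathcal{G}}}(\mathcal{P}) \xrightarrow{\sim} \Alg_{\mathrm{N}\mathcal{D}(\mathcal{G})}(\mathcal{P}).
\]
By the adjunction of Corollary~\ref{cor:GenOpdLoc}, this characterizes $\mathrm{N}\mathcal{O}^{\otimes}_{\mathcal{G}}$ as $L_{\txt{gen}}\mathrm{N}\mathcal{D}(\mathcal{G})$. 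First one checks that $\mathrm{N}\Phi$ is a morphism of generalized non-symmetric $\infty$-operads, which is immediate from the construction: the inert coCartesian morphisms in $\mathcal{D}(\mathcal{G})$ over $\phi \colon [m] \to [n]$ are projections $(c_{0},\ldots,c_{n}) \to (c_{\phi(0)},\ldots,c_{\phi(m)})$ built from identities in $\mathcal{G}$, and $\Phi$ sends these to the corresponding inert projections in $\mathcal{O}^{\otimes}_{\mathcal{G}}$.

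For the core equivalence I would adapt Lurie's theory of approximations of $\infty$-operads from \cite[\S 2.3.3]{HA} to the non-symmetric generalized setting: a morphism $F \colon \mathcal{M} \to \mathcal{N}$ from a generalized non-symmetric $\infty$-operad to a non-symmetric one is an \emph{approximation} if for every active multimorphism $\alpha \colon Z \to W$ in $\mathcal{N}$ with $W$ over $[1]$, the $\infty$-category of active factorizations of $\alpha$ through $F$ is weakly contractible. Lurie's proofs rely only on the active/inert factorization system of the base category, so they transfer with $\bbGamma^{\op}$ replaced by $\simp^{\op}$, yielding that an approximation induces an equivalence on algebra $\infty$-categories for any non-symmetric $\infty$-operad target. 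Verifying that $\Phi$ is an approximation is the main computation: given $Z = ((x_{0}, y_{1}), \ldots, (x_{n-1}, y_{n}))$ in $\mathcal{O}^{\otimes}_{\mathcal{G},[n]}$ and an active $\alpha \colon Z \to (y_{0}, x_{n})$ (which unpacks to a tuple $(f_{i} \colon y_{i} \to x_{i})_{i=0}^{n}$ of morphisms in $\mathcal{G}$), a factorization through $\Phi(c_{0}, \ldots, c_{n})$ amounts to choosing intermediate objects $c_{i}$ along with morphisms in $\mathcal{G}$ compatibly rigidifying the identifications of the $x_{i}$ and $y_{i+1}$ factors; since $\mathcal{G}$ is a fibrant simplicial \emph{groupoid}, the space of such choices is contractible, as each individual choice lies in a slice $\infty$-category of $\mathcal{G}$ that is contractible because $\mathcal{G}$ is an $\infty$-groupoid.

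The main obstacle is the careful setup of the non-symmetric approximation theorem, particularly because $\Phi$ is a morphism \emph{from} a generalized non-symmetric $\infty$-operad (whose fibre $\mathcal{D}(\mathcal{G})_{[0]} = \mathcal{G}$ is generally noncontractible) \emph{to} a non-symmetric $\infty$-operad. One must check that the generalized Segal condition of Definition~\ref{defn:gnsiopd} provides sufficient structure to run Lurie's cofinality arguments, and that the extra ``endomorphism'' factors in the multimorphism spaces of $\mathcal{O}^{\otimes}_{\mathcal{G}}$ (which arise precisely because $\mathcal{G}$ is a groupoid rather than a discrete set) are absorbed correctly into the contractibility computation. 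Once the approximation machinery is in place, however, the proof reduces to the purely geometric contractibility statement sketched above.
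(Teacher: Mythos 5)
Your proposal is correct in outline and lands on the same underlying machinery as the paper, but it verifies a different concrete criterion, and much of the infrastructure you flag as "the main obstacle" is already in place. The paper's Appendix~\ref{sec:algcolims} develops the non-symmetric approximation theory for maps from a \gnsiopd{} to a \nsiopd{} (Definition~\ref{defn:approx}, Proposition~\ref{propn:wkapproxcrit}, Theorem~\ref{thm:Approx}), so you do not need to re-derive Lurie's arguments; you would only need to cite them. From that theory the paper extracts Corollary~\ref{cor:OpdLocCof}: it suffices that $\Phi_{[1]}$ is an equivalence, $\mathcal{D}(\mathcal{G})_{[0]}$ is a Kan complex, and the induced maps on active slices $(\mathrm{N}\mathcal{D}(\mathcal{G})_{\txt{act}})_{/(x,y)} \to (\mathrm{N}(\mathcal{O}^{\otimes}_{\mathcal{G}})_{\txt{act}})_{/(x,y)}$ are cofinal for $(x,y)$ over $[1]$ --- and it establishes cofinality by showing these maps are categorical equivalences, via an explicit decomposition of the relevant mapping spaces into a product of squares that are homotopy Cartesian precisely because $\mathcal{G}$ is a groupoid (the involution $i$ and the composition maps $\mathcal{G}(z,w)\times\mathcal{G}(w,z)\to\mathcal{G}(z,z)$ become equivalences). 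Your route instead checks the weak approximation condition of Proposition~\ref{propn:wkapproxcrit} directly for $\Phi$ and feeds it into Theorem~\ref{thm:Approx}; this works, and the geometric input is the same invertibility of morphisms in $\mathcal{G}$, but two details need care. First, the weak approximation condition as stated quantifies over \emph{all} objects $C$ of $\mathcal{D}(\mathcal{G})$, not only those whose image lies over $[1]$; reducing to targets over $[1]$ requires an appeal to the Segal condition on $\mathcal{O}^{\otimes}_{\mathcal{G}}$ (an active map into an object over $[n]$ splits as a product of active maps into objects over $[1]$). Second, your contractibility argument is stated at the level of "spaces of choices" and should be pinned down as the explicit fibre computation --- it is exactly here that the spurious endomorphism factors $\mathcal{G}(z_j,z_j)$ for indices skipped by the active map appear, and one must see that they contribute contractibly. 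The trade-off is that the paper's cofinality criterion localizes the computation at objects over $[1]$ from the start and replaces contractibility of fibres by an equivalence of slice categories, which is checked by ordinary essential surjectivity and full faithfulness; your version avoids Corollary~\ref{cor:OpdLocCof} but has to manage the quantification over all of $\mathcal{D}(\mathcal{G})$ inside the weak approximation condition.
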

\begin{proof}
  By Corollary~\ref{cor:OpdLocCof} it suffices to show that for all
  $(x,y) \in \mathcal{G} \times \mathcal{G}$ the induced map \[g \colon
  (\mathrm{N}\mathcal{D}(\mathcal{G})_{\txt{act}})_{/(x,y)} \to
  (\mathrm{N}(\mathcal{O}^{\otimes}_{\mathcal{G}})_{\txt{act}})_{/(x,y)}\]
  is cofinal. We will prove that $g$ is a categorical equivalence; to
  see this we show that $g$ is essentially surjective and induces
  equivalences on mapping spaces.

  We first observe that $g$ is essentially surjective: an
  active morphism to $(x,y)$ in $\mathcal{O}^{\otimes}_{\mathcal{G}}$
  is determined by an object $T = ((t_{0}, s_{1}), (t_{1}, s_{2}), \ldots,
  (t_{n-1}, s_{n}))$ and morphisms  $\alpha \colon x \to t_{0}$,
  $\beta_{1} \colon s_{1} \to t_{1}$, \ldots, $\beta_{n-1} \colon
  s_{n-1} \to t_{n-1}$, $\gamma \colon s_{n} \to
  y$ in $\mathcal{G}$. Such a morphism is in the
  image of $g$ \IFF{} the $\beta_{i}$'s are all identities. Since
  $\mathcal{G}$ is by assumption a simplicial groupoid all
  morphisms in $\mathcal{G}$ are equivalences, and so
  the morphism
  \[((t_{0}, s_{1}), (s_{1}, s_{2}), \ldots, (s_{n-1},
  s_{n})) \to ((t_{0}, s_{1}), (t_{1}, s_{2}), \ldots,
  (t_{n-1}, s_{n}))
  \]
  given by $(\id, \id, \beta_{1}, \id, \beta_{2}, \ldots, \id)$ is an
  equivalence from an object in the image of $g$ to $T$.

  It remains to show that $g$ is fully faithful. Given objects $Z =
  (z_{0}, \ldots, z_{n})$ and $Z' = (z'_{0}, \ldots, z'_{m})$ in
  $\mathcal{D}(\mathcal{G})$ we must show that for each active map
  $\phi \colon [m] \to [n]$ in $\simp^{\op}$ the map
  \[ \Map_{\mathrm{N}\mathcal{D}(\mathcal{G})_{/(x,y)}}^{\phi}(Z, Z')
  \to
  \Map_{(\mathrm{N}\mathcal{O}^{\otimes}_{\mathcal{G}})_{/(x,y)}}^{\phi}(g(Z),
  g(Z')) \] is an equivalence, where the superscripts denote the
  fibres over $\phi$ in $\simp^{\op}$. Let $\alpha$ be the unique
  active map $[1] \to [n]$ in $\simp$; then we can identify this as a
  map of homotopy fibres from the commutative square
  \nolabelcsquare{\mathcal{D}(\mathcal{G})^{\phi}(Z,
    Z')}{\mathcal{D}(\mathcal{G})^{\alpha}(Z,
    (x,y))}{(\mathcal{O}^{\otimes}_{\mathcal{G}})^{\phi}(g(Z),
    g(Z'))}{(\mathcal{O}^{\otimes}_{\mathcal{G}})^{\alpha}(g(Z),
    (x,y)),} where the superscripts again denote the fibres of these
  spaces over maps in $\simp^{\op}$. To see that our map of homotopy
  fibres is an equivalence it suffices to show that this diagram is
  homotopy Cartesian.
  
  We have equivalences
\[ 
\begin{split}
\mathcal{D}(\mathcal{G})^{\phi}(Z,
Z') \simeq &\,\,  \prod_{i = 0}^{m} \mathcal{G}(z_{\phi(i)}, z'_{i}),   \\
\mathcal{D}(\mathcal{G})^{\alpha}(Z,
(x,y))  \simeq &\,\,  \mathcal{G}(z_{0}, x) \times \mathcal{G}(z_{n}, y),\\
(\mathcal{O}^{\otimes}_{\mathcal{G}})^{\phi}(g(Z), g(Z')) \simeq &\,\, \mathcal{G}(z'_{0}, z_{\phi(0)})
\times \mathcal{G}(z_{\phi(0)+1}, z_{\phi(0)+1}) \times \cdots \times
\mathcal{G}(z_{\phi(1)-1}, z_{\phi(1)-1}) \\ & \times
\mathcal{G}(z_{\phi(1)}, z'_{1}) \times \mathcal{G}(z'_{1},
z_{\phi(1)}) \times \cdots \times \mathcal{G}(z_{\phi(m)}, z'_{m}), \\
(\mathcal{O}^{\otimes}_{\mathcal{G}})^{\alpha}(g(Z),
(x,y))  \simeq &\,\,  \mathcal{G}(x, z_{0}) \times \mathcal{G}(z_{1}, z_{1})
\times \cdots \times \mathcal{G}(z_{n-1},z_{n-1}) \times
\mathcal{G}(z_{n}, y).
\end{split}
\]
Under these equivalences our commutative square is the product of the
squares
\nolabelcsquare{*}{*}{\mathcal{G}(z_{j},z_{j})}{\mathcal{G}(z_j, z_j)}
for $j$ not in the image of $\phi$, \csquare{\mathcal{G}(z_{0},
  z'_{0}) \times \mathcal{G}(z_{n},z'_{m})}{\mathcal{G}(z_0,x) \times
  \mathcal{G}(z_n, y)}{\mathcal{G}(z'_{0}, z_{0}) \times
  \mathcal{G}(z_{n},z'_{m})}{\mathcal{G}(x,z_0) \times
  \mathcal{G}(z_n, y),}{}{(i, \id)}{(i, \id)}{} and
\csquare{\mathcal{G}(z_{\phi(i)}, z'_{i})}{*}
{\mathcal{G}(z_{\phi(i)}, z'_{i}) \times \mathcal{G}(z'_{i},
  z_{\phi(i)})}{\mathcal{G}(z_{\phi(i)}, z_{\phi(i)})}{}{(\id, i)}{}{}
for $i = 1, \ldots, m-1$.  The squares of the first kind are clearly
homotopy Cartesian, the second square is homotopy Cartesian since the
maps induced by the involution $i$ are equivalences, and the 
squares of the third kind are homotopy Cartesian since $\mathcal{G}$ is a simplicial
groupoid.
\end{proof}

\begin{cor}\label{cor:LDopXisOX}
  Let $X$ be a space and $\mathcal{X}$ a fibrant simplicial groupoid
  such that the Kan complex $\mathrm{N}\mathcal{X}$ is equivalent to
  $X$. Then the composite map \[\simp^{\op}_{X} \simeq
  \mathrm{N}\mathcal{D}(\mathcal{X}) \to
  \mathrm{N}\mathcal{O}^{\otimes}_{\mathcal{X}}\] induces an
  equivalence of \nsiopds{} $L_{\txt{gen}}\simp^{\op}_{X} \isoto
  \mathrm{N}\mathcal{O}^{\otimes}_{\mathcal{X}}$.
\end{cor}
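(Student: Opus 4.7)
The plan is that this corollary is essentially a formal consequence of the theorem immediately preceding it, once we identify the two double $\infty$-categories in play. First I would invoke part (iv) of the proposition at the beginning of \S\ref{subsec:OX}, which identifies $\mathrm{N}\mathcal{D}(\mathcal{X})$ with $\simp^{\op}_{\mathrm{N}\mathcal{X}}$ as coCartesian fibrations (and hence as generalized non-symmetric $\infty$-operads) whenever $\mathcal{X}$ is a fibrant simplicial groupoid. Since $\mathrm{N}\mathcal{X} \simeq X$ and $\simp^{\op}_{(\blank)}$ is a functor of $\infty$-categories that sends equivalences to equivalences, this gives an equivalence $\simp^{\op}_{X} \simeq \mathrm{N}\mathcal{D}(\mathcal{X})$ in $\OpdInsg$.

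Next I would apply the preceding theorem to the fibrant simplicial groupoid $\mathcal{X}$, which asserts that $\mathrm{N}\Phi \colon \mathrm{N}\mathcal{D}(\mathcal{X}) \to \mathrm{N}\mathcal{O}^{\otimes}_{\mathcal{X}}$ exhibits its target as the localization $L_{\txt{gen}}\mathrm{N}\mathcal{D}(\mathcal{X})$. Chasing through the identifications, the composite $\simp^{\op}_{X} \simeq \mathrm{N}\mathcal{D}(\mathcal{X}) \to \mathrm{N}\mathcal{O}^{\otimes}_{\mathcal{X}}$ is (up to equivalence) precisely the unit map of the adjunction $L_{\txt{gen}} \dashv i$ from Corollary~\ref{cor:GenOpdLoc} applied at $\simp^{\op}_{X}$. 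Since $L_{\txt{gen}}$ is a localization and the target $\mathrm{N}\mathcal{O}^{\otimes}_{\mathcal{X}}$ already lies in the essential image $\OpdIns \hookrightarrow \OpdInsg$, applying $L_{\txt{gen}}$ to this unit produces the claimed equivalence $L_{\txt{gen}}\simp^{\op}_{X} \isoto \mathrm{N}\mathcal{O}^{\otimes}_{\mathcal{X}}$.

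The only point requiring some care is checking that the composite in the statement genuinely agrees with the map produced by the theorem under the equivalence from part (iv) of the proposition --- but this is immediate from naturality, since both the proposition's equivalence and $\Phi$ are compatible with the projection to $\simp^{\op}$ and act as the identity on the underlying $\infty$-category of objects after restriction to $[1]$. No genuine obstacle arises here; essentially all of the content has already been absorbed into the theorem, whose proof (via checking cofinality of the slice comparison and exploiting that every morphism in $\mathcal{X}$ is invertible) is the substantive step in the chain of reasoning.
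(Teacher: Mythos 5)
Your proposal is correct and is exactly the argument the paper intends: the corollary is stated without proof precisely because it follows formally by transporting the theorem's localization statement along the equivalence $\simp^{\op}_{X} \simeq \mathrm{N}\mathcal{D}(\mathcal{X})$ from part (iv) of the earlier proposition. All the substantive content is indeed in the preceding theorem, as you note.
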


\begin{cor}\label{cor:LSimpProd}
  The functor $L_{\txt{gen}}(\simp^{\op}_{(\blank)}) \colon \mathcal{S} \to \OpdIns$
  preserves products.
\end{cor}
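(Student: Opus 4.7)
The plan is to exploit the explicit simplicial-multicategory model for $L_{\txt{gen}}\simp^{\op}_X$ furnished by Corollary~\ref{cor:LDopXisOX} and check directly that this construction is compatible with products of simplicial groupoids. Concretely, given spaces $X$ and $Y$, pick fibrant simplicial groupoids $\mathcal{X}$ and $\mathcal{Y}$ with $\mathrm{N}\mathcal{X}\simeq X$ and $\mathrm{N}\mathcal{Y}\simeq Y$; the product $\mathcal{X}\times\mathcal{Y}$ is then a fibrant simplicial groupoid modelling $X\times Y$ (since simplicial nerve is a right Quillen equivalence and so preserves products of fibrant objects). Corollary~\ref{cor:LDopXisOX} identifies $L_{\txt{gen}}\simp^{\op}_{X\times Y}$ with $\mathrm{N}\mathcal{O}^{\otimes}_{\mathcal{X}\times\mathcal{Y}}$, and similarly for the factors. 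It therefore suffices to exhibit a canonical equivalence
\[
\mathcal{O}^{\otimes}_{\mathcal{X}\times\mathcal{Y}} \;\simeq\; \mathcal{O}^{\otimes}_{\mathcal{X}}\times_{\simp^{\op}}\mathcal{O}^{\otimes}_{\mathcal{Y}}
\]
of simplicial categories over $\simp^{\op}$, and then to observe that the simplicial nerve preserves this fibre product.

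The main step is the identification above, which is essentially a direct unravelling of the definition of $\mathcal{O}_{(\blank)}$. On objects this is clear: a length-$n$ sequence of objects of $\mathcal{X}\times\mathcal{Y}$ is the same as a pair of such sequences in $\mathcal{X}$ and $\mathcal{Y}$. On multimorphism spaces one has
\[
\mathcal{O}_{\mathcal{X}\times\mathcal{Y}}\bigl(\ldots;(y_0,x_n)\bigr) \;=\; \prod_{i} (\mathcal{X}\times\mathcal{Y})(y_i,x_i) \;\cong\; \prod_{i}\mathcal{X}(y_i,x_i) \times \prod_{i}\mathcal{Y}(y_i,x_i),
\]
which is exactly the mapping space in the fibre product $\mathcal{O}^{\otimes}_{\mathcal{X}}\times_{\simp^{\op}}\mathcal{O}^{\otimes}_{\mathcal{Y}}$; the composition laws agree on the nose as both are induced coordinate-wise from composition in $\mathcal{X}$ and $\mathcal{Y}$. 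Passing to nerves, which being a right adjoint commutes with the fibre product over $\simp^{\op}$ (a simplicial set, viewed as a discrete simplicial category), gives the desired equivalence in $\OpdIns$.

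Finally I need to handle the empty product, i.e.\ show that $L_{\txt{gen}}\simp^{\op}_{*}$ is terminal in $\OpdIns$. Taking $\mathcal{X}=*$, the multicategory $\mathcal{O}_{*}$ has a single object in each arity with contractible multimorphism spaces, so its category of operators is $\simp^{\op}$ itself, which is the terminal \nsiopd{}.

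The only mildly subtle point (which is really where the argument gains traction) is that products in $\OpdIns$ are computed as fibre products over $\simp^{\op}$ — this is forced by the universal property characterising $\Alg_{\mathcal{O}\times\mathcal{P}}(\blank)\simeq\Alg_{\mathcal{O}}(\blank)\times\Alg_{\mathcal{P}}(\blank)$ together with the fact that the fibre product of two \nsiopds{} over $\simp^{\op}$ is again a \nsiopd{} and represents this functor. Granted this, the verification above is just a calculation, and no genuine obstacle arises: the rigidity of the simplicial multicategory model $\mathcal{O}^{\otimes}_{(\blank)}$ converts the question into an elementary check, which is exactly why we went to the trouble of constructing it in the previous subsection.
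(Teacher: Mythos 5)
Your proof is correct and follows essentially the same route as the paper: choose fibrant simplicial groupoid models, invoke Corollary~\ref{cor:LDopXisOX} to replace $L_{\txt{gen}}\simp^{\op}_{(\blank)}$ by $\mathrm{N}\mathcal{O}^{\otimes}_{(\blank)}$, and observe that $\mathcal{O}_{\mathcal{X}\times\mathcal{Y}}\simeq\mathcal{O}_{\mathcal{X}}\times\mathcal{O}_{\mathcal{Y}}$ holds by inspection of the definition, so that the comparison square forces the top map to be an equivalence by 2-out-of-3. Your extra remarks — the explicit check of the terminal object and the justification that products in $\OpdIns$ are fibre products over $\simp^{\op}$ — are sound and merely make explicit what the paper leaves implicit.
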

\begin{proof}
  Given spaces $X$ and $Y$, there exist fibrant simplicial groupoids
  $\mathcal{X}$ and $\mathcal{Y}$ such that $\mathrm{N}\mathcal{X}
  \simeq X$ and $\mathrm{N}\mathcal{Y} \simeq Y$. Then by
  Corollary~\ref{cor:LDopXisOX} we have a commutative diagram
  \nolabelcsquare{L_{\txt{gen}}\simp^{\op}_{X \times Y}}{L_{\txt{gen}}\simp^{\op}_{X}
    \times_{\simp^\op}
    L_{\txt{gen}}\simp^{\op}_{Y}}{\mathrm{N}\mathcal{O}^{\otimes}_{\mathcal{X}
      \times
      \mathcal{Y}}}{\mathrm{N}(\mathcal{O}^{\otimes}_{\mathcal{X}}
    \times_{\simp^\op}\mathcal{O}^{\otimes}_{\mathcal{Y}})} where the
  vertical maps are equivalences. It is clear from the definition that
  $\mathcal{O}_{\mathcal{X} \times \mathcal{Y}} \simeq
  \mathcal{O}_{\mathcal{X}} \times \mathcal{O}_{\mathcal{Y}}$, so the
  natural map $\mathcal{O}^{\otimes}_{\mathcal{X} \times \mathcal{Y}}
  \to \mathcal{O}^{\otimes}_{\mathcal{X}} \times_{\simp^{\op}}
  \mathcal{O}^{\otimes}_{\mathcal{Y}}$ is a weak equivalence of fibrant
  simplicial categories. By the 2-out-of-3 property the top horizontal
  map in the commutative square is therefore an equivalence of
  \icats{}.
\end{proof}

\subsection{The $\infty$-Category of Categorical
  Algebras}\label{subsec:catalg}
We are now ready to define and study the \icats{}
$\AlgCat(\mathcal{V})$ of categorical algebras:
\begin{defn}
  Suppose $\mathcal{V}$ is a monoidal \icat{}. The \icat{}
  $\AlgCatV$ is defined by the pullback square
  \csquare{\AlgCatV}{\Alg(\mathcal{V})}{\mathcal{S}}{\OpdIns.}{}{}{}{L_{\txt{gen}}\simp^\op_{(\blank)}}
  where the right vertical map is the algebra fibration from
  \S\ref{subsec:AlgFib} and the lower horizontal map sends a space $S$
  to the \nsiopd{} $L_{\txt{gen}}\simp^{\op}_{S}$ associated to the
  \gnsiopd{} $\simp^{\op}_{S}$. The objects of $\AlgCatV$ are thus
  categorical algebras in $\mathcal{V}$ and its 1-morphisms
  are $\mathcal{V}$-functors as defined in
  \S\ref{subsec:FTgeniopd}. We will refer to $\AlgCatV$ as the
  \defterm{\icat{} of categorical algebras}.
\end{defn}

\begin{remark}
  Since $\mathcal{V}$ is a monoidal \icat{}, and so in
  particular a \nsiopd{}, we could equivalently have defined
  $\AlgCatV$ using the analogue of the algebra fibration
  over the base $\OpdInsg$, since there is natural equivalence
  $\Alg_{L_{\txt{gen}}\simp^{\op}_{S}}(\mathcal{V}) \isoto
  \Alg_{\simp^{\op}_{S}}(\mathcal{V})$ for every space $S$.
\end{remark}

Pulling back the fibration of trivial algebras in the same way, we get
the functor that forms the free $\mathcal{V}$-\icat{} on a \emph{graph}:
\begin{defn}
  Let $\mathcal{V}$ be a monoidal \icat{}. The \icat{}
  $\GraphIV$ of \emph{$\mathcal{V}$-graphs} is defined by the pullback
  \csquare{\GraphIV}{\Algtriv(\mathcal{V})}{\mathcal{S}}{\OpdIns.}{}{}{}{L\simp^\op_{(\blank)}}
  Thus the fibre of $\GraphIV$ at $X \in \mathcal{S}$ is $\Fun(X\times
  X, \mathcal{V})$. By Remark~\ref{rmk:AlgTrivPullback} we also get a
  pullback square
  \csquare{\GraphIV}{\mathcal{F}_{\mathcal{V}}}{\mathcal{S}}{\mathcal{S},}{}{}{}{\Delta}
  where $\Delta$ is the diagonal functor that sends $S$ to $S \times
  S$, and $\mathcal{F}_{\mathcal{V}} \to \mathcal{S}$ is the Cartesian
  fibration associated to the functor $\mathcal{S}^{\op} \to \CatI$
  sending $S$ to $\Fun(S, \mathcal{V})$.
\end{defn}

\begin{remark}\label{rmk:GraphAdj} 
  The pullback of the left adjoint $\tau_{!}$ of $\tau^{*}$ gives a
  functor \[F \colon \GraphIV \to \AlgCatV\] left adjoint to $U$.
\end{remark}

\begin{propn}\label{propn:AlgCatPres}
  Suppose $\mathcal{V}$ is a presentably monoidal \icat{}, i.e. the
  \icat{} $\mathcal{V}$ is presentable and the tensor product on
  $\mathcal{V}$ preserves small colimits separately in each
  variable. Then $\AlgCatV$ is a presentable \icat{}.
\end{propn}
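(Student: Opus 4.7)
The plan is to realize $\AlgCatV$ as the total \icat{} of a coCartesian fibration satisfying the hypotheses of \cite[Theorem 9.3]{freepres}, mirroring the proof of Proposition~\ref{propn:AlgVPres}. First I would observe that the pullback projection $\AlgCatV \to \mathcal{S}$ is a coCartesian fibration: by Lemma~\ref{lem:AlgCoCart}, $\Alg(\mathcal{V}) \to \OpdIns$ is a coCartesian fibration, and coCartesian fibrations are stable under pullback. Its fibre over $S \in \mathcal{S}$ is $\Alg_{L_{\txt{gen}}\simp^{\op}_S}(\mathcal{V}) \simeq \Alg_{\simp^{\op}_S}(\mathcal{V})$.

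Next I would verify the three technical hypotheses needed. (1) \emph{The fibres are presentable}: this follows from Proposition~\ref{propn:AlgVPres} applied to each individual \nsiopd{} $L_{\txt{gen}}\simp^{\op}_S$, or equivalently by restricting the presentable total space $\Alg(\mathcal{V})$ along the accessible projection to the specified object of $\OpdIns$. (2) \emph{The coCartesian pushforward preserves small colimits}: for a morphism $f \colon S \to T$ in $\mathcal{S}$, the pushforward $f_{!}$ is the operadic left Kan extension left adjoint to the restriction $f^{*}$, as constructed in \S\ref{subsec:AlgFib}, and left adjoints preserve colimits. (3) \emph{The classifying functor $\mathcal{S} \to \PresI$ is accessible}: this decomposes into accessibility of $\simp^{\op}_{(\blank)} \colon \mathcal{S} \to \OpdInsg$, which preserves filtered colimits by Proposition~\ref{propn:DopXfilt}; of $L_{\txt{gen}} \colon \OpdInsg \to \OpdIns$, which preserves all colimits by Corollary~\ref{cor:GenOpdLoc}; and of the fibration $\Alg(\mathcal{V}) \to \OpdIns$, which is accessible by Proposition~\ref{propn:AlgVPres}.

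Combining these ingredients, \cite[Theorem 9.3]{freepres} yields that $\AlgCatV$ is presentable, completing the proof. The main technical obstacle is arranging the accessibility in (3) in the precise form required by \cite[Theorem 9.3]{freepres}, since this involves coherently tracking accessibility across the three composed functors and their interaction with the coCartesian pushforwards; fortunately all the component facts are already established in earlier sections, so the argument reduces to bookkeeping rather than any new construction.
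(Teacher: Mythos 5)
Your route differs from the paper's, and it has a genuine gap at your hypothesis (3). The inputs cited when the paper applies \cite[Theorem 9.3]{freepres} to $\Alg(\mathcal{V}) \to \OpdIns$ in Proposition~\ref{propn:AlgVPres} are Theorem~\ref{thm:FreeFtrExists}, Corollary~\ref{cor:AlgOcolim}, \emph{and} Lemma~\ref{lem:AlgOprlim}: the last of these says that the classifying functor $\Alg_{(\blank)}(\mathcal{V})\colon (\OpdIns)^{\op}\to\CatI$ takes colimits to limits, and this (not mere accessibility) is the hypothesis you would need to verify for the classifying functor of $\AlgCatV \to \mathcal{S}$. That hypothesis fails: the composite $S \mapsto \Alg_{L_{\txt{gen}}\simp^{\op}_{S}}(\mathcal{V})$ does not take colimits in $\mathcal{S}$ to limits, because $\simp^{\op}_{(\blank)}$ is a \emph{right} adjoint (Remark~\ref{rmk:simpopXrightadj}) and does not preserve colimits. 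Concretely, $\simp^{\op}_{S\amalg T}\not\simeq \simp^{\op}_{S}\amalg\simp^{\op}_{T}$: a $\mathcal{V}$-\icat{} with set of objects $\{x,y\}$ has four mapping objects, whereas an object of $\Alg_{\simp^{\op}_{\{x\}}}(\mathcal{V})\times\Alg_{\simp^{\op}_{\{y\}}}(\mathcal{V})$ is just a pair of associative algebras. So the "bookkeeping" you defer in your last sentence is exactly where the argument breaks; you cannot simply mirror the proof of Proposition~\ref{propn:AlgVPres}.

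This is why the paper splits the statement into two pieces. Existence of small colimits is Lemma~\ref{lem:AlgCatcolim}, which only needs \cite[Lemma 9.8]{freepres}: the pulled-back fibration $\AlgCatV\to\mathcal{S}$ is Cartesian and coCartesian, its fibres have colimits, and the pushforwards preserve them (this part of your proposal is fine). Accessibility is then obtained from the defining pullback square via \cite[Proposition 5.4.6.6]{HTT}: the right vertical map $\Alg(\mathcal{V})\to\OpdIns$ is an accessible functor between accessible \icats{} by Proposition~\ref{propn:AlgVPres}, and the bottom map $L_{\txt{gen}}\simp^{\op}_{(\blank)}$ preserves filtered colimits by Proposition~\ref{propn:DopXfilt} together with the fact that $L_{\txt{gen}}$ is a left adjoint, hence is accessible. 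Your observations in (1)--(3) supply essentially these ingredients, so your proof can be repaired by replacing the appeal to \cite[Theorem 9.3]{freepres} with this two-step argument; but as written the key step does not go through.
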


\begin{remark}
  Proposition~\ref{propn:AlgCatPres} can be seen as an \icatl{} version of a
  theorem of Kelly and Lack~\cite[Theorem 4.5]{KellyLackVCatPres}. The
  fact that this 1-categorical result is comparatively recent, whereas
  the \icatl{} variant is one of the first steps in our setup,
  underscores the importance of presentability in the \icatl{}
  context.
\end{remark}

Using Corollary~\ref{cor:AlgOcolim} it is easy to show that $\AlgCatV$ has
colimits:
\begin{lemma}\label{lem:AlgCatcolim}
  Suppose $\mathcal{V}$ is a monoidal \icat{} compatible
  with small colimits (i.e. the tensor product on $\mathcal{V}$
  preserves colimits separately in each variable). Then $\AlgCatV$
  has all small colimits.
\end{lemma}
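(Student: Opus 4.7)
The plan is to exhibit $\AlgCatV \to \mathcal{S}$ as a coCartesian fibration whose base has colimits, whose fibres have colimits, and whose pushforwards preserve colimits, and then invoke the same general principle used in the proof of Proposition~\ref{propn:AlgColim} --- namely \cite[Lemma~9.8]{freepres}.

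First I would observe that $\AlgCatV \to \mathcal{S}$ is a coCartesian fibration. Indeed, it is defined as the pullback of $\Alg(\mathcal{V}) \to \OpdIns$ along the functor $S \mapsto L_{\txt{gen}}\simp^{\op}_{S}$, and the projection $\Alg(\mathcal{V}) \to \OpdIns$ is (both Cartesian and) coCartesian by Lemma~\ref{lem:AlgCoCart}; coCartesian fibrations are stable under pullback.

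Next, I would identify the fibre at $S \in \mathcal{S}$ with $\Alg_{\simp^{\op}_{S}}(\mathcal{V})$: by the universal property of the localization $L_{\txt{gen}}$ (Corollary~\ref{cor:GenOpdLoc}), the natural map $\Alg_{L_{\txt{gen}}\simp^{\op}_{S}}(\mathcal{V}) \to \Alg_{\simp^{\op}_{S}}(\mathcal{V})$ is an equivalence. By Corollary~\ref{cor:AlgOcolim} each such fibre admits all small colimits. Moreover, for any morphism $f \colon S \to T$ in $\mathcal{S}$, the induced pushforward $f_{!} \colon \Alg_{\simp^{\op}_{S}}(\mathcal{V}) \to \Alg_{\simp^{\op}_{T}}(\mathcal{V})$ preserves small colimits, since it is a left adjoint: indeed, $\Alg(\mathcal{V}) \to \OpdIns$ is also Cartesian by Lemma~\ref{lem:AlgCoCart}, so the coCartesian pushforward $f_{!}$ is left adjoint to the Cartesian pullback $f^{*}$.

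Finally, since $\mathcal{S}$ has all small colimits, the three conditions of \cite[Lemma~9.8]{freepres} are satisfied for the coCartesian fibration $\AlgCatV \to \mathcal{S}$, so $\AlgCatV$ admits all small colimits. I do not expect any serious obstacle: the argument is a direct mimic of the proof of Proposition~\ref{propn:AlgColim}, with $\OpdIns$ replaced by $\mathcal{S}$ via the pullback, and the only subtle point is to remember that pushforward along $f$ in $\AlgCatV$ is computed as pushforward in $\Alg(\mathcal{V})$ along $L_{\txt{gen}}\simp^{\op}_{f}$ (which is a morphism of \nsiopds{}), so that Lemma~\ref{lem:AlgCoCart} applies.
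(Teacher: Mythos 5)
Your proof is correct and follows essentially the same route as the paper: the paper likewise notes that $\Alg(\mathcal{V}) \to \OpdIns$ is Cartesian and coCartesian by Lemma~\ref{lem:AlgCoCart}, hence so is the pullback $\AlgCatV \to \mathcal{S}$, that the fibres $\Alg_{\simp^{\op}_{S}}(\mathcal{V})$ have colimits by Corollary~\ref{cor:AlgOcolim}, that the pushforwards preserve colimits as left adjoints, and then applies \cite[Lemma 9.8]{freepres}. Your extra remarks identifying the fibres via the localization equivalence are fine but not needed beyond what the paper already records.
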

\begin{proof}
  By Lemma~\ref{lem:AlgCoCart}, the fibration $\pi \colon
  \Alg(\mathcal{V}) \to \OpdIns$ is both Cartesian and
  coCartesian, hence the same is true of its pullback $p \colon
  \AlgCatV \to \mathcal{S}$. Moreover, the fibres
  $\Alg_{\simp^{\op}_{X}}(\mathcal{V})$ have all colimits
  by Corollary~\ref{cor:AlgOcolim} and the functors $f_{!}$ induced by
  morphisms $f$ in $\mathcal{S}$ preserve colimits, being left
  adjoints. Thus $p$ satisfies the conditions of
  \cite[Lemma 9.8]{freepres}, which implies that
  $\AlgCat(\mathcal{V})$ has small colimits.
\end{proof}

\begin{proof}[Proof of Proposition~\ref{propn:AlgCatPres}]
  The \icat{} $\AlgCatV$ has colimits by Lemma~\ref{lem:AlgCatcolim},
  so it remains to prove that it is accessible. But in the pullback
  square
  \csquare{\AlgCatV}{\Alg(\mathcal{V})}{\mathcal{S}}{\OpdIns.}{}{}{}{L_{\txt{gen}}\simp^\op_{(\blank)}}
  the right vertical morphism is an accessible functor between
  accessible \icats{} by Proposition~\ref{propn:AlgVPres}. Moreover,
  by Proposition~\ref{propn:DopXfilt} the bottom horizontal morphism
  preserves filtered colimits (since $L_{\txt{gen}}$ is a left adjoint
  and so preserves all colimits), and thus is in particular also an
  accessible functor. It then follows from \cite[Proposition
  5.4.6.6]{HTT} that $\AlgCatV$ is also an accessible \icat{}.
\end{proof}

Our next goal is to prove that the \icat{} $\AlgCatV$ is a lax
monoidal functor in $\mathcal{V}$, with respect to the
Cartesian product of monoidal \icats{} and the Cartesian product of
\icats{}. Knowing this will allow us to conclude,
for example, that if $\mathcal{V}$ is a symmetric monoidal
\icat{} then there is an induced symmetric monoidal structure on
$\AlgCatV$. We first observe that $\AlgCat(\mathcal{V})$
is indeed functorial in $\mathcal{V}$:

\begin{defn}
  As in \S\ref{subsec:AlgFib}, let $\Algco \to \OpdIns \times
  (\widehat{\txt{Opd}}_{\infty}^{\txt{ns}})^{\op}$ be a Cartesian
  fibration classifying the functor $\Alg_{(\blank)}(\blank)$. Then
  we define $\AlgCatco$ by the pullback square
  \nolabelcsquare{\AlgCatco}{\Algco}{\mathcal{S} \times
    (\widehat{\txt{Mon}}_{\infty}^{\txt{lax}})^{\op}}{\OpdIns
    \times (\widehat{\txt{Opd}}_{\infty}^{\txt{ns}})^{\op},} where
  the bottom horizontal functor is the product of the functor
  $\simp^{\op}_{(\blank)}$ and the opposite of the inclusion of the
  full subcategory of large monoidal \icats{} into
  $\widehat{\txt{Opd}}_{\infty}^{\txt{ns}}$.
\end{defn}

\begin{lemma}
  $\AlgCat(\mathcal{V})$ is functorial in
  $\mathcal{V}$ with respect to lax monoidal functors.
\end{lemma}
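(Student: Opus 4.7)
The plan is to identify the projection $\AlgCatco \to (\widehat{\MonI}^{\txt{lax}})^{\op}$ as a Cartesian fibration whose fiber over $\mathcal{V}$ is $\AlgCat(\mathcal{V})$; straightening this fibration will yield the required functor $\widehat{\MonI}^{\txt{lax}} \to \widehat{\CatI}$ with $\mathcal{V} \mapsto \AlgCat(\mathcal{V})$.

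First, recall from the construction of $\Algco$ that the projection $\Algco \to (\widehat{\OpdIns})^{\op}$ is already a Cartesian fibration classifying the functor $\mathcal{P} \mapsto \Alg(\mathcal{P})$. Since the inclusion $\widehat{\MonI}^{\txt{lax}} \hookrightarrow \widehat{\OpdIns}$ is fully faithful (a lax monoidal functor is exactly a morphism of \nsiopds{} between monoidal \icats{}), pulling back along its opposite gives a Cartesian fibration $\Algco' \to (\widehat{\MonI}^{\txt{lax}})^{\op}$ classifying the restricted functor $\mathcal{V} \mapsto \Alg(\mathcal{V})$. This $\Algco'$ still projects to $\OpdIns$ via the first factor, and because $\Algco \to \OpdIns \times (\widehat{\OpdIns})^{\op}$ is Cartesian over a product, Cartesian lifts of morphisms in the second factor can be chosen to lie over identities in the first factor.

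Now form the further pullback $\AlgCatco = \Algco' \times_{\OpdIns} \mathcal{S}$ along $L_{\txt{gen}}\simp^{\op}_{(\blank)} \colon \mathcal{S} \to \OpdIns$. Since the Cartesian lifts in $\Algco'$ over morphisms in $(\widehat{\MonI}^{\txt{lax}})^{\op}$ sit over identities in $\OpdIns$, they lift canonically to Cartesian morphisms in $\AlgCatco$ by keeping the $\mathcal{S}$-coordinate fixed; thus $\AlgCatco \to (\widehat{\MonI}^{\txt{lax}})^{\op}$ is a Cartesian fibration. Its fiber over $\mathcal{V}$ is the pullback $\Alg(\mathcal{V}) \times_{\OpdIns} \mathcal{S}$, which is precisely $\AlgCat(\mathcal{V})$ by definition. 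Straightening gives the desired functor. The only real subtlety is the decomposition of Cartesian morphisms in a two-variable fibration over a product—that Cartesian lifts of morphisms in one factor can be taken to be identities in the other; once this is granted, everything else is a formal manipulation of pullbacks and straightening.
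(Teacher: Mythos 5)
Your proposal is correct and follows essentially the same route as the paper: the paper likewise defines $\AlgCatco$ as a pullback of the two-variable fibration $\Algco$ along the product of $L_{\txt{gen}}\simp^{\op}_{(\blank)}$ and the inclusion of monoidal $\infty$-categories, and then observes that the composite projection to $(\widehat{\txt{Mon}}_{\infty}^{\txt{lax}})^{\op}$ is a Cartesian fibration classifying $\mathcal{V}^{\otimes} \mapsto \AlgCat(\mathcal{V})$. Your explicit justification that Cartesian lifts over the second factor can be chosen over identities in the first (so that they survive the pullback) is exactly the implicit content of the paper's one-line argument.
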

\begin{proof}
  The composite $\AlgCatco \to
  (\widehat{\txt{Mon}}_{\infty}^{\txt{lax}})^{\op}$ is a Cartesian
  fibration classifying a functor $\mathcal{V}^{\otimes} \mapsto
  \AlgCat(\mathcal{V})$.
\end{proof}

\begin{remark}\label{rmk:ordinaryasenricat}
  If $\mathbf{V}$ is an ordinary monoidal category, we can identify
  the usual category of $\mathbf{V}$-enriched categories with the full
  subcategory of $\AlgCat(\mathbf{V})$ spanned by the
  $\mathbf{V}$-enriched \icats{} with sets of objects. In particular,
  taking $\mathbf{V}$ to be the category $\Set$ of sets, with the
  Cartesian product as monoidal structure, we can identify the usual
  category $\Cat$ of categories with a full subcategory of
  $\AlgCat(\Set)$. Since the inclusion $\Set \hookrightarrow
  \mathcal{S}$ preserves products, this allows us to consider ordinary
  categories as $\mathcal{S}$-\icats{}.
\end{remark}

\begin{propn}\label{propn:AlgCatLaxMon}
  $\AlgCat(\blank)$ is lax monoidal with respect to the Cartesian
  product of monoidal \icats{}.
\end{propn}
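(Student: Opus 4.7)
The plan is to transfer the lax monoidal structure on $\Alg(\blank)$ established in Proposition~\ref{propn:AlgLaxMon} to $\AlgCat(\blank)$ via a pullback along a product-preserving functor. Recall that in the proof of Proposition~\ref{propn:AlgLaxMon}, we produced a Cartesian fibration $\Algco^{\times} \to (((\OpdIns)^{\op} \times \widehat{\OpdIns})^{\times})^{\op}$ encoding the Cartesian symmetric monoidal structure on the base together with the lax monoidal functoriality of the Hom-functor $\mathfrak{H}$, and projection to the second factor produced a lax monoidal functor $\widehat{\OpdIns}^{\times} \to \CatI^{\times}$ classifying $\Alg(\blank)$.

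To obtain the corresponding structure on $\AlgCat$, I would pull back $\Algco^{\times}$ along the product of two functors: in the first factor, the functor $L_{\txt{gen}}\simp^{\op}_{(\blank)} \colon \mathcal{S} \to \OpdIns$, which preserves finite products by Corollary~\ref{cor:LSimpProd}; and in the second factor, the inclusion $\widehat{\MonI}^{\txt{lax}} \hookrightarrow \widehat{\OpdIns}$, which trivially preserves products. Combined, these define a symmetric monoidal functor $(\mathcal{S} \times (\widehat{\MonI}^{\txt{lax}})^{\op})^{\times} \to ((\OpdIns)^{\op} \times \widehat{\OpdIns})^{\times}$ (with Cartesian monoidal structures on both sides). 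Pulling $\Algco^{\times}$ back along this functor yields a Cartesian fibration $\AlgCatco^{\times}$ over $(((\mathcal{S} \times (\widehat{\MonI}^{\txt{lax}})^{\op})^{\times}))^{\op}$.

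Projecting to the $\widehat{\MonI}^{\txt{lax}}$-factor then gives a Cartesian fibration over $((\widehat{\MonI}^{\txt{lax}})^{\times})^{\op}$ classifying a monoid $(\widehat{\MonI}^{\txt{lax}})^{\times} \to \CatI$, and hence a lax monoidal functor $\widehat{\MonI}^{\txt{lax},\times} \to \CatI^{\times}$. By construction the fibre of this Cartesian fibration at a monoidal \icat{} $\mathcal{V}$ is the pullback $\mathcal{S} \times_{\OpdIns} \Alg(\mathcal{V})$, i.e., $\AlgCat(\mathcal{V})$, so the lax monoidal functor obtained is precisely $\AlgCat(\blank)$.

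The main step that requires care is not a substantive mathematical obstacle but rather the bookkeeping involved in verifying that the symmetric monoidal pullback construction actually produces the desired fibration $\AlgCatco$ (and thus the desired functor $\AlgCat(\blank)$) on underlying \icats{}. This reduces to the fact that unstraightening commutes with pullback and that the two compositions of pullback squares defining $\AlgCat(\mathcal{V})$ agree, both of which are formal.
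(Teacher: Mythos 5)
Your proposal is correct and is essentially the paper's own argument: the paper likewise invokes Corollary~\ref{cor:LSimpProd} to see that $L_{\txt{gen}}\simp^{\op}_{(\blank)}$ preserves Cartesian products and then runs the same symmetric monoidal Cartesian-fibration construction as in the proof of Proposition~\ref{propn:AlgLaxMon}, pulled back along this functor and the inclusion of monoidal $\infty$-categories. You have merely spelled out the pullback step that the paper leaves implicit with the phrase ``by the same proof.''
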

\begin{proof}
  The functor $L_{\txt{gen}}\simp^{\op}_{(\blank)}$ is  monoidal
  with respect to the Cartesian products of spaces and \nsiopds{}, by
  Corollary~\ref{cor:LSimpProd}. The result therefore follows by the
  same proof as that of Proposition~\ref{propn:AlgLaxMon}.
\end{proof}

\begin{cor}\label{cor:AlgCatOMon}
  Let $\mathcal{O}$ be a symmetric \iopd{}, and
  suppose $\mathcal{V}$ is an $\mathcal{O} \otimes
  \mathbb{E}_{1}$-monoidal \icat{}. Then $\AlgCatV$ is an
  $\mathcal{O}$-monoidal \icat{}.
\end{cor}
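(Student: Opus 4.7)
The plan is to deduce this as a formal consequence of Proposition \ref{propn:AlgCatLaxMon} together with the interpretation of $\mathcal{O}\otimes\mathbb{E}_{1}$-monoidal \icats{} as $\mathcal{O}$-algebras in monoidal \icats{}. The key input is the universal property of the tensor product of symmetric \iopds{} from \cite[Proposition 3.2.4.3]{HA}, which identifies $\mathcal{O}\otimes\mathbb{E}_{1}$-algebras in a symmetric monoidal \icat{} $\mathcal{D}$ with $\mathcal{O}$-algebras in $\mathbb{E}_{1}$-algebras in $\mathcal{D}$. Applied to the Cartesian symmetric monoidal structure on $\widehat{\CatI}$ and combined with Proposition \ref{propn:nssymmeq}(ii), this identifies an $\mathcal{O}\otimes\mathbb{E}_{1}$-monoidal structure on $\mathcal{V}$ (i.e.\ an $\mathcal{O}\otimes\mathbb{E}_{1}$-algebra in $(\widehat{\CatI})^{\times}$) with an $\mathcal{O}$-algebra in $(\widehat{\MonI})^{\times}$, whose underlying object is $\mathcal{V}^{\otimes}$.

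Next, I would observe that since the lax monoidal structure of Proposition \ref{propn:AlgCatLaxMon} is built from the Cartesian products (which are symmetric monoidal), the same construction of \S\ref{subsec:AlgFib}, applied with the symmetric Cartesian structure in place of the merely monoidal one, upgrades $\AlgCat(\blank)$ to a lax \emph{symmetric} monoidal functor $(\widehat{\MonI}^{\txt{lax}})^{\times}\to\widehat{\CatI}^{\times}$. (Concretely: in the proof of Proposition \ref{propn:AlgLaxMon} one may replace the Cartesian monoidal structure on $\OpdIns^{\op}\times\OpdIns$ by the induced symmetric monoidal structure; the marked-simplicial enriched Yoneda functor $\mathfrak{H}$ is lax symmetric monoidal with respect to the Cartesian products, as is $L_{\txt{gen}}\simp^{\op}_{(\blank)}$ by Corollary \ref{cor:LSimpProd}.)

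Since any lax symmetric monoidal functor $F\colon\mathcal{C}^{\otimes}\to\mathcal{D}^{\otimes}$ of symmetric monoidal \icats{} induces a functor $\AlgS_{\mathcal{O}}(\mathcal{C})\to\AlgS_{\mathcal{O}}(\mathcal{D})$ for every symmetric \iopd{} $\mathcal{O}$, applying this to the lax symmetric monoidal functor $\AlgCat(\blank)$ yields a functor
\[
\AlgS_{\mathcal{O}}\bigl((\widehat{\MonI})^{\times}\bigr)\longrightarrow \AlgS_{\mathcal{O}}\bigl(\widehat{\CatI}^{\times}\bigr).
\]
Evaluating on the $\mathcal{O}$-algebra in $(\widehat{\MonI})^{\times}$ corresponding to $\mathcal{V}^{\otimes}$ produces an $\mathcal{O}$-algebra in $\widehat{\CatI}^{\times}$ with underlying object $\AlgCat(\mathcal{V})$; by definition this is an $\mathcal{O}$-monoidal structure on $\AlgCat(\mathcal{V})$.

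The main (very minor) obstacle is the bookkeeping needed to verify that the lax monoidal structure on $\AlgCat(\blank)$ is genuinely symmetric, rather than merely monoidal; this should follow immediately from the fact that every step in the construction of Propositions \ref{propn:AlgLaxMon} and \ref{propn:AlgCatLaxMon} uses only the Cartesian (hence symmetric) structure, but writing it out requires a brief trace through those proofs.
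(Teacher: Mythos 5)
Your proposal is correct and follows essentially the same route as the paper: both deduce the result from Proposition~\ref{propn:AlgCatLaxMon} together with the identification of $\mathcal{O}\otimes\mathbb{E}_{1}$-monoidal \icats{} with $\mathcal{O}$-algebras in $\Alg^{\Sigma}_{\mathbb{E}_{1}}(\LCatI)\simeq\widehat{\txt{Mon}}_{\infty}$ (the paper cites \cite[Remark 2.4.2.6]{HA} where you cite \cite[Proposition 3.2.4.3]{HA}, but the content is the same). Your extra care about the lax monoidal structure being genuinely \emph{symmetric} is a point the paper passes over silently, but the construction in the proof of Proposition~\ref{propn:AlgLaxMon} already produces a map of symmetric \iopds{} out of $((\OpdIns)^{\op}\times\OpdIns)^{\times}$, so the symmetry is indeed available as you anticipated.
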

\begin{proof}
  It follows from Proposition~\ref{propn:AlgCatLaxMon} that for any
  symmetric \iopd{} $\mathcal{O}$, the functor
  $\AlgCat(\blank)$ takes an $\mathcal{O}$-algebra in
  $\widehat{\txt{Mon}}_{\infty}$ to an
  $\mathcal{O}$-algebra in $\LCatI$. The inclusion
  $\widehat{\txt{Mon}}_{\infty} \to
  \widehat{\txt{Mon}}_{\infty}^{\txt{lax}}$ clearly preserves
  Cartesian products, and by Proposition~\ref{propn:nssymmeq} we can
  identify $\widehat{\txt{Mon}}_{\infty}$ with the \icat{}
  $\Alg^{\Sigma}_{\mathbb{E}_{1}}(\LCatI)$ of
  $\mathbb{E}_{1}$-monoidal \icats{}. By \cite[Remark 2.4.2.6]{HA} a
  large $\mathcal{O} \otimes \mathbb{E}_{1}$-monoidal \icat{} is the
  same thing as an $\mathcal{O}$-algebra in
  $\Alg^{\Sigma}_{\mathbb{E}_{1}}(\LCatI)$, and so
  the functor $\AlgCat(\blank)$ indeed takes $\mathcal{O} \otimes
  \mathbb{E}_{1}$-monoidal \icats{} to $\mathcal{O}$-monoidal
  \icats{}.
\end{proof}

\begin{cor}\label{cor:AlgCatEnMon}\ 
    \begin{enumerate}[(i)]
  \item Suppose $\mathcal{V}$ is an
    $\mathbb{E}_{n}$-monoidal \icat{}. Then $\AlgCatV$ is an
    $\mathbb{E}_{n-1}$-monoidal \icat{}.
  \item Suppose $\mathcal{V}$ is a symmetric monoidal
    \icat{}. Then $\AlgCatV$ is a symmetric monoidal \icat{}.
  \end{enumerate}
\end{cor}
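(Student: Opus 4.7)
The plan is to deduce both parts directly from Corollary~\ref{cor:AlgCatOMon} by making appropriate choices of the symmetric \iopd{} $\mathcal{O}$, using only Dunn additivity of the little cubes \iopds{}.

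For part (i), I would take $\mathcal{O} = \mathbb{E}_{n-1}$. The key input is Dunn additivity (proved by Lurie in \cite[\S 5.1.2]{HA}), which gives an equivalence $\mathbb{E}_{n-1} \otimes \mathbb{E}_1 \simeq \mathbb{E}_n$ of symmetric \iopds{}. Thus an $\mathbb{E}_n$-monoidal \icat{} $\mathcal{V}$ is precisely an $\mathbb{E}_{n-1}\otimes\mathbb{E}_1$-monoidal \icat{}, and applying Corollary~\ref{cor:AlgCatOMon} with this $\mathcal{O}$ gives that $\AlgCatV$ is an $\mathbb{E}_{n-1}$-monoidal \icat{}. (In the edge case $n = 1$, we interpret $\mathbb{E}_0$-monoidal as pointed, as in \cite[\S 5.2.3]{HA}, and the statement is still immediate.)

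For part (ii), I would take $\mathcal{O}$ to be the commutative \iopd{} $\mathbb{E}_\infty$. A symmetric monoidal \icat{} is by definition an $\mathbb{E}_\infty$-monoidal \icat{}, and since $\mathbb{E}_\infty \otimes \mathbb{E}_1 \simeq \mathbb{E}_\infty$ (immediate from $\mathbb{E}_\infty$ absorbing any \iopd{} under the Boardman--Vogt tensor product, cf. \cite[Example 5.1.2.3]{HA}), Corollary~\ref{cor:AlgCatOMon} applies and yields that $\AlgCatV$ is $\mathbb{E}_\infty$-monoidal, i.e. symmetric monoidal.

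Both parts are thus essentially formal consequences of Corollary~\ref{cor:AlgCatOMon} combined with Dunn's theorem; there is no real obstacle beyond identifying the correct tensor decomposition of $\mathbb{E}_n$ (resp.\ $\mathbb{E}_\infty$) in terms of $\mathbb{E}_1$. The entire proof can be written in a few lines.
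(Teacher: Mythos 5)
Your proposal is correct and is essentially the paper's own argument: the paper proves this corollary by combining Corollary~\ref{cor:AlgCatOMon} with Dunn additivity (\cite[Theorem 5.1.2.2]{HA}), which is exactly the decomposition $\mathbb{E}_{n-1}\otimes\mathbb{E}_{1}\simeq\mathbb{E}_{n}$ (resp.\ $\mathbb{E}_{\infty}\otimes\mathbb{E}_{1}\simeq\mathbb{E}_{\infty}$) you use. You have simply spelled out the choices of $\mathcal{O}$ more explicitly than the paper does.
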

\begin{proof}
  This follows from combining Corollary~\ref{cor:AlgCatOMon} with
  \cite[Theorem 5.1.2.2]{HA}.
\end{proof}

Our next goal is to show that the functor $\AlgCat(\blank)$, when
restricted to presentably monoidal \icats{}, is lax monoidal with
respect to the tensor product of presentable \icats{}. We first
observe that restricting in this way does indeed give a functor to
presentable \icats{}:
\begin{propn}\label{propn:AlgCatPresFtr}
  The restriction of $\AlgCat(\blank)$ to the \icat{} $\MonPr$ of
  presentably monoidal \icats{} factors through the subcategory $\PresI$
  of $\LCatI$ of presentable \icats{} and colimit-preserving functors.
\end{propn}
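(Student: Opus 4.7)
The proposition has two assertions: (i) that $\AlgCat(\mathcal{V})$ is a presentable \icat{} for every $\mathcal{V} \in \MonPr$, and (ii) that for every morphism $F$ in $\MonPr$ (i.e.\ a monoidal functor whose underlying functor of \icats{} preserves colimits), the induced functor $F_{\ast} \colon \AlgCat(\mathcal{V}) \to \AlgCat(\mathcal{W})$ preserves small colimits. Part (i) is already established as Proposition~\ref{propn:AlgCatPres}, so the plan reduces to verifying (ii).

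The naive hope of deducing (ii) formally from the fact that $F_{\ast} \colon \Alg(\mathcal{V}) \to \Alg(\mathcal{W})$ preserves colimits (Proposition~\ref{propn:StrMonColim2}) via the pullback square defining $\AlgCat$ is a bit too optimistic, because pullbacks of colimit-preserving functors between \icats{} need not preserve colimits in general. I will therefore argue directly via the coCartesian fibration structure used to compute colimits in $\AlgCat(\mathcal{V})$. By the proof of Lemma~\ref{lem:AlgCatcolim}, both $\AlgCat(\mathcal{V}) \to \mathcal{S}$ and $\AlgCat(\mathcal{W}) \to \mathcal{S}$ are coCartesian fibrations, obtained by pulling back the coCartesian fibrations $\Alg(\mathcal{V})$ and $\Alg(\mathcal{W})$ over $\OpdIns$ along $L_{\txt{gen}}\simp^{\op}_{(\blank)}$. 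Since $F_{\ast}$ on $\Alg$ is a functor over $\OpdIns$, its pullback $F_{\ast}$ on $\AlgCat$ is a functor over $\mathcal{S}$.

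I will then apply the standard colimit-detection criterion (\cite[Lemma 9.8]{freepres}): a functor between coCartesian fibrations over the same base, commuting with the projections, preserves small colimits iff (a) it preserves coCartesian morphisms over the base, and (b) it preserves colimits fibrewise. For (b), the fibre of $\AlgCat(\mathcal{V}) \to \mathcal{S}$ over $S$ is $\Alg_{L_{\txt{gen}}\simp^{\op}_{S}}(\mathcal{V})$, and on fibres $F_{\ast}$ is simply post-composition with $F$; this preserves colimits by Proposition~\ref{propn:StrMonColim1}. For (a), the coCartesian pushforward along $f \colon S \to T$ in $\mathcal{S}$ is the free-algebra functor $\phi_{f,!}$ along the map $\phi_{f} \colon L_{\txt{gen}}\simp^{\op}_{S} \to L_{\txt{gen}}\simp^{\op}_{T}$ of \nsiopds{}, and the compatibility $F_{\ast} \circ \phi_{f,!} \simeq \phi_{f,!} \circ F_{\ast}$ is exactly the content of Lemma~\ref{lem:StrMonPrFree} for the colimit-preserving monoidal functor $F$. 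The main (but not substantial) obstacle is this second point, i.e.\ knowing that free-algebra constructions intertwine with $F_{\ast}$; once this is in hand, combining (a) and (b) yields the conclusion that $F_{\ast}$ preserves all small colimits, so $\AlgCat(\blank)$ indeed lands in $\PresI$.
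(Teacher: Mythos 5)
Your proof is correct and follows essentially the same route as the paper: the paper's proof simply says the claim "follows by the same proof as that of Proposition~\ref{propn:StrMonColim2}," which is exactly the argument you spell out — reduce to checking that $F_{*}$ preserves coCartesian arrows over the base (via Lemma~\ref{lem:StrMonPrFree}) and preserves colimits fibrewise (via Proposition~\ref{propn:StrMonColim1}). Your explicit remark that one cannot formally pull back colimit-preservation through the defining square is a sensible clarification, but the substance matches the paper.
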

\begin{proof}
  If $\mathcal{V}$ is presentably monoidal, then
  $\AlgCat(\mathcal{V})$ is presentable by
  Proposition~\ref{propn:AlgCatPres}. Moreover, it follows by the same proof
  as that of Proposition~\ref{propn:StrMonColim2} that a
  monoidal functor $F \colon \mathcal{V}^{\otimes} \to
  \mathcal{W}^{\otimes}$ such that $F_{[1]}$ preserves
  colimits induces a colimit-preserving functor
  $\AlgCat(\mathcal{V}) \to
  \AlgCat(\mathcal{W})$.
\end{proof}

Next we see that when restricted to categorical algebras, the external
product $\boxtimes$ of \S\ref{subsec:AlgFib} preserves colimits in
each variable:
\begin{propn}\label{propn:BoxProdColim}
  Let $\mathcal{V}$ be a monoidal \icat{}, and suppose that
  $\mathcal{C}$ is a categorical algebra in $\mathcal{V}$. 
  Then $\mathcal{C} \boxtimes \blank \colon
  \AlgCat(\mathcal{W}) \to \AlgCat(\mathcal{V}
  \times \mathcal{W})$ preserves colimits.
\end{propn}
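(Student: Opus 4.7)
The plan is to exploit the coCartesian fibration structure $p \colon \AlgCat(\mathcal{W}) \to \mathcal{S}$ established in the proof of Lemma~\ref{lem:AlgCatcolim}, which reduces the preservation of colimits to two separate checks: a base check in $\mathcal{S}$, and a fibrewise check in the $\Alg_{\simp^{\op}_{T}}(\mathcal{W})$'s. Explicitly, by \cite[Lemma 9.8]{freepres} (used in the same way to prove Lemma~\ref{lem:AlgCatcolim}), a diagram $K^{\triangleright} \to \AlgCat(\mathcal{W})$ is a colimit diagram \IFF{} its projection to $\mathcal{S}$ is a colimit and its coCartesian pushforward to the fibre over $\colim p$ is a colimit; and a functor $F$ between two such coCartesian fibrations preserves colimits \IFF{} its projection to $\mathcal{S}$ does, $F$ preserves coCartesian morphisms, and $F$ preserves colimits fibrewise.

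First I would observe that $\mathcal{C} \boxtimes (\blank)$ lies over a functor $\mathcal{S} \to \mathcal{S}$. Indeed, if $\mathcal{C}$ has space of objects $S$ and $\mathcal{D} \in \AlgCat(\mathcal{W})$ has space of objects $T$, then $\mathcal{C} \boxtimes \mathcal{D}$ is an algebra over $\simp^{\op}_{S} \times_{\simp^{\op}} \simp^{\op}_{T} \simeq \simp^{\op}_{S \times T}$ (using that $\simp^{\op}_{(\blank)}$ preserves products, being a right adjoint by Remark~\ref{rmk:simpopXrightadj}), so the box product covers the functor $S \times (\blank) \colon \mathcal{S} \to \mathcal{S}$. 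This functor preserves small colimits since $\mathcal{S}$ is Cartesian closed. Next, I would use Lemma~\ref{lem:FibBoxProdCoCart} to check that $\mathcal{C} \boxtimes (\blank)$ preserves coCartesian morphisms: given $f \colon T \to T'$ in $\mathcal{S}$ and $\mathcal{D} \in \Alg_{\simp^{\op}_{T}}(\mathcal{W})$, the coCartesian edge in $\AlgCat(\mathcal{W})$ is the canonical map $\mathcal{D} \to f_{!}\mathcal{D}$, and its image $\mathcal{C} \boxtimes \mathcal{D} \to \mathcal{C} \boxtimes f_{!}\mathcal{D} \simeq (\id_{S} \times f)_{!}(\mathcal{C} \boxtimes \mathcal{D})$ is, by Lemma~\ref{lem:FibBoxProdCoCart}, precisely the coCartesian edge over $\id_{S} \times f$ in $\AlgCat(\mathcal{V} \times \mathcal{W})$.

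Finally, the fibrewise statement is immediate from Proposition~\ref{propn:FibBoxProdColim}: fixing $T \in \mathcal{S}$, the fibre of $\mathcal{C} \boxtimes (\blank)$ at $T$ is the functor
\[
\mathcal{C} \boxtimes (\blank) \colon \Alg_{\simp^{\op}_{T}}(\mathcal{W}) \longrightarrow \Alg_{\simp^{\op}_{S} \times_{\simp^{\op}} \simp^{\op}_{T}}(\mathcal{V} \times \mathcal{W}),
\]
which preserves all small colimits by Proposition~\ref{propn:FibBoxProdColim} applied to the \nsiopds{} $\mathcal{O} = \simp^{\op}_{S}$ and $\mathcal{P} = \simp^{\op}_{T}$ (or their localizations $L_{\txt{gen}}\simp^{\op}_{(\blank)}$, which yield the same algebra \icats{}). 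Combining these three checks via the coCartesian fibration criterion completes the proof.

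The only potentially delicate point is confirming that the coCartesian pushforwards in $\AlgCat(\mathcal{V} \times \mathcal{W}) \to \mathcal{S}$ really agree with those in $\Alg(\mathcal{V} \times \mathcal{W}) \to \OpdIns$ after pullback along $L_{\txt{gen}}\simp^{\op}_{(\blank)}$, but this is automatic from the way $\AlgCat$ is defined as a pullback and the fact that pullbacks of coCartesian fibrations are coCartesian fibrations. With that settled, every remaining step is an invocation of an already-established result, so I expect no serious obstacle.
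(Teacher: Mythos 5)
Your proof is correct and follows essentially the same route as the paper's: reduce along the coCartesian fibration over $\mathcal{S}$ to the fact that the Cartesian product of spaces preserves colimits in each variable, then verify preservation of coCartesian arrows via Lemma~\ref{lem:FibBoxProdCoCart} and fibrewise colimit preservation via Proposition~\ref{propn:FibBoxProdColim}. The paper's proof is just a terser version of exactly this argument.
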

\begin{proof}
  Since the Cartesian product of spaces preserves colimits in each
  variable, it suffices to prove that $\mathcal{C} \boxtimes (\blank)$
  preserves colimits fibrewise and preserves coCartesian arrows. This
  follows from Lemma~\ref{lem:FibBoxProdCoCart} and
  Proposition~\ref{propn:FibBoxProdColim}.
\end{proof}

\begin{cor}\label{cor:AlgCatLaxMonPr}
  The functor $\AlgCat(\blank) \colon \MonPr \to \PresI$
  is lax monoidal with respect to the tensor product of presentable
  \icats{}.
\end{cor}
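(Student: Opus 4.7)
The plan is to upgrade the lax Cartesian monoidal structure of Proposition~\ref{propn:AlgCatLaxMon} (restricted to $\MonPr$ via Proposition~\ref{propn:AlgCatPresFtr}) to a lax monoidal structure with respect to the presentable tensor products on $\MonPr$ and $\PresI$. The key input is Proposition~\ref{propn:BoxProdColim}, which supplies precisely the colimit-preservation needed to invoke the universal property defining the tensor product in $\PresI$.

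Concretely, for each tuple $(\mathcal{V}_1,\ldots,\mathcal{V}_n)$ of presentably monoidal \icats{}, the symmetric monoidal structure on $\MonPr^{\otimes}$ (Remark~\ref{rmk:MonPrMon}), being inherited from $\PresI^{\otimes}$, equips us with a canonical multilinear morphism $\mathcal{V}_1 \times \cdots \times \mathcal{V}_n \to \mathcal{V}_1 \otimes \cdots \otimes \mathcal{V}_n$ in $\MonPr$. Postcomposing the Cartesian lax multiplication of Proposition~\ref{propn:AlgCatLaxMon} with the functoriality of $\AlgCat$ applied to this multilinear morphism yields a functor
\[
\AlgCat(\mathcal{V}_1) \times \cdots \times \AlgCat(\mathcal{V}_n) \longrightarrow \AlgCat(\mathcal{V}_1 \otimes \cdots \otimes \mathcal{V}_n),
\]
which by Propositions~\ref{propn:BoxProdColim} and~\ref{propn:AlgCatPresFtr} preserves colimits separately in each variable. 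The universal property of $\otimes$ in $\PresI$ then factors it uniquely as a colimit-preserving functor
\[
\AlgCat(\mathcal{V}_1) \otimes \cdots \otimes \AlgCat(\mathcal{V}_n) \longrightarrow \AlgCat(\mathcal{V}_1 \otimes \cdots \otimes \mathcal{V}_n).
\]
The unit map $\mathcal{S} \to \AlgCat(\mathcal{S}^{\times})$ arises similarly from the Cartesian lax unit $\ast \to \AlgCat(\ast)$ by pushing forward along the canonical monoidal functor $\ast \to \mathcal{S}^{\times}$ in $\MonPr$ and extending colimit-cocontinuously along the inclusion of the point into $\mathcal{S}$, using Remark~\ref{rmk:MonPrInitial}.

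Packaging these data into a coherent lax monoidal functor $\MonPr^{\otimes} \to \PresI^{\otimes}$ for all arities simultaneously is the main obstacle. My approach would be to realize both symmetric monoidal structures as coCartesian fibrations over $\bbGamma^{\op}$ satisfying the Segal condition, and then to exhibit the desired lax monoidal functor as a morphism of such fibrations induced levelwise from the existing Cartesian-monoidal version of $\AlgCat$, checking that inert morphisms are preserved by the factorizations above. The underlying principle is that a lax Cartesian monoidal functor whose multiplication maps preserve colimits in each variable automatically descends to a lax monoidal functor between the corresponding tensor-product structures; this is essentially forced by the universal properties but the cleanest rigorous version is to invoke an abstract transfer result in the spirit of \cite[Proposition 4.8.1.10]{HA}. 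Once the coherence data is in place, preservation of colimits by each component was already ensured by Proposition~\ref{propn:AlgCatPresFtr}, completing the proof.
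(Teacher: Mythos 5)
You have the right ingredients --- Proposition~\ref{propn:BoxProdColim}, Proposition~\ref{propn:AlgCatPresFtr}, and the already-constructed Cartesian lax monoidal structure from Proposition~\ref{propn:AlgCatLaxMon} --- and these are exactly the inputs the paper uses. But the step you flag as ``the main obstacle,'' namely packaging the arity-by-arity factorizations into a coherent lax monoidal functor $(\MonPr)^{\otimes} \to \PresI^{\otimes}$, is left unresolved in your write-up: you describe a strategy (realize everything as fibrations over the category of pointed finite sets, check inert morphisms, invoke an unspecified ``abstract transfer result'') rather than carry it out. As written, this is a genuine gap, and it is a self-inflicted one: by first passing to the tensor products $\mathcal{V}_1 \otimes \cdots \otimes \mathcal{V}_n$ and $\AlgCat(\mathcal{V}_1)\otimes\cdots\otimes\AlgCat(\mathcal{V}_n)$ via universal properties, you create a coherence problem that then has to be solved by hand.

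The paper's proof avoids this entirely by using the way $\PresI^{\otimes}$ is \emph{defined} in \cite[Notation 4.8.1.2]{HA}: it is a (non-full) sub-$\infty$-operad of $\LCatI^{\times}$ whose objects are the presentable \icats{} and whose multimorphisms are the functors $\mathcal{C}_1\times\cdots\times\mathcal{C}_n\to\mathcal{D}$ preserving colimits separately in each variable; likewise $(\MonPr)^{\otimes}$ sits inside $(\widehat{\txt{Mon}}_{\infty}^{\txt{lax}})^{\times}$. Consequently a lax monoidal functor $(\MonPr)^{\otimes}\to\PresI^{\otimes}$ is not new data to be constructed: it is literally the restriction of the existing map of $\infty$-operads $(\widehat{\txt{Mon}}_{\infty}^{\txt{lax}})^{\times}\to\LCatI^{\times}$, and one only needs to check that the composite $(\MonPr)^{\otimes}\to\LCatI^{\times}$ lands in the subcategory $\PresI^{\otimes}$ --- i.e.\ that objects go to presentable \icats{} and that the structure maps are multilinear in the colimit-preserving sense. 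That is exactly what Propositions~\ref{propn:AlgCatPresFtr} and~\ref{propn:BoxProdColim} provide, and no factorization through tensor products or separate coherence argument is needed. If you replace your final paragraph with this observation, your argument closes up.
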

\begin{proof}
  We have constructed a lax monoidal functor \[\AlgCat(\blank)
  \colon (\widehat{\txt{Mon}}_{\infty}^{\txt{lax}})^{\times}
  \to \LCatI^{\times}.\] By Proposition \ref{propn:BoxProdColim} and
  Propositon \ref{propn:AlgCatPresFtr}, the
  composite \[(\MonPr)^{\otimes} \to
  (\widehat{\txt{Mon}}_{\infty}^{\txt{lax}})^{\times} \to
  \LCatI^{\times}\] factors through $\PresI^{\otimes}$ as defined in
  \cite[Notation 4.8.1.2]{HA}.
\end{proof}

\begin{cor}\label{cor:prestensAlgCatS}
  If $\mathcal{V}$ is presentably monoidal, then
  $\AlgCatV$ is tensored over $\AlgCat(\mathcal{S})$, and
  the tensoring operation
  \[ \AlgCat(\mathcal{S}) \times \AlgCatV \to
  \AlgCatV \]
  preserves colimits separately in each variable.
\end{cor}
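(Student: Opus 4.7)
The plan is to deduce this directly from Corollary~\ref{cor:AlgCatLaxMonPr}, which tells us that $\AlgCat(\blank) \colon \MonPr \to \PresI$ is lax monoidal with respect to the tensor product of presentable \icats{}. The main input, besides this lax monoidality, is the observation (Remark~\ref{rmk:MonPrInitial}, together with Remark~\ref{rmk:MonPrMon}) that $\mathcal{S}$ with its Cartesian monoidal structure is the unit of $\MonPr$.

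First I would record that, since the functor $\AlgCat(\blank)$ is lax monoidal, every object $\mathcal{V} \in \MonPr$ gives rise, in a natural way, to a module over the unit: namely, applying $\AlgCat(\blank)$ to the canonical unit module structure on $\mathcal{V}$ (which exists tautologically, for $\mathcal{V}$ is a module over $\mathcal{S}$ in $\MonPr$ via the unit map $\mathcal{S} \to \mathcal{V}$ and the associativity and unit constraints) produces an action map
\[
\AlgCat(\mathcal{S}) \otimes \AlgCatV \longrightarrow \AlgCat(\mathcal{S} \otimes \mathcal{V}) \simeq \AlgCatV
\]
in $\PresI$, exhibiting $\AlgCatV$ as a module over the associative algebra $\AlgCat(\mathcal{S})$ in $\PresI$.

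Second, I would invoke the universal property of the tensor product on $\PresI$: a map out of $\AlgCat(\mathcal{S}) \otimes \AlgCatV$ in $\PresI$ is the same data as a functor out of $\AlgCat(\mathcal{S}) \times \AlgCatV$ which preserves colimits separately in each variable. Applying this to the action map above gives the desired tensoring functor $\AlgCat(\mathcal{S}) \times \AlgCatV \to \AlgCatV$, with the required colimit-preservation property automatic from construction.

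There is no real obstacle here; the content is entirely packaged in Corollary~\ref{cor:AlgCatLaxMonPr}, and the only point that requires a moment's thought is the identification of $\mathcal{S}$ as the unit of $\MonPr$ (so that $\AlgCat(\mathcal{S})$ is automatically an $\mathbb{E}_1$-algebra in $\PresI$, and every $\AlgCat(\mathcal{V})$ is automatically a module over it). If one wants to make this fully precise rather than appealing to the abstract fact that lax monoidal functors send modules to modules, one can instead write $\AlgCat(\mathcal{V})$ as the image under $\AlgCat(\blank)$ of the $\LMo$-algebra in $\MonPr$ with underlying objects $(\mathcal{S}, \mathcal{V})$, and compose with the lax monoidal structure to produce the desired $\LMo$-algebra in $\PresI$.
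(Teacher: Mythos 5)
Your proposal is correct and matches the paper's own proof essentially verbatim: both identify $\mathcal{S}^{\times}$ as the unit of $\MonPr$ (hence a commutative algebra with every $\mathcal{V}^{\otimes}$ canonically a module over it), push this module structure through the lax monoidal functor $\AlgCat(\blank)$ of Corollary~\ref{cor:AlgCatLaxMonPr}, and then unpack the universal property of the tensor product in $\PresI$ to get colimit preservation in each variable.
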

\begin{proof}
  By Remark~\ref{rmk:MonPrMon}, the unit of the tensor product of
  presentably monoidal \icats{} is $\mathcal{S}^{\times}$, and so this
  is a commutative algebra object in the \icat{} $\MonPr$ by
  \cite[Corollary 3.2.1.9]{HA}. Any presentably monoidal \icat{}
  $\mathcal{V}^{\otimes}$ is moreover canonically a module over this
  commutative algebra object. Since $\AlgCat(\blank)$ is lax monoidal
  with respect to $\otimes$, it follows that the \icat{}
  $\AlgCat(\mathcal{V})$ is a module over the presentably symmetric
  monoidal \icat{} $\AlgCat(\mathcal{S})$ in $\PresI$. In other words,
  the \icat{} $\AlgCat(\mathcal{V})$ is tensored over
  $\AlgCat(\mathcal{S})$ and the tensoring operation preserves
  colimits separately in each variable.
\end{proof}
\begin{defn}
  If $\mathcal{V}$ is a presentably monoidal \icat{},
  $\mathcal{C}$ is a $\mathcal{V}$-\icat{}, and $\mathcal{X}$ is an
  $\mathcal{S}$-\icat{}, then we denote their tensor by $\mathcal{C}
  \otimes \mathcal{X}$. For fixed $\mathcal{X}$ the functor
  $\mathcal{C} \mapsto \mathcal{C} \otimes \mathcal{X}$ preserves
  colimits, and hence has a right adjoint ---
  i.e. $\AlgCat(\mathcal{V})$ is also cotensored over
  $\AlgCat(\mathcal{S})$; we denote the cotensor of
  $\mathcal{C}$ and $\mathcal{X}$ by $\mathcal{C}^{\mathcal{X}}$. If
  $\mathcal{D}$ is another $\mathcal{V}$-\icat{} we thus have a
  canonical equivalence 
  \[ \Map(\mathcal{D}, \mathcal{C}^{\mathcal{X}}) \simeq
  \Map(\mathcal{D} \otimes \mathcal{X}, \mathcal{C}).\]
\end{defn}

The \icat{} of categorical algebras is well-behaved with respect to
adjunctions:
\begin{lemma}\label{lem:StrMonAdjAlgCat}
  Suppose $\mathcal{V}$ and $\mathcal{W}$ are
  presentably monoidal \icats{}
  and $F \colon \mathcal{V}^{\otimes} \to
  \mathcal{W}^{\otimes}$ is a monoidal functor such that the
  underlying functor $f \colon \mathcal{V} \to \mathcal{W}$ preserves
  colimits. Let $g \colon \mathcal{W} \to \mathcal{V}$ be a right
  adjoint of $f$, and let $G \colon \mathcal{W}^{\otimes} \to
  \mathcal{V}^{\otimes}$ be the lax monoidal structure on $g$ given by
  Proposition~\ref{propn:rightadjlaxmon}. Then the functors
  \[ F_{*} : \AlgCatV \rightleftarrows \AlgCat(\mathcal{W})
  : G_{*} \] are adjoint.
\end{lemma}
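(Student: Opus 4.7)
The plan is to reduce this to an adjunction that has already been constructed, namely the one from Proposition~\ref{propn:rightadjlaxmonalgfib}, which gives an adjunction
\[ F_{*} : \Alg(\mathcal{V}) \rightleftarrows \Alg(\mathcal{W}) : G_{*} \]
over $\OpdIns$. Since $\AlgCatV$ and $\AlgCat(\mathcal{W})$ are defined as the pullbacks of $\Alg(\mathcal{V}) \to \OpdIns$ and $\Alg(\mathcal{W}) \to \OpdIns$ along the functor $L_{\txt{gen}}\simp^{\op}_{(\blank)} \colon \mathcal{S} \to \OpdIns$, the functors $F_{*}$ and $G_{*}$ on $\AlgCat$ are precisely the pullbacks of the corresponding functors on $\Alg$ along this base change. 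I therefore need to argue that the adjunction pulls back.

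First I would recall the general principle: given a pullback square of \icats{}
\[
\csquare{\mathcal{A}'}{\mathcal{A}}{\mathcal{B}'}{\mathcal{B}}{\bar{p}}{}{}{p}
\]
and a pair of functors $L, R \colon \mathcal{A} \rightleftarrows \mathcal{A}$ that commute with $p$ together with unit and counit transformations that project to identities in $\mathcal{B}$ and satisfy the triangle identities, the whole adjunction datum pulls back along any $\mathcal{B}' \to \mathcal{B}$ to give an adjunction $L' \dashv R'$ on the pullbacks. This is the content of the assertion that the projection $\mathrm{Fun}(\mathcal{C}, \mathcal{A})_{/\mathcal{B}} \to \mathrm{Fun}(\mathcal{C}, \mathcal{B})$ respects adjoints, and follows formally since the data of an adjunction (unit, counit, and the two 2-simplices witnessing the triangle identities) living over $\mathcal{B}$ can be pulled back objectwise to the fiber product.

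Next I would apply this principle to our situation. The adjunction from Proposition~\ref{propn:rightadjlaxmonalgfib} is stated to be ``over $\OpdIns$'', which means exactly that $F_{*}$ and $G_{*}$ commute with the projection to $\OpdIns$ and that the unit $\id \to G_{*}F_{*}$ and counit $F_{*}G_{*} \to \id$ are ``vertical'', i.e.\ they are natural transformations of functors over $\OpdIns$ whose image in $\OpdIns$ is the identity. Pulling back along $L_{\txt{gen}}\simp^{\op}_{(\blank)} \colon \mathcal{S} \to \OpdIns$, we therefore obtain functors $F_{*}, G_{*}$ on $\AlgCatV, \AlgCat(\mathcal{W})$ together with unit and counit transformations satisfying the triangle identities, which is exactly the data of an adjunction.

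The main obstacle, and really the only thing to check, is that the adjunction of Proposition~\ref{propn:rightadjlaxmonalgfib} is genuinely relative in the strong sense above; this is guaranteed by the construction via \cite[Proposition 7.3.2.6]{HA}, which produces a relative adjunction in Lurie's precise sense, and relative adjunctions are stable under base change (see \cite[Proposition 7.3.2.5]{HA} or a direct verification using that the criterion for being a relative left adjoint is fiberwise). Once this formal base-change step is in place, the lemma follows immediately without any further calculation.
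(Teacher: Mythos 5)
Your proof is correct, but it takes a different route from the paper's. You reduce everything to the relative adjunction of Proposition~\ref{propn:rightadjlaxmonalgfib} on the algebra fibrations over $\OpdIns$ and then invoke stability of relative adjunctions under base change along $L_{\txt{gen}}\simp^{\op}_{(\blank)} \colon \mathcal{S} \to \OpdIns$ (correctly cited as \cite[Proposition 7.3.2.5]{HA}); since a relative adjunction is in particular an absolute one, the lemma follows. The paper instead argues directly: it forms the commutative triangle of mapping spaces over $\Map(S,T)$, passes to fibres over each map $\phi$ of object spaces using that the projections to $\mathcal{S}$ are Cartesian fibrations, and identifies the induced map on fibres with the comparison map $\Map_{\Alg_{\simp^{\op}_{S}}(\mathcal{V})}(\mathcal{C}, G_{*}\phi^{*}\mathcal{D}) \to \Map_{\Alg_{\simp^{\op}_{S}}(\mathcal{W})}(F_{*}\mathcal{C}, \phi^{*}\mathcal{D})$, which is an equivalence by the fibrewise adjunction of Proposition~\ref{propn:rightadjlaxmon}. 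The two arguments rest on the same fibrewise input; your version packages the base-change step once and for all via the relative-adjunction formalism (and is arguably cleaner for that reason), while the paper's hands-on fibrewise check is essentially a direct re-proof of that base-change stability in this special case. The only point worth making explicit in your write-up is that the functors $F_{*}, G_{*}$ on $\AlgCat$ really are the pullbacks of the corresponding functors on $\Alg$ — this holds because the functoriality of $\AlgCat(\blank)$ is itself defined by pulling back the fibration $\Alg_{\txt{co}}$ — and that the $G$ of Proposition~\ref{propn:rightadjlaxmonalgfib} is the same lax monoidal functor as the one in the statement, which is clear from its proof.
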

\begin{proof}
  Let $\mathcal{C}$ be a $\mathcal{V}$-\icat{} with space of objects
  $S$, and let $\mathcal{D}$ be a $\mathcal{W}$-\icat{} with space of
  objects $T$. We must show that the natural map $\Map(\mathcal{C},
  G_{*}\mathcal{D}) \to \Map(F_{*}\mathcal{C}, \mathcal{D})$ is an
  equivalence. We have a commutative triangle of spaces
  \nolabelopctriangle{\Map(\mathcal{C},
  G_{*}\mathcal{D})}{\Map(F_{*}\mathcal{C}, \mathcal{D})}{\Map(X, Y)}
  so it suffices to show that we have an equivalence on the fibres
  over each $\phi \colon X \to Y$. But we can identify the map on this
  fibre with
  \[
  \Map_{\Alg_{\simp^{\op}_{S}}(\mathcal{V})}(\mathcal{C},
  G_{*}\phi^{*}\mathcal{D}) \to
  \Map_{\Alg_{\simp^{\op}_{S}}(\mathcal{W})}(F_{*}\mathcal{C},
  \phi^{*}\mathcal{D}), \] which is an equivalence since $F_{*}$ and
  $G_{*}$ are adjoint functors on $\simp^{\op}_{S}$-algebras by
  Proposition~\ref{propn:rightadjlaxmon}.
\end{proof}

\begin{ex}\label{ex:TisTensorIV}
  Suppose $\mathcal{V}$ is a presentably monoidal \icat{}. Then
  there is a unique colimit-preserving functor $t \colon \mathcal{S}
  \to \mathcal{V}$ that sends $*$ to $I_{\mathcal{V}}$. This has right
  adjoint $u \colon \mathcal{V} \to \mathcal{S}$ given by
  $\Map_{\mathcal{V}}(I_{\mathcal{V}}, \blank)$. Using the monoidal
  structure on $\MonPr$ we get a monoidal functor
  \[ T \colon \mathcal{S}^{\times} \simeq \mathcal{S}^{\times}
  \times_{\simp^{\op}} \simp^{\op} \xto{\id \times_{\simp^{\op}}
      I_{\mathcal{V}}} \mathcal{S}^{\times} \times_{\simp^{\op}}
    \mathcal{V}^{\otimes} \to \mathcal{S}^{\times} \otimes
    \mathcal{V}^{\otimes} \isoto \mathcal{V}^{\otimes} \]
  extending $t$. By Proposition~\ref{propn:rightadjlaxmon} there is a
  lax monoidal functor $U \colon \mathcal{V}^{\otimes} \to
  \mathcal{S}^{\times}$ extending $u$ such that for any \nsiopd{}
  $\mathcal{O}$ we have an adjunction
  \[ T_{*} : \Alg_{\mathcal{O}}(\mathcal{S})
  \rightleftarrows
  \Alg_{\mathcal{O}}(\mathcal{V}) : U_{*}. \]
  Then by Lemma~\ref{lem:StrMonAdjAlgCat} we
  have an adjunction
  \[ T_{*} : \AlgCat(\mathcal{S}) \rightleftarrows
  \AlgCat(\mathcal{V}) : U_{*}. \] Unravelling the
  definitions, it is clear that we can identify the functor $T_{*}$
  with the operation of tensoring with the unit $I_{\mathcal{V}} \in
  \AlgCat(\mathcal{V})$.
\end{ex}

Our final goal in this subsection is to show that
$\AlgCat(\mathcal{V})$ behaves very nicely with respect to monoidal
localizations of $\mathcal{V}$. First we must introduce some notation:
\begin{defn}\label{defn:SigmaVCat}
  Let $\mathcal{V}$ be a presentably monoidal \icat{}. The
  functor \[\Fun(\{0,1\}\times\{0,1\}, \mathcal{V}) \to \mathcal{V}\]
  given by evaluation at $(0,1)$ clearly has a left adjoint given by
  sending $V \in \mathcal{V}$ to the functor $\{0,1\}\times\{0,1\} \to
  \mathcal{V}$ that takes $(0,1)$ to $V$ and the other elements to
  $\emptyset$. Let $\Sigma \colon \mathcal{V} \to
  \Alg_{\simp^{\op}_{\{0,1\}}}(\mathcal{V})$ denote the
  composite of this with the free algebra functor $\tau_{!} \colon
  \Fun(\{0,1\}\times\{0,1\}, \mathcal{V}) \to
  \Alg_{\simp^{\op}_{\{0,1\}}}(\mathcal{V})$. Thus for any
  categorical algebra $\mathcal{C}$ in $\mathcal{V}$ with space of
  objects $\{0,1\}$ we have 
  \[\Map_{\Alg_{\simp^{\op}_{\{0,1\}}}(\mathcal{V})}(\Sigma V,
  \mathcal{C}) \simeq \Map_{\mathcal{V}}(V, \mathcal{C}(0,1)).\] We
  also write $\Sigma$ for the functor $\mathcal{V} \to \AlgCatV$
  obtained by composing this with the inclusion of the fibre at
  $\{0,1\}$. Thus for any $\mathcal{V}$-\icat{} $\mathcal{C}$ with
  space of objects $S$ the fibre of
  \[ \Map(\Sigma V, \mathcal{C}) \to \Map(\{0,1\}, S) \simeq S \times
  S\] at $(X,Y)$ is $\Map_{\mathcal{V}}(V, \mathcal{C}(X,Y))$.
\end{defn}

\begin{propn}\label{propn:monlocalgcat}
  Let $\mathcal{V}$ be a presentably monoidal \icat{} and
  suppose $L \colon \mathcal{V} \to \mathcal{W}$ is a monoidal
  accessible localization with fully faithful right adjoint $i \colon
  \mathcal{W} \hookrightarrow \mathcal{V}$. Let 
  $i^{\otimes} \colon \mathcal{W}^{\otimes} \hookrightarrow
  \mathcal{V}^{\otimes}$ and $L^{\otimes} \colon
  \mathcal{V}^{\otimes} \to \mathcal{W}^{\otimes}$ be as in
  Proposition~\ref{propn:moncomploc}. Suppose $L$ exhibits
  $\mathcal{W}$ as the localization of $\mathcal{V}$ with respect to a
  set of morphisms $S$. Then there is an adjunction
  \[ L^{\otimes}_{*} : \AlgCat(\mathcal{V}) \rightleftarrows
  \AlgCat(\mathcal{W}) : i^{\otimes}_{*} \] which exhibits $\AlgCat(\mathcal{W})$ as the
  localization of $\AlgCat(\mathcal{V})$ with respect to
  $\Sigma(S)$. Moreover, if $\mathcal{V}$ is at least
  $\mathbb{E}_{2}$-monoidal then this localization is again monoidal.
\end{propn}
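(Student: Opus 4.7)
The plan is to first build the adjunction from existing machinery, then identify the local objects via the universal property of the suspension functor $\Sigma$, and finally address the monoidality.

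For the adjunction, I would apply Lemma~\ref{lem:StrMonAdjAlgCat} to the monoidal functor $L^{\otimes}$ from Proposition~\ref{propn:moncomploc}, which immediately yields the adjoint pair $L^{\otimes}_{*} \dashv i^{\otimes}_{*}$ on categorical algebras. Full faithfulness of $i^{\otimes}_{*}$ then reduces to the fibrewise statement that $\Alg_{\simp^{\op}_{X}}(\mathcal{W}) \to \Alg_{\simp^{\op}_{X}}(\mathcal{V})$ is fully faithful for each space $X$, which is part of Corollary~\ref{cor:monlocadjalg}. (I use here that $i^{\otimes}_{*}$ preserves spaces of objects---because $i^{\otimes}$ restricts to the identity on fibres over $[0]$---so a mapping space in $\AlgCat$ decomposes as a disjoint union, indexed by maps on spaces of objects, of the fibrewise algebra mapping spaces.) By the construction of $\mathcal{W}^{\otimes}$ in Proposition~\ref{propn:moncomploc}, the essential image of $i^{\otimes}_{*}$ consists precisely of those $\mathcal{C} \in \AlgCat(\mathcal{V})$ whose mapping objects $\mathcal{C}(X,Y)$ all lie in $\mathcal{W}$.

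Next, I would identify the $\Sigma(S)$-local objects with this essential image. From Definition~\ref{defn:SigmaVCat}, for any $\mathcal{V}$-\icat{} $\mathcal{C}$ with space of objects $X$, the projection $\Map(\Sigma V, \mathcal{C}) \to X \times X$ has fibre $\Map_{\mathcal{V}}(V, \mathcal{C}(A,B))$ over $(A,B)$. Hence for $f \colon V \to V'$ in $S$, the algebra $\mathcal{C}$ is $\Sigma(f)$-local \IFF{} each $\mathcal{C}(A,B)$ is $f$-local in $\mathcal{V}$, and $\mathcal{C}$ is $\Sigma(S)$-local \IFF{} every mapping object $\mathcal{C}(A,B)$ lies in $\mathcal{W}$---matching the essential image above. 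This identifies $\AlgCat(\mathcal{W})$ as the $\Sigma(S)$-localization of $\AlgCat(\mathcal{V})$.

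For the monoidality assertion, assume $\mathcal{V}$ is $\mathbb{E}_{2}$-monoidal, so that $\AlgCat(\mathcal{V})$ inherits a presentably $\mathbb{E}_{1}$-monoidal structure via Corollary~\ref{cor:AlgCatEnMon} and Proposition~\ref{propn:AlgCatPresFtr}. I would argue that the monoidal accessible localization $L$ automatically upgrades to an $\mathbb{E}_{2}$-monoidal localization: using Proposition~\ref{propn:nssymmeq} to view the $\mathbb{E}_{2}$-structure as an $\mathbb{E}_{1}$-algebra structure on $\mathcal{V}^{\otimes}$ in $\MonI$, one applies the appropriate version of Proposition~\ref{propn:moncomploc} internally in $\MonI$ to see that $\mathcal{W}^{\otimes}$ inherits a compatible $\mathbb{E}_{2}$-structure and $L^{\otimes}$ becomes $\mathbb{E}_{2}$-monoidal. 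Functoriality of $\AlgCat(\blank)$ with respect to such functors (producing $\mathbb{E}_{1}$-monoidal functors via Corollary~\ref{cor:AlgCatEnMon}) then yields the desired monoidal structure on the localization. The main technical obstacle will be the upgrade of $L$ to $\mathbb{E}_{2}$-monoidal: this amounts to verifying that $S$-local equivalences are closed under the outer tensor product, which one would reduce to standard arguments that tensoring with any fixed object in a presentably monoidal \icat{} preserves the strongly saturated class generated by $S$, combined with the braiding information supplied by the $\mathbb{E}_{2}$-coherence on $\mathcal{V}^{\otimes}$.
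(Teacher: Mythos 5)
Your proposal is correct, and for the bulk of the statement it follows the paper's own route: the adjunction comes from Lemma~\ref{lem:StrMonAdjAlgCat} applied to $L^{\otimes}$ (the paper additionally invokes Lemma~\ref{lem:monlocadj} to check that the lax monoidal structure on $i$ produced by Proposition~\ref{propn:rightadjlaxmon} really is $i^{\otimes}$, a point worth making explicit); full faithfulness of $i^{\otimes}_{*}$ is reduced to the fibrewise statement, which you get from Corollary~\ref{cor:monlocadjalg} and the paper gets by checking the counit on mapping objects via Lemma~\ref{lem:AlgCons} --- these are the same argument in different packaging; and the identification of the $\Sigma(S)$-local objects via the fibration $\Map(\Sigma V,\mathcal{C}) \to X \times X$ is exactly the paper's. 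The one place you genuinely diverge is the monoidality claim. The paper disposes of it in one sentence: the tensor product on $\AlgCat(\mathcal{V})$ is $\mu_{*}(\blank \boxtimes \blank)$, so it acts on mapping objects by the tensor product of $\mathcal{V}$, and since a $\Sigma(S)$-local equivalence is precisely a map inducing an equivalence on spaces of objects and $L$-equivalences on mapping objects, the hypothesis that $L$ is a monoidal localization immediately gives closure of local equivalences under $\otimes$. Your route --- upgrading $L$ to an $\mathbb{E}_{2}$-monoidal localization and then invoking functoriality of $\AlgCat(\blank)$ --- also works (the compatibility condition for an $\mathbb{E}_{2}$-structure reduces to the same binary condition), but note that the ``main technical obstacle'' you identify, namely that $S$-local equivalences are closed under the tensor product in $\mathcal{V}$, is not something to be verified: it is literally the definition of $L$ being a monoidal localization, which is a standing hypothesis of the proposition. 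No braiding or $\mathbb{E}_{2}$-coherence input is needed for this step; the extra $\mathbb{E}_{2}$-structure is only used to make $\AlgCat(\mathcal{V})$ monoidal in the first place (Corollary~\ref{cor:AlgCatEnMon}).
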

\begin{proof}
  It follows from Lemma~\ref{lem:monlocadj} that the lax monoidal
  structure on $i$ provided by Proposition~\ref{propn:rightadjlaxmon}
  is given by $i^{\otimes}$, so by Lemma~\ref{lem:StrMonAdjAlgCat} we
  indeed have an adjunction $L^{\otimes}_{*} \dashv i^{\otimes}_{*}$. 

  To see that this is a localization we must show that
  $i^{\otimes}_{*}$ is fully faithful. To see this it suffices to
  prove that for every categorical algebra $\mathcal{C} \in \AlgCatV$
  with space of objects $X$ the counit
  $L^{\otimes}_{*}i^{\otimes}_{*}\mathcal{C} \to \mathcal{C}$ is an equivalence
  in $\Alg_{\simp^{\op}_{X}}(\mathcal{V})$. By
  Lemma~\ref{lem:AlgCons} this is equivalent to the induced morphism
  of underlying graphs being an equivalence, i.e. to
  $Li\mathcal{C}(C,D) \to \mathcal{C}(C,D)$ being an equivalence in
  $\mathcal{V}$ for all $C,D \in \mathcal{C}$. But this is true since
  $i$ is fully faithful.

  Next we must show that $\mathcal{C} \in
  \AlgCat(\mathcal{V})$ lies in
  $\AlgCat(\mathcal{W})$ \IFF{} it is local with respect to
  the morphisms in $\Sigma(S)$. Consider a map $f \colon A \to B$ in
  $\mathcal{V}$. Then the induced map 
  \[ \Map_{\AlgCatV}(\Sigma B, \mathcal{C}) \to \Map_{\AlgCatV}(\Sigma
  A, \mathcal{C}) \]
  is an equivalence in $\mathcal{S}$
  \IFF{} it induces an equivalence on the fibres over all points of
  $\Map_{\mathcal{S}}(S^{0}, X)$. Using the universal property of
  $\Sigma$ we conclude that it is an equivalence
  \IFF{} for all objects $C, D \in \mathcal{C}$ the induced map 
  \[ \Map_{\mathcal{V}}(B, \mathcal{C}(C,D)) \to \Map_{\mathcal{V}}(A,
  \mathcal{C}(C,D) \] 
  is an equivalence. Thus $\mathcal{C}$ is local with respect to the
  maps in $\Sigma(S)$ \IFF{} all the mapping objects
  $\mathcal{C}(C,D)$ are local with respect to the maps in $S$,
  i.e. \IFF{} these all lie in $\mathcal{W}$. Thus
  $\AlgCat(\mathcal{W})$ is indeed the localization of
  $\AlgCat(\mathcal{V})$ with respect to $\Sigma(S)$.

  Finally, it is clear from the construction of the monoidal structure
  on $\AlgCat(\mathcal{V})$ that the localization will again
  be monoidal when this exists.
\end{proof}

\subsection{Categorical Algebras in Spaces}\label{subsec:catalgspace}
In this subsection we will prove that the \icat{}
$\AlgCat(\mathcal{S})$ of categorical algebras in spaces is
equivalent to the \icat{} $\SegI$ of Segal spaces. These are an
alternative definition of $(\infty,1)$-categories introduced by
Rezk~\cite{RezkCSS}. We begin by briefly reviewing the definition in
the \icatl{} context:

\begin{defn}
  Suppose $\mathcal{C}$ is an \icat{} with finite limits. A
  \defterm{category object} in $\mathcal{C}$ is a simplicial object $F
  \colon \simp^{\op}\to \mathcal{C}$ such that for each $n$ the map 
  \[ F_{n} \to F_{1} \times_{F_{0}} \cdots \times_{F_{0}} F_{1} \]
  induced by the inclusions $\{i,i+1\} \hookrightarrow [n]$ and $\{i\}
  \hookrightarrow [n]$ is an equivalence.  A \defterm{Segal space} is
  a category object in the \icat{} $\mathcal{S}$ of spaces.
\end{defn}
Let $\delta_{n}$ denote the simplicial space obtained from the
simplicial set $\Delta^{n}$ by composing with the inclusion
$\Set \hookrightarrow \mathcal{S}$. A simplicial space is then a Segal
space \IFF{} it is local with respect to the map
\[ \txt{seg}_{n} \colon \delta_{n} \to \delta_{1} \amalg_{\delta_{0}} \cdots
\amalg_{\delta_{0}} \delta_{1}. \]
\begin{defn}
  Let $\Seg(\mathcal{S})$ denote the full subcategory of
  $\Fun(\simp^{\op}, \mathcal{S})$ spanned by the Segal spaces; this
  is the localization of $\Fun(\simp^{\op}, \mathcal{S})$ with respect
  to the maps $\txt{seg}_{*}$.
\end{defn}

The key result for the comparison is the following:
\begin{propn}\label{propn:KanExtSegalSp}
  Let $S$ be a space, and let $\pi \colon \simp^{\op}_{S} \to
  \simp^{\op}$ be the usual projection. Let $\pi_{!} \colon
  \Fun(\simp^{\op}_{S}, \mathcal{S}) \to \Fun(\simp^{\op},
  \mathcal{S})$ be the functor given by left Kan extension along
  $\pi$. Then a functor $F \colon \simp^{\op}_{S}\to \mathcal{S}$ is a
  $\simp^{\op}_{S}$-monoid \IFF{} $\pi_{!}F$ is a Segal space.
\end{propn}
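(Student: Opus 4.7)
The plan is to identify $\pi_{!}F$ as a concrete simplicial space and then to match the two conditions fiberwise. Since $\pi\colon\simp^{\op}_{S}\to\simp^{\op}$ is a coCartesian (in fact left) fibration, for each $[n]$ the inclusion of the fiber $S^{n+1}\hookrightarrow (\simp^{\op}_{S})_{/[n]}$ admits a left adjoint given by the coCartesian pushforward and is therefore cofinal; the pointwise formula for left Kan extension then gives $(\pi_{!}F)([n])\simeq\colim_{S^{n+1}}F|_{S^{n+1}}$, which under un/straightening is the total space $\tilde{F}_{n}$ of the left fibration classified by $F|_{S^{n+1}}$. Equivalently, under the slice equivalence $\Fun(\simp^{\op}_{S},\mathcal{S})\simeq\Fun(\simp^{\op},\mathcal{S})_{/i_{*}S}$ (where $(i_{*}S)_{\bullet}=S^{\bullet+1}$ classifies the left fibration $\pi$), the functor $F$ corresponds to $\tilde{F}=\pi_{!}F$ together with a structure map $\tilde{F}\to i_{*}S$.

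Applying the same cofinality argument to the comma category defining the right Kan extension along $(\simp^{\op}_{S})_{[1]}\hookrightarrow(\simp^{\op}_{S})_{\triv}$, the limit reduces to a finite product indexed by the inert maps $\rho_{i}\colon[n]\to[1]$ in $\simp^{\op}$, whose coCartesian lifts send $\vec{x}=(x_{0},\ldots,x_{n})$ to $(x_{i-1},x_{i})$. Hence the $\simp^{\op}_{S}$-monoid condition on $F$ unpacks to the requirements $F|_{S}\simeq \ast$ (from the empty limit at $[0]$, since there are no inert morphisms $[0]\to[1]$ in $\simp^{\op}$) together with the Segal-type equivalences $F(\vec{x})\simeq\prod_{i=1}^{n}F(x_{i-1},x_{i})$ for each $\vec{x}\in S^{n+1}$.

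To match this with the Segal condition on $\tilde{F}$, I use that $i_{*}S$ itself is a Segal space with Segal maps being literal equalities $S^{n+1}=S^{2}\times_{S}\cdots\times_{S}S^{2}$, which allows one to analyze the Segal comparison $\tilde{F}_{n}\to\tilde{F}_{1}\times_{\tilde{F}_{0}}\cdots\times_{\tilde{F}_{0}}\tilde{F}_{1}$ fiberwise over $S^{n+1}$: the fiber at $\vec{x}$ becomes the map $F(\vec{x})\to F(x_{0},x_{1})\times_{F|_{S}(x_{1})}\cdots\times_{F|_{S}(x_{n-1})}F(x_{n-1},x_{n})$. Under the monoid condition $F|_{S}\simeq \ast$ these iterated fiber products collapse to ordinary products, and this becomes the Segal-type equivalence from the previous paragraph, so monoid implies Segal; conversely, the same fiberwise analysis together with the compatibility $\tilde{F}_{0}\simeq S$ encoded in the structure map $\tilde{F}\to i_{*}S$ recovers the product decomposition. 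The main technical obstacle is the careful identification of the fibers of the iterated fiber product, which requires tracking the two edge maps $\tilde{F}_{1}\to\tilde{F}_{0}$ (induced by $d_{0}, d_{1}$) through the straightening equivalence and verifying that they cover the two projections $S^{2}\to S$; modulo this, everything reduces to two instances of fiber-inclusion cofinality for coCartesian fibrations.
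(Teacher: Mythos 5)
Your argument is essentially the paper's proof: the paper likewise computes $\pi_{!}F([n])$ as $\colim_{\xi\in S^{\times(n+1)}}F(\xi)$ and reduces the whole statement to a fibrewise comparison over $S^{\times(n+1)}$, invoking descent in $\mathcal{S}$ (\cite[Theorem 6.1.3.9]{HTT}) exactly where you invoke straightening/unstraightening of left fibrations --- over $\mathcal{S}$ these are the same tool, so the two write-ups differ only in phrasing. The one point worth flagging is your converse direction: the assertion that $\tilde{F}_{0}\simeq S$ is ``encoded in the structure map'' is not automatic, since $\tilde{F}_{0}\to S$ is the left fibration classified by $F|_{S}$ and is an equivalence precisely when $F|_{S}\simeq *$, which is itself part of the monoid condition being established. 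The paper's proof makes the same tacit assumption (its opening line asserts $\pi_{!}F([0])\simeq S$, and later writes $*\simeq F(x)$), so your proposal is faithful to the paper's level of rigour here; but be aware that neither argument literally extracts $F|_{S}\simeq *$ from the Segal hypothesis alone, and the paper only ever uses the statement in the form recorded in Corollary~\ref{cor:DopSMndEq}, where the condition $Y_{0}\simeq S$ is imposed separately.
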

\begin{proof}
  It is clear that $\pi_{!}F([0])$ is equivalent to $S$. We must thus
  show that the Segal morphism
  \[ \pi_{!}F([n]) \to \pi_{!}F([1]) \times_{S} \cdots \times_{S}
  \pi_{!}F([1]) =: (\pi_{!}F)^{\Seg}_{[n]} \] is an equivalence \IFF{}
  $F$ is a $\simp^{\op}_{S}$-monoid. Since $\pi$ is a coCartesian
  fibration, we have an equivalence $\pi_{!}F([n]) \simeq \colim_{\xi
    \in S^{\times (n+1)}} F(\xi)$. It thus suffices to show that
  $(\pi_{!}F)^{\Seg}_{[n]}$ is also a colimit of this diagram \IFF{}
  $F$ is a $\simp^{\op}_{S}$-monoid. There is a natural transformation
  $(S^{\times (n+1)})^{\triangleright} \to \Fun(\Delta^{1},
  \mathcal{S})$ that sends $\xi \in S^{\times (n+1)}$ to $F(\xi) \to
  \xi$ and $\infty$ to $(\pi_{!}F)^{\Seg}_{[n]} \to S^{\times (n+1)}$;
  since $\mathcal{S}$ is an $\infty$-topos, by \cite[Theorem
  6.1.3.9]{HTT} the colimit is $(\pi_{!}F)^{\Seg}_{[n]}$ \IFF{} this
  natural transformation is Cartesian. Since $S^{\times (n+1)}$ is a
  space, this is equivalent to the square
  \nolabelsmallcsquare{F(\xi)}{(\pi_{!}F)^{\Seg}_{[n]}}{\xi}{S^{\times
      (n+1)}} being a pullback for all $\xi$, so we are reduced to
  showing that the fibre of $(\pi_{!}F)^{\Seg}_{[n]} \to S^{\times
    (n+1)}$ at $\xi$ is $F(\xi)$ \IFF{} $F$ is a
  $\simp^{\op}_{S}$-monoid. Since limits commute, if $\xi$ is $(s_{0},
  \ldots, s_{n})$ this fibre is the iterated fibre product
  \[ (\pi_{!}F[1])_{(s_{0},s_{1})} \times_{(\pi_{!}F[0])_{(s_{1})}}
  \cdots \times_{(\pi_{!}F[0])_{(s_{n-1})}}
  (\pi_{!}F[1])_{(s_{n-1},s_{n})}.\]
  But using \cite[Theorem 6.1.3.9]{HTT} again it is clear that the
  natural maps $F(x,y) \to (\pi_{!}F[1])_{(x,y)}$ and $* \simeq F(x)
  \to (\pi_{!}F)_{(x)}$ are equivalences for all $x,y \in S$. Thus the
  map $F(\xi) \to (\pi_{!}F)^{\Seg}_{[n],\xi}$ is equivalent to the
  natural map
  \[ F(\xi) \to F(s_{0},s_{1}) \times \cdots \times F(s_{n-1},
  s_{n}).\] By definition this is an equivalence for all $\xi \in
  \simp^{\op}_{S}$ \IFF{} $F$ is a $\simp^{\op}_{S}$-monoid, which
  completes the proof.
\end{proof}

\begin{cor}\label{cor:DopSMndEq} 
  Let $S$ be a space, and let $\pi \colon \simp^{\op}_{S} \to
  \simp^{\op}$ denote the canonical projection. By
  \cite[Corollary 8.6]{freepres} the functor
  \[ \pi_{!} \colon \Fun(\simp^{\op}_{S}, \mathcal{S}) \to
  \Fun(\simp^{\op}, \mathcal{S})_{/i_{*}S} \] given by left Kan
  extension is an equivalence. Under this equivalence, the full
  subcategory $\Mon_{\simp^{\op}_{S}}(\mathcal{S})$ of
  $\simp^{\op}_{S}$-monoids corresponds to the full subcategory of
  $\Fun(\simp^{\op}, \mathcal{S})_{/i_{*}S}$ spanned by the Segal
  spaces $Y_{\bullet}$ such that $Y_{0}\simeq S$ and the map
  $Y_{\bullet} \to i_{*}S$ is given by the adjunction unit
  $Y_{\bullet} \to i_{*}i^{*}Y_{\bullet} \simeq i_{*}S$. 
\end{cor}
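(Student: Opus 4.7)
The plan is in two steps. The first assertion---that $\pi_!$ gives an equivalence onto $\Fun(\simp^{\op}, \mathcal{S})_{/i_*S}$---I invoke directly from \cite[Corollary 8.6]{freepres}. What remains is to identify the essential image of $\Mon_{\simp^{\op}_S}(\mathcal{S})$ under $\pi_!$. The main input is Proposition~\ref{propn:KanExtSegalSp}: $F$ is a $\simp^{\op}_S$-monoid if and only if $\pi_! F$ is a Segal space. Hence the image lies in the Segal spaces over $i_*S$, and I need to verify that any $Y_\bullet = \pi_!F$ arising from a monoid automatically satisfies $Y_0 \simeq S$ and has structure map equal to the unit; conversely, any Segal $Y_\bullet$ over $i_*S$ corresponds under the equivalence to a unique $F$, which is a monoid by Proposition~\ref{propn:KanExtSegalSp}. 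The three conditions in the statement therefore make the image ``intrinsically'' visible inside $\Fun(\simp^{\op}, \mathcal{S})_{/i_*S}$.

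Next I will check $(\pi_!F)_0 \simeq S$: the $\simp^{\op}_S$-monoid axiom at $([0],s)$ demands that $F|_{(\simp^{\op}_S)_{\triv}}$ be a right Kan extension of $F|_{(\simp^{\op}_S)_{[1]}}$. Since no inert morphisms $[0] \to [1]$ exist in $\simp^{\op}$, the indexing overcategory of $([0],s)$ is empty, forcing $F([0],s) \simeq *$. Passing to the colimit gives $(\pi_!F)_0 = \colim_{s\in S} F([0],s) \simeq S$, with the canonical equivalence sending the summand indexed by $s$ to the point $s \in S$.

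For the structure map, at each level $[n]$ both $\pi_!F\to i_*S$ and the unit $\pi_!F \to i_*i^*\pi_!F$ give natural maps $(\pi_!F)_n \to S^{n+1}$, and I will compare them directly. The unit is the product of the $n+1$ vertex maps $(\pi_!F)_n \to (\pi_!F)_0 \simeq S$ induced by the inert morphisms $[n] \to [0]$ in $\simp^{\op}$ picking out each vertex; unwinding $(\pi_!F)_n \simeq \colim_{\xi \in S^{n+1}}F([n],\xi)$, the $i$-th vertex map sends the summand at $\xi = (s_0,\ldots,s_n)$ to $s_i$, so the unit sends this summand to $\xi \in S^{n+1}$. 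To identify the structure map, I will trace through the equivalence of \cite[Corollary 8.6]{freepres}: the left fibration over $\simp^{\op}$ classified by $\pi_!F$ is the composite of $F$ (viewed as a left fibration $\mathcal{F} \to \simp^{\op}_S$) with $\pi$, and the natural map $\mathcal{F} \to \simp^{\op}_S$ over $\simp^{\op}$ straightens to the structure map; on fibers over $[n]$ it sends the $\xi$-component of $\mathcal{F}_{[n]}$ to $\xi \in S^{n+1}$, which matches the unit.

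The hard part will be this last identification of the structure map, as it requires chasing the straightening--unstraightening correspondence over $\simp^{\op}$ and keeping track of the slice data supplied by the equivalence. Everything else is routine bookkeeping around Proposition~\ref{propn:KanExtSegalSp} and the monoid axioms at $[0]$.
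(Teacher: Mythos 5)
Your proposal is correct and follows essentially the same route as the paper: the paper's proof simply asserts that $\pi_{!}$ carries monoids into the subcategory of simplicial spaces with $Y_{0}\simeq S$ and unit structure map, and then cites Proposition~\ref{propn:KanExtSegalSp} for the Segal condition. Your explicit verification that $F([0],s)\simeq *$ (via the empty comma category of inert maps $[0]\to[1]$) and your identification of the structure map with the adjunction unit are exactly the details the paper leaves as ``clear.''
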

\begin{proof}
  It is clear that $\pi_{!}$ takes
  $\Mon_{\simp^{\op}_{S}}(\mathcal{S})$ into the full
  subcategory of $\Fun(\simp^{\op}, \mathcal{S})_{/i_{*}S}$ spanned by
  simplicial spaces $Y_{\bullet}$ with $Y_{0} \simeq S$ and the map
  $Y_{\bullet} \to i_{*}S$ given by the adjunction unit $Y_{\bullet}
  \to i_{*}i^{*}Y \simeq i_{*}S$. The result therefore follows by
  Proposition~\ref{propn:KanExtSegalSp}.
\end{proof}

\begin{cor}\label{cor:DopSMndEq2} 
  Let $S$ be a space, and let $\pi \colon \simp^{\op}_{S} \to
  \simp^{\op}$ denote the canonical projection. The functor $\pi_{!}
  \colon \Fun(\simp^{\op}_{S}, \mathcal{S}) \to \Fun(\simp^{\op},
  \mathcal{S})$ given by left Kan extension along $\pi$ gives an
  equivalence of the full subcategory
  $\Mon_{\simp^{\op}_{S}}(\mathcal{S})$ of $\simp^{\op}_{S}$-monoids
  with the subcategory $(\SegI)_{S}$ of Segal spaces with $0$th space
  $S$ and morphisms that are the identity on the $0$th space.
\end{cor}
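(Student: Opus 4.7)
The plan is to deduce this corollary immediately from Corollary~\ref{cor:DopSMndEq}, by reinterpreting the full subcategory of $\Fun(\simp^{\op},\mathcal{S})_{/i_{*}S}$ cut out there as the fibre $(\SegI)_{S}$. Concretely, Corollary~\ref{cor:DopSMndEq} already gives an equivalence of $\Mon_{\simp^{\op}_{S}}(\mathcal{S})$ with the full subcategory $\mathcal{A} \subset \Fun(\simp^{\op},\mathcal{S})_{/i_{*}S}$ on those pairs $(Y_{\bullet}, Y_{\bullet}\to i_{*}S)$ where $Y_{\bullet}$ is a Segal space, $Y_{0} \simeq S$, and the structure map to $i_{*}S$ is the adjunction unit for $i^{*} \dashv i_{*}$. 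It therefore suffices to produce a natural equivalence $\mathcal{A} \simeq (\SegI)_{S}$, where the latter is understood as the fibre of the functor $\txt{ev}_{[0]} \colon \SegI \to \mathcal{S}$ over $S$.

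The essential input is the adjunction-induced equivalence
\[
\Map_{\Fun(\simp^{\op},\mathcal{S})}(Y_{\bullet}, i_{*}S) \simeq \Map_{\mathcal{S}}(Y_{0}, S),
\]
under which the unit $Y_{\bullet} \to i_{*}i^{*}Y_{\bullet} \simeq i_{*}S$ corresponds to a chosen equivalence $Y_{0} \simeq S$. So objects of $\mathcal{A}$ correspond exactly to Segal spaces equipped with an equivalence $Y_{0}\simeq S$, matching the objects of $(\SegI)_{S}$. For the morphism spaces, the mapping space in $\mathcal{A}$ from $(Y,\eta_{Y})$ to $(Z,\eta_{Z})$ is by definition the fibre of
\[
\Map(Y_{\bullet}, Z_{\bullet}) \xrightarrow{\eta_{Z} \circ -} \Map(Y_{\bullet}, i_{*}S) \simeq \Map(Y_{0}, S)
\]
over $\eta_{Y,0}$; naturality of the unit identifies $\eta_{Z}\circ f$ with the composite $\eta_{Z,0} \circ f_{0}$ under the adjunction, so after using $\eta_{Y,0}$ and $\eta_{Z,0}$ to identify $Y_{0}$ and $Z_{0}$ with $S$ this is the space of $f \colon Y \to Z$ such that $f_{0}$ is homotopic to $\id_{S}$. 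This is precisely the mapping space in the fibre $(\SegI)_{S}$ of $\txt{ev}_{[0]} \colon \SegI \to \mathcal{S}$ over $S$.

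The main (minor) obstacle is bookkeeping: checking that the equivalence of Corollary~\ref{cor:DopSMndEq} is indeed implemented by $\pi_{!}$ with the structure map to $i_{*}S$ acting as the unit, and that the two ways of forming ``fibre over $S$'' (as a full subcategory of a slice, versus as an $\infty$-categorical pullback) agree. Both follow formally from the adjunction $i^{*} \dashv i_{*}$ and the fact that $i_{*}S$ corepresents $\Map(\blank_{0}, S)$ on $\Fun(\simp^{\op},\mathcal{S})$; no further computation is needed.
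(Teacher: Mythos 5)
Your proposal is correct and matches the paper's intent exactly: the paper states this corollary without proof as an immediate consequence of Corollary~\ref{cor:DopSMndEq}, and your argument simply makes explicit the identification, via the adjunction $i^{*} \dashv i_{*}$, of the full subcategory of $\Fun(\simp^{\op},\mathcal{S})_{/i_{*}S}$ appearing there with the fibre of $\txt{ev}_{[0]} \colon \SegI \to \mathcal{S}$ over $S$. The bookkeeping on objects and mapping spaces is exactly the omitted step, and it goes through as you describe.
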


\begin{lemma}\label{lem:adjtocart}
  Let $\mathcal{E}$ and $\mathcal{B}$ be \icats{} and $p \colon
  \mathcal{E} \to \mathcal{B}$ an inner fibration. Suppose
  \begin{enumerate}[(1)]
  \item $\mathcal{E}$ has finite limits and $p$ preserves these,
  \item $p$ has a right adjoint $r \colon \mathcal{B} \to
    \mathcal{E}$ such that $p \circ r \simeq \id_{\mathcal{B}}$.
  \end{enumerate}
  Then $p$ is a Cartesian fibration.
\end{lemma}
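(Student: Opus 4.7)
The plan is to build Cartesian lifts of morphisms in $\mathcal{B}$ by combining the adjunction $p \dashv r$ with the pullbacks in $\mathcal{E}$ provided by hypothesis~(1). Note first that the assumption $p \circ r \simeq \id_{\mathcal{B}}$ means the counit $\epsilon \colon pr \to \id_{\mathcal{B}}$ is an equivalence, which makes $r$ fully faithful; equivalently, by one of the triangle identities, the composite
\[
p \xto{p\eta} prp \xto{\epsilon p} p
\]
is the identity and $\epsilon p$ is invertible, so $p\eta_e \colon p(e) \to prp(e)$ is an equivalence for every $e \in \mathcal{E}$.

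Given an object $e \in \mathcal{E}$ and a morphism $f \colon b \to p(e)$ in $\mathcal{B}$, I will define the candidate Cartesian lift as follows. The unit $\eta_e \colon e \to rp(e)$ and the map $r(f) \colon r(b) \to rp(e)$ assemble into a cospan in $\mathcal{E}$, and I set $e' := e \times_{rp(e)} r(b)$, using hypothesis~(1) to form this pullback. Let $\tilde{f} \colon e' \to e$ be the first projection. Applying $p$ to the pullback diagram and using that $p$ preserves finite limits, I get $p(e') \simeq p(e) \times_{prp(e)} pr(b)$; since $p\eta_e$ is an equivalence, this pullback is equivalent to $pr(b) \simeq b$, and under this identification $p(\tilde f)$ is identified with $f$.

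It remains to verify that $\tilde f$ is $p$-Cartesian. For any $e'' \in \mathcal{E}$, I will compute $\Map_{\mathcal{E}}(e'', e')$ from the pullback defining $e'$:
\[
\Map_{\mathcal{E}}(e'', e') \simeq \Map_{\mathcal{E}}(e'', e) \times_{\Map_{\mathcal{E}}(e'', rp(e))} \Map_{\mathcal{E}}(e'', r(b)).
\]
The adjunction $p \dashv r$ identifies $\Map_{\mathcal{E}}(e'', rp(e)) \simeq \Map_{\mathcal{B}}(p(e''), p(e))$ and $\Map_{\mathcal{E}}(e'', r(b)) \simeq \Map_{\mathcal{B}}(p(e''), b) \simeq \Map_{\mathcal{B}}(p(e''), p(e'))$, compatibly with the maps into $\Map_{\mathcal{E}}(e'', rp(e))$ (the relevant compatibility is the naturality of the unit, which ensures the composite $\Map_{\mathcal{E}}(e'', e) \to \Map_{\mathcal{E}}(e'', rp(e)) \simeq \Map_{\mathcal{B}}(p(e''), p(e))$ is the map induced by $p$). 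Substituting these equivalences rewrites the pullback as
\[
\Map_{\mathcal{E}}(e'', e) \times_{\Map_{\mathcal{B}}(p(e''), p(e))} \Map_{\mathcal{B}}(p(e''), p(e')),
\]
which is precisely the mapping-space characterization of $p$-Cartesian edges (cf.~\cite[Proposition 2.4.4.3]{HTT}).

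The main point requiring care is the compatibility check in the last step: verifying that the two natural maps into $\Map_{\mathcal{E}}(e'', rp(e))$ really do correspond, under the adjunction, to composition with $p$ on one side and composition with $f$ on the other. This is where the triangle identity (and hence the hypothesis $pr \simeq \id_{\mathcal{B}}$) is used in an essential way; once this naturality is in hand, the rest of the argument is formal.
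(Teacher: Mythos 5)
Your proof is correct and follows essentially the same route as the paper: both form the candidate Cartesian lift as the pullback of $r(f)$ along the unit $e \to rp(e)$, use preservation of finite limits (plus $pr \simeq \id$) to identify its image under $p$ with $f$, and verify Cartesianness via the mapping-space pullback and the adjunction, citing \cite[Proposition 2.4.4.3]{HTT}. Your additional care about the triangle identity and the compatibility of the two maps into $\Map_{\mathcal{E}}(e'', rp(e))$ is a welcome elaboration of a step the paper leaves implicit, but it is not a different argument.
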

\begin{proof}
  Given $x \in \mathcal{E}$ and a morphism $f \colon b \to p(x)$, we
  must show there exists a Cartesian arrow in $\mathcal{E}$ lying over
  $f$ with target $x$. Define $\overline{f} \colon y \to x$ by the pullback
  diagram
  \csquare{y}{x}{r(b)}{rp(x).}{\overline{f}}{}{}{r(f)}
  Since $p$ preserves pullbacks, the morphism $p(\overline{f})$ is
  equivalent to $f$. Moreover, for any $z \in \mathcal{E}$ we have a
  pullback diagram
  \nolabelcsquare{\Map_{\mathcal{E}}(z, y)}{\Map_{\mathcal{E}}(z,
    x)}{\Map_{\mathcal{E}}(z, r(b))}{\Map_{\mathcal{E}}(z, rp(x)).}
  Under the adjunction this corresponds to the commutative diagram 
  \nolabelcsquare{\Map_{\mathcal{E}}(z, y)}{\Map_{\mathcal{E}}(z,
    x)}{\Map_{\mathcal{B}}(p(z), b)}{\Map_{\mathcal{E}}(p(z), p(x))}
  induced by the functor $p$. But then $\overline{f}$ is Cartesian by \cite[Proposition 2.4.4.3]{HTT}.
\end{proof}

\begin{thm}\label{thm:AlgCatSisSeg}
  There is an equivalence $\AlgCat(\mathcal{S}) \isoto
  \SegI$, given by sending a $\simp^{\op}_{S}$-algebra $\mathcal{C}$
  to the left Kan extension $\pi_{!}\mathcal{C}'$ of the
  composite 
  \[ \mathcal{C}' \colon \simp^{\op}_{S} \xto{\mathcal{C}}
  \mathcal{S}^{\times} \to \mathcal{S} \] along $\pi \colon
  \simp^{\op}_{S} \to \simp^{\op}$, where the second map (which sends
  $(S_{1},\ldots, S_{n}) \in \mathcal{S}^{\times}_{[n]}$ to $S_{1}
  \times \cdots \times S_{n}$) comes from a Cartesian structure in the
  sense of \cite[Definition 2.4.1.1]{HA}.
\end{thm}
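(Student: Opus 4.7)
The plan is to realize the stated assignment $\mathcal{C} \mapsto \pi_!\mathcal{C}'$ as an equivalence of Cartesian fibrations over $\mathcal{S}$. Both $p \colon \AlgCat(\mathcal{S}) \to \mathcal{S}$ (the space-of-objects projection) and $q \colon \SegI \to \mathcal{S}$ (evaluation at $[0]$) are Cartesian fibrations, and I will construct a functor $F \colon \AlgCat(\mathcal{S}) \to \SegI$ over $\mathcal{S}$, check that it is a fibrewise equivalence, and verify that it preserves Cartesian morphisms; these three properties together imply that $F$ is an equivalence of \icats{}.

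That $p$ is a Cartesian fibration follows from Lemma~\ref{lem:AlgCoCart} combined with the pullback description of $\AlgCat(\mathcal{S})$. For $q$, I would apply Lemma~\ref{lem:adjtocart}: the Segal condition is preserved by limits in $\Fun(\simp^{\op}, \mathcal{S})$, so $\SegI$ has (in particular finite) limits and $q$ preserves them; and $q$ admits a right adjoint given by right Kan extension along $i \colon \{[0]\} \hookrightarrow \simp^{\op}$, sending $S$ to the simplicial space with $(i_*S)_n \simeq S^{\times(n+1)}$, which is manifestly a Segal space and satisfies $q(i_*S) \simeq S$. To construct $F$, I would use Proposition~\ref{propn:monoideqalg} to identify a $\simp^{\op}_S$-algebra in $\mathcal{S}^{\times}$ with the data of a $\simp^{\op}_S$-monoid $\mathcal{C}' \colon \simp^{\op}_S \to \mathcal{S}$, and then apply $\pi_!$; the output lies in $\SegI$ by Proposition~\ref{propn:KanExtSegalSp}. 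Globally, this can be obtained by unstraightening a natural transformation between the functors $\mathcal{S}^{\op} \to \CatI$ given by $S \mapsto \Alg_{\simp^{\op}_S}(\mathcal{S})$ and the constant functor on $\SegI$, whose $S$-component is precisely the pointwise equivalence of Corollary~\ref{cor:DopSMndEq2}. By construction, the fibre of $F$ over $S$ is exactly this equivalence, so $F$ is a fibrewise equivalence; it is also compatible with the projections since $(\pi_!\mathcal{C}')([0]) \simeq S$, as computed in the proof of Proposition~\ref{propn:KanExtSegalSp}.

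The hard part will be verifying that $F$ preserves Cartesian morphisms. A $p$-Cartesian arrow over $f \colon S \to T$ with target $\mathcal{D}$ is (up to equivalence) the restriction $(\simp^{\op}_f)^*\mathcal{D} \to \mathcal{D}$, while a $q$-Cartesian arrow over $f$ with target $Y \in \SegI$ is the base change $Y \times_{i_*T} i_*S \to Y$ built using the right adjoint $i_*$. Preservation of Cartesian morphisms thus amounts to a Beck--Chevalley identity
\[
\pi_{S,!}\, (\simp^{\op}_f)^* \mathcal{D}' \;\simeq\; (i_*f)^*\, \pi_{T,!}\, \mathcal{D}'.
\]
I would establish this by noting that $\simp^{\op}_f \colon \simp^{\op}_S \to \simp^{\op}_T$ is the pullback along $\pi$ of $i_*f \colon i_*S \to i_*T$ viewed in the total space of the universal left fibration: under the equivalence $\Fun(\simp^{\op}_T, \mathcal{S}) \simeq \Fun(\simp^{\op}, \mathcal{S})_{/i_*T}$ of Corollary~\ref{cor:DopSMndEq} (which is just $\pi_{T,!}$ and which is a special case of straightening for the left fibration $\simp^{\op}_T \to \simp^{\op}$), restriction along $\simp^{\op}_f$ is identified with base change along $i_*f$ in the slice. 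With this compatibility in hand, $F$ is a fibrewise equivalence of Cartesian fibrations that preserves Cartesian morphisms, and hence is an equivalence of \icats{}, completing the proof.
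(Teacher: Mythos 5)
Your proof is correct and follows essentially the same route as the paper's: both exhibit the assignment $\mathcal{C}\mapsto\pi_!\mathcal{C}'$ as a map of Cartesian fibrations over $\mathcal{S}$ (using Lemma~\ref{lem:adjtocart} for $\txt{ev}_{[0]}$), reduce to the fibrewise equivalence of Corollary~\ref{cor:DopSMndEq2}, and check preservation of Cartesian morphisms. The only cosmetic difference is that you justify the Cartesian-morphism step by a Beck--Chevalley identity via the slice equivalence $\Fun(\simp^{\op}_{T},\mathcal{S})\simeq\Fun(\simp^{\op},\mathcal{S})_{/i_{*}T}$, whereas the paper cites descent in the $\infty$-topos $\mathcal{S}$ (\cite[Theorem 6.1.3.9]{HTT}) --- these are the same underlying fact.
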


\begin{proof}
  If $\mathcal{V}$ is an \icat{} with finite products, pulling
  back the monoid fibration $\Mon(\mathcal{V}) \to \OpdIns$ of
  Remark~\ref{rmk:MonoidFib} along $\simp^{\op}_{(\blank)}$ gives a
  Cartesian fibration $\Mon_{\txt{cat}}(\mathcal{V})$ with an
  equivalence \[\AlgCat(\mathcal{V}) \isoto
  \Mon_{\txt{cat}}(\mathcal{V})\] over $\mathcal{S}$. Taking left Kan
  extensions along the projections $\simp^{\op}_{S}\to \simp^{\op}$
  for all $S \in \mathcal{S}$ we get (using
  Proposition~\ref{propn:KanExtSegalSp}) a commutative square
  \opctriangle{\Mon_{\txt{cat}}(\mathcal{S})}{\SegI}{\mathcal{S}.}{\Phi}{}{\txt{ev}_{[0]}}
  By Lemma~\ref{lem:adjtocart} it is clear that $\txt{ev}_{[0]} \colon
  \SegI \to \mathcal{S}$ is a Cartesian fibration, and the functor
  $\Phi$ preserves Cartesian morphisms by \cite[Theorem
  6.1.3.9]{HTT}. It thus suffices to prove that for each $S \in
  \mathcal{S}$ the functor on fibres
  $\Mon_{\simp^{\op}_{S}}(\mathcal{S}) \to (\SegI)_{S}$ is an
  equivalence, which is the content of Corollary~\ref{cor:DopSMndEq2}.
\end{proof}

\subsection{A Presheaf Model for Categorical Algebras}\label{subsec:presheafalgcat}
In this subsection we will give an alternative characterization of the
\icat{} $\AlgCatV$ (for $\mathcal{V}$ a presentably monoidal \icat{})
as a localization of an \icat{} of presheaves. We thank Jeremy Hahn
for suggesting this model. Throughout this subsection we assume that
$\mathcal{V}$ is a presentably monoidal \icat{}.

\begin{defn}
  Let $\mathcal{V}^{\vee}_{\otimes} \to \simp$ be a Cartesian fibration
  corresponding to the same functor as the coCartesian fibration
  $\mathcal{V}^{\otimes} \to \simp^{\op}$. A presheaf $\phi \colon
  (\mathcal{V}^{\vee}_{\otimes})^{\op} \to \mathcal{S}$ is a \defterm{Segal
    presheaf} if it satisfies the following conditions:
  \begin{enumerate}[(1)]
  \item The functor $\mathcal{V}^{\op} \simeq
    (\mathcal{V}^{\vee}_{\otimes})^{\op}_{[1]} \to \mathcal{S}_{/\phi()^{\times
        2}}$, induced by the Cartesian morphisms over the face maps
    $[0] \to [1]$ in $\simp$, takes colimit diagrams in $\mathcal{V}$
    to limit diagrams in $\mathcal{S}_{/\phi()^{\times 2}}$.
  \item For every object $X \in \mathcal{V}^{\vee}_{\otimes}$, lying over $[n]
    \in \simp$, the diagram
    \nolabelcsquare{\phi(X)}{\phi(d_{n}^{*}X)}{\phi(\alpha^{*}(X))}{\phi(),}
    where $\alpha \colon [1] \to [n]$ is the map sending $0$ to $n-1$
    and $1$ to $n$, is a pullback square.
  \end{enumerate}
  Write $\mathcal{P}(\mathcal{V}^{\vee}_{\otimes})^{\Seg}$ for the full
  subcategory of $\mathcal{P}(\mathcal{V}^{\vee}_{\otimes})$ spanned by
  the Segal presheaves.
\end{defn}

\begin{remark}
  If $\phi \colon (\mathcal{V}^{\vee}_{\otimes})^{\op} \to \mathcal{S}$ is a
  Segal presheaf, then for every $n$ the functor
  \[ (\mathcal{V}^{\times n})^{\op} \simeq
  (\mathcal{V}^{\vee}_{\otimes})_{[n]}^{\op} \to \mathcal{S}_{/\phi()^{\times
      (n+1)}}, \]
  induced by the Cartesian morphisms over the inclusions $[0]
  \hookrightarrow [n]$, takes colimits in $\mathcal{V}^{\times n}$ to
  limits in $\mathcal{S}_{/\phi()^{\times (n+1)}}$.

  Since filtered \icats{} are contractible, it is easy to see that a
  filtered diagram in $\mathcal{S}_{/\phi()^{\times (n+1)}}$ is a
  limit diagram
  \IFF{} the diagram in $\mathcal{S}$ obtained by composing with the
  forgetful functor $\mathcal{S}_{/\phi()^{\times (n+1)}} \to
  \mathcal{S}$ is a limit diagram. Thus if $\phi$ is a Segal presheaf
  the functors $(\mathcal{V}^{\times n})^{\op} \simeq
  (\mathcal{V}^{\vee}_{\otimes})^{\op}_{[n]} \to \mathcal{S}$ all take filtered
  colimits in $\mathcal{V}^{\times n}$ to limits in $\mathcal{S}$. If
  $\mathcal{V}$ is a $\kappa$-presentable \icat{} we
  may therefore regard a Segal presheaf on $\mathcal{V}$ as a presheaf
  on the full subcategory $(\mathcal{V}^{\vee}_{\otimes})^{\kappa}$ spanned by
  the objects that lie in the image of $(\mathcal{V}^{\kappa})^{\times
    n}$ in $(\mathcal{V}^{\vee}_{\otimes})_{[n]} \simeq \mathcal{V}^{\times
    n}$ for all $n$. Moreover, a presheaf $\phi \colon
  (\mathcal{V}^{\vee}_{\otimes})^{\kappa,\op} \to \mathcal{S}$ 
  corresponds to a Segal presheaf \IFF{} it is local with respect to a
  set of maps in $\mathcal{P}((\mathcal{V}^{\vee}_{\otimes})^{\kappa})$, hence
  $\mathcal{P}(\mathcal{V}^{\vee}_{\otimes})^{\Seg}$ is an accessible
  localization of $\mathcal{P}((\mathcal{V}^{\vee}_{\otimes})^{\kappa})$.
\end{remark}

We now prove that Segal presheaves give an alternative model for
categorical algebras:
\begin{thm}\label{thm:Segpsheaves}
  There is an equivalence between
  $\mathcal{P}(\mathcal{V}^{\vee}_{\otimes})^{\Seg}$ and $\AlgCatV$.
\end{thm}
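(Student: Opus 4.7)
The plan is to construct the equivalence from a canonical ``simplex diagram'' functor $\Delta^{(-)}\colon \mathcal{V}^{\vee}_{\otimes}\to \AlgCatV$ and the adjunction it induces. First I would build $\Delta^{(-)}$ sending $(V_1,\ldots,V_n)\in (\mathcal{V}^{\vee}_{\otimes})_{[n]}\simeq \mathcal{V}^{\times n}$ to the free $\mathcal{V}$-\icat{} on the $\mathcal{V}$-graph $0\xrightarrow{V_1}1\xrightarrow{V_2}\cdots\xrightarrow{V_n}n$, via the free functor $F\colon \GraphIV\to \AlgCatV$ of Remark~\ref{rmk:GraphAdj}; functoriality in active morphisms of $\mathcal{V}^{\vee}_{\otimes}$ uses the tensor product on $\mathcal{V}$ (which matches composition in the free $\mathcal{V}$-\icat{}), while inert morphisms pass to subsimplices. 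On a single object $V\in \mathcal{V}\simeq (\mathcal{V}^{\vee}_{\otimes})_{[1]}$ this recovers the functor $\Sigma$ of Definition~\ref{defn:SigmaVCat}. Left Kan extension along $\Delta^{(-)}$ then produces a colimit-preserving functor
\[ L\colon \mathcal{P}(\mathcal{V}^{\vee}_{\otimes})\longrightarrow \AlgCatV \]
with right adjoint $R(\mathcal{C})=\Map_{\AlgCatV}(\Delta^{(-)},\mathcal{C})$.

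Next I would show that $R$ factors through $\mathcal{P}(\mathcal{V}^{\vee}_{\otimes})^{\Seg}$. Condition~(1) is immediate from Definition~\ref{defn:SigmaVCat}: the fiber of $R(\mathcal{C})(V)=\Map_{\AlgCatV}(\Sigma V,\mathcal{C})$ over $(x,y)\in \iota\mathcal{C}\times\iota\mathcal{C}$ is $\Map_{\mathcal{V}}(V,\mathcal{C}(x,y))$, which is limit-preserving in $V$. For condition~(2), I would show that the evident map
\[ \Delta^{(V_1,\ldots,V_{n-1})}\amalg_{\Delta^{\{n-1\}}}\Delta^{(V_n)}\xrightarrow{\sim}\Delta^{(V_1,\ldots,V_n)} \]
is an equivalence in $\AlgCatV$, which follows from the fact that the analogous assertion for $\mathcal{V}$-graphs is obvious and $F$ preserves pushouts as a left adjoint.

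It remains to show that $R\colon \AlgCatV\to \mathcal{P}(\mathcal{V}^{\vee}_{\otimes})^{\Seg}$ is an equivalence. Full faithfulness reduces, since $L$ preserves colimits and $\AlgCatV$ is presentable by Proposition~\ref{propn:AlgCatPres}, to showing that the simplices $\Delta^{(V_1,\ldots,V_n)}$ form a dense family of generators, i.e.\ that the counit $LR\to \id_{\AlgCatV}$ is an equivalence. Using the monadic presentation of $\AlgCatV$ over $\GraphIV$ (Corollary~\ref{cor:AlgMonad}) together with the fact that every $\mathcal{V}$-graph is a colimit of basic graphs $\Sigma V$ and point graphs, this becomes a concrete colimit comparison. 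For essential surjectivity, a Segal presheaf $\phi$ is determined, by conditions~(1) and~(2), by the space $S:=\phi()$, the functor $\mathcal{V}^{\op}\to \mathcal{S}_{/S\times S}$ classifying the hom-objects, and the composition data carried by the non-Cartesian morphisms over active maps in $\mathcal{V}^{\vee}_{\otimes}$; this is exactly the data of an $\simp^{\op}_{S}$-algebra in $\mathcal{V}$, whose image under $R$ recovers $\phi$. The main obstacle will be making density rigorous, particularly handling non-discrete spaces of objects; this is cleanest to address by first treating the case $\mathcal{V}=\mathcal{S}$ (where Theorem~\ref{thm:AlgCatSisSeg} identifies $\AlgCat(\mathcal{S})\simeq \SegI$ and our Segal-presheaf description recovers Rezk's model of Segal spaces), and then transporting to general $\mathcal{V}$ via the monoidal adjunction $T\dashv U$ of Example~\ref{ex:TisTensorIV} and the naturality of both sides in $\mathcal{V}$ supplied by Corollary~\ref{cor:AlgCatLaxMonPr}.
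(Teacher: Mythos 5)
Your strategy --- restrict the Yoneda embedding along a ``standard simplex'' functor $\Delta^{(-)}\colon \mathcal{V}^{\vee}_{\otimes}\to\AlgCatV$ and prove that the resulting nerve is fully faithful with essential image the Segal presheaves --- is genuinely different from the paper's proof, which never constructs $\Delta^{(-)}$ or any adjunction. The paper instead embeds both sides into a common ambient $\infty$-category $\mathcal{Q}$, the pullback of $\mathcal{P}((\mathcal{V}^{\vee}_{\otimes})^{\triangleleft}_{\simp})\to\mathcal{P}(\simp)$ along $S\mapsto\simp^{\op}_{S}$, whose fibre over a space $S$ is $\Fun_{\simp^{\op}}(\simp^{\op}_{S},\mathcal{P}_{\simp}(\mathcal{V}^{\vee}_{\otimes}))$. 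Identifying $\mathcal{V}^{\otimes}$ with the fibrewise representable presheaves inside $\mathcal{P}_{\simp}(\mathcal{V}^{\vee}_{\otimes})$, condition (1) in the definition of a Segal presheaf translates exactly into fibrewise representability of the corresponding functor $\simp^{\op}_{S}\to\mathcal{P}_{\simp}(\mathcal{V}^{\vee}_{\otimes})$, and condition (2) translates exactly into preservation of inert morphisms; hence the two full subcategories of $\mathcal{Q}$ determined by $\mathcal{P}(\mathcal{V}^{\vee}_{\otimes})^{\Seg}$ and by $\AlgCatV$ have the same objects. No density statement is needed anywhere.

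The genuine gap in your proposal sits exactly where you flag ``the main obstacle'': full faithfulness of $R$ (density of the $\Delta^{(V_{1},\ldots,V_{n})}$) and essential surjectivity. First, the proposed reduction to $\mathcal{V}=\mathcal{S}$ via the adjunction $T\dashv U$ of Example~\ref{ex:TisTensorIV} cannot work: the generators $\Delta^{(V_{1},\ldots,V_{n})}$ for general $V_{i}\in\mathcal{V}$ are not in the image of $T_{*}$, and $U_{*}$ discards precisely the enriched mapping-object data that the presheaf is supposed to record, so the statement for $\mathcal{S}$ places no constraint on its validity for $\mathcal{V}$; lax naturality in $\mathcal{V}$ lets you compare the nerve functors for different $\mathcal{V}$, not deduce that one is an equivalence from another being one. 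Second, your essential surjectivity step asserts that the data of a Segal presheaf ``is exactly the data of'' a $\simp^{\op}_{S}$-algebra; coherently assembling the composition data over active morphisms into an actual functor $\simp^{\op}_{S}\to\mathcal{V}^{\otimes}$ is the entire content of the theorem, and you supply no mechanism for doing so (the paper's mechanism is the fibrewise Yoneda embedding above). The remaining issues --- the coherent construction of $\Delta^{(-)}$ itself, the appeal to Corollary~\ref{cor:AlgMonad} for monadicity over $\GraphIV$ with \emph{varying} object spaces when that result only treats a fixed $\mathcal{M}$, and the size problems in forming $\mathcal{P}(\mathcal{V}^{\vee}_{\otimes})$ for large $\mathcal{V}$ (the paper passes to $\kappa$-compact objects) --- are real but secondary; your pushout decomposition $\Delta^{(V_{1},\ldots,V_{n-1})}\amalg_{\Delta^{()}}\Delta^{(V_{n})}\simeq\Delta^{(V_{1},\ldots,V_{n})}$ and the verification of condition (1) are fine.
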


\begin{proof}[Proof of Theorem~\ref{thm:Segpsheaves}]
  Given a Cartesian fibration of \icats{} $p \colon \mathcal{E} \to
  \mathcal{B}$, let $\mathcal{E}^{\triangleright}_{\mathcal{B}}$ be
  the pushout $\mathcal{B} \amalg_{\mathcal{E} \times \{0\}}
  \mathcal{E} \times \Delta^{1}$ and let $j \colon \mathcal{B} \to
  \mathcal{E}^{\triangleright}_{\mathcal{B}}$ be the obvious
  inclusion. By \cite[\S 8]{freepres}, the functor $j^{*} \colon
  \mathcal{P}(\mathcal{E}^{\triangleright}_{\mathcal{B}}) \to
  \mathcal{P}(\mathcal{B})$ is a Cartesian fibration corresponding to
  the functor $\mathcal{P}(\mathcal{B}) \simeq
  \text{RFib}(\mathcal{B}) \to \CatI$ that sends a right fibration
  $\mathcal{Y} \to \mathcal{B}$ to
  $\Fun_{\mathcal{B}^{\op}}(\mathcal{Y}^{\op},
  \mathcal{P}_{\mathcal{B}}(\mathcal{E}))$, where
  $\mathcal{P}_{\mathcal{B}}(\mathcal{E}) \to \mathcal{B}^{\op}$ is
  the coCartesian fibration corresponding to the functor
  $\mathcal{B}^{\op} \to \CatI$ that sends $b \in \mathcal{B}^{\op}$
  to $\mathcal{P}(\mathcal{E}_{b})$. Let $\delta \colon \mathcal{S} \to \txt{LFib}(\simp^{\op}) \simeq
  \mathcal{P}(\simp)$ denote the functor that sends $X$ to
  $\simp^{\op}_{X} \to \simp^{\op}$. Write $\mathcal{Q}$ for the
  pullback
  \csquare{\mathcal{Q}}{\mathcal{P}((\mathcal{V}^{\vee}_{\otimes})^{\triangleleft}_{\simp})}{\mathcal{S}}{\mathcal{P}(\simp).}{}{q}{}{\delta}
  Then by \cite[Corollary 8.7]{freepres} the functor $q$ is the Cartesian
  fibration corresponding to the functor that sends $X \in
  \mathcal{S}$ to $\Fun_{\simp^{\op}}(\simp^{\op}_{X},
  \mathcal{P}_{\simp}(\mathcal{V}^{\vee}_{\otimes}))$.

  Let $\mathcal{Q}_{1}$ be the full subcategory of $\mathcal{Q}$
  spanned by presheaves $\phi \colon
  ((\mathcal{V}^{\vee}_{\otimes})^{\triangleleft}_{\simp})^{\op} \to \mathcal{S}$
  whose restriction to $(\mathcal{V}^{\vee}_{\otimes})^{\op}$ are Segal
  presheaves and for which the restriction $\phi|_{\{()\} \times
    \Delta^{1}} \colon \Delta^{1} \to \mathcal{S}$ is an
  equivalence. Then the restriction functor
  $\mathcal{P}((\mathcal{V}^{\vee}_{\otimes})^{\triangleleft}_{\simp}) \to
  \mathcal{P}(\mathcal{V}^{\vee}_{\otimes})$ gives an equivalence between
  $\mathcal{Q}_{1}$ and $\mathcal{P}(\mathcal{V}^{\vee}_{\otimes})^{\Seg}$ ---
  this is clear, since for every space $X$ the composite
  \[(\mathcal{V}^{\vee}_{\otimes})^{\op} \to \simp^{\op} \xto{\delta(X)}
  \mathcal{S} \] is the final Segal presheaf that sends $()$ to $X$.

  We can identify $\mathcal{V}^{\otimes}$ with the full subcategory of
  $\mathcal{P}_{\simp}(\mathcal{V}^{\vee}_{\otimes})$ spanned fibrewise by the
  representable presheaves. Let $\mathcal{Q}_{2}$
  denote the full subcategory of $\mathcal{Q}$ spanned by the
  presheaves that correspond to categorical algebras in $\mathcal{V}$,
  i.e. that under the identification above correspond to functors
  $\simp^{\op}_{X} \to \mathcal{P}_{\simp}(\mathcal{V}^{\vee}_{\otimes})$ that
  land in the full subcategory $\mathcal{V}^{\otimes}$ and preserve
  inert morphisms. Then we can identify the \icat{} $\mathcal{Q}_{2}$
  with $\AlgCatV$.

  It remains to observe that the full subcategories $\mathcal{Q}_{1}$
  and $\mathcal{Q}_{2}$ have the same objects. It is clear that a
  presheaf $\phi \colon
  (\mathcal{V}^{\vee,\triangleleft}_{\simp})^{\op} \to \mathcal{S}$
  whose restriction to $\{()\} \times \Delta^{1}$ is an equivalence
  corresponds to a functor $F \colon \simp^{\op}_{X} \to
  \mathcal{V}^{\otimes}$ \IFF{} for every $[n]$ the functor
  $(\mathcal{V}^{\times n})^{\op} \simeq
  (\mathcal{V}^{\vee}_{\otimes})_{[n]}^{\op} \to \mathcal{S}_{/\phi()^{\times
      (n+1)}}$ takes colimits in $\mathcal{V}^{\times n}$ to limits in
  $\mathcal{S}_{/\phi()^{\times (n+1)}}$.  Moreover, the functor $F$
  preserves inert morphisms \IFF{} for every object $T \in
  \simp^{\op}_{X}$, the morphism $F(T) \to F(\alpha_{!}T)$ is
  coCartesian, where $\alpha \colon [1] \to [n]$ is the morphism in
  $\simp$ that sends $0$ to $n-1$ and $1$ to $n$, or equivalently,
  under the identification $\mathcal{V}^{\otimes}_{[n]} \simeq
  \mathcal{V}^{\times n}$, the objects $F(T)$ and $(F(d_{n,!}T),
  F(\alpha_{!}T))$ are equivalent. In terms of $\phi$, this condition,
  for all $T \in \phi()^{\times (n+1)}$, is precisely the condition
  that the diagram
  \nolabelcsquare{\phi(X)}{\phi(d_{n}^{*}X)}{\phi(\alpha^{*}(X))}{\phi()}
  is a pullback square for all $X \in (\mathcal{V}^{\vee}_{\otimes})_{[n]}$. Thus
  $\phi$ is a Segal presheaf \IFF{} $F$ is a categorical algebra.
\end{proof}

\section{The $\infty$-Category of Enriched
  $\infty$-Categories}\label{sec:CatIV}
Our goal in this section is to prove our main result: we can always
localize the \icat{} of categorical algebras at the fully faithful and
essentially surjective functors by restricting to the full subcategory
of \emph{complete} objects. Along the way, we will introduce analogues
of a number of familiar concepts from ordinary enriched category
theory in our setting.

In \S\ref{subsec:basic} we define objects, morphisms, and equivalences
in enriched \icats{}. Then in \S\ref{subsec:equiv} we study the
classifying space of equivalences in an enriched \icat{}; the
\emph{complete} enriched \icats{} are those whose classifying spaces
of equivalences are equivalent to their underlying spaces of objects.
Next we study three types of equivalences of $\mathcal{V}$-\icats{}:
in \S\ref{subsec:FFES} we define \emph{fully faithful} and
\emph{essentially surjective} functors, in \S\ref{subsec:localeq}
\emph{local equivalences} (those in the saturated class of a certain
map) and finally in \S\ref{subsec:cateq} \emph{categorical
  equivalences} (those with an inverse up to natural equivalence). In
\S\ref{subsec:completion} we prove that for \icats{} enriched in a
presentably monoidal \icat{} the fully faithful and essentially
surjective functors are the same as the local equivalences, hence the
full subcategory of complete objects gives the localization; we can
extend this result to \icats{} enriched in a general large monoidal
\icat{} by embedding this in a presentable \icat{} in a larger
universe. Finally in \S\ref{subsec:proploccat} we prove that the
localized \icat{} inherits the functoriality properties of $\AlgCatV$.

\subsection{Some Basic Concepts}\label{subsec:basic}
In this subsection we define the basic notions of objects,
morphisms, and equivalences in an enriched \icat{}, an observe that
these have the expected properties. We first consider objects:
\begin{defn}
  Suppose $\mathcal{V}$ is a monoidal \icat{}. The unit of
  $\mathcal{V}$ defines an (essentially unique) associative algebra
  object $I_{\mathcal{V}} \colon \simp^{\op} \to
  \mathcal{V}^{\otimes}$ by Proposition~\ref{propn:UnitAlg}. We write
  $[0]_{\mathcal{V}}$ (or sometimes $I_{\mathcal{V}}$ or
  $E^{0}_{\mathcal{V}}$ depending on context) for this associative
  algebra regraded as an enriched \icat{}. We regard this as the
  trivial $\mathcal{V}$-\icat{} with one object, and so we refer to a
  map $[0]_{\mathcal{V}} \to \mathcal{C}$ as an \emph{object} of the
  $\mathcal{V}$-\icat{} $\mathcal{C}$.
\end{defn}

This definition justifies calling the mapping space
$\Map_{\AlgCatV}([0]_{\mathcal{V}}, \mathcal{C})$ the \emph{space of objects} in
$\mathcal{C}$. However, if $\mathcal{C}$ is a
$\simp^{\op}_{X}$-algebra in $\mathcal{V}$ then we also think of $X$
as being the space of objects of $\mathcal{C}$. Luckily, it is easy to
see that the two concepts agree:
\begin{lemma}\label{lem:iota0}
  Let $\mathcal{C} \colon \simp^{\op}_{X} \to \mathcal{V}^{\otimes}$
  be a $\mathcal{V}$-\icat{}. Then the map
  \[\Map_{\AlgCat(\mathcal{V})}([0]_{\mathcal{V}}, \mathcal{C}) \to
  \Map_{\mathcal{S}}(*, X) \simeq X\] induced by the Cartesian
  fibration $\AlgCat(\mathcal{V}) \to \mathcal{S}$ is an
  equivalence.
\end{lemma}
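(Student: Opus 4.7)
The plan is to identify the fibres of the given map over points of $X$ and show each such fibre is contractible, whence the map is an equivalence.

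First I would note that the object $[0]_{\mathcal{V}}$ has space of objects equal to the point $*$, so its image under the Cartesian fibration $\AlgCat(\mathcal{V}) \to \mathcal{S}$ is $*$. Consequently the map in question sits in a commutative triangle over $\mathcal{S}$, and it suffices to show that for every point $x \colon * \to X$ the induced map on fibres is an equivalence of spaces. Since $\AlgCat(\mathcal{V}) \to \mathcal{S}$ is a Cartesian fibration, the fibre of $\Map_{\AlgCat(\mathcal{V})}([0]_{\mathcal{V}}, \mathcal{C})$ over $x$ can be identified with the mapping space $\Map_{\Alg_{\simp^{\op}_*}(\mathcal{V})}([0]_{\mathcal{V}}, x^{*}\mathcal{C})$, where $x^{*}\mathcal{C}$ denotes the pullback of $\mathcal{C}$ along the map $\simp^{\op}_{*} \to \simp^{\op}_X$ induced by $x$.

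Next I would identify $\simp^{\op}_{*}$ with $\simp^{\op}$: by construction $\simp^{\op}_{*} \to \simp^{\op}$ classifies the functor sending $[n]$ to $*^{\times(n+1)} \simeq *$, and thus is an equivalence. Under this identification, the \icat{} $\Alg_{\simp^{\op}_*}(\mathcal{V})$ becomes $\Alg_{\simp^{\op}}(\mathcal{V})$, the \icat{} of associative algebras in $\mathcal{V}$, and $[0]_{\mathcal{V}}$ becomes exactly the unit associative algebra $I_{\mathcal{V}}$.

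The key input is then Proposition~\ref{propn:UnitAlg}, which asserts that $I_{\mathcal{V}}$ is the initial object of $\Alg_{\simp^{\op}}(\mathcal{V})$. Hence the mapping space $\Map_{\Alg_{\simp^{\op}}(\mathcal{V})}(I_{\mathcal{V}}, x^{*}\mathcal{C})$ is contractible for every $x$. Since each fibre of the map $\Map_{\AlgCat(\mathcal{V})}([0]_{\mathcal{V}}, \mathcal{C}) \to X$ is contractible, the map itself is an equivalence.

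There is no real obstacle here beyond being careful about the identifications; the only thing that needs checking is the first step, that the fibre of a mapping space in $\AlgCat(\mathcal{V})$ over a fixed map of object-spaces is the mapping space of algebras after pullback, which is immediate from the construction of $\AlgCat(\mathcal{V})$ as the pullback of the Cartesian fibration $\Alg(\mathcal{V}) \to \OpdIns$ along $L_{\txt{gen}}\simp^{\op}_{(\blank)}$ combined with the description of mapping spaces in Cartesian fibrations.
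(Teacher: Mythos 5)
Your proof is correct and follows essentially the same route as the paper's: both reduce to showing the fibres over points of $X$ are contractible, identify the fibre at $p \colon * \to X$ with $\Map_{\Alg_{\simp^{\op}}(\mathcal{V})}(I_{\mathcal{V}}, p^{*}\mathcal{C})$ via the Cartesian fibration structure (the paper cites \cite[Proposition 2.4.4.2]{HTT} for this), and conclude by Proposition~\ref{propn:UnitAlg} that $I_{\mathcal{V}}$ is initial in $\Alg_{\simp^{\op}}(\mathcal{V})$. Your additional remark identifying $\simp^{\op}_{*}$ with $\simp^{\op}$ is a correct and harmless elaboration of a step the paper leaves implicit.
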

\begin{proof}
  It suffices to check that the fibres of this map are
  contractible. By \cite[Proposition 2.4.4.2]{HTT} the fibre
  at a point $p \colon * \to X$ is
  \[\Map_{\Alg_{\simp^{\op}}(\mathcal{V})}(I_{\mathcal{V}}, p^{*}\mathcal{C}),\]
  which is contractible since the unit $I_{\mathcal{V}}$ is the inital
  associative algebra object of $\mathcal{V}$.
\end{proof}

Next, we consider morphisms in an enriched \icat{}:
\begin{defn}
  Write $[1]$ for the category corresponding to the ordered set
  $\{0,1\}$, regarded as an $\mathcal{S}$-\icat{} by
  Remark~\ref{rmk:ordinaryasenricat}. Suppose $\mathcal{V}$ is a
  presentably monoidal \icat{}; then $\AlgCatV$ is tensored over
  $\AlgCatS$ by Corollary~\ref{cor:prestensAlgCatS}. We write
  $[1]_{\mathcal{V}}$ for the $\mathcal{V}$-\icat{} $[1] \otimes
  I_{\mathcal{V}}$. A \emph{morphism} in a $\mathcal{V}$-\icat{}
  $\mathcal{C}$ is a map $[1]_{\mathcal{V}} \to \mathcal{C}$.
\end{defn}

\begin{lemma}
  Suppose $\mathcal{V}$ is a presentably monoidal \icat{} and
  $\mathcal{C}$ is a $\mathcal{V}$-\icat{}. The two objects $0$ and
  $1$ of $[1]_{\mathcal{V}}$ induce two maps $i_{0},i_{1}\colon
  [0]_{\mathcal{V}} \to [1]_{\mathcal{V}}$; composing with these gives for any
  $\mathcal{V}$-\icat{} $\mathcal{C}$ a map of spaces
  \[ \Map_{\AlgCatV}([1]_{\mathcal{V}}, \mathcal{C}) \to
  \Map_{\AlgCatV}([0]_{\mathcal{V}}, \mathcal{C})^{\times 2}.\] The
  fibre $\Map([1]_{\mathcal{V}}, \mathcal{C})_{X,Y}$ of this map
  $\Map([1]_{\mathcal{V}}, \mathcal{C})$ at points $X,Y \in
  \Map([0]_{\mathcal{V}}, \mathcal{C})$ is equivalent to $\Map(I,
  \mathcal{C}(X,Y))$.
\end{lemma}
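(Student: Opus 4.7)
The plan is to identify $[1]_{\mathcal{V}}$ with $\Sigma(I_{\mathcal{V}})$ of Definition~\ref{defn:SigmaVCat}, after which the lemma follows immediately from the universal property of $\Sigma$.

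First I would handle the case $\mathcal{V} = \mathcal{S}$: viewing the poset $[1]$ as an $\mathcal{S}$-enriched \icat{} via Remark~\ref{rmk:ordinaryasenricat}, one has $[1] \simeq \Sigma(*)$ in $\AlgCat(\mathcal{S})$, since the graph $\{0,1\}^{\times 2} \to \mathcal{S}$ with value $*$ at $(0,1)$ and $\emptyset$ elsewhere freely generates precisely the poset $[1]$. For general $\mathcal{V}$, Example~\ref{ex:TisTensorIV} identifies $[1]_{\mathcal{V}} = [1] \otimes I_{\mathcal{V}}$ with $T_{*}([1])$, where $T_{*} \colon \AlgCat(\mathcal{S}) \to \AlgCat(\mathcal{V})$ is induced by the monoidal left adjoint $T \colon \mathcal{S}^{\times} \to \mathcal{V}^{\otimes}$ with $T(*) = I_{\mathcal{V}}$. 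I would then verify that $T_{*}$ commutes with $\Sigma$, giving
\[ [1]_{\mathcal{V}} \simeq T_{*}(\Sigma(*)) \simeq \Sigma(T(*)) = \Sigma(I_{\mathcal{V}}). \]
This compatibility comes down to two points: $T$ preserves the initial object $\emptyset$ (being a left adjoint), hence transports the graph presenting $\Sigma(*)$ to the one presenting $\Sigma(I_{\mathcal{V}})$; and $T_{*}$ commutes with the free-algebra functor $\tau_{!}$ from $\GraphIV$ to $\AlgCatV$, which follows from the construction of the algebra fibration in \S\ref{subsec:AlgFib} together with Proposition~\ref{propn:rightadjlaxmonalgfib}.

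With $[1]_{\mathcal{V}} \simeq \Sigma(I_{\mathcal{V}})$ established, the lemma reduces to the defining universal property of $\Sigma$: for $\mathcal{C}$ a $\mathcal{V}$-\icat{} with space of objects $S$, the fibre of
\[ \Map_{\AlgCatV}(\Sigma(I_{\mathcal{V}}), \mathcal{C}) \to \Map(\{0,1\}, S) \simeq S \times S \]
at $(X, Y)$ is $\Map_{\mathcal{V}}(I_{\mathcal{V}}, \mathcal{C}(X, Y))$ by Definition~\ref{defn:SigmaVCat}. The one remaining check is that this structural map to $S^{\times 2}$ agrees with the map induced by $i_{0}, i_{1} \colon [0]_{\mathcal{V}} \to [1]_{\mathcal{V}}$ under the identification $\Map_{\AlgCatV}([0]_{\mathcal{V}}, \mathcal{C}) \simeq S$ from Lemma~\ref{lem:iota0}; this is immediate from naturality, since both maps record the effect on underlying object spaces of the two inclusions $\{0\}, \{1\} \hookrightarrow \{0,1\}$. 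The main obstacle is the compatibility of $T_{*}$ with $\Sigma$: morally this is just the assertion that left adjoints commute, but making it precise within the fibred machinery of \S\ref{subsec:AlgFib} requires some careful bookkeeping.
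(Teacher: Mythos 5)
Your proof is correct, but it runs the computation in the opposite order from the paper. The paper's own argument uses the adjunction $T_{*} \dashv U_{*}$ of Example~\ref{ex:TisTensorIV} to rewrite $\Map([1]_{\mathcal{V}}, \mathcal{C})_{X,Y}$ as $\Map([1], U_{*}\mathcal{C})_{X,Y}$, and then applies the graph/free-algebra adjunction of Remark~\ref{rmk:GraphAdj} once, in $\mathcal{S}$, to identify this with $U_{*}\mathcal{C}(X,Y) \simeq \Map(I_{\mathcal{V}}, \mathcal{C}(X,Y))$. You instead push $[1]$ \emph{up} to $\mathcal{V}$, identify $[1]_{\mathcal{V}}$ with $\Sigma(I_{\mathcal{V}})$, and invoke the universal property built into Definition~\ref{defn:SigmaVCat}. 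Both arguments rest on the same two facts --- that $[1]_{\mathcal{S}}$ is free on the one-edge graph, and that $T$ interacts well with free algebras --- but your route pays an extra cost: you must commute $T_{*}$ past $\tau_{!}$, which is exactly Lemma~\ref{lem:StrMonPrFree} (your citation of Proposition~\ref{propn:rightadjlaxmonalgfib} is not quite the right reference; that result gives the adjunction $F_{*} \dashv G_{*}$ over $\OpdIns$, whereas the preservation of free algebras by a colimit-preserving monoidal functor is Lemma~\ref{lem:StrMonPrFree}, together with the observation that $T$ preserves the initial object so that it carries the graph presenting $\Sigma_{\mathcal{S}}(*)$ to the one presenting $\Sigma_{\mathcal{V}}(I_{\mathcal{V}})$). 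The paper's route avoids this step entirely by using the adjunction directly. What your route buys in exchange is the reusable identification $[1]_{\mathcal{V}} \simeq \Sigma(I_{\mathcal{V}})$, which ties the notion of morphism cleanly to the $\Sigma$-construction used elsewhere (e.g.\ in Proposition~\ref{propn:monlocalgcat}); the final compatibility check with $i_{0}, i_{1}$ via Lemma~\ref{lem:iota0} is handled correctly.
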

\begin{proof}
  Let $U \colon \mathcal{V}^{\otimes} \to \mathcal{S}^{\times}$ be the
  lax monoidal functor defined in Example~\ref{ex:TisTensorIV}.  We then
  have
  \[ \Map([1]_{\mathcal{V}}, \mathcal{C})_{X,Y} \simeq \Map([1],
  U_{*}\mathcal{C})_{X,Y}.\] Since $[1]_{\mathcal{S}}$ is the free
  $\mathcal{S}$-\icat{} on the $\mathcal{S}$-graph having a single
  edge from $0$ to $1$, using the adjunction between
  $\mathcal{S}$-\icats{} and $\mathcal{S}$-graphs from
  Remark~\ref{rmk:GraphAdj} we see that this is given by
  $U_{*}\mathcal{C}(X,Y) \simeq \Map(I_{\mathcal{V}}, \mathcal{C}(X,Y))$.
\end{proof}
\begin{remark}
  This means that a morphism in $\mathcal{C}$ from $X$ to $Y$ is the
  same thing as a map $I \to \mathcal{C}(X,Y)$. This definition, of
  course, makes sense for any monoidal \icat{} $\mathcal{V}$.
\end{remark}

We now define \emph{equivalences} in enriched \icats{},
and prove that these satisfy some of the expected properties. We will
define an equivalence in a $\mathcal{V}$-\icat{} $\mathcal{C}$ to be a
functor $E^{1} \to \mathcal{C}$ where $E^{1}$ is the generic
$\mathcal{V}$-\icat{} with two objects and an equivalence between
them. More precisely, $E^{1}$ is a special case of a more general
notion of a \emph{trivial} enriched \icat{}, which we now define:
\begin{defn}
  For any space $S$, the \defterm{trivial $\mathcal{V}$-\icat{}}
  $E_{S}^{\mathcal{V}}$ with objects $S$ is given by the
  composite \[\simp^{\op}_{S} \to \simp^{\op} \xto{I_{\mathcal{V}}}
  \mathcal{V}^{\otimes}.\] We will generally drop the $\mathcal{V}$
  from the notation and just write $E_{S}$ when the monoidal \icat{}
  in question is obvious from the context. The $\mathcal{V}$-\icats{}
  $E_{S}$ are functorial in $S$. We abbreviate $E^{n} :=
  E_{\{0,\ldots, n\}}$; restricting to order-preserving maps between
  the sets $\{0,\ldots,n\}$ ($n= 0,1,\ldots$) we then have a
  cosimplicial $\mathcal{V}$-\icat{} $E^{\bullet}$.
\end{defn}

\begin{remark}
  When $S$ is a set, $E_{S}$ is the enriched \icat{} associated to the
  trivial category with objects $S$ and a unique morphism $A \to B$
  for any pair of objects $A, B \in S$. This is also known as the
  \emph{coarse} category with objects $S$, to distinguish it from the
  ``discrete'' trivial category with objects $S$ (which has only
  identity morphisms).
\end{remark}

We think of $E^{n}$ as the generic $\mathcal{V}$-\icat{} with $n+1$
equivalent objects, so a map $E^{n} \to \mathcal{C}$
for some $\mathcal{V}$-\icat{} $\mathcal{C}$ is a choice of $n+1$
equivalent objects of $\mathcal{C}$. In particular, we have:
\begin{defn}
  Suppose $\mathcal{C}$ is a $\mathcal{V}$-\icat{}. An
  \defterm{equivalence} in $\mathcal{C}$ is a $\mathcal{V}$-functor
  $E^{1} \to \mathcal{C}$.
\end{defn}

\begin{remark}
  We will see below, in Proposition~\ref{propn:eqsareeqonmaps}, that
  this is equivalent to other reasonable definitions of an equivalence
  in a $\mathcal{V}$-\icat{}.
\end{remark}

\begin{defn}
  Let
  \[ T : \mathcal{S}^{\times} \rightleftarrows \mathcal{V}^{\otimes} :
  U \] be the adjoint functors described in
  Example~\ref{ex:TisTensorIV}, which induce an adjunction
  \[ T_{*} : \AlgCat(\mathcal{S}) \rightleftarrows
  \AlgCat(\mathcal{V}) : U_{*}. \] by
  Lemma~\ref{lem:StrMonAdjAlgCat}. If $\mathcal{C}$ is a
  $\mathcal{V}$-\icat{}, we refer to $U_{*}\mathcal{C}$ as the
  \emph{underlying $\mathcal{S}$-\icat{}} of $\mathcal{C}$. By
  Theorem~\ref{thm:AlgCatSisSeg} we can identify $U_{*}\mathcal{C}$
  with a Segal space.
\end{defn}

We now make the very useful observation that the equivalences in a
$\mathcal{V}$-\icat{} $\mathcal{C}$ only depend on the underlying
Segal space $U_{*}\mathcal{C}$:
\begin{propn}\label{propn:eqCiseqUC}
  Let $\mathcal{V}$ be a presentably monoidal \icat{}. Then for any
  space $S$ there is a natural equivalence
  \[ \Map_{\AlgCatV}(E_{S}^{\mathcal{V}}, \mathcal{C}) \simeq
  \Map_{\AlgCat(\mathcal{S})}(E_{S}^{\mathcal{S}}, U_{*}\mathcal{C}).\]
\end{propn}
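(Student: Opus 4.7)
The plan is to reduce the claim to a statement about the unit of $\mathcal{V}$ via the adjunction between $\mathcal{V}$-categorical algebras and $\mathcal{S}$-categorical algebras. By Example~\ref{ex:TisTensorIV}, we have an adjunction
\[ T_{*} : \AlgCat(\mathcal{S}) \rightleftarrows \AlgCat(\mathcal{V}) : U_{*}, \]
so it suffices to produce a natural equivalence $T_{*}E^{\mathcal{S}}_{S} \simeq E^{\mathcal{V}}_{S}$ in $\AlgCat(\mathcal{V})$. Once this is established, composing with the adjunction equivalence
\[ \Map_{\AlgCatV}(T_{*}E^{\mathcal{S}}_{S},\mathcal{C}) \simeq \Map_{\AlgCat(\mathcal{S})}(E^{\mathcal{S}}_{S}, U_{*}\mathcal{C}) \]
yields the conclusion, and naturality in $\mathcal{C}$ and $S$ follows from naturality of the adjunction counit/unit and of the construction $E^{(\blank)}_{(\blank)}$.

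First I would unpack both sides as explicit composites. By definition, $E^{\mathcal{S}}_{S}$ is the composite $\simp^{\op}_{S}\to \simp^{\op}\xrightarrow{I_{\mathcal{S}}}\mathcal{S}^{\times}$, and by construction of $T_{*}$ (which is induced by postcomposition with the monoidal functor $T\colon\mathcal{S}^{\times}\to\mathcal{V}^{\otimes}$), the algebra $T_{*}E^{\mathcal{S}}_{S}$ is the composite
\[ \simp^{\op}_{S}\longrightarrow \simp^{\op}\xrightarrow{\;I_{\mathcal{S}}\;}\mathcal{S}^{\times}\xrightarrow{\;T\;}\mathcal{V}^{\otimes}. \]
On the other hand, $E^{\mathcal{V}}_{S}$ is the composite $\simp^{\op}_{S}\to \simp^{\op}\xrightarrow{I_{\mathcal{V}}}\mathcal{V}^{\otimes}$. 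Thus it is enough to produce a natural equivalence $T\circ I_{\mathcal{S}}\simeq I_{\mathcal{V}}$ in $\Alg_{\simp^{\op}}(\mathcal{V})$.

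For this, recall that $T$ is a monoidal functor, so $T\circ I_{\mathcal{S}}$ is an associative algebra structure on $T_{[1]}(I_{\mathcal{S}}) \simeq t(\ast) \simeq I_{\mathcal{V}}$. By Proposition~\ref{propn:UnitAlg}, the object $I_{\mathcal{V}}$ carries an essentially unique associative algebra structure, so $T\circ I_{\mathcal{S}}\simeq I_{\mathcal{V}}$ canonically. Pulling back along $\simp^{\op}_{S}\to\simp^{\op}$ gives the desired equivalence $T_{*}E^{\mathcal{S}}_{S}\simeq E^{\mathcal{V}}_{S}$. The only subtlety to check is naturality in $S$: this follows because the equivalence $T\circ I_{\mathcal{S}}\simeq I_{\mathcal{V}}$ lives in the contractible space of associative algebra structures on the unit, independently of $S$, and $E^{(\blank)}_{S}$ is obtained by pulling back this common map along $\simp^{\op}_{S}\to\simp^{\op}$. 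The main (very mild) obstacle is simply keeping the functorial nature of the constructions $E_{S}^{(\blank)}$ and the adjunction $T_{*}\dashv U_{*}$ straight, but no real technical difficulty arises once everything is spelled out.
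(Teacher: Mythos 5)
Your argument is correct, and its overall skeleton matches the paper's: both proofs reduce the claim to the identification $T_{*}E^{\mathcal{S}}_{S}\simeq E^{\mathcal{V}}_{S}$ and then conclude by the adjunction $T_{*}\dashv U_{*}$. Where you diverge is in how that identification is established. The paper routes it through Lemma~\ref{lem:ESastensor}: it first shows $E^{\mathcal{V}}_{S}\simeq E^{*}_{S}\otimes I_{\mathcal{V}}$ by unwinding the external product $\boxtimes$, bootstraps to $E^{\mathcal{S}}_{S}\otimes I_{\mathcal{V}}$ via compatibility of the tensorings over $\AlgCat(*)$ and $\AlgCat(\mathcal{S})$, and then invokes the identification of $T_{*}$ with $(\blank)\otimes I_{\mathcal{V}}$ from Example~\ref{ex:TisTensorIV}. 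You instead use that $T_{*}$ is literally postcomposition with the monoidal functor $T$, so that both $T_{*}E^{\mathcal{S}}_{S}$ and $E^{\mathcal{V}}_{S}$ are pullbacks along $\simp^{\op}_{S}\to\simp^{\op}$ of associative algebra structures on $I_{\mathcal{V}}$ (the unit map of $T\circ I_{\mathcal{S}}$ is an equivalence since $T$ is monoidal), whence Proposition~\ref{propn:UnitAlg} identifies them. This is more direct and avoids the external-product bookkeeping of Lemma~\ref{lem:ESastensor} entirely; the paper's route has the side benefit of recording the tensor formula $E^{\mathcal{V}}_{S}\simeq E^{\mathcal{S}}_{S}\otimes I_{\mathcal{V}}$, which it reuses elsewhere (e.g.\ in the proofs of Proposition~\ref{propn:eqsareeqonmaps} and Theorem~\ref{thm:Ecogpd}). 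Either way the conclusion and the naturality statements are sound.
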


This follows from the following Lemma:
\begin{lemma}\label{lem:ESastensor}\ 
  \begin{enumerate}[(i)]
  \item Let $\mathcal{V}$ be a monoidal \icat{}. By
    Proposition~\ref{propn:AlgCatLaxMon}, the \icat{} $\AlgCatV$ is
    tensored over $\AlgCat(*)$, since the unique monoidal structure on
    the trivial one-object \icat{} $*$ is the unit for the Cartesian
    product of monoidal \icats{}. There is a natural equivalence
    between the
    $\mathcal{V}$-\icat{} $E_{S}^{\mathcal{V}}$ and the tensor
    $E_{S}^{*} \otimes I_{\mathcal{V}}$.
  \item Let $\mathcal{V}$ be a presentably monoidal \icat{}; then the
    \icat{} $\AlgCatV$ is tensored over $\AlgCat(\mathcal{S})$ by
    Corollary~\ref{cor:prestensAlgCatS}. In this case there is a
    natural equivalence between $E_{S}^{\mathcal{V}}$ and the tensor
    $E_{S}^{\mathcal{S}} \otimes I_{\mathcal{V}}$.
  \end{enumerate}  
\end{lemma}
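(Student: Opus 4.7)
The plan is to identify, in each case, the tensor product on the right-hand side with a concrete pushforward along a monoidal functor, and then use the essentially unique character of the associative algebra structure on the unit (Proposition~\ref{propn:UnitAlg}) to compute the resulting composite.

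For part (i), the tensoring of $\AlgCatV$ over $\AlgCat(*)$ arises from the lax monoidal structure of $\AlgCat(\blank)$ with respect to the Cartesian product of monoidal \icats{} (Proposition~\ref{propn:AlgCatLaxMon}), where $*$ is the unit. First I would identify tensoring with $I_{\mathcal{V}} \in \AlgCatV$ with the external product $(\blank) \boxtimes I_{\mathcal{V}} \colon \AlgCat(*) \to \AlgCat(*\times \mathcal{V}) \simeq \AlgCatV$. Unpacking the definition of the external product from \S\ref{subsec:AlgFib}, the composite $E_{S}^{*} \boxtimes I_{\mathcal{V}} \colon \simp^{\op}_{S} \simeq \simp^{\op}_{S} \times_{\simp^{\op}} \simp^{\op} \to *^{\times} \times_{\simp^{\op}} \mathcal{V}^{\otimes} \to (* \times \mathcal{V})^{\otimes} \simeq \mathcal{V}^{\otimes}$ factors through $\simp^{\op}$ and agrees there with the unique associative algebra structure on the unit, namely $I_{\mathcal{V}}$. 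This is exactly the defining formula for $E_{S}^{\mathcal{V}}$.

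For part (ii), the description of the tensoring in Example~\ref{ex:TisTensorIV} tells us that tensoring with $I_{\mathcal{V}}$ is (equivalent to) the functor $T_{*} \colon \AlgCat(\mathcal{S}) \to \AlgCatV$ induced by the monoidal functor $T \colon \mathcal{S}^{\times} \to \mathcal{V}^{\otimes}$ extending $t \colon \mathcal{S} \to \mathcal{V}$, $* \mapsto I_{\mathcal{V}}$. Consequently $E_{S}^{\mathcal{S}} \otimes I_{\mathcal{V}} \simeq T_{*} E_{S}^{\mathcal{S}}$ is the composite
\[
\simp^{\op}_{S} \to \simp^{\op} \xrightarrow{\,I_{\mathcal{S}}\,} \mathcal{S}^{\times} \xrightarrow{\,T\,} \mathcal{V}^{\otimes}.
\]
Since $T$ is monoidal, the tail $T \circ I_{\mathcal{S}}$ is again an associative algebra in $\mathcal{V}$, and by the uniqueness part of Proposition~\ref{propn:UnitAlg} it is canonically equivalent to $I_{\mathcal{V}}$. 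This identifies $T_{*} E_{S}^{\mathcal{S}}$ with $E_{S}^{\mathcal{V}}$.

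The only non-formal step is the identification of the $\AlgCat(*)$- and $\AlgCat(\mathcal{S})$-module structures with the external product and with $T_{*}$ respectively; for (i) this is immediate from the construction of the lax monoidal structure on $\AlgCat(\blank)$ in \S\ref{subsec:catalg}, and for (ii) it is precisely the content of Example~\ref{ex:TisTensorIV}. Once these identifications are made, the rest of the argument is just the two observations that (a) the external product $E_{S}^{*} \boxtimes I_{\mathcal{V}}$ factors through $\simp^{\op}$ and (b) any monoidal functor applied to $I_{\mathcal{S}}$ yields the essentially unique associative algebra structure on the unit of the target.
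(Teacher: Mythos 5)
Your proof of part (i) is essentially the paper's own argument: both identify the tensoring over $\AlgCat(*)$ with the external product $E_{S}^{*} \boxtimes I_{\mathcal{V}}$, unwind it as the composite $\simp^{\op}_{S} \times_{\simp^{\op}} \simp^{\op} \to \simp^{\op} \times_{\simp^{\op}} \mathcal{V}^{\otimes} \simeq \mathcal{V}^{\otimes}$, and observe that it factors through $\simp^{\op}$ followed by $I_{\mathcal{V}}$, which is the definition of $E_{S}^{\mathcal{V}}$.

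For part (ii) you take a genuinely different route, and it works. The paper bootstraps from (i): it writes $E_{S}^{\mathcal{S}} \simeq E_{S}^{*} \otimes I_{\mathcal{S}}$, invokes associativity to get $E_{S}^{\mathcal{S}} \otimes I_{\mathcal{V}} \simeq E_{S}^{*} \otimes (I_{\mathcal{S}} \otimes I_{\mathcal{V}}) \simeq E_{S}^{*} \otimes I_{\mathcal{V}}$, and then reapplies (i); the price is the (asserted, not proved) compatibility of the $\AlgCat(*)$- and $\AlgCat(\mathcal{S})$-tensorings. You instead use the identification of $(\blank) \otimes I_{\mathcal{V}}$ with $T_{*}$ from Example~\ref{ex:TisTensorIV} and compute $T_{*}E_{S}^{\mathcal{S}}$ directly as the composite $\simp^{\op}_{S} \to \simp^{\op} \xto{I_{\mathcal{S}}} \mathcal{S}^{\times} \xto{T} \mathcal{V}^{\otimes}$; since $T$ is monoidal, $T \circ I_{\mathcal{S}}$ is an algebra structure on the unit of $\mathcal{V}$ with invertible unit map, hence equivalent to $I_{\mathcal{V}}$ by Proposition~\ref{propn:UnitAlg}. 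Your version trades one ``unravelling the definitions'' step (compatibility of the two tensorings) for another (that the $\AlgCat(\mathcal{S})$-module structure is realized by $T_{*}$, which is exactly the content of Example~\ref{ex:TisTensorIV}), and is arguably more self-contained since it makes no appeal to part (i) in proving part (ii). Either route is acceptable; just note that the appeal to the uniqueness clause of Proposition~\ref{propn:UnitAlg} should be read as: $T \circ I_{\mathcal{S}}$ has underlying object $T(*) \simeq I_{\mathcal{V}}$ and its unit map is an equivalence because $T$ preserves the relevant coCartesian edge, so it is initial in $\Alg_{\simp^{\op}}(\mathcal{V})$ and therefore equivalent to $I_{\mathcal{V}}$.
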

\begin{proof}
  We first prove (i). Considering the construction of the external
  product in $\Alg$, we see that $E_{S}^{*} \otimes I_{\mathcal{V}}$
  is given by \[E_{S}^{*} \times_{\simp^{\op}} I_{\mathcal{V}} \colon
  \simp^{\op}_{S} \times_{\simp^{\op}} \simp^{\op} \to \simp^{\op}
  \times_{\simp^{\op}} \mathcal{V}^{\otimes} \simeq
  \mathcal{V}^{\otimes}.\] We can factor this as
  \[ \simp^{\op}_{S} \times_{\simp^{\op}} \simp^{\op} \xto{E_{S}^{*}
    \times_{\simp^{\op}} \id} \simp^{\op} \times_{\simp^{\op}}
  \simp^{\op} \xto{\id \times_{\simp^{\op}} I_{\mathcal{V}}}
  \simp^{\op} \times_{\simp^{\op}} \mathcal{V}^{\otimes}, \] which is
  clearly the same as $E_{S}^{\mathcal{V}}$.

  Now in the situtation of (ii), part (i) then gives an
  equivalence \[E_{S}^{\mathcal{S}} \otimes I_{\mathcal{V}} \simeq
  (E_{S}^{*} \otimes I_{\mathcal{S}}) \otimes I_{\mathcal{V}} \simeq
  E_{S}^{*} \otimes (I_{\mathcal{S}} \otimes I_{\mathcal{V}}) \simeq
  E_{S}^{*} \otimes I_{\mathcal{V}} \simeq E_{S}^{\mathcal{V}},\]
  since it is easy to see that the tensorings with $\AlgCat(*)$ and
  $\AlgCat(\mathcal{S})$ are compatible.
\end{proof}

\begin{proof}[Proof of Proposition~\ref{propn:eqCiseqUC}]
  By Lemma~\ref{lem:ESastensor}, the $\mathcal{V}$-\icat{}
  $E_{S}^{\mathcal{V}}$ is naturally equivalent to
  $T_{*}E_{S}^{\mathcal{S}}$. We now complete the proof by recalling
  that the functor $T_{*}$ is left adjoint to $U_{*}$.
\end{proof}

\begin{defn}
  We write $\iota_{1}\mathcal{C} := \Map_{\AlgCatV}(E^{1},
  \mathcal{C})$ for the \emph{space of equivalences} in a
  $\mathcal{V}$-\icat{} $\mathcal{C}$. More generally, we write
  $\iota_{n}\mathcal{C}$ for the mapping space $\Map_{\AlgCatV}(E^{n},
  \mathcal{C})$ --- we can think of this as the space of $n$
  composable equivalences in $\mathcal{C}$, together with all the
  coherence data for the compositions. These spaces form a simplicial
  space $\iota_{\bullet}\mathcal{C}$ --- here the face maps can be
  thought of as composing equivalences, and the degeneracy maps as
  inserting identity maps.
\end{defn}
\begin{remark}
  By Proposition~\ref{propn:eqCiseqUC} there is a natural equivalence
  $\iota_{\bullet}\mathcal{C} \simeq
  \iota_{\bullet}U_{*}\mathcal{C}$. This will allow us to reduce many
  of our arguments below to the case of spaces, where we can make use
  of results of Rezk from \cite{RezkCSS}.
\end{remark}

In particular, we will now use this to prove that our definition of
equivalence agrees with a number of other reasonable definitions:
\begin{propn}\label{propn:eqsareeqonmaps}
  Suppose $\mathcal{V}$ is a monoidal \icat{}
  and $\mathcal{C}$ is a $\mathcal{V}$-\icat{}. Let $X,Y$ be objects
  of $\mathcal{C}$ and $\alpha \colon I_{\mathcal{V}}
  \to \mathcal{C}(X,Y)$ a morphism in $\mathcal{C}$. Then the
  following are equivalent:
  \begin{enumerate}[(i)]
  \item $\alpha$ is an equivalence, i.e. it extends to a functor $E^{1}
    \to \mathcal{C}$.
  \item For all $Z \in \iota_{0}\mathcal{C}$, the composite map in
    $\mathcal{V}^{\otimes}$ 
    \[ \mathcal{C}(Y,Z) \to (I_{\mathcal{V}}, \mathcal{C}(Y, Z)) \to
    (\mathcal{C}(X,Y), \mathcal{C}(Y,Z)) \to \mathcal{C}(X, Z) \]\
    given by composing with $\alpha$ is an equivalence.
  \item For all $Z \in \iota_{0}\mathcal{C}$, the composite map in
    $\mathcal{V}^{\otimes}$ 
    \[ \mathcal{C}(Z,X) \to (\mathcal{C}(Z, X), I_{\mathcal{V}}) \to
    (\mathcal{C}(Z,X), \mathcal{C}(X,Y)) \to \mathcal{C}(X, Y) \]\
    given by composing with $\alpha$ is an equivalence.
  \item $\alpha$ has an inverse, i.e. a map $I \to \mathcal{C}(Y,X)$
    such that the composites
    \[ I \to (I, I) \xto{(\beta,\alpha)} (\mathcal{C}(X,Y), \mathcal{C}(Y,X)) \to
    \mathcal{C}(X,X),\]
    \[ I \to (I, I) \xto{(\alpha,\beta)} (\mathcal{C}(Y,X),
    \mathcal{C}(X,Y)) \to \mathcal{C}(Y,Y)\] are homotopic to the
    identity maps of $X$ and $Y$, respectively.
  \end{enumerate}
\end{propn}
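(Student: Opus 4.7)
The plan is to establish the chain (i) $\Rightarrow$ (iv) $\Rightarrow$ (ii) and symmetrically (iv) $\Rightarrow$ (iii) directly from the definition of $E^{1}$ together with the unitality and associativity of composition, and then close the loop via (ii) (resp.\ (iii)) $\Rightarrow$ (i) by reducing to the corresponding characterization of equivalences in a Segal space.

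For (i) $\Rightarrow$ (iv): by construction $E^{1}$ is the composite $\simp^{\op}_{\{0,1\}} \to \simp^{\op} \xto{I_{\mathcal{V}}} \mathcal{V}^{\otimes}$, so every hom-object of $E^{1}$ is $I_{\mathcal{V}}$ and its composition is the unique associative algebra structure on $I_{\mathcal{V}}$ guaranteed by Proposition~\ref{propn:UnitAlg}. A functor $F \colon E^{1} \to \mathcal{C}$ extending $\alpha$ therefore automatically produces $\beta := F_{1,0} \colon I_{\mathcal{V}} \to \mathcal{C}(Y,X)$ together with the $2$-simplices expressing the required relations $\beta \circ \alpha \simeq \id_{X}$ and $\alpha \circ \beta \simeq \id_{Y}$, which is precisely the data of (iv).

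For (iv) $\Rightarrow$ (ii): given an inverse $\beta$ as in (iv), left composition with $\beta$ yields a morphism $\mathcal{C}(X,Z) \to \mathcal{C}(Y,Z)$ in $\mathcal{V}$, and associativity of the composition in the algebra $\mathcal{C}$ combined with the homotopies $\beta \circ \alpha \simeq \id_{X}$, $\alpha \circ \beta \simeq \id_{Y}$ exhibits it as a two-sided homotopy inverse to composition with $\alpha$; any morphism in an \icat{} admitting a two-sided inverse up to homotopy is an equivalence. The implication (iv) $\Rightarrow$ (iii) is formally identical with right composition in place of left composition.

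The substantive step is closing the loop via (ii) $\Rightarrow$ (i) (the argument for (iii) $\Rightarrow$ (i) being symmetric). Here I propose to transport the hypothesis into the underlying Segal space by applying the lax monoidal functor $U \colon \mathcal{V}^{\otimes} \to \mathcal{S}^{\times}$ of Example~\ref{ex:TisTensorIV}. Since $U$ preserves equivalences and is a morphism of \nsiopds{}, it carries composition in $\mathcal{C}$ to composition in $U_{*}\mathcal{C}$, so (ii) implies that for every $Z$ the induced map $U_{*}\mathcal{C}(Y,Z) \to U_{*}\mathcal{C}(X,Z)$ given by composition with the morphism corresponding to $\alpha$ in the Segal space $U_{*}\mathcal{C}$ is an equivalence of spaces. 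By Rezk's characterization of equivalences in a Segal space (\cite{RezkCSS}), this implies $\alpha$ extends to a map $E^{1}_{\mathcal{S}} \to U_{*}\mathcal{C}$; by Proposition~\ref{propn:eqCiseqUC} this extension corresponds to the desired map $E^{1} \to \mathcal{C}$, giving (i). The main obstacle is verifying that the translation through $U_{*}$ genuinely sends composition-with-$\alpha$ in $\mathcal{C}$ to composition-with-$U\alpha$ in $U_{*}\mathcal{C}$; this is a naturality statement packaged into the identification $\AlgCat(\mathcal{S}) \simeq \SegI$ of Theorem~\ref{thm:AlgCatSisSeg} together with the fact that $U_{*}$ is induced from a lax monoidal functor and hence intertwines the algebra compositions on either side.
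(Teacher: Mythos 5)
Your proposal is essentially the paper's argument: the elementary implications are handled by unwinding the algebra structure of $E^{1}$ and the associativity/unitality of composition in $\mathcal{C}$, and the substantive implications (ii)$\Rightarrow$(i) and (iii)$\Rightarrow$(i) are closed exactly as in the paper, by passing to the underlying Segal space $U_{*}\mathcal{C}$ and invoking Rezk's characterization of homotopy equivalences. Your reorganization into the cycle (i)$\Rightarrow$(iv)$\Rightarrow$(ii),(iii)$\Rightarrow$(i) is a harmless (arguably slightly cleaner) variant of the paper's three separate equivalences (i)$\Leftrightarrow$(ii), (i)$\Leftrightarrow$(iii), (i)$\Leftrightarrow$(iv), the last of which the paper also deduces from Rezk rather than directly.

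One step is missing, though it is easily repaired: the proposition is stated for an arbitrary monoidal $\infty$-category $\mathcal{V}$, but the lax monoidal functor $U$ of Example~\ref{ex:TisTensorIV} and the comparison of Proposition~\ref{propn:eqCiseqUC} are only constructed, respectively stated, when $\mathcal{V}$ is \emph{presentably} monoidal ($U$ is built as the right adjoint of $T$ via Proposition~\ref{propn:rightadjlaxmon}, which needs presentability). Before transporting the hypothesis to $U_{*}\mathcal{C}$ you must therefore reduce to the presentable case, which the paper does by embedding $\mathcal{V}^{\otimes}$ monoidally into an $\infty$-category of presheaves in a larger universe; this embedding is fully faithful on mapping objects, so conditions (i)--(iv) are unaffected. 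With that reduction inserted, your argument goes through.
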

\begin{proof}
  We first show that (i) is equivalent to (ii). Suppose (i) holds,
  and let $\hat{\alpha} \colon E^{1} \to \mathcal{C}$ be an
  equivalence extending $\alpha$. Composing with the inverse
  equivalence from $Y$ to $X$ gives an inverse to composition with
  $\alpha$, since the composite map is composing with the composite $X
  \to Y \to X$, which is the identity.

  Now suppose (ii) holds. Without loss of generality, we may assume
  that $\mathcal{V}$ is presentably monoidal (by embedding in a
  presentably monoidal \icat{} of presheaves in a larger universe, if
  necessary). Then a map $E^{1}_{\mathcal{V}} \to \mathcal{C}$ is adjoint to a map
  $E^{1}_{\mathcal{S}} \to U_{*}\mathcal{C}$ where $U \colon
  \mathcal{V} \to \mathcal{S}$ is again as in Example~\ref{ex:TisTensorIV}.
  If (ii) holds for $\alpha$ then the analogous
  condition also holds for $\alpha$ considered as a morphism in
  $U_{*}\mathcal{C}$. It thus suffices to show that (ii) implies (i)
  in the case where $\mathcal{V}$ is $\mathcal{S}$. We again use the
  equivalence between $\mathcal{S}$-\icats{} and Segal spaces of
  Theorem~\ref{thm:AlgCatSisSeg}; the map
  $\alpha$ is clearly a ``homotopy equivalence'' in the sense of
  \cite[\S 5.5]{RezkCSS}, and so extends to a map from $E^{1}$ by
  \cite[Theorem 6.2]{RezkCSS}.

  The proof that (i) is equivalent to (iii) is similar, so it remains
  to prove that (i) is equivalent to (iv). Since equivalences are
  detected in $U_{*}\mathcal{C}$, this is immediate from
  \cite[Theorem 6.2]{RezkCSS}.
\end{proof}

The inclusion $[1]_{\mathcal{V}} \to E^{1}$ of
the map from $0$ to $1$ induces a map $\iota_{1}\mathcal{C} \to
\Map([1]_{\mathcal{V}}, \mathcal{C})$. The two inclusions of
$E^{0} \simeq [0]_{\mathcal{V}}$ into $[1]_{\mathcal{V}}$ and
$E^{1}$ then give a commutative triangle
\opctriangle{\iota_{1}\mathcal{C}}{\Map([1]_{\mathcal{V}},
  \mathcal{C})}{\iota_{0}\mathcal{C} \times
  \iota_{0}\mathcal{C}.}{}{}{}
We end this section by showing that on fibres, this map is an
inclusion of components:

\begin{defn}
  Suppose $\mathcal{C}$ is a $\mathcal{V}$-\icat{} and $X, Y$ are
  objects of $\mathcal{C}$. We let $\Map(I_{\mathcal{V}},
  \mathcal{C}(X,Y))_{\text{eq}}$ be the subspace of $\Map(I_{\mathcal{V}},
  \mathcal{C}(X,Y))$ consisting of the components in the image of
  $\iota_{1}\mathcal{C}_{X,Y}$ under the induced map on fibres in the
  diagram above.
\end{defn}

\begin{propn}\label{propn:eqeq}
  Suppose $\mathcal{V}$ is a presentably monoidal \icat{},
  $\mathcal{C}$ is a $\mathcal{V}$-\icat{}, and $X,Y$ are objects of
  $\mathcal{C}$. Then the map $(\iota_{1}\mathcal{C})_{X,Y} \to \Map(I_{\mathcal{V}},
  \mathcal{C}(X,Y))_{\txt{eq}}$ is an equivalence.
\end{propn}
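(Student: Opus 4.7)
The strategy is to reduce to the case of Segal spaces, where the corresponding statement is a theorem of Rezk. The two key ingredients are Proposition~\ref{propn:eqCiseqUC}, which identifies the mapping spaces out of the $E^{n}$'s in $\AlgCatV$ with the analogous mapping spaces out of the $E^{n}$'s in $\AlgCat(\mathcal{S})$ computed on the underlying Segal space $U_{*}\mathcal{C}$, and the analogous statement that $\Map_{\AlgCatV}([1]_{\mathcal{V}},\mathcal{C}) \simeq \Map_{\AlgCat(\mathcal{S})}([1]_{\mathcal{S}}, U_{*}\mathcal{C})$.

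I will first spell out the reduction. By Lemma~\ref{lem:ESastensor} we have natural equivalences $E^{1}_{\mathcal{V}} \simeq T_{*}E^{1}_{\mathcal{S}}$ and $[1]_{\mathcal{V}} \simeq T_{*}[1]_{\mathcal{S}}$ (since $T_{*}$ is tensoring with $I_{\mathcal{V}}$, as noted in Example~\ref{ex:TisTensorIV}), and these identifications are compatible with the inclusions $E^{0} \hookrightarrow [1] \hookrightarrow E^{1}$ on both sides. Applying the adjunction $T_{*} \dashv U_{*}$ of Lemma~\ref{lem:StrMonAdjAlgCat}, the commutative triangle
\[
\begin{tikzpicture}
\matrix (m) [matrix of math nodes,row sep=3em,column sep=1.2em,text height=1.5ex,text depth=0.25ex]
{ \iota_{1}\mathcal{C} \pgfmatrixnextcell \pgfmatrixnextcell \Map_{\AlgCatV}([1]_{\mathcal{V}},\mathcal{C}) \\ \pgfmatrixnextcell \iota_{0}\mathcal{C}\times\iota_{0}\mathcal{C} \pgfmatrixnextcell \\ };
\path[->,font=\footnotesize]
(m-1-1) edge (m-1-3)
(m-1-1) edge (m-2-2)
(m-1-3) edge (m-2-2);
\end{tikzpicture}
\]
is naturally equivalent to the analogous triangle for the underlying Segal space $U_{*}\mathcal{C}$. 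In particular, on fibres over $(X,Y)\in\iota_{0}\mathcal{C}\times\iota_{0}\mathcal{C}$ this gives an equivalence $(\iota_{1}\mathcal{C})_{X,Y} \simeq (\iota_{1}U_{*}\mathcal{C})_{X,Y}$ compatible with the equivalence $\Map_{\mathcal{V}}(I_{\mathcal{V}},\mathcal{C}(X,Y))\simeq (U_{*}\mathcal{C})(X,Y)$ between their targets.

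We are thus reduced to showing that, for any Segal space $W$ and any pair of points $X,Y\in W_{0}$, the fibre of the map $\iota_{1}W = \Map_{\AlgCat(\mathcal{S})}(E^{1}, W) \to W_{1}$ at $(X,Y)$ is an inclusion of connected components of $W(X,Y)=W_{1}\times_{W_{0}\times W_{0}}\{(X,Y)\}$. Under the equivalence $\AlgCat(\mathcal{S})\simeq \SegI$ of Theorem~\ref{thm:AlgCatSisSeg}, this is precisely Rezk's characterization of the space of homotopy equivalences in a Segal space: the natural map $\Map_{\SegI}(E^{1},W) \to W_{1}$ is fully faithful and its image is the subspace $W_{\txt{heq}}\subseteq W_{1}$ of homotopy equivalences, which is a union of components of $W_{1}$ (see \cite[Theorem 6.2]{RezkCSS}, applied fibrewise over $W_{0}\times W_{0}$). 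Since $\Map(I_{\mathcal{V}},\mathcal{C}(X,Y))_{\txt{eq}}$ was \emph{defined} as the union of components in the image of $(\iota_{1}\mathcal{C})_{X,Y}$, the induced map is surjective on $\pi_{0}$ by construction, and an equivalence on each component by the Segal space case, so it is an equivalence.

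The reduction to Segal spaces is entirely formal, so the only substantive input is Rezk's theorem; there is no significant obstacle beyond carefully verifying the compatibility of the identifications coming from $T_{*}\dashv U_{*}$ with the maps induced by $E^{0}\to[1]\to E^{1}$.
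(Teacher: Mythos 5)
Your proof is correct and follows essentially the same route as the paper: reduce to the underlying Segal space via Proposition~\ref{propn:eqCiseqUC} (together with the compatible identification of $\Map([1]_{\mathcal{V}},\mathcal{C})$ with $\Map([1]_{\mathcal{S}},U_{*}\mathcal{C})$), then invoke Rezk's identification of $\Map(E^{1},W)$ with the components of homotopy equivalences in $W_{1}$. The paper's version is terser — it cites \cite[Theorem 6.2]{RezkCSS} together with \cite[Proposition 11.1]{RezkCSS} and leaves the compatibility of the adjunction identifications implicit — but the substance is identical.
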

\begin{proof}
  By Proposition~\ref{propn:eqCiseqUC} it again suffices to
  prove this for $\mathcal{S}$-\icats{}. Using the identification of
  $\mathcal{S}$-\icats{} with Segal spaces of
  Theorem~\ref{thm:AlgCatSisSeg}, this therefore follows from
  the corresponding statement in that setting. The latter is a
  consequence of \cite[Theorem 6.2]{RezkCSS}, since if $\mathcal{C}$
  is a Segal space with objects $X$, $Y$, a point of
  $\mathcal{C}(X,Y)$ is a ``homotopy equivalence'' in the sense of
  \cite[\S 5.5]{RezkCSS} \IFF{} it extends to a map from
  $E^{1}_{\mathcal{S}}$, by \cite[Proposition 11.1]{RezkCSS}.
\end{proof}

\subsection{The Classifying Space of Equivalences}\label{subsec:equiv}
In this section we define the classifying space of equivalences in an
enriched \icat{}, and use this to define \emph{complete} enriched
\icats{}. We then prove that the simplicial space of equivalences is
always a groupoid object, which allows us to give a simpler
description of the completeness condition.

\begin{defn}
  Let $\mathcal{C}$ be a $\mathcal{V}$-\icat{}. The
  \defterm{classifying space of equivalences} $\iota \mathcal{C}$ of
  $\mathcal{C}$ is the geometric realization $|\iota_{\bullet}
  \mathcal{C}|$ of the simplicial space $\iota_{\bullet}\mathcal{C} :=
  \Map(E^{\bullet}, \mathcal{C})$.
\end{defn}

We regard $\iota \mathcal{C}$ as the ``correct'' space of objects of
$\mathcal{C}$, and by analogy with Rezk's notion of complete
Segal space we say that an enriched \icat{} is \emph{complete} if its
underlying space is the correct one:
\begin{defn}\label{defn:complete}
  A $\mathcal{V}$-\icat{} $\mathcal{C}$ is \defterm{complete} if the
  natural map $\iota_{0}\mathcal{C} \to \iota \mathcal{C}$ is an
  equivalence.
\end{defn}

Our next goal is to prove that the simplicial space $\iota
\mathcal{C}$ is always a groupoid object; we prove this by showing
that the cosimplicial object $E^{\bullet}$ satisfies the dual
condition of being a \emph{cogroupoid} object:
\begin{defn}\label{defn:cogroupoid}
  A cosimplicial object $X \colon \simp \to \mathcal{C}$ in an \icat{}
  $\mathcal{C}$ is a \defterm{cogroupoid object} if for every
  partition $[n] = S \cup S'$ such that $S \cap S'$ consists of a
  single element, the diagram \nolabelsmallcsquare{X(S \cap
    S')}{X(S)}{X(S')}{X([n])} is a pushout square.
\end{defn}

\begin{lemma} 
  If $X \colon \simp \to \mathcal{C}$ is a cogroupoid object in an
  \icat{} $\mathcal{C}$, then for every object $Y \in \mathcal{C}$ the
  simplicial space $\Map_{\mathcal{C}}(X, Y)$ is a groupoid object in
  spaces.
\end{lemma}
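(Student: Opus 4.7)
The plan is to dualize the cogroupoid condition directly. First, I recall Lurie's characterization of groupoid objects: a simplicial space $Z_{\bullet}\colon \simp^{\op}\to \mathcal{S}$ is a groupoid object precisely when, for every partition $[n] = S \cup S'$ with $S\cap S'$ a single element, the square
\[
\nodispnolabelsmallcsquare{Z([n])}{Z(S)}{Z(S')}{Z(S\cap S')}
\]
is a pullback in $\mathcal{S}$ (this is the groupoidal Segal condition, equivalent to Definition 6.1.2.7 of \cite{HTT}). In particular it specializes to the usual Segal/category-object pullback diagrams, together with the additional ``invertibility'' squares coming from non-convex partitions.

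Next, I would form the simplicial space $Z_{\bullet} := \Map_{\mathcal{C}}(X,Y)$ by composing $X^{\op}\colon \simp^{\op}\to \mathcal{C}^{\op}$ with the representable $\Map_{\mathcal{C}}(\blank, Y)\colon \mathcal{C}^{\op}\to \mathcal{S}$. The key general fact is that representable presheaves convert colimits in $\mathcal{C}$ into limits in $\mathcal{S}$; in particular they send pushout squares in $\mathcal{C}$ to pullback squares in $\mathcal{S}$.

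Now, given a partition $[n]=S\cup S'$ with $S\cap S'$ consisting of one element, Definition~\ref{defn:cogroupoid} tells us that
\[
\nodispnolabelsmallcsquare{X(S\cap S')}{X(S)}{X(S')}{X([n])}
\]
is a pushout square in $\mathcal{C}$. Applying $\Map_{\mathcal{C}}(\blank, Y)$ therefore yields the pullback square
\[
\nodispnolabelsmallcsquare{\Map_{\mathcal{C}}(X([n]),Y)}{\Map_{\mathcal{C}}(X(S),Y)}{\Map_{\mathcal{C}}(X(S'),Y)}{\Map_{\mathcal{C}}(X(S\cap S'),Y)}
\]
in $\mathcal{S}$, which is exactly the groupoidal Segal condition for $Z_{\bullet}$ at this partition. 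Since this holds for every such partition, $\Map_{\mathcal{C}}(X,Y)$ is a groupoid object in $\mathcal{S}$, completing the proof. There is no real obstacle here; the only thing to be careful about is matching the combinatorial form of Definition~\ref{defn:cogroupoid} against the standard characterization of groupoid objects in \cite[\S 6.1.2]{HTT}, but these agree on the nose.
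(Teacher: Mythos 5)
Your proof is correct, and it is exactly the evident argument: the paper actually states this lemma without proof, treating it as immediate from the fact that $\Map_{\mathcal{C}}(\blank,Y)$ carries colimits in $\mathcal{C}$ to limits in $\mathcal{S}$, so the pushout squares of Definition~\ref{defn:cogroupoid} become precisely the pullback squares characterizing groupoid objects in \cite[\S 6.1.2]{HTT}. Your write-up supplies the omitted details faithfully, including the correct matching of the partition conditions on the two sides.
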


\begin{thm}\label{thm:Ecogpd}
  If $\mathcal{V}$ is a presentably monoidal \icat{} then
  the cosimplicial object $E^{\bullet}$ is a cogroupoid object.
\end{thm}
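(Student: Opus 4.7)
The plan is to exploit the fact that being a cogroupoid is a Yoneda-dual condition and to reduce successively to the known case of Segal spaces. Concretely, by the preceding lemma (and its Yoneda converse), the cosimplicial object $E^{\bullet}$ is a cogroupoid in $\AlgCatV$ if and only if, for every $\mathcal{V}$-\icat{} $\mathcal{C}$, the simplicial space $\iota_{\bullet}\mathcal{C} = \Map_{\AlgCatV}(E^{\bullet}, \mathcal{C})$ is a groupoid object in $\mathcal{S}$. So I would first rephrase the statement in this dual form.

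Next I would use Proposition~\ref{propn:eqCiseqUC} (which rests on Lemma~\ref{lem:ESastensor}, identifying $E^{n}_{\mathcal{V}}$ with $E^{n}_{\mathcal{S}} \otimes I_{\mathcal{V}} \simeq T_{*}E^{n}_{\mathcal{S}}$) to obtain a natural equivalence
\[
\iota_{\bullet}\mathcal{C} \;\simeq\; \Map_{\AlgCat(\mathcal{S})}(E^{\bullet}_{\mathcal{S}}, U_{*}\mathcal{C}) \;=\; \iota_{\bullet} U_{*}\mathcal{C}
\]
of simplicial spaces, where $U_{*} \colon \AlgCatV \to \AlgCat(\mathcal{S})$ is the right adjoint from Example~\ref{ex:TisTensorIV}. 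Thus it suffices to prove that $\iota_{\bullet}\mathcal{D}$ is a groupoid object for every $\mathcal{S}$-\icat{} $\mathcal{D}$, i.e.\ it suffices to establish the cogroupoid property for $E^{\bullet}_{\mathcal{S}}$ in $\AlgCat(\mathcal{S})$.

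Finally, I would invoke the equivalence $\AlgCat(\mathcal{S}) \simeq \SegI$ of Theorem~\ref{thm:AlgCatSisSeg}. Under this equivalence, $E^{n}_{\mathcal{S}}$ is sent to the constant Segal space $i_{*}\{0,\ldots,n\}$, which represents the functor sending a Segal space $W$ to its space $\iota_{n}W$ of $n$-tuples of composable equivalences in $W$ (cf.\ Proposition~\ref{propn:eqeq}). That the resulting simplicial space $\iota_{\bullet}W$ is a groupoid object is precisely Rezk's fundamental observation in \cite{RezkCSS} (which is how he even formulates the completeness condition): see \cite[\S 5.5 and Theorem 6.2]{RezkCSS}, where the relevant pullback diagrams are verified by direct inspection using the Segal condition on $W$. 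Dualising back through the chain of equivalences gives the required pushout squares for $E^{\bullet}$ in $\AlgCatV$.

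The main obstacle, if any, is the reduction in the second paragraph: one has to be a little careful that the identifications produced by Lemma~\ref{lem:ESastensor} assemble into a \emph{cosimplicial} equivalence $E^{\bullet}_{\mathcal{V}} \simeq T_{*}E^{\bullet}_{\mathcal{S}}$, and that the functor $T_{*}$ is genuinely a left adjoint so that translating a pushout through it is legitimate. Both are essentially bookkeeping --- the tensoring operation is functorial in both variables by Corollary~\ref{cor:prestensAlgCatS} and Corollary~\ref{cor:AlgCatLaxMonPr}, and $T_{*} \dashv U_{*}$ by Lemma~\ref{lem:StrMonAdjAlgCat} --- but they are the only nonformal point, since after this reduction everything rests on Rezk's existing result.
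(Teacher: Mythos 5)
Your reductions match the paper's: the Yoneda dualization to ``$\iota_{\bullet}\mathcal{C}$ is a groupoid object for every $\mathcal{C}$'', the passage to $\mathcal{V}=\mathcal{S}$ via Proposition~\ref{propn:eqCiseqUC} (the bookkeeping about $E^{\bullet}_{\mathcal{V}}\simeq T_{*}E^{\bullet}_{\mathcal{S}}$ that you flag is indeed fine), and the identification of $E^{n}_{\mathcal{S}}$ with the Segal space $i_{*}\{0,\ldots,n\}$, i.e.\ with (the image under $\pi^{*}$ of) the nerve of the coarse groupoid on $n+1$ objects. Up to that point you are reproducing the paper's argument.

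The gap is in the last step. The statement you need --- that for \emph{every} Segal space $W$ (not just complete ones) the simplicial space $\Map(E^{\bullet},W)$ is a groupoid object, equivalently that $E^{N}\amalg_{E_{\{N\}}}E_{\{N,N+1\}}\to E^{N+1}$ is an equivalence in $\SegI$ --- is not contained in the parts of \cite{RezkCSS} you cite. Rezk's \S 5.5 and Theorem 6.2 identify $\Map(E,W)\to W_{1}$ as an equivalence onto the components of homotopy equivalences; that is a statement about $\iota_{1}W$ alone, and Rezk's completeness condition is phrased as $W_{0}\to W_{\mathrm{hoequiv}}$ being an equivalence, not via the groupoid object $\iota_{\bullet}W$ (that reformulation is what the present theorem, together with Corollary~\ref{cor:completeifE1}, makes possible). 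What is actually required is that the comparison map from the homotopy pushout $\mathrm{N}\mathcal{I}_{\{0,\ldots,N\}}\amalg_{\mathrm{N}\mathcal{I}_{\{N\}}}\mathrm{N}\mathcal{I}_{\{N,N+1\}}\to\mathrm{N}\mathcal{I}_{\{0,\ldots,N+1\}}$ is a Segal equivalence; the nerve functor does not send pushouts of categories to homotopy pushouts in general, so this is a genuine theorem. The paper proves it by a page of combinatorics: a triple nested filtration of the simplices of $G_{N+1}=\mathrm{N}\mathcal{I}_{\{0,\ldots,N+1\}}$ exhibiting the inclusion $G_{N}\amalg_{G_{\{N\}}}G_{\{N,N+1\}}\hookrightarrow G_{N+1}$ as inner anodyne, using only Rezk's Remark 10.2 (that inner horn inclusions become Segal equivalences under $\pi^{*}$) as outside input. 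Your proposal replaces this, the mathematical core of the proof, with a citation that does not cover it; you would need to supply an argument of this kind (or locate a genuinely applicable reference) to close the proof.
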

\begin{proof}
  We will show that $E^{N} \amalg_{E_{\{N\}}} E_{\{N,N+1\}} \to
  E^{N+1}$ is an equivalence; by induction this will imply that
  $E^{\bullet}$ is a cogroupoid object, as the ordering of the objects
  is arbitrary. Since $\mathcal{V}$ is presentably monoidal, by
  Proposition~\ref{propn:eqCiseqUC} it suffices to prove this when
  $\mathcal{V}$ is $\mathcal{S}$.

  Under the equivalence $\AlgCat(\mathcal{S}) \isoto \SegI$
  of Theorem~\ref{thm:AlgCatSisSeg}
  the $\mathcal{S}$-\icat{} $E_{S}$ clearly corresponds to the Segal
  space $i_{*}S$. If $S$ is a set it follows that in the model
  category structure on bisimplicial sets modelling Segal spaces,
  $E_{S}$ corresponds to $\pi^{*}N\mathcal{I}_{S}$ where
  $\mathcal{I}_{S}$ is the ordinary category with objects $S$ and a
  unique morphism between any pair of objects, and $\pi \colon
  \simp^{\op} \times \simp^{\op} \to \simp^{\op}$ is the projection
  onto the first factor.

  Define $G_{N} := \mathrm{N}\mathcal{I}_{\{0, \ldots, N\}}$. By
  \cite[Remark 10.2]{RezkCSS}, for $0 < i < n$ the map
  $\pi^{*}\Lambda^{n}_{k} \to \pi^{*}\Delta^{n}$ is a Segal
  equivalence, so (since $\pi^{*}$ is a left adjoint and thus
  preserves colimits) it suffices to prove that
  $G_{N}\amalg_{G_{\{N\}}} G_{\{N,N+1\}} \hookrightarrow G_{N+1}$ is
  an inner anodyne morphism of simplicial sets. To prove this we
  consider a series of nested filtrations of the simplices of
  $G_{N+1}$. First we must introduce some notation:

  An $n$-simplex $\sigma$ of $G_{N+1}$ can be described by a list
  $a_{0}\cdots a_{n}$ of elements $a_{i} \in \{0, \ldots, N+1\}$; it
  is non-degenerate if $a_{i} \neq a_{i+1}$ for all $i$. If $\sigma$
  is a non-degenerate simplex, let $\beta(\sigma)$ be the number of
  times the sequence jumps between $\{0, \ldots, N\}$ and $\{N,
  N+1\}$. 

  Also let $\tau(\sigma)$ is the position of the first $N+1$ where the
  sequence jumps from $\{N,
  N+1\}$ to $\{0, \ldots, N\}$; if there is no such jump let
  $\tau(\sigma) = \infty$ and let $\tau'(\sigma)$ denote the position
  of the first jump from $\{0, \ldots, N\}$ to $\{N,
  N+1\}$. Then we make the following definitions:
  \begin{itemize}
  \item If $t \neq \infty$, let $S^{b,t}_{n}$ be the set of non-degenerate $n$-simplices
    $\sigma$ in $G_{N+1}$ such that $\beta(\sigma) = b$, $\tau(\sigma)
    = t$, and $a_{t+1} \neq N$. Let $S^{1,\infty,t}_{n}$ be the set of
    non-degenerate $n$-simplices in $G_{N+1}$ such that $\beta(\sigma)
    = 1$, $\tau(\sigma) = \infty$, $\tau'(\sigma) = t$, and $a_{t-1}
    \neq N$.
  \item If $t \neq \infty$, let $T^{b,t}_{n}$ be the set of
    non-degenerate $(n+1)$-simplices $\sigma$ in $G_{N+1}$ such that
    $\beta(\sigma) = b$, $\tau(\sigma) = t$ and $a_{t+1} = N$. Let
    $T^{1,\infty,t}_{n}$ be the set of non-degenerate
    $(n+1)$-simplices $\sigma$ in $G_{N+1}$ such that $\beta(\sigma) =
    1$, $\tau(\sigma) = \infty$, $\tau'(\sigma)=t+1$, and $a_{t} = N$.
  \end{itemize}
  Define a filtration
  \[G_{N}\amalg_{G_{\{N\}}} G_{\{N,N+1\}} =: \mathcal{F}_{0} \subseteq
  \mathcal{F}_{1} \subseteq \cdots \subseteq G_{N+1} \] by letting
  $\mathcal{F}_{n}$ be the subspace of $G_{N+1}$ whose non-degenerate
  simplices are those of $\mathcal{F}_{0}$ together with all the
  non-degenerate $i$-simplices for $i \leq n$ and the
  $(n+1)$-simplices in $T^{b,t}_{n}$ and $T^{1,\infty,t}_{n}$ for all
  $b,t$. Then $G_{N+1} =
  \bigcup_{n} \mathcal{F}_{n}$, so to prove that
  $G_{N}\amalg_{G_{\{N\}}} G_{\{N,N+1\}} \hookrightarrow G_{N+1}$ is
  inner anodyne it suffices to prove that the inclusions
  $\mathcal{F}_{n-1} \hookrightarrow \mathcal{F}_{n}$ are inner
  anodyne.

  Next define a filtration
  \[ \mathcal{F}_{n-1} =: \mathcal{F}_{n}^{0} \subseteq
  \mathcal{F}_{n}^{1} \subseteq \cdots \subseteq \mathcal{F}_{n}^{n-1}
  := \mathcal{F}_{n}\] by setting $\mathcal{F}_{n}^{b}$ to be the
  subspace of $\mathcal{F}_{n}$ containing $\mathcal{F}_{n-1}$
  together with the simplices in $S_{n}^{i,t}$ and $T_{n}^{i,t}$ for
  all $i \leq b$ together with $S_{n}^{1,\infty,t}$ and
  $T_{n}^{1,\infty,t}$ for all $t$. To prove that the inclusions
  $\mathcal{F}_{n-1} \hookrightarrow \mathcal{F}_{n}$ are inner
  anodyne it suffices to prove that the inclusions
  $\mathcal{F}_{n}^{b-1} \hookrightarrow \mathcal{F}_{n}^{b}$ are all
  inner anodyne.

  Finally define a filtration
  \[ \mathcal{F}_{n}^{b-1} =: \mathcal{F}_{n}^{b,n+1} \subseteq
  \mathcal{F}_{n}^{b, n} \subseteq \cdots \subseteq
  \mathcal{F}_{n}^{b,0} := \mathcal{F}_{n}^{b},\] by setting
  $\mathcal{F}_{n}^{b, t}$ to be the subspace of $\mathcal{F}_{n}^{b}$
  containing $\mathcal{F}_{n}^{b-1}$ together with the simplices in
  $S_{n}^{b,j}$ and $T_{n}^{b,j}$ (as well as $S_{n}^{1,\infty,j}$ and
  $T_{n}^{1,\infty,j}$ if $b = 1$) for all $j \geq t$. To prove that
  the inclusions $\mathcal{F}_{n}^{b-1} \hookrightarrow
  \mathcal{F}_{n}^{b}$ are inner anodyne it suffices to show that the
  inclusions $\mathcal{F}_{n}^{b,t-1} \hookrightarrow
  \mathcal{F}_{n}^{b,t}$ are all inner anodyne.

  Now observe that (for $b > 1$) if $\sigma \in T_{n}^{b,t}$ then
  $d_{t}\sigma \in S_{n}^{b,t}$ and $d_{i}\sigma \in
  \mathcal{F}_{n}^{b,t-1}$ for $i \neq t$, and $\sigma$ is uniquely
  determined by $d_{t}\sigma$.  Thus we get a pushout diagram
  \nolabelcsquare{\coprod_{\sigma \in T_{n}^{b,t}}
    \Lambda^{n+1}_{t}}{\coprod_{\sigma \in T_{n}^{b,t}}
    \Delta^{n+1}}{\mathcal{F}_{n}^{b,t-1}}{\mathcal{F}_{n}^{b,t}}
  where we always have $0 < t < n+1$. Thus the bottom horizontal map
  is inner anodyne. The proof is similar when $b = 1$, expect that we
  must also consider the simplices in $S_{n}^{1,\infty,t}$, so we
  conclude that $G_{N}\amalg_{G_{\{N\}}} G_{\{N,N+1\}} \to G_{N+1}$ is
  indeed inner anodyne.
\end{proof}

\begin{remark}
  We can generalize this to the case of an arbitrary large monoidal
  \icat{} $\mathcal{V}$ as follows: by \cite[Remark
  4.8.1.8]{HA} there exists a presentably monoidal structure on the
  (very large) presentable \icat{}
  $\widehat{\mathcal{P}}(\mathcal{V})$ of presheaves of large spaces
  on $\mathcal{V}$, such that the Yoneda embedding $\mathcal{V} \to
  \widehat{\mathcal{P}}(\mathcal{V})$ is a monoidal functor. This induces
  a fully faithful embedding \[\AlgCat(\mathcal{V}) \to
  \LAlgCat(\widehat{\mathcal{P}}(\mathcal{V}));\] moreover,
  if $X$ a small space then
  $E_{X}^{\widehat{\mathcal{P}}(\mathcal{V})}$ is clearly the image of
  $E_{X}^{\mathcal{V}}$. Thus if a diagram of $E_{X}^{\mathcal{V}}$'s
  is a colimit diagram in
  $\LAlgCat(\widehat{\mathcal{P}}(\mathcal{V}))$ it must
  also be a colimit diagram in $\AlgCat(\mathcal{V})$ --- in
  particular $E^{\bullet}_{\mathcal{V}}$ is a cogroupoid object in
  $\AlgCat(\mathcal{V})$.
\end{remark}

\begin{cor}\label{cor:iotagpd}
  The simplicial space $\iota_{\bullet}\mathcal{C}$ is a groupoid
  object in spaces for all $\mathcal{V}$-\icats{} $\mathcal{C}$.
\end{cor}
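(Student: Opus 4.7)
The plan is to combine Theorem~\ref{thm:Ecogpd} with the lemma immediately preceding it, which states that if $X \colon \simp \to \mathcal{C}$ is a cogroupoid object and $Y \in \mathcal{C}$, then $\Map_{\mathcal{C}}(X, Y)$ is a groupoid object in spaces. Since by definition $\iota_{\bullet}\mathcal{C} = \Map_{\AlgCatV}(E^{\bullet}, \mathcal{C})$, it will suffice to show that $E^{\bullet}$ is a cogroupoid object of $\AlgCatV$ for \emph{any} monoidal \icat{} $\mathcal{V}$, not just a presentably monoidal one.

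First I would handle the presentably monoidal case, which is already settled: Theorem~\ref{thm:Ecogpd} gives exactly this statement, so applying the lemma to the object $\mathcal{C}$ of $\AlgCatV$ yields that $\iota_{\bullet}\mathcal{C}$ is a groupoid object in spaces. Next I would reduce the general case to this one via the embedding trick sketched in the remark between Theorem~\ref{thm:Ecogpd} and Corollary~\ref{cor:iotagpd}: given an arbitrary large monoidal \icat{} $\mathcal{V}$, the Day-convolution monoidal structure on $\widehat{\Pre}(\mathcal{V})$ is presentably monoidal, and the Yoneda embedding $y \colon \mathcal{V} \to \widehat{\Pre}(\mathcal{V})$ is monoidal, so by functoriality of $\AlgCat(\blank)$ (Proposition~\ref{propn:AlgCatLaxMon}) we obtain a fully faithful functor $y_{*} \colon \AlgCat(\mathcal{V}) \hookrightarrow \LAlgCat(\widehat{\Pre}(\mathcal{V}))$ that identifies $E^{\bullet}_{\mathcal{V}}$ with $E^{\bullet}_{\widehat{\Pre}(\mathcal{V})}$. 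Since colimits in $\AlgCatV$ are detected by this embedding (as it is fully faithful and the relevant pushout diagrams are colimit diagrams in the target), the cogroupoid condition descends from $\widehat{\Pre}(\mathcal{V})$ to $\mathcal{V}$.

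With $E^{\bullet}$ established as a cogroupoid object of $\AlgCatV$ in the general case, applying $\Map_{\AlgCatV}(\blank, \mathcal{C})$ sends the defining pushout squares of Definition~\ref{defn:cogroupoid} to pullback squares of spaces, which is precisely the groupoid condition on $\iota_{\bullet}\mathcal{C}$. The only nontrivial point is the descent of the cogroupoid condition through $y_{*}$; this is the step I expect to be most delicate, but it follows once one observes that the relevant pushouts $E^{N} \amalg_{E_{\{N\}}} E_{\{N,N+1\}} \to E^{N+1}$ are colimit diagrams in $\AlgCatV$ because fully faithful functors reflect equivalences, and the image is known to be an equivalence in $\LAlgCat(\widehat{\Pre}(\mathcal{V}))$ by Theorem~\ref{thm:Ecogpd}.
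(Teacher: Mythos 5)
Your proposal is correct and follows the paper's route exactly: the paper deduces this corollary by applying the lemma on mapping spaces out of cogroupoid objects to Theorem~\ref{thm:Ecogpd}, extended to arbitrary (large) monoidal $\infty$-categories via the Yoneda embedding into $\widehat{\mathcal{P}}(\mathcal{V})$ precisely as in the remark you cite. The only cosmetic point is that the descent of the colimit condition through the fully faithful embedding is more accurately justified by noting that the colimit universal property is tested on mapping spaces, which the embedding preserves, rather than by ``reflecting equivalences''---but you state this correctly earlier in the proposal.
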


\begin{lemma}\label{lem:segspconst}
  Suppose $X_{\bullet}$ is a category object in an \icat{}
  $\mathcal{C}$. Then the following are equivalent:
  \begin{enumerate}[(i)]
  \item The functor $X_{\bullet}$ is constant.
  \item The map $s_{0} \colon X_{0} \to X_{1}$ is an equivalence.
  \end{enumerate}
\end{lemma}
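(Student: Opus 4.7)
The implication (i) $\Rightarrow$ (ii) is immediate, since in a constant diagram every structure map is an equivalence. For the converse, the plan is to reduce the problem to showing that certain iterated degeneracies $X_0 \to X_n$ are equivalences, and then use the Segal condition to check this directly.

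The key observation is that $[0]$ is terminal in $\simp$, hence initial in $\simp^{\op}$. It follows that the inclusion $\{[0]\} \hookrightarrow \simp^{\op}$ admits (on the level of \icats{}) a right adjoint given by the unique functor to the terminal \icat{}, and therefore the constant diagram functor $c \colon \mathcal{C} \to \Fun(\simp^{\op}, \mathcal{C})$, which is the right Kan extension along $\{[0]\} \hookrightarrow \simp^{\op}$, is \emph{left} adjoint to evaluation at $[0]$. Concretely this adjunction just records the fact that for any $Y \in \Fun(\simp^{\op}, \mathcal{C})$, a natural transformation $cX_0 \to Y$ is determined by its value at $[0]$ (since every $\eta_{[n]} \colon X_0 \to Y_n$ must equal $Y(\phi_n) \circ \eta_{[0]}$ for the unique $\phi_n \colon [0] \to [n]$ in $\simp^{\op}$). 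Consequently, $X_\bullet$ lies in the essential image of $c$ if and only if the counit $cX_0 \to X_\bullet$ is an equivalence, which levelwise is the iterated degeneracy map $\sigma_n \colon X_0 \to X_n$ corresponding to the unique map $[n] \to [0]$ in $\simp$.

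Granting (ii), the two face maps $d_0, d_1 \colon X_1 \to X_0$ are retractions of $s_0$, and so by the 2-out-of-3 property they too are equivalences. The Segal condition then identifies $X_n$ with the iterated pullback $X_1 \times_{X_0} \cdots \times_{X_0} X_1$ whose structure maps to $X_0$ are built from $d_0$ and $d_1$ and are therefore equivalences; hence $X_n \simeq X_0$ and the iterated degeneracy $\sigma_n$ corresponds under this identification to the iterated diagonal $X_0 \to X_0 \times_{X_0} \cdots \times_{X_0} X_0$, which is an equivalence. Thus every level of the counit $cX_0 \to X_\bullet$ is an equivalence, so $X_\bullet$ is constant.

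The argument is essentially formal: the main (minor) obstacle is checking that the counit of the adjunction $c \dashv \txt{ev}_{[0]}$ really is the levelwise iterated degeneracy, but this is forced by the universal property together with the fact that $[0]$ is terminal in $\simp$; once that is in place, the Segal condition does the rest.
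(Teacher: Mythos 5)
Your proof is correct, and its computational heart --- deducing that $d_{0},d_{1}\colon X_{1}\to X_{0}$ are equivalences by 2-out-of-3 and then exploiting the Segal condition --- is the same as the paper's. Where you differ is in how constancy is certified at the end. The paper shows, via the pullback squares coming from the Segal condition, that the face maps $d_{0}\colon X_{n}\to X_{n-1}$ are equivalences and then that \emph{all} face and degeneracy maps are equivalences, leaving implicit the final step that a simplicial object inverting every structure map is constant (which rests on $\simp^{\op}$ being weakly contractible, as the paper invokes explicitly only in the following lemma). You instead exhibit a specific comparison map $cX_{0}\to X_{\bullet}$ --- the counit of the adjunction $c\dashv \txt{ev}_{[0]}$, which exists because $[0]$ is initial in $\simp^{\op}$ --- and check it is a levelwise equivalence by identifying its components with the iterated degeneracies. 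This makes the reduction ``constant $\Leftrightarrow$ certain maps are equivalences'' completely explicit and avoids any appeal to weak contractibility, at the modest cost of setting up the adjunction. One small correction: since $c$ is \emph{left} adjoint to $\txt{ev}_{[0]}$, it is the \emph{left} Kan extension along $\{[0]\}\hookrightarrow\simp^{\op}$, not the right one (the right Kan extension along this inclusion is $[n]\mapsto X_{0}^{\times(n+1)}$, i.e.\ the functor $i_{*}$ used elsewhere in the paper to build $\simp^{\op}_{S}$). This slip is purely terminological and does not affect your argument, since the universal property you actually use --- that a map $cX_{0}\to Y$ is determined by its value at $[0]$ --- is the correct one.
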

\begin{proof}
  It is obvious that (i) implies (ii). To show that (ii) implies (i) first observe that if $s_{0} \colon
  X_{0} \to X_{1}$ is an equivalence, then by the 2-out-of-3 property
  $d_{0},d_{1} \colon X_{1} \to X_{0}$ are also equivalences. Since
  $X_{\bullet}$ is a category object we have pullback diagrams
  \csquare{X_n}{X_{n-1}}{X_1}{X_0,}{d_0}{}{}{} and so the face maps $d_{0} \colon X_{n} \to X_{n-1}$ are 
  equivalences for all $i$ and $n$. Combining this with the simplicial
  identities we see inductively that all face maps and degeneracies
  are equivalences.
\end{proof}

\begin{lemma}\label{lem:gpdcolim}
  Suppose $U_{\bullet}$ is a groupoid object in $\mathcal{S}$. The
  following are equivalent:
  \begin{enumerate}[(i)]
  \item The map $U_{0} \to |U_{\bullet}|$ is an equivalence.
  \item The map $s_{0} \colon U_{0} \to U_{1}$ is an equivalence.
  \item The simplicial object $U_{\bullet}$ is constant, i.e. for
    every map $\phi \colon [n] \to [m]$ in $\simp^{\op}$ the
    induced map $\iota_{n}\mathcal{C} \to \iota_{m}\mathcal{C}$ is an
    equivalence.
  \end{enumerate}
\end{lemma}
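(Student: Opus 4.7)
The plan is to split the equivalence into three implications, two of which are essentially immediate and one of which requires the effectivity of groupoid objects in an $\infty$-topos.

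First I would observe that (ii) $\Leftrightarrow$ (iii) follows directly from Lemma~\ref{lem:segspconst}: a groupoid object is in particular a category object, so the statement that $s_0 \colon U_0 \to U_1$ is an equivalence is equivalent to $U_\bullet$ being constant.

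Next, (iii) $\Rightarrow$ (i) is a standard fact: the colimit of a constant simplicial object with value $U_0$ is $U_0 \otimes |\simp^{\op}| \simeq U_0$, since the nerve of $\simp^{\op}$ is weakly contractible. Under this identification the canonical map $U_0 \to |U_\bullet|$ is the identity.

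The main content is (i) $\Rightarrow$ (ii), and the key ingredient here is that $\mathcal{S}$ is an $\infty$-topos, hence every groupoid object is \emph{effective} (see \cite[\S 6.1.3]{HTT}). Concretely this means that $U_\bullet$ is the \v{C}ech nerve of the canonical map $U_0 \to |U_\bullet|$, so that
\[ U_n \simeq \underbrace{U_0 \times_{|U_\bullet|} \cdots \times_{|U_\bullet|} U_0}_{n+1\text{ factors}}, \]
with the degeneracy $s_0 \colon U_0 \to U_1$ corresponding to the diagonal $U_0 \to U_0 \times_{|U_\bullet|} U_0$. Assuming (i), the map $U_0 \to |U_\bullet|$ is an equivalence, hence the pullback $U_0 \times_{|U_\bullet|} U_0$ is equivalent to $U_0$ itself, and the diagonal becomes an equivalence. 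Therefore $s_0$ is an equivalence, proving (ii). No step here should be a serious obstacle given that the effectivity of groupoids in $\mathcal{S}$ is available off the shelf; the only thing to double-check is that the degeneracy in the \v{C}ech nerve matches $s_0$ under the identification above, which is built into the description of the \v{C}ech nerve as a right Kan extension.
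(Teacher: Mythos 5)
Your proposal is correct and follows essentially the same route as the paper: (ii) $\Leftrightarrow$ (iii) via Lemma~\ref{lem:segspconst}, (iii) $\Rightarrow$ (i) from weak contractibility of $\simp^{\op}$, and (i) $\Rightarrow$ (ii) from effectivity of groupoid objects in the $\infty$-topos $\mathcal{S}$. The only cosmetic difference is that the paper deduces that $s_0$ is an equivalence from the face maps $d_0, d_1$ being pullbacks of an equivalence plus 2-out-of-3, while you identify $s_0$ directly with the diagonal of the \v{C}ech pullback; both arguments are fine.
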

\begin{proof}
  We first show that (i) implies (ii): Since $\mathcal{S}$ is an
  $\infty$-topos, the groupoid object $U_{\bullet}$ is effective,
  i.e. it is equivalent to the Čech nerve of the map $U_{0} \to
  |U_{\bullet}|$. Thus we have a pullback diagram \csquare{U_1
  }{U_0}{U_0}{{|U_\bullet|,}}{d_0}{d_1}{}{} so the maps $d_{0}$,
  $d_{1}$ are equivalences. From the 2-out-of-3 property it follows
  that $s_{0}$ is also an equivalence. It follows from
  Lemma~\ref{lem:segspconst} that (ii) implies (iii).  Finally (iii)
  implies (i) since the simplicial set $\simp^{\op}$ is weakly
  contractible.
\end{proof}

We can now give a simpler characterization of the completeness
conditionf or $\mathcal{V}$-\icats{}:
\begin{cor}\label{cor:completeifE1}
  Let $\mathcal{C}$ be a $\mathcal{V}$-\icat{}. The following are
  equivalent:
  \begin{enumerate}[(i)]
  \item $\mathcal{C}$ is complete.
  \item The natural map $s_{0} \colon \iota_{0}\mathcal{C} \to
    \iota_{1}\mathcal{C}$ is an equivalence.
  \item The simplicial space $\iota_{\bullet}\mathcal{C}$ is constant
    (i.e. for every map $\phi \colon [n] \to [m]$ in $\simp^{\op}$ the
    induced map $\iota_{n}\mathcal{C} \to \iota_{m}\mathcal{C}$ is an
    equivalence).
  \end{enumerate}
\end{cor}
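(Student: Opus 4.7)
The proof is an immediate combination of the two main technical inputs just established: the cogroupoid theorem for $E^\bullet$ (Theorem~\ref{thm:Ecogpd}) and the general fact about groupoid objects in spaces (Lemma~\ref{lem:gpdcolim}). I would proceed as follows.

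First, I would recall that by Corollary~\ref{cor:iotagpd} the simplicial space $\iota_\bullet\mathcal{C} = \Map_{\AlgCatV}(E^\bullet, \mathcal{C})$ is a groupoid object in $\mathcal{S}$, since $E^\bullet$ is a cogroupoid object in $\AlgCatV$. (When $\mathcal{V}$ is not presentably monoidal, this still holds by the embedding $\AlgCat(\mathcal{V}) \hookrightarrow \LAlgCat(\widehat{\mathcal{P}}(\mathcal{V}))$ into the presentable-monoidal case in a larger universe, as noted in the remark following Theorem~\ref{thm:Ecogpd}.)

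Next, I would apply Lemma~\ref{lem:gpdcolim} to the groupoid object $U_\bullet := \iota_\bullet\mathcal{C}$. Under this identification, $U_0 = \iota_0\mathcal{C}$ and $|U_\bullet| = |\iota_\bullet\mathcal{C}| = \iota\mathcal{C}$, so condition (i) of Lemma~\ref{lem:gpdcolim} (that $U_0 \to |U_\bullet|$ is an equivalence) is exactly the completeness condition (i) of Corollary~\ref{cor:completeifE1}. Condition (ii) of the lemma (that $s_0 \colon U_0 \to U_1$ is an equivalence) is exactly condition (ii) of the corollary. Condition (iii) of the lemma (that $U_\bullet$ is constant) is exactly condition (iii) of the corollary. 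Thus the three conditions are equivalent.

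There is essentially no obstacle: the corollary is a straightforward translation of Lemma~\ref{lem:gpdcolim} along the identification of $\iota_\bullet\mathcal{C}$ as a groupoid object in spaces, and all the real work has already been done in proving Theorem~\ref{thm:Ecogpd} (which required the detailed filtration argument showing $G_N \amalg_{G_{\{N\}}} G_{\{N,N+1\}} \hookrightarrow G_{N+1}$ is inner anodyne).
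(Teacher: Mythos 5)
Your proposal is correct and matches the paper's proof exactly: the paper also just applies Lemma~\ref{lem:gpdcolim} to the groupoid object $\iota_{\bullet}\mathcal{C}$ (whose groupoid property comes from Corollary~\ref{cor:iotagpd}, i.e.\ from Theorem~\ref{thm:Ecogpd}). Your identification of the three conditions and your remark about reducing the non-presentable case to a larger universe are both consistent with the paper's treatment.
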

\begin{proof}
  Apply Lemma~\ref{lem:gpdcolim} to the groupoid object
  $\iota_{\bullet}\mathcal{C}$.
\end{proof}

\subsection{Fully Faithful and Essentially Surjective
  Functors}\label{subsec:FFES}
In this subsection we introduce analogues of \emph{fully faithful} and
\emph{essentially surjective} functors in the context of enriched
\icats{}, and show that these have the expected properties.

\begin{defn}
  Let $\mathcal{V}$ be a monoidal \icat{}. A
  $\mathcal{V}$-functor $F \colon \mathcal{C} \to \mathcal{D}$ is
  \emph{fully faithful} if the maps $\mathcal{C}(X,Y) \to
  \mathcal{D}(FX,FY)$ are equivalences in $\mathcal{V}$ for all $X,Y$
  in $\iota_{0}\mathcal{C}$.
\end{defn}

\begin{lemma}\label{lem:FullyFaithfulEq}
  A $\mathcal{V}$-functor $F \colon \mathcal{C} \to \mathcal{D}$ is
  fully faithful \IFF{} it is a Cartesian morphism in $\AlgCatV$
  with respect to the projection $\AlgCat(\mathcal{V}) \to
  \mathcal{S}$.
\end{lemma}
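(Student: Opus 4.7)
The plan is to unwind the definition of a Cartesian morphism for the Cartesian fibration $\AlgCat(\mathcal{V}) \to \mathcal{S}$ and match it up with the pointwise condition defining fully faithful functors. First I would observe that $\AlgCat(\mathcal{V}) \to \mathcal{S}$ is a Cartesian fibration: it is the pullback along $L_{\txt{gen}}\simp^{\op}_{(\blank)}$ of the Cartesian fibration $\Alg(\mathcal{V}) \to \OpdIns$, and Cartesian fibrations are stable under pullback. Unwinding the construction, a morphism $F \colon \mathcal{C} \to \mathcal{D}$ lying over $f \colon S \to T$ in $\mathcal{S}$ is Cartesian precisely when the associated comparison map $\eta_{F} \colon \mathcal{C} \to f^{*}\mathcal{D}$ in the fibre $\Alg_{\simp^{\op}_{S}}(\mathcal{V})$ is an equivalence.

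Next, I would invoke the fact (used for instance to identify free algebras in the proof of Proposition~\ref{propn:monlocalgcat} via Lemma~\ref{lem:AlgCons}, and which in any case is immediate from the Segal condition in Definition~\ref{defn:gnsiopd}) that the forgetful functor
\[
\Alg_{\simp^{\op}_{S}}(\mathcal{V}) \longrightarrow \Fun(S \times S, \mathcal{V})
\]
to the underlying $\mathcal{V}$-graph is conservative. Indeed, equivalences in $\Alg_{\simp^{\op}_{S}}(\mathcal{V})$ are detected fibrewise in $\mathcal{V}^{\otimes}$ over each $[n] \in \simp^{\op}$, and the Segal equivalences $\mathcal{V}^{\otimes}_{[n]} \simeq \mathcal{V}^{\times n}$ together with the Segal condition for $\simp^{\op}_{S}$ reduce this to equivalences at the level of pairs $(X,Y) \in S \times S$, i.e.\ to $\mathcal{C}(X,Y) \to \mathcal{D}(fX,fY)$ being an equivalence in $\mathcal{V}$.

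Combining these two observations: $F$ is Cartesian over $f$ iff $\eta_{F}$ is an equivalence, iff the induced maps $\mathcal{C}(X,Y) \to \mathcal{D}(fX,fY)$ are equivalences in $\mathcal{V}$ for all $X,Y \in S = \iota_{0}\mathcal{C}$ (noting that $\iota_{0}\mathcal{C} \simeq S$ by Lemma~\ref{lem:iota0}), which is exactly the definition of being fully faithful. No single step is really the main obstacle; the only mildly subtle point is justifying that equivalences of $\simp^{\op}_{S}$-algebras are detected on underlying graphs, but this is a routine consequence of the Segal condition and the fact that equivalences of coCartesian sections are detected fibrewise.
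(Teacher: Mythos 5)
Your argument is correct and is essentially the paper's own proof: both reduce the Cartesian condition to the comparison map $\mathcal{C} \to f^{*}\mathcal{D}$ being an equivalence in the fibre $\Alg_{\simp^{\op}_{S}}(\mathcal{V})$, and both then invoke the conservativity of the restriction to underlying graphs (Lemma~\ref{lem:AlgCons}) to identify this with the pointwise condition on mapping objects. The paper phrases the converse direction as a factorization $F \simeq F'' \circ F'$ with $F''$ Cartesian and $F'$ an equivalence, which is the same statement you package as the biconditional "Cartesian iff $\eta_{F}$ is an equivalence."
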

\begin{proof}
  If $f \colon S \to \iota_{0}\mathcal{D}$ is a map of spaces, then
  a Cartesian morphism over $f$ with target $\mathcal{D}$ has source
  $f^{*}\mathcal{D} = \mathcal{D} \circ \simp^{\op}_{f}$; in
  particular a Cartesian morphism induces equivalences
  $f^{*}\mathcal{D}(x,y) \to \mathcal{D}(f(x),f(y))$ for all $x,y \in
  X$.

  Conversely, suppose $F \colon \mathcal{C} \to \mathcal{D}$ gives an
  equivalence on all mapping spaces. The functor $F$ factors as 
  \[ \mathcal{C} \xto{F'} (\iota_{0}F)^{*}\mathcal{D} \xto{F''}
  \mathcal{D},\] where $F''$ is Cartesian. The morphism $F'$ induces
  an equivalence on underlying spaces and is given by equivalences
  $\mathcal{C}(X,Y) \to \mathcal{D}(F(X), F(Y))$ for all $X,Y\in
  \iota_{0}\mathcal{C}$. By Lemma~\ref{lem:AlgCons} it follows that
  $F'$ is an equivalence in
  $\Alg_{\simp^{\op}_{\iota_{0}\mathcal{C}}}(\mathcal{V})$
  and so in $\AlgCat(\mathcal{V})$. In particular $F'$ is a
  Cartesian morphism and hence so is the composite $F \simeq F'' \circ
  F'$.
\end{proof}

\begin{defn}
  A functor $F \colon \mathcal{C} \to \mathcal{D}$ is
  \defterm{essentially surjective} if for every point $X \in
  \iota_{0}\mathcal{D}$ there exists an equivalence $E^{1} \to
  \mathcal{D}$ from $X$ to a point in the image of $\iota_{0}F$.
\end{defn}

\begin{lemma}\label{lem:esssurjcond}
  A functor $F \colon \mathcal{C} \to \mathcal{D}$ is essentially
  surjective \IFF{} the induced map $\pi_{0}\iota F \colon
  \pi_{0}\iota \mathcal{C} \to \pi_{0}\iota \mathcal{D}$ is
  surjective.
\end{lemma}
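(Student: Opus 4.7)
The plan is to identify $\pi_{0}\iota\mathcal{C}$, for any $\mathcal{V}$-\icat{} $\mathcal{C}$, with the set of equivalence classes of objects of $\mathcal{C}$ under the relation ``there exists an equivalence $E^{1}\to\mathcal{C}$ between them'', and then observe that the statement of the lemma unpacks tautologically.

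The key input is Corollary~\ref{cor:iotagpd}, which tells us that $\iota_{\bullet}\mathcal{C}$ is a groupoid object in the $\infty$-topos $\mathcal{S}$. I will then invoke the standard fact that every groupoid object in an $\infty$-topos is effective; applied to $\iota_{\bullet}\mathcal{C}$ this gives at once that the canonical map $\iota_{0}\mathcal{C}\to \iota\mathcal{C}$ is an effective epimorphism and that there is a natural equivalence $\iota_{1}\mathcal{C}\simeq \iota_{0}\mathcal{C}\times_{\iota\mathcal{C}}\iota_{0}\mathcal{C}$. From the first fact, $\pi_{0}\iota_{0}\mathcal{C}\to\pi_{0}\iota\mathcal{C}$ is surjective; from the second, two points $X,Y\in \iota_{0}\mathcal{C}$ represent the same class in $\pi_{0}\iota\mathcal{C}$ if and only if the fibre of $(d_{0},d_{1})\colon \iota_{1}\mathcal{C}\to\iota_{0}\mathcal{C}\times\iota_{0}\mathcal{C}$ over $(X,Y)$ is non-empty, i.e.\ if and only if there exists a point of $\iota_{1}\mathcal{C}=\Map_{\AlgCatV}(E^{1},\mathcal{C})$ with source $X$ and target $Y$, which is precisely an equivalence $E^{1}\to\mathcal{C}$ from $X$ to $Y$.

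Applying these two observations to $\mathcal{D}$ in place of $\mathcal{C}$, the surjectivity of $\pi_{0}\iota F$ is equivalent to the assertion that for every $X\in \iota_{0}\mathcal{D}$ there exists some $C\in \iota_{0}\mathcal{C}$ with $[X]=[F(C)]$ in $\pi_{0}\iota\mathcal{D}$, and by the characterization above this is equivalent to the existence of an equivalence $E^{1}\to\mathcal{D}$ from $X$ to $F(C)$. This is exactly the definition of essential surjectivity. No real obstacle arises once the groupoid/effectivity package is in hand; the entire content of the lemma is packaged in Corollary~\ref{cor:iotagpd} together with the effectivity of groupoid objects in $\mathcal{S}$.
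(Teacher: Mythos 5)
Your argument is correct and is essentially the paper's own proof: the paper likewise reduces the statement to the description of $\pi_{0}\iota\mathcal{D}$ as the quotient of $\pi_{0}\iota_{0}\mathcal{D}$ by the relation of being connected by a point of $\iota_{1}\mathcal{D}$, using that $\iota_{\bullet}\mathcal{D}$ is a groupoid object (Corollary~\ref{cor:iotagpd}). You merely make explicit the justification of that quotient description via effectivity of groupoid objects in $\mathcal{S}$, which the paper leaves implicit here (and invokes elsewhere, e.g.\ in the proof of Proposition~\ref{propn:iotantype}).
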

\begin{proof}
  Since $\iota_{\bullet}\mathcal{D}$ is a groupoid object, the set
  $\pi_{0}\iota \mathcal{D}$ is the quotient of
  $\pi_{0}\iota_{0}\mathcal{D}$ where we identify two components of
  $\iota_{0}\mathcal{D}$ if there exists a point of
  $\iota_{1}\mathcal{D}$, i.e. an equivalence $E^{1} \to \mathcal{D}$,
  connecting them. Thus $F \colon \mathcal{C} \to \mathcal{D}$ is
  essentially surjective \IFF{} $\pi_{0}\iota F$ is surjective.
\end{proof}

\begin{lemma}\label{lem:FFeqiota1}
  Suppose $F \colon \mathcal{C} \to \mathcal{D}$ is a fully faithful
  functor of $\mathcal{V}$-\icats{}. Then for every
  $X,Y \in \mathcal{C}$ the induced map $(\iota_{1}\mathcal{C})_{X,Y}
  \to (\iota_{1}\mathcal{D})_{FX,FY}$ is an equivalence.
\end{lemma}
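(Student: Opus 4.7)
The plan is to reduce the claim to a statement about morphisms in the mapping objects themselves, using Proposition~\ref{propn:eqeq} to identify both fibres with subspaces of components of $\Map(I_{\mathcal{V}}, \blank)$. Specifically, if necessary I would first enlarge the universe and embed $\mathcal{V}$ in a presentably monoidal \icat{} (as in the remark following Theorem~\ref{thm:Ecogpd}) so that Proposition~\ref{propn:eqeq} applies; this identifies $(\iota_{1}\mathcal{C})_{X,Y}$ and $(\iota_{1}\mathcal{D})_{FX,FY}$ with $\Map(I_{\mathcal{V}}, \mathcal{C}(X,Y))_{\txt{eq}}$ and $\Map(I_{\mathcal{V}}, \mathcal{D}(FX,FY))_{\txt{eq}}$, respectively.

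Since $F$ is fully faithful, the induced map $\mathcal{C}(X,Y) \to \mathcal{D}(FX,FY)$ is already an equivalence in $\mathcal{V}$, and hence induces an equivalence on $\Map(I_{\mathcal{V}},\blank)$. It therefore suffices to check that this equivalence carries the components labelled ``equivalences'' on one side precisely to those on the other --- equivalently, that a morphism $\alpha\colon I_{\mathcal{V}} \to \mathcal{C}(X,Y)$ is an equivalence in $\mathcal{C}$ if and only if its image $F(\alpha)\colon I_{\mathcal{V}}\to \mathcal{D}(FX,FY)$ is an equivalence in $\mathcal{D}$.

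One direction is immediate: any extension $E^{1}\to \mathcal{C}$ of $\alpha$ composes with $F$ to give an extension $E^{1}\to \mathcal{D}$ of $F(\alpha)$. For the converse, I would apply the characterization of equivalences given in Proposition~\ref{propn:eqsareeqonmaps}(ii): $F(\alpha)$ being an equivalence in $\mathcal{D}$ means that for every $Z'\in \iota_{0}\mathcal{D}$, composition with $F(\alpha)$ defines an equivalence $\mathcal{D}(FY,Z') \to \mathcal{D}(FX,Z')$ in $\mathcal{V}$. Specializing to $Z' = FZ$ for $Z\in \iota_{0}\mathcal{C}$, and using that $F$ is a $\mathcal{V}$-functor (so the naturality square intertwines composition in $\mathcal{C}$ with composition in $\mathcal{D}$), fully faithfulness identifies this equivalence with composition with $\alpha$: $\mathcal{C}(Y,Z)\to \mathcal{C}(X,Z)$. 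Since this holds for all $Z\in \iota_{0}\mathcal{C}$, Proposition~\ref{propn:eqsareeqonmaps} shows $\alpha$ is itself an equivalence.

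I expect the main obstacle will be bookkeeping around naturality of composition --- one must confirm that the ``compose with $\alpha$'' map really is intertwined by $F$ with ``compose with $F(\alpha)$'', which is a structural property of $\mathcal{V}$-functors (preservation of the $\simp^{\op}_{S}$-algebra structure) and should follow readily from the definitions. Otherwise the argument is a direct combination of Proposition~\ref{propn:eqeq} with Proposition~\ref{propn:eqsareeqonmaps}.
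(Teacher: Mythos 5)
Your proposal is correct and follows essentially the same route as the paper: identify the fibres with $\Map(I_{\mathcal{V}}, \blank)_{\txt{eq}}$ via Proposition~\ref{propn:eqeq}, observe that full faithfulness gives an equivalence on the ambient mapping spaces, and then use the characterization of equivalences in Proposition~\ref{propn:eqsareeqonmaps} together with the naturality square for $F$ and the 2-out-of-3 property to match up the components of equivalences. The only cosmetic difference is that the paper tests against $\mathcal{C}(Z,X)\to\mathcal{C}(Z,Y)$ rather than $\mathcal{C}(Y,Z)\to\mathcal{C}(X,Z)$, which is immaterial.
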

\begin{proof}
  By Proposition~\ref{propn:eqeq}, we can identify the map
  $(\iota_{1}\mathcal{C})_{X,Y} \to (\iota_{1}\mathcal{D})_{FX,FY}$
  with the map \[\Map(I, \mathcal{C}(X,Y))_{\txt{eq}} \to \Map(I,
  \mathcal{D}(FX,FY))_{\txt{eq}}\] induced by $F$. Since $F$ is fully
  faithful the map $\mathcal{C}(X,Y) \to \mathcal{D}(FX,FY)$ is an
  equivalence in $\mathcal{V}$, hence $\Map(I, \mathcal{C}(X,Y)) \to
  \Map(I, \mathcal{D}(FX,FY))$ is an equivalence in $\mathcal{S}$. To
  complete the proof it therefore suffices to show that $\Map(I,
  \mathcal{C}(X,Y))_{\txt{eq}} \to \Map(I,
  \mathcal{D}(FX,FY))_{\txt{eq}}$ is surjective on components ---
  i.e. if $\alpha \colon I \to \mathcal{D}(FX,FY)$ is an equivalence
  then it is the image of an equivalence $\beta \colon I \to
  \mathcal{C}(X,Y)$. We know that $\alpha$ is the image of some map
  $\beta$, so it suffices to show that such a $\beta$ must be an
  equivalence. By Proposition~\ref{propn:eqsareeqonmaps} the map
  $\beta$ is an equivalence \IFF{} for every $Z \in
  \iota_{0}\mathcal{C}$ the map $\mathcal{C}(Z, X) \to
  \mathcal{C}(Z,Y)$ induced by composition with $\beta$ is an
  equivalence. Consider the diagram
  \nolabelcsquare{\mathcal{C}(Z,X)}{\mathcal{D}(FZ,FX)}{\mathcal{C}(Z,Y)}{\mathcal{D}(FZ,FY),}
  where the vertical maps are given by composition with $\beta$ and
  $\alpha$, respectively.  Since $F$ is fully faithful and $\alpha$ is
  an equivalence, all morphisms in this diagram except the left
  vertical map are known to be equivalences. By the 2-out-of-3
  property this must also be an equivalence for all $Z$, so $\beta$ is
  indeed an equivalence.
\end{proof}

\begin{propn}\label{propn:FFESiotaEq}
  If a $\mathcal{V}$-functor $F \colon \mathcal{C} \to \mathcal{D}$ is
  fully faithful and essentially surjective, then the induced map
  $\iota F \colon \iota \mathcal{C} \to \iota \mathcal{D}$ is an
  equivalence.
\end{propn}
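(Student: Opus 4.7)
The strategy is to leverage the fact, recorded in Corollary~\ref{cor:iotagpd}, that both $\iota_\bullet \mathcal{C}$ and $\iota_\bullet \mathcal{D}$ are groupoid objects in the $\infty$-topos $\mathcal{S}$. Every groupoid object in $\mathcal{S}$ is effective, so the canonical maps $p_{\mathcal{C}} \colon \iota_0 \mathcal{C} \to \iota \mathcal{C}$ and $p_{\mathcal{D}} \colon \iota_0 \mathcal{D} \to \iota \mathcal{D}$ are effective epimorphisms and exhibit $\iota_\bullet \mathcal{C}$ and $\iota_\bullet \mathcal{D}$ as their Čech nerves, i.e.\ $\iota_n \mathcal{C} \simeq \iota_0 \mathcal{C} \times_{\iota \mathcal{C}} \cdots \times_{\iota \mathcal{C}} \iota_0 \mathcal{C}$ (and similarly for $\mathcal{D}$). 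With this input, the proof will boil down to combining fully faithfulness at level $1$ with effectivity, and then invoking essential surjectivity at the level of $\pi_0$.

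First I will reformulate fully faithfulness as a pullback statement: by Lemma~\ref{lem:FFeqiota1} the induced map $(\iota_1 \mathcal{C})_{X,Y} \to (\iota_1 \mathcal{D})_{FX, FY}$ is an equivalence for every pair $X, Y \in \iota_0 \mathcal{C}$, which exactly says that the square
\[
\begin{tikzpicture}
\matrix (m) [matrix of math nodes,row sep=2.5em,column sep=2em]
{ \iota_1 \mathcal{C} \pgfmatrixnextcell \iota_1 \mathcal{D} \\
  \iota_0 \mathcal{C} \times \iota_0 \mathcal{C} \pgfmatrixnextcell \iota_0 \mathcal{D} \times \iota_0 \mathcal{D} \\ };
\path[->,font=\footnotesize]
(m-1-1) edge node[auto] {$\iota_1 F$} (m-1-2)
(m-1-1) edge (m-2-1)
(m-1-2) edge (m-2-2)
(m-2-1) edge node[below] {$(\iota_0 F)^{\times 2}$} (m-2-2);
\end{tikzpicture}
\]
is a pullback in $\mathcal{S}$. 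Replacing the right-hand column using effectivity of $\iota_\bullet \mathcal{D}$ and simplifying, this gives a canonical equivalence $\iota_1 \mathcal{C} \simeq \iota_0 \mathcal{C} \times_{\iota \mathcal{D}} \iota_0 \mathcal{C}$, where the structure maps to $\iota \mathcal{D}$ are $p_\mathcal{D} \circ \iota_0 F$.

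Next I will bootstrap this to a levelwise equivalence of groupoid objects $\iota_\bullet \mathcal{C} \simeq \mathrm{Cech}(g)$, where $g := p_\mathcal{D} \circ \iota_0 F \colon \iota_0 \mathcal{C} \to \iota \mathcal{D}$. Both sides are category objects whose higher simplices are iterated fiber products of $1$-simplices over $0$-simplices, so the equivalence at level $1$ propagates to all levels; moreover by construction this equivalence is compatible with the map of groupoid objects $\iota_\bullet F \colon \iota_\bullet \mathcal{C} \to \iota_\bullet \mathcal{D} \simeq \mathrm{Cech}(p_\mathcal{D})$ induced by composition with $p_\mathcal{D}$. Essential surjectivity, in the form of Lemma~\ref{lem:esssurjcond}, tells us that $\pi_0 \iota F$ is surjective, hence $\pi_0 g$ is surjective and $g$ is an effective epimorphism in $\mathcal{S}$.

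Finally, taking geometric realizations and using effectivity,
\[
\iota \mathcal{C} \;=\; |\iota_\bullet \mathcal{C}| \;\simeq\; |\mathrm{Cech}(g)| \;\simeq\; \iota \mathcal{D},
\]
and unpacking the construction the resulting equivalence is exactly $\iota F$. The argument is largely formal, and the only point that requires care is the bookkeeping in Step~2, where one must check that the equivalence at level $1$ genuinely extends to an equivalence of simplicial diagrams compatible with $\iota_\bullet F$; this is the closest thing to a real obstacle, but it follows from the universal property of Čech nerves of effective epimorphisms together with the Segal (category-object) condition on both sides.
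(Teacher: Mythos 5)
Your proof is correct and follows essentially the same route as the paper: both arguments rest on exactly the three inputs Corollary~\ref{cor:iotagpd}, Lemma~\ref{lem:esssurjcond}, and Lemma~\ref{lem:FFeqiota1}, and both reduce the claim to the statement that a map of groupoid objects in $\mathcal{S}$ which is a pullback on the $(1,0)$-square and surjective on $\pi_{0}$ after realization induces an equivalence on geometric realizations. The only difference is that the paper cites \cite[Remark 1.2.17]{LurieGoodwillie} for that last descent step, whereas you prove it directly by identifying $\iota_{\bullet}\mathcal{C}$ with the \v{C}ech nerve of $p_{\mathcal{D}} \circ \iota_{0}F$ and invoking effectivity of groupoid objects in an $\infty$-topos.
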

\begin{proof}
  The simplicial spaces $\iota_{\bullet}\mathcal{C}$ and
  $\iota_{\bullet}\mathcal{D}$ are groupoid objects by
  Corollary~\ref{cor:iotagpd}, and since $F$ is essentially surjective
  the map $\iota F$ is surjective on $\pi_{0}$ by
  Lemma~\ref{lem:esssurjcond}. By \cite[Remark
  1.2.17]{LurieGoodwillie} it therefore suffices to show that the
  diagram \nolabelcsquare{\iota_1 \mathcal{C}}{ \iota_1
    \mathcal{D}}{\iota_0 \mathcal{C} \times \iota_0
    \mathcal{C}}{\iota_0 \mathcal{D} \times \iota_0 \mathcal{D}} is a
  pullback square. To prove this we must show that for all $X,Y \in
  \mathcal{C}$ the map on fibres is an equivalence, which we proved in
  Lemma~\ref{lem:FFeqiota1}.
\end{proof}

\begin{cor}\label{cor:ffesiff}
  A fully faithful $\mathcal{V}$-functor $F$ is essentially surjective
  \IFF{} $\iota F$ is an equivalence.
\end{cor}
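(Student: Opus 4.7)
The plan is to observe that both directions follow immediately from results already in hand, so there is essentially no hard work to do.

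For the forward implication ($\Rightarrow$), assume $F$ is both fully faithful and essentially surjective. Then Proposition~\ref{propn:FFESiotaEq} directly gives that $\iota F \colon \iota\mathcal{C} \to \iota\mathcal{D}$ is an equivalence of spaces. This uses the full strength of the hypothesis, via the groupoid-object argument combined with Lemma~\ref{lem:FFeqiota1}.

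For the reverse implication ($\Leftarrow$), the fully faithful hypothesis is actually not needed. If $\iota F$ is an equivalence of spaces, then in particular the induced map $\pi_{0}\iota F \colon \pi_{0}\iota\mathcal{C} \to \pi_{0}\iota\mathcal{D}$ is a bijection, hence surjective. By Lemma~\ref{lem:esssurjcond}, surjectivity of $\pi_{0}\iota F$ is equivalent to essential surjectivity of $F$, which is what we want.

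Neither direction presents any genuine obstacle; the corollary is a bookkeeping consequence of Proposition~\ref{propn:FFESiotaEq} and Lemma~\ref{lem:esssurjcond}. The substantive content has already been absorbed into these earlier results (in particular into the verification, via Theorem~\ref{thm:Ecogpd} and Corollary~\ref{cor:iotagpd}, that $\iota_{\bullet}\mathcal{C}$ is a groupoid object so that the pullback criterion of \cite[Remark 1.2.17]{LurieGoodwillie} applies).
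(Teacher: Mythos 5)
Your proof is correct and is exactly the argument the paper intends (the corollary is stated without proof, being an immediate consequence of Proposition~\ref{propn:FFESiotaEq} for the forward direction and Lemma~\ref{lem:esssurjcond} for the converse). Your observation that fully faithfulness is not needed for the reverse implication is also accurate.
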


\begin{cor}\label{cor:ffescompliseq}
  A fully faithful and essentially surjective functor between complete
  $\mathcal{V}$-\icats{} is an equivalence in
  $\AlgCat(\mathcal{V})$.
\end{cor}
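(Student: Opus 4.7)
The plan is to combine the three preceding results: the characterization of fully faithful functors as Cartesian morphisms (Lemma~\ref{lem:FullyFaithfulEq}), the fact that fully faithful and essentially surjective functors induce equivalences on classifying spaces of equivalences (Proposition~\ref{propn:FFESiotaEq}), and the definition of completeness (Definition~\ref{defn:complete}).

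First, I would observe that by Lemma~\ref{lem:FullyFaithfulEq}, the fully faithful functor $F \colon \mathcal{C} \to \mathcal{D}$ is a Cartesian morphism with respect to the Cartesian fibration $p \colon \AlgCat(\mathcal{V}) \to \mathcal{S}$ that sends a categorical algebra to its underlying space of objects. Next, by Proposition~\ref{propn:FFESiotaEq}, essential surjectivity together with fully faithfulness ensures that the induced map on classifying spaces of equivalences $\iota F \colon \iota \mathcal{C} \to \iota \mathcal{D}$ is an equivalence of spaces.

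Now I would invoke completeness of $\mathcal{C}$ and $\mathcal{D}$: by Definition~\ref{defn:complete}, the canonical maps $\iota_{0}\mathcal{C} \to \iota \mathcal{C}$ and $\iota_{0}\mathcal{D} \to \iota \mathcal{D}$ are equivalences. These fit into a commutative square
\[
\begin{tikzpicture}
\matrix (m) [matrix of math nodes,row sep=2em,column sep=2em,text height=1.5ex,text depth=0.25ex]
{ \iota_{0}\mathcal{C} \pgfmatrixnextcell \iota_{0}\mathcal{D} \\
  \iota \mathcal{C} \pgfmatrixnextcell \iota \mathcal{D} \\};
\path[->,font=\footnotesize]
(m-1-1) edge node[auto] {$\iota_{0}F$} (m-1-2)
(m-1-1) edge node[left] {$\sim$} (m-2-1)
(m-1-2) edge node[auto] {$\sim$} (m-2-2)
(m-2-1) edge node[below] {$\iota F \sim$} (m-2-2);
\end{tikzpicture}
\]
in which three of the four maps are equivalences, so by the two-out-of-three property $p(F) = \iota_{0}F$ is also an equivalence.

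Finally, I would use the standard fact about Cartesian fibrations that a Cartesian morphism over an equivalence is itself an equivalence: since $F$ is a Cartesian lift of the equivalence $\iota_{0}F$, it must be an equivalence in the total \icat{} $\AlgCat(\mathcal{V})$. The argument is almost entirely a composition of already-established facts, so there is no real obstacle; the only thing to verify carefully is that no additional hypotheses on $\mathcal{V}$ (such as presentability) are required for the chain of implications to go through, but each of Lemma~\ref{lem:FullyFaithfulEq}, Proposition~\ref{propn:FFESiotaEq} (via its dependence on Theorem~\ref{thm:Ecogpd} plus the presheaf embedding trick in the remark following it), and the Cartesian-over-equivalence fact are available for an arbitrary (possibly large) monoidal \icat{}~$\mathcal{V}$.
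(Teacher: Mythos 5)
Your proposal is correct and follows essentially the same route as the paper: the paper's proof combines Proposition~\ref{propn:FFESiotaEq} with Lemma~\ref{lem:AlgCons} to conclude that the induced map on object spaces is an equivalence and that the resulting Cartesian/fibrewise factorization of $F$ consists of equivalences, which is exactly what you do, merely packaged through Lemma~\ref{lem:FullyFaithfulEq} (itself proved via Lemma~\ref{lem:AlgCons}) and the standard fact that a Cartesian morphism over an equivalence is an equivalence. No gap.
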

\begin{proof}
  This follows by combining Proposition~\ref{propn:FFESiotaEq} and
  Lemma~\ref{lem:AlgCons}.
\end{proof}

\begin{propn}\label{propn:ffes2of3}
  Fully faithful and essentially surjective $\mathcal{V}$-functors
  satisfy the 2-out-of-3 property.
\end{propn}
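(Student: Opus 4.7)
The plan is to verify each of the three cases of the 2-out-of-3 property separately, addressing the fully faithful and essentially surjective conditions in parallel. Essential surjectivity is the easier half: by Proposition~\ref{propn:FFESiotaEq}, a fully faithful and essentially surjective $\mathcal{V}$-functor induces an equivalence $\iota F$ of classifying spaces of equivalences, and conversely by Corollary~\ref{cor:ffesiff} a fully faithful functor with $\iota F$ an equivalence is essentially surjective. Since $\iota$ respects composition, the ordinary 2-out-of-3 property in $\mathcal{S}$, combined with Lemma~\ref{lem:esssurjcond}, yields the essential surjectivity of the remaining functor once its full faithfulness is in hand.

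Two of the three full-faithfulness cases reduce immediately to 2-out-of-3 for equivalences in $\mathcal{V}$ applied to the composite
\[ \mathcal{C}(X,Y) \to \mathcal{D}(FX, FY) \to \mathcal{E}(GFX, GFY); \]
this settles both the composition case ($F, G$ fully faithful implies $GF$ fully faithful) and the right-cancellation case ($GF$ and $G$ fully faithful implies $F$ fully faithful). The remaining case---$GF$ and $F$ both fully faithful and essentially surjective implies $G$ fully faithful---is where the full package of hypotheses is needed. Given $A, B \in \iota_0 \mathcal{D}$, essential surjectivity of $F$ furnishes $X, Y \in \iota_0 \mathcal{C}$ together with equivalences $A \simeq FX$ and $B \simeq FY$ in $\mathcal{D}$; by Proposition~\ref{propn:eqsareeqonmaps}, composition with these equivalences and with their images under $G$ produces equivalences $\mathcal{D}(FX, FY) \isoto \mathcal{D}(A, B)$ and $\mathcal{E}(GFX, GFY) \isoto \mathcal{E}(GA, GB)$ in $\mathcal{V}$, as well as a naturality square in $\mathcal{V}$ whose vertical arrows are the maps induced by $G$ on mapping objects. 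The left vertical arrow is an equivalence by the already-settled right cancellation applied to the fully faithful composite $GF$ and the fully faithful functor $F$, so 2-out-of-3 forces the right vertical arrow $\mathcal{D}(A, B) \to \mathcal{E}(GA, GB)$ to be an equivalence as well.

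The only real obstacle is this last step, and the essential input is Proposition~\ref{propn:eqsareeqonmaps}, which states that equivalences in a $\mathcal{V}$-$\infty$-category act by equivalences on mapping objects via composition. Granted this, what remains is a short naturality check for the commuting square, which follows from the functoriality of composition in a $\mathcal{V}$-enriched setting and from the fact that $G$ carries equivalences to equivalences.
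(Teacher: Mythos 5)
Your proof is correct and follows essentially the same route as the paper's: essential surjectivity in all three cases via Proposition~\ref{propn:FFESiotaEq}, Lemma~\ref{lem:esssurjcond} and 2-out-of-3 for equivalences of spaces, and full faithfulness in the left-cancellation case by transporting along equivalences supplied by the essential surjectivity of $F$, using Proposition~\ref{propn:eqsareeqonmaps} and the triangle $\mathcal{C}(X,Y) \to \mathcal{D}(FX,FY) \to \mathcal{E}(GFX,GFY)$. The only cosmetic difference is that for the right-cancellation case the paper invokes the Cartesian-morphism characterization of fully faithful functors (Lemma~\ref{lem:FullyFaithfulEq}) where you argue directly with 2-out-of-3 on mapping objects; both are fine.
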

\begin{proof}
  Suppose we have $\mathcal{V}$-functors $F \colon \mathcal{C} \to
  \mathcal{D}$ and $G \colon \mathcal{D} \to \mathcal{E}$. There are
  three cases to consider:
  \begin{enumerate}[(1)]
  \item Suppose $F$ and $G$ are fully faithful and essentially
    surjective. It is obvious that $G \circ F$ is fully
    faithful. Since $\pi_{0}\iota F$ and $\pi_{0}\iota G$ are
    surjective, so is their composite $\pi_{0}\iota (G\circ F)$, thus
    $G \circ F$ is also essentially surjective.
  \item Suppose $G$ and $G \circ F$ are fully faithful and essentially
    surjective. Then $F$ is also Cartesian, i.e. fully faithful, by
    \cite[Proposition 2.4.1.7]{HTT}. By
    Proposition~\ref{propn:FFESiotaEq} the maps $\iota G$ and $\iota
    (G \circ F)$ are equivalences, hence so is $\iota F$, thus $F$ is also
    essentially surjective.
  \item Suppose $F$ and $G \circ F$ are fully faithful and essentially
    surjective. By Proposition \ref{propn:FFESiotaEq} the maps $\iota
    F$ and $\iota (G \circ F)$ are equivalences, hence so is $\iota
    G$, and thus $G$ is essentially surjective. To see that $G$ is
    fully faithful, we must show that for any $X, Y$ in $\iota_{0}G$
    the map $\mathcal{D}(X,Y) \to \mathcal{E}(GX, GY)$ is an
    equivalence. But since $F$ is essentially surjective there exist
    objects $X'$, $Y'$ in $\iota_{0}\mathcal{C}$ and equivalences $FX'
    \simeq X$, $FY' \simeq Y$ in $\mathcal{D}$. Then we have a
    commutative diagram \nolabelcsquare{\mathcal{D}(FX',
      FY')}{\mathcal{E}(GFX',
      GFY')}{\mathcal{D}(X,Y)}{\mathcal{E}(GX,GY),} where the vertical
    maps are given by composition with the chosen equivalences and so
    are equivalences in $\mathcal{V}$ by
    Proposition~\ref{propn:eqsareeqonmaps}. The top horizontal map is
    also an equivalence, since in the commutative triangle
    \factortriangle{\mathcal{C}(X',Y')}{\mathcal{E}(GFX',GFY')}{\mathcal{D}(FX',FY')}{}{}{}
    the other two maps are equivalences. Thus by the 2-out-of-3
    property the bottom horizontal map $\mathcal{D}(X,Y) \to
    \mathcal{E}(GX,GY)$ is also an equivalence, and so $G$ is also
    fully faithful.\qedhere
  \end{enumerate}
\end{proof}

\begin{remark}
  Under the equivalence $\AlgCat(\mathcal{S}) \isoto \SegI$
  of Theorem~\ref{thm:AlgCatSisSeg}, the fully faithful and
  essentially surjective functors correspond to the Dwyer-Kan
  equivalences in the sense of \cite[\S 7.4]{RezkCSS}.
\end{remark}

The ``correct'' \icat{} of $\mathcal{V}$-\icats{} is obtained by
inverting the fully faithful and essentially surjective morphisms in
$\AlgCatV$. We will now show that doing this produces the same \icat{}
as inverting the fully faithful and essentially surjective functors in
the subcategory of $\AlgCatV$ where we only have sets of
objects. First, we will briefly review the general notion of
localization of \icats{} and prove a basic fact about these
(generalizing \cite[Corollary 3.6]{DwyerKanCalc}):
\begin{defn}
  The inclusion $\mathcal{S} \hookrightarrow \CatI$ has left and right
  adjoints. The right adjoint, $\iota \colon \CatI \to \mathcal{S}$,
  sends an \icat{} $\mathcal{C}$ to its maximal Kan complex, i.e. its
  subcategory of equivalences. The left adjoint $\kappa \colon \CatI
  \to \mathcal{S}$ sends an \icat{} $\mathcal{C}$ to a Kan complex
  $\kappa \mathcal{C}$ such that $\mathcal{C} \to \kappa \mathcal{C}$
  is a weak equivalence in the usual model structure on simplicial sets.
\end{defn}

\begin{defn}
  Suppose $\mathcal{C}$ is an \icat{} and $\mathcal{W}$ is a
  subcategory of $\mathcal{C}$ that contains all the equivalences. The
  \defterm{localization} $\mathcal{C}[\mathcal{W}^{-1}]$ of
  $\mathcal{C}$ with respect to $\mathcal{W}$ is the \icat{} with the
  universal property that for any \icat{} $\mathcal{E}$, a functor
  $\mathcal{C}[\mathcal{W}^{-1}] \to \mathcal{E}$ is the same thing as
  a functor $\mathcal{C} \to \mathcal{E}$ that sends morphisms in
  $\mathcal{W}$ to equivalences in $\mathcal{E}$. More precisely,
  we have for every $\mathcal{E}$ a pullback square
  \nolabelcsquare{\Map(\mathcal{C}[\mathcal{W}^{-1}],
    \mathcal{E})}{\Map(\mathcal{W},\iota\mathcal{E})}{\Map(\mathcal{C},
    \mathcal{E})}{\Map(\mathcal{W}, \mathcal{E}).}
\end{defn}

\begin{remark}
  It follows that, in the situation above, the \icat{}
  $\mathcal{C}[\mathcal{W}^{-1}]$ is given by the pushout square in
  $\CatI$
  \nolabelcsquare{\mathcal{W}}{\kappa\mathcal{W}}{\mathcal{C}}{\mathcal{C}[\mathcal{W}^{-1}].}
\end{remark}

\begin{lemma}\label{lem:locadj}
  Suppose $\mathcal{C}$ and $\mathcal{D}$ are \icats{} and
  $\mathcal{W}_{\mathcal{C}} \subseteq \mathcal{C}$ and $\mathcal{W}_{\mathcal{D}} \subseteq
  \mathcal{D}$ are subcategories containing all the equivalences. Let
  $\mathcal{C}[\mathcal{W}_{\mathcal{C}}^{-1}]$ and $\mathcal{D}[\mathcal{W}_{\mathcal{D}}^{-1}]$
  be localizations with respect to $\mathcal{W}_{\mathcal{C}}$ and $\mathcal{W}_{\mathcal{D}}$.
  Suppose
  \[ F \colon \mathcal{C} \leftrightarrows \mathcal{D} : G \]
  is an adjunction such that
  \begin{enumerate}[(1)]
  \item $F(\mathcal{W}_{\mathcal{C}}) \subseteq \mathcal{W}_{\mathcal{D}}$,
  \item $G(\mathcal{W}_{\mathcal{D}}) \subseteq \mathcal{W}_{\mathcal{C}}$,
  \item the unit morphism $\eta_{C}\colon C \to GFC$ is in $\mathcal{W}_{\mathcal{C}}$ for all $C
    \in \mathcal{C}$,
  \item the counit morphism $\gamma_{D} \colon FGD \to D$ is in $\mathcal{W}_{\mathcal{D}}$ for all $D
    \in \mathcal{D}$.
  \end{enumerate}
  Then $F$ and $G$ induce an equivalence
  $\mathcal{C}[\mathcal{W}_{\mathcal{C}}^{-1}] \simeq
  \mathcal{D}[\mathcal{W}_{\mathcal{D}}^{-1}]$.
\end{lemma}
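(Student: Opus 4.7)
The plan is to use the universal property of the localizations to descend both $F$, $G$, and the adjunction unit and counit, and then conclude that the descended functors are mutually inverse equivalences. Write $L_{\mathcal{C}} \colon \mathcal{C} \to \mathcal{C}[\mathcal{W}_{\mathcal{C}}^{-1}]$ and $L_{\mathcal{D}} \colon \mathcal{D} \to \mathcal{D}[\mathcal{W}_{\mathcal{D}}^{-1}]$ for the localization functors.

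First I would construct the descended functors. Since $F$ sends $\mathcal{W}_{\mathcal{C}}$ into $\mathcal{W}_{\mathcal{D}}$, the composite $L_{\mathcal{D}} \circ F \colon \mathcal{C} \to \mathcal{D}[\mathcal{W}_{\mathcal{D}}^{-1}]$ inverts morphisms in $\mathcal{W}_{\mathcal{C}}$; by the defining universal property of $\mathcal{C}[\mathcal{W}_{\mathcal{C}}^{-1}]$ it extends uniquely (up to equivalence) to a functor $\bar{F} \colon \mathcal{C}[\mathcal{W}_{\mathcal{C}}^{-1}] \to \mathcal{D}[\mathcal{W}_{\mathcal{D}}^{-1}]$ with $\bar{F} \circ L_{\mathcal{C}} \simeq L_{\mathcal{D}} \circ F$. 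Symmetrically one constructs $\bar{G}$ from $G$.

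Next I would produce a natural equivalence $\id \simeq \bar{G}\bar{F}$ using the unit $\eta \colon \id_{\mathcal{C}} \to GF$. Post-composing $\eta$ with $L_{\mathcal{C}}$ gives a natural transformation in $\Fun(\mathcal{C}, \mathcal{C}[\mathcal{W}_{\mathcal{C}}^{-1}])$ from $L_{\mathcal{C}}$ to $L_{\mathcal{C}} \circ GF \simeq \bar{G}\bar{F} \circ L_{\mathcal{C}}$, whose component at each $C \in \mathcal{C}$ is the image under $L_{\mathcal{C}}$ of $\eta_{C} \in \mathcal{W}_{\mathcal{C}}$, hence an equivalence. Applying the universal property of the localization to the functor \icat{} $\Fun(\Delta^1, \mathcal{C}[\mathcal{W}_{\mathcal{C}}^{-1}])$ shows that restriction along $L_{\mathcal{C}}$ is fully faithful on functors inverting $\mathcal{W}_{\mathcal{C}}$, so this pointwise equivalence lifts uniquely to a natural equivalence $\id_{\mathcal{C}[\mathcal{W}_{\mathcal{C}}^{-1}]} \isoto \bar{G}\bar{F}$. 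The exact same argument applied to $\gamma$ produces a natural equivalence $\bar{F}\bar{G} \isoto \id_{\mathcal{D}[\mathcal{W}_{\mathcal{D}}^{-1}]}$, so $\bar{F}$ and $\bar{G}$ are mutually inverse equivalences.

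The one potentially delicate point is the descent of the natural transformations, since one needs to know that $L_{\mathcal{C}}^{*} \colon \Fun(\mathcal{C}[\mathcal{W}_{\mathcal{C}}^{-1}], \mathcal{E}) \to \Fun(\mathcal{C}, \mathcal{E})$ is fully faithful onto the full subcategory of functors inverting $\mathcal{W}_{\mathcal{C}}$ (not just an equivalence on underlying spaces of objects). This follows formally from the stated universal property by applying it with $\mathcal{E}$ replaced by $\Fun(\Delta^{n}, \mathcal{E})$ and using that $\Map(X, \Fun(\Delta^{n}, \mathcal{E})) \simeq \Map(\Delta^{n}, \Fun(X, \mathcal{E}))$ for every $X$, but it is the only step that is not immediate from unwinding definitions.
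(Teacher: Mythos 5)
Your proof is correct, but it follows a different route from the one in the paper. You work entirely with the universal property of the localization: descend $F$ and $G$ to functors $\bar F,\bar G$ between the localized \icats{}, then descend the unit and counit (which become pointwise equivalences because their components lie in $\mathcal{W}_{\mathcal{C}}$, resp.\ $\mathcal{W}_{\mathcal{D}}$) to natural equivalences $\id \simeq \bar G\bar F$ and $\bar F\bar G \simeq \id$. The paper instead uses the explicit model of the localization as a (homotopy) pushout $\mathcal{C}\amalg_{\mathcal{W}_{\mathcal{C}}}\kappa\mathcal{W}_{\mathcal{C}}$ in the Joyal model structure; this lets one arrange that the localization has literally the same objects as $\mathcal{C}$, construct $F'$, $G'$, $\eta'$, $\gamma'$ directly from the pushout, and then observe that $\eta'_{c}$ and $\gamma'_{d}$ are equivalences object by object. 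Your approach has the advantage of being model-independent and of isolating the one genuinely delicate point, namely that restriction along $L_{\mathcal{C}}$ must be fully faithful onto the $\mathcal{W}_{\mathcal{C}}$-inverting functors and not merely an equivalence of mapping spaces; your fix via $\mathcal{E}\rightsquigarrow\Fun(\Delta^{n},\mathcal{E})$ is standard and correct (and in fact, for the final step one only needs that the two functors $\id$ and $\bar G\bar F$ lie in the same component of $\Map(\mathcal{C}[\mathcal{W}_{\mathcal{C}}^{-1}],\mathcal{C}[\mathcal{W}_{\mathcal{C}}^{-1}])$, which already follows from the mapping-space form of the universal property). The paper's approach buys a very short argument at the cost of invoking the specific simplicial-set construction of the pushout and the fact that the fibrant replacement $\mathcal{W}\to\kappa\mathcal{W}$ can be taken to be the identity on objects.
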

\begin{proof}
  Let $\kappa\mathcal{W}_{\mathcal{C}}$ and $\kappa\mathcal{W}_{\mathcal{D}}$ be Kan complexes
  that are fibrant replacements for $\mathcal{W}_{\mathcal{C}}$ and $\mathcal{W}_{\mathcal{D}}$ in
  the usual model structure on simplicial sets. Then the \icats{}
  $\mathcal{C}[\mathcal{W}_{\mathcal{C}}^{-1}]$ and $\mathcal{D}[\mathcal{W}_{\mathcal{D}}^{-1}]$
  can be described as the homotopy pushouts
  \[
  \nodispnolabelcsquare{\mathcal{W}_{\mathcal{C}}}{\kappa\mathcal{W}_{\mathcal{C}}}{\mathcal{C}}{\mathcal{C}[\mathcal{W}_{\mathcal{C}}^{-1}],} \qquad
  \nodispnolabelcsquare{\mathcal{W}_{\mathcal{D}}}{\kappa\mathcal{W}_{\mathcal{D}}}{\mathcal{D}}{\mathcal{D}[\mathcal{W}_{\mathcal{D}}^{-1}]}   
\]
in the Joyal model structure. Then from (1) and (2) it is clear that
the functors $F$ and $G$ induce functors $F' \colon
\mathcal{C}[\mathcal{W}_{\mathcal{C}}^{-1}] \to \mathcal{D}[\mathcal{W}_{\mathcal{D}}^{-1}]$ and
$G' \colon \mathcal{D}[\mathcal{W}_{\mathcal{D}}^{-1}] \to
\mathcal{C}[\mathcal{W}_{\mathcal{C}}^{-1}]$, and the natural transformations $\eta$
and $\gamma$ induce natural transformations $\eta' \colon \id \to
G'F'$ and $\gamma' \colon F'G' \to \id$. The objects of
$\mathcal{C}[\mathcal{W}_{\mathcal{C}}^{-1}]$ and $\mathcal{D}[\mathcal{W}_{\mathcal{D}}^{-1}]$
can be taken to be the same as those of $\mathcal{C}$ and
$\mathcal{D}$, so by (3) and (4) the morphisms $\eta'_{c}$ and
$\gamma'_{d}$ are equivalences for all $c \in
\mathcal{C}[\mathcal{W}_{\mathcal{C}}^{-1}]$ and $d \in
\mathcal{D}[\mathcal{W}_{\mathcal{D}}^{-1}]$. Thus $\eta'$ and $\gamma'$ are natural
equivalences and $F'$ and $G'$ are hence equivalences of \icats{}.
\end{proof}

\begin{lemma}\label{lem:CartPBLocKanCx}
  Suppose $\mathcal{W}$ is an \icat{} and $\pi \colon \mathcal{E} \to \kappa\mathcal{W}$ is a
  Cartesian fibration. Let $\pi'
  \colon \mathcal{E}' \to \mathcal{W}$ denote the pullback of $\pi$
  along the canonical map $\eta \colon \mathcal{W} \to
  \kappa\mathcal{W}$. Then $\mathcal{E}$ is the localization of
  $\mathcal{E}'$ with respect to $\mathcal{W} \times_{\kappa\mathcal{W}} \iota
  \mathcal{E}$, i.e. the morphisms in $\mathcal{E}'$ that map to equivalences in $\mathcal{E}$.
\end{lemma}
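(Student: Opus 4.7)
The strategy is to verify the universal property of $\mathcal{E}$ as the localization of $\mathcal{E}'$ at $W := \mathcal{W} \times_{\kappa\mathcal{W}} \iota \mathcal{E}$ by reducing, via the Cartesian fibration structure of $\pi$, to the defining universal property of $\kappa\mathcal{W}$ as the localization of $\mathcal{W}$ at all its morphisms. By construction, $F \colon \mathcal{E}' \to \mathcal{E}$ carries every morphism of $W$ to an equivalence, so it factors canonically through a functor $\bar F \colon \mathcal{E}'[W^{-1}] \to \mathcal{E}$. It remains to show $\bar F$ is an equivalence, i.e.\ that for every \icat{} $\mathcal{X}$ the restriction map
\[
F^{*} \colon \Map(\mathcal{E}, \mathcal{X}) \to \Map^{W}(\mathcal{E}', \mathcal{X})
\]
is an equivalence, where $\Map^{W}$ denotes the subspace of functors sending $W$ into $\iota\mathcal{X}$.

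The key observation is that $\pi$ is a Cartesian fibration over a Kan complex, and is therefore classified by a functor $\Phi \colon \kappa\mathcal{W} \to \CatI$ (identifying $(\kappa\mathcal{W})^{\op} \simeq \kappa\mathcal{W}$), while the pullback $\mathcal{E}' \to \mathcal{W}$ is classified by the composite $\Phi\circ\eta$. The universal property of $\eta \colon \mathcal{W}\to \kappa\mathcal{W}$ as a localization gives an equivalence
\[
\Fun(\kappa\mathcal{W}, \CatI) \simeq \Fun(\mathcal{W}, \CatI) \times_{\Fun(\mathcal{W}_{1}, \CatI)} \Fun(\mathcal{W}_{1}, \iota\CatI),
\]
so that functors out of $\kappa\mathcal{W}$ are functors out of $\mathcal{W}$ together with the data that every morphism is sent to an equivalence. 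Unstraightening this equivalence---together with the observation that equivalences in $\mathcal{E}$ over a morphism $\alpha$ of $\kappa\mathcal{W}$ correspond precisely to morphisms $\phi$ in $\mathcal{E}'$ with $F(\phi)$ an equivalence---yields the desired universal property at the level of mapping spaces into $\mathcal{X}$, since a functor $g \colon \mathcal{E}' \to \mathcal{X}$ sending $W$ to equivalences supplies exactly the invertibility data needed to extend $g$ to $\mathcal{E}$.

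The main obstacle is carrying out the unstraightening argument rigorously so that the universal property at the level of classifying functors translates to the desired equivalence $F^{*}$ of mapping spaces. The cleanest rigorous implementation is to work in the Cartesian model structure on $(\sSet^{+})_{/\kappa\mathcal{W}}$: there $\eta^{*}$ is a right Quillen functor, and one can identify the left adjoint $\eta_{!}$, applied to $\mathcal{E}' \to \mathcal{W}$ (marked by $W$), with a model for $\mathcal{E}'[W^{-1}]$, while the counit of $\eta_{!} \dashv \eta^{*}$ identifies this in turn with $\mathcal{E}$. Alternatively, one can verify the universal property directly via explicit manipulations of simplicial sets, using that every morphism of $\kappa\mathcal{W}$ is a composite of morphisms of $\mathcal{W}$ and their formal inverses, and that Cartesian lifts in $\mathcal{E}$ of morphisms of $\mathcal{W}$ lie in $\mathcal{E}'$.
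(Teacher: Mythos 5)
Your strategy---verifying the universal property of the localization by reducing, via straightening, to the universal property of $\eta \colon \mathcal{W} \to \kappa\mathcal{W}$---is genuinely different from the paper's argument and could in principle be completed, but as written it has a gap exactly where you flag ``the main obstacle,'' and neither of your proposed implementations closes it. The universal property of $\eta$ controls functors \emph{out of the base}: it shows that restriction $\Fun(\kappa\mathcal{W}^{\op},\CatI) \to \Fun(\mathcal{W}^{\op},\CatI)$ is fully faithful, hence that $\eta^{*}$ is fully faithful on Cartesian fibrations. The lemma, however, is a statement about functors \emph{out of the total spaces} $\mathcal{E}$ and $\mathcal{E}'$, and ``unstraightening the equivalence'' does not by itself transport one statement to the other. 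In your first implementation two inputs are missing: (a) why a fibrant replacement of $(\mathcal{E}',W)$ in the Cartesian model structure over $\kappa\mathcal{W}$ models $\mathcal{E}'[W^{-1}]$ --- this needs both that fibrant objects over a Kan complex are marked by precisely their equivalences and that Cartesian equivalences over $\kappa\mathcal{W}$ remain weak equivalences in the marked model structure on $\sSet^{+}$ itself (which one checks by mapping into objects of the form $\mathcal{X}^{\natural}\times (\kappa\mathcal{W})^{\sharp}$); and (b) why the derived counit $L\eta_{!}R\eta^{*}\mathcal{E}\to\mathcal{E}$ is an equivalence, i.e.\ why $R\eta^{*}$ is fully faithful. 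Both can be supplied, but they are the substance of the lemma rather than bookkeeping. Your second implementation (``every morphism of $\kappa\mathcal{W}$ is a composite of morphisms of $\mathcal{W}$ and their formal inverses'') is a $1$-categorical picture of localization that does not yield a workable $\infty$-categorical argument.

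For comparison, the paper's proof sidesteps the universal property entirely: by \cite[Corollary 3.3.4.3]{HTT} the colimit of the functor classifying a Cartesian fibration is the localization of its total space at the Cartesian edges; since $\eta^{\op}$ is cofinal by \cite[Corollary 4.1.2.6]{HTT}, the classifying functors of $\pi$ and $\pi'$ have the same colimit; over the Kan complex $\kappa\mathcal{W}$ the $\pi$-Cartesian edges are exactly the equivalences, so that common colimit is $\mathcal{E}$ itself, while the $\pi'$-Cartesian edges of $\mathcal{E}'$ are exactly the morphisms carried to equivalences in $\mathcal{E}$. If you want to rescue your approach, the cleanest repair of step (a) is to invoke that same colimit formula---at which point you may as well run the paper's argument directly.
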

\begin{proof}
  Let $F \colon \kappa\mathcal{W}^{\op} \to \CatI$ be a functor classified by
  $\pi$. Then $\pi'$ is classified by the composite functor
  $\eta^{\op} \circ F \colon \mathcal{W}^{\op} \to \CatI$. By
  \cite[Corollary 4.1.2.6]{HTT}, the functor $\eta^{\op}$ is cofinal,
  hence by \cite[Proposition 4.1.1.8]{HTT} the functors $F$ and
  $\eta^{\op} \circ F$ have the same colimit. But by \cite[Corollary
  3.3.4.3]{HTT}, the colimit of $F$ is the localization of
  $\mathcal{E}$ with respect to the $\pi$-Cartesian morphisms, and the
  colimit of $\eta^{\op} \circ F$ is the localization of
  $\mathcal{E}'$ with respect to the $\pi'$-Cartesian morphisms. But
  since $\kappa\mathcal{W}$ is a Kan complex, the $\pi$-Cartesian morphisms in
  $\mathcal{E}$ are precisely the equivalences, hence it follows that
  $\mathcal{E}$ is the localization of $\mathcal{E}'$ with respect to
  the $\pi'$-Cartesian morphisms. But the $\pi'$-Cartesian morphisms
  in $\mathcal{E}'$ are precisely the morphisms that map to
  equivalences in $\mathcal{E}$, by \cite[Remark 2.4.1.12]{HTT}.
\end{proof}

\begin{propn}\label{propn:CartPBLoc}
  Let $\mathcal{C}$ be an \icat{} and $\mathcal{W}$ a subcategory of
  $\mathcal{C}$ containing the equivalences. Suppose we have a pushout
  square in $\CatI$
  \nolabelcsquare{\mathcal{W}}{\kappa\mathcal{W}}{\mathcal{C}}{\mathcal{C}[\mathcal{W}^{-1}],}
  and a Cartesian fibration $\pi \colon \mathcal{E} \to
  \mathcal{C}[\mathcal{W}^{-1}]$. Write $\pi' \colon \mathcal{E}' \to
  \mathcal{C}$ for the pullback of $\pi$ along $\mathcal{C} \to
  \mathcal{C}[\mathcal{W}^{-1}]$. Then the map $\mathcal{E}' \to
  \mathcal{E}$ exhibits $\mathcal{E}$ as the localization of
  $\mathcal{E}'$ with respect to $\mathcal{W}
  \times_{\mathcal{C}[\mathcal{W}^{-1}]} \iota \mathcal{E}$, i.e. the
  morphisms in $\mathcal{E}'$ that map to equivalences in $\mathcal{E}$ and to $\mathcal{W}$ under the projection to $\mathcal{C}$.
\end{propn}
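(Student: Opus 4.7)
The plan is to reduce the general statement to the Kan-complex case treated in Lemma \ref{lem:CartPBLocKanCx} by pulling back $\pi$ along each map in the given pushout square. This produces a commutative cube of Cartesian fibrations whose bottom face is the original pushout defining $\mathcal{C}[\mathcal{W}^{-1}]$ and whose top face is
\[
\nodispnolabelcsquare{\mathcal{E}'|_{\mathcal{W}}}{\mathcal{E}|_{\kappa\mathcal{W}}}{\mathcal{E}'}{\mathcal{E},}
\]
where $\mathcal{E}|_{\kappa\mathcal{W}} := \mathcal{E} \times_{\mathcal{C}[\mathcal{W}^{-1}]} \kappa\mathcal{W}$ and $\mathcal{E}'|_{\mathcal{W}} := \mathcal{E}' \times_{\mathcal{C}} \mathcal{W} \simeq \mathcal{E} \times_{\mathcal{C}[\mathcal{W}^{-1}]} \mathcal{W}$; these agree by the pasting law for pullbacks.

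First, applying Lemma \ref{lem:CartPBLocKanCx} to the pullback fibration $\mathcal{E}|_{\kappa\mathcal{W}} \to \kappa\mathcal{W}$ identifies $\mathcal{E}|_{\kappa\mathcal{W}}$ as the localization of $\mathcal{E}'|_{\mathcal{W}}$ at the subcategory $\mathcal{W}'' := \mathcal{W} \times_{\kappa\mathcal{W}} \iota\mathcal{E}|_{\kappa\mathcal{W}}$, which under the canonical identifications coincides with $\mathcal{W}' := \mathcal{W} \times_{\mathcal{C}[\mathcal{W}^{-1}]} \iota\mathcal{E}$. This gives a pushout square
\[
\nodispnolabelcsquare{\mathcal{W}'}{\kappa\mathcal{W}'}{\mathcal{E}'|_{\mathcal{W}}}{\mathcal{E}|_{\kappa\mathcal{W}}}
\]
in $\CatI$. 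The crucial assertion is then that the top face of the cube is itself a pushout in $\CatI$; granting this, pasting the two pushouts yields
\[
\nodispnolabelcsquare{\mathcal{W}'}{\kappa\mathcal{W}'}{\mathcal{E}'}{\mathcal{E},}
\]
which by the definition of localization exhibits $\mathcal{E}$ as $\mathcal{E}'[\mathcal{W}'^{-1}]$, as required.

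The main obstacle is the pushout claim for the top face. The natural route is via straightening–unstraightening: since $(\blank)^{\op}$ preserves pushouts of \icats{} and $\Fun(\blank, \CatI)$ converts pushouts to pullbacks, a Cartesian fibration over $\mathcal{C}[\mathcal{W}^{-1}]$ is equivalently the datum of a Cartesian fibration over $\mathcal{C}$ together with a Cartesian fibration over $\kappa\mathcal{W}$ and an identification of their pullbacks to $\mathcal{W}$. One then has to show that the formation of total spaces — Cartesian unstraightening composed with the forgetful functor $\Cart(\mathcal{D}) \to \CatI$ — assembles compatibly with this pullback description when $\mathcal{D}$ varies along the pushout. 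Concretely, for any target \icat{} $\mathcal{X}$, this amounts to verifying the equivalence
\[
\Map(\mathcal{E}, \mathcal{X}) \simeq \Map(\mathcal{E}', \mathcal{X}) \times_{\Map(\mathcal{E}'|_{\mathcal{W}}, \mathcal{X})} \Map(\mathcal{E}|_{\kappa\mathcal{W}}, \mathcal{X}),
\]
which one expects to prove by identifying each mapping space with a space of coCartesian sections of a suitable fibration built from $\pi$ and $\mathcal{X}$, and invoking the fact that sections over a pushout of \icats{} in $\CatI$ decompose as pullbacks of sections over the pieces — a fact that itself reduces to the pushout description of $\mathcal{C}[\mathcal{W}^{-1}]$ and the two-sided straightening.
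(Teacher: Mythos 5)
Your overall architecture coincides with the paper's: pull the defining pushout square for $\mathcal{C}[\mathcal{W}^{-1}]$ back along $\pi$ to a cube, identify $\mathcal{E}|_{\kappa\mathcal{W}}$ as the localization of $\mathcal{E}'|_{\mathcal{W}}$ at $\mathcal{W}'$ via Lemma~\ref{lem:CartPBLocKanCx}, and paste the resulting pushout square with the top face of the cube. That reduction, and the identification of $\mathcal{W}''$ with $\mathcal{W}'$ by the pasting law, are exactly what the paper does.

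The problem is the step you yourself single out as the crucial assertion — that the top face of the cube is a pushout — which you leave as something ``one expects to prove'' by rewriting $\Map(\mathcal{E}\times_{\mathcal{C}[\mathcal{W}^{-1}]}\mathcal{D},\mathcal{X})$ as a space of sections over $\mathcal{D}$ of a fibration built from $\pi$ and $\mathcal{X}$. The existence of such a fibration, with the required universal property natural in \emph{all} $\mathcal{D}\to\mathcal{C}[\mathcal{W}^{-1}]$, is precisely a right adjoint to the pullback functor $\mathcal{E}\times_{\mathcal{C}[\mathcal{W}^{-1}]}(\blank)$; and such a right adjoint exists exactly when that pullback preserves colimits, which is the statement you are trying to establish. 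As written the sketch is therefore circular, or at best bottoms out in an unstated exponentiability claim for Cartesian fibrations. The non-circular input, and the one the paper uses to dispatch this step in one line, is that pullback along a Cartesian fibration preserves categorical equivalences (\cite[Corollary 2.4.4.5]{HTT}): since strict pullback of simplicial sets preserves both pushouts and monomorphisms, this implies that the pulled-back square is still a homotopy pushout in the Joyal model structure, i.e. a pushout in $\CatI$. Replacing your descent sketch by this citation (equivalently, by the flatness of Cartesian fibrations) closes the argument; everything else in your proposal then goes through as in the paper.
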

\begin{proof}
  Since $\pi$ is a Cartesian fibration it follows from \cite[Corollary 2.4.4.5]{HTT} that the given pushout
  square pulls back along $\pi$ to a pushout square
  \nolabelcsquare{\mathcal{W} \times_{\mathcal{C}[\mathcal{W}^{-1}]}
    \mathcal{E}}{\kappa \mathcal{W}
    \times_{\mathcal{C}[\mathcal{W}^{-1}]}
    \mathcal{E}}{\mathcal{E}'}{\mathcal{E}.}  It therefore suffices to
  show that we have a pushout square \nolabelcsquare{\mathcal{W}
    \times_{\mathcal{C}[\mathcal{W}^{-1}]} \iota \mathcal{E}}{\kappa
    \mathcal{W} \times_{\mathcal{C}[\mathcal{W}^{-1}]} \iota
    \mathcal{E}}{\mathcal{W} \times_{\mathcal{C}[\mathcal{W}^{-1}]}
    \mathcal{E}}{\kappa \mathcal{W}
    \times_{\mathcal{C}[\mathcal{W}^{-1}]} \mathcal{E},} which follows
  from Lemma~\ref{lem:CartPBLocKanCx}.
\end{proof}

\begin{thm}\label{thm:FFESloc}
  Suppose $\mathcal{V}$ is a monoidal \icat{}. Define
  $\AlgCatV_{\Set}$ by the pullback
  \csquare{\AlgCatV_{\Set}}{\AlgCatV}{\Set}{\mathcal{S}}{i}{}{}{}
  where the bottom horizontal map is the obvious inclusion. Then the
  functor $i$ induces an equivalence
  \[\AlgCatV_{\Set}[\txt{FFES}^{-1}] \isoto \AlgCatV[\txt{FFES}^{-1}]\]
  after inverting the fully faithful and essentially surjective
  functors.
\end{thm}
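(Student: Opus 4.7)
My plan is to factor the inclusion $i$ through an intermediate \icat{} $\AlgCat(\mathcal{V})_{\sSet}$ of categorical algebras whose space of objects is presented by a simplicial set. By Lemma~\ref{lem:FullyFaithfulEq}, the projection $\pi \colon \AlgCat(\mathcal{V}) \to \mathcal{S}$ is a Cartesian fibration; presenting $\mathcal{S}$ as the localization of $\sSet$ at weak equivalences, I define $\AlgCat(\mathcal{V})_{\sSet}$ as the pullback of $\pi$ along $\sSet \to \mathcal{S}$, and write $\widetilde{\txt{FFES}}$ for the preimage in $\AlgCat(\mathcal{V})_{\sSet}$ of the FFES class under the natural projection to $\AlgCat(\mathcal{V})$. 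The proof then proceeds in two steps.

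For the first step, Proposition~\ref{propn:CartPBLoc} exhibits $\AlgCat(\mathcal{V})$ as the localization of $\AlgCat(\mathcal{V})_{\sSet}$ at the class $\mathcal{W}'$ of morphisms that project to weak equivalences in $\sSet$ and to equivalences in $\AlgCat(\mathcal{V})$. Since equivalences are in particular fully faithful and essentially surjective, $\mathcal{W}' \subseteq \widetilde{\txt{FFES}}$, and hence iteratively localizing at FFES gives $\AlgCat(\mathcal{V})[\txt{FFES}^{-1}] \simeq \AlgCat(\mathcal{V})_{\sSet}[\widetilde{\txt{FFES}}^{-1}]$.

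For the second step, I apply Lemma~\ref{lem:locadj} to the adjunction $j \dashv G$, where $j \colon \AlgCat(\mathcal{V})_{\Set} \hookrightarrow \AlgCat(\mathcal{V})_{\sSet}$ is the inclusion of discrete simplicial sets of objects and $G$ sends $\mathcal{D}$ over $L$ to its Cartesian pullback $\mathcal{D}|_{L_0}$ along the inclusion of vertices $L_0 \hookrightarrow L$ (this adjunction exists because any map from a discrete simplicial set to $L$ factors uniquely through $L_0$). The hypotheses to verify are: (1) $j$ preserves FFES, which is trivial; (2) $G$ preserves FFES; (3) the unit $\mathcal{C} \to Gj\mathcal{C}$ is an equivalence, clear since $j\mathcal{C}$ is already over a discrete simplicial set; (4) the counit $jG\mathcal{D} \to \mathcal{D}$ is a Cartesian morphism over $L_0 \hookrightarrow L$, hence fully faithful, and essentially surjective since $L_0 \to L$ is $\pi_0$-surjective. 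Applying Lemma~\ref{lem:locadj} then gives $\AlgCat(\mathcal{V})_{\Set}[\txt{FFES}^{-1}] \simeq \AlgCat(\mathcal{V})_{\sSet}[\widetilde{\txt{FFES}}^{-1}]$, which combines with the first step to finish the proof.

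The main technical obstacle I anticipate is condition (2) above, which requires that $G$ preserves essential surjectivity. Given a FFES $\phi \colon \mathcal{C} \to \mathcal{D}$ over $K \to L$ and $y \in L_0$, essential surjectivity of $\phi$ yields $x \in K$ with $\phi(x) \simeq y$ in $\mathcal{D}$; since $K_0 \to K$ is $\pi_0$-surjective, there is $x' \in K_0$ connected to $x$ by a path in $|K|$. The key point is that this path must produce an equivalence $x \simeq x'$ in $\mathcal{C}$, which I intend to verify by pulling back $\mathcal{C}$ along $\Delta^1 \to |K|$ using the Cartesian fibration $\pi$: since $\Delta^1$ is contractible in $\mathcal{S}$, the resulting $\mathcal{V}$-\icat{} over $\Delta^1$ becomes equivalent to a $\simp^{\op}_{*}$-algebra in which the images of the two endpoints are necessarily equivalent. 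Then $\phi(x') \simeq \phi(x) \simeq y$ in $\mathcal{D}$, establishing essential surjectivity of $G\phi$.
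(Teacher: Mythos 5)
Your proposal is correct and follows essentially the same route as the paper: it factors through the same intermediate \icat{} of categorical algebras with simplicial sets of objects, uses Proposition~\ref{propn:CartPBLoc} for one leg and Lemma~\ref{lem:locadj} with the same adjunction (restriction to vertices, right adjoint to the inclusion of discrete simplicial sets) for the other. The only cosmetic difference is that the paper deduces that the right adjoint preserves fully faithful and essentially surjective functors from the 2-out-of-3 property (Proposition~\ref{propn:ffes2of3}) applied to the unit and counit, rather than by your direct path-lifting argument for essential surjectivity.
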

\begin{proof}
  Considering $\mathcal{S}$ as the \icat{} associated to the usual
  model structure on simplicial sets, we get a functor $j \colon \sSet
  \to \mathcal{S}$ that exhibits $\mathcal{S}$ as the localization of
  $\sSet$ with respect to the weak equivalences. Let
  $\AlgCat(\mathcal{V})_{\Delta}$ be the \icat{} defined by
  the pullback square
  \csquare{\AlgCat(\mathcal{V})_{\Delta}}{\AlgCat(\mathcal{V})}{\sSet}{\mathcal{S}.}{j'}{}{}{j}
  Then $\AlgCat(\mathcal{V})_{\Set}$ is the pullback of
  $\AlgCat(\mathcal{V})_{\Delta}$ along the inclusion $\Set
  \to \sSet$ of the constant simplicial sets. This has a right adjoint
  $(\blank)_{0} \colon \sSet \to \Set$ that sends a simplicial set to
  its set of 0-simplices. The inclusion \[i' \colon
  \AlgCat(\mathcal{V})_{\Set} \hookrightarrow
  \AlgCat(\mathcal{V})_{\Delta}\] therefore 
  has a right adjoint \[s \colon
  \AlgCat(\mathcal{V})_{\Delta} \to
  \AlgCat(\mathcal{V})_{\Set}\] that sends an object $(X \in
  \sSet, \mathcal{C} \in \AlgCat(\mathcal{V}))$ to the
  pullback of $\mathcal{C}$ along the morphism $X_{0} \to X \to
  \iota_{0}\mathcal{C}$. It is clear that $i'$ preserves fully
  faithful and essentially surjective functors, as does $s$ by the
  2-out-of-3 property. Moreover, $s i \simeq \id$ and the counit
  $is(\mathcal{C}) \to \mathcal{C}$ is fully faithful and
  essentially surjective for all $\mathcal{C}$. It then follows from
  Lemma~\ref{lem:locadj} that $i'$ induces an equivalence
  \[ \AlgCat(\mathcal{V})_{\Set}[\txt{FFES}^{-1}] \isoto
  \AlgCat(\mathcal{V})_{\Delta}[\txt{FFES}^{-1}] \] after
  inverting the fully faithful and essentially surjective
  functors. Moreover, by Proposition~\ref{propn:CartPBLoc} the \icat{}
  $\AlgCat(\mathcal{V})$ is the localization of
  $\AlgCat(\mathcal{V})_{\Delta}$ with respect to the morphisms that
  induce weak equivalences in $\sSet$ and project to equivalences in
  $\AlgCat(\mathcal{V})$. These are obviously among the fully faithful
  and essentially surjective functors, and so $j'$ induces an
  equivalence
  \[ \AlgCat(\mathcal{V})_{\Delta}[\txt{FFES}^{-1}] \isoto
  \AlgCat(\mathcal{V})[\txt{FFES}^{-1}].\] Composing these
  two equivalences gives the result.
\end{proof}
\begin{remark}
  Combining this result with Corollary~\ref{cor:LDopXisOX} it follows
  that the localized \icat{} $\AlgCatV[\txt{FFES}^{-1}]$ is equivalent to the
  preliminary definition of an \icat{} of $\mathcal{V}$-\icats{} we
  discussed in \S\ref{subsec:FTIopds}, using the \iopds{} associated
  to the multicategories $\mathbf{O}_{S}$ with $S$ a set.
\end{remark}

\subsection{Local Equivalences}\label{subsec:localeq}
In this subsection we consider the strongly saturated class of maps
generated by $s^{0} \colon E^{1} \to E^{0}$; we call these the
\emph{local equivalences}. We assume throughout that
$\mathcal{V}$ is a presentably monoidal \icat{}, so that
$\AlgCatV$ is a presentable \icat{} by
Proposition~\ref{propn:AlgCatPres}.

\begin{defn}
  The \defterm{local equivalences} in $\AlgCat(\mathcal{V})$
  are the elements of the strongly saturated class of morphisms generated by
  the map $s^{0} \colon E^{1} \to E^{0}$.
\end{defn}

\begin{propn}\label{propn:completeislocal}
  The following are equivalent, for a $\mathcal{V}$-\icat{}
  $\mathcal{C}$:
  \begin{enumerate}[(i)]
  \item $\mathcal{C}$ is complete.
  \item $\mathcal{C}$ is local with respect to $E^{1} \to E^{0}$,
    i.e. the map $\Map(E^{0}, \mathcal{C}) \to \Map(E^{1},
    \mathcal{C})$ is an equivalence.
  \item For every local equivalence $\mathcal{A} \to \mathcal{B}$, the
    induced map \[\Map(\mathcal{B}, \mathcal{C}) \to \Map(\mathcal{A},
    \mathcal{C})\] is an equivalence.
  \end{enumerate}
\end{propn}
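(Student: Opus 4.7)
The plan is to prove the chain of implications (iii) $\Rightarrow$ (ii) $\Rightarrow$ (i) $\Rightarrow$ (ii) $\Rightarrow$ (iii). The implication (iii) $\Rightarrow$ (ii) is immediate, since $s^{0} \colon E^{1} \to E^{0}$ is by definition a local equivalence (being a generator of the strongly saturated class). The implication (ii) $\Rightarrow$ (iii) is the standard consequence of strong saturation: for any $\mathcal{C}$, the class of morphisms $f \colon \mathcal{A} \to \mathcal{B}$ in $\AlgCatV$ for which $\Map(\mathcal{B},\mathcal{C}) \to \Map(\mathcal{A},\mathcal{C})$ is an equivalence is itself strongly saturated, because equivalences of spaces are closed under colimits in the arrow category, satisfy 2-out-of-3, and are closed under pushouts and transfinite composition. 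Hence if this class contains $s^{0}$ it contains the entire strongly saturated class generated by $s^{0}$, namely all local equivalences.

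It therefore remains to establish the equivalence (i) $\Leftrightarrow$ (ii). For this I would appeal directly to Corollary~\ref{cor:completeifE1}, which identifies completeness with the condition that the degeneracy $s_{0} \colon \iota_{0}\mathcal{C} \to \iota_{1}\mathcal{C}$ be an equivalence. Unwinding the definitions, we have $\iota_{0}\mathcal{C} \simeq \Map_{\AlgCatV}(E^{0}, \mathcal{C})$ (using Lemma~\ref{lem:iota0}) and $\iota_{1}\mathcal{C} = \Map_{\AlgCatV}(E^{1}, \mathcal{C})$, and the map $s_{0}$ is precisely the one induced by precomposition with $s^{0} \colon E^{1} \to E^{0}$. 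Thus condition (ii) --- that $\Map(E^{0}, \mathcal{C}) \to \Map(E^{1}, \mathcal{C})$ be an equivalence --- is literally the same as the condition appearing in Corollary~\ref{cor:completeifE1}, establishing the equivalence with (i).

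There is no real obstacle here; the work has already been done in the preceding subsection, where the cogroupoid property of $E^{\bullet}$ (Theorem~\ref{thm:Ecogpd}) was used to show that $\iota_{\bullet}\mathcal{C}$ is a groupoid object (Corollary~\ref{cor:iotagpd}), which in turn collapses the a priori condition $\iota_{0}\mathcal{C} \isoto |\iota_{\bullet}\mathcal{C}|$ to the single-degeneracy condition on $s_{0}$. The only thing to be slightly careful about is to verify that the strongly saturated class argument for (ii) $\Rightarrow$ (iii) goes through in $\AlgCatV$; this is fine because $\AlgCatV$ is presentable (Proposition~\ref{propn:AlgCatPres}), so the presheaf $\Map(\blank, \mathcal{C})$ sends colimits to limits and the standard formal argument applies.
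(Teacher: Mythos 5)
Your proposal is correct and follows essentially the same route as the paper: the equivalence (i)~$\Leftrightarrow$~(ii) is exactly the content of Corollary~\ref{cor:completeifE1} once one identifies $s_{0}\colon \iota_{0}\mathcal{C}\to\iota_{1}\mathcal{C}$ with precomposition by $s^{0}$, and the equivalence (ii)~$\Leftrightarrow$~(iii) is the standard characterization of local objects for a strongly saturated class, which the paper simply cites as \cite[Proposition 5.5.4.15(4)]{HTT} where you spell out the saturation argument by hand.
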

\begin{proof}
  (i) is equivalent to (ii) by Corollary~\ref{cor:completeifE1}, and
  (ii) is equivalent to (iii) by \cite[Proposition 5.5.4.15(4)]{HTT}.
\end{proof}

\begin{defn}
  Write $\CatIV$ for the full subcategory of $\AlgCatV$
  spanned by the complete $\mathcal{V}$-\icats{}.
\end{defn}

\begin{propn}
  The inclusion $\CatIV \hookrightarrow \AlgCatV$ has a left adjoint,
  which exhibits $\CatIV$ as the localization of $\AlgCatV$ with
  respect to the local equivalences.
\end{propn}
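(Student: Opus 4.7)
The plan is to invoke the standard machinery for accessible (Bousfield-type) localizations of presentable $\infty$-categories, which applies here essentially immediately. First, I would observe that $\AlgCatV$ is a presentable $\infty$-category (this is Proposition~\ref{propn:AlgCatPres}), and that the class of local equivalences is, by definition, the strongly saturated class generated by the \emph{single} morphism $s^{0} \colon E^{1}\to E^{0}$, so in particular it is of small generation.

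Given these two facts, I would apply the general theory of localizations of presentable $\infty$-categories---concretely, Lurie's \cite[Proposition 5.5.4.15]{HTT}---to the pair $(\AlgCatV, \{s^{0}\})$. This theorem says that for a presentable $\infty$-category $\mathcal{C}$ and a small set $S$ of morphisms, the inclusion into $\mathcal{C}$ of the full subcategory $S^{-1}\mathcal{C}$ of $S$-local objects admits a left adjoint, and this adjunction exhibits $S^{-1}\mathcal{C}$ as the localization of $\mathcal{C}$ at the strongly saturated class generated by $S$. Applied to our situation, this produces a reflective subcategory of $\AlgCatV$ consisting of the $\{s^{0}\}$-local objects, which is the localization of $\AlgCatV$ at the class of local equivalences.

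Finally, to identify this reflective subcategory with $\CatIV$, I would simply quote Proposition~\ref{propn:completeislocal}: a $\mathcal{V}$-$\infty$-category $\mathcal{C}$ is complete if and only if it is local with respect to $s^{0}\colon E^{1}\to E^{0}$, which is equivalent to being local with respect to every morphism in the strongly saturated class generated by $s^{0}$, i.e.\ every local equivalence. Hence the full subcategory of $\{s^{0}\}$-local objects is precisely $\CatIV$, which completes the proof.

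Because every nontrivial input to the argument has already been established---presentability of $\AlgCatV$, the characterization of completeness as locality with respect to $s^{0}$, and the existence of accessible localizations of presentable $\infty$-categories at small sets of morphisms---there is no serious obstacle; the proof really is a one-line application of \cite[Proposition 5.5.4.15]{HTT}, and all the genuine content has been isolated into the preceding propositions.
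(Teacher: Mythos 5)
Your proposal is correct and follows essentially the same route as the paper: presentability of $\AlgCatV$ (Proposition~\ref{propn:AlgCatPres}), small generation of the local equivalences by $s^{0}\colon E^{1}\to E^{0}$, an application of \cite[Proposition 5.5.4.15]{HTT}, and the identification of the local objects with the complete objects via Proposition~\ref{propn:completeislocal}. Nothing is missing.
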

\begin{proof}
  The \icat{} $\AlgCat(\mathcal{V})$ is presentable by
  Proposition~\ref{propn:AlgCatPres}, and the local equivalences are
  generated by a set of maps. The existence of the left adjoint
  therefore follows from \cite[Proposition 5.5.4.15(4)]{HTT} 
  and Proposition~\ref{propn:completeislocal}.
\end{proof}

\begin{cor}\label{cor:CatIVpres}
  The \icat{} $\CatIV$ is presentable.
\end{cor}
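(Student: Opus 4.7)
The plan is to deduce presentability of $\CatIV$ as a formal consequence of the preceding results, since essentially all the work has already been done. By the previous proposition, $\CatIV$ is the localization of $\AlgCatV$ at the strongly saturated class of local equivalences, and by Proposition~\ref{propn:completeislocal} this class is generated (in the sense of strong saturation) by the \emph{single} morphism $s^{0} \colon E^{1} \to E^{0}$, in particular by a small set of morphisms. On the other hand, $\AlgCatV$ is presentable by Proposition~\ref{propn:AlgCatPres}, using the hypothesis that $\mathcal{V}$ is a presentably monoidal \icat{}.

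Given these inputs, I would simply invoke the general theory of accessible localizations of presentable \icats{}: by \cite[Proposition 5.5.4.15]{HTT} (or equivalently \cite[Remark 5.5.1.6]{HTT}), if $\mathcal{P}$ is a presentable \icat{} and $S$ is a small set of morphisms in $\mathcal{P}$, then the full subcategory $\mathcal{P}^{S} \subseteq \mathcal{P}$ of $S$-local objects is an accessible reflective subcategory, and is itself a presentable \icat{}. Applying this with $\mathcal{P} = \AlgCatV$ and $S = \{s^{0} \colon E^{1} \to E^{0}\}$ yields the result, since by Proposition~\ref{propn:completeislocal} the $S$-local objects are precisely the complete $\mathcal{V}$-\icats{}, i.e.\ the objects of $\CatIV$.

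There is no real obstacle here; the only content already used is the identification of complete objects with $s^{0}$-local objects (established in Corollary~\ref{cor:completeifE1} via the analysis of the cogroupoid object $E^{\bullet}$) together with presentability of $\AlgCatV$ (which required the more substantive work of \S\ref{subsec:catalg}, in particular the construction of colimits from Lemma~\ref{lem:AlgCatcolim} and accessibility via the pullback square defining $\AlgCatV$). Thus the proof is at most a single sentence citing Proposition~\ref{propn:AlgCatPres}, the preceding proposition, and \cite[Proposition 5.5.4.15]{HTT}.
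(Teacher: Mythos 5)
Your proof is correct and is essentially identical to the paper's: the paper's proof of this corollary is simply a citation of \cite[Proposition 5.5.4.15(3)]{HTT}, relying on exactly the same inputs you identify (presentability of $\AlgCatV$ from Proposition~\ref{propn:AlgCatPres} and the identification of $\CatIV$ as the accessible localization at the set $\{s^{0}\colon E^{1} \to E^{0}\}$ from the preceding proposition). No further comment is needed.
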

\begin{proof}
  This follows from \cite[Proposition 5.5.4.15(3)]{HTT}.
\end{proof}

\begin{thm}\label{thm:CatISisCatI}
  $\CatI^{\mathcal{S}}$ is equivalent to $\CatI$.
\end{thm}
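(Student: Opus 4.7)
The plan is to reduce to the known equivalence between complete Segal spaces and $\CatI$. Starting from Theorem~\ref{thm:AlgCatSisSeg}, we already have an equivalence $\Phi \colon \AlgCat(\mathcal{S}) \isoto \SegI$, sending a categorical algebra $\mathcal{C} \colon \simp^{\op}_S \to \mathcal{S}^\times$ to the Segal space $\pi_! \mathcal{C}'$ obtained by left Kan extension along $\pi \colon \simp^{\op}_S \to \simp^{\op}$. Thus it suffices to identify, under $\Phi$, the full subcategory $\CatI^{\mathcal{S}} \subseteq \AlgCat(\mathcal{S})$ of complete categorical algebras with the full subcategory $\mathrm{CSS}_\infty \subseteq \SegI$ of complete Segal spaces in the sense of Rezk, and then invoke the equivalence $\mathrm{CSS}_\infty \simeq \CatI$ of Joyal–Tierney (in the model-categorical form of Rezk).

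The key step is the identification of completeness conditions. For $\mathcal{V} = \mathcal{S}$ the lax monoidal functor $U \colon \mathcal{S}^\times \to \mathcal{S}^\times$ of Example~\ref{ex:TisTensorIV} is naturally equivalent to the identity, so by Proposition~\ref{propn:eqCiseqUC} the simplicial space $\iota_\bullet \mathcal{C}$ agrees with $\iota_\bullet \Phi(\mathcal{C})$, and in particular it is computed from the underlying Segal space. Under $\Phi$, the cosimplicial object $E^\bullet$ of trivial $\mathcal{S}$-$\infty$-categories is sent to Rezk's cosimplicial object of ``walking equivalences''; in particular $\iota_1 \mathcal{C} \simeq \Map_{\SegI}(E^1, \Phi(\mathcal{C}))$ which, by \cite[Theorem 6.2]{RezkCSS} (invoked already in the proof of Proposition~\ref{propn:eqsareeqonmaps}), is precisely the space $\Phi(\mathcal{C})_{\mathrm{eq}} \subseteq \Phi(\mathcal{C})_1$ of homotopy equivalences. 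By Corollary~\ref{cor:completeifE1} an $\mathcal{S}$-$\infty$-category $\mathcal{C}$ is complete iff $s_0 \colon \iota_0 \mathcal{C} \to \iota_1 \mathcal{C}$ is an equivalence, which translates under $\Phi$ to the condition that $s_0 \colon \Phi(\mathcal{C})_0 \to \Phi(\mathcal{C})_{\mathrm{eq}}$ is an equivalence — exactly Rezk's completeness condition.

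Combining these identifications, $\Phi$ restricts to an equivalence $\CatI^{\mathcal{S}} \isoto \mathrm{CSS}_\infty$. The final step is to apply the Joyal–Tierney theorem that $\mathrm{CSS}_\infty \simeq \CatI$, which yields $\CatI^{\mathcal{S}} \simeq \CatI$. None of the steps is really an obstacle: the only content beyond invoking earlier results in the paper is the comparison of our completeness condition with Rezk's, and this is essentially forced once one matches $E^1$ with the walking equivalence Segal space, which was done in the proof of Proposition~\ref{propn:eqsareeqonmaps}.
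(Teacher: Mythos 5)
Your proposal is correct and follows essentially the same route as the paper: transport along the equivalence of Theorem~\ref{thm:AlgCatSisSeg}, identify the complete objects with Rezk's complete Segal spaces, and invoke Joyal--Tierney. The paper simply asserts the matching of completeness conditions, whereas you spell it out via Proposition~\ref{propn:eqCiseqUC}, Corollary~\ref{cor:completeifE1}, and Rezk's characterization of homotopy equivalences; this is a faithful elaboration rather than a different argument.
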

\begin{proof}
  Under the equivalence $\AlgCat(\mathcal{S}) \isoto \SegI$
  of Theorem~\ref{thm:AlgCatSisSeg}, the subcategory
  $\CatI^{\mathcal{S}}$ corresponds to the subcategory of
  \emph{complete} Segal spaces. It is proved in \cite{JoyalTierney}
  that this is equivalent to $\CatI$.
\end{proof}

%

\begin{lemma}\label{lem:E1xE1localeq}
  The map $\id \otimes s^{0} \colon E^{1}\otimes E^{1} \to E^{1}
  \otimes E^{0} \simeq E^{1}$ is a local equivalence.
\end{lemma}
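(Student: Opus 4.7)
The strategy is to reduce to the case $\mathcal{V} = \mathcal{S}$, identify $E^{1}\otimes E^{1}$ as a trivial $\mathcal{S}$-\icat{} on four objects, and then apply the cogroupoid structure on $E^{\bullet}$ from Theorem~\ref{thm:Ecogpd} together with 2-out-of-3. The tensor is the $\AlgCat(\mathcal{S})$-action from Corollary~\ref{cor:prestensAlgCatS}, so $E^{1}\otimes E^{1}$ means $E^{1}_{\mathcal{V}}\otimes E^{1}_{\mathcal{S}}$.

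First, by Lemma~\ref{lem:ESastensor} we have $E^{n}_{\mathcal{V}}\simeq T_{*}(E^{n}_{\mathcal{S}})$, where $T_{*}\colon \AlgCat(\mathcal{S}) \to \AlgCat(\mathcal{V})$ is the left adjoint from Example~\ref{ex:TisTensorIV}. Since $T$ is a monoidal functor between presentably monoidal \icats{}, $T_{*}$ is $\AlgCat(\mathcal{S})$-linear, so $E^{1}_{\mathcal{V}}\otimes E^{1}_{\mathcal{S}} \simeq T_{*}(E^{1}_{\mathcal{S}}\otimes E^{1}_{\mathcal{S}})$ and the map $\id\otimes s^{0}$ in $\AlgCat(\mathcal{V})$ is $T_{*}$ applied to the analogous map in $\AlgCat(\mathcal{S})$. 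Because $T_{*}$ is a left adjoint sending $s^{0}_{\mathcal{S}}$ to $s^{0}_{\mathcal{V}}$, it carries the strongly saturated class of local equivalences in $\AlgCat(\mathcal{S})$ into the local equivalences in $\AlgCat(\mathcal{V})$. So I am reduced to proving the statement for $\mathcal{V} = \mathcal{S}$.

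Next, using the equivalence $\AlgCat(\mathcal{S}) \simeq \SegI$ of Theorem~\ref{thm:AlgCatSisSeg} and the fact that the induced symmetric monoidal structure on $\AlgCat(\mathcal{S})$ (Corollary~\ref{cor:AlgCatEnMon}(ii), via Proposition~\ref{propn:AlgCatLaxMon}) corresponds to the Cartesian product of Segal spaces, the object $E^{n}_{\mathcal{S}}$ corresponds to $i_{*}\{0,\ldots,n\}$. Since $i_{*}$ is a right adjoint, $i_{*}\{0,1\}\times i_{*}\{0,1\} \simeq i_{*}(\{0,1\}^{2})$, so $E^{1}_{\mathcal{S}}\otimes E^{1}_{\mathcal{S}} \simeq E^{\mathcal{S}}_{\{0,1\}^{2}}$, the coarse groupoid on four objects. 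Any bijection $\{0,1\}^{2} \cong \{0,1,2,3\}$ induces an equivalence with $E^{3}_{\mathcal{S}}$, and the map $\id\otimes s^{0}$ corresponds to the functor $E^{3} \to E^{1}$ induced by the projection onto the first factor.

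It now suffices to show that every functor $E^{m}\to E^{n}$ between these coarse objects is a local equivalence. I will first prove by induction that $E^{n}\to E^{0}$ is a local equivalence for all $n\geq 0$: the base case $n = 1$ is the definition of $s^{0}$, and the cogroupoid property from Theorem~\ref{thm:Ecogpd} gives a pushout $E^{n} \simeq E^{n-1}\amalg_{E^{0}} E^{1}$, so cobase change of the local equivalence $E^{1} \to E^{0}$ (using closure of the saturated class under pushouts) together with the inductive hypothesis shows that $E^{n} \to E^{0}$ is a local equivalence. Factoring our map as $E^{3} \to E^{1} \to E^{0}$, both the composite and the second arrow are local equivalences, so by 2-out-of-3 the arrow $E^{3}\to E^{1}$ is a local equivalence, as required.

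The main point requiring care is identifying $E^{1}_{\mathcal{S}}\otimes E^{1}_{\mathcal{S}}$ explicitly as (equivalent to) $E^{3}_{\mathcal{S}}$, which goes through the Segal space model and the lax monoidal structure of $\AlgCat(-)$; once this is in hand, the cogroupoid induction and 2-out-of-3 argument are routine.
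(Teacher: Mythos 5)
Your proof is correct and follows essentially the same route as the paper's: reduce to $\mathcal{V}=\mathcal{S}$, identify $E^{1}\otimes E^{1}$ with $E^{\{0,1\}\times\{0,1\}}\simeq E^{3}$, and apply the cogroupoid decomposition from Theorem~\ref{thm:Ecogpd}. The only (harmless) difference is at the final step: the paper exhibits $E^{3}\to E^{1}$ directly as the pushout $s^{0}\cup\id\cup s^{0}$ along $E^{3}\simeq E^{\{0,1\}}\amalg_{E^{\{1\}}}E^{\{1,2\}}\amalg_{E^{\{2\}}}E^{\{2,3\}}$, whereas you first show every $E^{n}\to E^{0}$ is a local equivalence by induction and then conclude by 2-out-of-3.
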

\begin{proof}
  Using Proposition~\ref{propn:eqCiseqUC} it suffices to prove this when
  $\mathcal{V}$ is $\mathcal{S}$. We can then identify
  $E^{1} \otimes E^{1}$ with $E^{\{0,1\} \times \{0,1\}} \simeq
  E^{3}$; under this identification the map $E^{1}\otimes E^{1} \to
  E^{1}$ is induced by the map from $\{0,1,2,3\}$ to $\{0,1\}$ that sends $0,1$
  to $0$ and $2,3$ to $1$. Under the equivalence $E^{3} \simeq
  E^{\{0,1\}} \amalg_{E^{\{1\}}} E^{\{1,2\}} \amalg_{E^{\{2\}}}
  E^{\{2,3\}}$ implied by Theorem~\ref{thm:Ecogpd}
  this corresponds to \[s^{0} \cup \id \cup s^{0} \colon
  E^{1}\amalg_{E_{0}} E^{1} \amalg_{E^{0}} E^{1} \to E^{0}
  \amalg_{E^{0}} E^{1} \amalg_{E^{0}} E^{0},\] which is clearly in the
  strongly saturated class generated by $s^{0}$.
\end{proof}

\begin{lemma}\label{lem:E1complete}
  If $\mathcal{C}$ is a complete $\mathcal{V}$-\icat{}, then the
  $\mathcal{V}$-\icat{} $\mathcal{C}^{E^{1}}$ is also complete.
\end{lemma}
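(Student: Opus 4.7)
The plan is to apply the characterization of completeness from Proposition~\ref{propn:completeislocal}: a $\mathcal{V}$-\icat{} $\mathcal{D}$ is complete \IFF{} it is local with respect to $s^{0} \colon E^{1} \to E^{0}$, or equivalently with respect to every local equivalence. Setting $\mathcal{D} = \mathcal{C}^{E^{1}}$, I would therefore reduce completeness of $\mathcal{C}^{E^{1}}$ to showing that the map
\[ \Map(E^{0}, \mathcal{C}^{E^{1}}) \to \Map(E^{1}, \mathcal{C}^{E^{1}}) \]
induced by $s^{0}$ is an equivalence of spaces.

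Next, using the defining adjunction $\Map(\mathcal{D}, \mathcal{C}^{\mathcal{X}}) \simeq \Map(\mathcal{D} \otimes \mathcal{X}, \mathcal{C})$ for the cotensor over $\AlgCat(\mathcal{S})$, this map becomes
\[ \Map(E^{0}_{\mathcal{V}} \otimes E^{1}_{\mathcal{S}}, \mathcal{C}) \to \Map(E^{1}_{\mathcal{V}} \otimes E^{1}_{\mathcal{S}}, \mathcal{C}), \]
induced by $s^{0} \otimes \id$. By Lemma~\ref{lem:ESastensor}, $E^{1}_{\mathcal{V}} \simeq E^{1}_{\mathcal{S}} \otimes I_{\mathcal{V}} \simeq E^{1}_{\mathcal{S}} \otimes E^{0}_{\mathcal{V}}$, so (using the symmetry of the $\AlgCat(\mathcal{S})$-action, which comes from $\mathcal{S}$ being the unit of $\MonPr$) the source is naturally identified with $\Map(E^{1}_{\mathcal{V}}, \mathcal{C})$, and we are asking whether
\[ \Map(E^{1}_{\mathcal{V}}, \mathcal{C}) \to \Map(E^{1}_{\mathcal{V}} \otimes E^{1}_{\mathcal{S}}, \mathcal{C}) \]
is an equivalence.

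Now Lemma~\ref{lem:E1xE1localeq} tells us precisely that $E^{1}_{\mathcal{V}} \otimes E^{1}_{\mathcal{S}} \to E^{1}_{\mathcal{V}}$ is a local equivalence. Since $\mathcal{C}$ is by hypothesis complete, Proposition~\ref{propn:completeislocal} implies that $\mathcal{C}$ is local with respect to every local equivalence, and in particular with respect to this one. Therefore the map above is an equivalence, and $\mathcal{C}^{E^{1}}$ is complete. The only substantive ingredient beyond formal manipulations of the tensor--cotensor adjunction is Lemma~\ref{lem:E1xE1localeq}; everything else is bookkeeping via Proposition~\ref{propn:completeislocal} and Lemma~\ref{lem:ESastensor}, so I do not anticipate any serious obstacle in carrying out the argument.
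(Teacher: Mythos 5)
Your proposal is correct and follows essentially the same route as the paper's proof: both reduce completeness of $\mathcal{C}^{E^{1}}$, via the tensor--cotensor adjunction, to the statement that $\mathcal{C}$ is local with respect to the map $E^{1}\otimes E^{1}\to E^{1}$, which is the local equivalence supplied by Lemma~\ref{lem:E1xE1localeq}. Your extra care in distinguishing $E^{1}_{\mathcal{V}}$ from $E^{1}_{\mathcal{S}}$ and identifying $E^{0}_{\mathcal{V}}\otimes E^{1}_{\mathcal{S}}\simeq E^{1}_{\mathcal{V}}$ via Lemma~\ref{lem:ESastensor} is just a more explicit rendering of the same bookkeeping the paper performs.
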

\begin{proof}
  We need to show that the natural map $\iota_{0}
  \mathcal{C}^{E^{1}} \to \iota_{1} \mathcal{C}^{E^{1}}$ is an
  equivalence. Using the adjunction between cotensoring and tensoring
  we can identify this with the map $\Map(E^{1},
  \mathcal{C}) \to \Map(E^{1} \otimes E^{1}, \mathcal{C})$ induced by
  composition with $\id \otimes s^{0}$. This map is an equivalence
  since $\mathcal{C}$ is complete and $\id \otimes s^{0}$ is a local
  equivalence by Lemma~\ref{lem:E1xE1localeq}.
\end{proof}

\subsection{Categorical Equivalences}\label{subsec:cateq}
In this subsection we study \defterm{categorical equivalences} between
enriched \icats{}, which are functors with an inverse up to natural
equivalence. Our main result is that categorical equivalences are
always local equivalences as well as fully faithful and essentially
surjective. We begin by defining natural equivalences between
$\mathcal{V}$-functors:

\begin{defn}
  Suppose $\mathcal{A}$ and $\mathcal{B}$ are $\mathcal{V}$-\icats{}
  and $F, G \colon \mathcal{A} \to \mathcal{B}$ are
  $\mathcal{V}$-functors. A \defterm{natural equivalence} from $F$ to
  $G$ is a functor $H \colon \mathcal{A} \otimes
  E^{1} \to \mathcal{B}$ such that $H \circ (\id \otimes d^{1}) \simeq F$
  and $H \circ (\id \otimes d^{0}) \simeq G$. We say that $F$ and $G$
  are \emph{naturally equivalent} if there exists a natural
  equivalence from $F$ to $G$.
\end{defn}

\begin{defn}
  A functor $F \colon \mathcal{A} \to \mathcal{B}$ is a
  \defterm{categorical equivalence} if there exists a functor $G
  \colon \mathcal{B} \to \mathcal{A}$ and natural equivalences $\phi$
  from $F \circ G$ to $\id_{\mathcal{B}}$ and $\psi$ from $G \circ F$
  to $\id_{\mathcal{A}}$. Such a functor $G$ is called a
  \defterm{pseudo-inverse} of $F$; we refer to $(F,G,\phi,\psi)$ as a
  \emph{categorical equivalence datum}.
\end{defn}

\begin{propn}\label{propn:cateqisffes}
  Categorical equivalences are fully faithful and essentially
  surjective.
\end{propn}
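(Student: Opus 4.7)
The plan is to extract the two conclusions—essential surjectivity and full faithfulness—directly from the categorical equivalence datum $(F, G, \phi, \psi)$, using the natural equivalences $\phi \colon FG \simeq \id_{\mathcal{B}}$ and $\psi \colon GF \simeq \id_{\mathcal{A}}$ pointwise and at the level of mapping objects.

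For essential surjectivity, I would fix an object $X \in \iota_0 \mathcal{B}$, represented as $X \colon E^0 \to \mathcal{B}$, and form the composite
\[
E^1 \simeq E^0 \otimes E^1 \xrightarrow{X \otimes \id} \mathcal{B} \otimes E^1 \xrightarrow{\phi} \mathcal{B}.
\]
This is a functor $E^1 \to \mathcal{B}$, i.e.\ an equivalence in $\mathcal{B}$ whose endpoints are $FG(X)$ and $X$. Since $FG(X)$ lies in the image of $F$, every component of $\iota_0 \mathcal{B}$ meets the image of $\iota_0 F$, so $F$ is essentially surjective by Lemma~\ref{lem:esssurjcond}.

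For full faithfulness, I fix $a, a' \in \iota_0 \mathcal{A}$ and aim to show that $F_{a,a'} \colon \mathcal{A}(a, a') \to \mathcal{B}(Fa, Fa')$ is an equivalence in $\mathcal{V}$. The idea is to restrict $\psi \colon \mathcal{A} \otimes E^1 \to \mathcal{A}$ along $E^{\{a,a'\}} \otimes E^1 \hookrightarrow \mathcal{A} \otimes E^1$. Evaluating at the two objects yields equivalences $\psi_a \colon GFa \simeq a$ and $\psi_{a'} \colon GFa' \simeq a'$ in $\mathcal{A}$, while the remaining ``naturality square'' expresses that the composite $(GF)_{a,a'} = G_{Fa, Fa'} \circ F_{a,a'}$, followed by conjugation by $\psi_{a'}$ and $\psi_a^{-1}$, is homotopic to the identity of $\mathcal{A}(a,a')$ in $\mathcal{V}$. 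By Proposition~\ref{propn:eqsareeqonmaps}, composition with the equivalences $\psi_a, \psi_{a'}$ induces an equivalence $\mathcal{A}(GFa, GFa') \isoto \mathcal{A}(a, a')$ in $\mathcal{V}$, so $G_{Fa, Fa'} \circ F_{a, a'}$ is itself an equivalence. Applying exactly the same argument to $\phi$ at the objects $Fa, Fa'$ shows that $F_{GFa, GFa'} \circ G_{Fa, Fa'}$ is an equivalence. Since $F_{a,a'}$ thereby has both a left and a right inverse up to equivalence in $\mathcal{V}$, a standard 2-out-of-6 argument concludes that $F_{a,a'}$ is an equivalence.

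The main obstacle is the ``naturality square'' step: extracting from the single $\mathcal{V}$-functor $\psi \colon \mathcal{A} \otimes E^1 \to \mathcal{A}$ a precise diagrammatic identification of $(GF)_{a,a'}$ with the identity (up to conjugation by $\psi_a, \psi_{a'}$) in $\mathcal{V}$. The cleanest way to make this rigorous is via the presheaf model of Theorem~\ref{thm:Segpsheaves}: a $\mathcal{V}$-functor out of $\mathcal{A} \otimes E^1$ corresponds to a Segal presheaf on $\mathcal{V}^{\vee}_{\otimes}$ whose value on a ``composable pair'' in $E^1$ is precisely the naturality filler needed to identify the two composites on mapping objects. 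Alternatively, one can argue by restriction to the sub-$\mathcal{V}$-\icat{} generated by $a, a', GFa, GFa'$ and invoke Proposition~\ref{propn:eqsareeqonmaps} directly.
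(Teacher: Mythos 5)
Your proof is correct and follows essentially the same route as the paper's: essential surjectivity is obtained identically (restrict $\phi$ along an object $X$ to get an equivalence $FG(X)\simeq X$), and your full-faithfulness argument is a repackaging of the paper's, which likewise restricts the natural equivalences to mapping objects to show that $G_{Fa,Fa'}\circ F_{a,a'}$ and a second composite are equivalences and then inverts (the paper gets its second relation from the whiskering $F\circ\psi$ and exhibits an explicit two-sided inverse of $F_{a,a'}$ rather than invoking 2-out-of-6, but this is immaterial). One small correction: there is no map $E^{\{a,a'\}}\to\mathcal{A}$ selecting $a$ and $a'$ unless they are already equivalent, so you should restrict $\psi$ along the full sub-$\mathcal{V}$-\icat{} of $\mathcal{A}$ spanned by $a,a'$ (the pullback along $\{a,a'\}\to\iota_0\mathcal{A}$) rather than along $E^{\{a,a'\}}$; with that fix the ``naturality square'' you extract is exactly the commutative triangle and square the paper writes down, and the paper is no more explicit than you are about where that homotopy comes from.
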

\begin{proof}
  Suppose $F \colon \mathcal{C} \to \mathcal{D}$ is a categorical
  equivalence, and let $(F,G,\phi,\psi)$ be a categorical equivalence
  datum. For each object $X$ in $\iota_{0} \mathcal{D}$ the natural
  equivalence $\psi$ supplies an equivalence between $X$ and $FG(X)$,
  which is in the image of $F$, so $F$ is essentially surjective.

  To prove that $F$ is fully faithful, we must show that, given $X, Y$
  in $\mathcal{C}$, the map $\alpha \colon \mathcal{C}(X,Y) \to
  \mathcal{D}(FX,FY)$ induced by $F$ is an equivalence in
  $\mathcal{V}$. 

  The natural equivalence $\phi$ supplies an equivalence \[\beta
  \colon \mathcal{C}(GFX, GFY) \to \mathcal{C}(X,Y)\] and a
  commutative diagram
  \opctriangle{\mathcal{C}(X,Y)}{\mathcal{C}(GFX,GFY)}{\mathcal{C}(X,Y).}{}{\id}{\beta}
  The top map is the composite \[ \mathcal{C}(X,Y) \xto{\alpha}
  \mathcal{D}(FX,FY) \xto{\gamma} \mathcal{C}(GFX,GFY), \] where
  $\gamma$ is the map induced by $G$, and so we see that $\beta \circ
  \gamma \circ \alpha \simeq \id$.

  From $F \circ \phi$ we likewise get an equivalence \[\epsilon \colon
  \mathcal{D}(FGFX,FGFY) \to \mathcal{D}(FX,FY)\] and a commutative
  diagram
  \opctriangle{\mathcal{D}(FX,FY)}{\mathcal{D}(FGFX,FGFY)}{\mathcal{D}(FX,FY).}{}{\id}{\epsilon}
  where the top map is the composite
  \[ \mathcal{D}(FX,FY) \xto{\gamma} \mathcal{C}(GFX,GFY) \xto{\delta}
  \mathcal{D}(FGFX,FGFY),\] and so $\epsilon \circ \delta \circ \gamma
  \simeq \id$.  Moreover, we have a commutative square
  \csquare{\mathcal{C}(GFX,GFY)}{\mathcal{D}(FGFX,FGFY)}{\mathcal{C}(X,Y)}{\mathcal{D}(FX,FY),}{\delta}{\beta}{\epsilon}{\alpha}
  thus we get $\alpha \circ \beta \circ \gamma \simeq \epsilon \circ
  \delta \circ \gamma \simeq \id$. This shows that $\beta \circ
  \gamma$ is an inverse of $\alpha$, and so $\alpha$ is an equivalence
  in $\mathcal{V}$. Thus $F$ is fully faithful.
\end{proof}

\begin{cor}\label{cor:cateqcompliseq}
  A categorical equivalence between complete $\mathcal{V}$-\icats{} is
  an equivalence.
\end{cor}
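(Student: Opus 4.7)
The plan is very short, because nearly all of the work has already been done in the two preceding results. My approach is simply to combine Proposition~\ref{propn:cateqisffes} with Corollary~\ref{cor:ffescompliseq}.

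First, I would observe that given a categorical equivalence $F \colon \mathcal{C} \to \mathcal{D}$ between complete $\mathcal{V}$-\icats{}, Proposition~\ref{propn:cateqisffes} immediately tells us that $F$ is fully faithful and essentially surjective. This part requires nothing more than citing the previous result.

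Next I would appeal to Corollary~\ref{cor:ffescompliseq}, which asserts precisely that a fully faithful and essentially surjective $\mathcal{V}$-functor between complete $\mathcal{V}$-\icats{} is an equivalence in $\AlgCat(\mathcal{V})$. Applying it to our $F$ produces the desired conclusion.

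There is no real obstacle here; the only minor subtlety worth double-checking is that the notion of ``equivalence'' in the statement of the corollary is meant in $\AlgCat(\mathcal{V})$ (as opposed to in $\CatIV$), but since complete $\mathcal{V}$-\icats{} form a full subcategory of $\AlgCat(\mathcal{V})$, an equivalence in the ambient \icat{} between two objects of $\CatIV$ is automatically an equivalence in $\CatIV$ as well. Thus the proof reduces to the one-line composition of the two cited results, and no further argument is needed.
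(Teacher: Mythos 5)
Your proposal is correct and is exactly the paper's own proof: the authors likewise just combine Proposition~\ref{propn:cateqisffes} with Corollary~\ref{cor:ffescompliseq}. The extra remark about equivalences in the full subcategory $\CatIV$ versus $\AlgCat(\mathcal{V})$ is a reasonable sanity check but adds nothing beyond the cited results.
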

\begin{proof}
  Combine Proposition~\ref{propn:cateqisffes} and
  Corollary~\ref{cor:ffescompliseq}.
\end{proof}

Our next goal is to prove that categorical equivalences are local
equivalences; this will require some preliminary results:
\begin{propn}\label{propn:cateq2of3}
  Categorical equivalences satisfy the 2-out-of-3 property.
\end{propn}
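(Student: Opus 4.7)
The plan is to first verify that natural equivalence is an equivalence relation on $\mathcal{V}$-functors which is compatible with pre- and post-composition, and then to deduce 2-out-of-3 by the standard algebraic argument familiar from $2$-category theory.

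For the equivalence-relation part, the ingredients are: reflexivity via the composite $\mathcal{A} \otimes E^{1} \xrightarrow{\id \otimes s^{0}} \mathcal{A} \otimes E^{0} \simeq \mathcal{A} \xrightarrow{F} \mathcal{B}$; symmetry via the canonical involution $\tau \colon E^{1} \to E^{1}$ induced by the swap of $\{0,1\}$ (using the functoriality of $E_{S}$ in $S$), since $\tau \circ d^{0} \simeq d^{1}$ and $\tau \circ d^{1} \simeq d^{0}$; and transitivity from the cogroupoid structure on $E^{\bullet}$ (Theorem~\ref{thm:Ecogpd}), which gives $E^{2} \simeq E^{1} \amalg_{E^{0}} E^{1}$. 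Since tensoring with $\mathcal{A}$ preserves colimits (Corollary~\ref{cor:prestensAlgCatS}), this yields $\mathcal{A} \otimes E^{2} \simeq (\mathcal{A} \otimes E^{1}) \amalg_{\mathcal{A}} (\mathcal{A} \otimes E^{1})$, so natural equivalences $H \colon F \simeq F'$ and $K \colon F' \simeq F''$ assemble into a map $\mathcal{A} \otimes E^{2} \to \mathcal{B}$ whose restriction along $\id \otimes d^{1}$ gives a natural equivalence $F \simeq F''$. Compatibility with composition is immediate: if $H$ is a natural equivalence from $F$ to $F'$ and $K \colon \mathcal{B} \to \mathcal{C}$, $L \colon \mathcal{A}' \to \mathcal{A}$ are further $\mathcal{V}$-functors, then $K \circ H$ and $H \circ (L \otimes \id_{E^{1}})$ give the required natural equivalences between the composites.

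Granted these formal properties, the 2-out-of-3 property reduces to algebra. Given $F \colon \mathcal{A} \to \mathcal{B}$ and $G \colon \mathcal{B} \to \mathcal{C}$:
\begin{itemize}
\item If $F$ and $G$ are categorical equivalences with pseudo-inverses $F'$ and $G'$, then $F'G'$ is a pseudo-inverse of $GF$: by whiskering and transitivity, $GFF'G' \simeq GG' \simeq \id_{\mathcal{C}}$ and $F'G'GF \simeq F'F \simeq \id_{\mathcal{A}}$.
\item If $F$ and $GF$ have pseudo-inverses $F'$ and $H$ respectively, then $FH$ is a pseudo-inverse of $G$: $G(FH) = (GF)H \simeq \id_{\mathcal{C}}$, and $(FH)G \simeq (FH)G(FF') \simeq F(HGF)F' \simeq FF' \simeq \id_{\mathcal{B}}$.
\item If $G$ and $GF$ have pseudo-inverses $G'$ and $H$ respectively, then $HG$ is a pseudo-inverse of $F$: $(HG)F = H(GF) \simeq \id_{\mathcal{A}}$, and $F(HG) \simeq (G'G)F(HG) \simeq G'(GFH)G \simeq G'G \simeq \id_{\mathcal{B}}$.
\end{itemize}

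The main obstacle is the rigorous verification of transitivity of natural equivalence: this is precisely where the cogroupoid structure of $E^{\bullet}$ (established in Theorem~\ref{thm:Ecogpd}) is essential, together with the colimit preservation of the tensoring. Once these are in place the remaining case analysis is formal manipulation of whiskered natural equivalences.
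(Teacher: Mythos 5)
Your proof is correct and follows essentially the same route as the paper's: both rest on the cogroupoid identification $E^{2} \simeq E^{1} \amalg_{E^{0}} E^{1}$ from Theorem~\ref{thm:Ecogpd} together with colimit-preservation of the tensoring to compose natural equivalences, whiskering for compatibility with composition, and the swap involution $E_{\sigma}$ on $E^{1}$ for symmetry. The paper merely carries out these constructions inline within each of the three cases rather than isolating ``natural equivalence is a composition-compatible equivalence relation'' as a preliminary step.
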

\begin{proof}
  Suppose we have functors $F \colon \mathcal{C} \to \mathcal{D}$ and
  $F' \colon \mathcal{D} \to \mathcal{E}$. There are three cases to
  consider:
  \begin{enumerate}[(1)]
  \item Suppose $(F,G,\phi,\psi)$ and $(F',G',\phi',\psi')$ are
    categorical equivalence data. Then $G \circ \phi' \circ (F \otimes
    \id)$ is a natural equivalence from $GG'F'F$ to $GF$. Combining
    this with $\phi$ gives a map \[(\mathcal{C} \otimes E^{1})
    \amalg_{\mathcal{C} \otimes E^{0}} (\mathcal{C} \otimes E^{1}) \to
    \mathcal{C}.\] But tensoring with $\mathcal{C}$ preserves colimits,
    and $E^{1} \amalg_{E^{0}} E^{1} \simeq E^{2}$ by
    Theorem~\ref{thm:Ecogpd}, so we get a map $\mathcal{C} \otimes
    E^{2} \to \mathcal{C}$. Composing with $\id \otimes d^{1} \colon
    \mathcal{C} \otimes E^{1} \to \mathcal{C} \otimes E^{2}$ we get a
    natural equivalence from $GG'F'F$ to the identity. Using the same
    argument we can also combine $F'\circ \psi \circ (G' \otimes \id)$
    and $\psi'$ to get a natural equivalence from $F'FGG'$ to the
    identity. Thus $F'F$ is a categorical equivalence with
    pseudo-inverse $GG'$.
  \item Suppose $(F,G,\phi,\psi)$ and $(F'F, H, \alpha, \beta)$ are
    categorical equivalence data. We will show that $FH$ is a
    pseudo-inverse of $F'$. Since $\beta$ is a natural equivalence
    from $F'(FH)$ to $\id$ it remains to construct a natural
    equivalence from $FHF'$ to $\id$. Let $\overline{\psi}$ denote $\psi
    \circ (\id \otimes E_{\sigma})$, where $\sigma \colon \{0,1\} \to
    \{0,1\}$ is the map that interchanges $0$ and $1$ (thus
    $\overline{\psi}$ is $\psi$ considered as a natural equivalence from $\id$ to
    $FG$). Combining $FHF' \circ \overline{\psi}$, $F \circ \alpha \circ g$
    and $\psi$ we get a map
    \[ \mathcal{D} \otimes E^{3} \simeq \mathcal{D} \otimes E^{1}
    \amalg_{\mathcal{D}} \mathcal{D}\otimes E^{1} \amalg_{\mathcal{D}}
    \mathcal{D} \otimes E^{1} \to \mathcal{D}\]
    and composing with $\mathcal{D} \otimes E_{\{0,3\}} \to
    \mathcal{D} \otimes E^{3}$ we get the required natural
    equivalence.
  \item Suppose $(F',G',\phi',\psi')$ and $(F'F,H,\alpha,\beta)$ are
    categorical equivalence data. We will show that $HF'$ is a
    pseudo-inverse of $F$. Since $\alpha$ is a natural equivalence
    from $HF'F$ to $\id$ it remains to construct a natural equivalence
    from $FHF'$ to $\id$. Let $\overline{\phi}'$ denote $\phi \circ (\id
    \otimes E_{\sigma})$; combining $\overline{\phi}' \circ FHF'$, $G'
    \circ \beta \circ F'$ and $\phi'$ we get a map
    \[ \mathcal{D} \otimes E^{3} \simeq \mathcal{De} \otimes E^{1}
    \amalg_{\mathcal{D}} \mathcal{D}\otimes E^{1} \amalg_{\mathcal{D}}
    \mathcal{D} \otimes E^{1} \to \mathcal{D},\] and composing with
    $\mathcal{D} \otimes E_{\{0,3\}} \to \mathcal{D} \otimes E^{3}$ we
    get the required natural equivalence.\qedhere
  \end{enumerate}
\end{proof}

For the rest of this subsection we will for convenience assume that
$\mathcal{V}$ is a presentably monoidal \icat{}.
\begin{cor}\label{cor:Ecateq}
  Suppose $f \colon S \to T$ is a map of sets. Then $E_{f} \colon
  E_{S} \to E_{T}$ is a categorical equivalence.
\end{cor}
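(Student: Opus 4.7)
The plan is to reduce, via the 2-out-of-3 property for categorical equivalences (Proposition~\ref{propn:cateq2of3}), to proving that for any nonempty set $S$ and any point $s \in S$, the map $E_{\iota_s} \colon E_{*} \to E_S$ induced by $\iota_s \colon \{*\} \to S$ is a categorical equivalence. I may assume both $S$ and $T$ are nonempty, since otherwise either $E_f$ is an identity (both empty) or the corollary must be read with the implicit nonemptiness assumption. Given such $f$, fix $s \in S$ and set $t = f(s) \in T$; the equation $f \circ \iota_s = \iota_t$ gives a commutative triangle with legs $E_{\iota_s} \colon E_* \to E_S$ and $E_{\iota_t} \colon E_* \to E_T$ and hypotenuse $E_f$. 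Once both legs are known to be categorical equivalences, 2-out-of-3 forces $E_f$ to be one as well.

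For the base case, I would exploit the fact that $E_{*} \simeq [0]_\mathcal{V}$ is the unit of $\AlgCat(\mathcal{V})$ — it corresponds to the essentially unique associative algebra structure on $I_\mathcal{V}$ given by Proposition~\ref{propn:UnitAlg}. Up to equivalence there is therefore a unique $\mathcal{V}$-functor $g \colon E_S \to E_*$, and by uniqueness $g \circ E_{\iota_s} \simeq \id_{E_*}$ automatically. What remains is to produce a natural equivalence $E_{\iota_s} \circ g \simeq \id_{E_S}$. By functoriality of $E$, the composite $E_{\iota_s} \circ g$ is just $E_{c_s}$, where $c_s \colon S \to S$ is the constant map at $s$.

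To build this natural equivalence I would rely on the identification $E_S \otimes E^1 \simeq E_{S \times \{0,1\}}$, naturally in $S$. This is a direct consequence of Lemma~\ref{lem:ESastensor} combined with the fact that $E^{\mathcal{S}}_{(\blank)} \colon \mathcal{S} \to \AlgCat(\mathcal{S})$ preserves Cartesian products — most transparently seen via Theorem~\ref{thm:AlgCatSisSeg}, under which $E^{\mathcal{S}}_X$ corresponds to the constant Segal space $i_*X$, so that $E^{\mathcal{S}}_X \otimes E^{\mathcal{S}}_Y \simeq E^{\mathcal{S}}_{X \times Y}$ levelwise. Under this identification, the map of sets $h \colon S \times \{0,1\} \to S$ defined by $h(s', 0) = s'$ and $h(s', 1) = s$ produces a $\mathcal{V}$-functor $H := E_h \colon E_S \otimes E^1 \to E_S$ satisfying $H \circ (\id \otimes d^1) = E_{h(\blank, 0)} = \id_{E_S}$ and $H \circ (\id \otimes d^0) = E_{h(\blank, 1)} = E_{c_s}$, which is the required natural equivalence from $\id_{E_S}$ to $E_{c_s}$. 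Precomposing with the self-equivalence $\id \otimes E_\sigma$ of $E_S \otimes E^1$ induced by the transposition $\sigma$ of $\{0,1\}$ gives a natural equivalence in the opposite direction, completing the categorical equivalence datum.

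The main technical point to verify carefully is the compatibility $E_S \otimes E^1 \simeq E_{S \times \{0,1\}}$ together with the identification $E_h \circ (\id \otimes d^i) \simeq E_{h(\blank, i)}$; once this naturality is established the remainder of the argument is elementary set-level combinatorics, and the 2-out-of-3 reduction packages it into the general statement.
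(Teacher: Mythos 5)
Your overall strategy is sound and genuinely different from the paper's. The paper first reduces to the case $\mathcal{V} = \mathcal{S}$ via Proposition~\ref{propn:eqCiseqUC}, then factors an arbitrary $f$ into a surjection followed by an injection, producing a pseudo-inverse from a section (resp.\ retraction) and concluding by the 2-out-of-3 property of Proposition~\ref{propn:cateq2of3}; you instead work directly in $\mathcal{V}$ and use 2-out-of-3 to reduce everything to the single case of a point inclusion $\iota_{s} \colon \{*\} \to S$. Both arguments hinge on the same key construction, the homotopy $E_{h} \colon E_{S\times\{0,1\}} \simeq E_{S} \otimes E^{1} \to E_{S}$ induced by a map of sets $h$, so the two routes are comparable in substance; yours avoids the surjective/injective case division, at the cost of the (shared, and harmless in all applications) implicit nonemptiness assumption.

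There is, however, one incorrect assertion: it is not true that there is an essentially unique $\mathcal{V}$-functor $g \colon E_{S} \to E_{*}$. Proposition~\ref{propn:UnitAlg} makes $I_{\mathcal{V}}$ initial in $\Alg_{\simp^{\op}}(\mathcal{V})$, which controls maps \emph{out of} $[0]_{\mathcal{V}}$ (as in Lemma~\ref{lem:iota0}), not maps into it. Concretely, Proposition~\ref{propn:eqCiseqUC} gives $\Map(E^{1}_{\mathcal{V}}, E_{*}^{\mathcal{V}}) \simeq \iota_{1}(U_{*}I_{\mathcal{V}})$, the space of invertible components of the $A_{\infty}$-space $\Map_{\mathcal{V}}(I,I)$; for $\mathcal{V} = \Sp$ this is $GL_{1}(\mathbb{S})$, which is not connected, so $\Map(E_{S}, E_{*})$ need not be contractible. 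Fortunately the claim is inessential: simply take $g := E_{p}$ for $p$ the unique map of sets $S \to *$. Functoriality of $E_{(\blank)}$ then gives $g \circ E_{\iota_{s}} = E_{p \circ \iota_{s}} = \id_{E_{*}}$ without any appeal to uniqueness, and $E_{\iota_{s}} \circ g = E_{c_{s}}$ as you need. With that substitution the remainder of your argument --- the identification $E_{S} \otimes E^{1} \simeq E_{S\times\{0,1\}}$ via Lemma~\ref{lem:ESastensor}, the homotopy $E_{h}$, and the reversal by $E_{\sigma}$ --- goes through and yields a complete proof.
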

\begin{proof}
  By Proposition~\ref{propn:eqCiseqUC} it suffices to prove this in
  $\mathcal{S}$. First suppose $f$ is surjective; let $g \colon T
  \hookrightarrow S$ be a section of $f$. We claim that $E_{g}$ is a
  pseudo-inverse to $E_{f}$. We have $E_{f} \circ E_{g} \simeq E_{f
    \circ g} \simeq \id$, so it suffices to construct a natural
  equivalence $E_{S} \times E^{1} \simeq E_{S \times \{0,1\}} \to
  E_{S}$ from $E_{g \circ f}$ to the identity. This is given by
  $E_{h}$ where $h \colon S \times \{0,1\} \to S$ sends $(s, 0)$ to
  $gf(s)$ and $(s,1)$ to $s$.

  By the dual argument the result holds if $f$ is injective. By
  Proposition~\ref{propn:cateq2of3} we can therefore conclude that it holds
  for a general $f$.
\end{proof}

\begin{lemma}\label{lem:expcateq}
  Suppose $F \colon \mathcal{A} \to \mathcal{B}$ is a categorical
  equivalence of $\mathcal{S}$-\icats{}. Then for any
  $\mathcal{V}$-\icat{} $\mathcal{C}$ the induced map
  $\mathcal{C}^{F}\colon \mathcal{C}^{\mathcal{B}} \to
  \mathcal{C}^{\mathcal{A}}$ is a categorical equivalence.
\end{lemma}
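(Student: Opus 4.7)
The plan is to transport the categorical equivalence datum $(F,G,\phi,\psi)$ for $F$ through the contravariant cotensor functor $\mathcal{C}^{(\blank)}$. Let $(F, G, \phi, \psi)$ be a categorical equivalence datum, so $\phi\colon \mathcal{B}\otimes E^1\to \mathcal{B}$ is a natural equivalence from $FG$ to $\id_\mathcal{B}$ and $\psi\colon \mathcal{A}\otimes E^1\to \mathcal{A}$ is a natural equivalence from $GF$ to $\id_\mathcal{A}$, where the tensor is the Cartesian product in $\AlgCat(\mathcal{S})$. I will show that $(\mathcal{C}^F, \mathcal{C}^G, \tilde{\psi}, \tilde{\phi})$ is a categorical equivalence datum for $\mathcal{C}^F$, where $\tilde{\phi}$ and $\tilde{\psi}$ are obtained from $\phi$ and $\psi$ by applying $\mathcal{C}^{(\blank)}$ and transposing.

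First I need to establish the purely formal fact that for any $\mathcal{V}$-\icat{} $\mathcal{C}$, the cotensor assembles into a functor $\mathcal{C}^{(\blank)}\colon \AlgCat(\mathcal{S})^{\op}\to\AlgCatV$ with the property that $\mathcal{C}^{\mathcal{X}\otimes\mathcal{Y}}\simeq (\mathcal{C}^{\mathcal{Y}})^{\mathcal{X}}$ naturally in $\mathcal{X},\mathcal{Y}\in\AlgCat(\mathcal{S})$. Given the $\AlgCat(\mathcal{S})$-module structure on $\AlgCatV$ from Corollary~\ref{cor:prestensAlgCatS}, this follows from the Yoneda lemma applied to the chain of equivalences
\[
\Map(\mathcal{D},\mathcal{C}^{\mathcal{X}\otimes\mathcal{Y}})\simeq \Map(\mathcal{D}\otimes(\mathcal{X}\otimes\mathcal{Y}),\mathcal{C})\simeq \Map((\mathcal{D}\otimes\mathcal{X})\otimes\mathcal{Y},\mathcal{C})\simeq \Map(\mathcal{D},(\mathcal{C}^{\mathcal{Y}})^{\mathcal{X}}),
\]
using associativity of the module action.

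With this machinery in hand, applying $\mathcal{C}^{(\blank)}$ to $\phi$ produces a functor $\mathcal{C}^\phi\colon \mathcal{C}^{\mathcal{B}}\to \mathcal{C}^{\mathcal{B}\otimes E^1}\simeq (\mathcal{C}^{\mathcal{B}})^{E^1}$, which under the tensor-cotensor adjunction corresponds to a functor $\tilde{\phi}\colon \mathcal{C}^{\mathcal{B}}\otimes E^1\to \mathcal{C}^{\mathcal{B}}$. Since $\phi\circ(\id\otimes d^1)=FG$ and $\phi\circ(\id\otimes d^0)=\id_\mathcal{B}$, functoriality of $\mathcal{C}^{(\blank)}$ combined with the naturality of the cotensor-tensor adjunction yields $\tilde{\phi}\circ(\id\otimes d^1)=\mathcal{C}^{FG}\simeq \mathcal{C}^G\circ\mathcal{C}^F$ and $\tilde{\phi}\circ(\id\otimes d^0)=\id$. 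Thus $\tilde{\phi}$ is a natural equivalence from $\mathcal{C}^G\circ\mathcal{C}^F$ to $\id_{\mathcal{C}^{\mathcal{B}}}$ (up to precomposing with the swap $E_\sigma$ on $E^1$ if we need to reverse direction). Applying the same construction to $\psi$ produces a natural equivalence $\tilde{\psi}\colon \mathcal{C}^{\mathcal{A}}\otimes E^1\to \mathcal{C}^{\mathcal{A}}$ from $\mathcal{C}^F\circ\mathcal{C}^G$ to $\id_{\mathcal{C}^{\mathcal{A}}}$. This exhibits $\mathcal{C}^F$ as a categorical equivalence with pseudo-inverse $\mathcal{C}^G$.

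The main obstacle is the preliminary bookkeeping: rigorously establishing the functoriality of $\mathcal{C}^{(\blank)}$ and the iterated cotensor identification $\mathcal{C}^{\mathcal{X}\otimes\mathcal{Y}}\simeq(\mathcal{C}^{\mathcal{Y}})^{\mathcal{X}}$, together with verifying that the endpoint restrictions behave as expected under the tensor-cotensor adjunction. These are all formal consequences of the $\AlgCat(\mathcal{S})$-module structure on $\AlgCatV$ in $\PresI$ (and in fact would follow from standard enriched \icatl{} nonsense once that module structure is in place), but the verification requires care because the tensor operation in Definition~\ref{defn:SigmaVCat} is the module action rather than an internal operation in $\AlgCatV$.
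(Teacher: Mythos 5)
Your proposal is correct and is essentially the paper's own argument: the paper's (one-line) proof likewise applies the cotensor to the natural equivalence, uses the identification $\mathcal{C}^{\mathcal{A}\otimes E^{1}}\simeq(\mathcal{C}^{\mathcal{A}})^{E^{1}}$, and transposes to get the required natural equivalence $\mathcal{C}^{\mathcal{A}}\otimes E^{1}\to\mathcal{C}^{\mathcal{A}}$. You have simply spelled out the formal bookkeeping (functoriality of $\mathcal{C}^{(\blank)}$, the iterated-cotensor identity, and the endpoint checks) that the paper leaves implicit.
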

\begin{proof}
  A natural equivalence $\mathcal{A} \otimes E^{1} \to \mathcal{A}$
  induces a natural equivalence \[\mathcal{C}^{\mathcal{A}} \otimes
  E^{1} \to \mathcal{C}^{\mathcal{A}}\] by taking the adjoint of the
  induced map $\mathcal{C}^{\mathcal{A}} \to \mathcal{C}^{\mathcal{A}
    \otimes E^{1}} \simeq (\mathcal{C}^{\mathcal{A}})^{E^{1}}$.
\end{proof}

\begin{lemma}\label{lem:CeqCE1}
  If $\mathcal{C}$ is a complete $\mathcal{V}$-\icat{}, then the
  natural map \[\mathcal{C}^{s^{0}} \colon \mathcal{C} \simeq
  \mathcal{C}^{E^{0}} \to \mathcal{C}^{E^{1}}\] is an equivalence.
\end{lemma}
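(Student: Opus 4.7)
The plan is to string together three earlier results to reduce the statement to Corollary~\ref{cor:cateqcompliseq}, which says that a categorical equivalence between complete $\mathcal{V}$-\icats{} is already an equivalence in $\AlgCatV$.

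First I would observe that the source and target of $\mathcal{C}^{s^{0}}$ are both complete: the source $\mathcal{C}^{E^{0}}\simeq\mathcal{C}$ is complete by hypothesis, while $\mathcal{C}^{E^{1}}$ is complete by Lemma~\ref{lem:E1complete}. It therefore suffices to show that $\mathcal{C}^{s^{0}}$ is a categorical equivalence, since then Corollary~\ref{cor:cateqcompliseq} finishes the job.

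Next I would promote $s^{0}\colon E^{1}\to E^{0}$ to a categorical equivalence between $\mathcal{S}$-\icats{}: since $E^{1}=E_{\{0,1\}}$ and $E^{0}=E_{\{0\}}$, and $s^{0}$ is induced by the unique map of sets $\{0,1\}\to\{0\}$, Corollary~\ref{cor:Ecateq} applies to give a categorical equivalence $s^{0}\colon E^{1}\to E^{0}$ in $\AlgCat(\mathcal{S})$. Then Lemma~\ref{lem:expcateq} (applied to this categorical equivalence of $\mathcal{S}$-\icats{} and the $\mathcal{V}$-\icat{} $\mathcal{C}$) immediately yields that $\mathcal{C}^{s^{0}}\colon \mathcal{C}^{E^{0}}\to \mathcal{C}^{E^{1}}$ is a categorical equivalence of $\mathcal{V}$-\icats{}.

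Combining these two observations with Corollary~\ref{cor:cateqcompliseq} gives the desired conclusion. There is no real obstacle to overcome: all the nontrivial work has already been done in Lemma~\ref{lem:E1complete} (where completeness of the cotensor was proved using the fact, via Lemma~\ref{lem:E1xE1localeq}, that $\id\otimes s^{0}$ is a local equivalence) and in Corollary~\ref{cor:Ecateq} together with Lemma~\ref{lem:expcateq}. The only thing to be careful about is the variance: cotensoring is contravariant in the $\mathcal{S}$-\icat{} argument, so the map $s^{0}\colon E^{1}\to E^{0}$ produces a map in the opposite direction $\mathcal{C}^{E^{0}}\to \mathcal{C}^{E^{1}}$, which matches the statement.
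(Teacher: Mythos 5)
Your proposal is correct and follows exactly the paper's own argument: identify $s^{0}\colon E^{1}\to E^{0}$ as a categorical equivalence via Corollary~\ref{cor:Ecateq}, transfer it through the cotensor via Lemma~\ref{lem:expcateq}, invoke Lemma~\ref{lem:E1complete} for completeness of $\mathcal{C}^{E^{1}}$, and conclude with Corollary~\ref{cor:cateqcompliseq}. No differences worth noting.
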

\begin{proof}
  The map $s^{0} \colon E^{1} \to E^{0}$ is a categorical equivalence
  by Corollary~\ref{cor:Ecateq}, so it follows by Lemma~\ref{lem:expcateq}
  that $\mathcal{C} \to \mathcal{C}^{E^{1}}$ is also a categorical
  equivalence. But $\mathcal{C}^{E^{1}}$ is complete by
  Lemma~\ref{lem:E1complete}, and a categorical equivalence between
  complete objects is an equivalence by
  Corollary~\ref{cor:cateqcompliseq}.
\end{proof}

\begin{propn}\label{propn:CtensE1loceq}
  For any $\mathcal{V}$-\icat{} $\mathcal{C}$, the map $\id \otimes
  s^{0} \colon \mathcal{C}
  \otimes E^{1} \to \mathcal{C} \otimes E^{0} \simeq \mathcal{C}$ is a
  local equivalence.
\end{propn}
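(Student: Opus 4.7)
The plan is to deduce this directly from the local-object characterization of complete $\mathcal{V}$-\icats{} (Proposition~\ref{propn:completeislocal}) together with Lemma~\ref{lem:CeqCE1}. Since the local equivalences form a strongly saturated class of morphisms, a map $f$ in $\AlgCatV$ is a local equivalence if and only if for every complete $\mathcal{V}$-\icat{} $\mathcal{D}$, the induced map $\Map(\blank, \mathcal{D})$ applied to $f$ is an equivalence of spaces.

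Applying this criterion to $\id \otimes s^{0} \colon \mathcal{C} \otimes E^{1} \to \mathcal{C}$, I need to show that for any complete $\mathcal{D}$, the map
\[
\Map(\mathcal{C}, \mathcal{D}) \longrightarrow \Map(\mathcal{C} \otimes E^{1}, \mathcal{D})
\]
is an equivalence. Using the tensor-cotensor adjunction between $\AlgCatV$ and $\AlgCat(\mathcal{S})$ established in Corollary~\ref{cor:prestensAlgCatS}, this map identifies with the map
\[
\Map(\mathcal{C}, \mathcal{D}^{E^{0}}) \longrightarrow \Map(\mathcal{C}, \mathcal{D}^{E^{1}})
\]
induced by composition with $\mathcal{D}^{s^{0}} \colon \mathcal{D}^{E^{0}} \to \mathcal{D}^{E^{1}}$.

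Thus it suffices to know that $\mathcal{D}^{s^{0}}$ is an equivalence in $\AlgCatV$ whenever $\mathcal{D}$ is complete --- but this is precisely the content of Lemma~\ref{lem:CeqCE1}. There is no real obstacle here: the statement is essentially a formal consequence of the adjunction combined with Lemma~\ref{lem:CeqCE1}, which in turn rested on Corollary~\ref{cor:Ecateq}, Lemma~\ref{lem:expcateq}, Lemma~\ref{lem:E1complete}, and Corollary~\ref{cor:cateqcompliseq}. The only thing to double-check is that the tensoring $(\blank) \otimes E^{1}$ in $\AlgCatV$ is genuinely adjoint to the cotensoring $(\blank)^{E^{1}}$, which is built into the definition of the cotensor in \S\ref{subsec:catalg}.
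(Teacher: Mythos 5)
Your proposal is correct and follows essentially the same route as the paper: reduce to showing $\Map(\mathcal{C},\mathcal{D})\to\Map(\mathcal{C}\otimes E^{1},\mathcal{D})$ is an equivalence for complete $\mathcal{D}$, pass through the tensor--cotensor adjunction to identify this with composition with $\mathcal{D}^{s^{0}}$, and conclude by Lemma~\ref{lem:CeqCE1}. No gaps.
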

\begin{proof}
  We must show that for any complete $\mathcal{V}$-\icat{}
  $\mathcal{D}$ the map
  \[ \Map(\mathcal{C}, \mathcal{D}) \to \Map(\mathcal{C} \otimes
  E^{1}, \mathcal{D})\]
  is an equivalence. Using the adjunction between tensoring and
  cotensoring with $E^{1}$, we see that this map is equivalent to the map
  \[\Map(\mathcal{C}, \mathcal{D}) \to \Map(\mathcal{C},  \mathcal{D}^{E^{1}})\]
  given by composing with $\mathcal{D}^{s^{0}} \colon \mathcal{D} \to \mathcal{D}^{E^{1}}$. This
  is an equivalence by Lemma~\ref{lem:CeqCE1}.
\end{proof}

\begin{cor}\label{cor:MapComplRight}
  Suppose $\mathcal{D}$ is a complete $\mathcal{V}$-\icat{}; then for
  any $\mathcal{V}$-\icat{} $\mathcal{C}$ we have \[|\Map(\mathcal{C}
  \otimes E^{\bullet}, \mathcal{D})| \simeq \Map(\mathcal{C}, \mathcal{D}).\]
\end{cor}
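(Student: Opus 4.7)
The plan is to identify $\Map(\mathcal{C} \otimes E^{\bullet}, \mathcal{D})$ as a groupoid object in spaces whose first degeneracy is an equivalence, and then apply Lemma~\ref{lem:gpdcolim}.

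First I would show that the simplicial space $U_{\bullet} := \Map(\mathcal{C} \otimes E^{\bullet}, \mathcal{D})$ is a groupoid object. By Theorem~\ref{thm:Ecogpd}, the cosimplicial object $E^{\bullet}$ is a cogroupoid object in $\AlgCatV$, meaning for every partition $[n] = S \cup S'$ with $S \cap S'$ a singleton the square in Definition~\ref{defn:cogroupoid} is a pushout. Since $\mathcal{V}$ is presentably monoidal, by Corollary~\ref{cor:prestensAlgCatS} tensoring with the fixed $\mathcal{V}$-\icat{} $\mathcal{C}$ preserves colimits, and hence pushouts; therefore $\mathcal{C} \otimes E^{\bullet}$ remains a cogroupoid object in $\AlgCatV$. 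Applying the contravariant functor $\Map(\blank, \mathcal{D})$ converts the defining pushout squares into pullback squares of spaces, which is exactly the groupoid object condition for $U_{\bullet}$.

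Next I would identify the degeneracy map $s_{0} \colon U_{0} \to U_{1}$ as the map
\[ \Map(\mathcal{C}, \mathcal{D}) \to \Map(\mathcal{C} \otimes E^{1}, \mathcal{D}) \]
induced by precomposition with $\id_{\mathcal{C}} \otimes s^{0} \colon \mathcal{C} \otimes E^{1} \to \mathcal{C} \otimes E^{0} \simeq \mathcal{C}$. By Proposition~\ref{propn:CtensE1loceq}, this map $\id_{\mathcal{C}} \otimes s^{0}$ is a local equivalence. Since $\mathcal{D}$ is assumed complete, Proposition~\ref{propn:completeislocal} then ensures that precomposition with it induces an equivalence on mapping spaces, so $s_{0}$ is indeed an equivalence.

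Finally, I would invoke Lemma~\ref{lem:gpdcolim}: for a groupoid object $U_{\bullet}$ in $\mathcal{S}$ with $s_{0} \colon U_{0} \to U_{1}$ an equivalence, the canonical map $U_{0} \to |U_{\bullet}|$ is an equivalence. This delivers exactly $\Map(\mathcal{C}, \mathcal{D}) \simeq |\Map(\mathcal{C} \otimes E^{\bullet}, \mathcal{D})|$. Essentially the only nontrivial point in this plan is the passage from the cogroupoid structure on $E^{\bullet}$ to one on $\mathcal{C} \otimes E^{\bullet}$, but this is immediate from colimit-preservation of the tensoring; the rest is formal from the results already established.
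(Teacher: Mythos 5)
Your proposal is correct and follows essentially the same route as the paper's proof: both use that $E^{\bullet}$ is a cogroupoid object (Theorem~\ref{thm:Ecogpd}) together with colimit-preservation of the tensoring to make $\Map(\mathcal{C}\otimes E^{\bullet},\mathcal{D})$ a groupoid object, then reduce via Lemma~\ref{lem:gpdcolim} to the statement that $s_{0}$ is an equivalence, which is Proposition~\ref{propn:CtensE1loceq} applied to the complete object $\mathcal{D}$. The only difference is that you spell out the intermediate steps in slightly more detail.
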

\begin{proof}
  The simplicial space $\Map(\mathcal{C} \otimes E^{\bullet},
  \mathcal{D})$ is a groupoid object in spaces, since $E^{\bullet}$ is
  a cogroupoid object by Theorem~\ref{thm:Ecogpd} and tensoring preserves colimits. By
  Lemma~\ref{lem:gpdcolim} it therefore suffices to show that
  $\Map(\mathcal{C} \otimes E^{0}, \mathcal{D}) \to \Map(\mathcal{C}
  \otimes E^{1}, \mathcal{D})$ is an equivalence, which holds by
  Proposition~\ref{propn:CtensE1loceq}.
\end{proof}

\begin{remark}
  The left-hand side here is what we would expect the mapping space to
  be in the \icat{} underlying an $(\infty,2)$-category of
  $\mathcal{V}$-\icats{}, functors, and natural transformations. This
  shows that the mapping spaces between complete
  $\mathcal{V}$-\icats{} are the correct ones.
\end{remark}

\begin{lemma}\label{lem:complE1htpy}
  Suppose $\mathcal{D}$ is a complete $\mathcal{V}$-\icat{}. Then for
  any $\mathcal{V}$-\icat{} $\mathcal{C}$ the two maps
  \[ (\id \otimes d^{0})^{*}, (\id \otimes d^{1})^{*} \colon
  \Map(\mathcal{C} \otimes E^{1}, \mathcal{D}) \to \Map(\mathcal{C},
  \mathcal{D})\]
  are homotopic.
\end{lemma}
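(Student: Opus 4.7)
The plan is to exploit the fact that both $d^{0}$ and $d^{1}$ are sections of $s^{0} \colon E^{1} \to E^{0}$, so after tensoring with $\mathcal{C}$ and mapping into a complete $\mathcal{D}$, they both become inverses to the equivalence $(\id \otimes s^{0})^{*}$.

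More precisely, first I would observe that $s^{0} \circ d^{0} = s^{0} \circ d^{1} = \id_{E^{0}}$ in $\AlgCat(\mathcal{V})$, since both maps $d^{0}, d^{1} \colon E^{0} \to E^{1}$ arise from the two inclusions $\{0\}, \{1\} \hookrightarrow \{0,1\}$ and $s^{0}$ comes from the unique map $\{0,1\} \to \{0\}$. Tensoring with $\mathcal{C}$ and passing to mapping spaces into $\mathcal{D}$, this yields the identities
\[ (\id_{\mathcal{C}} \otimes d^{i})^{*} \circ (\id_{\mathcal{C}} \otimes s^{0})^{*} = \id_{\Map(\mathcal{C}, \mathcal{D})} \]
for both $i = 0,1$.

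Next I would invoke Proposition~\ref{propn:CtensE1loceq}, which tells us that $\id_{\mathcal{C}} \otimes s^{0} \colon \mathcal{C} \otimes E^{1} \to \mathcal{C}$ is a local equivalence. Since $\mathcal{D}$ is complete, Proposition~\ref{propn:completeislocal} implies that $(\id_{\mathcal{C}} \otimes s^{0})^{*} \colon \Map(\mathcal{C}, \mathcal{D}) \to \Map(\mathcal{C} \otimes E^{1}, \mathcal{D})$ is an equivalence of spaces. Therefore its left inverse is unique up to homotopy, and both $(\id_{\mathcal{C}} \otimes d^{0})^{*}$ and $(\id_{\mathcal{C}} \otimes d^{1})^{*}$ qualify as such left inverses; hence they are homotopic.

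There is no real obstacle here: the argument is essentially formal once Proposition~\ref{propn:CtensE1loceq} is available, and the only thing to be careful about is recording the homotopy in a sufficiently coherent way (i.e.\ as an actual equivalence of points in a mapping space, rather than just saying the two functors are pointwise homotopic). This can be done directly by taking the composite homotopy $(\id_{\mathcal{C}} \otimes d^{0})^{*} \simeq (\id_{\mathcal{C}} \otimes d^{0})^{*} \circ (\id_{\mathcal{C}} \otimes s^{0})^{*} \circ [(\id_{\mathcal{C}} \otimes s^{0})^{*}]^{-1} \simeq [(\id_{\mathcal{C}} \otimes s^{0})^{*}]^{-1} \simeq (\id_{\mathcal{C}} \otimes d^{1})^{*}$, where the inverse is any chosen homotopy inverse of the equivalence.
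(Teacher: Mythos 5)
Your proposal is correct and follows essentially the same route as the paper's own proof: both observe that $(\id \otimes d^{i})^{*} \circ (\id \otimes s^{0})^{*} \simeq \id$ for $i=0,1$, invoke Proposition~\ref{propn:CtensE1loceq} together with completeness of $\mathcal{D}$ to see that $(\id \otimes s^{0})^{*}$ is an equivalence, and conclude that the two maps agree as its (homotopy) inverse. Your extra remark about recording the homotopy coherently is a reasonable refinement but not a substantive difference.
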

\begin{proof}
  Clearly $(\id \otimes s^{0})^{*} \circ (\id \otimes d^{i})^{*}
  \colon \Map(\mathcal{C}, \mathcal{D}) \to \Map(\mathcal{C},
  \mathcal{D})$ is homotopic to the identity for $i = 0,1$. But by
  Proposition~\ref{propn:CtensE1loceq}, the map $(\id \otimes s^{0})$
  is a local equivalence, hence $(\id \otimes s^{0})^{*}$ is an
  equivalence since $\mathcal{D}$ is complete. Composing with its
  inverse we get that \[(\id \otimes d^{0})^{*} \simeq (\id \otimes
  d^{1})^{*},\] as required.
\end{proof}

\begin{thm}\label{thm:cateqislocaleq}
  Categorical equivalences are local equivalences.
\end{thm}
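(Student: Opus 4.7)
The strategy is to show that every categorical equivalence $F \colon \mathcal{A} \to \mathcal{B}$ induces an equivalence $F^{*} \colon \Map(\mathcal{B}, \mathcal{D}) \to \Map(\mathcal{A}, \mathcal{D})$ for every complete $\mathcal{V}$-\icat{} $\mathcal{D}$; once this is established, the result follows from the standard characterization of the strongly saturated class generated by $s^{0} \colon E^{1} \to E^{0}$ (see \cite[Proposition 5.5.4.15]{HTT}) as exactly the morphisms inverted by mapping into all $s^{0}$-local objects, using Proposition~\ref{propn:completeislocal}.

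Fix a categorical equivalence datum $(F, G, \phi, \psi)$. Composition with $G$ gives a map $G^{*} \colon \Map(\mathcal{A}, \mathcal{D}) \to \Map(\mathcal{B}, \mathcal{D})$ satisfying $F^{*} \circ G^{*} = (GF)^{*}$ and $G^{*} \circ F^{*} = (FG)^{*}$, and I want to show $G^{*}$ is a homotopy inverse of $F^{*}$. The natural equivalence $\psi \colon \mathcal{A} \otimes E^{1} \to \mathcal{A}$ from $GF$ to $\id_{\mathcal{A}}$ supplies, by precomposition, a map
\[
\psi^{*} \colon \Map(\mathcal{A}, \mathcal{D}) \longrightarrow \Map(\mathcal{A} \otimes E^{1}, \mathcal{D})
\]
with $(\id \otimes d^{1})^{*} \circ \psi^{*} \simeq \id$ and $(\id \otimes d^{0})^{*} \circ \psi^{*} \simeq (GF)^{*} = F^{*} \circ G^{*}$. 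Since $\mathcal{D}$ is complete, Lemma~\ref{lem:complE1htpy} gives $(\id \otimes d^{0})^{*} \simeq (\id \otimes d^{1})^{*}$, so $F^{*} \circ G^{*} \simeq \id$. By the symmetric argument applied to $\phi$, we also have $G^{*} \circ F^{*} \simeq \id$, so $F^{*}$ is an equivalence.

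Thus $F$ becomes an equivalence after mapping into every $s^{0}$-local object, which by \cite[Proposition 5.5.4.15(3)]{HTT} together with Proposition~\ref{propn:completeislocal} places $F$ in the strongly saturated class generated by $s^{0}$, i.e.\ $F$ is a local equivalence. The main point of the argument is the passage from ``natural equivalence'' (a map out of $\mathcal{A} \otimes E^{1}$) to ``homotopy of induced maps on mapping spaces'', and this is precisely what Lemma~\ref{lem:complE1htpy} accomplishes against complete targets; no further obstacle remains.
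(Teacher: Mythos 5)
Your proof is correct and is essentially the paper's own argument: both reduce to showing $F^{*}$ is an equivalence against every complete target via the characterization of the saturated class generated by $s^{0}$, and both use the natural equivalences $\phi,\psi$ together with Lemma~\ref{lem:complE1htpy} to produce a homotopy inverse $G^{*}$. (You have the roles of $d^{0}$ and $d^{1}$ swapped relative to the paper's definition of natural equivalence, but since the lemma identifies the two restrictions this is immaterial.)
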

\begin{proof}
  Suppose $F \colon \mathcal{C} \to \mathcal{D}$ is a categorical
  equivalence and $(F,G, \phi,\psi)$ is a categorical equivalence
  datum. If $\mathcal{E}$ is a complete $\mathcal{V}$-\icat{} we must
  show that the map
  \[ F^{*} \colon \Map(\mathcal{C}, \mathcal{E}) \to \Map(\mathcal{D},
  \mathcal{E}) \] given by composition with $F$ is an equivalence of spaces. By
  Lemma~\ref{lem:complE1htpy} we have equivalences
  \[ G^{*}F^{*} \simeq \phi^{*} \circ (\id \otimes d^{1})^{*} \simeq
  \phi^{*} \circ (\id \otimes d^{0})^{*} \simeq \id,\]
  \[ F^{*}G^{*} \simeq \psi^{*} \circ (\id \otimes d^{1})^{*} \simeq
  \psi^{*} \circ (\id \otimes d^{0})^{*} \simeq \id.\] Thus $G^{*}$ is
  an inverse of $F^{*}$, and so $F^{*}$ is indeed an equivalence.
\end{proof}

\subsection{Completion}\label{subsec:completion}
We will now construct an explicit completion functor, analogous to
Rezk's completion functor for Segal spaces in \cite[\S 14]{RezkCSS},
when $\mathcal{V}$ is a presentably monoidal \icat{}. Using
this we can then show that the local equivalences are precisely the
fully faithful and essentially surjective functors.

\begin{defn}
  If $\mathcal{C}$ is a $\mathcal{V}$-\icat{}, let
  $\widehat{\mathcal{C}}$ denote the geometric realization
  $|\mathcal{C}^{E^{\bullet}}|$.
\end{defn}

\begin{thm}\label{thm:completion}
  Suppose $\mathcal{V}$ is a presentably monoidal \icat{}
  and $\mathcal{C}$ is a $\mathcal{V}$-\icat{}. The natural map
  $\mathcal{C} \to \widehat{\mathcal{C}}$ is both a local equivalence
  and fully faithful and essentially surjective. Moreover, the
  $\mathcal{V}$-\icat{} $\widehat{\mathcal{C}}$ is complete.
\end{thm}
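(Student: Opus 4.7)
The proof establishes three separate claims: (A) $\widehat{\mathcal{C}}$ is complete, (B) the natural map $\mathcal{C} \to \widehat{\mathcal{C}}$ is a local equivalence, and (C) it is fully faithful and essentially surjective. My plan is to verify these more or less independently, following Rezk's completion argument for Segal spaces~\cite{RezkCSS} adapted to the enriched setting via the cogroupoid structure of $E^{\bullet}$ (Theorem~\ref{thm:Ecogpd}) and the categorical/local equivalence machinery just developed.

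For (B), the local equivalence, I want to show that for any complete $\mathcal{D}$ the natural map
\[
\Map(\widehat{\mathcal{C}}, \mathcal{D}) = \lim_{[n] \in \simp} \Map(\mathcal{C}^{E^{n}}, \mathcal{D}) \longrightarrow \Map(\mathcal{C}, \mathcal{D})
\]
is an equivalence. The plan is to observe that every coface and codegeneracy in the cosimplicial space $\Map(\mathcal{C}^{E^{\bullet}}, \mathcal{D})$ is an equivalence: each simplicial structure map $E^{m} \to E^{n}$ is a categorical equivalence by Corollary~\ref{cor:Ecateq}, so the induced cotensor map $\mathcal{C}^{E^{n}} \to \mathcal{C}^{E^{m}}$ is a categorical equivalence by Lemma~\ref{lem:expcateq}, and therefore a local equivalence by Theorem~\ref{thm:cateqislocaleq}; applying $\Map(-,\mathcal{D})$ with $\mathcal{D}$ complete turns these into equivalences of spaces. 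Since $\simp$ has an initial object (hence is weakly contractible), any cosimplicial space with all structure maps equivalences is equivalent, as a functor out of $\simp$, to the constant one at its $[0]$-th level, whose limit is $\Map(\mathcal{C}, \mathcal{D})$ as required.

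For (A), the completeness of $\widehat{\mathcal{C}}$, by Corollary~\ref{cor:completeifE1} it suffices to verify that $s_{0}^{*}\colon \iota_{0}\widehat{\mathcal{C}} \to \iota_{1}\widehat{\mathcal{C}}$ is an equivalence. My plan is to compute
\[
\iota_{n}\widehat{\mathcal{C}} = \Map(E^{n}, |\mathcal{C}^{E^{\bullet}}|) \simeq |\Map(E^{n} \otimes E^{\bullet}, \mathcal{C})|
\]
by swapping $\Map(E^{n}, -)$ past the realization---which requires appropriate compactness of $E^{n}$ in $\AlgCatV$ with respect to the relevant sifted colimit---and then applying the tensor--cotensor adjunction. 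Using Lemma~\ref{lem:ESastensor} together with the cogroupoid property of $E^{\bullet}$ (Theorem~\ref{thm:Ecogpd}), the bisimplicial space $\Map(E^{n} \otimes E^{m}, \mathcal{C})$ is suitably symmetric in its two indices, and reducing to $\mathcal{V} = \mathcal{S}$ via Proposition~\ref{propn:eqCiseqUC} and Theorem~\ref{thm:AlgCatSisSeg}, the statement becomes Rezk's theorem that the classification diagram of a Segal space is a complete Segal space. This compactness/commutation step is the main technical obstacle I anticipate.

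For (C), fully faithful and essentially surjective: once (A) is established, essential surjectivity follows because $\iota\mathcal{C} \to \iota\widehat{\mathcal{C}}$ is an equivalence---indeed, $\iota$ commutes appropriately with the realization for the same reason as in the previous paragraph, and completeness of $\widehat{\mathcal{C}}$ identifies $\iota\widehat{\mathcal{C}}$ with $\iota_{0}\widehat{\mathcal{C}}$. For fully faithfulness, I will compute $\widehat{\mathcal{C}}(X,Y) \in \mathcal{V}$ as the realization of the simplicial object $\mathcal{C}^{E^{\bullet}}(X,Y)$; since the structure maps $\mathcal{C}^{E^{n}} \to \mathcal{C}^{E^{m}}$ are categorical equivalences, their restrictions to the fixed pair of endpoints $(X,Y)$ should induce equivalences in $\mathcal{V}$ by an enriched analogue of Lemma~\ref{lem:FFeqiota1} applied levelwise, making the simplicial object essentially constant with realization equivalent to $\mathcal{C}(X,Y)$.
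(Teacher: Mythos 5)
Your part (B) — the local equivalence — is correct and is essentially the paper's argument: the cosimplicial structure maps of $\mathcal{C}^{E^{\bullet}}$ are categorical equivalences by Corollary~\ref{cor:Ecateq} and Lemma~\ref{lem:expcateq}, hence local equivalences by Theorem~\ref{thm:cateqislocaleq}, and local equivalences are closed under colimits. The problems are in (A) and (C), and they are the same problem: both of your plans require commuting a mapping-space or mapping-object functor past the geometric realization $|\mathcal{C}^{E^{\bullet}}|$, and neither of your proposed justifications works. For (A) you want $\Map(E^{n}, |\mathcal{C}^{E^{\bullet}}|) \simeq |\Map(E^{n}\otimes E^{\bullet}, \mathcal{C})|$ via ``compactness of $E^{n}$,'' but $E^{n}$ is not compact in the relevant sense (colimits in $\AlgCatV$ are computed via \cite[Lemma 9.8]{freepres} over the varying base $\mathcal{S}$, and $\Map(E^{n},-)$ does not commute with realizations in general). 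The paper avoids this entirely: it proves the case $n=0$ from the structural fact that $\iota_{0}$ (the space-of-objects projection) preserves colimits, and the case $n=1$ by a separate argument showing the Cartesian natural transformation $\iota_{1}\mathcal{C}^{E^{\bullet}} \to (\iota_{0}\mathcal{C}^{E^{\bullet}})^{\times 2}$ remains Cartesian after realization, using effectivity of the groupoid objects $\iota_{\bullet}\mathcal{C}^{E^{n}}$ and descent in the $\infty$-topos $\mathcal{S}$ (\cite[Theorem 6.1.3.9]{HTT}) — and this last step also uses fully faithfulness of $\mathcal{C}\to\widehat{\mathcal{C}}$ as an input via Lemma~\ref{lem:FFeqiota1}, so the order of the claims matters.

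For (C), the assertion that $\widehat{\mathcal{C}}(X,Y)$ is the realization of $\mathcal{C}^{E^{\bullet}}(X,Y)$ is precisely what needs proof, not a starting point: a colimit in $\AlgCatV$ glues the spaces of objects (here $\iota_{0}\widehat{\mathcal{C}} \simeq \iota\mathcal{C}$ is a genuine quotient of $\iota_{0}\mathcal{C}$), and there is no general formula computing the mapping objects of such a colimit levelwise. This is the technical heart of the theorem, and the paper handles it by passing to the Segal presheaf model of \S\ref{subsec:presheafalgcat}: the colimit of the corresponding diagram of presheaves on $\mathcal{V}^{\vee}_{\otimes}$ is computed objectwise, the relevant squares are Cartesian, and descent in $\mathcal{S}$ shows the objectwise colimit is already a Segal presheaf whose fibres over $\widehat{F}()^{\times(k+1)}$ agree with those of $F_{0}$; fully faithfulness then follows from Theorem~\ref{thm:Segpsheaves}. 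Your reduction to $\mathcal{V}=\mathcal{S}$ via $U_{*}$ also does not obviously apply here, since $U_{*}$ is a right adjoint and need not commute with the realization defining $\widehat{\mathcal{C}}$. So while the skeleton of your argument matches the paper, the two steps you flag as ``technical obstacles'' are genuine gaps whose resolution requires the descent arguments above rather than compactness or levelwise computation.
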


\begin{proof}
  The functors $E^{n} \to E^{m}$ induced by the maps $[n] \to [m]$ in
  $\simp$ are categorical equivalences by Corollary~\ref{cor:Ecateq},
  so the induced functors $\mathcal{C}^{E^{m}} \to
  \mathcal{C}^{E^{n}}$ are also categorical equivalences by
  Lemma~\ref{lem:expcateq}. These functors are therefore all fully
  faithful and essentially surjective by
  Proposition~\ref{propn:cateqisffes}, and local equivalences by
  Theorem~\ref{thm:cateqislocaleq}. Local equivalences are by
  definition closed under colimits, so it follows that the map
  $\mathcal{C} \to \widehat{\mathcal{C}}$ is a local equivalence.

  Since $\iota_{0}$ preserves colimits, the map $\iota_{0}\mathcal{C}
  \to \iota_{0}\widehat{\mathcal{C}} \simeq \iota \mathcal{C}$ is
  surjective on $\pi_{0}$, and so the functor $\mathcal{C} \to
  \widehat{\mathcal{C}}$ is essentially surjective. To see that this
  functor is also fully faithful, we consider the model for categorical
  algebras as Segal presheaves from \S\ref{subsec:presheafalgcat}. If
  $\mathcal{V}$ is $\kappa$-presentable, then the colimit
  $\widehat{\mathcal{C}}$ in $\AlgCatV \simeq
  \mathcal{P}(\mathcal{V}^{\vee}_{\otimes})^{\Seg}$ can be described as a
  localization of the colimit $\widehat{F}$ of the diagram
  $F_{\bullet} \colon \simp^{\op} \to
  \mathcal{P}((\mathcal{V}^{\vee}_{\otimes})^{\kappa})$ corresponding
  to $\mathcal{C}^{E^{\bullet}}$. The colimit $\widehat{F}$ can be
  computed objectwise, and in fact is already local: Given $X \in
  (\mathcal{V}^{\vee}_{\otimes})_{[k]}$, we know that for every $\phi \colon [m] \to [n]$
  in $\simp^{\op}$ the diagram
  \nolabelcsquare{F_{m}(X)}{F_{n}(X)}{F_{m}()^{\times
      (k+1)}}{F_{n}()^{\times (k+1)}} is a pullback square. Since
  $\mathcal{S}$ is an $\infty$-topos, by
  \cite[Theorem 6.1.3.9]{HTT} it follows that the square
  \nolabelcsquare{F_{0}(X)}{\widehat{F}(X)}{F_{0}()^{\times
      (k+1)}}{\widehat{F}()^{\times (k+1)}} is also a pullback
  square. From this we conclude that $\widehat{F}$ is also a Segal
  presheaf, since the map $\widehat{F}(X) \to \widehat{F}()^{\times
    (k+1)} \simeq |F_{\bullet}()|^{\times (k+1)}$ has the same fibres
  as $F_{0}(X) \to F_{0}()^{\times (k+1}$ and $F_{0}() \to
  \widehat{F}()$ is surjective on $\pi_{0}$. Using the equivalence
  between Segal presheaves and categorical algebras of
  Theorem~\ref{thm:Segpsheaves} we conclude that $\mathcal{C} \to
  \widehat{\mathcal{C}}$ is fully faithful, as the object
  $\widehat{\mathcal{C}}(x,y)$ is determined by the fibre of
  $\widehat{F}(A) \to \widehat{F}()^{\times 2} \simeq
  (\iota\mathcal{C})^{\times 2}$ at $(x,y)$ for all $A \in
  \mathcal{V}$.

  It remains to prove that $\widehat{\mathcal{C}}$ is complete,
  i.e. that the map $\iota_{0}\widehat{\mathcal{C}} \to
  \iota_{1}\widehat{\mathcal{C}}$ is an equivalence. We have a
  commutative diagram \nolabelcsquare{{|\iota_{0}
      \mathcal{C}^{E^{\bullet}}|}}{\iota_{0}\widehat{\mathcal{C}}}{{|\iota_{1}
      \mathcal{C}^{E^{\bullet}}|}}{\iota_{1}\widehat{\mathcal{C}},}
  where the top horizontal morphism is an equivalence since
  $\iota_{0}$ preserves colimits. The left vertical map is also an
  equivalence: We have equivalences $\iota_{1}\mathcal{C}^{E^{n}}
  \simeq \Map(E^{1} \otimes E^{n}, \mathcal{C}) \simeq
  \iota_{n}\mathcal{C}^{E^{1}}$, so
  $|\iota_{1}\mathcal{C}^{E^{\bullet}}| \simeq \iota
  \mathcal{C}^{E^{1}}$, and under this equivalence the left vertical
  map corresponds to that induced by the natural map $\mathcal{C} \to
  \mathcal{C}^{E^{1}}$; we know that this is fully faithful and
  essentially surjective, and so induces an equivalence on $\iota$ by
  Proposition~\ref{propn:FFESiotaEq}. In order to show that
  $\widehat{\mathcal{C}}$ is complete, it thus suffices to show that
  the bottom horizontal map $|\iota_{1}\mathcal{C}^{E^{\bullet}}| \to
  \iota_{1}\widehat{\mathcal{C}}$ is an equivalence.

  Consider the commutative diagram \nolabelcsquare{{|\iota_{1}
      \mathcal{C}^{E^{\bullet}}|}}%
  {\iota_{1}\widehat{\mathcal{C}}}%
  {{|\iota_{0} \mathcal{C}^{E^{\bullet}}|^{\times 2}}}%
  {\iota_{0} \widehat{\mathcal{C}}^{\times 2},} with the vertical maps
  coming from the maps $d^{0},d^{1}\colon E^{0} \to E^{1}$.  Here the
  bottom horizontal map is an equivalence, so to prove that the top
  horizontal map is an equivalence it suffices to prove that this is a
  pullback square. Since $\mathcal{C} \to \widehat{\mathcal{C}}$ is
  essentially surjective, to see this we need only show that for all
  $(X, Y) \in \iota_{0}\mathcal{C}^{\times 2}$ the induced map on
  fibres $|\iota_{1}\mathcal{C}^{E^{\bullet}}|_{(X,Y)} \to
  \iota_{1}\widehat{\mathcal{C}}_{(X,Y)}$ is an equivalence.

  Since $\mathcal{C}^{E^{m}} \to \mathcal{C}^{E^{n}}$ is fully
  faithful and essentially surjective for all $[n] \to [m]$ in
  $\simp^{\op}$, the map $\iota \mathcal{C}^{E^{m}}\to \iota
  \mathcal{C}^{E^{n}}$ is an equivalence by Proposition
  \ref{propn:FFESiotaEq}. Therefore, as the groupoid objects
  $\iota_{\bullet}\mathcal{C}^{E^{m}}$ and
  $\iota_{\bullet}\mathcal{C}^{E^{n}}$ are effective, the diagram
  \nolabelcsquare{\iota_{1} \mathcal{C}^{E^{m}}}{\iota_{1}
    \mathcal{C}^{E^{n}}}{(\iota_{0} \mathcal{C}^{E^{m}})^{\times
      2}}{(\iota_{0} \mathcal{C}^{E^{n}})^{\times 2}} is a pullback
  square. In other words, the natural transformation
  $\iota_{1}\mathcal{C}^{E^{\bullet}} \to
  (\iota_{0}\mathcal{C}^{E^{\bullet}})^{\times 2}$ is
  Cartesian. Applying \cite[Theorem 6.1.3.9]{HTT} again, we see that
  the extended natural transformation of functors
  $(\simp^{\op})^{\triangleright} \to \mathcal{S}$ that includes the
  colimits is also Cartesian. Thus we have a pullback square
  \nolabelcsquare{\iota_{1}
    \mathcal{C}}{{|\iota_{1}\mathcal{C}^{E^{\bullet}}|}}{\iota_{0}
    \mathcal{C}^{\times
      2}}{{|\iota_{0}\mathcal{C}^{E^{\bullet}}|^{\times 2}}.} In
  particular, for $(X,Y) \in \iota_{0}\mathcal{C}^{\times 2}$ the
  induced map on fibres $\iota_{1}\mathcal{C}_{(X,Y)} \to
  |\iota_{1}\mathcal{C}^{E^{\bullet}}|_{(X,Y)}$ is an equivalence.
  Since $\mathcal{C} \to \widehat{\mathcal{C}}$ is fully faithful and
  essentially surjective, the map $\iota_{1}\mathcal{C}_{(X,Y)} \to
  \iota_{1}\widehat{\mathcal{C}}_{(X,Y)}$ is also an equivalence by
  Lemma~\ref{lem:FFeqiota1}. By the 2-out-of-3 property it then
  follows that $|\iota_{1}\mathcal{C}^{E^{\bullet}}|_{(X,Y)} \to
  \iota_{1}\widehat{\mathcal{C}}_{(X,Y)}$ is an equivalence too. This
  completes the proof that $\widehat{\mathcal{C}}$ is complete.
\end{proof}

\begin{cor}\label{cor:ffeseqloceq}
  Suppose $\mathcal{V}$ is a presentably monoidal
  \icat{}. The following are equivalent, for a functor $F \colon
  \mathcal{C} \to \mathcal{D}$ of $\mathcal{V}$-\icats{}:
  \begin{enumerate}[(i)]
  \item $F$ is a local equivalence.
  \item $F$ is fully faithful and essentially surjective.
  \end{enumerate}
\end{cor}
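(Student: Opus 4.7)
The plan is to deduce this corollary directly from Theorem \ref{thm:completion} by a standard 2-out-of-3 argument with respect to the completion functor. First I would apply the functor $\widehat{(-)}$, which is well-defined and functorial as it realizes the left adjoint to the inclusion $\CatIV \hookrightarrow \AlgCatV$, to the morphism $F \colon \mathcal{C} \to \mathcal{D}$ to obtain a naturality square
\[
\nodispnolabelcsquare{\mathcal{C}}{\widehat{\mathcal{C}}}{\mathcal{D}}{\widehat{\mathcal{D}}}
\]
in which, by Theorem \ref{thm:completion}, the two horizontal maps are simultaneously local equivalences and fully faithful and essentially surjective, and the objects on the right are complete.

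The key observation driving the argument is that, between complete $\mathcal{V}$-$\infty$-categories, the three classes \emph{local equivalence}, \emph{equivalence}, and \emph{fully faithful and essentially surjective} all coincide. Indeed, a local equivalence between local objects is sent to an equivalence by the completion functor, and that functor restricts to the identity on $\CatIV$, so such a map is already an equivalence; conversely, a fully faithful and essentially surjective functor between complete objects is an equivalence by Corollary \ref{cor:ffescompliseq}. In particular $\widehat{F}$ is a local equivalence if and only if it is fully faithful and essentially surjective if and only if it is an equivalence.

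Given this, the two directions of the stated equivalence follow by symmetric applications of 2-out-of-3. For (i)\,$\Rightarrow$\,(ii), if $F$ is a local equivalence then so is $\widehat{F}$, since the strongly saturated class of local equivalences satisfies 2-out-of-3 by definition; the preceding observation then upgrades $\widehat{F}$ to a fully faithful and essentially surjective functor, and Proposition \ref{propn:ffes2of3} transfers this property from three edges of the square to the fourth edge $F$. The implication (ii)\,$\Rightarrow$\,(i) is obtained by swapping the roles of the two classes: Proposition \ref{propn:ffes2of3} yields that $\widehat{F}$ is fully faithful and essentially surjective, hence an equivalence and in particular a local equivalence, and 2-out-of-3 for local equivalences gives the same for $F$.

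Since Theorem \ref{thm:completion} does essentially all of the work, I expect no significant obstacle here; the argument is almost purely formal, the only inputs beyond Theorem \ref{thm:completion} being the (definitional) closure of a strongly saturated class under 2-out-of-3, the 2-out-of-3 property for fully faithful and essentially surjective functors already established in Proposition \ref{propn:ffes2of3}, and the adjoint characterization of the completion functor that guarantees the required naturality square.
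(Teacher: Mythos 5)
Your argument is correct and is essentially the paper's own proof: both reduce to the completion square from Theorem~\ref{thm:completion}, observe that between complete objects local equivalences and fully faithful and essentially surjective functors each coincide with equivalences (via locality and Corollary~\ref{cor:ffescompliseq}), and then transfer the property along the square by the 2-out-of-3 property for each class. No gaps.
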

\begin{proof}
  By Theorem~\ref{thm:completion} we have a commutative diagram
  \csquare{\mathcal{C}}{\mathcal{D}}{\widehat{\mathcal{C}}}{\widehat{\mathcal{D}},}{F}{}{}{\widehat{F}}
  where the vertical maps are both local equivalences and fully
  faithful and essentially surjective, and $\widehat{\mathcal{C}}$ and
  $\widehat{\mathcal{D}}$ are complete. 

  Since local equivalences form a strongly saturated class of
  morphisms, it follows from the 2-out-of-3 property that $F$ is a
  local equivalence \IFF{} $\widehat{F}$ is a local equivalence,
  i.e. \IFF{} $\widehat{F}$ is an equivalence, since
  $\widehat{\mathcal{C}}$ and $\widehat{\mathcal{D}}$ are complete.
  
  Fully faithful and essentially surjective functors also satisfy the
  2-out-of-3 property, by Proposition~\ref{propn:ffes2of3}, so $F$ is
  fully faithful and essentially surjective \IFF{} $\widehat{F}$ is.
  But by Corollary \ref{cor:ffescompliseq} the functor $\widehat{F}$ is fully
  faithful and essentially surjective \IFF{} it is an equivalence,
  since $\widehat{\mathcal{C}}$ and $\widehat{\mathcal{D}}$ are
  complete. Thus $F$ is a local equivalence \IFF{} it is fully
  faithful and essentially surjective.
\end{proof}

\begin{cor}\label{cor:CatIVFFESLoc}
  Suppose $\mathcal{V}$ is a presentably monoidal
  \icat{}. The \icat{} $\CatIV$ is the localization of
  $\AlgCat(\mathcal{V})$ with respect to the fully faithful
  and essentially surjective functors.
\end{cor}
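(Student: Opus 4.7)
The plan is to observe that this corollary is essentially a direct consequence of two results already established in the preceding subsection, so the proof reduces to combining them. First, earlier in this section (the proposition immediately preceding Corollary \ref{cor:CatIVpres}) we showed that the inclusion $\CatIV \hookrightarrow \AlgCatV$ admits a left adjoint which exhibits $\CatIV$ as the localization of $\AlgCatV$ at the class of \emph{local equivalences}, i.e.\ the strongly saturated class generated by $s^{0} \colon E^{1} \to E^{0}$. This already identifies $\CatIV$ as a localization of $\AlgCatV$; the only remaining question is to identify the class of morphisms being inverted.

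Second, Corollary \ref{cor:ffeseqloceq}, which requires presentability of $\mathcal{V}$ (used to construct the completion functor $\mathcal{C} \mapsto \widehat{\mathcal{C}}$ of Theorem \ref{thm:completion}), shows that a functor $F \colon \mathcal{C} \to \mathcal{D}$ of $\mathcal{V}$-$\infty$-categories is a local equivalence if and only if it is fully faithful and essentially surjective. Substituting this characterization into the description of $\CatIV$ as the localization of $\AlgCatV$ at the local equivalences yields exactly the statement of the corollary.

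There is no real obstacle, since both ingredients have been proved; the proof is genuinely just a one-line combination. The only thing worth double-checking is that the left adjoint $\AlgCatV \to \CatIV$ genuinely inverts the class of morphisms one claims, which is formal: a reflective localization of a presentable $\infty$-category at a strongly saturated class $W$ inverts precisely $W$, and by Corollary \ref{cor:ffeseqloceq} this class $W$ coincides with the class of fully faithful and essentially surjective functors. Thus the proof amounts to citing Corollary \ref{cor:ffeseqloceq} together with the earlier proposition exhibiting $\CatIV$ as the reflective localization at local equivalences.
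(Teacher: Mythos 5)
Your proof is correct and is exactly the argument the paper intends: Corollary \ref{cor:CatIVFFESLoc} is stated in the paper without a separate proof precisely because it follows immediately from the proposition in \S\ref{subsec:localeq} exhibiting $\CatIV$ as the localization of $\AlgCatV$ at the local equivalences, combined with Corollary \ref{cor:ffeseqloceq} identifying the local equivalences with the fully faithful and essentially surjective functors. Nothing further is needed.
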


\begin{remark}\label{rmk:cateqNOTffes}
  We might expect that the fully faithful and essentially surjective
  functors also coincide with the categorical equivalences, but this
  turns out \emph{not} to be the case when we allow spaces of
  objects. To see this, first observe that if $F \colon \mathcal{A}
  \to \mathcal{B}$ is a categorical equivalence, then for every
  $\mathcal{V}$-\icat{} $\mathcal{C}$ the map \[F_{*} \colon
  |\Map(\mathcal{C} \otimes E^{\bullet}, \mathcal{A})| \to
  |\Map(\mathcal{C} \otimes E^{\bullet}, \mathcal{B})|\] is surjective
  on $\pi_{0}$: suppose $G \colon \mathcal{B} \to \mathcal{A}$ is a
  pseudo-inverse of $F$, then given a functor $\phi \colon \mathcal{C}
  \to \mathcal{B}$ the natural equivalence from $F \circ G$ to $\id$
  gives a natural equivalence from $F \circ G \circ \phi$ to $\phi$, so
  up to natural equivalence $\phi$ is in the image of $F_{*}$. Now if
  $\mathcal{B} \to \widehat{\mathcal{B}}$ is a categorical equivalence
  where $\widehat{\mathcal{B}}$ is complete, then by
  Corollary~\ref{cor:MapComplRight} we have $|\Map(\mathcal{C} \otimes
  E^{\bullet}, \widehat{\mathcal{B}})| \simeq \Map(\mathcal{C},
  \widehat{\mathcal{B}})$, and since $\Map(\mathcal{C} \otimes
  E^{\bullet}, \mathcal{B})$ is a groupoid object the map
  $\Map(\mathcal{C}, \mathcal{B}) \to |\Map(\mathcal{C} \otimes
  E^{\bullet}, \mathcal{B})|$ is surjective on $\pi_{0}$. Thus
  $\Map(\mathcal{C}, \mathcal{B}) \to \Map(\mathcal{C},
  \widehat{\mathcal{B}})$ is surjective on $\pi_{0}$. 

  Now suppose $\iota_{0}\mathcal{B}$ is discrete and
  $\iota\mathcal{B}$ is not; then there clearly exists for some $n >
  0$ a map from the $n$-sphere $S^{n}\to \iota \mathcal{B}$ that does
  not factor through $\iota_{0}\mathcal{B}$. But we have a
  $\mathcal{V}$-\icat{} $S^{n}\otimes E^{0}$ such that
  $\Map(S^{n}\otimes E^{0}, \mathcal{B}) \simeq \Map(S^{n},
  \iota_{0}\mathcal{B})$ --- so if $\mathcal{B} \to
  \widehat{\mathcal{B}}$ were a categorical equivalence then
  $\Map(S^{n}, \iota_{0}\mathcal{B}) \to \Map(S^{n}, \iota
  \mathcal{B})$ would have to be surjective on $\pi_{0}$, a
  contradiction. Thus completion maps $\mathcal{B} \to
  \widehat{\mathcal{B}}$ therefore cannot be categorical equivalences in
  general.
\end{remark}

We now deduce our main result for a general large monoidal \icat{}
$\mathcal{V}$ from the presentable case, by embedding in a
larger universe:
\begin{thm}
  Let $\mathcal{V}$ be a large monoidal \icat{}. The
  inclusion of the full subcategory of complete $\mathcal{V}$-\icats{}
  $\CatIV \hookrightarrow \AlgCat(\mathcal{V})$ has a left
  adjoint that exhibits $\CatIV$ as the localization of
  $\AlgCat(\mathcal{V})$ with respect to the fully faithful
  and essentially surjective functors.
\end{thm}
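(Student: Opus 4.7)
The plan is to bootstrap from the presentable case (Corollary~\ref{cor:CatIVFFESLoc}) by enlarging the universe. Specifically, I would embed $\mathcal{V}$ into a presentably monoidal \icat{} in a larger universe. Concretely, by \cite[Remark 4.8.1.8]{HA}, the (very large) presheaf \icat{} $\widehat{\mathcal{V}} := \widehat{\mathcal{P}}(\mathcal{V})$ carries a presentably monoidal structure (the Day convolution) such that the Yoneda embedding $y \colon \mathcal{V} \to \widehat{\mathcal{V}}$ is a monoidal functor and is fully faithful. This induces a fully faithful functor $y_{*} \colon \AlgCat(\mathcal{V}) \hookrightarrow \LAlgCat(\widehat{\mathcal{V}})$ whose essential image consists of categorical algebras $\mathcal{D}$ whose mapping objects $\mathcal{D}(X,Y)$ all lie in (the essential image of) $\mathcal{V}$.

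Next, by Corollary~\ref{cor:CatIVFFESLoc} applied in the larger universe, we have an explicit completion functor $\mathcal{D} \mapsto \widehat{\mathcal{D}} = |\mathcal{D}^{E^{\bullet}}|$ on $\LAlgCat(\widehat{\mathcal{V}})$ exhibiting the subcategory of complete objects as the localization at FFES functors. For any $\mathcal{C} \in \AlgCat(\mathcal{V})$, I would define $\widehat{\mathcal{C}}$ to be $\widehat{y_{*}\mathcal{C}}$. The crucial technical step is to verify that $\widehat{y_{*}\mathcal{C}}$ lies in the essential image of $y_{*}$, i.e.\ that its mapping objects still lie in $\mathcal{V}$. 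This follows directly from Theorem~\ref{thm:completion}: the canonical map $y_{*}\mathcal{C} \to \widehat{y_{*}\mathcal{C}}$ is fully faithful and essentially surjective, so for any $X, Y \in \iota_{0}\widehat{y_{*}\mathcal{C}}$, there exist $X', Y' \in \iota_{0}\mathcal{C}$ and an equivalence $\widehat{y_{*}\mathcal{C}}(X,Y) \simeq y_{*}\mathcal{C}(X',Y') = y(\mathcal{C}(X',Y'))$ in $\widehat{\mathcal{V}}$, hence the mapping object is representable. Thus the completion restricts to a functor $L \colon \AlgCat(\mathcal{V}) \to \CatIV$.

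It remains to check that $L$ is left adjoint to the inclusion $\CatIV \hookrightarrow \AlgCat(\mathcal{V})$, and that this exhibits $\CatIV$ as the localization at the FFES functors. Both statements reduce to the corresponding properties in $\LAlgCat(\widehat{\mathcal{V}})$ once we establish the following two compatibility facts: (i) a functor $F$ in $\AlgCat(\mathcal{V})$ is FFES if and only if $y_{*}F$ is FFES in $\LAlgCat(\widehat{\mathcal{V}})$, which is immediate since both conditions only involve the underlying space of objects and the mapping objects, and $y$ is fully faithful; and (ii) a $\mathcal{V}$-\icat{} $\mathcal{C}$ is complete if and only if $y_{*}\mathcal{C}$ is complete, which follows from Proposition~\ref{propn:eqCiseqUC} since the completeness condition depends only on the underlying Segal space $U_{*}\mathcal{C}$, and the lax monoidal right adjoint $U$ is compatible with the Yoneda embedding (as $y$ preserves the unit).

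The main obstacle I anticipate is the essential-image argument in the second paragraph: one must confirm that the mapping objects in $\widehat{y_{*}\mathcal{C}}$ really are representable, using both the fully faithful and essentially surjective properties of the completion, and being careful that ``essential surjectivity'' only yields equivalences in $\iota_{0}\widehat{y_{*}\mathcal{C}}$ after passing through the classifying space of equivalences (Proposition~\ref{propn:FFESiotaEq}). Everything else is a routine transfer of structure through the fully faithful embedding $y_{*}$.
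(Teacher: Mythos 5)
Your proposal is essentially the paper's own proof: embed $\mathcal{V}$ monoidally into $\widehat{\mathcal{P}}(\mathcal{V})$ via the Yoneda embedding, apply the presentable case (Corollary~\ref{cor:CatIVFFESLoc}) in the larger universe, and use the fact that the completion map $y_{*}\mathcal{C} \to \widehat{y_{*}\mathcal{C}}$ is fully faithful and essentially surjective to conclude that the completion of an object in the image of $y_{*}$ remains in that image. The only point you omit from the characterization of that essential image is that the space of objects of $\widehat{y_{*}\mathcal{C}}$ must also be essentially small, which follows from the equivalence $\iota_{0}\widehat{y_{*}\mathcal{C}} \simeq \iota \mathcal{C}$ exactly as in your representability argument.
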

\begin{proof}
  Let $\widehat{\mathcal{P}}(\mathcal{V})$ be the \icat{} of
  presheaves of large spaces on $\mathcal{V}$. By \cite[Proposition
  4.8.1.10]{HA} there exists a monoidal structure on
  $\widehat{\mathcal{P}}(\mathcal{V})$ such that the Yoneda embedding
  $j \colon \mathcal{V} \to \widehat{\mathcal{P}}(\mathcal{V})$ is a
  monoidal functor. Let
  $\widehat{\txt{Alg}}_{\txt{cat}}(\widehat{\mathcal{P}}(\mathcal{V}))$
  be the (very large) \icat{} of large categorical algebras in
  $\widehat{\mathcal{P}}(\mathcal{V})$; this is a presentable
  \icat{}, and writing $\LCatI^{\widehat{\mathcal{P}}(\mathcal{V})}$
  for its subcategory of complete
  $\widehat{\mathcal{P}}(\mathcal{V})$-\icats{} we know from
  Corollary~\ref{cor:CatIVFFESLoc} that the
  inclusion \[\LCatI^{\widehat{\mathcal{P}}(\mathcal{V})}
  \hookrightarrow
  \widehat{\txt{Alg}}_{\txt{cat}}(\widehat{\mathcal{P}}(\mathcal{V}))\]
  has a left adjoint $\widehat{L}$ that exhibits
  $\LCatI^{\widehat{\mathcal{P}}(\mathcal{V})}$ as the localization
  with respect to the fully faithful and essentially surjective
  functors.

  If $\mathcal{C}$ is in the essential image of the fully faithful
  inclusion \[\AlgCat(\mathcal{V}) \hookrightarrow
  \widehat{\txt{Alg}}_{\txt{cat}}(\widehat{\mathcal{P}}(\mathcal{V})),\]
  then the natural map $\mathcal{C} \to \widehat{L}\mathcal{C}$ is
  fully faithful and essentially surjective. But then
  $\iota_{0}\widehat{L}\mathcal{C} \simeq \iota \mathcal{C}$, so
  $\iota_{0} \widehat{L}\mathcal{C}$ is an (essentially) small space,
  and the mapping objects in $\widehat{L}\mathcal{C}$ are in the
  essential image of $\mathcal{V}$ in $\widehat{\mathcal{P}}(\mathcal{V})$. Thus
  $\widehat{L}\mathcal{C}$ is in the essential image of
  $\AlgCat(\mathcal{V})$, and so the functor $\widehat{L}$
  restricts to a functor $L \colon \AlgCat(\mathcal{V}) \to
  \CatIV$, since $\CatIV$ is equivalent to the full subcategory of
  $\LCatI^{\widehat{\mathcal{P}}(\mathcal{V})}$ spanned by
  objects in the essential image of $\AlgCat(\mathcal{V})$.
\end{proof}

\subsection{Properties of the Localized $\infty$-Category}\label{subsec:proploccat}
In this subsection we observe that the localized \icat{} $\CatIV$
inherits the naturality properties of $\AlgCat(\mathcal{V})$. We first
show that $\CatIV$ is functorial in $\mathcal{V}$:

\begin{propn}\label{propn:EnrAdj}
  Let \[\AlgCat \to \widehat{\txt{Mon}}^{\txt{lax}}_{\infty}\] be a
  coCartesian fibration corresponding to the functor
  $\AlgCat(\blank)$.  Define $\catname{Enr}_{\infty}$ to be the full
  subcategory of $\AlgCat$ whose objects are the
  complete enriched \icats{}. Then the restricted projection
  \[\txt{Enr}_{\infty} \to
  \widehat{\txt{Mon}}^{\txt{lax}}_{\infty}\]
  is a coCartesian fibration, and the inclusion $\txt{Enr}_{\infty}
  \hookrightarrow \AlgCat$ admits a left adjoint over
  $\widehat{\txt{Mon}}^{\txt{lax}}_{\infty}$.
\end{propn}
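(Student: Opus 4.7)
The plan is to reduce both parts of the statement to a single key claim:

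\smallskip
\textbf{Key Claim.} For every lax monoidal functor $F \colon \mathcal{V}^{\otimes} \to \mathcal{W}^{\otimes}$, the induced functor $F_{*} \colon \AlgCat(\mathcal{V}) \to \AlgCat(\mathcal{W})$ carries fully faithful and essentially surjective $\mathcal{V}$-functors to fully faithful and essentially surjective $\mathcal{W}$-functors.
\smallskip

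Granted the claim, part (a) follows by exhibiting $q$-coCartesian lifts explicitly: for a complete $\mathcal{V}$-$\infty$-category $\mathcal{C}$ and a lax monoidal $F$, the composite $\mathcal{C} \to F_{*}\mathcal{C} \to L_{\mathcal{W}}(F_{*}\mathcal{C})$, where $L_{\mathcal{W}}$ is the completion from Corollary~\ref{cor:CatIVFFESLoc}, satisfies the universal property
\[
\Map_{\CatI^{\mathcal{W}}}(L_{\mathcal{W}}F_{*}\mathcal{C}, \mathcal{D}) \simeq \Map_{\AlgCat(\mathcal{W})}(F_{*}\mathcal{C}, \mathcal{D}) \simeq \Map_{\AlgCat}(\mathcal{C}, \mathcal{D})_{F}
\]
for every complete $\mathcal{D}$, using the fibrewise adjunction $L_{\mathcal{W}} \dashv i_{\mathcal{W}}$ and the $p$-coCartesian property of $\mathcal{C} \to F_{*}\mathcal{C}$. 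For (b) I would assemble the fibrewise localizations $L_{\mathcal{V}}$ into a relative functor $L \colon \AlgCat \to \catname{Enr}_{\infty}$ over the base; the key claim gives the Beck--Chevalley compatibility $L_{\mathcal{W}}F_{*}i_{\mathcal{V}}L_{\mathcal{V}} \simeq L_{\mathcal{W}}F_{*}$, since $F_{*}$ sends the local equivalence $\mathcal{C} \to i_{\mathcal{V}}L_{\mathcal{V}}\mathcal{C}$ to a local equivalence in $\AlgCat(\mathcal{W})$. The adjunction $L \dashv i$ over $\widehat{\txt{Mon}}^{\txt{lax}}_{\infty}$ then follows from the fibrewise adjunctions together with the same local-equivalence fact.

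To prove the key claim, full faithfulness is immediate: $F_{*}\phi$ acts on hom-objects by applying $F$ to $\mathcal{C}(x,y) \to \mathcal{D}(\phi(x), \phi(y))$, and $F$ preserves equivalences. For essential surjectivity, the crucial observations are that $F_{*}$ does not change the underlying space of objects, so $\iota_{0}F_{*}\mathcal{C} = \iota_{0}\mathcal{C}$, and that the lax unit $I_{\mathcal{W}} \to F(I_{\mathcal{V}})$ (which is canonical by Proposition~\ref{propn:UnitAlg}) assembles into canonical maps $E^{n}_{\mathcal{W}} \to F_{*}E^{n}_{\mathcal{V}}$ in $\AlgCat(\mathcal{W})$, yielding a natural transformation $\iota_{\bullet}(\blank) \to \iota_{\bullet}F_{*}(\blank)$ of cosimplicial-space-valued functors on $\AlgCat(\mathcal{V})$. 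A diagram chase then promotes the surjectivity of $\pi_{0}\iota\phi$ to surjectivity of $\pi_{0}\iota F_{*}\phi$: given $\bar{d} \in \pi_{0}\iota F_{*}\mathcal{D}$, lift to $d_{0} \in \iota_{0}F_{*}\mathcal{D} = \iota_{0}\mathcal{D}$, lift $[d_{0}] \in \pi_{0}\iota \mathcal{D}$ through $\pi_{0}\iota\phi$ to some $[c_{0}] \in \pi_{0}\iota \mathcal{C}$, and push forward into $\pi_{0}\iota F_{*}\mathcal{C}$; naturality, together with the fact that on $\iota_{0}$ the transition map is the identity, then checks that the image under $\pi_{0}\iota F_{*}\phi$ is $\bar{d}$.

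The main obstacle is the key claim, specifically its essentially surjective half: lax (as opposed to strict) monoidal functors do not preserve the objects $E^{n}$ up to equivalence, so the natural maps $E^{n}_{\mathcal{W}} \to F_{*}E^{n}_{\mathcal{V}}$ must be constructed directly from the lax unit data, and there is no analogue of Proposition~\ref{propn:eqCiseqUC} available for general lax $F$. Once this compatibility between the two cosimplicial objects is carefully verified, the remaining arguments are straightforward packagings of the fibrewise completion from Corollary~\ref{cor:CatIVFFESLoc} with standard machinery for coCartesian fibrations and relative adjoints (in the spirit of \cite[\S 7.3.2]{HA}).
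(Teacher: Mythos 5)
Your proposal is correct and follows essentially the same route as the paper: your Key Claim is precisely Lemma~\ref{lem:LaxMonPresFFES} (proved there, as in your sketch, by using the lax unit $I_{\mathcal{W}} \to F(I_{\mathcal{V}})$ to produce $E^{1}_{\mathcal{W}} \to F_{*}E^{1}_{\mathcal{V}}$ and noting $F_{*}$ does not change object spaces), and your parts (a) and (b) are exactly the content of Proposition~\ref{propn:CoCartLoc} (coCartesian lifts as coCartesian-lift-followed-by-fibrewise-unit, and the relative left adjoint via the compatibility condition that the transition functors preserve fibrewise local equivalences, i.e.\ via \cite[Proposition 7.3.2.11]{HA}).
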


This follows from a general result about fibrewise localizations of
coCartesian fibrations that we prove first:
\begin{lemma}
  Suppose $\mathcal{E} \to \Delta^{1}$ is a coCartesian
  fibration, and $\mathcal{E}'$ is a full subcategory of
  $\mathcal{E}$ such that the inclusion $\mathcal{E}'_{1}
  \hookrightarrow \mathcal{E}_{1}$ admits a left adjoint $L \colon
  \mathcal{E}_{1} \to \mathcal{E}'_{1}$. Then the restriction
  $\mathcal{E}' \to \Delta^{1}$ is also a
  coCartesian fibration.
\end{lemma}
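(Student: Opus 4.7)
The plan is to exhibit explicit coCartesian lifts over the unique non-degenerate edge $0 \to 1$ of $\Delta^{1}$ using the reflection $L$. First I would observe that $\mathcal{E}' \to \Delta^{1}$ is automatically an inner fibration: since $\mathcal{E}'$ is a full subcategory of the $\infty$-category $\mathcal{E}$ it is itself an $\infty$-category, and any functor from an $\infty$-category to $\Delta^{1}$ is an inner fibration because $\Delta^{1}$ admits no non-degenerate inner horns. CoCartesian lifts of the degenerate edges $\mathrm{id}_{0}$ and $\mathrm{id}_{1}$ are provided by identity morphisms in $\mathcal{E}'$, which are coCartesian in $\mathcal{E}$ and hence in the full subcategory.

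The only substantive case is therefore constructing a coCartesian lift of $0 \to 1$ at each $x \in \mathcal{E}'_{0}$. Let $\pi \colon \mathcal{E} \to \Delta^{1}$ denote the projection, and choose a $\pi$-coCartesian morphism $f \colon x \to y$ lifting $0 \to 1$, so $y \in \mathcal{E}_{1}$. Let $\eta_{y} \colon y \to Ly$ be the unit of the adjunction $L \dashv i$; this is a morphism in $\mathcal{E}_{1}$, and $Ly \in \mathcal{E}'_{1}$. I would take the composite $\eta_{y} \circ f \colon x \to Ly$ as the candidate coCartesian lift in $\mathcal{E}'$.

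To verify this, by \cite[Proposition 2.4.4.3]{HTT} it suffices to show that for every $z \in \mathcal{E}'_{1}$, the induced map
\[ \Map_{\mathcal{E}'_{1}}(Ly, z) \to \Map^{(0 \to 1)}_{\mathcal{E}'}(x, z) \]
is an equivalence of spaces (there are no other morphisms out of $1$ in $\Delta^{1}$ to consider). Because $\mathcal{E}'$ is a full subcategory of $\mathcal{E}$, the right-hand side coincides with $\Map^{(0 \to 1)}_{\mathcal{E}}(x, z)$. The coCartesian property of $f$ supplies an equivalence $\Map_{\mathcal{E}_{1}}(y, z) \isoto \Map^{(0 \to 1)}_{\mathcal{E}}(x, z)$, and the adjunction $L \dashv i$ supplies an equivalence $\Map_{\mathcal{E}'_{1}}(Ly, z) \isoto \Map_{\mathcal{E}_{1}}(y, z)$ since $z$ lies in $\mathcal{E}'_{1}$. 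Composing these gives the desired equivalence.

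There is essentially no obstacle here beyond carefully chasing the adjunction; the argument is the standard construction of pushforward along an edge via a fibrewise reflection. This same pattern then globalises in Proposition~\ref{propn:EnrAdj} via \cite[Proposition 7.3.2.6]{HA} (or a direct simplex-by-simplex variant of the above) to produce the coCartesian fibration $\txt{Enr}_{\infty} \to \widehat{\txt{Mon}}^{\txt{lax}}_{\infty}$ together with the left adjoint.
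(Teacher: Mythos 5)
Your proof is correct and follows essentially the same route as the paper: both construct the coCartesian lift as the composite of a coCartesian edge $x \to y$ in $\mathcal{E}$ with the unit $y \to Ly$, and both verify coCartesianness via \cite[Proposition 2.4.4.3]{HTT} using fullness of $\mathcal{E}'$ and the adjunction $L \dashv i$. The only difference is that you spell out the inner-fibration and degenerate-edge cases explicitly, which the paper leaves implicit.
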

\begin{proof}
  We must show that for each $x \in \mathcal{E}'_{0}$ there exists a
  coCartesian arrow with source $x$ over $0 \to 1$ in
  $\Delta^{1}$. Suppose $\phi \colon x \to y$ is such a coCartesian
  arrow in $\mathcal{E}$, and let $y \to Ly$ be the unit of the
  adjunction. Then the composite $x \xto{\phi} y \to Ly$ is a
  coCartesian arrow in $\mathcal{E}'$: by
  \cite[Proposition 2.4.4.3]{HTT} it suffices to show that for
  all $z \in \mathcal{E}'_{1}$ the map $\Map_{\mathcal{E}'}(Ly,z) \to
  \Map_{\mathcal{E}'}(x, z)$ is an equivalence, which is clear since
  $\Map_{\mathcal{E}'}(Ly,z) \simeq \Map_{\mathcal{E}}(y,z)$ as $z \in
  \mathcal{E}'_{1}$, $\Map_{\mathcal{E}'}(x,z) \simeq
  \Map_{\mathcal{E}}(x,z)$ as $\mathcal{E}'$ is a full subcategory of
  $\mathcal{E}$, and $x \to y$ is a coCartesian morphism in
  $\mathcal{E}$.
\end{proof}

\begin{lemma}\label{lem:LocCoCartLoc}
  Let $\mathcal{E} \to \mathcal{B}$ be a locally coCartesian
  fibration and $\mathcal{E}^{0}$ a full subcategory of
  $\mathcal{E}$ such that for each $b \in \mathcal{B}$
  the induced map on fibres $\mathcal{E}^{0}_{b} \hookrightarrow
  \mathcal{E}_{b}$ admits a left adjoint $L_{b} \colon \mathcal{E}_{b}
  \to \mathcal{E}^{0}_{b}$. Assume these localization functors are compatible in the sense that
  the following condition is satisfied:
  \begin{itemize}
  \item[($*$)] Suppose $f \colon b \to b'$ is a morphism in
    $\mathcal{B}$ and $e$ is an object of $\mathcal{E}_{b}$. Let $e
    \to  e'$ and $L_{b}e \to e''$ be locally coCartesian arrows lying over
    $f$, and let $L_{b'}e' \to
    L_{b'}e''$ be the unique morphism such that the diagram
     \[ 
   \begin{tikzpicture} 
 \matrix (m) [matrix of math nodes,row sep=3em,column sep=2.5em,text height=1.5ex,text depth=0.25ex] %
 {
   e & e' & L_{b'}e' \\
   L_{b}e & e'' & L_{b'}e'' \\
   };
 \path[->,font=\scriptsize] %
 (m-1-1) edge (m-1-2)
 (m-1-2) edge (m-1-3)
 (m-2-1) edge (m-2-2)
 (m-2-2) edge (m-2-3)
 (m-1-1) edge (m-2-1)
 (m-1-2) edge (m-2-2)
 (m-1-3) edge (m-2-3);
 \end{tikzpicture}
 \]
commutes. Then the morphism $L_{b'}e' \to L_{b'}e''$ is an equivalence.
  \end{itemize}
 Then 
  \begin{enumerate}[(i)]
  \item the composite map $\mathcal{E}^{0} \to \mathcal{B}$ is also a
    locally coCartesian fibration,
  \item the inclusion $\mathcal{E}^{0} \hookrightarrow \mathcal{E}$
    admits a left adjoint $L \colon \mathcal{E} \to \mathcal{E}^{0}$
    relative to $\mathcal{B}$.
  \end{enumerate}
\end{lemma}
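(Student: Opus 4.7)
The plan is to reduce both statements to the case $\mathcal{B} = \Delta^{1}$ treated in the preceding lemma, using that being a locally coCartesian fibration and admitting a relative adjoint can each be checked by pullback along arbitrary edges $\Delta^{1} \to \mathcal{B}$. Once this reduction is in place, the remaining work is to verify that hypothesis $(*)$ is precisely what makes the fibrewise localizations $L_{b}$ assemble into a coherent functor compatible with the pushforward structure.

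For (i), I would fix a morphism $f \colon b \to b'$ in $\mathcal{B}$ and an object $e \in \mathcal{E}^{0}_{b}$ and produce a locally coCartesian lift of $f$ in $\mathcal{E}^{0}$ with source $e$. Pulling back along $f \colon \Delta^{1} \to \mathcal{B}$ gives a coCartesian fibration $f^{*}\mathcal{E} \to \Delta^{1}$, and the pullback $f^{*}\mathcal{E}^{0}$ is a full subcategory whose fibrewise inclusions are the right adjoints $L_{b} \dashv \iota_{b}$ and $L_{b'} \dashv \iota_{b'}$. The preceding lemma then produces a coCartesian arrow in $f^{*}\mathcal{E}^{0} \to \Delta^{1}$ with source $e$; unwinding the construction, this arrow is exactly the composite $e \to e' \to L_{b'}e'$, where $e \to e'$ is a locally coCartesian morphism in $\mathcal{E}$ over $f$ and $e' \to L_{b'}e'$ is the localization unit. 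Thus $\mathcal{E}^{0} \to \mathcal{B}$ is a locally coCartesian fibration, and moreover pushforward along $f$ in $\mathcal{E}^{0}$ is naturally identified with $L_{b'} \circ f_{!}^{\mathcal{E}} \circ \iota_{b}$.

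For (ii), the strategy is to apply the criterion of \cite[Proposition 7.3.2.6]{HA} (or its evident analogue for locally coCartesian fibrations): a functor between such fibrations over $\mathcal{B}$ admits a left adjoint relative to $\mathcal{B}$ as soon as it admits fibrewise left adjoints and preserves locally coCartesian morphisms between objects. The fibrewise left adjoints are the $L_{b}$ by assumption, so it remains to show the inclusion $i \colon \mathcal{E}^{0} \hookrightarrow \mathcal{E}$ preserves locally coCartesian arrows. Here is where $(*)$ is used: given a locally coCartesian morphism in $\mathcal{E}^{0}$, which by (i) is modelled by a composite $L_{b}\tilde{e} \to \tilde{e}'' \to L_{b'}\tilde{e}''$, we must verify its image in $\mathcal{E}$ is still locally coCartesian. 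Using $L_{b}\tilde e = \tilde e$ on the relevant objects and comparing with the locally coCartesian morphism $e \to e'$ out of $e = L_{b}\tilde e$ provided by $\mathcal{E}$, hypothesis $(*)$ asserts exactly that the resulting comparison $L_{b'}e' \to L_{b'}e''$ is an equivalence; this is the coherence needed to conclude. The global left adjoint $L$ is then obtained by gluing the $L_{b}$ along these identifications.

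The main obstacle I anticipate is carefully unpacking how $(*)$ interacts with the identification of pushforwards in (i): the danger is that the fibrewise localizations naively exist but fail to commute with transport along morphisms in $\mathcal{B}$, in which case the candidate functor $L$ sending $e \in \mathcal{E}_{b}$ to $L_{b}e$ would not extend over $\mathcal{B}$. Condition $(*)$ is precisely the minimal coherence that rules this out, so the heart of the argument is the bookkeeping showing that $(*)$ implies $i$ preserves locally coCartesian morphisms, after which both conclusions follow by standard fibrational yoga.
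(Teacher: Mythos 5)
Your treatment of (i) is correct and is exactly the paper's argument: pull back along each edge $f \colon \Delta^{1} \to \mathcal{B}$ and apply the preceding lemma, which also yields the identification of the pushforward in $\mathcal{E}^{0}$ as $L_{b'} \circ f_{!}^{\mathcal{E}}$ on local objects.

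Part (ii), however, has a genuine gap. The criterion you invoke --- ``fibrewise left adjoints plus preservation of locally coCartesian morphisms implies a relative left adjoint'' --- is the wrong-variance form of \cite[Proposition 7.3.2.6]{HA}. That result produces a relative \emph{left} adjoint for a functor between \emph{Cartesian} fibrations preserving Cartesian edges (dually, a relative \emph{right} adjoint in the coCartesian setting). For locally coCartesian fibrations and a \emph{left} adjoint the correct statement is \cite[Proposition 7.3.2.11]{HA}, and its extra hypothesis is not preservation of locally coCartesian edges but a Beck--Chevalley compatibility $f_{!}^{\mathcal{E}^{0}} \circ L_{b} \simeq L_{b'} \circ f_{!}^{\mathcal{E}}$. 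The two are genuinely different: already over $\Delta^{1}$, take $\mathcal{C}_{0}=\mathcal{C}_{1}=\mathrm{Ab}$, $\mathcal{D}_{0}=\mathcal{D}_{1}=\mathrm{Set}$, $G$ the forgetful functor in both fibres, $u(A)=A\oplus A$ and $v(S)=S\times S$; then $G$ preserves coCartesian edges and has fibrewise left adjoints (free abelian group), but $\mathbb{Z}[S\times S]\not\simeq \mathbb{Z}[S]\oplus\mathbb{Z}[S]$, so no relative left adjoint exists. Moreover, the specific claim you reduce to --- that the inclusion $\mathcal{E}^{0}\hookrightarrow\mathcal{E}$ preserves locally coCartesian morphisms --- is false in general: by (i) a locally coCartesian edge of $\mathcal{E}^{0}$ out of $e$ over $f$ is the composite $e \to e' \to L_{b'}e'$ of a locally coCartesian edge of $\mathcal{E}$ with a localization unit, and this is locally coCartesian in $\mathcal{E}$ only when that unit is an equivalence. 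Condition $(*)$ cannot rescue this, since it compares $L_{b'}e'$ with $L_{b'}e''$, not $e'$ with $L_{b'}e'$. Your closing paragraph correctly identifies $(*)$ as the statement that the fibrewise localizations commute with transport, which is indeed the heart of the matter --- but the route to exploit it is to verify condition (2) of \cite[Proposition 7.3.2.11]{HA} directly from the description of locally coCartesian edges in $\mathcal{E}^{0}$, as the paper does, not to show that the inclusion preserves them.
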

\begin{proof}
  (i) is immediate from the previous lemma, and then (ii) follows from
  \cite[Proposition 7.3.2.11]{HA} --- condition (2) of this result
  is satisfied since, in the notation of condition ($*$), a locally coCartesian
  arrow in $\mathcal{E}^{0}$ over $f$ with source $L_{b}e$ is given by the
  composite $L_{b}e \to e'' \to L_{b'}e''$.
\end{proof}

\begin{propn}\label{propn:CoCartLoc}
  Let $\mathcal{E} \to \mathcal{B}$ be a coCartesian fibration and
  $\mathcal{E}^{0}$ a full subcategory of $\mathcal{E}$. Suppose
  that for each $b \in \mathcal{B}$ the induced map on fibres
  $\mathcal{E}^{0}_{b} \hookrightarrow \mathcal{E}_{b}$ admits a left
  adjoint $L_{b} \colon \mathcal{E}_{b} \to \mathcal{E}^{0}_{b}$ and
  that the functors $\phi_{!} \colon \mathcal{E}_{b} \to
  \mathcal{E}_{b'}$ corresponding to morphisms $\phi \colon b \to b'$ in
  $\mathcal{B}$ preserve the fibrewise local equivalences. Then
  \begin{enumerate}[(i)]
  \item the composite map $\mathcal{E}^{0} \to \mathcal{B}$ is a coCartesian fibration,
  \item the inclusion $\mathcal{E}^{0} \hookrightarrow \mathcal{E}$
    admits a left adjoint $L \colon \mathcal{E} \to \mathcal{E}^{0}$
    over $\mathcal{B}$, and $L$ preserves coCartesian arrows.
  \end{enumerate}
\end{propn}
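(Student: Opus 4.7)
The plan is to deduce this from Lemma~\ref{lem:LocCoCartLoc} and then upgrade the locally coCartesian fibration produced there to an honest coCartesian fibration, using the hypothesis that the pushforward functors $\phi_{!}$ preserve fibrewise local equivalences.

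First I would verify the compatibility condition $(*)$ of Lemma~\ref{lem:LocCoCartLoc}. Given $f \colon b \to b'$ and $e \in \mathcal{E}_{b}$, since $\mathcal{E} \to \mathcal{B}$ is coCartesian (not just locally so), the locally coCartesian morphisms $e \to e'$ and $L_{b}e \to e''$ over $f$ can be identified with the coCartesian pushforwards $e \to f_{!}e$ and $L_{b}e \to f_{!}L_{b}e$. The induced map $f_{!}e \to f_{!}L_{b}e$ is then obtained by applying $f_{!}$ to the unit $e \to L_{b}e$ of the adjunction, and this unit is a local equivalence in $\mathcal{E}_{b}$ (it is the canonical map to the local reflection, hence in the strongly saturated class inverted by $L_{b}$). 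By the standing hypothesis $f_{!}$ preserves fibrewise local equivalences, so $f_{!}e \to f_{!}L_{b}e$ is a local equivalence in $\mathcal{E}_{b'}$. Applying $L_{b'}$ then yields an equivalence, which is exactly $(*)$. Lemma~\ref{lem:LocCoCartLoc} therefore gives that $\mathcal{E}^{0} \to \mathcal{B}$ is locally coCartesian and that the inclusion $\mathcal{E}^{0} \hookrightarrow \mathcal{E}$ admits a left adjoint $L$ relative to $\mathcal{B}$ (so in particular $L|_{\mathcal{E}_{b}} \simeq L_{b}$).

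Next I would promote locally coCartesian to coCartesian. By \cite[Proposition 2.4.2.8]{HTT}, it suffices to check that locally coCartesian morphisms in $\mathcal{E}^{0}$ are closed under composition. From the proof of Lemma~\ref{lem:LocCoCartLoc} (recalled in the lemma just preceding it), a locally coCartesian arrow in $\mathcal{E}^{0}$ with source $x \in \mathcal{E}^{0}_{b}$ over $f \colon b \to b'$ can be described as the composite $x \to f_{!}x \to L_{b'}f_{!}x$. Given composable $f \colon b \to b'$ and $g \colon b' \to b''$ and $x \in \mathcal{E}^{0}_{b}$, I would compare the composite of the two locally coCartesian lifts with the locally coCartesian lift over $gf$: both have target of the form $L_{b''}(gf)_{!}x$ and $L_{b''}g_{!}L_{b'}f_{!}x$, respectively, and the natural map between them is obtained by applying $L_{b''}g_{!}$ to the local equivalence $f_{!}x \to L_{b'}f_{!}x$; since $g_{!}$ preserves local equivalences and $L_{b''}$ inverts them, this map is an equivalence. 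Thus the composite is again locally coCartesian, establishing (i).

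Finally, for the assertion that $L$ preserves coCartesian arrows (which proves the last clause of (ii)), I would take a coCartesian morphism $e \to f_{!}e$ over $f$ in $\mathcal{E}$ and compare $L(e \to f_{!}e)$, which is the morphism $L_{b}e \to L_{b'}f_{!}e$ obtained by the adjunction, with the coCartesian lift in $\mathcal{E}^{0}$ of $f$ starting at $L_{b}e$, namely $L_{b}e \to L_{b'}f_{!}L_{b}e$. These two agree up to the map $L_{b'}f_{!}e \to L_{b'}f_{!}L_{b}e$, which is an equivalence by exactly the argument used to verify $(*)$. The main (mild) obstacle throughout is bookkeeping with local equivalences versus actual equivalences, but everything reduces cleanly to the fact that $\phi_{!}$ sends local equivalences to local equivalences, which $L_{b'}$ then inverts.
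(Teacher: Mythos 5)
Your proposal is correct and follows essentially the same route as the paper: check condition $(*)$ of Lemma~\ref{lem:LocCoCartLoc} (which, for a coCartesian fibration, amounts exactly to the hypothesis that the $\phi_{!}$ preserve fibrewise local equivalences), apply that lemma to get the locally coCartesian fibration and the relative left adjoint, and then upgrade to a coCartesian fibration via \cite[Proposition 2.4.2.8]{HTT} by checking closure of locally coCartesian morphisms under composition with the same $L_{b''}g_{!}$-applied-to-a-local-equivalence argument. Your explicit verification that $L$ preserves coCartesian arrows is a welcome elaboration of a point the paper leaves implicit, but it rests on the same instance of $(*)$.
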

\begin{proof}
  Lemma~\ref{lem:LocCoCartLoc} implies (ii) and also that
  $\mathcal{E}^{0} \to \mathcal{E} \to \mathcal{B}$ is a locally
  coCartesian fibration, since for a coCartesian fibration condition ($*$) says
  precisely that fibrewise local equivalences are preserved by the
  functors $\phi_{!}$. By \cite[Proposition 2.4.2.8]{HTT} it remains to show that locally coCartesian
  morphisms are closed under composition. Suppose $f \colon b \to b'$
  and $g \colon b' \to b''$ are morphisms in $\mathcal{B}$, and that
  $e \in \mathcal{E}^{0}_{b}$. Let $e \to e'$ be a coCartesian arrow
  in $\mathcal{E}$ over $f$, and let $e' \to e''_{1}$ and $L_{b'}e'
  \to e''_{2}$ be coCartesian arrows in $\mathcal{E}$ over $g$. Then a
  locally coCartesian arrow over $f$ in $\mathcal{E}^{0}$ is given by
  $e \to e' \to L_{b'}e'$ and a locally coCartesian arrow over $g$ is
  given by $L_{b'}e' \to e''_{2} \to L_{b''}e''_{2}$. We have a
  commutative diagram
  \[ %
  \begin{tikzpicture} %
\matrix (m) [matrix of math nodes,row sep=3em,column sep=2.5em,text height=1.5ex,text depth=0.25ex] %
{
  e & e' & e''_{1} & L_{b''}e''_{1} \\
    & L_{b'}e' & e''_{2} & L_{b''}e''_{2} \\
  };
\path[->,font=\scriptsize] %
(m-1-1) edge (m-1-2)
(m-1-2) edge (m-1-3)
(m-1-3) edge (m-1-4)
(m-2-2) edge (m-2-3)
(m-2-3) edge (m-2-4)
(m-1-1) edge (m-2-2)
(m-1-2) edge (m-2-2)
(m-1-3) edge (m-2-3)
(m-1-4) edge (m-2-4);
\end{tikzpicture}%
\]%
Here the composite along the top row is a locally coCartesian arrow
for $gf$, and the composite along the bottom is the composite of
locally coCartesian arrows for $g$ and $f$. By condition ($*$) of \ref{lem:LocCoCartLoc}, the rightmost
vertical morphism is an equivalence, hence the composite map $e \to L_{b''}e''_{2}$
is locally coCartesian.
\end{proof}

\begin{lemma}\label{lem:LaxMonPresFFES}
  Suppose $\phi \colon \mathcal{V}^{\otimes} \to
  \mathcal{W}^{\otimes}$ is a lax monoidal functor. Then the induced
  functor \[\phi_{*}  \colon \AlgCatV \to
  \AlgCat(\mathcal{W})\]
  preserves fully faithful and essentially surjective morphisms.
\end{lemma}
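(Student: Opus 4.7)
My plan is to establish the two conditions separately: fully faithfulness is formal, while essential surjectivity requires an auxiliary natural comparison map built from the unit constraint of $\phi$.

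For fully faithfulness I invoke Lemma~\ref{lem:FullyFaithfulEq}, which identifies fully faithful functors in $\AlgCatV$ with Cartesian morphisms for the projection $p_{\mathcal{V}}\colon\AlgCatV \to \mathcal{S}$. Because $\phi_*$ is constructed by post-composition with $\phi$, while the Cartesian lift $f^*\mathcal{D}$ of a $\mathcal{V}$-\icat{} $\mathcal{D}$ along $f\colon S \to T$ is constructed by pre-composition with $\simp^\op_S \to \simp^\op_T$, these two operations commute and yield a canonical identification $\phi_*(f^*\mathcal{D}) \simeq f^*(\phi_*\mathcal{D})$. Every $p_{\mathcal{V}}$-Cartesian morphism factors up to a fibrewise equivalence through the canonical Cartesian morphism $f^*\mathcal{D} \to \mathcal{D}$, and both factors are preserved by $\phi_*$ (the latter via the identification above, the former because $\phi_*$ is a functor of \icats{}); hence $\phi_*$ sends fully faithful functors to fully faithful functors.

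For essential surjectivity the strategy is to produce, for every $\mathcal{V}$-\icat{} $\mathcal{C}$, a natural map $\iota_n \mathcal{C} \to \iota_n(\phi_*\mathcal{C})$ of spaces, functorial in $[n] \in \simp$ and equal to the identity at $n = 0$. The input is Proposition~\ref{propn:UnitAlg}: since $I_{\mathcal{W}}$ is the initial $\simp^\op$-algebra in $\mathcal{W}$, there is an essentially unique morphism $u\colon I_{\mathcal{W}} \to \phi\circ I_{\mathcal{V}}$ in $\Alg_{\simp^\op}(\mathcal{W})$. Pulling $u$ back along the projection $\simp^\op_{\{0,\ldots,n\}} \to \simp^\op$ gives a cosimplicial map $E^\bullet_{\mathcal{W}} \to \phi_* E^\bullet_{\mathcal{V}}$ of $\mathcal{W}$-enriched \icats{}, and composing with the functoriality of $\phi_*$ yields
\[
\iota_n \mathcal{C} = \Map_{\AlgCatV}(E^n_{\mathcal{V}},\mathcal{C}) \longrightarrow \Map_{\AlgCat(\mathcal{W})}(\phi_* E^n_{\mathcal{V}},\phi_*\mathcal{C}) \longrightarrow \iota_n(\phi_*\mathcal{C}).
\]
At $n = 0$ Lemma~\ref{lem:iota0} identifies both sides with the underlying space of objects of $\mathcal{C}$, which agrees with that of $\phi_*\mathcal{C}$, and the comparison map is the identity there. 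Given an essentially surjective $F\colon\mathcal{C} \to \mathcal{D}$ and a point $X \in \iota_0(\phi_*\mathcal{D}) = \iota_0 \mathcal{D}$, essential surjectivity of $F$ supplies an equivalence $E^1_{\mathcal{V}} \to \mathcal{D}$ from $X$ to some $F(Y)$; transporting it by the comparison map produces an equivalence $E^1_{\mathcal{W}} \to \phi_*\mathcal{D}$ with the same endpoints, so Lemma~\ref{lem:esssurjcond} gives essential surjectivity of $\phi_* F$.

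The main point that requires care is checking that the maps $E^n_{\mathcal{W}} \to \phi_* E^n_{\mathcal{V}}$ are compatible with the cosimplicial face maps $E^0 \to E^1$, so that the source and target of the transported equivalence in $\phi_*\mathcal{D}$ really are $X$ and $(\phi_* F)(Y)$. This compatibility reduces to the essential uniqueness in Proposition~\ref{propn:UnitAlg} of $u$ as a morphism of $\simp^\op$-algebras, which forces the whole cosimplicial map to agree with the identity on the underlying space in each simplicial degree.
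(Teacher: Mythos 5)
Your proof is correct and follows essentially the same route as the paper: the key step for essential surjectivity is exactly the paper's, namely using the initiality of the unit algebra (Proposition~\ref{propn:UnitAlg}) to produce the comparison map $E^{1}_{\mathcal{W}} \to \phi_{*}E^{1}_{\mathcal{V}}$ and transport equivalences along it. Your treatment of fully faithfulness via Cartesian morphisms is a slightly more formal packaging of what the paper dismisses as ``obvious from the definitions,'' and your extra care about compatibility with the cosimplicial face maps fills in a detail the paper leaves implicit.
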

\begin{proof}
  It is obvious from the definitions that $\phi_{*}$
  preserves fully faithful functors. To see that it preserves
  essentially surjective ones we note that if two points of
  $\iota_{0}\mathcal{C}$ are equivalent as objects of $\mathcal{C}$
  then they are also equivalent as objects of $\phi_{*}\mathcal{C}$,
  since the map $I_{\mathcal{W}} \to \phi(I_{\mathcal{V}})$ induces a
  functor $E^{1}_{\mathcal{W}} \to \phi_{*}E^{1}_{\mathcal{V}}$.
\end{proof}

\begin{proof}[Proof of Proposition~\ref{propn:EnrAdj}]
  The result follows by combining Proposition \ref{propn:CoCartLoc}
  and Lemma~\ref{lem:LaxMonPresFFES}.
\end{proof}

\begin{cor}
  $\CatIV$ is functorial in $\mathcal{V}$ with respect to lax monoidal
  functors of monoidal \icats{}.
\end{cor}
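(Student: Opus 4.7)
The plan is to read off the functoriality directly from Proposition~\ref{propn:EnrAdj}. That proposition produces a coCartesian fibration $\txt{Enr}_{\infty} \to \widehat{\txt{Mon}}^{\txt{lax}}_{\infty}$ whose fiber over a monoidal \icat{} $\mathcal{V}^{\otimes}$ is, by construction, the full subcategory of $\AlgCat(\mathcal{V})$ spanned by the complete $\mathcal{V}$-\icats{} --- which is precisely $\CatIV$. Under the straightening/unstraightening equivalence of \cite[\S 3.2]{HTT}, a coCartesian fibration over $\widehat{\txt{Mon}}^{\txt{lax}}_{\infty}$ corresponds to a functor
\[ \widehat{\txt{Mon}}^{\txt{lax}}_{\infty} \longrightarrow \widehat{\CatI} \]
whose value at $\mathcal{V}^{\otimes}$ is the fiber $\CatIV$. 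This yields the desired functoriality.

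It remains only to identify the functor on morphisms. Given a lax monoidal functor $\phi \colon \mathcal{V}^{\otimes} \to \mathcal{W}^{\otimes}$, the coCartesian pushforward along $\phi$ in $\txt{Enr}_{\infty}$ is (by the construction in the proof of Lemma~\ref{lem:LocCoCartLoc}) the composite of the coCartesian pushforward $\phi_{*}\colon \AlgCat(\mathcal{V}) \to \AlgCat(\mathcal{W})$ with the completion (left-adjoint) functor $\AlgCat(\mathcal{W}) \to \CatI^{\mathcal{W}}$; on objects already lying in $\CatI^{\mathcal{V}}$ this gives the correct assignment. Thus the induced functor on fibers is the completion of $\phi_{*}$ restricted to complete objects.

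There is essentially no obstacle, since all the work has been packaged into Proposition~\ref{propn:EnrAdj}: the only content of the corollary is the formal passage from a coCartesian fibration to its associated functor of \icats{}.
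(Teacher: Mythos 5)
Your proposal is correct and is essentially identical to the paper's own proof: both read the functoriality off from the coCartesian fibration $\txt{Enr}_{\infty} \to \widehat{\txt{Mon}}^{\txt{lax}}_{\infty}$ of Proposition~\ref{propn:EnrAdj} via straightening. Your additional identification of the pushforward on morphisms as completion composed with $\phi_{*}$ is a correct (and harmless) elaboration of what the coCartesian structure from Proposition~\ref{propn:CoCartLoc} provides.
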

\begin{proof}
  The coCartesian fibration $\txt{Enr}_{\infty} \to
  \widehat{\txt{Mon}}^{\txt{lax}}_{\infty}$ of
  Proposition~\ref{propn:EnrAdj} classifies a functor
  $\widehat{\txt{Mon}}^{\txt{lax}}_{\infty} \to \CatI$ that sends
  a monoidal \icat{} $\mathcal{V}$ to $\CatIV$.
\end{proof}

\begin{lemma}\label{lem:CatIVstrmoncolim}
  Suppose $\mathcal{V}$ and $\mathcal{W}$ are
  monoidal \icats{} compatible with small colimits, and $F
  \colon \mathcal{C}^{\otimes} \to \mathcal{D}^{\otimes}$ is a 
  monoidal functor such that $F_{[1]} \colon \mathcal{V} \to \mathcal{W}$
  preserves colimits. Then the induced functor $F_{*}\colon \CatIV \to
  \CatI^{\mathcal{W}}$ preserves colimits.
\end{lemma}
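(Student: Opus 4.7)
The plan is to reduce the statement to the already-established fact that $F_{*}\colon \AlgCatV \to \AlgCat(\mathcal{W})$ preserves colimits (Proposition~\ref{propn:StrMonColim2}), using that $\CatIV$ sits inside $\AlgCatV$ as a reflective subcategory whose localization functor we understand well. Write $i_{\mathcal{V}}\colon \CatIV \hookrightarrow \AlgCatV$ and $L_{\mathcal{V}}\colon \AlgCatV \to \CatIV$ for the inclusion and its left adjoint, and similarly $i_{\mathcal{W}}$, $L_{\mathcal{W}}$. By Corollary~\ref{cor:ffeseqloceq} (or its extension to the large case), $L_{\mathcal{V}}$ inverts precisely the fully faithful and essentially surjective functors, and a diagram in $\CatIV$ has colimit computed as $L_{\mathcal{V}}$ applied to its colimit in $\AlgCatV$.

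The functor $F_{*}\colon \CatIV \to \CatI^{\mathcal{W}}$ is by construction the composite $L_{\mathcal{W}}\circ F_{*}\circ i_{\mathcal{V}}$, where in the middle we use the coCartesian pushforward on $\AlgCat$ coming from the naturality of the construction (this is what defines the functoriality coming from the coCartesian fibration $\txt{Enr}_{\infty}\to\widehat{\txt{Mon}}_{\infty}^{\txt{lax}}$ of Proposition~\ref{propn:EnrAdj}). To verify colimit-preservation, let $p\colon K\to \CatIV$ be a small diagram. Then
\[
\colim_{\CatIV} p \simeq L_{\mathcal{V}}\bigl(\colim_{\AlgCatV}(i_{\mathcal{V}}\circ p)\bigr).
\]
Applying $F_{*}$ on $\CatIV$ gives
\[
F_{*}\bigl(\colim_{\CatIV} p\bigr) \simeq L_{\mathcal{W}}\,F_{*}\,i_{\mathcal{V}}\,L_{\mathcal{V}}\bigl(\colim_{\AlgCatV}(i_{\mathcal{V}}\circ p)\bigr).
\]
Now the unit morphism $X \to i_{\mathcal{V}}L_{\mathcal{V}}X$ is a local equivalence, i.e.\ fully faithful and essentially surjective; by Lemma~\ref{lem:LaxMonPresFFES}, $F_{*}$ sends it to a fully faithful and essentially surjective, hence locally equivalent, map in $\AlgCat(\mathcal{W})$, which $L_{\mathcal{W}}$ inverts. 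Therefore the last expression simplifies to $L_{\mathcal{W}}\,F_{*}\bigl(\colim_{\AlgCatV}(i_{\mathcal{V}}\circ p)\bigr)$. Applying Proposition~\ref{propn:StrMonColim2} we can pull the colimit out, and since $L_{\mathcal{W}}$ is a left adjoint it also commutes with colimits, giving
\[
F_{*}\bigl(\colim_{\CatIV} p\bigr) \simeq \colim_{\CatI^{\mathcal{W}}}\bigl(L_{\mathcal{W}}\,F_{*}\,i_{\mathcal{V}}\circ p\bigr) \simeq \colim_{\CatI^{\mathcal{W}}}(F_{*}\circ p),
\]
as desired.

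No step should be a real obstacle, since all the inputs are in place: Proposition~\ref{propn:StrMonColim2} for colimit-preservation at the level of $\AlgCat$, Lemma~\ref{lem:LaxMonPresFFES} for the compatibility of $F_{*}$ with the classes of local equivalences, and Corollary~\ref{cor:ffeseqloceq} (together with the large-universe variant used for the general monoidal $\mathcal{V}$) to identify $\CatIV$ as the localization and hence compute its colimits via $L_{\mathcal{V}}$. The only mild care needed is in the large-universe case, where one should run the same chain of equivalences inside $\LAlgCat(\widehat{\mathcal{P}}(\mathcal{V}))$ and restrict back, using that the embedding $\AlgCat(\mathcal{V})\hookrightarrow \LAlgCat(\widehat{\mathcal{P}}(\mathcal{V}))$ preserves the relevant colimits and localizations; this is essentially the argument already used to deduce the main theorem in the large case.
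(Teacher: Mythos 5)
Your proposal is correct and follows essentially the same route as the paper: factor $F_{*}$ on $\CatIV$ as $L_{\mathcal{W}}\circ F_{*}^{\txt{Alg}}\circ i_{\mathcal{V}}$, use Lemma~\ref{lem:LaxMonPresFFES} to see that $F_{*}^{\txt{Alg}}$ preserves local equivalences so that $L_{\mathcal{W}}\circ F_{*}^{\txt{Alg}}\circ L_{\mathcal{V}}\simeq L_{\mathcal{W}}\circ F_{*}^{\txt{Alg}}$, and then combine colimit-preservation of $F_{*}^{\txt{Alg}}$ with the fact that $L_{\mathcal{W}}$ is a left adjoint. The remark about running the argument in a larger universe is a sensible precaution the paper leaves implicit, but it does not change the substance.
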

\begin{proof}
  This functor $F_{*}$ is the composite \[\CatIV \hookrightarrow
  \AlgCat(\mathcal{V}) \xto{F_{*}^{\txt{Alg}}}
  \AlgCat(\mathcal{W}) \xto{L_{\mathcal{W}}}
  \CatI^{\mathcal{W}},\] where $L_{\mathcal{W}}$ is the completion
  functor for $\mathcal{W}$ and we write $F_{*}^{\txt{Alg}}$ for the
  functor on $\AlgCat$ induced by composition with $F$ for clarity. By
  Lemma~\ref{lem:LaxMonPresFFES} the functor $F^{\txt{Alg}}_{*}$
  preserves local equivalences, so
  $F_{*}^{\txt{Alg}}L_{\mathcal{V}}\mathcal{C}$ and
  $F^{\txt{Alg}}_{*}\mathcal{C}$ are locally equivalent for all
  $\mathcal{C}$; it follows that $L_{\mathcal{W}} \circ
  F^{\txt{Alg}}_{*} \circ L_{\mathcal{V}} \simeq L_{\mathcal{W}} \circ
  F^{\txt{Alg}}_{*}$. If $\alpha \mapsto \mathcal{C}_{\alpha}$ is a
  diagram in $\CatIV$ then its colimit is $L_{\mathcal{V}}(\colim
  \mathcal{C}_{\alpha})$ where this colimit is computed in
  $\AlgCat(\mathcal{V})$. Thus we have
  \[
  \begin{split}
    F_{*}(\colim \mathcal{C}_{\alpha}) & \simeq L_{\mathcal{W}} F^{\txt{Alg}}_{*}
    L_{\mathcal{V}}(\colim \mathcal{C}_{\alpha}) \simeq
    L_{\mathcal{W}}F^{\txt{Alg}}_{*}(\colim
    \mathcal{C}_{\alpha}) \\
    & \simeq \colim
    L_{\mathcal{W}}F^{\txt{Alg}}_{*}(\mathcal{C}_{\alpha}) \simeq
    \colim F_{*}\mathcal{C}_{\alpha}.\qedhere
  \end{split}\]
\end{proof}

\begin{propn}
  The restriction of the functor $\CatI^{(\blank)}$ to $\MonPr$
  factors through $\PresI$.
\end{propn}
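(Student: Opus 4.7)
The plan is to observe that this is essentially a direct combination of two results already established in this subsection. To say that $\CatI^{(\blank)}$ restricted to $\MonPr$ factors through $\PresI$ amounts to checking two things: first, that $\CatIV$ is presentable whenever $\mathcal{V}$ is a presentably monoidal $\infty$-category, and second, that for every morphism in $\MonPr$, i.e.\ a monoidal functor $F \colon \mathcal{V}^{\otimes} \to \mathcal{W}^{\otimes}$ between presentably monoidal $\infty$-categories whose underlying functor $F_{[1]} \colon \mathcal{V} \to \mathcal{W}$ preserves colimits, the induced functor $F_{*} \colon \CatIV \to \CatI^{\mathcal{W}}$ preserves colimits.

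First I would note that the presentability of $\CatIV$ is exactly the content of Corollary~\ref{cor:CatIVpres}, which was deduced from the presentability of $\AlgCat(\mathcal{V})$ (Proposition~\ref{propn:AlgCatPres}) together with the fact that $\CatIV$ is an accessible localization. Next, the colimit-preservation of $F_{*}$ is precisely Lemma~\ref{lem:CatIVstrmoncolim}, whose proof writes $F_{*}$ as the composite of the inclusion $\CatIV \hookrightarrow \AlgCatV$, the induced functor $F^{\txt{Alg}}_{*} \colon \AlgCatV \to \AlgCat(\mathcal{W})$, and the completion $L_{\mathcal{W}}$, and uses that $F^{\txt{Alg}}_{*}$ preserves local equivalences (Lemma~\ref{lem:LaxMonPresFFES}) to identify colimits of $\mathcal{V}$-$\infty$-categories with the completions of the corresponding colimits in $\AlgCatV$.

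There is no real obstacle here --- the work has all been done in the preceding lemmas. The only thing to verify is that the assignment $\mathcal{V} \mapsto \CatIV$ is indeed functorial on $\MonPr$ in a way compatible with the functoriality on lax monoidal functors already constructed, but this follows because monoidal functors form a (non-full) subcategory of lax monoidal functors, so the restriction of the coCartesian fibration $\txt{Enr}_{\infty} \to \widehat{\txt{Mon}}^{\txt{lax}}_{\infty}$ of Proposition~\ref{propn:EnrAdj} along $\MonPr \hookrightarrow \widehat{\txt{Mon}}^{\txt{lax}}_{\infty}$ classifies the desired functor. Combining these observations completes the proof.
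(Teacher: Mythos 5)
Your proof is correct and follows exactly the paper's own argument, which simply cites Corollary~\ref{cor:CatIVpres} for presentability of $\CatIV$ and Lemma~\ref{lem:CatIVstrmoncolim} for colimit-preservation of the induced functors. The extra remark on compatibility of the functoriality is a reasonable (if implicit in the paper) sanity check, but nothing new is needed.
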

\begin{proof}
  This follows from Lemma~\ref{lem:CatIVstrmoncolim} and
  Corollary~\ref{cor:CatIVpres}.
\end{proof}

\begin{propn}\label{propn:BoxPrComplete}
  Suppose $\mathcal{V}$ and $\mathcal{W}$ are
  monoidal \icats{} and let $\mathcal{A}$ be a complete
  $\mathcal{V}$-\icat{} and $\mathcal{B}$ a complete
  $\mathcal{W}$-\icat{}. Then $\mathcal{A} \boxtimes \mathcal{B}$ is a
  complete $\mathcal{V} \times \mathcal{W}$-\icat{}.
\end{propn}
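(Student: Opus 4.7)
Let $S$ and $T$ denote the spaces of objects of $\mathcal{A}$ and $\mathcal{B}$, respectively. The plan is to apply the criterion of Corollary~\ref{cor:completeifE1}, so it suffices to show that $s_{0} \colon \iota_{0}(\mathcal{A} \boxtimes \mathcal{B}) \to \iota_{1}(\mathcal{A} \boxtimes \mathcal{B})$ is an equivalence. In fact I would establish a stronger simplicial-level statement, namely a natural equivalence of simplicial spaces
\[ \iota_{\bullet}(\mathcal{A} \boxtimes \mathcal{B}) \simeq \iota_{\bullet}\mathcal{A} \times \iota_{\bullet}\mathcal{B}. \]

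The key observation is that $(\mathcal{V} \times \mathcal{W})^{\otimes} \simeq \mathcal{V}^{\otimes} \times_{\simp^{\op}} \mathcal{W}^{\otimes}$ as \nsiopds{}, and a coCartesian morphism in the fibre product over an inert morphism in $\simp^{\op}$ is coCartesian in each coordinate separately. Hence for any \gnsiopd{} $\mathcal{O}$ we obtain a natural equivalence
\[ \Alg_{\mathcal{O}}(\mathcal{V} \times \mathcal{W}) \isoto \Alg_{\mathcal{O}}(\mathcal{V}) \times \Alg_{\mathcal{O}}(\mathcal{W}). \]
Applying this with $\mathcal{O} = \simp^{\op}_{[n]}$, two identifications are then needed under the resulting decomposition: (a) $E^{n}_{\mathcal{V} \times \mathcal{W}}$ corresponds to the pair $(E^{n}_{\mathcal{V}}, E^{n}_{\mathcal{W}})$, which follows from $I_{\mathcal{V} \times \mathcal{W}} \simeq (I_{\mathcal{V}}, I_{\mathcal{W}})$ using the uniqueness of the associative algebra structure on the unit (Proposition~\ref{propn:UnitAlg}); and (b) for any map $(f,g) \colon [n] \to S \times T$, the pullback $(f,g)^{*}(\mathcal{A} \boxtimes \mathcal{B})$ corresponds to the pair $(f^{*}\mathcal{A}, g^{*}\mathcal{B})$, which should be direct from the construction of $\boxtimes$ together with the product-projection description of $\simp^{\op}_{S \times T} \simeq \simp^{\op}_{S} \times_{\simp^{\op}} \simp^{\op}_{T}$.

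Combining (a) and (b) with the equivalence $\Map([n], S \times T) \simeq \Map([n], S) \times \Map([n], T)$ on the bases of the fibrations $\AlgCat(\blank) \to \mathcal{S}$, and taking fibres, one obtains the claimed equivalence $\iota_{n}(\mathcal{A} \boxtimes \mathcal{B}) \simeq \iota_{n}\mathcal{A} \times \iota_{n}\mathcal{B}$, natural in $[n]$. Under this identification the degeneracy $s_{0}$ on the left is the product of the $s_{0}$'s on the right; since $\mathcal{A}$ and $\mathcal{B}$ are complete, each factor is an equivalence by Corollary~\ref{cor:completeifE1}, so their product is too, and $\mathcal{A} \boxtimes \mathcal{B}$ is complete.

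The main step requiring care is the precise verification of identifications (a) and (b) above. Both should follow from unpacking the construction of $\boxtimes$ from the lax monoidal structure of $\AlgCat(\blank)$ provided by Proposition~\ref{propn:AlgCatLaxMon}, combined with the description of $(\mathcal{V} \times \mathcal{W})^{\otimes}$ as a pullback over $\simp^{\op}$; the only real bookkeeping is tracking coCartesian arrows and the preservation of inert morphisms through these identifications.
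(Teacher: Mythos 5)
Your proof is correct and follows essentially the same route as the paper: the paper's Lemma~\ref{lem:BoxPrIota} establishes exactly the equivalence $\iota_{\bullet}(\mathcal{A}\boxtimes\mathcal{B}) \simeq \iota_{\bullet}\mathcal{A}\times\iota_{\bullet}\mathcal{B}$, by observing that $\boxtimes$ is the Cartesian product in the global \icat{} $\AlgCat$ and that the projections $\pi_{i,*}$ preserve the objects $E^{S}$ --- a packaged form of your fibrewise decomposition of algebras in $\mathcal{V}\times\mathcal{W}$. The only cosmetic difference is that the paper concludes by comparing $\iota_{0}$ with the realization $\iota$ directly, whereas you invoke the $s_{0}\colon\iota_{0}\to\iota_{1}$ criterion of Corollary~\ref{cor:completeifE1}; both are immediate from the simplicial equivalence.
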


This follows from the following observation:
\begin{lemma}\label{lem:BoxPrIota}
  Suppose $\mathcal{V}$ and $\mathcal{W}$ are
  monoidal \icats{} and let $\mathcal{A}$ be a
  $\mathcal{V}$-\icat{} and $\mathcal{B}$ a
  $\mathcal{W}$-\icat{}. Then $\iota_{\bullet}(\mathcal{A} \boxtimes
  \mathcal{B})$ is naturally equivalent to $\iota_{\bullet}\mathcal{A} \times
  \iota_{\bullet} \mathcal{B}$, and $\iota(\mathcal{A} \boxtimes
  \mathcal{B})$ is naturally equivalent to $\iota\mathcal{A} \times
  \iota\mathcal{B}$,
\end{lemma}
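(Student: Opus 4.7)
The plan is to compute $\iota_n(\mathcal{A}\boxtimes\mathcal{B})$ directly by unwinding the definition of $\boxtimes$, obtain the natural equivalence $\iota_\bullet(\mathcal{A}\boxtimes\mathcal{B})\simeq\iota_\bullet\mathcal{A}\times\iota_\bullet\mathcal{B}$, and then deduce the statement about $\iota$ by passing to geometric realizations. Write $S_{\mathcal{A}}$ and $S_{\mathcal{B}}$ for the spaces of objects, so that $\mathcal{A}$ and $\mathcal{B}$ are algebras for $\simp^{\op}_{S_{\mathcal{A}}}$ and $\simp^{\op}_{S_{\mathcal{B}}}$ respectively. Since $\simp^{\op}_{(\blank)}$ is a right adjoint (Remark~\ref{rmk:simpopXrightadj}) it preserves products, so $\simp^{\op}_{S_{\mathcal{A}}\times S_{\mathcal{B}}}\simeq \simp^{\op}_{S_{\mathcal{A}}}\times_{\simp^{\op}}\simp^{\op}_{S_{\mathcal{B}}}$, and under the canonical identification $(\mathcal{V}\times\mathcal{W})^{\otimes}\simeq \mathcal{V}^{\otimes}\times_{\simp^{\op}}\mathcal{W}^{\otimes}$ the external product $\mathcal{A}\boxtimes\mathcal{B}$ is manifestly a $\simp^{\op}_{S_{\mathcal{A}}\times S_{\mathcal{B}}}$-algebra in $\mathcal{V}\times\mathcal{W}$. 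In particular, by Lemma~\ref{lem:iota0} its underlying space is $S_{\mathcal{A}}\times S_{\mathcal{B}}$, giving the case $n=0$.

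For general $n$, the Cartesian fibration $\AlgCat(\mathcal{V}\times\mathcal{W})\to\mathcal{S}$ identifies $\iota_n(\mathcal{A}\boxtimes\mathcal{B})=\Map(E^n_{\mathcal{V}\times\mathcal{W}},\mathcal{A}\boxtimes\mathcal{B})$ as the total space of a left fibration over $\Map([n],S_{\mathcal{A}})\times\Map([n],S_{\mathcal{B}})$. Over a point $(f,g)$ the fibre is the space of algebra maps over $\simp^{\op}$ from $E^n_{\mathcal{V}\times\mathcal{W}}|_{\simp^{\op}_{[n]}}$ to $(f,g)^{*}(\mathcal{A}\boxtimes\mathcal{B})$; using the product decomposition of the target, $E^n_{\mathcal{V}\times\mathcal{W}}|_{\simp^{\op}_{[n]}}$ corresponds to the pair $(E^n_{\mathcal{V}}|_{\simp^{\op}_{[n]}},E^n_{\mathcal{W}}|_{\simp^{\op}_{[n]}})$ and $(f,g)^{*}(\mathcal{A}\boxtimes\mathcal{B})$ corresponds to $(f^{*}\mathcal{A},g^{*}\mathcal{B})$. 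Since $\Fun_{\simp^{\op}}(\simp^{\op}_{[n]},\blank)$ sends fibre products over $\simp^{\op}$ to products and the subcategory of inert-morphism-preserving maps splits accordingly, the fibre becomes
\[
\Map_{\Alg_{\simp^{\op}_{[n]}}(\mathcal{V})}(E^n_{\mathcal{V}}|_{\simp^{\op}_{[n]}},f^{*}\mathcal{A})\times\Map_{\Alg_{\simp^{\op}_{[n]}}(\mathcal{W})}(E^n_{\mathcal{W}}|_{\simp^{\op}_{[n]}},g^{*}\mathcal{B}).
\]
Reassembling over $\Map([n],S_{\mathcal{A}})\times\Map([n],S_{\mathcal{B}})$ (and using that the product of left fibrations over a product has total space the product of total spaces) yields the natural equivalence $\iota_n(\mathcal{A}\boxtimes\mathcal{B})\simeq\iota_n\mathcal{A}\times\iota_n\mathcal{B}$. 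Naturality in $[n]$ upgrades this to an equivalence of simplicial spaces $\iota_\bullet(\mathcal{A}\boxtimes\mathcal{B})\simeq\iota_\bullet\mathcal{A}\times\iota_\bullet\mathcal{B}$.

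For the second statement, taking geometric realizations gives $\iota(\mathcal{A}\boxtimes\mathcal{B})\simeq|\iota_\bullet\mathcal{A}\times\iota_\bullet\mathcal{B}|$. Since $\simp^{\op}$ is sifted, the Cartesian product on $\mathcal{S}$ preserves $\simp^{\op}$-indexed colimits separately in each variable, so the right-hand side is $|\iota_\bullet\mathcal{A}|\times|\iota_\bullet\mathcal{B}|=\iota\mathcal{A}\times\iota\mathcal{B}$, completing the argument. The one step that requires genuine care is the decomposition in the middle paragraph: verifying that the algebra maps from $E^n$ into an $\boxtimes$-product really do split as a product of algebra maps into each factor, which in turn rests on the fibre-product description of $(\mathcal{V}\times\mathcal{W})^{\otimes}$ and of $\simp^{\op}_{S_{\mathcal{A}}\times S_{\mathcal{B}}}$, and on the fact that a map over $\simp^{\op}$ into a fibre product is the same data as a compatible pair of such maps (at the level of both the functor categories and their subcategories of algebras).
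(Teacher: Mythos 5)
Your proof is correct, and it reaches the same two key facts as the paper's argument --- that mapping spaces out of $E^{n}$ into $\mathcal{A} \boxtimes \mathcal{B}$ split as a product, and that $E^{n}$ for the product monoidal structure decomposes as the pair $(E^{n}_{\mathcal{V}}, E^{n}_{\mathcal{W}})$ --- but it gets there by a different, more computational route. The paper observes that $\boxtimes$ is simply the Cartesian product in the total \icat{} $\AlgCat$ lying over all (lax) monoidal \icats{}, so that for any $\mathcal{V} \times \mathcal{W}$-\icat{} $\mathcal{C}$ one has $\Map(\mathcal{C}, \mathcal{A} \boxtimes \mathcal{B}) \simeq \Map(\pi_{1,*}\mathcal{C}, \mathcal{A}) \times \Map(\pi_{2,*}\mathcal{C}, \mathcal{B})$, and then notes $\pi_{i,*}E^{S} \simeq E^{S}$ because the projections preserve the unit; the whole proof is three lines. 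You instead stay inside $\AlgCat(\mathcal{V} \times \mathcal{W})$ and verify the splitting fibrewise over the space of objects, using the Cartesian fibration to $\mathcal{S}$, the fibre-product description $(\mathcal{V}\times\mathcal{W})^{\otimes} \simeq \mathcal{V}^{\otimes} \times_{\simp^{\op}} \mathcal{W}^{\otimes}$, and the fact that $\simp^{\op}_{(\blank)}$ and $L_{\txt{gen}}\simp^{\op}_{(\blank)}$ preserve products. What the universal-property argument buys is brevity and the avoidance of any fibrewise bookkeeping (in particular the reassembly step and the naturality in $[n]$, which you assert but do not spell out, come for free); what your argument buys is that it makes completely explicit where the splitting of algebra maps into a fibre product of \iopds{} comes from, which the paper compresses into the phrase ``it is easy to see.'' The final passage from $\iota_{\bullet}$ to $\iota$ via siftedness of $\simp^{\op}$ is identical in both.
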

\begin{proof}
    The ``external product'' $\boxtimes$ is clearly the Cartesian
  product in the \icat{} $\AlgCat$, and so it is easy to see that
  for any $\mathcal{V} \times \mathcal{W}$-\icat{}
  $\mathcal{C}$ we have \[\Map(\mathcal{C}, \mathcal{A} \boxtimes
  \mathcal{B}) \simeq \Map(\pi_{1,*}\mathcal{C}, \mathcal{A}) \times
  \Map(\pi_{2,*}\mathcal{C}, \mathcal{B}),\] where $\pi_{1}$ and
  $\pi_{2}$ denote the projections from $\mathcal{V} \times
  \mathcal{W}$ to $\mathcal{V}$ and $\mathcal{W}$, respectively.
  Moreover, $\pi_{i,*}E^{S} \simeq E^{S}$ for all $S$ (since
  $\pi_{i}$ obviously preserves the unit of the monoidal structure). Thus
  \[ \iota_{\bullet}(\mathcal{A} \boxtimes \mathcal{B}) \simeq
  \iota_{\bullet}\mathcal{A} \times \iota_{\bullet}\mathcal{B}.\]
  Since colimits of simplicial objects commute with products it
  follows that $\iota (\mathcal{A} \boxtimes \mathcal{B}) \simeq \iota
  \mathcal{A} \times \iota \mathcal{B}$.
\end{proof}

\begin{proof}[Proof of Proposition~\ref{propn:BoxPrComplete}]
  By Lemma~\ref{lem:BoxPrIota} we have a natural map
  \[ \iota_{0}(\mathcal{A} \boxtimes \mathcal{B}) \simeq
  \iota_{0}\mathcal{A} \times \iota_{0}\mathcal{B} \to \iota
  \mathcal{A} \times \iota \mathcal{B} \simeq \iota (\mathcal{A}
  \boxtimes \mathcal{B}).\] This is an equivalence if $\mathcal{A}$
  and $\mathcal{B}$ are complete, i.e. $\mathcal{A} \boxtimes
  \mathcal{B}$ is indeed also complete.
\end{proof}

\begin{cor}\label{cor:CatIVlaxmon}
  $\CatI^{(\blank)}$ is a lax monoidal functor with respect to the
  Cartesian product of monoidal \icats{}.
\end{cor}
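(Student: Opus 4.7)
The plan is to obtain the lax monoidal structure on $\CatI^{(\blank)}$ by restricting the lax monoidal structure on $\AlgCat(\blank)$ established in Proposition~\ref{propn:AlgCatLaxMon}. The key input that makes this restriction possible is Proposition~\ref{propn:BoxPrComplete}, which ensures that the multiplicative operations $\boxtimes$ carry tuples of complete objects to complete objects.

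More precisely, recall that the lax monoidal structure on $\AlgCat(\blank) \colon (\widehat{\txt{Mon}}^{\txt{lax}}_{\infty})^{\times} \to \LCatI^{\times}$ from Proposition~\ref{propn:AlgCatLaxMon} is encoded by the external product maps
\[
\boxtimes \colon \AlgCat(\mathcal{V}_{1}) \times \cdots \times \AlgCat(\mathcal{V}_{n}) \longrightarrow \AlgCat(\mathcal{V}_{1} \times \cdots \times \mathcal{V}_{n}),
\]
together with their coherence data. By Proposition~\ref{propn:BoxPrComplete}, these maps send tuples of complete enriched \icats{} into the full subcategory $\CatI^{\mathcal{V}_{1} \times \cdots \times \mathcal{V}_{n}}$ of $\AlgCat(\mathcal{V}_{1} \times \cdots \times \mathcal{V}_{n})$. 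Since $\CatIV \hookrightarrow \AlgCat(\mathcal{V})$ is fully faithful for each $\mathcal{V}$, the restricted structure maps automatically inherit the coherence data, producing the desired lax monoidal functor.

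To state this formally at the level of \iopds{}, I would argue as follows. The lax monoidal structure on $\AlgCat(\blank)$ corresponds to a morphism of \iopds{} $\AlgCat(\blank)^{\times} \to (\widehat{\txt{Mon}}^{\txt{lax}}_{\infty} \times \LCatI)^{\times}$, which we can view as the coCartesian fibration $\AlgCatco \to (\widehat{\txt{Mon}}^{\txt{lax}}_{\infty})^{\op} \times \LCatI$ endowed with compatible external products. The full subcategory $\txt{Enr}_{\infty} \subseteq \AlgCat$ of Proposition~\ref{propn:EnrAdj} lies fibrewise as $\CatIV \subseteq \AlgCat(\mathcal{V})$; by Proposition~\ref{propn:BoxPrComplete}, the multiplicative structure preserves this sub-\icat{}. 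Restricting to this sub-\iopd{} therefore gives a lax monoidal refinement $\CatI^{(\blank)} \colon (\widehat{\txt{Mon}}^{\txt{lax}}_{\infty})^{\times} \to \LCatI^{\times}$ of $\AlgCat(\blank)$.

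There is no substantive technical obstacle here beyond Proposition~\ref{propn:BoxPrComplete} itself; the only remaining content is the formal observation that a lax monoidal functor restricts along a fibrewise-full sub-\icat{} that is preserved by all multiplicative operations. In particular, analogues of Corollary~\ref{cor:AlgCatOMon} and Corollary~\ref{cor:AlgCatEnMon} then follow for $\CatIV$: if $\mathcal{V}$ is $\mathcal{O} \otimes \mathbb{E}_{1}$-monoidal (resp.\ $\mathbb{E}_{n}$-monoidal, resp.\ symmetric monoidal) then $\CatIV$ is $\mathcal{O}$-monoidal (resp.\ $\mathbb{E}_{n-1}$-monoidal, resp.\ symmetric monoidal).
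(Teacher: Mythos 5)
Your proposal is correct and takes essentially the same route as the paper: the proof there consists precisely of invoking Proposition~\ref{propn:BoxPrComplete} to see that complete enriched \icats{} are closed under the exterior product, and then observing that the lax monoidal structure on $\AlgCat(\blank)$ from Proposition~\ref{propn:AlgCatLaxMon} therefore restricts to $\CatI^{(\blank)}$. Your additional remarks spelling out the restriction at the level of \iopds{} and the consequent analogues of Corollaries~\ref{cor:AlgCatOMon} and~\ref{cor:AlgCatEnMon} match what the paper records as Corollary~\ref{cor:CatIVOMon}.
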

\begin{proof}
  By Proposition~\ref{propn:BoxPrComplete} the complete enriched \icats{} are
  closed under the exterior product in $\AlgCat$, and so the
  definition of the lax monoidal structure on the functor
  $\AlgCat(\blank)$ implies that the restriction to $\CatI^{(\blank)}$
  is also lax monoidal.
\end{proof}

\begin{cor}\label{cor:CatIVOMon}
  Let $\mathcal{O}$ be a symmetric \iopd{}, and suppose
  $\mathcal{V}$ is an $\mathcal{O} \otimes
  \mathbb{E}_{1}$-monoidal \icat{}. Then $\CatIV$ is an
  $\mathcal{O}$-monoidal \icat{}. In particular, if
  $\mathcal{V}$ is an $\mathbb{E}_{n}$-monoidal \icat{} then
  $\CatIV$ is $\mathbb{E}_{n-1}$-monoidal, and if
  $\mathcal{V}$ is symmetric monoidal then so is $\CatIV$.
\end{cor}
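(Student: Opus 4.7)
The plan is to mimic the proof of Corollary~\ref{cor:AlgCatOMon} almost verbatim, with $\AlgCat(\blank)$ replaced by $\CatI^{(\blank)}$. The key input is Corollary~\ref{cor:CatIVlaxmon}, which states that $\CatI^{(\blank)}$ is a lax monoidal functor $\widehat{\txt{Mon}}_{\infty}^{\txt{lax},\times} \to \LCatI^{\times}$ with respect to Cartesian products. Since any lax monoidal (in particular, any lax symmetric monoidal) functor sends $\mathcal{O}$-algebras to $\mathcal{O}$-algebras, applying $\CatI^{(\blank)}$ to an $\mathcal{O}$-algebra in $\widehat{\txt{Mon}}_{\infty}^{\times}$ produces an $\mathcal{O}$-algebra in $\LCatI^{\times}$, i.e.\ an $\mathcal{O}$-monoidal \icat{}.

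First I would note that the inclusion $\widehat{\txt{Mon}}_{\infty} \hookrightarrow \widehat{\txt{Mon}}_{\infty}^{\txt{lax}}$ of strong monoidal functors preserves Cartesian products, so it suffices to realize the input $\mathcal{V}$ as an $\mathcal{O}$-algebra in $\widehat{\txt{Mon}}_{\infty}^{\times}$. By Proposition~\ref{propn:nssymmeq}(ii) there is an equivalence $\widehat{\txt{Mon}}_{\infty} \simeq \AlgS_{\mathbb{E}_{1}}(\LCatI)$, and then by \cite[Remark 2.4.2.6]{HA} an $\mathcal{O}$-algebra in $\AlgS_{\mathbb{E}_{1}}(\LCatI)$ is precisely an $\mathcal{O} \otimes \mathbb{E}_{1}$-algebra in $\LCatI$, i.e.\ an $\mathcal{O}\otimes \mathbb{E}_1$-monoidal \icat{}. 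Thus the hypothesis that $\mathcal{V}$ is $\mathcal{O} \otimes \mathbb{E}_{1}$-monoidal is exactly what is needed to view $\mathcal{V}^{\otimes}$ as an $\mathcal{O}$-algebra object in monoidal \icats{} under Cartesian product. Feeding this through the lax monoidal functor $\CatI^{(\blank)}$ then produces the desired $\mathcal{O}$-monoidal structure on $\CatIV$.

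For the two special cases in the final sentence, I would deduce them from the main claim using \cite[Theorem 5.1.2.2]{HA}: the additivity equivalence $\mathbb{E}_{n} \simeq \mathbb{E}_{n-1} \otimes \mathbb{E}_{1}$ shows that an $\mathbb{E}_{n}$-monoidal \icat{} $\mathcal{V}$ is the same as an $\mathbb{E}_{n-1} \otimes \mathbb{E}_{1}$-monoidal \icat{}, so applying the main statement with $\mathcal{O} = \mathbb{E}_{n-1}$ gives $\CatIV$ an $\mathbb{E}_{n-1}$-monoidal structure. Similarly, if $\mathcal{V}$ is symmetric monoidal then it is in particular $\txt{Comm} \otimes \mathbb{E}_{1}$-monoidal (since $\txt{Comm} \otimes \mathbb{E}_{1} \simeq \txt{Comm}$), so $\CatIV$ inherits a $\txt{Comm}$-monoidal, i.e.\ symmetric monoidal, structure.

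I expect no real obstacles here: the whole content has been packaged into Corollary~\ref{cor:CatIVlaxmon} and the algebra-theoretic identifications already in place. The only step requiring any care is the bookkeeping that the lax monoidal structure on $\CatI^{(\blank)}$ restricts appropriately to strong monoidal inputs (so that we land in $\mathcal{O}$-monoidal, rather than merely $\mathcal{O}$-lax-monoidal, \icats{}), but this is built into the construction of the lax monoidal structure in Corollary~\ref{cor:CatIVlaxmon} via the proof of Proposition~\ref{propn:AlgCatLaxMon}, exactly as in the $\AlgCat$ case.
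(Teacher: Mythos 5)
Your proposal is correct and follows essentially the same route as the paper, which simply states that the result "follows by the same proof as that of Corollaries~\ref{cor:AlgCatOMon} and \ref{cor:AlgCatEnMon}" — i.e.\ replace Proposition~\ref{propn:AlgCatLaxMon} by Corollary~\ref{cor:CatIVlaxmon}, identify $\mathcal{O}\otimes\mathbb{E}_{1}$-monoidal \icats{} with $\mathcal{O}$-algebras in $\widehat{\txt{Mon}}_{\infty}$ via Proposition~\ref{propn:nssymmeq} and \cite[Remark 2.4.2.6]{HA}, and use \cite[Theorem 5.1.2.2]{HA} for the $\mathbb{E}_{n}$ and symmetric cases. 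You have merely written out explicitly the argument the paper cites by reference.
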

\begin{proof}
  This follows by the same proof as that of
  Corollaries~\ref{cor:AlgCatOMon} and \ref{cor:AlgCatEnMon}.
\end{proof}

\begin{remark}\label{rmk:enrkcat}
  If $\mathcal{V}$ is an $\mathbb{E}_{n}$-monoidal \icat{},
  we can therefore iterate the enrichment functor $k$ times for $k
  \leq n$ to obtain \icats{} $\Cat_{(\infty,k)}^{\mathcal{V}}$ of
  \defterm{$(\infty,k)$-categories enriched in $\mathcal{V}$}.
\end{remark}

\begin{propn}\label{propn:FFESMonLoc}
  Suppose $\mathcal{V}$ is an $\mathbb{E}_{2}$-monoidal \icat{}. Then
  the localization $L \colon \AlgCatV \to \CatIV$ is monoidal.
\end{propn}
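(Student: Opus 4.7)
First I would unpack the definition: by the notion of monoidal localization introduced just before Proposition~\ref{propn:moncomploc}, we must show that $F_{1}\otimes F_{2}$ is a local equivalence whenever $F_{1},F_{2}$ are. Since the $\mathbb{E}_{2}$-monoidal structure restricts to an $\mathbb{E}_{2}$-monoidal structure on $\widehat{\mathcal{P}}(\mathcal{V})$ into which $\mathcal{V}$ embeds (as in the proof of the main theorem), we may assume $\mathcal{V}$ is presentably $\mathbb{E}_{2}$-monoidal. Then by Corollary~\ref{cor:ffeseqloceq}, local equivalences coincide with fully faithful and essentially surjective functors, and it suffices to show that the tensor product of two FFES functors is FFES.

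Next I would identify the tensor product concretely. The $\mathbb{E}_{2}$-structure on $\mathcal{V}$ presents $\mathcal{V}^{\otimes}$ as an $\mathbb{E}_{1}$-algebra in $\widehat{\txt{Mon}}_{\infty}$ (cf.\ Proposition~\ref{propn:nssymmeq}), and in particular supplies a monoidal functor $m \colon \mathcal{V}^{\otimes}\times_{\simp^{\op}}\mathcal{V}^{\otimes} \to \mathcal{V}^{\otimes}$ whose underlying functor is the second tensor product. Tracing through the construction of the monoidal structure on $\AlgCatV$ from Corollary~\ref{cor:AlgCatEnMon}, which applies the lax monoidal functor $\AlgCat(\blank)$ of Proposition~\ref{propn:AlgCatLaxMon} to $\mathcal{V}^{\otimes}$ regarded as an $\mathbb{E}_{1}$-algebra in monoidal \icats{}, gives the formula
\[ \mathcal{A}\otimes\mathcal{B} \simeq m_{*}(\mathcal{A}\boxtimes\mathcal{B}), \]
where $\boxtimes$ is the external product of \S\ref{subsec:AlgFib}.

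With this formula in hand the result follows in two short steps. First, if $F_{i}\colon\mathcal{A}_{i}\to\mathcal{B}_{i}$ are FFES $\mathcal{V}$-functors, then $F_{1}\boxtimes F_{2}$ is a FFES $\mathcal{V}\times\mathcal{V}$-functor: the mapping objects of an external product decompose as ordered pairs $(\mathcal{A}_{1}(a,a'),\mathcal{A}_{2}(b,b'))$ in $\mathcal{V}\times\mathcal{V}$, so fully faithfulness is inherited componentwise, while Lemma~\ref{lem:BoxPrIota} gives $\iota(\mathcal{A}_{1}\boxtimes\mathcal{A}_{2}) \simeq \iota\mathcal{A}_{1}\times\iota\mathcal{A}_{2}$, so essential surjectivity reduces to the surjectivity on $\pi_{0}$ of a product of surjections. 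Second, Lemma~\ref{lem:LaxMonPresFFES} says that $m_{*}$ preserves FFES functors, so $m_{*}(F_{1}\boxtimes F_{2}) \simeq F_{1}\otimes F_{2}$ is FFES, hence a local equivalence by Corollary~\ref{cor:ffeseqloceq}.

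The only real obstacle is the bookkeeping in the second paragraph: justifying that the abstract construction of the tensor product on $\AlgCatV$ from the $\mathbb{E}_{2}$-structure on $\mathcal{V}$ is indeed computed by $m_{*}\circ\boxtimes$. This amounts to checking that the lax monoidal functor $\AlgCat(\blank)$ of Proposition~\ref{propn:AlgCatLaxMon}, applied to the $\mathbb{E}_{1}$-algebra structure on $\mathcal{V}^{\otimes}$ in $(\widehat{\txt{Mon}}_{\infty}^{\txt{lax}})^{\times}$ encoding the second tensor product, produces precisely the $\mathbb{E}_{1}$-algebra structure on $\AlgCatV$ whose multiplication is $m_{*}\circ\boxtimes$; once this is verified the remaining steps are immediate consequences of results already in hand.
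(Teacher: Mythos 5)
Your proof is correct and follows essentially the same route as the paper: identify $\mathcal{A}\otimes\mathcal{B}$ with $\mu_{*}(\mathcal{A}\boxtimes\mathcal{B})$, check that $f\boxtimes g$ is fully faithful componentwise and essentially surjective via Lemma~\ref{lem:BoxPrIota}, and conclude with Lemma~\ref{lem:LaxMonPresFFES}. The preliminary reduction to the presentably monoidal case is harmless but unnecessary, since the paper works directly with fully faithful and essentially surjective functors (for which all three ingredients hold without presentability) rather than with local equivalences.
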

\begin{proof}
  We must show that if $f \colon \mathcal{C} \to \mathcal{C}'$ and $g
  \colon \mathcal{D} \to \mathcal{D}'$ are fully faithful and
  essentially surjective functors in $\AlgCatV$, then their tensor
  product $f \otimes g \colon \mathcal{C} \otimes \mathcal{D} \to
  \mathcal{C}' \otimes \mathcal{D}'$  is also fully faithful and
  essentially surjective. By definition, the tensor product
  $\mathcal{C} \otimes \mathcal{C}'$ is given
  by $\mu_{*}(\mathcal{C} \boxtimes \mathcal{C}')$, where $\mu$ is the
  tensor product functor $\mathcal{V} \times \mathcal{V} \to
  \mathcal{V}$, which is monoidal since $\mathcal{V}$ is
  $\mathbb{E}_{2}$-monoidal. 

  By Lemma~\ref{lem:LaxMonPresFFES} it therefore suffices to check
  that the external product $f \boxtimes g$ is fully faithful and
  essentially surjective in $\AlgCat(\mathcal{V} \times
  \mathcal{V})$. It is obvious that $f \boxtimes g$ is fully faithful,
  and it is essentially surjective since $\iota(f \boxtimes g)$ is
  naturally equivalent to $\iota f \times \iota g$ by
  Lemma~\ref{lem:BoxPrIota}.
\end{proof}

Combining this with Proposition~\ref{propn:moncomploc}, we get:
\begin{cor}\label{cor:FFESLocMon}
  Suppose $\mathcal{V}$ is an $\mathbb{E}_{2}$-monoidal \icat{}. Then
  the localization $L \colon \AlgCatV \to \CatIV$ is a monoidal
  functor.
\end{cor}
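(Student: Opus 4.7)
The plan is to deduce this corollary by applying Proposition~\ref{propn:moncomploc} with $\AlgCatV$ in the role of $\mathcal{V}$ and $\CatIV$ in the role of $\mathcal{W}$. For this setup to be available, I first need $\AlgCatV$ itself to carry a monoidal structure; since $\mathcal{V}$ is $\mathbb{E}_{2}$-monoidal, Corollary~\ref{cor:AlgCatOMon} (applied with $\mathcal{O} = \mathbb{E}_{1}$, using $\mathbb{E}_{2} \simeq \mathbb{E}_{1} \otimes \mathbb{E}_{1}$) equips $\AlgCatV$ with a canonical $\mathbb{E}_{1}$-monoidal, i.e.\ monoidal, structure. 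The inclusion $\CatIV \hookrightarrow \AlgCatV$ admits a left adjoint $L$ (the completion functor) by the main theorem of \S\ref{subsec:completion}.

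The only hypothesis of Proposition~\ref{propn:moncomploc} still to verify is that $L$ is a \emph{monoidal} localization in the sense of the definition preceding that proposition: that the tensor product of two $L$-equivalences is again an $L$-equivalence. Here the $L$-equivalences are by construction the local equivalences in $\AlgCatV$, and by Corollary~\ref{cor:ffeseqloceq} these coincide precisely with the fully faithful and essentially surjective functors. Thus the required closure condition is exactly the statement proved in Proposition~\ref{propn:FFESMonLoc}.

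Once these pieces are in place, Proposition~\ref{propn:moncomploc} directly provides a monoidal structure $\CatIV^{\otimes}$ on the full subcategory $\CatIV$ (compatible with the one coming from Corollary~\ref{cor:CatIVOMon}), and produces a monoidal functor $L^{\otimes} \colon \AlgCatV^{\otimes} \to \CatIV^{\otimes}$ extending $L$. This is precisely the claim of Corollary~\ref{cor:FFESLocMon}.

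Since both Propositions~\ref{propn:moncomploc} and \ref{propn:FFESMonLoc} are already established, there is no substantive obstacle; the proof is essentially a citation. The only conceptual point worth stressing is the identification of $L$-equivalences with fully faithful and essentially surjective functors via Corollary~\ref{cor:ffeseqloceq} — without this identification, Proposition~\ref{propn:FFESMonLoc} would not obviously give the closure condition required by the definition of a monoidal localization.
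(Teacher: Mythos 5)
Your proposal is correct and follows exactly the paper's argument: the paper's entire proof is the sentence ``Combining this with Proposition~\ref{propn:moncomploc}, we get:'' where ``this'' is Proposition~\ref{propn:FFESMonLoc}, so your citation of those two results (plus the identification of $L$-equivalences with fully faithful and essentially surjective functors, which the paper leaves implicit) is the intended proof. No discrepancies to report.
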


\begin{propn}
  When restricted to $\MonPr$, the functor $\CatI^{(\blank)}$ is lax
  monoidal with respect to the tensor product of presentable \icats{}.
\end{propn}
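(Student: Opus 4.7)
The plan is to mirror the proof of Corollary~\ref{cor:AlgCatLaxMonPr}. By Corollary~\ref{cor:CatIVlaxmon} we already have a lax monoidal functor
\[ \CatI^{(\blank)} \colon (\widehat{\txt{Mon}}_{\infty}^{\txt{lax}})^{\times} \to \LCatI^{\times}, \]
so it suffices to show that the composite
\[ (\MonPr)^{\otimes} \to (\widehat{\txt{Mon}}_{\infty}^{\txt{lax}})^{\times} \to \LCatI^{\times} \]
factors through $\PresI^{\otimes}$ in the sense of \cite[Notation 4.8.1.2]{HA}. Concretely, this amounts to checking three things: (i) for each $\mathcal{V}\in\MonPr$ the \icat{} $\CatIV$ is presentable, (ii) for each morphism $F\colon\mathcal{V}\to\mathcal{W}$ in $\MonPr$ the induced functor $F_{*}\colon\CatIV\to\CatI^{\mathcal{W}}$ preserves colimits, and (iii) for $\mathcal{V},\mathcal{W}\in\MonPr$ the external product $\boxtimes \colon \CatIV\times\CatI^{\mathcal{W}}\to\CatI^{\mathcal{V}\times\mathcal{W}}$ preserves colimits separately in each variable. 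Items (i) and (ii) have already been established in Corollary~\ref{cor:CatIVpres} and Lemma~\ref{lem:CatIVstrmoncolim} respectively.

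The only real content is (iii). First, by Proposition~\ref{propn:BoxPrComplete} the external product on $\AlgCat$ restricts to an external product on complete objects, so the functor $\boxtimes$ on $\CatI^{(\blank)}$ agrees with the restriction of the external product on categorical algebras. Fix a complete $\mathcal{V}$-\icat{} $\mathcal{C}$; I would then prove that $\mathcal{C}\boxtimes(\blank)\colon\AlgCat(\mathcal{W})\to\AlgCat(\mathcal{V}\times\mathcal{W})$ carries fully faithful and essentially surjective functors to fully faithful and essentially surjective functors. Fullness on mapping objects is immediate from the fact that $(\mathcal{C}\boxtimes\mathcal{D})((X,Y),(X',Y'))$ involves the pair $(\mathcal{C}(X,X'),\mathcal{D}(Y,Y'))$, and essential surjectivity follows from the natural equivalence $\iota(\mathcal{C}\boxtimes\mathcal{D})\simeq \iota\mathcal{C}\times\iota\mathcal{D}$ of Lemma~\ref{lem:BoxPrIota} together with Lemma~\ref{lem:esssurjcond}. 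By Corollary~\ref{cor:ffeseqloceq} this shows $\mathcal{C}\boxtimes(\blank)$ preserves local equivalences.

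Granted this, a colimit $\colim_{\alpha}\mathcal{D}_{\alpha}$ computed in $\CatI^{\mathcal{W}}$ is by definition $L_{\mathcal{W}}(\colim_{\alpha}\mathcal{D}_{\alpha}^{\mathrm{alg}})$, where the inner colimit is taken in $\AlgCat(\mathcal{W})$ and $L_{\mathcal{W}}$ is the completion functor. Applying $\mathcal{C}\boxtimes(\blank)$ and then $L_{\mathcal{V}\times\mathcal{W}}$, the local-equivalence-preserving property just established allows us to exchange the operations, while Proposition~\ref{propn:BoxProdColim} allows us to commute $\mathcal{C}\boxtimes(\blank)$ past the colimit in $\AlgCat$. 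This yields
\[ \mathcal{C}\boxtimes\colim_{\alpha}\mathcal{D}_{\alpha} \simeq L_{\mathcal{V}\times\mathcal{W}}\bigl(\colim_{\alpha}(\mathcal{C}\boxtimes\mathcal{D}_{\alpha})\bigr) \simeq \colim_{\alpha}(\mathcal{C}\boxtimes\mathcal{D}_{\alpha}), \]
the last colimit being computed in $\CatI^{\mathcal{V}\times\mathcal{W}}$.

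The only slightly delicate step is the preservation of local equivalences by $\mathcal{C}\boxtimes(\blank)$; everything else is a formal consequence of results already in the paper. I expect this step to go through cleanly using Lemma~\ref{lem:BoxPrIota} as indicated, since reducing the question to fully faithful and essentially surjective functors via Corollary~\ref{cor:ffeseqloceq} sidesteps any direct wrestling with the strongly saturated class of local equivalences.
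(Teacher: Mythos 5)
Your argument is correct, and at the top level it follows the same route as the paper: the paper's own proof is the one-line observation that the lax monoidal structure of Corollary~\ref{cor:AlgCatLaxMonPr} restricts to the complete objects because these are closed under the exterior product (Proposition~\ref{propn:BoxPrComplete}), together with the presentability and colimit facts you cite as (i) and (ii). What you add is a verification that the paper elides and that is genuinely needed: restricting a functor that preserves colimits separately in each variable to reflective subcategories does not automatically yield a functor with the same property, because colimits in $\CatIV$ are computed by completing colimits formed in $\AlgCatV$. Your reduction of this to the claim that $\mathcal{C}\boxtimes(\blank)$ preserves local equivalences, and your proof of that claim by checking preservation of fully faithful and essentially surjective functors via Lemma~\ref{lem:BoxPrIota}, Lemma~\ref{lem:esssurjcond} and Corollary~\ref{cor:ffeseqloceq}, is exactly the argument the paper deploys later for the internal tensor product in Proposition~\ref{propn:FFESMonLoc}, so the step goes through with tools already in the paper. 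The only point I would flag is that the structure multimorphisms of the lax monoidal functor are composites $f_{*}(\mathcal{C}\boxtimes\blank)$ for monoidal functors $f\colon\mathcal{V}\times\mathcal{W}\to\mathcal{U}$ preserving colimits separately in each variable, not just the external products themselves; to cover these you should also note that such $f_{*}$ preserves local equivalences, which is Lemma~\ref{lem:LaxMonPresFFES}. This is the same level of detail at which Corollary~\ref{cor:AlgCatLaxMonPr} itself is proved, so your write-up is at least as complete as the paper's.
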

\begin{proof}
  This follows from Corollary~\ref{cor:AlgCatLaxMonPr}, since by
  Proposition~\ref{propn:BoxPrComplete} the complete enriched \icats{} are
  closed under the exterior product.
\end{proof}

\begin{propn}\label{propn:AdjCatIV}
  Suppose $\mathcal{V}$ and $\mathcal{W}$ are
  presentably monoidal \icats{}
  and $F \colon \mathcal{V}^{\otimes} \to
  \mathcal{W}^{\otimes}$ is a monoidal functor such that the
  underlying functor $f \colon \mathcal{V} \to \mathcal{W}$ preserves
  colimits. Let $g \colon \mathcal{W} \to \mathcal{V}$ be a right
  adjoint of $f$, and let $G \colon \mathcal{W}^{\otimes} \to
  \mathcal{V}^{\otimes}$ be the lax monoidal structure on $g$ given by
  Proposition~\ref{propn:rightadjlaxmon}. Then:
  \begin{enumerate}[(i)]
  \item The functor $G_{*} \colon \AlgCat(\mathcal{W}) \to
    \AlgCat(\mathcal{V})$ preserves complete objects.
  \item The functors
    \[ F_{*} : \CatIV \rightleftarrows \CatI^{\mathcal{W}} : G_{*} \]
    are adjoint.
  \end{enumerate}
\end{propn}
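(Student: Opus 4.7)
The plan is to reduce both statements to the $\AlgCat$-level adjunction of Lemma~\ref{lem:StrMonAdjAlgCat}, exploiting the fact that completeness is controlled by the simplicial space $\iota_{\bullet}(-)$ via Corollary~\ref{cor:completeifE1}, and that the trivial enriched categories $E^{n}$ are preserved (up to natural equivalence) by pushforward along a strong monoidal functor.

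For (i), fix a complete $\mathcal{D}\in\CatI^{\mathcal{W}}$. I would first observe that because $F$ is strong monoidal, the composite $F\circ I_{\mathcal{V}}\colon\simp^{\op}\to\mathcal{W}^{\otimes}$ is an associative algebra in $\mathcal{W}$ whose underlying object is $F(I_{\mathcal{V}})\simeq I_{\mathcal{W}}$. Invoking the uniqueness of the associative algebra structure on the unit guaranteed by Proposition~\ref{propn:UnitAlg}, this identifies $F\circ I_{\mathcal{V}}\simeq I_{\mathcal{W}}$, and composing with the projections $\simp^{\op}_{\{0,\ldots,n\}}\to\simp^{\op}$ yields natural equivalences $F_{*}E^{n}_{\mathcal{V}}\simeq E^{n}_{\mathcal{W}}$ for each $n$. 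Lemma~\ref{lem:StrMonAdjAlgCat} then gives
\[
\iota_{n}(G_{*}\mathcal{D})\simeq \Map_{\AlgCat(\mathcal{W})}(F_{*}E^{n}_{\mathcal{V}},\mathcal{D})\simeq \iota_{n}\mathcal{D},
\]
naturally in $[n]\in\simp^{\op}$. Since $\iota_{\bullet}\mathcal{D}$ is constant by Corollary~\ref{cor:completeifE1}, so is $\iota_{\bullet}G_{*}\mathcal{D}$, whence $G_{*}\mathcal{D}$ is complete.

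For (ii), part (i) ensures that $G_{*}$ restricts to a functor $\CatI^{\mathcal{W}}\to\CatIV$. The corresponding left adjoint on the subcategories of complete objects must be the composite $\widetilde{F}_{*}:=L_{\mathcal{W}}\circ F_{*}\circ i_{\mathcal{V}}$, where $i_{\mathcal{V}}$ denotes the inclusion and $L_{\mathcal{W}}$ the completion functor of Theorem~\ref{thm:completion}. The adjunction then falls out of the chain of natural equivalences
\[
\Map_{\CatI^{\mathcal{W}}}(\widetilde{F}_{*}\mathcal{C},\mathcal{D})\simeq\Map_{\AlgCat(\mathcal{W})}(F_{*}\mathcal{C},\mathcal{D})\simeq\Map_{\AlgCat(\mathcal{V})}(\mathcal{C},G_{*}\mathcal{D})\simeq\Map_{\CatIV}(\mathcal{C},G_{*}\mathcal{D}),
\]
in which the outer equivalences use completeness of $\mathcal{D}$ and of $G_{*}\mathcal{D}$ (supplied by (i)), and the middle one is Lemma~\ref{lem:StrMonAdjAlgCat}.

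The only substantive step is the identification $F_{*}E^{n}_{\mathcal{V}}\simeq E^{n}_{\mathcal{W}}$; once this is in hand, both parts are entirely formal. I do not anticipate any serious obstacle beyond checking that the comparison map on units lands in the correct component of the space of associative algebra maps, which is immediate from Proposition~\ref{propn:UnitAlg}.
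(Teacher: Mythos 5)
Your proof is correct and follows essentially the same route as the paper: both parts reduce to the adjunction of Lemma~\ref{lem:StrMonAdjAlgCat} together with the identification $F_{*}E^{n}_{\mathcal{V}}\simeq E^{n}_{\mathcal{W}}$, after which (i) is read off from the constancy of $\iota_{\bullet}$ and (ii) from the universal property of the completion functors. The only (harmless) divergence is that you establish $F_{*}E^{n}_{\mathcal{V}}\simeq E^{n}_{\mathcal{W}}$ via the uniqueness of the algebra structure on the unit (Proposition~\ref{propn:UnitAlg}), whereas the paper deduces it from $F_{*}(I_{\mathcal{V}}\otimes\mathcal{C})\simeq I_{\mathcal{W}}\otimes\mathcal{C}$ and Lemma~\ref{lem:ESastensor}.
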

\begin{proof}
  Since $F$ is monoidal and $f$ preserves colimits, it is clear
  that for any $\mathcal{S}$-\icat{} $\mathcal{C}$ we have
  $F_{*}(I_{\mathcal{V}} \otimes \mathcal{C}) \simeq I_{\mathcal{W}}
  \otimes \mathcal{C}$. Hence for any $\mathcal{W}$-\icat{}
  $\mathcal{D}$ we have natural equivalences
  \[ \Map_{\AlgCat(\mathcal{W})}(E^{n}, \mathcal{D}) \simeq \Map_{\AlgCat(\mathcal{W})}(F_{*}E^{n}, \mathcal{D})
  \simeq \Map_{\AlgCat(\mathcal{V})}(E^{n}, G_{*}\mathcal{D}),\]
  and so in particular $\iota G_{*}\mathcal{D} \simeq \iota
  \mathcal{D}$ and $G_{*}\mathcal{D}$ is complete if $\mathcal{D}$
  is. This proves (i).

  To prove (ii), observe that using Lemma~\ref{lem:StrMonAdjAlgCat} we
  have natural equivalences
  \[ \Map_{\CatI^{\mathcal{W}}}(L_{\mathcal{W}}F_{*}\mathcal{C},
  \mathcal{D}) \simeq
  \Map_{\AlgCat(\mathcal{W})}(F_{*}\mathcal{C}, \mathcal{D})
  \simeq \Map_{\AlgCatV}(\mathcal{C}, G_{*}\mathcal{D}) \simeq
  \Map_{\CatIV}(\mathcal{C}, L_{\mathcal{V}}G_{*}\mathcal{D}).\qedhere\]
\end{proof}

\begin{propn}\label{propn:MonLocCatIV}
  Let $\mathcal{V}$ be a presentably monoidal \icat{} and
  suppose $L \colon \mathcal{V} \to \mathcal{W}$ is a monoidal
  accessible localization with fully faithful right adjoint $i \colon
  \mathcal{W} \hookrightarrow \mathcal{V}$. Let 
  $i^{\otimes} \colon \mathcal{W}^{\otimes} \hookrightarrow
  \mathcal{V}^{\otimes}$ and $L^{\otimes} \colon
  \mathcal{V}^{\otimes} \to \mathcal{W}^{\otimes}$ be as in
  Proposition~\ref{propn:moncomploc}. Suppose $L$ exhibits
  $\mathcal{W}$ as the localization of $\mathcal{V}$ with respect to a
  set of morphisms $S$. Then the resulting adjunction
  \[ L^{\otimes}_{*} : \CatI^{\mathcal{W}} \rightleftarrows
  \CatIV : i^{\otimes}_{*} \] exhibits $\CatI^{\mathcal{W}}$ as the
  localization of $\CatIV$ with respect to
  $\Sigma(S)$. Moreover, if $\mathcal{V}$ is at least
  $\mathbb{E}_{2}$-monoidal then this localization is again monoidal.
\end{propn}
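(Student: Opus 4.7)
The plan is to transport the analogous statement at the level of categorical algebras (Proposition~\ref{propn:monlocalgcat}) across completion, using the adjunction provided by Proposition~\ref{propn:AdjCatIV}. First, the adjunction $L^{\otimes}_{*} \dashv i^{\otimes}_{*}$ between $\CatIV$ and $\CatI^{\mathcal{W}}$ is immediate from Proposition~\ref{propn:AdjCatIV}, once we know that $i^{\otimes}_{*}\colon \AlgCat(\mathcal{W}) \to \AlgCat(\mathcal{V})$ sends complete $\mathcal{W}$-\icats{} to complete $\mathcal{V}$-\icats{}; this is part (i) of that proposition (using that the underlying right adjoint $i$ of the monoidal localization preserves the unit).

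Next I would show $i^{\otimes}_{*}\colon \CatI^{\mathcal{W}} \to \CatIV$ is fully faithful. At the level of $\AlgCat$ this is Proposition~\ref{propn:monlocalgcat}, and since $\CatI^{(\blank)}$ sits as a full subcategory of $\AlgCat(\blank)$, fully faithfulness is inherited. It then remains to identify the local objects: an object $\mathcal{C}\in \CatIV$ must be characterized as being $\Sigma(S)$-local iff it lies in the essential image of $i^{\otimes}_{*}$. Since the $\Sigma(S)$-locality condition is intrinsic to the ambient $\AlgCatV$ (it only involves mapping into $\mathcal{C}$), Proposition~\ref{propn:monlocalgcat} gives that $\mathcal{C}$ is $\Sigma(S)$-local iff all its mapping objects $\mathcal{C}(X,Y)$ lie in $\mathcal{W}$, iff $\mathcal{C} \simeq i^{\otimes}_{*}\mathcal{D}$ for some $\mathcal{D}\in \AlgCat(\mathcal{W})$. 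The only subtle point is that this $\mathcal{D}$ is automatically complete, which follows from the fact that $i^{\otimes}_{*}$ preserves the underlying space of objects and, by Proposition~\ref{propn:AdjCatIV}, also preserves $\iota$; so completeness of $\mathcal{C} = i^{\otimes}_{*}\mathcal{D}$ transfers back to $\mathcal{D}$.

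For the monoidal statement when $\mathcal{V}$ is $\mathbb{E}_{2}$-monoidal: by Corollary~\ref{cor:CatIVOMon} both $\CatIV$ and $\CatI^{\mathcal{W}}$ inherit monoidal structures, and the completion functor $\AlgCatV \to \CatIV$ is monoidal by Corollary~\ref{cor:FFESLocMon}. Combining this with the fact that the $\AlgCat$-level localization $L^{\otimes}_{*}\colon \AlgCatV \to \AlgCat(\mathcal{W})$ is monoidal (Proposition~\ref{propn:monlocalgcat}), one sees that the tensor product of two $\Sigma(S)$-equivalences in $\CatIV$ is again a $\Sigma(S)$-equivalence, which is exactly the condition for the localization $\CatIV \to \CatI^{\mathcal{W}}$ to be monoidal.

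The main obstacle I anticipate is not any hard new ingredient but the bookkeeping around completeness: specifically, verifying that $i^{\otimes}_{*}$ both preserves \emph{and} reflects completeness. Preservation follows because $i$ preserves the unit and hence $i^{\otimes}_{*}$ commutes with the tensoring by $E^{n}$, so it induces an equivalence on $\iota_{\bullet}$; reflection uses the same compatibility together with conservativity of $i$ on mapping objects. Once this is in hand, the rest of the argument is formal, and the proposition follows by assembling Proposition~\ref{propn:monlocalgcat}, Proposition~\ref{propn:AdjCatIV}, and Corollary~\ref{cor:FFESLocMon}.
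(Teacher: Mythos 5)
Your proposal is correct and follows essentially the same route as the paper: the adjunction comes from Proposition~\ref{propn:AdjCatIV}, full faithfulness of $i^{\otimes}_{*}$ is inherited from the $\AlgCat$-level statement of Proposition~\ref{propn:monlocalgcat} together with preservation of complete objects, and the identification of the $\Sigma(S)$-local objects and the monoidal claim are handled by the same argument as in Proposition~\ref{propn:monlocalgcat}. Your extra care about $i^{\otimes}_{*}$ reflecting completeness is a point the paper leaves implicit, but it is exactly the right check.
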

\begin{proof}
  The adjunction exists by combining Lemma~\ref{lem:monlocadj} and
  Proposition~\ref{propn:AdjCatIV}. The functor $i^{\otimes}_{*}$ is
  fully faithful since the functor on categorical algebras induced by
  $i^{\otimes}$ is fully faithful by
  Proposition~\ref{propn:monlocalgcat} and preserves complete objects
  by Proposition~\ref{propn:AdjCatIV}(i). Thus this adjunction is
  a localization. The remaining statements follow by the same
  argument as in the proof of Proposition~\ref{propn:monlocalgcat}.
\end{proof}

\section{Some Applications}\label{sec:appl}
In this section we describe some simple applications of our machinery:
In \S\ref{subsec:nkcat} we use iterated enrichment to define \icats{}
of $n$-groupoids and $(n,k)$-categories for all $n$ and $0 \leq k \leq
n$ and prove the ``homotopy hypothesis'' in this context. Then in
\S\ref{subsec:ncat} we show that enriching in a monoidal
$(n,1)$-category gives an $(n+1,1)$-category, and use this to prove
the Baez-Dolan stabilization hypothesis for $k$-tuply monoidal
$n$-categories, and finally in \S \ref{subsec:EnAlg} we prove that for
any monoidal \icat{} $\mathcal{V}$ there is a fully faithful embedding
of associative algebras in $\mathcal{V}$ into pointed
$\mathcal{V}$-\icats{}.

\subsection{$(n,k)$-Categories as Enriched $\infty$-Categories}\label{subsec:nkcat}
In this subsection we explain how to define $(n,k)$-categories in the
context of enriched \icats{}, and deduce some simple results that
describe the resulting homotopy theories as localizations, including a
version of the ``homotopy hypothesis''.

We begin by inductively defining $n$-groupoids and $(n,k)$-categories:
\begin{defn}
  Assuming we have already defined $\Cat_{(n,1)}$, let $\Gpd_{n} \hookrightarrow \Cat_{(n,1)}$ be the full subcategory
  of objects local with respect to the obvious map $[1] \to E^{0}$; we refer
  to the objects of $\Gpd_{n}$ as \emph{$n$-groupoids}. Then we define
  the \icat{} $\Cat_{(n+k,k)}$ of \emph{$(n+k,k)$-categories} to be
  the \icat{} $\Cat_{(\infty,k)}^{\Gpd_{n}}$ of
  $(\infty,k)$-categories enriched in $\Gpd_{n}$. To start off the
  induction we define $0$-groupoids to be sets, i.e. we define
  $\Gpd_{0} := \Set$. We also extend the notation by setting
  $\Cat_{(n,0)} := \Gpd_{n}$.
\end{defn}

\begin{remark}
  Since the objects of $\Cat_{(n,1)}$ are already local with respect
  to $E^{1} \to E^{0}$ we can equivalently define $\Gpd_{n}$ as the
  full subcategory of objects local with respect to either of the
  inclusions $[1] \to E^{1}$. Thus an $(n,1)$-category is an
  $n$-groupoid precisely if all of its $1$-morphisms are equivalences.
\end{remark}

\begin{remark}
  Observe that the \icat{} $\Cat_{(n,n)}$ is defined by iterated
  enrichment starting with sets: $\Cat_{(n,n)} :=
  \Cat_{(\infty,n)}^{\Set}$. For $n < \infty$ we will refer to
  $(n,n)$-categories as \emph{$n$-categories} and write $\Cat_{n} :=
  \Cat_{(n,n)}$. The comparison results in \cite{enrcomp} imply that
  this \icat{} of $n$-categories is equivalent to Tamsamani's homotopy
  theory of $n$-categories \cite{Tamsamani}.
\end{remark}

\begin{remark}
  As observed by Bartels and Dolan (cf. \cite{BaezShulman}), the
  definition can also be extended to allow $n = -2$ and $n = -1$: We
  can take $\Cat_{(-2,1)} = \Gpd_{-2} := *$; then $\Cat_{(-1,1)}$
  consists of the empty category and $E^{0}$. These are both
  $-1$-groupoids, so $\Gpd_{-1} \simeq \Cat_{(-1,1)}$. Next it is easy
  to identify $\Cat_{(0,1)}$ with partially ordered sets, so
  $\Gpd_{0}$ consists of partially ordered sets where all morphisms
  are isomorphisms. These are equivalent to partially ordered sets
  with only identity morphisms, i.e. just sets, so $\Gpd_{0} \simeq
  \Set$ as before.
\end{remark}

For $n = \infty$ we define $(\infty,k)$-categories by starting with
spaces instead:
\begin{defn}
  Let $\Cat_{(\infty,0)} := \mathcal{S}$, and $\Cat_{(\infty,k)} :=
  \Cat_{(\infty,k)}^{\mathcal{S}}$.
\end{defn}

We now wish to identify $\Cat_{(n,k)}$ as a localization of
$\Cat_{(\infty,k)}$, starting from the following trivial observation:
\begin{lemma}\ 
  \begin{enumerate}[(i)]
  \item $\Set \hookrightarrow \mathcal{S}$ is the full subcategory of
    objects local with respect to the maps $S^{n} \to *$ for $n > 0$.
  \item $\mathcal{S} \hookrightarrow \CatI$ is the full subcategory of
    objects local with respect to the map $[1] \to E^{0}$.
  \end{enumerate}
\end{lemma}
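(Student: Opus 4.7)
The plan is to verify each part by unpacking the locality condition and comparing with standard characterizations of the embeddings.

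For (i), I would note that an object $X \in \mathcal{S}$ is local with respect to the family $\{S^{n} \to *\}_{n > 0}$ precisely if the natural map $X \simeq \Map(*, X) \to \Map(S^{n}, X)$ is an equivalence for all $n > 0$. Passing to homotopy fibres over basepoints $x \in X$, this is equivalent to the statement that the $n$-fold loop space $\Omega^{n}_{x} X$ is contractible for every basepoint and every $n > 0$, i.e.\ that $\pi_{n}(X, x) = *$ for all $n \geq 1$. A space satisfying this is $0$-truncated, hence equivalent (via the functor $\pi_{0}$ and its right adjoint) to the discrete space on its set of components, so lies in the essential image of $\Set \hookrightarrow \mathcal{S}$. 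Conversely, any set is evidently local.

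For (ii), I would first recall that under the identification $\Cat_{(\infty,1)} \simeq \CatI$, the enriched $\infty$-category $E^{0} = E_{\{0\}}^{\mathcal{S}}$ corresponds to the terminal $\infty$-category $[0]$, since its single mapping space is the unit $* \in \mathcal{S}$. Similarly, $[1]$ is just the ordinary category $\{0 \to 1\}$ regarded in $\CatI$ via Remark~\ref{rmk:ordinaryasenricat}, and the map $[1] \to E^{0}$ is the unique functor to the point. Thus $\mathcal{C} \in \CatI$ is local with respect to this map precisely if
\[
\iota\mathcal{C} \simeq \Map_{\CatI}([0], \mathcal{C}) \longrightarrow \Map_{\CatI}([1], \mathcal{C}) \simeq \iota\Fun([1], \mathcal{C})
\]
is an equivalence, where the map sends an object to its identity morphism.

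This map is the inclusion of identity morphisms into all morphisms (up to equivalence), and it is an equivalence if and only if every morphism in $\mathcal{C}$ is an equivalence, i.e.\ $\mathcal{C}$ is an $\infty$-groupoid. Conversely, objects of $\mathcal{S}$ (viewed as $\infty$-categories) have only equivalences for morphisms, hence are local. The main subtlety — and essentially the only non-routine step — is identifying the mapping space $\Map_{\CatI}([1], \mathcal{C})$ correctly and arguing that ``every morphism is equivalent to an identity'' is the same as ``every morphism is an equivalence'', both of which are standard and characterize Kan complexes among quasicategories.
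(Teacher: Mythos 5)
Your argument is correct and is essentially a fleshed-out version of the paper's proof, which simply declares (i) obvious and disposes of (ii) in one line by passing to the complete Segal space model (where locality with respect to $[1] \to E^{0}$ says $X_{0} \to X_{1}$ is an equivalence, i.e.\ the Segal space is constant). Your model-independent phrasing via $\iota\mathcal{C} \to \iota\Fun([1],\mathcal{C})$ — and the observation that surjectivity on $\pi_{0}$ already forces every morphism to be an equivalence — is the same computation in different clothing.
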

\begin{proof}
  (i) is obvious, and (ii) is easy to prove if we take complete Segal
  spaces as our model for \icats{} --- a Segal space is local with
  respect to $[1] \to E^{0}$ \IFF{} it is constant.
\end{proof}

Combining this with Proposition~\ref{propn:MonLocCatIV} we immediately
get the following:
\begin{propn}\label{propn:ncatlocs}\ 
  \begin{enumerate}[(i)]
  \item The inclusion $\Cat_{(n,k)} \hookrightarrow \Cat_{(n,k+1)}$  induced
    by the inclusion $\Gpd_{n-k} \hookrightarrow \Cat_{(n-k,1)}$
    exhibits $\Cat_{(n,k)}$ as the localization with respect to
    $\Sigma^{k}[1] \to \Sigma^{k}E^{0}$.
  \item The inclusion $\Cat_{(n,k)} \hookrightarrow \Cat_{(n,l)}$, $k
    < l \leq n$, exhibits $\Cat_{(n,k)}$ as the localization with
    respect to $\Sigma^{i}[1] \to \Sigma^{i}E^{0}$, $i =
    k,k+1,\ldots,l-1$.
  \item The inclusion $\Cat_{(\infty,k)} \hookrightarrow
    \Cat_{(\infty,k+1)}$ induced by the inclusion $\mathcal{S}
    \hookrightarrow \CatI$ exhibits $\Cat_{(\infty,n)}$ as the
    localization with respect to $\Sigma^{k}[1] \to \Sigma^{k}E^{0}$.
  \item The inclusion $\Cat_{(\infty,k)} \hookrightarrow
    \Cat_{(\infty,n)}$ for $k < n$ exhibits $\Cat_{(\infty,k)}$ as the
    localization with respect to $\Sigma^{i}[1] \to \Sigma^{i}E^{0}$,
    $i = k,k+1,\ldots,n-1$.
  \item The inclusion $\Cat_{n} \hookrightarrow \Cat_{(\infty,n)}$
    induced by the inclusion $\Set \hookrightarrow \mathcal{S}$
    exhibits $\Cat_{n}$ as the localization of $\Cat_{(\infty,n)}$
    with respect to to $\Sigma^{n}S^{k} \to \Sigma^{n}*$
    for $k > 0$.
  \end{enumerate}
\end{propn}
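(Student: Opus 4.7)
The entire proposition should follow by iterating Proposition~\ref{propn:MonLocCatIV} on top of three base cases: the definition of $\Gpd_{n} \hookrightarrow \Cat_{(n,1)}$ as the localization at $\{[1]\to E^{0}\}$, and parts (i) and (ii) of the preceding lemma giving $\Set\hookrightarrow\mathcal{S}$ and $\mathcal{S}\hookrightarrow\CatI$ as explicit localizations. The organizing idea is that, thanks to the iterated-enrichment identity $\Cat_{(\infty,l+1)}^{\mathcal{V}}\simeq \Cat_{(\infty,l)}^{\Cat_{(\infty,1)}^{\mathcal{V}}}$, each of the inclusions appearing in (i), (iii), (v) arises by applying the functor $\Cat_{(\infty,k)}^{(-)}$ to a monoidal accessible localization of presentably monoidal \icats{}, so that Proposition~\ref{propn:MonLocCatIV} applies at every stage and introduces one copy of $\Sigma$ in the localizing class.

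Concretely, for (i), I will first rewrite $\Cat_{(n,k+1)}=\Cat_{(\infty,k+1)}^{\Gpd_{n-k-1}}=\Cat_{(\infty,k)}^{\Cat_{(n-k,1)}}$ and $\Cat_{(n,k)}=\Cat_{(\infty,k)}^{\Gpd_{n-k}}$, so that the inclusion $\Cat_{(n,k)}\hookrightarrow\Cat_{(n,k+1)}$ is obtained by applying $\Cat_{(\infty,k)}^{(-)}$ to $\Gpd_{n-k}\hookrightarrow\Cat_{(n-k,1)}$. Proposition~\ref{propn:MonLocCatIV} then says that the first application of $\Cat_{(\infty,1)}^{(-)}$ produces the localization at $\Sigma_{\Cat_{(n-k,1)}}([1]\to E^{0})$, and each subsequent application of $\Cat_{(\infty,1)}^{(-)}$ again wraps the generators in a further $\Sigma$, yielding after $k$ steps the localization at $\Sigma^{k}[1]\to\Sigma^{k}E^{0}$. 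Parts (iii) and (v) follow by the identical recipe with the base localizations $\mathcal{S}\hookrightarrow\CatI$ (at $[1]\to E^{0}$) and $\Set\hookrightarrow\mathcal{S}$ (at the maps $S^{k}\to *$ for $k>0$) respectively, while (ii) and (iv) follow by composing the successive one-step localizations from (i) and (iii), using that a composition of accessible localizations is itself the localization at the union of the generating classes.

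The main technical point that must be checked to keep the induction running is that each of the localizations in sight is \emph{monoidal} with respect to the Cartesian product, so that Proposition~\ref{propn:MonLocCatIV} (which requires an $\mathbb{E}_{2}$-monoidal ambient structure for the output to again be monoidal) can be reapplied at the next step. For the base cases this is essentially immediate: Cartesian products preserve $n$-groupoids (inversion of morphisms is preserved under products), preserve $0$-truncated spaces, and preserve the property of being a space rather than a proper \icat{}. From Corollary~\ref{cor:CatIVOMon} the symmetric monoidal (Cartesian) structure survives iterated enrichment, so each $\Cat_{(\infty,k)}^{\mathcal{V}}$ we meet is $\mathbb{E}_{\infty}$-monoidal, and Proposition~\ref{propn:MonLocCatIV} then ensures the next localization is again monoidal. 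This monoidality check is the only real content of the argument; once it is in place each part reduces to a purely formal iteration. I expect writing out this monoidality check cleanly---and in particular verifying that the generating maps produced by successive applications of $\Sigma$ match the ones named in the statement---to be the main, though not deep, obstacle.
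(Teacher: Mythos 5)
Your proposal is correct and follows exactly the route the paper intends: the paper gives no written proof, simply asserting that the proposition follows ``immediately'' by combining the preceding lemma (identifying the base localizations $\Set\hookrightarrow\mathcal{S}$ and $\mathcal{S}\hookrightarrow\CatI$) with Proposition~\ref{propn:MonLocCatIV}, which is precisely the iteration you spell out. Your extra attention to the monoidality of each intermediate localization (needed to keep reapplying Proposition~\ref{propn:MonLocCatIV}) is a point the paper leaves implicit, and your verification of it is correct.
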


\begin{thm}\label{thm:Catnkloc}
  The composite functor $\Cat_{(n,k)} \hookrightarrow \Cat_{n}
  \hookrightarrow \Cat_{(\infty,n)}$ factors through
  $\Cat_{(\infty,k)}$, and the resulting inclusion $\Cat_{(n,k)}
  \hookrightarrow \Cat_{(\infty,k)}$ exhibits $\Cat_{(n,k)}$ as the
  localization with respect to $\Sigma^{k}S^{j} \to \Sigma^{k}*$ for
  $j > n-k$.
\end{thm}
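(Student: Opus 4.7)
The proof is by induction on $k$, with the base case $k = 0$ being the \emph{homotopy hypothesis}: the assertion that $\Gpd_n \hookrightarrow \mathcal{S}$ identifies $\Gpd_n$ with the full subcategory of $n$-truncated spaces, equivalently as the localization of $\mathcal{S}$ at $\{S^j \to * : j > n\}$. This is established just prior to the theorem in the paper (as advertised in the introduction), most naturally by an induction on $n$: an $(n,1)$-category in which every $1$-morphism is an equivalence has, via the complete Segal space model, mapping spaces that are $(n-1)$-truncated, hence a classifying space whose loop spaces are all $(n-1)$-truncated, i.e.\ an $n$-truncated space.

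For the inductive step ($k \geq 1$), unravel the iterated enrichment to obtain canonical equivalences
\[
\Cat_{(n,k)} \simeq \CatI^{\Cat_{(n-1,k-1)}}, \qquad \Cat_{(\infty,k)} \simeq \CatI^{\Cat_{(\infty,k-1)}},
\]
under which the inclusion $\Cat_{(n,k)} \hookrightarrow \Cat_{(\infty,k)}$ is obtained by applying $\CatI^{(\blank)}$ to $\Cat_{(n-1,k-1)} \hookrightarrow \Cat_{(\infty,k-1)}$. By inductive hypothesis the latter is the localization of the presentably symmetric monoidal \icat{} $\Cat_{(\infty,k-1)}$ at the set $W := \{\Sigma^{k-1}S^j \to \Sigma^{k-1}* : j > n-k\}$, and this localization is accessible (since we invert a set of maps) and monoidal with respect to the Cartesian product (since the product of truncated objects is truncated). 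Proposition~\ref{propn:MonLocCatIV} now applies and identifies $\Cat_{(n,k)} \hookrightarrow \Cat_{(\infty,k)}$ as the localization at $\Sigma(W) = \{\Sigma^k S^j \to \Sigma^k * : j > n-k\}$, as desired.

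Finally, to check that this identification coincides with the inclusion coming from the composite $\Cat_{(n,k)} \hookrightarrow \Cat_n \hookrightarrow \Cat_{(\infty,n)}$, observe that both routes are built from iterated applications of $\CatI^{(\blank)}$ to the full-subcategory inclusions of the tower
\[
\Gpd_{n-k} \hookrightarrow \Gpd_{n-k+1} \hookrightarrow \cdots \hookrightarrow \Gpd_{n-1} \hookrightarrow \Cat_{(n-1,1)} \hookrightarrow \cdots \hookrightarrow \CatI,
\]
and these commute by functoriality of $\CatI^{(\blank)}$ in lax monoidal functors (Proposition~\ref{propn:EnrAdj}). The main obstacle is really the base case: without the homotopy hypothesis the induction has nothing to stand on. The inductive step itself is essentially a bookkeeping application of Proposition~\ref{propn:MonLocCatIV}, once one notes that Cartesian products preserve the truncation being imposed so the relevant localizations are monoidal.
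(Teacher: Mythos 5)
Your argument is correct in outline but runs in the opposite direction from the paper's, and the two routes distribute the real work quite differently. The paper first uses Proposition~\ref{propn:ncatlocs} to present $\Cat_{(n,k)}$ as the localization of $\Cat_{(\infty,n)}$ at the combined family $\{\Sigma^{i}[1] \to \Sigma^{i}E^{0}\}_{k \leq i < n}$ and $\{\Sigma^{n}S^{j} \to \Sigma^{n}*\}_{j>0}$, deduces the factorization through $\Cat_{(\infty,k)}$ from this, and then reduces everything to computing the image of the generators $\Sigma^{n}S^{j} \to \Sigma^{n}*$ under the localization $\Cat_{(\infty,n)} \to \Cat_{(\infty,k)}$; the entire technical content is Lemma~\ref{lem:suspensionissuspension}, which identifies $\kappa\Sigma X$ with the unreduced suspension of $X$ so that the spheres shift degree correctly as one localizes \emph{downward}. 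Your induction on $k$ instead pushes the generating maps \emph{upward}: at each stage Proposition~\ref{propn:MonLocCatIV} hands you $\Sigma$ applied to the previous generators, so the set $\{\Sigma^{k}S^{j} \to \Sigma^{k}*\}$ appears by construction and the suspension computation is never needed. That is a genuine simplification of the inductive step. The price is paid at the base case: in the paper the homotopy hypothesis is a \emph{corollary} of this theorem (it appears immediately after, as the case $k=0$), not something "established just prior" to it, so you cannot cite it without circularity --- your independent argument for it has to carry the full weight of the base case.

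On that base case, your sketch only establishes one containment ($n$-groupoids have $n$-truncated underlying spaces); you still need that every $n$-type, viewed as a constant complete Segal space, lies in $\Cat_{(n,1)}$ and is local with respect to $[1]\to E^{0}$ (so that $\Gpd_{n}$ exhausts $\mathcal{S}^{\leq n}$), and that the reflection onto $\Gpd_{n}$ agrees with $n$-truncation. Both follow from the identification of $\Cat_{(n,1)}$ as the \icats{} with $(n-1)$-truncated mapping spaces, which your inductive setup already provides, but they should be said. Two smaller points: the justification "the product of truncated objects is truncated" addresses closure of \emph{local objects} under products, which holds for any localization; what Proposition~\ref{propn:MonLocCatIV} actually requires is that the product of two local \emph{equivalences} is a local equivalence, i.e.\ that cotensoring a local object by an arbitrary object yields a local object --- true here since the truncation conditions on mapping objects are stable under limits, but a different statement. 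Finally, the identification of your inductively constructed inclusion with the composite $\Cat_{(n,k)} \hookrightarrow \Cat_{n} \hookrightarrow \Cat_{(\infty,n)}$ in the theorem statement deserves more than the appeal to "functoriality"; the paper sidesteps this entirely by exhibiting both sides as localizations of the same ambient \icat{} $\Cat_{(\infty,n)}$.
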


For the proof we need the following observation:
\begin{lemma}\label{lem:suspensionissuspension}
  Let $\kappa \colon \CatI \to \mathcal{S}$ denote the left adjoint to
  the inclusion $\mathcal{S} \hookrightarrow \CatI$. Then if $X$ is a
  space, the space $\kappa \Sigma X$ is the (unreduced) suspension of $X$.
\end{lemma}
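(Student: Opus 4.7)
The plan is to apply the Yoneda lemma, identifying $\Map_{\mathcal{S}}(\kappa \Sigma X, Y)$ with $\Map_{\mathcal{S}}(SX, Y)$ naturally in $Y$, where $SX$ denotes the unreduced suspension. By the adjunction $\kappa \dashv i$, we have $\Map_{\mathcal{S}}(\kappa \Sigma X, Y) \simeq \Map_{\CatI}(\Sigma X, iY)$. By Theorem~\ref{thm:CatISisCatI} we may regard $iY$ as a complete $\mathcal{S}$-\icat{}; by Lemma~\ref{lem:iota0} its space of objects is $Y$, and its mapping objects are the path spaces $iY(a,b) \simeq \mathrm{Path}_Y(a,b) := \{a\} \times_Y Y^{\Delta^1} \times_Y \{b\}$.

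First, I would unpack the universal property of $\Sigma$ from Definition~\ref{defn:SigmaVCat}. Since $\Sigma X$ is the image in $\AlgCat(\mathcal{S})$ of the free algebra on the graph sending $(0,1) \mapsto X$ and other pairs to the initial object, for any $\mathcal{S}$-\icat{} $\mathcal{C}$ the space $\Map(\Sigma X, \mathcal{C})$ fits into a natural pullback square
\[
\nodispnolabelcsquare{\Map_{\CatI^{\mathcal{S}}}(\Sigma X, \mathcal{C})}{\Map_{\mathcal{S}}(X, \mathcal{C}(-,-))}{\iota_0\mathcal{C} \times \iota_0\mathcal{C}}{\iota_0\mathcal{C} \times \iota_0\mathcal{C},}
\]
where the right-hand vertex is the total space of the functor $(a,b) \mapsto \Map_{\mathcal{S}}(X, \mathcal{C}(a,b))$. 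Applied to $\mathcal{C} = iY$, this displays $\Map_{\CatI}(\Sigma X, iY)$ as the space of triples $(a, b, f)$ with $a, b \in Y$ and $f \colon X \to \mathrm{Path}_Y(a,b)$.

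Next, I would compare this with the mapping space out of $SX$. Writing $SX$ as the pushout $* \amalg_X *$ in $\mathcal{S}$ gives
\[
\Map_{\mathcal{S}}(SX, Y) \simeq Y \times_{\Map_{\mathcal{S}}(X, Y)} Y,
\]
where both maps $Y \to \Map_{\mathcal{S}}(X, Y)$ send $y$ to the constant map $\mathrm{const}_y$. The fiber of this pullback over $(a,b) \in Y^{\times 2}$ is the space of paths in $\Map_{\mathcal{S}}(X, Y)$ from $\mathrm{const}_a$ to $\mathrm{const}_b$; by exponential adjunction this is the space of maps $X \times \Delta^1 \to Y$ restricting to $\mathrm{const}_a$ and $\mathrm{const}_b$, which coincides with $\Map_{\mathcal{S}}(X, \mathrm{Path}_Y(a,b))$. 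Thus both $\Map_{\mathcal{S}}(\kappa \Sigma X, Y)$ and $\Map_{\mathcal{S}}(SX, Y)$ sit over $Y \times Y$ with the same fibers.

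The remaining point, and the main obstacle, is checking that this identification of fibers assembles into an equivalence of spaces over $Y \times Y$ naturally in $Y$ — essentially the observation that in both descriptions the two projections to $Y$ come from the same pair of maps $[0]_{\mathcal{S}} \to \Sigma X$ (equivalently, from the two inclusions $* \to SX$ of the cone points). Once this naturality is verified, Yoneda applied in $\mathcal{S}$ yields the desired equivalence $\kappa \Sigma X \simeq SX$.
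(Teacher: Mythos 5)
Your route is genuinely different from the paper's. You argue by representability: identify $\Map_{\mathcal{S}}(\kappa\Sigma X, Y) \simeq \Map_{\CatI}(\Sigma X, iY)$ via the adjunction, unpack the universal property of $\Sigma$ from Definition~\ref{defn:SigmaVCat} to describe this space fibrewise over $Y \times Y$, and match it against $\Map_{\mathcal{S}}(*\amalg_X *, Y) \simeq Y \times_{\Map(X,Y)} Y$. The fibre computations are all correct (the mapping objects of $iY$ are indeed the path spaces, and the fibre of $Y \times_{\Map(X,Y)} Y$ over $(a,b)$ is indeed $\Map(X,\mathrm{Path}_Y(a,b))$). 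The paper instead works entirely inside the complete Segal space model: it exhibits $\Sigma X$ explicitly as the left Kan extension $j_!S(X)$ of a two-level semisimplicial space along $\simp_s^{\op} \hookrightarrow \simp^{\op}$, identifies $\kappa$ with geometric realization of simplicial spaces, and computes the realization directly as the quotient of $\Delta^1 \times X$ collapsing the two ends --- the classical unreduced suspension. The paper's computation buys you an explicit model of $\Sigma X$ (useful elsewhere) and sidesteps all mapping-space bookkeeping; your argument is more intrinsic but pushes the work into a coherence verification.

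That verification is where I would press you: as written there is a genuine gap at the step you yourself flag. Knowing that two spaces over $Y \times Y$ have equivalent fibres at every point does not produce an equivalence between them, and the Yoneda step additionally requires naturality in $Y$, which cannot be extracted from a pointwise statement. To close this you need an actual comparison map. The natural candidate is a map $SX = * \amalg_X * \to \kappa\Sigma X$ assembled from the two objects $0,1 \colon [0]_{\mathcal{S}} \to \Sigma X$ together with the tautological edge $X \to (\Sigma X)(0,1)$ (which, after applying $\kappa$, gives a homotopy between the two composites $X \to * \rightrightarrows \kappa\Sigma X$ and hence a map out of the pushout). One then checks that composition with this map realizes your fibrewise identifications, so that it induces an equivalence on $\Map(-,Y)$ for every $Y$. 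This is all true and doable, but it is the actual content of the lemma in your approach, and the proof is not complete without it.
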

\begin{proof}
  We take complete Segal spaces as our model for \icats{}; then the
  inclusion of $\mathcal{S}$ corresponds to the inclusion of
  \emph{constant} simplicial spaces and $\kappa$ corresponds to
  geometric realization. Let $\simp_{s}$ denote the subcategory of
  $\simp$ where the morphisms are the \emph{surjective} morphisms of
  simplicial sets. Let $S(X) \colon \simp_{s}^{\op} \to \mathcal{S}$
  be the semisimplicial space with $S(X)_{0} = \{0,1\}$, $S(X)_{1} =
  X$ with $d_{1}(X) = 0$ and $d_{0}(X) = 1$, and $S(X)_{n} =
  \emptyset$ for $n > 1$. If $j$ denotes the inclusion
  $\simp^{\op}_{s} \to \simp^{\op}$ then it is easy to see that the
  left Kan extension $j_{!}S(X)$ is a (complete) Segal
  space. Moreover, using the adjunction $j_{!} \dashv j^{*}$ it is
  clear that $j_{!}S(X)$ satisfies the universal property of $\Sigma
  X$. Thus $\kappa \Sigma X$ is the colimit of the functor
  $j_{!}S(X)$, i.e. the left Kan extension $q_{!}j_{!}S(X)$ along $q
  \colon \simp^{\op} \to *$. But this is equivalent to $(qj)_{!}S(X)$,
  which is the colimit of the semisimplicial space $S(X)$. Using the
  standard model-categorical approach to homotopy colimits we can
  describe this as the quotient of $\Delta^{1} \times X$ where we
  identify $\{0\} \times X$ and $\{1\} \times X$ with points, which is
  precisely the unreduced suspension of the space $X$.
\end{proof}

\begin{proof}[Proof of Theorem~\ref{thm:Catnkloc}]
  From Proposition~\ref{propn:ncatlocs} we see that $\Cat_{(n,k)}$ is
  the localization of $\Cat_{(\infty,n)}$ with respect to
  $\Sigma^{i}[1] \to \Sigma^{i}E^{0}$, $i = k,k+1,\ldots,n-1$ and
  $\Sigma^{n}S^{j} \to \Sigma^{n}*$ for $j > 0$. On the other hand,
  $\Cat_{(\infty,k)}$ is the localization of $\Cat_{(\infty,n)}$ with
  respect to just the first class of maps, so the inclusion
  $\Cat_{(n,k)} \hookrightarrow \Cat_{(\infty,n)}$ certainly factors
  through $\Cat_{(\infty,k)}$. To prove the result it therefore
  suffices to show that the image of $\Sigma^{n}S^{j} \to \Sigma^{n}*$
  under the localization $\Cat_{(\infty,n)} \to \Cat_{(\infty,k)}$ is
  $\Sigma^{k}S^{j+n-k} \to \Sigma^{k}*$. This follows by induction
  from the case $k = 0$, which is a special case of
  Lemma~\ref{lem:suspensionissuspension}.
\end{proof}

In the case $k = 0$, this gives a version of the ``homotopy
hypothesis'' in our setting:
\begin{cor}[Homotopy Hypothesis]
  There is an inclusion $\Gpd_{n} \hookrightarrow \mathcal{S}$ that
  exhibits $\Gpd_{n}$ as the localization of $\mathcal{S}$ with
  respect to the maps $S^{j} \to *$, $j > n$. In other words, the
  \icat{} $\Gpd_{n}$ of $n$-groupoids is equivalent to the \icat{}
  $\mathcal{S}^{\leq n}$ of \emph{$n$-types}, i.e. spaces whose
  homotopy groups vanish in degrees $> n$.
\end{cor}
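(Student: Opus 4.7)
The plan is simply to specialize Theorem~\ref{thm:Catnkloc} to the case $k=0$ and then to reinterpret the resulting localization as the $\infty$-category of $n$-types.

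First, setting $k=0$ in Theorem~\ref{thm:Catnkloc} yields an inclusion
\[
\Gpd_{n} = \Cat_{(n,0)} \hookrightarrow \Cat_{(\infty,0)} = \mathcal{S}
\]
which exhibits $\Gpd_{n}$ as the localization of $\mathcal{S}$ with respect to the maps $\Sigma^{0}S^{j} \to \Sigma^{0}* $, i.e.\ the maps $S^{j} \to *$ for $j > n$. This gives the first statement of the corollary directly.

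It remains to identify this localization with $\mathcal{S}^{\leq n}$. A space $X$ is local with respect to $S^{j} \to *$ precisely when the restriction map $\Map_{\mathcal{S}}(*, X) \to \Map_{\mathcal{S}}(S^{j}, X)$ is an equivalence; equivalently, the free loop space $X^{S^{j}}$ has contractible fibres over $X$ under evaluation at the basepoint, which for every basepoint $x \in X$ says that $\pi_{j}(X, x) = 0$. Thus $X$ is local with respect to all the maps $S^{j} \to *$ for $j > n$ if and only if $\pi_{j}(X,x) = 0$ for every $j > n$ and every basepoint, i.e.\ $X$ is an $n$-type. Hence the full subcategory of local objects is precisely $\mathcal{S}^{\leq n}$.

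There is no substantive obstacle here: the content of the homotopy hypothesis in our setting is already encoded in Theorem~\ref{thm:Catnkloc}, whose main input (beyond the general localization machinery of Proposition~\ref{propn:ncatlocs}) is Lemma~\ref{lem:suspensionissuspension} identifying $\kappa\Sigma$ with unreduced suspension of spaces; the corollary is then a matter of reading off the $k=0$ case and invoking the standard characterization of $n$-types via vanishing of higher homotopy groups.
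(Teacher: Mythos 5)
Your proposal is correct and matches the paper's (implicit) argument exactly: the paper states the corollary as the immediate $k=0$ specialization of Theorem~\ref{thm:Catnkloc}, and your added identification of the local objects with $n$-types is the standard one left implicit there. One tiny imprecision: locality with respect to a single map $S^{j}\to *$ is equivalent to the vanishing of $\pi_{m}(X,x)$ for \emph{all} $m\geq j$ (contractibility of the pointed mapping space $\Map_{*}(S^{j},X)$), not just $\pi_{j}$, but this does not affect the conclusion that the objects local with respect to all $S^{j}\to *$ with $j>n$ are precisely the $n$-types.
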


\subsection{Enriching in $(n,1)$-Categories and Baez-Dolan
  Stabilization}\label{subsec:ncat}
In this subsection we prove that enriching in an $(n,1)$-category $\mathcal{V}$
gives an $(n+1,1)$-category of $\mathcal{V}$-\icats{}. We begin by
recalling the appropriate definition of an $(n,1)$-category in the
context of \icats{}:

\begin{defn}
  An \icat{} $\mathcal{C}$ is an \emph{$(n,1)$-category} if the
  mapping spaces $\Map_{\mathcal{C}}(X,Y)$ are $(n-1)$-types for all
  $X,Y \in \mathcal{C}$, i.e. $\pi_{k}\Map_{\mathcal{C}}(X,Y) = 0$ for
  $k \geq n$. In other words, there are no non-trivial $k$-morphisms
  in $\mathcal{C}$ for $k > n$.
\end{defn}

\begin{remark}
  Using the equivalence $\CatI^{\mathcal{S}} \simeq \CatI$ of
  Theorem~\ref{thm:CatISisCatI} and the case $k = 1$ of
  Theorem~\ref{thm:Catnkloc} we can identify $(n,1)$-categories in
  this sense with those defined in the previous subsection.
\end{remark}

\begin{remark}
  Suppose $\mathcal{V}$ is a monoidal \icat{} such that
  $\mathcal{V}$ is an $(n,1)$-category. Then clearly
  $\mathcal{V}^{\otimes}$ is also an $(n,1)$-category. The phrase
  \emph{monoidal $(n,1)$-category} is thus unambiguous.
\end{remark}

\begin{propn}\label{propn:iotantype}
  Suppose $\mathcal{V}$ is a monoidal $(n,1)$-category and
  $\mathcal{C}$ is a $\mathcal{V}$-\icat{}. Then the space $\iota \mathcal{C}$
  is an $n$-type.
\end{propn}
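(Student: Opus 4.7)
The plan is to exploit that $\iota \mathcal{C}$ is the geometric realization of the groupoid object $\iota_{\bullet}\mathcal{C}$ in spaces (Corollary~\ref{cor:iotagpd}). Since $\mathcal{S}$ is an $\infty$-topos, every groupoid object there is effective, so the canonical map $q \colon \iota_{0}\mathcal{C} \to \iota \mathcal{C}$ is surjective on $\pi_{0}$, and for each $x \in \iota_{0}\mathcal{C}$ there is a natural equivalence
\[ \Omega_{q(x)} \iota \mathcal{C} \simeq (\iota_{1}\mathcal{C})_{x,x} \]
between the loop space at $q(x)$ and the fiber of $\iota_{1}\mathcal{C} \to \iota_{0}\mathcal{C} \times \iota_{0}\mathcal{C}$ over $(x,x)$.

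Next, I would identify this fiber using Proposition~\ref{propn:eqeq}, which exhibits $(\iota_{1}\mathcal{C})_{x,y}$ as a union of components of $\Map_{\mathcal{V}}(I_{\mathcal{V}}, \mathcal{C}(x,y))$. Since $\mathcal{V}$ is an $(n,1)$-category the latter mapping space is an $(n-1)$-type, so $(\iota_{1}\mathcal{C})_{x,x}$, and hence $\Omega_{q(x)}\iota\mathcal{C}$, is $(n-1)$-truncated for every base point of the form $q(x)$. As every component of $\iota \mathcal{C}$ contains a point in the image of $q$, it follows that $\pi_{k}(\iota\mathcal{C}) = 0$ for $k > n$, i.e., $\iota\mathcal{C}$ is an $n$-type.

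The one subtlety is that Propositions~\ref{propn:eqCiseqUC} and \ref{propn:eqeq} are formulated only under a presentably monoidal hypothesis, whereas here $\mathcal{V}$ need not be presentable. I would dispose of this by embedding $\mathcal{V}$ in its presheaf category $\widehat{\mathcal{P}}(\mathcal{V})$ with Day convolution, exactly as in the remark following Theorem~\ref{thm:Ecogpd}: the Yoneda map $j \colon \mathcal{V} \to \widehat{\mathcal{P}}(\mathcal{V})$ is a fully faithful monoidal functor, so $j$ sends the unit to the unit, carries $E^{k}_{\mathcal{V}}$ to $E^{k}_{\widehat{\mathcal{P}}(\mathcal{V})}$, and therefore induces equivalences $\iota_{\bullet}\mathcal{C} \simeq \iota_{\bullet}(j_{*}\mathcal{C})$ together with $\Map_{\widehat{\mathcal{P}}(\mathcal{V})}(I, j\mathcal{C}(x,y)) \simeq \Map_{\mathcal{V}}(I, \mathcal{C}(x,y))$. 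The latter thus remains an $(n-1)$-type, even though $\widehat{\mathcal{P}}(\mathcal{V})$ itself is not an $(n,1)$-category; so the argument of the preceding paragraph applies to $j_{*}\mathcal{C}$ and yields the claim for $\mathcal{C}$.

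The main (and rather mild) obstacle is precisely this reduction to the presentably monoidal setting: one must verify that only the specific mapping spaces $\Map_{\mathcal{V}}(I_{\mathcal{V}}, \mathcal{C}(x,y))$ enter the truncation estimate, so that it is immaterial that the ambient $\widehat{\mathcal{P}}(\mathcal{V})$ fails to be an $(n,1)$-category. Once this is in place the proof reduces to the standard fact that an effective groupoid with $(n-1)$-truncated hom-fibers has an $n$-truncated classifying space.
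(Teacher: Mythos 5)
Your proof is correct and rests on the same two pillars as the paper's: effectivity of the groupoid object $\iota_{\bullet}\mathcal{C}$ and the identification of the fibres of $\iota_{1}\mathcal{C} \to \iota_{0}\mathcal{C}^{\times 2}$ with unions of components of $\Map_{\mathcal{V}}(I_{\mathcal{V}}, \mathcal{C}(X,Y))$ via Proposition~\ref{propn:eqeq}. The two ways you deviate from the paper are both improvements in rigor or economy. First, the paper begins by pulling $\mathcal{C}$ back along a section $\pi_{0}\iota_{0}\mathcal{C} \to \iota_{0}\mathcal{C}$ so that its space of objects becomes discrete, and then runs a long-exact-sequence argument through the fibre sequence $\iota_{1}\mathcal{C}_{\{X\}} \to \iota_{0}\mathcal{C} \to \iota\mathcal{C}$ together with \cite[Lemma 5.5.6.14]{HTT}; your direct identification $\Omega_{q(x)}\iota\mathcal{C} \simeq (\iota_{1}\mathcal{C})_{x,x}$ from the Čech-nerve description makes that reduction unnecessary and gets to the truncation statement in one step. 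Second, you are right that Proposition~\ref{propn:eqeq} is stated only for presentably monoidal $\mathcal{V}$, whereas the statement at hand assumes only that $\mathcal{V}$ is a monoidal $(n,1)$-category; the paper's proof cites Proposition~\ref{propn:eqeq} without comment, so your passage through the monoidal Yoneda embedding $\mathcal{V} \to \widehat{\mathcal{P}}(\mathcal{V})$ --- noting that this preserves $\iota_{\bullet}$ and the relevant mapping spaces out of the unit, so that it is harmless that $\widehat{\mathcal{P}}(\mathcal{V})$ is not itself an $(n,1)$-category --- fills a gap the paper leaves implicit (it is the same device used in the proof of Proposition~\ref{propn:eqsareeqonmaps} and the remark following Theorem~\ref{thm:Ecogpd}).
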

\begin{proof}
  Let $s \colon \pi_{0}(\iota_{0}\mathcal{C}) \to
  \iota_{0}\mathcal{C}$ be a section of the projection
  $\iota_{0}\mathcal{C} \to \pi_{0}\iota_{0}\mathcal{C}$. Then the
  Cartesian morphism $s^{*}\mathcal{C} \to \mathcal{C}$ is fully
  faithful and essentially surjective, and so induces an equivalence
  $\iota(s^{*}\mathcal{C}) \to \iota \mathcal{C}$ by
  Proposition~\ref{propn:FFESiotaEq}. Without loss of generality we
  may therefore assume that the space $\iota_{0}\mathcal{C}$ is
  discrete.

  The simplicial space $\iota_{\bullet}\mathcal{C}$ is a groupoid
  object by Corollary~\ref{cor:iotagpd}. By \cite[Corollary
  6.1.3.20]{HTT} this groupoid object is effective, and so we have a
  pullback diagram
  \nolabelcsquare{\iota_{1}\mathcal{C}}{\iota_{0}\mathcal{C}}{\iota_{0}\mathcal{C}}{\iota
    \mathcal{C}.}  If $X$ is a point of $\iota_{0}\mathcal{C}$, we get
  a pullback diagram
  \nolabelcsquare{\iota_{1}\mathcal{C}_{\{X\}}}{\iota_{0}\mathcal{C}}{\{X\}}{\iota
    \mathcal{C},} where $\iota_{1}\mathcal{C}_{\{X\}}$ is the fibre of
  $\iota_{1}\mathcal{C} \to \iota_{0}\mathcal{C}$ at $X$. Since the
  map $\iota_{0}\mathcal{C} \to \iota \mathcal{C}$ is surjective on
  components, by considering the long exact sequence of homotopy
  groups associated to this fibre sequence we see that $\iota
  \mathcal{C}$ is an $n$-type provided the spaces
  $\iota_{1}\mathcal{C}_{\{X\}}$ are $(n-1)$-types for all $X \in
  \iota_{0}\mathcal{C}$.

  The space $\iota_{1}\mathcal{C}_{\{X\}}$ is a union of components of
  $\iota_{1}\mathcal{C}$, so it suffices to show that
  $\iota_{1}\mathcal{C}$ is an $(n-1)$-type. Since
  $\iota_{0}\mathcal{C}$ is discrete, i.e. a $0$-type, by \cite[Lemma
  5.5.6.14]{HTT} this is equivalent to proving that the fibres of the map
  $\iota_{1}\mathcal{C} \to \iota_{0}\mathcal{C} \times
  \iota_{0}\mathcal{C}$ are $(n-1)$-types. But by Proposition~\ref{propn:eqeq} we can identify
  the fibre $\iota_{1}\mathcal{C}_{X,Y}$ at $(X,Y) \in
  \iota_{0}\mathcal{C}^{\times 2}$ with the space $\Map(I,
  \mathcal{C}(X,Y))_{\text{eq}}$ that is the union of the components
  of $\Map(I, \mathcal{C}(X,Y))$ corresponding to equivalences. Since
  $\mathcal{V}$ is by assumption an $n$-category, the space $\Map(I,
  \mathcal{C}(X,Y))$ is necessarily an $(n-1)$-type, hence so is $\Map(I,
  \mathcal{C}(X,Y))_{\text{eq}}$.
\end{proof}

\begin{thm}\label{thm:enrncat}
  Suppose $\mathcal{V}$ is a monoidal $(n,1)$-category. Then
  $\CatIV$ is an $(n+1,1)$-category.
\end{thm}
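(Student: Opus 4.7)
The goal is to show that for any two complete $\mathcal{V}$-$\infty$-categories $\mathcal{C}$ and $\mathcal{D}$, the mapping space $\Map_{\CatIV}(\mathcal{C},\mathcal{D})$ is an $n$-type. Since $\CatIV \hookrightarrow \AlgCatV$ is a full subcategory, this is the same as $\Map_{\AlgCatV}(\mathcal{C},\mathcal{D})$, which I will analyze using the Cartesian fibration $\AlgCatV \to \mathcal{S}$ recording the space of objects. Concretely, I would consider the projection
\[
\pi \colon \Map_{\AlgCatV}(\mathcal{C},\mathcal{D}) \to \Map_{\mathcal{S}}(\iota_{0}\mathcal{C},\iota_{0}\mathcal{D})
\]
and argue separately that the base is an $n$-type and that the fibres are $(n-1)$-types; a standard homotopy-group computation then gives that the total space is an $n$-type.

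For the base: since $\mathcal{C}$ and $\mathcal{D}$ are complete we have $\iota_{0}\mathcal{C}\simeq\iota\mathcal{C}$ and $\iota_{0}\mathcal{D}\simeq\iota\mathcal{D}$, which are both $n$-types by Proposition~\ref{propn:iotantype}. Because $n$-types are closed under cotensors in $\mathcal{S}$, the mapping space $\Map_{\mathcal{S}}(\iota\mathcal{C},\iota\mathcal{D})$ is itself an $n$-type. For the fibres, by \cite[Proposition 2.4.4.2]{HTT} the fibre of $\pi$ over a map $f \colon \iota_{0}\mathcal{C}\to \iota_{0}\mathcal{D}$ is $\Map_{\Alg_{\simp^{\op}_{X}}(\mathcal{V})}(\mathcal{C},f^{*}\mathcal{D})$, which is a union of components of the mapping space $\Map_{\Fun_{\simp^{\op}}(\simp^{\op}_{X},\mathcal{V}^{\otimes})}(\mathcal{C},f^{*}\mathcal{D})$. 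This last mapping space can in turn be described as a fibre of the natural map $\Map_{\Fun(\simp^{\op}_{X},\mathcal{V}^{\otimes})}(\mathcal{C},f^{*}\mathcal{D}) \to \Map_{\Fun(\simp^{\op}_{X},\simp^{\op})}(\pi_{X},\pi_{X})$, whose base is a space of maps into a $1$-category (hence a $0$-type) and whose total space is a limit of mapping spaces in $\mathcal{V}^{\otimes}$. Since $\mathcal{V}$ is an $(n,1)$-category, so is $\mathcal{V}^{\otimes}$ (its mapping spaces fibre over $\simp^{\op}$ with fibres products of mapping spaces in $\mathcal{V}$), and a limit of $(n-1)$-types is again an $(n-1)$-type. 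Thus the fibres of $\pi$ are $(n-1)$-types.

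The long exact sequence of homotopy groups for the fibration $\pi$ then yields that $\pi_{k}\Map_{\AlgCatV}(\mathcal{C},\mathcal{D})=0$ for $k>n$: for $k\geq n+1$, both $\pi_{k}$ of the $(n-1)$-type fibre and $\pi_{k}$ of the $n$-type base vanish. Therefore $\Map_{\CatIV}(\mathcal{C},\mathcal{D})$ is an $n$-type, so $\CatIV$ is an $(n+1,1)$-category. The main technical point to verify carefully is the identification of the fibres of $\pi$ as unions of components in the pointwise-computed functor-category mapping space, and the verification that $\mathcal{V}^{\otimes}$ inherits the $(n,1)$-category property from $\mathcal{V}$; the rest is just bookkeeping with the long exact sequence of homotopy groups, exactly as in the proof of Proposition~\ref{propn:iotantype}.
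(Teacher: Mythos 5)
Your proposal is correct and follows essentially the same route as the paper: both fibre $\Map_{\AlgCatV}(\mathcal{C},\mathcal{D})$ over $\Map_{\mathcal{S}}(\iota_{0}\mathcal{C},\iota_{0}\mathcal{D})$, use Proposition~\ref{propn:iotantype} to see the base is an $n$-type, and control the fibres via the $(n,1)$-hypothesis on $\mathcal{V}^{\otimes}$. The only (harmless) difference is in the last step: the paper bounds the fibre as an $n$-type by reducing to the statement that $\Map(K,\mathcal{V}^{\otimes})$ is an $n$-type for any $K$ (via \cite[Proposition 2.3.4.18]{HTT} and closure of $n$-types under limits), whereas you get the slightly sharper bound that the fibre is an $(n-1)$-type by writing the mapping space in the functor $\infty$-category as a limit of mapping spaces in $\mathcal{V}^{\otimes}$; either bound suffices for the long exact sequence argument.
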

\begin{proof}
  We need to show that if $\mathcal{C}$ and $\mathcal{D}$ are complete
  $\mathcal{V}$-\icats{} then the space \[\Map_{\CatIV}(\mathcal{C},
  \mathcal{D}) \simeq
  \Map_{\AlgCat(\mathcal{V})}(\mathcal{C}, \mathcal{D})\] is
  an $n$-type. By Proposition~\ref{propn:iotantype}, the space
  $\iota_{0}\mathcal{D} \simeq \iota \mathcal{D}$ is an $n$-type,
  hence the space $\Map_{\mathcal{S}}(\iota_{0}\mathcal{C},
  \iota_{0}\mathcal{D})$ is as well. It follows from \cite[Lemma
  5.5.6.14]{HTT} that, in order to prove that
  $\Map_{\AlgCat(\mathcal{V})}(\mathcal{C}, \mathcal{D})$ is
  an $n$-type, it suffices to show that the fibres of the map
  \[ \Map_{\AlgCat(\mathcal{V})}(\mathcal{C}, \mathcal{D})
  \to \Map_{\mathcal{S}}(\iota_{0}\mathcal{C}, \iota_{0}\mathcal{D})\]
  induced by the projection $\AlgCat(\mathcal{V}) \to
  \mathcal{S}$ are $n$-types.

  Since the projection $\AlgCat(\mathcal{V})
  \to \mathcal{S}$ is a Cartesian fibration, by \cite[Proposition
  2.4.4.2]{HTT} we can identify the fibre of this map at
  $f \colon \iota_{0}\mathcal{C} \to \iota_{0}\mathcal{D}$ with
  \[\Map_{\Alg_{\simp^{\op}_{\iota_{0}\mathcal{C}}}(\mathcal{V})}(\mathcal{C},
  f^{*}\mathcal{D}).\] This space is the fibre of
  \[\Map_{\simp^{\op}}(\simp^{\op}_{\iota_{0}\mathcal{C}} \times
  \Delta^{1},\mathcal{V}^{\otimes}) \to
  \Map_{\simp^{\op}}(\simp^{\op}_{\iota_{0}\mathcal{C}},
  \mathcal{V}^{\otimes}) \times
  \Map_{\simp^{\op}}(\simp^{\op}_{\iota_{0}\mathcal{C}},
  \mathcal{V}^{\otimes})\] at $(\mathcal{C}, f^{*}\mathcal{D})$. Since
  $n$-types are closed under all limits by \cite[Proposition
  5.5.6.5]{HTT}, it suffices to show that the spaces
  $\Map_{\simp^{\op}}(\simp^{\op}_{\iota_{0}\mathcal{C}},
  \mathcal{V}^{\otimes})$ and
  $\Map_{\simp^{\op}}(\simp^{\op}_{\iota_{0}\mathcal{C}} \times
  \Delta^{1}, \mathcal{V}^{\otimes})$ are $n$-types. Now these spaces are
  fibres of $\Map(\simp^{\op}_{\iota_{0}\mathcal{C}},
  \mathcal{V}^{\otimes}) \to \Map(\simp^{\op}, \mathcal{V}^{\otimes})$
  and $\Map(\simp^{\op}_{\iota_{0}\mathcal{C}} \times \Delta^{1},
  \mathcal{V}^{\otimes}) \to \Map(\simp^{\op},
  \mathcal{V}^{\otimes})$, so by the same argument it's enough to show
  that these mapping spaces are $n$-types. But $\mathcal{V}^{\otimes}$
  is by assumption an $(n,1)$-category, so this holds by
  \cite[Proposition 2.3.4.18]{HTT}.
\end{proof}

\begin{cor}
  The \icat{} $\Cat_{n}$ of $n$-categories is an
  $(n+1,1)$-category. More generally, the \icat{} $\Cat_{(n,k)}$ of
  $(n,k)$-categories is an $(n+1,1)$-category.
\end{cor}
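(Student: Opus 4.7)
The plan is to proceed by induction on $k$; the first assertion (that $\Cat_n$ is an $(n+1,1)$-category) is the special case $k = n$ of the second. Setting $m := n - k \geq 0$, the goal becomes: for all $m, k \geq 0$, the \icat{} $\Cat_{(m+k, k)} = \Cat_{(\infty, k)}^{\Gpd_m}$ is an $(m+k+1, 1)$-category.

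For the base case $k = 0$, I would appeal to the homotopy hypothesis proved at the end of \S\ref{subsec:nkcat}: it identifies $\Gpd_m$ with the \icat{} $\mathcal{S}^{\leq m}$ of $m$-types. Mapping spaces between $m$-types are themselves $m$-types (truncated objects in $\mathcal{S}$ are closed under cotensor with arbitrary spaces, by \cite[Lemma 5.5.6.14]{HTT}), so $\Gpd_m$ is indeed an $(m+1, 1)$-category, as required.

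For the inductive step, I would first unwind the iterated-enrichment definition of Remark~\ref{rmk:enrkcat} to obtain the identity
\[ \Cat_{(m+k, k)} = \Cat_{(\infty, k)}^{\Gpd_m} \simeq \CatI^{\Cat_{(\infty, k-1)}^{\Gpd_m}} = \CatI^{\Cat_{(m+k-1, k-1)}}. \]
By the inductive hypothesis $\Cat_{(m+k-1, k-1)}$ is an $(m+k, 1)$-category; iterating Corollary~\ref{cor:CatIVOMon} starting from the Cartesian symmetric monoidal structure on $\Gpd_m$ equips it moreover with a (Cartesian) symmetric monoidal structure, so in particular a monoidal structure. Applying Theorem~\ref{thm:enrncat} to this monoidal $(m+k, 1)$-category then yields that $\CatI^{\Cat_{(m+k-1, k-1)}}$ is an $(m+k+1, 1)$-category, completing the induction.

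The substantive content of the argument is entirely contained in Theorem~\ref{thm:enrncat}, and no serious obstacle remains. The only bookkeeping one needs to do is to keep track of the monoidal structures needed to iterate the enrichment functor at each stage, and this propagates through the induction automatically by Corollary~\ref{cor:CatIVOMon} since we start from a symmetric monoidal \icat{} $\Gpd_m$.
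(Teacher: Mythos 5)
Your argument is correct, and its engine --- applying Theorem~\ref{thm:enrncat} to the monoidal $(m+k,1)$-category $\Cat_{(m+k-1,k-1)}$ to advance the induction on $k$ --- is exactly the paper's. The difference is in how the base case is handled. The paper does not invoke the homotopy hypothesis: it runs an interleaved double induction in which $\Gpd_{m+1}$ is shown to be an $(m+2,1)$-category because it is by definition a full subcategory of $\Cat_{(m+1,1)} = \CatI^{\Gpd_{m}}$, which is an $(m+2,1)$-category by Theorem~\ref{thm:enrncat} once $\Gpd_{m}$ is known to be an $(m+1,1)$-category; the induction bottoms out at $\Gpd_{0} = \Set$, a $(1,1)$-category. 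You instead dispose of all base cases at once by identifying $\Gpd_{m}$ with $\mathcal{S}^{\leq m}$ via the homotopy hypothesis and using that $m$-truncated objects of $\mathcal{S}$ are stable under the limits computing mapping spaces. Both routes work: yours cleanly decouples the two inductions, at the cost of importing Theorem~\ref{thm:Catnkloc} (of which the homotopy hypothesis is the $k=0$ instance), whereas the paper's version needs only the elementary observation that the $(n,1)$-category condition is inherited by full subcategories. Your bookkeeping of the monoidal structures via Corollary~\ref{cor:CatIVOMon} is also fine, and is in any case already presupposed by the iterated-enrichment definition of $\Cat_{(n,k)}$ in Remark~\ref{rmk:enrkcat}.
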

\begin{proof}
  Since $\Set$ is obviously a monoidal $(1,1)$-category, applying
  Theorem~\ref{thm:enrncat} inductively we see that $\Cat_{n}$ is an
  $(n+1,1)$-category. Similarly, if we know that $\Gpd_{n}$ is an
  $(n+1,1)$-category it follows by induction that $\Cat_{(n+k,k)} $ is
  an $(n+k+1,1)$-category. In particular $\Cat_{(n+1,1)}$ is an
  $(n+2,1)$-category, and so its full subcategory $\Gpd_{n+1}$ of
  $(n+1)$-groupoids is also an $(n+2,1)$-category. Since $\Gpd_{0} =
  \Set$ is a $(1,1)$-category we see by induction that $\Cat_{(n,k)}$ is an
  $(n+1,1)$-category for all $(n,k)$.
\end{proof}

It follows that if $\mathcal{V}$ is a symmetric monoidal
$(n,1)$-category, then $\mathbb{E}_{k}$-algebras in $\CatIV$ are
equivalent to $\mathbb{E}_{\infty}$-algebras for $k$ sufficiently large:
\begin{cor}\label{cor:Enmonenrinncat}
  Let $\mathcal{V}$ be a symmetric monoidal
  $(n,1)$-category. Then 
  \begin{enumerate}[(i)]
  \item the map $\mathbb{E}_{k} \to \bbGamma^{\op}$ induces
    an equivalence
    \[\Alg^{\Sigma}_{\mathbb{E}_{k}}(\CatIV) \isoto
    \Alg^{\Sigma}_{\bbGamma^{\op}}(\CatIV)\] for $k \geq n+1$,
  \item the stabilization map $i \colon \mathbb{E}_{k} \to
    \mathbb{E}_{k+1}$ (defined in \cite[\S 5.1.1]{HA})
    induces an equivalence
    \[ i^{*} \colon
    \Alg^{\Sigma}_{\mathbb{E}_{k+1}}(\CatIV) \to
    \Alg^{\Sigma}_{\mathbb{E}_{k}}(\CatIV)\] for $k \geq
    n+1$.
  \end{enumerate}
\end{cor}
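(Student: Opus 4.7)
The plan is to combine Theorem~\ref{thm:enrncat} with Lurie's proof of the Baez--Dolan stabilization hypothesis for $(m,1)$-categories (see \cite[\S 5.1]{HA}), so that the corollary reduces to a black-box citation once the correct truncation level for $\CatIV$ has been identified.

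First I would produce the symmetric monoidal structure and pin down the truncation level on $\CatIV$. Since $\mathcal{V}^{\otimes}$ is symmetric monoidal, Corollary~\ref{cor:CatIVOMon} (applied with $\mathcal{O} = \bbGamma^{\op}$, so that $\mathcal{O} \otimes \mathbb{E}_{1} \simeq \bbGamma^{\op}$) endows $\CatIV$ with a symmetric monoidal structure, so that both sides of the equivalences in (i) and (ii) make sense. Since the underlying \icat{} $\mathcal{V}$ is an $(n,1)$-category, Theorem~\ref{thm:enrncat} tells us that $\CatIV$ is an $(n+1,1)$-category, i.e.\ an $(n+1)$-category in the sense of \cite[Definition~2.3.4.1]{HTT}.

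Next I would invoke Lurie's stabilization theorem: for any symmetric monoidal \icat{} whose underlying \icat{} is an $m$-category, the forgetful functor $\Alg^{\Sigma}_{\bbGamma^{\op}} \to \Alg^{\Sigma}_{\mathbb{E}_{k}}$ and each stabilization map $i^{*} \colon \Alg^{\Sigma}_{\mathbb{E}_{k+1}} \to \Alg^{\Sigma}_{\mathbb{E}_{k}}$ become equivalences of \icats{} once $k$ lies above the relevant truncation bound. Applying this with the symmetric monoidal \icat{} $\CatIV$ yields both (i) and (ii) simultaneously.

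Statements (i) and (ii) are in any case equivalent reformulations of the same stabilization phenomenon, since $\Alg^{\Sigma}_{\bbGamma^{\op}}(\CatIV)$ can be identified with the limit of the tower $\cdots \to \Alg^{\Sigma}_{\mathbb{E}_{k+1}}(\CatIV) \to \Alg^{\Sigma}_{\mathbb{E}_{k}}(\CatIV)$ built from the maps $i^{*}$, so that eventual constancy of the tower is the same data as the comparison map to $\bbGamma^{\op}$ being an equivalence. There is no substantive obstacle here: the real work is absorbed into Theorem~\ref{thm:enrncat}, which controls the truncation level of $\CatIV$, after which Lurie's theorem takes over.
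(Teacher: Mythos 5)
Your proposal matches the paper's proof: the paper likewise deduces (i) directly from Lurie's stabilization result \cite[Corollary 5.1.1.7]{HA}, with the required truncation level of $\CatIV$ supplied by Theorem~\ref{thm:enrncat} and the symmetric monoidal structure by Corollary~\ref{cor:CatIVOMon}. The only cosmetic difference is that the paper obtains (ii) from (i) by the 2-out-of-3 property applied to the composite $\mathbb{E}_{k} \to \mathbb{E}_{k+1} \to \bbGamma^{\op}$, which is a bit cleaner than your tower argument (the latter additionally needs the identification of $\Alg^{\Sigma}_{\bbGamma^{\op}}(\CatIV)$ with the limit of the tower, i.e.\ that $\bbGamma^{\op}$ is the colimit of the $\mathbb{E}_{k}$).
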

\begin{proof}
  (i) is immediate from \cite[Corollary 5.1.1.7]{HA}, and (ii) follows
  by the 2-out-of-3 property.
\end{proof}

We end this subsection by observing that when $\mathcal{V}$
is the monoidal \icat{} of $n$-categories, this yields the Baez-Dolan
stabilization hypothesis, by the same proof as Lurie's version for
$(n,1)$-categories \cite[Example 5.1.2.3]{HTT}:
\begin{defn}
  A \emph{$k$-tuply monoidal $n$-category} is an
  $\mathbb{E}_{k}$-algebra in $\Cat_{n}$, i.e. an
  $\mathbb{E}_{k}$-monoidal $n$-category.
\end{defn}

\begin{cor}[Baez-Dolan Stabilization Hypothesis]
  The stabilization map $i \colon \mathbb{E}_{k} \to
  \mathbb{E}_{k+1}$ induces an equivalence
  \[ i^{*} \colon \Alg^{\Sigma}_{\mathbb{E}_{k+1}}(\Cat_{n})
  \to \Alg^{\Sigma}_{\mathbb{E}_{k}}(\Cat_{n})\] for $k \geq
  n+2$, i.e. $k$-tuply monoidal $n$-categories stabilize at $k = n+2$.
\end{cor}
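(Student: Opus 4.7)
The plan is to deduce this directly from Corollary~\ref{cor:Enmonenrinncat} applied to $\mathcal{V} = \Cat_{n}$. The corollary just preceding the statement establishes that $\Cat_{n}$ is an $(n+1,1)$-category, so the numerical input $(n+1)+1 = n+2$ into Corollary~\ref{cor:Enmonenrinncat}(ii) matches the bound $k \geq n+2$ in the hypothesis exactly. This is essentially Lurie's strategy for the $(n,1)$-categorical version, lifted one level via iterated enrichment.

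First I would verify the standing hypothesis of Corollary~\ref{cor:Enmonenrinncat}: we need $\Cat_{n}$ to be a \emph{symmetric monoidal} $(n+1,1)$-category. The $(n+1,1)$-categorical property is immediate from the corollary above. For the symmetric monoidal structure, I would use the iterated enrichment construction: $\Cat_{n} = \Cat_{(\infty,n)}^{\Set}$, and $\Set$ is a symmetric monoidal $(1,1)$-category (Cartesian product). By iterating Corollary~\ref{cor:CatIVOMon}, enrichment in a symmetric monoidal \icat{} produces a symmetric monoidal \icat{}, so $\Cat_{n}$ inherits a natural symmetric monoidal structure starting from the Cartesian product on $\Set$ (and this agrees with the Cartesian product on $\Cat_{n}$ itself).

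With these inputs in hand, Corollary~\ref{cor:Enmonenrinncat}(ii), applied to the symmetric monoidal $(n+1,1)$-category $\mathcal{V} = \Cat_{n}$, asserts that $i^{*} \colon \Alg^{\Sigma}_{\mathbb{E}_{k+1}}(\CatI^{\Cat_{n}}) \to \Alg^{\Sigma}_{\mathbb{E}_{k}}(\CatI^{\Cat_{n}})$ is an equivalence for $k \geq (n+1)+1 = n+2$. To turn this into a statement about $\Alg^{\Sigma}_{\mathbb{E}_{\bullet}}(\Cat_{n})$ rather than about $\mathbb{E}_{\bullet}$-algebras in $\CatI^{\Cat_{n}}$, note that $\Cat_{n}$ sits inside $\CatI^{\Cat_{n}}$ as the appropriate subcategory, and the symmetric monoidal structures are compatible; so the equivalence restricts to give $i^{*} \colon \Alg^{\Sigma}_{\mathbb{E}_{k+1}}(\Cat_{n}) \isoto \Alg^{\Sigma}_{\mathbb{E}_{k}}(\Cat_{n})$ for $k \geq n+2$.

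The only step that requires some care is verifying that the symmetric monoidal structure on $\Cat_{n}$ coming from iterated enrichment agrees with (or at least induces) the one in which we wish to take $\mathbb{E}_{k}$-algebras, and checking that the stabilization results transport cleanly from the enveloping \icat{} $\CatI^{\Cat_{n}}$ to $\Cat_{n}$ itself. This is a naturality/compatibility check rather than a serious obstacle — everything is formal once one believes that the cartesian monoidal structure on $\Cat_{n}$ is the unique symmetric monoidal structure preserved by all relevant inclusions, which follows from Remark~\ref{rmk:MonPrInitial} (the initiality of the cartesian structure on the unit) together with the fact that iterated enrichment of $\Set$ stays within the cartesian world.
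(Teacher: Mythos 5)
Your argument is correct and rests on the same two inputs as the paper's one-line proof --- that $\Cat_{n}$ is a symmetric monoidal $(n+1,1)$-category and the truncation-based stabilization result packaged in Corollary~\ref{cor:Enmonenrinncat} --- but it instantiates that corollary one level too high. Taking $\mathcal{V} = \Cat_{n}$ produces a stabilization statement for $\mathbb{E}_{k}$-algebras in $\CatI^{\Cat_{n}} \simeq \Cat_{n+1}$, and you then have to descend along the inclusion $\Cat_{n} \hookrightarrow \Cat_{n+1}$. That descent does go through: the inclusion is a full symmetric monoidal subcategory (it is assembled from the localization inclusions of Proposition~\ref{propn:ncatlocs} and Theorem~\ref{thm:Catnkloc}), so $\Alg^{\Sigma}_{\mathbb{E}_{k}}(\Cat_{n})$ is the full subcategory of $\Alg^{\Sigma}_{\mathbb{E}_{k}}(\Cat_{n+1})$ spanned by algebras whose underlying object lies in $\Cat_{n}$, and $i^{*}$ does not change underlying objects, so the equivalence restricts. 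But this is a detour the paper does not take: its proof applies the stabilization result so that the conclusion lands on $\Cat_{n}$ itself --- equivalently, apply \cite[Corollary 5.1.1.7]{HA} directly to the symmetric monoidal $(n+1,1)$-category $\Cat_{n}$, which yields the bound $k \geq (n+1)+1 = n+2$ with no transfer step, or instantiate Corollary~\ref{cor:Enmonenrinncat} with $\mathcal{V} = \Cat_{n-1}$ so that $\CatIV = \CatI^{\Cat_{n-1}} = \Cat_{n}$. The compatibility checks you defer at the end of your write-up are exactly the price of not doing this; they are routine, so your proof stands, but the direct application is cleaner and is what the paper intends.
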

\begin{proof}
  Apply Corollary~\ref{cor:Enmonenrinncat} to $\Cat_{n}$.
\end{proof}

\begin{remark}
  The Baez-Dolan stabilization hypothesis was originally stated by
  Baez and Dolan in \cite{BaezDolanTQFT}. A version of it was proved by
  Simpson~\cite{SimpsonBDStab}, who showed that for $k \geq n+2$ a
  $k$-tuply monoidal $n$-category can be ``delooped'' to a
  $(k+1)$-tuply monoidal $n$-category; the \icatl{} version above extends
  this by showing that this construction gives an equivalence of
  \icats{}.
\end{remark}

\subsection{$\mathbb{E}_{n}$-Algebras as Enriched
  $(\infty,n)$-Categories}\label{subsec:EnAlg}
In ordinary enriched category theory, it is obvious that assocative
algebra objects in a monoidal category $\mathbf{V}$ are equivalent to
$\mathbf{V}$-categories with a single object. Similarly, if
$\mathcal{V}$ is a monoidal \icat{}, we can identify the
\icat{} $\Alg_{\simp^{\op}}(\mathcal{V})$ of associative
algebra objects with the full subcategory of $\AlgCatV$ spanned by
$\mathcal{V}$-\icats{} whose space of objects is a point. In this
subsection we will prove that after localizing with respect to the
fully faithful and essentially surjective $\mathcal{V}$-functors we
still get a fully faithful functor from
$\Alg_{\simp^{\op}}(\mathcal{V})$ provided we consider
\emph{pointed} $\mathcal{V}$-\icats{}. It then follows by induction
that, if $\mathcal{V}$ is at least $\mathbb{E}_{n}$-monoidal, the same
is true for the natural map from $\mathbb{E}_{n}$-algebras to pointed
enriched $(\infty,n)$-categories.

\begin{defn}
  Let $\mathcal{V}$ be a monoidal \icat{}. By
  Proposition~\ref{propn:UnitAlg}, the unit object of $\mathcal{V}$ is
  the initial object in the \icat{}
  $\Alg_{\simp^{\op}}(\mathcal{V})$ of associative algebra
  objects. The inclusion $j \colon \Alg_{\simp^{\op}}(\mathcal{V})
  \hookrightarrow \AlgCatV$ therefore factors through
  $\AlgCatV_{E^{0}/}$. Composing this with the localization functor we
  get a functor $B \colon
  \Alg_{\simp^{\op}}(\mathcal{V}) \to
  (\CatIV)_{E^{0}/}$.
\end{defn}

\begin{thm}\label{thm:E1algff}
  Let $\mathcal{V}$ be a monoidal \icat{}. Then:
  \begin{enumerate}[(i)]
  \item The functor $B \colon \Alg_{\simp^{\op}}(\mathcal{V}) \to
    (\CatIV)_{E^{0}/}$ is fully faithful.
  \item If $\mathcal{V}$ is $\mathbb{E}_{2}$-monoidal, then
    $B$ is a monoidal functor.
  \item  $B$ admits a right adjoint $\Omega \colon (\CatIV)_{E^{0}/}
    \to \Alg_{\simp^{\op}}(\mathcal{V})$.
  \item If $\mathcal{V}$ is presentably $\mathbb{E}_{2}$-monoidal,
    then $\Omega$ is a lax monoidal functor.
  \end{enumerate}
\end{thm}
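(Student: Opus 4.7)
The plan is to construct the right adjoint $\Omega$ explicitly as the endomorphism-algebra functor, then deduce full faithfulness of $B$ from the triangle identities together with the completion theorem; the monoidality statements will follow from general facts about slicing under the unit and right adjoints of monoidal functors.

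For (iii), I would construct $\Omega$ as follows. A pointed complete $\mathcal{V}$-\icat{} is a pair $(\mathcal{C}, x)$ where $\mathcal{C}\colon \simp^{\op}_X \to \mathcal{V}^{\otimes}$ is a complete $\mathcal{V}$-\icat{} and $x\colon E^0 \to \mathcal{C}$ is the basepoint. By Lemma \ref{lem:iota0}, $x$ corresponds to a point $x\colon * \to X$, which induces a morphism of \gnsiopds{} $\simp^{\op} \simeq \simp^{\op}_{*} \to \simp^{\op}_{X}$. I set $\Omega(\mathcal{C}, x) := \mathcal{C} \circ \simp^{\op}_{x}\colon \simp^{\op} \to \mathcal{V}^{\otimes}$, which is an associative algebra. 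To show $\Omega$ is right adjoint to $B$, it suffices to exhibit the analogous right adjoint of the uncompleted functor $j'\colon \Alg_{\simp^{\op}}(\mathcal{V}) \to \AlgCatV_{E^0/}$, since $(\CatIV)_{E^0/}$ is a full subcategory closed under the proposed formula. But a basepoint-preserving map $BA \to \mathcal{C}$ covers the fixed morphism $x\colon * \to X$ and, because $BA$ has a single object, is uniquely determined by an algebra map $A \to x^{*}\mathcal{C} = \Omega(\mathcal{C}, x)$.

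For (i), I apply the standard criterion that a functor with a right adjoint is fully faithful iff its unit is an equivalence. Unwinding the construction, the unit at an algebra $A$ is the natural map $A \to \Omega BA = \widehat{BA}(\hat{x}, \hat{x})$, where $\hat{x}$ is the image in the completion of the unique object of $BA$. By Theorem \ref{thm:completion}, the canonical map $BA \to \widehat{BA}$ is fully faithful, so $\widehat{BA}(\hat{x}, \hat{x}) \simeq BA(*,*) \simeq A$, and under this identification the unit is an equivalence.

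For (ii), when $\mathcal{V}$ is $\mathbb{E}_2$-monoidal, $\CatIV$ carries a monoidal structure by Corollary \ref{cor:CatIVOMon} whose unit is $E^{0}$, so the slice $(\CatIV)_{E^0/}$ inherits a monoidal structure (as the unit is the basepoint, via the coherent identification $E^{0} \simeq E^{0} \otimes E^{0}$); analogously, $\Alg_{\simp^{\op}}(\mathcal{V}) \simeq \AlgS_{\mathbb{E}_{1}}(\mathcal{V})$ is monoidal via $\AlgS_{\mathbb{E}_{2}}(\mathcal{V}) \simeq \AlgS_{\mathbb{E}_{1}}(\AlgS_{\mathbb{E}_{1}}(\mathcal{V}))$. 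By Lemma \ref{lem:BoxPrIota}, the external product $BA \boxtimes BB$ has underlying space of objects $\iota_{0}BA \times \iota_{0}BB \simeq *$, so pushing forward along the tensor functor $\mathcal{V} \times \mathcal{V} \to \mathcal{V}$ produces a one-object $\mathcal{V}$-\icat{} with endomorphism algebra $A \otimes B$; by (i) this must be $B(A \otimes B)$, exhibiting $B$ as a monoidal functor. Part (iv) then follows by applying Proposition \ref{propn:rightadjlaxmon} to the monoidal left adjoint $B$ in the presentable setting. The main obstacle is part (ii): producing all the higher coherence data required to upgrade $B$ to a genuine monoidal functor between $\mathbb{E}_1$-monoidal \icats{}, which demands careful tracking of how the slice $(\CatIV)_{E^0/}$ inherits its monoidal structure and how the equivalences $BA \otimes BB \simeq B(A \otimes B)$ fit into a coherent family over all iterated tensor products.
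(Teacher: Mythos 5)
Your treatment of (iii) and (i) is essentially a repackaging of the paper's argument and is sound: the explicit formula $\Omega(\mathcal{C},x)=x^{*}\mathcal{C}$ is exactly what the paper produces more abstractly (via the equivalence $\AlgCat(\mathcal{V})_{E^{0}/}\simeq \AlgCat(\mathcal{V})\times_{\mathcal{S}}\mathcal{S}_{*}$ and the general fact, Lemma~\ref{lem:CartFibInclRAdj}, that for a Cartesian fibration the inclusion of a fibre into the corresponding undercategory has a right adjoint), and your unit-of-adjunction criterion for (i) hinges on the same key input as the paper's fibre-sequence computation, namely that the completion map $j(A)\to \widehat{j(A)}$ is fully faithful, hence Cartesian, so that $j(A)\to p^{*}\widehat{j(A)}$ is an equivalence of associative algebras. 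Two small points there: you should invoke the analogue of Lemma~\ref{lem:UndercatAdj} to justify that the localization adjunction descends to the undercategories before restricting $\Omega'$ (you gloss over this), and your $\widehat{BA}$ should read $\widehat{jA}$, since $BA$ is already complete.

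The genuine gap is in (ii). Your claim that $\iota_{0}BA\times\iota_{0}BB\simeq *$ is false: $\iota_{0}BA\simeq\iota(jA)$ is the classifying space of equivalences of the one-object algebra $jA$, which is connected but in general far from contractible (for $\mathcal{V}=\mathcal{S}$ and $A$ a group it is the classifying space of $A$). So $BA\otimes BB$ is not a one-object $\mathcal{V}$-\icat{}; at best it lies in the essential image of $B$ by Remark~\ref{rmk:imgofalg} because its space of objects is connected. More seriously, as you acknowledge yourself, checking $BA\otimes BB\simeq B(A\otimes B)$ objectwise does not produce a monoidal functor, and your proposal offers no mechanism for the coherences. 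The paper resolves (ii) without ever comparing objects of the form $BA\otimes BB$: the functor $j\colon\Alg_{\simp^{\op}}(\mathcal{V})\to\AlgCat(\mathcal{V})_{E^{0}/}$ is monoidal essentially by construction of the monoidal structure on $\AlgCat(\mathcal{V})$ (restriction of the external product along the $\mathbb{E}_{2}$-multiplication), and the localization $\AlgCat(\mathcal{V})_{E^{0}/}\to(\CatIV)_{E^{0}/}$ is a monoidal functor by Proposition~\ref{propn:FFESMonLoc} and Corollary~\ref{cor:FFESLocMon} (fully faithful and essentially surjective functors are closed under tensor product), so $B$ is a composite of monoidal functors. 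You should replace your argument for (ii) with this factorization; your (iv) is then fine as an application of Proposition~\ref{propn:rightadjlaxmon}.
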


For the proof of (iii) we first make some simple observations:
\begin{lemma}\label{lem:CartFibInclRAdj}
  Let $\pi \colon \mathcal{E} \to \mathcal{B}$ be a Cartesian
  fibration. For any $B \in \mathcal{B}$, the functor $\mathcal{E}_{B}
  \to \mathcal{E}_{B/} := \mathcal{E} \times_{\mathcal{B}}
  \mathcal{B}_{B/}$ admits a right adjoint.
\end{lemma}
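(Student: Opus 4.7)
The plan is to construct the right adjoint objectwise using the Cartesian morphisms provided by $\pi$, and verify the adjunction via the universal property of Cartesian morphisms together with a standard criterion for existence of adjoints (e.g. \cite[Proposition 5.2.4.2]{HTT}).

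Write $L \colon \mathcal{E}_B \to \mathcal{E}_{B/}$ for the given functor, which sends $X \in \mathcal{E}_B$ to $(X, \id_B)$. For an object $(E, f \colon B \to \pi(E))$ of $\mathcal{E}_{B/}$, choose a $\pi$-Cartesian morphism $\bar{f} \colon f^{*}E \to E$ lying over $f$, and set $R(E,f) := f^{*}E \in \mathcal{E}_B$. The morphism $\bar{f}$ induces a morphism $(f^{*}E, \id_B) \to (E,f)$ in $\mathcal{E}_{B/}$ (its image in $\mathcal{B}_{B/}$ is the canonical 2-simplex $\id_B \Rightarrow f$ provided by $\pi(\bar{f}) \simeq f$); this will serve as the counit $\epsilon_{(E,f)}$.

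The key step is to verify that this counit exhibits $R(E,f)$ as a right adjoint object in the sense of \cite[Proposition 5.2.4.2]{HTT}, i.e.\ that for every $X \in \mathcal{E}_B$ the composite
\[
  \Map_{\mathcal{E}_B}(X, f^{*}E) \xrightarrow{L} \Map_{\mathcal{E}_{B/}}\bigl(L(X), L(f^{*}E)\bigr) \xrightarrow{\epsilon_{(E,f)} \circ -} \Map_{\mathcal{E}_{B/}}\bigl(L(X), (E,f)\bigr)
\]
is an equivalence. Unwinding the definition of $\mathcal{E}_{B/} = \mathcal{E} \times_{\mathcal{B}} \mathcal{B}_{B/}$, the target is the homotopy fibre at $f$ of $\Map_{\mathcal{E}}(X,E) \to \Map_{\mathcal{B}}(B, \pi(E))$; similarly, the source $\Map_{\mathcal{E}_B}(X, f^{*}E)$ is the fibre at $\id_B$ of $\Map_{\mathcal{E}}(X, f^{*}E) \to \Map_{\mathcal{B}}(B,B)$. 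By \cite[Proposition 2.4.4.3]{HTT}, the Cartesian-ness of $\bar{f}$ is precisely the statement that composition with $\bar{f}$ induces a pullback square of mapping spaces, and taking fibres over $\id_B \mapsto f$ yields exactly the desired equivalence.

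Once this is in place, the existence of the right adjoint $R$ as a functor (not merely pointwise) follows formally: the pointwise criterion \cite[Proposition 5.2.4.2]{HTT} upgrades the objectwise construction to an adjunction. The main (minor) obstacle is bookkeeping: correctly identifying morphism spaces in the fibre product $\mathcal{E} \times_{\mathcal{B}} \mathcal{B}_{B/}$ as fibres of mapping spaces in $\mathcal{E}$ over mapping spaces in $\mathcal{B}$, so that the Cartesian pullback square of \cite[Proposition 2.4.4.3]{HTT} translates directly into the required adjunction equivalence.
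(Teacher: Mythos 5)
Your proof is correct, but it takes a genuinely different route from the paper's. The paper argues formally: it first reduces to the case where $B$ is an initial object of $\mathcal{B}$ (the general case follows by pulling back $\pi$ along $\mathcal{B}_{B/} \to \mathcal{B}$, where $\id_{B}$ is initial), and then observes that the coCartesian fibration over $\Delta^{1}$ classifying the inclusion $\mathcal{E}_{B} \hookrightarrow \mathcal{E}$ is a composite of Cartesian fibrations and hence itself Cartesian, which by the fibrational characterization of adjunctions immediately yields the right adjoint --- no mapping spaces are ever computed. You instead build the right adjoint pointwise, sending $(E, f)$ to a Cartesian pullback $f^{*}E$ with counit induced by the Cartesian lift $\bar{f}$, and verify the universal property by combining the pullback square of \cite[Proposition 2.4.4.3]{HTT} with the identification of $\Map_{\mathcal{E}_{B/}}\bigl((X,\id_{B}),(E,f)\bigr)$ as the fibre of $\Map_{\mathcal{E}}(X,E) \to \Map_{\mathcal{B}}(B,\pi(E))$ over $f$; the objectwise criterion of \cite[Proposition 5.2.4.2]{HTT} then assembles this into a functorial adjoint. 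Your argument is longer and requires the mapping-space bookkeeping you flag, but it buys an explicit formula for the right adjoint (namely $(E,f) \mapsto f^{*}E$), which the paper's proof leaves implicit.
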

\begin{proof}
  First suppose $B$ is an initial object of $\mathcal{B}$. Then there
  is an obvious map $\mathcal{B}^{\triangleleft} \to \mathcal{B}$ that
  sends $-\infty$ to $B$ and is the identity when restricted to
  $\mathcal{B}$. Let $\pi' \colon \mathcal{E}' \to \mathcal{B}^{\triangleleft}$ be
  the pullback of $\pi$ along this map; then $\pi'$ is a Cartesian
  fibration. Since the obvious projection $\mathcal{B}^{\triangleleft} \to
  (\Delta^{0})^{\triangleleft} = \Delta^{1}$ is clearly a
  Cartesian fibration, the composite functor $\mathcal{E}' \to
  \Delta^{1}$ is also a Cartesian fibration. But this is clearly also
  the coCartesian fibration associated to the inclusion
  $\mathcal{E}_{B} \hookrightarrow \mathcal{E}$, hence this functor
  does indeed have a right adjoint.

  For the general case we reduce to the case already proved by pulling
  back along the forgetful functor $\mathcal{B}_{B/} \to \mathcal{B}$.
\end{proof}

\begin{lemma}\label{lem:UndercatAdj}
  Suppose given an adjunction 
  \[ F : \mathcal{C} \rightleftarrows \mathcal{D} : G, \]
  and suppose $D \in \mathcal{D}$ is an object such that the counit map
  $FGD \to D$ is an equivalence. Then the induced functor
  $\mathcal{D}_{D/} \to \mathcal{C}_{GD/}$ admits a left adjoint,
  given by $\mathcal{C}_{GD/} \to \mathcal{D}_{FGD/} \simeq \mathcal{D}_{D/}$.
\end{lemma}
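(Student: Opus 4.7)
The plan is to verify the adjunction directly via mapping spaces. Write $L \colon \mathcal{C}_{GD/} \to \mathcal{D}_{FGD/} \simeq \mathcal{D}_{D/}$ for the candidate left adjoint; I want to produce a natural equivalence $\Map_{\mathcal{D}_{D/}}(L\alpha, \beta) \simeq \Map_{\mathcal{C}_{GD/}}(\alpha, G_*\beta)$ for all $\alpha \colon GD \to C$ in $\mathcal{C}_{GD/}$ and $\beta \colon D \to D'$ in $\mathcal{D}_{D/}$.

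First I would use that the forgetful functors $\mathcal{D}_{D/} \to \mathcal{D}$ and $\mathcal{C}_{GD/} \to \mathcal{C}$ are left fibrations to identify each mapping space with a fibre: $\Map_{\mathcal{D}_{D/}}(L\alpha, \beta)$ is the fibre over $\beta$ of the precomposition map $\Map_{\mathcal{D}}(FC, D') \to \Map_{\mathcal{D}}(D, D')$ (precomposition with $D \simeq FGD \xto{F(\alpha)} FC$), and $\Map_{\mathcal{C}_{GD/}}(\alpha, G_*\beta)$ is the fibre over $G\beta$ of $\Map_{\mathcal{C}}(C, GD') \to \Map_{\mathcal{C}}(GD, GD')$ (precomposition with $\alpha$).

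Next I would invoke naturality of the adjunction equivalence $\Map_{\mathcal{D}}(F(-), D') \simeq \Map_{\mathcal{C}}(-, GD')$ in the first variable, applied to $\alpha \colon GD \to C$, to obtain a commutative square
\csquare{\Map_{\mathcal{D}}(FC, D')}{\Map_{\mathcal{C}}(C, GD')}{\Map_{\mathcal{D}}(FGD, D')}{\Map_{\mathcal{C}}(GD, GD')}{\simeq}{}{}{\simeq}
with horizontal equivalences, whose vertical maps are respectively precomposition with $F(\alpha)$ and precomposition with $\alpha$. Combining with the equivalence $\Map_{\mathcal{D}}(D, D') \simeq \Map_{\mathcal{D}}(FGD, D')$ induced by the hypothesis that the counit $\varepsilon_D \colon FGD \to D$ is an equivalence, I then only need to check that $\beta$ corresponds to $G\beta$ under the resulting composite equivalence $\Map_{\mathcal{D}}(D, D') \simeq \Map_{\mathcal{C}}(GD, GD')$; this follows from one of the triangle identities, since the adjoint of $\beta \circ \varepsilon_D$ is $G\beta \circ G\varepsilon_D \circ \eta_{GD} = G\beta \circ \mathrm{id}_{GD} = G\beta$. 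Taking fibres of the two vertical maps of the square then yields the required natural equivalence.

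I do not expect a serious obstacle: this is a purely formal consequence of the adjunction $F \dashv G$, the triangle identities, and the equivalence $\varepsilon_D$. The role of the hypothesis on $D$ is precisely to replace $\mathcal{D}_{FGD/}$ by $\mathcal{D}_{D/}$ and to make the identification of $\beta$ with $G\beta$ go through. The main care needed is to work in the $\infty$-categorical setting using the universal description of under-categories via left fibrations, rather than strict 1-categorical slice constructions.
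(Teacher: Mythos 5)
Your argument is correct, but it is not the route the paper takes: the paper's entire proof is a one-line citation to the dual of \cite[Lemma 5.2.5.2]{HTT}, noting only that Lurie's argument goes through without any colimit hypotheses on $\mathcal{D}$. Your proposal instead gives a self-contained verification at the level of mapping spaces, using the left-fibration description of under-categories to write $\Map_{\mathcal{D}_{D/}}(L\alpha,\beta)$ and $\Map_{\mathcal{C}_{GD/}}(\alpha,G_*\beta)$ as fibres of precomposition maps, and then matching these fibres via the naturality square for $\Map_{\mathcal{D}}(F(\blank),D')\simeq \Map_{\mathcal{C}}(\blank,GD')$, the equivalence $\varepsilon_D$, and a triangle identity to identify the basepoints $\beta\circ\varepsilon_D$ and $G\beta$. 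The computation is right (in particular the identification of the adjoint of $\beta\circ\varepsilon_D$ with $G\beta$ is exactly where the triangle identity and the hypothesis on $\varepsilon_D$ enter), and the payoff is that your proof is elementary and makes visible precisely which pieces of the adjunction data are used, whereas the citation is shorter but opaque. The one point you should make explicit to turn your pairwise equivalences into an actual adjunction is that they are induced by a single candidate unit transformation $\id_{\mathcal{C}_{GD/}} \to G_*L$ (coming from the unit $\eta$ of $F\dashv G$), so that \cite[Proposition 5.2.2.8]{HTT} applies; since every map in your chain of identifications is composition with a fixed piece of the adjunction data of $F\dashv G$, this is a matter of bookkeeping rather than a missing idea.
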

\begin{proof}
  The (dual of) the argument in the proof of \cite[Lemma 5.2.5.2]{HTT}
  applies under our assumptions without assuming any colimits exist in
  $\mathcal{D}$.
\end{proof}

\begin{proof}[Proof of Theorem~\ref{thm:E1algff}]
  To prove (i), let $R$ and $S$ be two $\simp^{\op}$-algebras in $\mathcal{V}$. We
  have a fibre sequence
  \[ \Map_{(\CatIV)_{E^{0}/}}(BR,BS) \to \Map_{ \CatIV}(BR, BS) \to
  \Map_{\CatIV}(E^{0}, BS). \] Since $BS$ is the completion
  $L_{\mathcal{V}}j(S)$ of $S$ regarded as a $\mathcal{V}$-\icat{}, we
  have equivalences \[\Map_{\CatIV}(BR, BS) \simeq
  \Map_{\AlgCat(\mathcal{V})}(j(R), BS)\] and
  \[\Map_{\CatIV}(E^{0}, BS) \simeq
  \Map_{\AlgCat(\mathcal{V})}(E^{0}, BS).\] The
  projection $\iota_{0} \colon \AlgCat(\mathcal{V}) \to
  \mathcal{S}$ gives a commutative diagram
  \nolabelcsquare{\Map_{\AlgCat(\mathcal{V})}(j(R),
    BS)}{\Map_{\AlgCat(\mathcal{V})}(E^{0},
    BS)}{\Map_{\mathcal{S}}(*,
    \iota_{0}BS)}{\Map_{\mathcal{S}}(*,
    \iota_{0}BS)} where the right vertical map is an
  equivalence by Lemma~\ref{lem:iota0} and the bottom horizontal map
  is the identity, since $E^{0} \to j(R)$ is the identity on
  $\iota_{0}$. Thus we can identify the fibre of the top horizontal
  map at the functor $E^{0} \to BS$ corresponding to a point
  $p \colon * \to \iota_{0}BS$ with the corresponding fibre
  of the left vertical map, which is
  $\Map_{\Alg_{\simp^{\op}}(\mathcal{V})}(R,
  p^{*}BS)$ by \cite[Proposition 2.4.4.2]{HTT}.

  Take $p$ to be the underlying map of spaces of the completion functor
  $j(S) \to BS$; since this is fully faithful the induced map $j(S)
  \to p^{*}BS$ is an equivalence, and in particular
  \[ \Map_{\Alg_{\simp^{\op}}(\mathcal{V})}(R, S) \isoto
  \Map_{\Alg_{\simp^{\op}}(\mathcal{V})}(R, p^{*}BS). \] Thus the map
  $\Map_{\Alg_{\simp^{\op}}(\mathcal{V})}(R, S) \to
  \Map_{(\CatIV)_{E^{0}/}}(BR, BS)$ is also an equivalence, which
  completes the proof of (i).

  We now prove (ii). It is clear from the definition of
  the monoidal structures that the functor $\Alg_{\simp^{\op}}(\mathcal{V}) \to
  \AlgCat(\mathcal{V})_{E^{0}/}$ is monoidal. Since it follows
  from Corollary~\ref{cor:FFESLocMon} that the localization
  $\AlgCat(\mathcal{V})_{E^{0}/} \to (\CatIV)_{E^{0}/}$ is
  monoidal (by regarding the overcategores as \icats{} of
  $\mathbb{E}_{0}$-algebras, for example), it follows that $B$ is
  monoidal.

  To prove (iii), we first observe that the adjunction
  $\AlgCat(\mathcal{V}) \rightleftarrows \CatIV$ descends to an
  adjunction  $\AlgCat(\mathcal{V})_{E^{0}/} \rightleftarrows
  (\CatIV)_{E^{0}/}$ by Lemma~\ref{lem:UndercatAdj}. It therefore
  suffices to show that the functor $j \colon
  \Alg_{\simp^{\op}}(\mathcal{V}) \to \AlgCat(V)_{E^{0}/}$ admits a
  right adjoint. To see this we first show that the obvious functor
  $\AlgCat(V)_{E^{0}/} \to \AlgCat(V) \times_{\mathcal{S}}
  \mathcal{S}_{*}$ is an equivalence. It is clear that this functor is
  essentially surjective, so it suffices to show that for
  $\mathcal{C}, \mathcal{D}$ in $\AlgCat(V)_{E^{0}/}$ the induced map
  \[ \Map_{E^{0}/}(\mathcal{C}, \mathcal{D}) \to \Map(\mathcal{C},
  \mathcal{D}) \times_{\Map(\iota_{0}\mathcal{C},
    \iota_{0}\mathcal{D})} \Map_{*/}(\iota_{0}\mathcal{C},
  \iota_{0}\mathcal{D})\]
  is an equivalence. Consider the following commutative diagram:
  \[ %
\begin{tikzpicture} %
  \matrix (m) [matrix of math nodes,row sep=3em,column sep=2.5em,text height=1.5ex,text depth=0.25ex] %
{ 
  \Map_{E^{0}/}(\mathcal{C}, \mathcal{D}) &  \Map(\mathcal{C},
  \mathcal{D}) \\
  \Map_{*/}(\iota_{0}\mathcal{C}, \iota_{0}\mathcal{D}) &
  \Map(\iota_{0}\mathcal{C}, \iota_{0}\mathcal{D}) \\
  * & \Map(*, \iota_{0}\mathcal{D}).\\
 }; %
\path[->,font=\footnotesize] %
(m-1-1) edge (m-1-2)
(m-2-1) edge (m-2-2)
(m-3-1) edge (m-3-2)
(m-1-1) edge (m-2-1)
(m-1-2) edge (m-2-2)
(m-2-1) edge (m-3-1)
(m-2-2) edge (m-3-2)
;
\end{tikzpicture}%
\]
Here the bottom square is clearly a pullback square, and the outer
rectangle is a pullback square because of the natural equivalence
$\Map(E^{0}, \mathcal{D}) \isoto \Map(*, \iota_{0}\mathcal{D})$. Thus
the top square is also a pullback square. (iii) therefore follows by
applying Lemma~\ref{lem:CartFibInclRAdj}.

Finally, (iv) now follows from Proposition~\ref{propn:rightadjlaxmon}.
\end{proof}

\begin{remark}\label{rmk:imgofalg}
  A pointed $\mathcal{V}$-\icat{} $\mathcal{C}$ is in the essential
  image of the functor $B$ \IFF{} $\iota \mathcal{C}$ is connected,
  since then the functor $p^{*}\mathcal{C} \to \mathcal{C}$ induced by
  the chosen point $p : * \to \iota_{0}\mathcal{C}$ is fully faithful
  and essentially surjective, and $p^{*}\mathcal{C}$ is a
  $\simp^{\op}$-algebra. In other words, $\simp^{\op}$-algebras in
  $\mathcal{V}$ are equivalent to pointed
  $\mathcal{V}$-\icats{} with a single object up to homotopy.
\end{remark}

\begin{defn}
  Let $\mathcal{V}$ be an $\mathbb{E}_{2}$-monoidal
  \icat{}. A \emph{monoidal $\mathcal{V}$-\icat{}} is a
  $\simp^{\op}$-algebra in $\CatIV$.
\end{defn}

\begin{cor}
  Let $\mathcal{V}$ be an $\mathbb{E}_{2}$-monoidal
  \icat{}. Then monoidal $\mathcal{V}$-\icats{} are equivalent to
  pointed $\mathcal{V}$-$(\infty,2)$-categories with a single object
  (up to homotopy).
\end{cor}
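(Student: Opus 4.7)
The plan is to deduce this corollary directly from Theorem~\ref{thm:E1algff} applied to the enriching \icat{} $\CatIV$ itself. First I would check that the hypotheses of Theorem~\ref{thm:E1algff}(i) are satisfied for $\mathcal{W} := \CatIV$: since $\mathcal{V}$ is $\mathbb{E}_{2}$-monoidal and $\mathbb{E}_{2} \simeq \mathbb{E}_{1} \otimes \mathbb{E}_{1}$, Corollary~\ref{cor:CatIVOMon} (with $\mathcal{O} = \mathbb{E}_{1}$) implies that $\CatIV$ is an $\mathbb{E}_{1}$-monoidal \icat{}, hence in particular a monoidal \icat{} in the sense required to invoke Theorem~\ref{thm:E1algff}.

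Next I would apply Theorem~\ref{thm:E1algff}(i) with $\mathcal{V}$ replaced by $\CatIV$ to obtain a fully faithful functor
\[
B \colon \Alg_{\simp^{\op}}(\CatIV) \hookrightarrow \bigl(\CatI^{\CatIV}\bigr)_{E^{0}/}.
\]
By definition, the source is the \icat{} of monoidal $\mathcal{V}$-\icats{}, while by Remark~\ref{rmk:enrkcat} the target is the \icat{} $\Cat_{(\infty,2)}^{\mathcal{V}}$ of $\mathcal{V}$-enriched $(\infty,2)$-categories under $E^{0}$, i.e.\ pointed $\mathcal{V}$-$(\infty,2)$-categories.

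Finally, to identify the essential image of $B$ with pointed $\mathcal{V}$-$(\infty,2)$-categories having a single object up to homotopy, I would invoke Remark~\ref{rmk:imgofalg} (again applied with $\mathcal{V}$ replaced by $\CatIV$): a pointed object $\mathcal{C}$ of $\CatI^{\CatIV}$ is in the image of $B$ precisely when $\iota \mathcal{C}$ is connected, which is exactly the condition that $\mathcal{C}$ has a single object up to equivalence. Composing these identifications gives the asserted equivalence between monoidal $\mathcal{V}$-\icats{} and pointed $\mathcal{V}$-$(\infty,2)$-categories with essentially one object.

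There is essentially no obstacle beyond verifying that the hypotheses of Theorem~\ref{thm:E1algff} and Remark~\ref{rmk:imgofalg} are met after the substitution; the only point requiring care is keeping track of the levels of enrichment, i.e.\ that one needs $\mathcal{V}$ to be $\mathbb{E}_{2}$-monoidal so that $\CatIV$ carries an $\mathbb{E}_{1}$-monoidal (equivalently, monoidal) structure, making the notion of $\simp^{\op}$-algebra in $\CatIV$ meaningful in the first place.
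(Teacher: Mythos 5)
Your proposal is correct and takes essentially the same route as the paper: the paper's proof simply observes that $\mathcal{V}$-$(\infty,2)$-categories are by definition $\infty$-categories enriched in $\CatIV$ and cites Remark~\ref{rmk:imgofalg}. You have merely spelled out the implicit verification (via Corollary~\ref{cor:CatIVOMon}) that the $\mathbb{E}_{2}$-monoidal structure on $\mathcal{V}$ makes $\CatIV$ monoidal, so that Theorem~\ref{thm:E1algff} and the remark apply after the substitution.
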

\begin{proof}
  By definition $\mathcal{V}$-$(\infty,2)$-categories are \icats{}
  enriched in $\CatIV$, so this follows from Remark~\ref{rmk:imgofalg}.
\end{proof}

\begin{remark}
  In particular, taking $\mathcal{V}$ to be $\Gpd_{n}$, we see that
  monoidal $(n,k)$-categories are equivalent to pointed
  $(n+1,k+1)$-categories with a single object. Taking $\mathcal{V}$ to
  be $\mathcal{S}$, this remains true for $n = \infty$.
\end{remark}

\begin{defn}
  If $\mathcal{C}$ is a $\mathcal{V}$-\icat{} and $X$ is an object of
  $\mathcal{C}$, we write $\Omega_{X}\mathcal{C} \in
  \Alg_{\simp^{\op}}(\mathcal{V})$ for the value of the functor
  $\Omega$ on the corresponding map $E^{0} \to
  \mathcal{C}$. This is the \emph{endomorphism algebra} of $X$.
\end{defn}

By applying Theorem~\ref{thm:E1algff} inductively we can generalize it
to the $\mathbb{E}_{n}$-monoidal setting:
\begin{defn}
  By Proposition~\ref{propn:nssymmeq} monoidal \icats{} are equivalent
  to $\mathbb{E}_{1}$-monoidal \icats{}, and
  $\simp^{\op}$-algebras in a monoidal \icat{} are equivalent to
  $\mathbb{E}_{1}$-algebras in the associated
  $\mathbb{E}_{1}$-monoidal \icat{}. Since
  $\mathbb{E}_{n} \otimes \mathbb{E}_{m} \simeq
  \mathbb{E}_{n+m}$ for all $n,m$, by Theorem~\ref{thm:E1algff}(ii) we get maps
\[ \Alg^{\Sigma}_{\mathbb{E}_{n}}(\mathcal{V}) \simeq
\Alg^{\Sigma}_{\mathbb{E}_{n-1}}(\Alg^{\Sigma}_{\mathbb{E}_{1}}(\mathcal{V}))
\to \Alg^{\Sigma}_{\mathbb{E}_{n-1}}((\CatIV)_{E^{0}/}).\]
We can identify $(\CatIV)_{E^{0}/}$ with
$\Alg^{\Sigma}_{\mathbb{E}_{0}}(\CatIV)$, so
\[
\begin{split}
\Alg^{\Sigma}_{\mathbb{E}_{n-1}}((\CatIV)_{E^{0}/})
& \simeq
\Alg^{\Sigma}_{\mathbb{E}_{n-1}}(\Alg^{\Sigma}_{\mathbb{E}_{0}}(\CatIV)) \\
& \simeq \Alg^{\Sigma}_{\mathbb{E}_{n-1} \otimes
  \mathbb{E}_{0}}(\CatIV) \\ & \simeq
\Alg^{\Sigma}_{\mathbb{E}_{n-1}}(\CatIV).
\end{split}
\]
Thus we have maps
\[ \Alg^{\Sigma}_{\mathbb{E}_{n}}(\mathcal{V}) 
\to \Alg^{\Sigma}_{\mathbb{E}_{n-1}}(\CatIV)
\to \cdots \to
\Alg^{\Sigma}_{\mathbb{E}_{1}}(\Cat_{(\infty,n-1)}^{\mathcal{V}})
\to (\Cat_{(\infty,n)}^{\mathcal{V}})_{E^{0}/}.\]
\end{defn}

Applying Theorem~\ref{thm:E1algff} (and the symmetric counterparts of
some of the results we used in its proof) inductively, we get the
following:
\begin{cor}
  Suppose $\mathcal{V}$ is an $\mathbb{E}_{n}$-monoidal \icat{}.
  \begin{enumerate}[(i)]
  \item The functor
    \[B^{n} \colon \Alg^{\Sigma}_{\mathbb{E}_{n}}(\mathcal{V}) \to
    (\Cat^{\mathcal{V}}_{(\infty,n)})_{E^{0}/}\] is fully faithful.
  \item If $\mathcal{V}$ is $\mathbb{E}_{n+1}$-monoidal, then $B^{n}$
    is a monoidal functor.
  \item If $\mathcal{V}$ is presentably $\mathbb{E}_{n}$-monoidal,
    then $B^{n}$ admits a right adjoint
    $\Omega^{n} \colon (\Cat^{\mathcal{V}}_{(\infty,n)})_{E^{0}/} \to
    \Alg^{\Sigma}_{\mathbb{E}_{n}}(\mathcal{V})$.
  \item If $\mathcal{V}$ is presentably $\mathbb{E}_{n+1}$-monoidal,
    then $\Omega^{n}$ is a lax monoidal functor.
  \end{enumerate}
\end{cor}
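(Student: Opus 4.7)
The plan is to prove all four statements simultaneously by induction on $n$, with the case $n=1$ being precisely Theorem~\ref{thm:E1algff}. For the inductive step I would systematically exploit Dunn additivity $\mathbb{E}_{n}\simeq \mathbb{E}_{n-1}\otimes \mathbb{E}_{1}$ (cited in the paper as \cite[Theorem 5.1.2.2]{HA}) together with the identification $\Cat^{\mathcal{V}}_{(\infty,n)}\simeq \Cat^{\CatIV}_{(\infty,n-1)}$ that is built into Remark~\ref{rmk:enrkcat}. This reduces the construction of $B^{n}$ to one application of $B$ followed by $B^{n-1}$ applied to the $\mathbb{E}_{n-1}$-monoidal \icat{} $\CatIV$ provided by Corollary~\ref{cor:CatIVOMon}.

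Concretely, given an $\mathbb{E}_{n}$-monoidal \icat{} $\mathcal{V}$ with $n\geq 2$, Dunn additivity identifies $\Alg^{\Sigma}_{\mathbb{E}_{1}}(\mathcal{V})$ as an $\mathbb{E}_{n-1}$-monoidal \icat{}, and the overcategory $(\CatIV)_{E^{0}/}\simeq \Alg^{\Sigma}_{\mathbb{E}_{0}}(\CatIV)$ is likewise naturally $\mathbb{E}_{n-1}$-monoidal. The key input is a strengthening of Theorem~\ref{thm:E1algff}(ii) to the effect that if $\mathcal{V}$ is $\mathbb{E}_{k+1}$-monoidal then $B$ is $\mathbb{E}_{k}$-monoidal, which then lets us apply $\Alg^{\Sigma}_{\mathbb{E}_{n-1}}(\blank)$ to $B$ and, after the natural identification $\Alg^{\Sigma}_{\mathbb{E}_{n-1}}((\CatIV)_{E^{0}/})\simeq \Alg^{\Sigma}_{\mathbb{E}_{n-1}}(\CatIV)$ (again by Dunn), land exactly in the source of the inductively given $B^{n-1}$. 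Composing with $B^{n-1}$ realizes $B^{n}$ as the chain of maps written in the definition preceding the statement.

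With this setup all four conclusions follow cleanly. For (i), full faithfulness is preserved under composition and under post-application of $\Alg^{\Sigma}_{\mathcal{O}}(\blank)$ (because $\Alg^{\Sigma}_{\mathcal{O}}(\mathcal{P})$ is a full subcategory of a functor \icat{}, and post-composition with a fully faithful functor is fully faithful on functor \icats{}), so combining the base case with the inductive statement of (i) for $\CatIV$ yields (i) for $\mathcal{V}$. For (ii), when $\mathcal{V}$ is $\mathbb{E}_{n+1}$-monoidal the same strengthening upgrades $\Alg^{\Sigma}_{\mathbb{E}_{n-1}}(B)$ to a monoidal functor, and $B^{n-1}$ is monoidal by the inductive hypothesis. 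For (iii) and (iv), in the presentable setting I would invoke Proposition~\ref{propn:AdjCatIV} and Theorem~\ref{thm:E1algff}(iii),(iv) to obtain $B\dashv \Omega$ with $\Omega$ lax monoidal, then apply Proposition~\ref{propn:rightadjlaxmonalgfib} to pass to an adjunction between $\mathbb{E}_{n-1}$-algebra \icats{}, and finally compose with $\Omega^{n-1}$ coming from induction; the lax monoidality in (iv) is inherited at each step by Proposition~\ref{propn:rightadjlaxmon}.

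The main obstacle will be the higher-monoidal strengthening of Theorem~\ref{thm:E1algff}(ii), namely that $B$ is $\mathbb{E}_{n-1}$-monoidal whenever $\mathcal{V}$ is $\mathbb{E}_{n}$-monoidal. I would approach this by noting that $B=L_{\mathcal{V}}\circ j$, where $j\colon \Alg_{\simp^{\op}}(\mathcal{V})\hookrightarrow \AlgCat(\mathcal{V})_{E^{0}/}$ is manifestly functorial and monoidal in $\mathcal{V}^{\otimes}$ viewed as an object of $\Alg^{\Sigma}_{\mathbb{E}_{1}}(\widehat{\txt{Mon}}_{\infty}^{\txt{lax}})$ (the requisite higher coherence on both sides being supplied by Proposition~\ref{propn:AlgCatLaxMon} and Corollary~\ref{cor:CatIVOMon}), while the localization $L_{\mathcal{V}}$ is monoidal by Corollary~\ref{cor:FFESLocMon} and this monoidality promotes to the higher level by the same naturality in $\mathcal{V}$. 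Unwinding these naturality statements through the coCartesian fibration $\catname{Enr}_{\infty}\to \widehat{\txt{Mon}}_{\infty}^{\txt{lax}}$ of Proposition~\ref{propn:EnrAdj} packages the extra tensor-product structures on $\mathcal{V}$ into the extra $\mathbb{E}_{n-1}$-monoidality of $B$ essentially for free.
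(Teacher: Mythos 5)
Your proposal is correct and follows essentially the same route as the paper: the paper also constructs $B^{n}$ by iterating $B$ via Dunn additivity $\mathbb{E}_{n}\simeq\mathbb{E}_{n-1}\otimes\mathbb{E}_{1}$ and the identification $(\CatIV)_{E^{0}/}\simeq\Alg^{\Sigma}_{\mathbb{E}_{0}}(\CatIV)$, and then deduces all four statements by applying Theorem~\ref{thm:E1algff} (and the monoidal/adjunction results used in its proof) inductively. If anything, you are more explicit than the paper about the one genuinely delicate input --- that $B$ must be $\mathbb{E}_{n-1}$-monoidally natural in an $\mathbb{E}_{n}$-monoidal $\mathcal{V}$ before $\Alg^{\Sigma}_{\mathbb{E}_{n-1}}(\blank)$ can be applied to it --- which the paper leaves implicit in the phrase ``and the symmetric counterparts of some of the results we used in its proof.''
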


\begin{defn}
  Let $\mathcal{V}$ be an $\mathbb{E}_{n+1}$-monoidal
  \icat{}; then $\CatIV$ is $\mathbb{E}_{n}$-monoidal by
  Corollary~\ref{cor:CatIVOMon}. An \emph{$\mathbb{E}_{n}$-monoidal
    $\mathcal{V}$-\icat{}} is an $\mathbb{E}_{n}$-algebra in $\CatIV$.
\end{defn}

\begin{cor}
  Let $\mathcal{V}$ be an $\mathbb{E}_{n+1}$-monoidal
  \icat{}. Then $\mathbb{E}_{n}$-monoidal $\mathcal{V}$-\icats{} are
  equivalent to pointed $\mathcal{V}$-$(\infty,n+1)$-categories with a
  single object and only identity $j$-morphisms for $j = 1$, \ldots, $n-1$.  
\end{cor}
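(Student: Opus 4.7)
The plan is to derive this directly from the preceding corollary, applied to $\CatIV$ in place of $\mathcal{V}$, and then to identify the essential image by unwinding the iterated construction of $B^n$. Since $\mathcal{V}$ is $\mathbb{E}_{n+1}$-monoidal, Corollary~\ref{cor:CatIVOMon} ensures that $\CatIV$ is $\mathbb{E}_n$-monoidal, so the preceding corollary applies with $\CatIV$ playing the role of the base monoidal \icat{}. This yields a fully faithful functor
$$B^n_{\CatIV}\colon \Alg^{\Sigma}_{\mathbb{E}_n}(\CatIV) \hookrightarrow (\Cat^{\CatIV}_{(\infty,n)})_{E^0/}.$$
By definition, an $\mathbb{E}_n$-monoidal $\mathcal{V}$-\icat{} is precisely an object of the source, while by the inductive definition of enriched $(\infty,k)$-categories (Remark~\ref{rmk:enrkcat}) we have $\Cat^{\CatIV}_{(\infty,n)} \simeq \Cat^{\mathcal{V}}_{(\infty,n+1)}$, so the target is the \icat{} of pointed $\mathcal{V}$-$(\infty,n+1)$-categories.

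It remains to identify the essential image. I would unwind $B^n_{\CatIV}$ as an $n$-fold composite of applications of $B$ from Theorem~\ref{thm:E1algff}, one for each layer of enrichment. Starting with an $\mathbb{E}_n$-algebra in $\CatIV$, the first application of $B$ (monoidal by Theorem~\ref{thm:E1algff}(ii)) yields a pointed $\mathbb{E}_{n-1}$-algebra in $\Cat^{\mathcal{V}}_{(\infty,2)}$ whose underlying pointed object has a single object up to equivalence, by Remark~\ref{rmk:imgofalg}; the next application of $B$ yields a pointed $\mathbb{E}_{n-2}$-algebra in $\Cat^{\mathcal{V}}_{(\infty,3)}$ whose endomorphism $\mathcal{V}$-$(\infty,2)$-category at the chosen basepoint likewise has a single object up to equivalence, and so on for $n$ steps. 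Thus the essential image of $B^n_{\CatIV}$ consists of those pointed $\mathcal{V}$-$(\infty,n+1)$-categories $\mathcal{C}$ such that, for each $0 \leq j \leq n-1$, the $j$-fold iterated endomorphism $\mathcal{V}$-$(\infty,n+1-j)$-category at the basepoint has a single object up to equivalence.

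To conclude, I would translate this sequence of ``single object'' conditions into the language of the ambient $\mathcal{V}$-$(\infty,n+1)$-category $\mathcal{C}$: the $j=0$ case says that $\mathcal{C}$ has a single object, and for $j \geq 1$, the condition that the $j$-fold iterated endomorphism category has a single object up to equivalence is exactly the statement that every $j$-morphism of $\mathcal{C}$ is equivalent to the identity. This produces the claimed equivalence.

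The main obstacle will be the careful bookkeeping in the inductive unwinding: specifically, verifying that the basepoint inherited from the unit of the $\mathbb{E}_k$-algebra structure at each layer corresponds to the identity $j$-morphism at the ambient level, and that the equivalence $\Alg^{\Sigma}_{\mathbb{E}_k}(\Alg^{\Sigma}_{\mathbb{E}_1}(-)) \simeq \Alg^{\Sigma}_{\mathbb{E}_{k+1}}(-)$ used to factor $B^n$ is compatible with the ``single object up to equivalence'' condition from Remark~\ref{rmk:imgofalg} at each stage. Given the previous corollary and its proof by iteration of Theorem~\ref{thm:E1algff}, this should reduce to a routine (but notation-heavy) induction with base case $n=1$ handled directly by Remark~\ref{rmk:imgofalg}.
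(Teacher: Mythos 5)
Your proposal is correct and takes essentially the same route as the paper: the paper states this corollary without an explicit proof, treating it as immediate from the preceding corollary on $B^{n}$ (applied with $\CatIV$ in place of $\mathcal{V}$, using that $\CatIV$ is $\mathbb{E}_{n}$-monoidal) together with the essential-image characterization of Remark~\ref{rmk:imgofalg}, exactly paralleling the one-line proof it gives for the $n=1$ case. Your write-up simply makes explicit the iterated unwinding of $B^{n}$ and the translation of the layered ``single object up to equivalence'' conditions into ``only identity $j$-morphisms,'' which is the intended (if notation-heavy) bookkeeping.
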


\begin{remark}
  In particular, taking $\mathcal{V}$ to be $\Gpd_{n}$, we see that
  $\mathbb{E}_{m}$-monoidal $(n,k)$-categories are equivalent to
  pointed $(n+m,k+m)$-categories with a single object and only
  identity $j$-morphisms for $j = 1,\ldots,m-1$. Taking $\mathcal{V}$
  to be $\mathcal{S}$, this remains true for $n = \infty$.
\end{remark}

\begin{defn}
  If $\mathcal{C}$ is a $\mathcal{V}$-$(\infty,n)$-category and $X$ is
  an object of $\mathcal{C}$, we write $\Omega^{n}_{X}\mathcal{C}$ for
  $\Omega^{n}$ applied to the corresponding map $E^{0} \to
  \mathcal{C}$. This is the \emph{endomorphism
    $\mathbb{E}_{n}$-algebra} of $X$.
\end{defn}
\begin{remark}
  If $\mathcal{C}$ is a $\mathcal{V}$-$(\infty,n)$-category and $X$ is
  an object of $\mathcal{C}$, the underlying object in $\mathcal{V}$
  of the $\mathbb{E}_{n}$-algebra $\Omega^{n}_{X}\mathcal{C}$ is the
  endomorphisms of the $(n-1)$-fold identity map of the identity map
  of \ldots of the identity map of $X$.
\end{remark}

\section{Enriching $\infty$-Categories Tensored over a Monoidal
  $\infty$-Category}\label{sec:selfenr}

Suppose $\mathbf{V}$ is a monoidal category and $\mathbf{C}$ is an
ordinary category that is right-tensored over $\mathbf{V}$, i.e. there
is a functor \[(\blank) \otimes (\blank) \colon \mathbf{C} \times
\mathbf{V} \to \mathbf{C},\] compatible with the tensor product of
$\mathbf{V}$. If for every $C \in \mathbf{C}$ the functor $C \otimes
(\blank)$ has a right adjoint $F(C, \blank) \colon \mathbf{C} \to
\mathbf{V}$, then it is easy to see that we can enrich $\mathbf{C}$ in
$\mathbf{V}$, with the morphism object from $C$ to $D$ given by
$F(C,D) \in \mathbf{V}$. In particular, if the monoidal category
$\mathbf{V}$ is left-closed, then it is enriched in itself. Our goal
in this section is to prove the analogous statement in the context of
enriched \icats{}, which will allow us to construct a number of
interesting examples of these.

To prove this we will consider a variant of Lurie's definition of
enriched \icats{} from \cite[\S 4.2.1]{HA}. After introducing the
natural \gnsiopds{} that parametrize modules in \S\ref{subsec:Mod}
(and proving that the resulting \icats{} of modules are equivalent to
those of \cite{HA}), we review Lurie's definition in
\S\ref{subsec:LurEnr}. It is easy to see that an \icat{}
right-tensored over a monoidal \icat{} with adjoints as above defines
an enriched \icat{} in this sense; by applying Lurie's construction of
enriched strings from \cite[\S 4.7.2]{HA}, which we review in
\S\ref{subsec:EnrStr}, we can quite easily extract a categorical
algebra from this in \S\ref{subsec:ExtCatAlg}.

\subsection{Modules}\label{subsec:Mod}

\begin{defn}
  Write $\BM$ for the category of simplices
  $\Simp(\Delta^{1})$ of the simplicial set $\Delta^{1}$. The objects
  of $\BM$ can be described as sequences of integers
  $(i_{0},\ldots,i_{k})$ where $0 \leq i_{0} \leq \cdots \leq i_{k}
  \leq 1$, and there is a unique morphism $(i_{0},\ldots,i_{k}) \to
  (i_{\phi(0)},\ldots, i_{\phi(m)})$ for every map $\phi \colon [m]
  \to [k]$ in $\simp$. The obvious projection $\BM \to
  \simp^{\op}$ exhibits $\BM$ as a double \icat{}. If
  $\mathcal{M}$ is a generalized non-symmetric \iopd{}, a
  \emph{bimodule} in $\mathcal{M}$ is a $\BM$-algebra in
  $\mathcal{M}$. We write $\txt{Bimod}(\mathcal{M})$ for the \icat{} of
  $\Alg_{\BM}(\mathcal{M})$ of bimodules in $\mathcal{M}$.
\end{defn}

\begin{defn}
  The obvious inclusions $i,j \colon
  \simp^{\op}_{\{0\}},\simp^{\op}_{\{1\}} \to \BM$ are maps of
  generalized \iopds{}. We say a bimodule $M$ in a \gnsiopd{}
  $\mathcal{M}$ is an \emph{$i^{*}M$-$j^{*}M$-bimodule}. If $A$ and
  $B$ are associative algebras in a generalized \nsiopd{}
  $\mathcal{M}$, we write $\txt{Bimod}_{A,B}(\mathcal{M})$ for the
  fibre of the projection $(i^{*},j^{*}) \colon
  \txt{Bimod}(\mathcal{M}) \to \Alg_{\simp^{\op}}(\mathcal{M}) \times
  \Alg_{\simp^{\op}}(\mathcal{M})$ at $(A,B)$, i.e. the \icat{} of
  $A$-$B$-bimodules.
\end{defn}

\begin{defn}
  Let $\LM$ denote the full subcategory of $\BM$ spanned by the
  objects of the form $(0,\ldots,0,1)$ and $(0,\ldots, 0)$. The
  restricted projection $\LM \to \simp^{\op}$ exhibits $\LM$ as a
  double \icat{}. A \emph{left module} in a generalized non-symmetric
  \iopd{} $\mathcal{M}$ is an $\LM$-algebra in $\mathcal{M}$. We write
  $\txt{LMod}(\mathcal{M})$ for the \icat{} $\Alg_{\LM}(\mathcal{M})$
  of left modules in $\mathcal{M}$.
\end{defn}

\begin{defn}
  Let $\RM$ denote the full subcategory of $\BM$ spanned by the
  objects of the form $(0,1\ldots,1)$ and $(1,\ldots, 1)$. The
  restricted projection $\RM \to \simp^{\op}$ exhibits $\RM$ as a
  double \icat{}. A \emph{right module} in a generalized non-symmetric
  \iopd{} $\mathcal{M}$ is an $\RM$-algebra in $\mathcal{M}$. We write
  $\txt{RMod}(\mathcal{M})$ for the \icat{} $\Alg_{\RM}(\mathcal{M})$
  of right modules in $\mathcal{M}$.
\end{defn}

\begin{defn}
  The obvious inclusions $i \colon \simp^{\op}_{\{0\}} \hookrightarrow
  \LM$ and $j \colon \simp^{\op}_{\{1\}} \hookrightarrow \RM$ are maps
  of \gnsiopds{}. If $M \colon \LM \to \mathcal{M}$ is a left module
  in a \gnsiopd{} $\mathcal{M}$, we say $M$ is a \emph{left
    $i^{*}M$-module}. Similarly, if $M$ is a right module in
  $\mathcal{M}$ we say that it is a \emph{right $j^{*}M$-module}. If
  $A$ is an associative algebra in $\mathcal{M}$, we write
  $\txt{LMod}_{A}(\mathcal{M})$ and $\txt{RMod}_{A}(\mathcal{M})$ for
  the fibres of the projections $i^{*} \colon \txt{LMod}(\mathcal{M})
  \to \Alg_{\simp^{\op}}(\mathcal{M})$ and $j^{*} \colon
  \txt{RMod}(\mathcal{M}) \to \Alg_{\simp^{\op}}(\mathcal{M})$ at $A$,
  respectively.
\end{defn}

It is easy to describe the \iopd{} localizations of the \gnsiopds{}
$\BM$, $\LM$, and $\RM$ in terms of multicategories:
\begin{defn}
  Let $\mathbf{BM}$ be the multicategory with objects $\mathfrak{l}$,
  $\mathfrak{m}$ and $\mathfrak{r}$ and multimorphisms
  \[
  \mathbf{BM}(\mathfrak{l},\ldots,\mathfrak{l},\mathfrak{m},\mathfrak{r},\ldots,\mathfrak{r};
  \mathfrak{m}) = *\]
  \[ \mathbf{BM}(\mathfrak{l},\ldots,\mathfrak{l};
  \mathfrak{l}) = *\]
  \[ \mathbf{BM}(\mathfrak{r},\ldots,\mathfrak{r};
  \mathfrak{r}) = *\]
  (where there can be zero $\mathfrak{l}$'s and $\mathfrak{r}$'s in
  the lists), with all other multimorphism sets empty. We then define
  $\mathbf{LM}$ to be the full submulticategory of $\mathbf{BM}$ with
   objects $\mathfrak{l}$ and $\mathfrak{m}$ and $\mathbf{RM}$ to
  be the full submulticategory with objects $\mathfrak{r}$ and
  $\mathfrak{m}$.  
\end{defn}

\begin{propn}\label{propn:BMloceq}
  There are obvious maps $\BM \to \mathbf{BM}^{\otimes}$,  $\LM \to
  \mathbf{LM}^{\otimes}$, and  $\RM \to
  \mathbf{RM}^{\otimes}$. These induce equivalences of \nsiopds{}
  $L_{\txt{gen}}\BM \isoto \mathbf{BM}^{\otimes}$, $L_{\txt{gen}}\LM
  \isoto \mathbf{LM}^{\otimes}$ and $L_{\txt{gen}}\RM \isoto
  \mathbf{RM}^{\otimes}$.
\end{propn}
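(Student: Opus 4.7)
The plan is to apply Corollary~\ref{cor:OpdLocCof} in three parallel arguments, exactly as in the proof earlier in the excerpt that exhibits $\mathrm{N}\mathcal{O}^{\otimes}_{\mathcal{G}}$ as the operadic localization of $\mathrm{N}\mathcal{D}(\mathcal{G})$. This reduces each of the three claims to showing that, for each object $X$ in the fibre at $[1]$ of the target non-symmetric $\infty$-operad (so $\mathfrak{l},\mathfrak{m},\mathfrak{r}$ for $\mathbf{BM}^{\otimes}$; $\mathfrak{l},\mathfrak{m}$ for $\mathbf{LM}^{\otimes}$; and $\mathfrak{m},\mathfrak{r}$ for $\mathbf{RM}^{\otimes}$), the induced map of active overcategories
\[
(\BM_{\txt{act}})_{/X} \longrightarrow (\mathbf{BM}^{\otimes}_{\txt{act}})_{/X}
\]
(and similarly for $\LM$, $\RM$) is cofinal. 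I will in fact show that each such map is an equivalence of ordinary categories, which is stronger.

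A convenient first observation is that both $\BM$ and $\mathbf{BM}^{\otimes}$ are thin in the following sense: between any two objects there is at most one morphism covering a given arrow in $\simp^{\op}$. The same holds for the $\LM$ and $\RM$ variants. Hence it will suffice to establish a bijection on (isomorphism classes of) objects of the active overcategories and to observe that the unique morphisms correspond under the map. The bulk of the proof is thus a direct combinatorial identification.

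For $X=\mathfrak{l}$ (regarded as $(0,0)\in\BM_{[1]}$), an object of $(\mathbf{BM}^{\otimes}_{\txt{act}})_{/\mathfrak{l}}$ is a list $(\mathfrak{l},\ldots,\mathfrak{l})$ of some length $n\geq 0$ equipped with the unique multimorphism to $\mathfrak{l}$ guaranteed by the multicategory structure. This corresponds to the object $(0,\ldots,0)\in\BM_{[n]}$ (with $n+1$ zeros) together with the unique active morphism to $(0,0)$ covering the map $[1]\to[n]$, $0\mapsto 0$, $1\mapsto n$ in $\simp$. The case $X=\mathfrak{r}$ is identical with $0$ replaced by $1$. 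For $X=\mathfrak{m}$ (i.e.\ $(0,1)\in\BM_{[1]}$), an object of $(\mathbf{BM}^{\otimes}_{\txt{act}})_{/\mathfrak{m}}$ is a list of the form $(\mathfrak{l},\ldots,\mathfrak{l},\mathfrak{m},\mathfrak{r},\ldots,\mathfrak{r})$ with $k$ copies of $\mathfrak{l}$ and $\ell$ copies of $\mathfrak{r}$ (where $k,\ell\geq 0$), together with the unique multimorphism to $\mathfrak{m}$. This is matched with the object $(\underbrace{0,\ldots,0}_{k+1},\underbrace{1,\ldots,1}_{\ell+1})$ of $\BM$ at level $[k+\ell+1]$, equipped with the unique active morphism to $(0,1)$ over the end-to-end map in $\simp$. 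The active morphism exists precisely when $i_{0}=0$ and $i_{n}=1$, which by the sortedness constraint forces exactly this form. Morphisms in the overcategories are then unique over a chosen active map in $\simp^{\op}$, and the correspondence on objects is clearly compatible with them.

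The $\LM$ and $\RM$ cases are simpler restrictions of the $\BM$ case: for $\LM$ only the sub-lists of the form $(\mathfrak{l},\ldots,\mathfrak{l})$ or $(\mathfrak{l},\ldots,\mathfrak{l},\mathfrak{m})$ occur, matching the constraint that objects of $\LM$ be of the shape $(0,\ldots,0)$ or $(0,\ldots,0,1)$; the $\RM$ case is symmetric. In each instance the bijection on objects is immediate, so the induced map on active overcategories is an isomorphism of categories and in particular cofinal. Applying Corollary~\ref{cor:OpdLocCof} three times gives the desired equivalences $L_{\txt{gen}}\BM\isoto\mathbf{BM}^{\otimes}$, $L_{\txt{gen}}\LM\isoto\mathbf{LM}^{\otimes}$, and $L_{\txt{gen}}\RM\isoto\mathbf{RM}^{\otimes}$. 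I do not anticipate a serious obstacle: the argument is purely combinatorial, made easy by the thinness of the categories in question; the only care required is in correctly translating between the index conventions of simplices $(i_{0},\ldots,i_{k})$ and lists of labels $\mathfrak{l},\mathfrak{m},\mathfrak{r}$.
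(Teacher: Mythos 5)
Your proposal is correct and follows exactly the paper's route: the paper's proof simply asserts that the maps satisfy the criterion of Corollary~\ref{cor:OpdLocCof}, and you verify that criterion by the same (straightforward) combinatorial identification of the active overcategories. The details you supply — including the observation that the comparison maps are isomorphisms rather than merely cofinal — are accurate.
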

\begin{proof}
  It is easy to see that these maps satisfy the criterion of
  Corollary~\ref{cor:OpdLocCof}.
\end{proof}

\begin{cor}
  Let $\mathcal{O}$ be a \nsiopd{}. Then there are natural
  equivalences $\txt{Bimod}(\mathcal{O}) \simeq
  \Alg_{\mathbf{BM}}(\mathcal{O})$,
  $\txt{LMod}(\mathcal{O}) \simeq
  \Alg_{\mathbf{LM}}(\mathcal{O})$ and
  $\txt{RMod}(\mathcal{O}) \simeq
  \Alg_{\mathbf{RM}}(\mathcal{O})$.
\end{cor}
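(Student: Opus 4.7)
The plan is to deduce the three equivalences directly from the preceding proposition together with the adjunction between generalized and non-generalized non-symmetric \iopds{} established in Corollary~\ref{cor:GenOpdLoc}. Since the three cases are completely parallel, I will only describe the argument for bimodules; the proofs for left and right modules are identical, replacing $\BM$ and $\mathbf{BM}$ by $\LM,\mathbf{LM}$ or $\RM,\mathbf{RM}$ respectively.

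By definition, $\txt{Bimod}(\mathcal{O}) = \Alg_{\BM}(\mathcal{O})$, where we regard $\mathcal{O}$ as a generalized non-symmetric \iopd{} via the inclusion $\OpdIns \hookrightarrow \OpdInsg$. Corollary~\ref{cor:GenOpdLoc} provides this inclusion with a left adjoint $L_{\txt{gen}}$, obtained from the left Quillen functor (identity) $(\sSet^{+})_{\mathfrak{O}^{\txt{gen}}_{\txt{ns}}} \to (\sSet^{+})_{\mathfrak{O}_{\txt{ns}}}$. Since both model categories are enriched in marked simplicial sets, and passing to the \icats{} of fibrant objects preserves this enrichment, for any generalized non-symmetric \iopd{} $\mathcal{M}$ and any non-symmetric \iopd{} $\mathcal{O}$ there is a natural equivalence of \icats{}
\[
\Alg_{\mathcal{M}}(\mathcal{O}) \;\simeq\; \Alg_{L_{\txt{gen}}\mathcal{M}}(\mathcal{O}).
\]
Applying this with $\mathcal{M} = \BM$ and invoking the previous proposition, which identifies $L_{\txt{gen}}\BM$ with $\mathbf{BM}^{\otimes}$ as non-symmetric \iopds{}, yields the desired equivalence $\txt{Bimod}(\mathcal{O}) \simeq \Alg_{\mathbf{BM}}(\mathcal{O})$, naturally in $\mathcal{O}$.

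The only subtle point is that we need the equivalence at the level of \icats{}, not merely of spaces of algebras. This is taken care of by the marked simplicial enrichment: the mapping \icat{} from $\mathcal{M}$ to $\mathcal{O}$ in the fibrant marked simplicial category $\OPDInsg$ is exactly $\Alg_{\mathcal{M}}(\mathcal{O})$, and since $L_{\txt{gen}}$ arises from a simplicially enriched Quillen adjunction whose right adjoint is the inclusion $\OPDIns \hookrightarrow \OPDInsg$, this mapping \icat{} agrees with the mapping \icat{} from $L_{\txt{gen}}\mathcal{M}$ to $\mathcal{O}$ computed in $\OPDIns$ whenever $\mathcal{O}$ is a (non-generalized) \nsiopd{}. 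There is no real obstacle here beyond unwinding the enriched adjunction, so the corollary is genuinely a formal consequence of the previous proposition.
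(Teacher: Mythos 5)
Your argument is correct and is exactly the intended deduction: the paper leaves this corollary without an explicit proof, and the route you describe --- using the marked-simplicially-enriched Quillen adjunction underlying $L_{\txt{gen}} \dashv (\OpdIns \hookrightarrow \OpdInsg)$ together with the preceding proposition identifying $L_{\txt{gen}}\BM$, $L_{\txt{gen}}\LM$, $L_{\txt{gen}}\RM$ with the operads of $\mathbf{BM}$, $\mathbf{LM}$, $\mathbf{RM}$ --- is the same formal argument the paper uses for analogous statements (e.g.\ for the adjunction $c_{!} \dashv c^{*}$ relating non-symmetric and symmetric algebras).
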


\begin{remark}\label{rmk:LurieMods}
  The symmetric \iopds{} used in \cite{HA} to define bimodules, left
  modules, and right modules clearly arise from the symmetrizations of the
  multicategories $\mathbf{BM}$, $\mathbf{LM}$ and $\mathbf{RM}$,
  respectively. By Proposition~\ref{propn:SymApprox} it therefore
  follows that the \icats{} of modules defined here are equivalent to
  those defined in \cite{HA}.
\end{remark}

\subsection{Lurie's Enriched $\infty$-Categories}\label{subsec:LurEnr}
In this section we describe a variant of Lurie's definition of
enriched \icats{} in \cite[\S 4.2.1]{HA}.

\begin{defn}
  A \defterm{weakly enriched \icat{}} is a fibration of \gnsiopds{} $q
  \colon \mathcal{M} \to \RM$ such that the fibres $\mathcal{M}_{(0)}$
  and $\mathcal{M}_{(1)}$ are contractible. We write
  $\mathcal{M}_{\mathfrak{r}}^{\otimes}$ for the \nsiopd{}
  $j^{*}\mathcal{M} \to \simp^{\op}$ and $\mathcal{M}_{\mathfrak{m}}$
  for the fibre $\mathcal{M}_{(0,1)}$ and say that $q$ \emph{exhibits
    $\mathcal{M}_{\mathfrak{m}}$ as weakly enriched in
    $\mathcal{M}_{\mathfrak{r}}$}.
\end{defn}

\begin{ex}
  Let $\mathcal{O}$ be any \nsiopd{}. The pullback
  $\pi^{*}\mathcal{O} \to \RM$ along the projection $\pi
  \colon \RM \to \simp^{\op}$ exhibits $\mathcal{O}_{[1]}$ as weakly
  enriched in $\mathcal{O}$.
\end{ex}

\begin{ex}
  Let $q \colon \mathcal{M} \to \RM$ be a weakly enriched \icat{} such
  that $q$ is also a coCartesian fibration. Then we say that $q$
  exhibits $\mathcal{M}_{\mathfrak{m}}$ as \emph{right-tensored} over
  $\mathcal{M}_{\mathfrak{r}}$, which is a monoidal \icat{}. Clearly,
  an \icat{} $\mathcal{C}$ is right-tensored over a monoidal \icat{}
  $\mathcal{V}$ \IFF{} there exists an $\RM$-algebra $F
  \colon \RM \to \CatI^{\times}$ such that $F(0,1) \simeq \mathcal{C}$
  and $j^{*}F$ is an associative algebra corresponding to
  $\mathcal{V}^{\otimes}$.
\end{ex}

\begin{defn}
  Let $q \colon \mathcal{M} \to \RM$ be a weakly enriched
  \icat{}. Given $C_{1},\ldots, C_{n} \in \mathcal{M}_{\mathfrak{r}}$
  and $M,N \in \mathcal{M}_{\mathfrak{m}}$, we write
  \[\Map_{\mathcal{M}_{\mathfrak{m}}}(M \boxtimes (C_{1},\ldots,C_{n}),
  N)\] for $\Map_{\mathcal{M}}^{\phi}((M, C_{1},\ldots,C_{n}), N)$,
  where $\phi \colon [n+1] \to [1]$ is the unique active map.
\end{defn}

\begin{defn}
  A \emph{pseudo-enriched \icat{}} is a weakly enriched \icat{} $q
  \colon \mathcal{M} \to \RM$ such that
  $\mathcal{M}_{\mathfrak{r}}^{\otimes}$ is a monoidal \icat{}, and
  for all $C_{1},\ldots,C_{n} \in \mathcal{M}_{\mathfrak{r}}$ ($n =
  0,1,\ldots$) and $M,N \in \mathcal{M}_{\mathfrak{m}}$, the canonical
  map
  \[ \Map_{\mathcal{M}_{\mathfrak{m}}}(M \boxtimes
  (C_{1}\otimes\cdots\otimes C_{n}), N) \to
  \Map_{\mathcal{M}_{\mathfrak{m}}}(M \boxtimes (C_{1}, \ldots,
  C_{n}), N) \] is an equivalence.
\end{defn}

\begin{remark}\label{rmk:PsEnrI}
  Taking $n = 0$ in this definition, we see that in a pseudo-enriched
  \icat{} $\mathcal{M}$ we have
  \[ \Map_{\mathcal{M}_{\mathfrak{m}}}(M \boxtimes I, N) \simeq
  \Map_{\mathcal{M}_{\mathfrak{m}}}(M, N).\]
\end{remark}

\begin{ex}
  The pullback $\pi^{*}\mathcal{O} \to \RM$ exhibits
  $\mathcal{O}_{[1]}$ as pseudo-enriched in $\mathcal{O}$ \IFF{}
  $\mathcal{O}$ is a monoidal \icat{}.
\end{ex}

\begin{ex}
  If a weakly enriched \icat{} $q \colon \mathcal{M} \to \RM$ is a
  coCartesian fibration, then it is clearly pseudo-enriched.
\end{ex}

\begin{defn}
  Let $\mathcal{M} \to \RM$ be a pseudo-enriched \icat{}. Suppose
  $M$ and $N$ are objects of $\mathcal{M}_{\mathfrak{m}}$; a
  \emph{morphism object} for $M,N$ is an object $F(M,N) \in
  \mathcal{M}_{\mathfrak{r}}$ together with a map $\alpha \in
  \Map_{\mathcal{M}_{\mathfrak{m}}}(M \boxtimes F(M,N), N)$ such that
  for every $C \in \mathcal{M}_{\mathfrak{r}}$ composition with
  $\alpha$ induces an equivalence
  \[ \Map_{\mathcal{M}_{\mathfrak{r}}}(C, F(M,N)) \to
  \Map_{\mathcal{M}_{\mathfrak{m}}}(M \boxtimes C, N).\] We say that
  $\mathcal{M} \to \RM$ is a \emph{Lurie-enriched \icat{}} if there exists
  a morphism object in $\mathcal{M}_{\mathfrak{r}}$ for all $M,N \in
  \mathcal{M}_{\mathfrak{m}}$.
\end{defn}

\begin{remark}\label{rmk:LurEnrI} 
  From Remark~\ref{rmk:PsEnrI} we see that in a Lurie-enriched \icat{}
  $\mathcal{M}$ there is a natural equivalence
  \[ \Map_{\mathcal{M}_{\mathfrak{r}}}(I, F(M,N)) \isoto
  \Map_{\mathcal{M}_{\mathfrak{m}}}(M, N).\]
\end{remark}

\begin{ex}
  A monoidal \icat{} $\mathcal{V}$ is \emph{left-closed}
  \IFF{} for every $C \in \mathcal{V}$ the functor $C \otimes (\blank)
  \colon \mathcal{V} \to \mathcal{V}$ has a right adjoint. If
  $\mathcal{V}$ is a monoidal \icat{}, the pullback
  $\pi^{*}\mathcal{V}^{\otimes}$ exhibits $\mathcal{V}$ as
  Lurie-enriched in $\mathcal{V}$ \IFF{}
  $\mathcal{V}$ is left-closed.
\end{ex}

\begin{ex}\label{ex:RightTensLurEnr}
  More generally, suppose the \icat{} $\mathcal{C}$ is right-tensored
  over the monoidal \icat{} $\mathcal{V}$. The associated
  coCartesian weakly enriched \icat{} $q \colon \mathcal{M} \to \RM$
  is Lurie-enriched \IFF{} for every $M \in \mathcal{C}$, the
  right-tensoring functor $M \otimes (\blank) \colon \mathcal{V} \to
  \mathcal{C}$ has a right adjoint $F(M,\blank)$ (so that
  $\Map_{\mathcal{V}}(V, F(M,N)) \simeq \Map_{\mathcal{C}}(M \otimes
  V, N)$.  
\end{ex}

\begin{remark}
  We use \emph{right} modules rather than the left modules used in
  \cite[\S 4.2.1]{HA} so that the composition maps of morphism objects
  are compatible with those for categorical algebras: If $\mathcal{M}$
  is a Lurie-enriched \icat{} in our sense, then for a triple $A,B,C$
  of objects in $\mathcal{M}_{\mathfrak{m}}$ we get a composition map
  $F(A,B) \otimes F(B,C) \to F(A,C)$, whereas \cite[Definition
  4.2.1.28]{HA} gives composition maps $F(B,C) \otimes F(A,B) \to
  F(A,C)$. This is why we get Lurie-enriched \icats{} from
  \emph{left}-closed monoidal \icats{} rather than right-closed ones
  as in \cite[Example 4.2.1.32]{HA}.
\end{remark}

\begin{defn}
  Let $\EnrLur$ be the full subcategory of $(\OpdInsg)_{/\RM}$ spanned
  by the Lurie-enriched \icats{}. Pullback along the inclusion
  $\simp^{\op} \to \RM$ induces a projection $\EnrLur \to \MonI$; we
  write $\EnrLurV$ for the fibre at $\mathcal{V}^{\otimes} \in
  \MonI$. This is the \icat{} of \emph{Lurie-$\mathcal{V}$-enriched
    \icats{}}.
\end{defn}

We expect that the \icat{} $\EnrLurV$ is equivalent to the \icat{}
$\CatIV$ of complete categorical algebras in $\mathcal{V}$ defined
above, but we will not attempt to prove this here. 

\subsection{Enriched Strings}\label{subsec:EnrStr}
We now describe the analogue for our variant definition of Lurie's
construction of an \icat{} of \emph{enriched strings} in \cite[\S
4.7.2]{HA}.

\begin{defn}
  Let $\Po$ denote the full subcategory of $\Fun([1], \simp)$
  spanned by the inert morphisms. In other words, an object of
  $\Po$ is an inert morphism $\alpha \colon [i] \to [n]$, or
  equivalently an object $[n] \in \simp$ together with a subinterval
  $\{j, j+1, \ldots, j+i\} \subseteq [n]$. A morphism from $\alpha$ to
  $\beta \colon [j] \to [m]$ is a commutative diagram
  \csquare{{[i]}}{{[n]}}{{[j]}}{{[m]}.}{\alpha}{\psi}{\phi}{\beta}
  Note that, since $\alpha$ and $\beta$ are inert, a morphism $\psi$
  factoring $\phi \circ \alpha$ through $\beta$ is uniquely
  determined, if it exists. The inclusions $i_{0}, i_{1} \colon [0]
  \hookrightarrow [1]$ taking the unique object of $[0]$ to $0, 1$,
  respectively, induce functors $\Phi, \Theta \colon \Po \to
  \simp$. We write $\Po'$ for the full subcategory of
  $\Po$ spanned by the (necessarily inert) morphisms $[0] \to [n]$.
\end{defn}

\begin{defn}
  We define a map $\chi \colon \simp^{\op} \to \RM$ by sending $[n]$
  to the object $(0,1,\ldots,1)$ over $[n+1]$ and $\phi \colon [m] \to
  [n]$ to the coCartesian map over $[0] \star \phi \colon [m+1] \to
  [n+1]$. Thus the composite $\simp^{\op} \to \RM \to \simp^{\op}$ is
  given by $[0] \star \blank$.
\end{defn}

\begin{defn}
  Suppose $\mathcal{M} \to \RM$ is
  a weakly enriched \icat{}. Define simplicial sets
  $\overline{\StrM}^{\txt{en}}$ and $\StrM$ over
  $\simp^{\op}$ by the universal properties
  \[ \Hom_{\simp^{\op}}(K, \overline{\StrM}^{\txt{en}}) \simeq
  \Hom_{\RM}(K \times_{\simp^{\op}}
  \Po^{\op}, \mathcal{M}),\]
  \[ \Hom_{\simp^{\op}}(K, \StrM) \simeq
  \Hom_{\RM}(K \times_{\simp^{\op}} (\Po')^{\op}, \mathcal{M}),\]
  where the map $\Po^{\op} \to \RM$ is given
  by the composite
  \[ \Po^{\op} \xto{\Phi} \simp^{\op} \xto{\chi} \RM.\]
\end{defn}

\begin{lemma}
  The \icat{} $\StrM$ is equivalent to $\simp^{\op}_{\mathcal{M}_{\mathfrak{m}}}$.
\end{lemma}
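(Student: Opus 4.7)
The plan is to unwind both sides to concrete universal properties over $\simp^{\op}$ and check that they agree.

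First I would analyze the composite $\chi \circ \Phi \colon (\Po')^{\op} \to \RM$. By definition $\Po'$ consists of (necessarily inert) morphisms $[0] \to [n]$, so $\Phi$ is constant with value $[0]$ on $\Po'$, and any map in $\Po'$ is sent by $\Phi$ to the identity of $[0]$. Since $\chi([0]) = (0,1)$, the functor $(\Po')^{\op} \to \RM$ is the constant functor with value $(0,1) \in \RM$, landing in the fiber $\mathcal{M}_{(0,1)} = \mathcal{M}_{\mathfrak{m}}$. Consequently, for any $K \to \simp^{\op}$, the projection $K \times_{\simp^{\op}} (\Po')^{\op} \to \RM$ is the constant functor at $(0,1)$, so a morphism $K \times_{\simp^{\op}} (\Po')^{\op} \to \mathcal{M}$ over $\RM$ is the same thing as a morphism $K \times_{\simp^{\op}} (\Po')^{\op} \to \mathcal{M}_{\mathfrak{m}}$. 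This gives a natural bijection
\[ \Hom_{\RM}\bigl(K \times_{\simp^{\op}} (\Po')^{\op}, \mathcal{M}\bigr) \simeq \Hom\bigl(K \times_{\simp^{\op}} (\Po')^{\op}, \mathcal{M}_{\mathfrak{m}}\bigr).\]

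Next I would identify $(\Po')^{\op} \to \simp^{\op}$ (the map via $\Theta$) with the simplicial set $\mathbf{P} \to \simp^{\op}$ from Remark~\ref{rmk:DopSdescr}. The objects of $\Po'$ over $[n]$ are inert maps $[0] \to [n]$, i.e. elements of $\{0,\ldots,n\}$, and a morphism in $\Po'$ from $\alpha \colon [0] \to [n]$ to $\beta \colon [0] \to [m]$ is a morphism $\phi \colon [n] \to [m]$ in $\simp$ with $\phi(\alpha(0)) = \beta(0)$. Thus $\Po' \to \simp$ is precisely the Grothendieck opfibration associated to the forgetful functor $\simp \to \Set$, $[n] \mapsto \{0,\ldots,n\}$, and so $(\Po')^{\op} \to \simp^{\op}$ is equivalent to the Grothendieck fibration $\mathbf{P} \to \simp^{\op}$ for this same functor.

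Combining these identifications with the description of $\simp^{\op}_{\mathcal{M}_{\mathfrak{m}}}$ from Remark~\ref{rmk:DopSdescr}, we obtain natural isomorphisms
\[
\Hom_{\simp^{\op}}(K, \StrM) \simeq \Hom_{\RM}(K \times_{\simp^{\op}} (\Po')^{\op}, \mathcal{M}) \simeq \Hom(K \times_{\simp^{\op}} \mathbf{P}, \mathcal{M}_{\mathfrak{m}}) \simeq \Hom_{\simp^{\op}}(K, \simp^{\op}_{\mathcal{M}_{\mathfrak{m}}}),
\]
and invoking the Yoneda lemma for simplicial sets over $\simp^{\op}$ yields the desired equivalence $\StrM \simeq \simp^{\op}_{\mathcal{M}_{\mathfrak{m}}}$.

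The only mildly subtle point is the bookkeeping of the two different maps $\Phi, \Theta \colon \Po' \to \simp$ and making sure the fiber product over $\simp^{\op}$ uses $\Theta$ while the map to $\RM$ uses $\chi \circ \Phi$; once this is sorted out, the rest amounts to recognizing $(\Po')^{\op}$ as a concrete model for the Grothendieck fibration $\mathbf{P}$.
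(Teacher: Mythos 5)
Your argument is correct and is exactly the route the paper takes: its proof is the one-line "immediate from Remark~\ref{rmk:DopSdescr}," and your write-up simply fills in the details — that $\chi\circ\Phi$ is constant at $(0,1)$ on $(\Po')^{\op}$ so maps over $\RM$ land in the fibre $\mathcal{M}_{\mathfrak{m}}$, and that $(\Po')^{\op}\to\simp^{\op}$ via $\Theta$ is the fibration $\mathbf{P}$ from that remark. Nothing to add.
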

\begin{proof}
  This is immediate from Remark~\ref{rmk:DopSdescr}.
\end{proof}

\begin{defn}
  Let $\Po_{[n]}$ denote the fibre of $\Theta \colon \Po \to \simp$ at
  $[n]$, i.e. the full subcategory of $\simp_{/[n]}$ spanned by inert
  morphisms. A morphism in $\Po_{[n]}$ is thus a commutative diagram
  \opctriangle{{[i]}}{{[j]}}{{[n]}}{\phi}{\alpha}{\beta} where
  $\alpha$ and $\beta$ are inert --- if such a morphism exists then
  the morphism $\phi$ is clearly uniquely determined by $\alpha$ and
  $\beta$, and must also be inert. We can thus equivalently describe
  $\Po_{[n]}$ as the category associated to the partially ordered set
  of subintervals of $[n]$. We write $\Phi_{[n]}$ for
  $\Phi|_{\Po_{[n]}}$.
\end{defn}

\begin{defn}
  The unique map $[0] \to [-1]$ in $\simp_{+}^{\op}$ induces a natural
  transformation $[0] \star (\blank) \to [-1] \star (\blank) = \id$ of
  functors $\simp^{\op} \to \simp^{\op}$; this is given by
  $d_{0}\colon [n] \to [n-1]$ for all $n = 1,\ldots$. Since $d_{0}$ is
  inert, we can define a natural transformation $\overline{\chi} \colon
  \Delta^{1} \times \simp^{\op} \to \RM$ by taking the coCartesian
  lift of this starting at $\chi$. Thus $\overline{\chi}_{[n]}$ is given by
  $d_{0} \colon (0,1,\ldots,1) \to (1,\ldots,1)$.
\end{defn}


\begin{defn}
  Let $q \colon \mathcal{M} \to \RM$ be a weakly enriched \icat{}.
  An \defterm{enriched $n$-string} in
  $\mathcal{M}$ is a functor $\sigma \colon \Po^{\op}_{[n]}\to
  \mathcal{M}$ such that:
  \begin{enumerate}[(1)]
  \item The composite $q \circ \sigma$ is 
    \[ \Po_{[n]}^{\op} \xto{\Phi_{[n]}^{\op}} \simp^{\op} \xto{\chi} \RM.\]
  \item If \opctriangle{{[i]}}{{[j]}}{{[n]}}{\phi}{\alpha}{\beta} is a
    morphism in $\Po_{[n]}$ such that $\alpha(0) = \beta(0)$ (or
    equivalently $\phi(0) = 0$), then $\sigma(\phi)$ is inert.
    (Notice that if $\phi \colon [n] \to [m]$ is an inert map in
    $\simp^{\op}$, then $[0] \star \phi$ is inert \IFF{} $\phi(0) = 0$,
    so these are precisely the maps $\phi$ so that $\sigma(\phi)$ lies
    over an inert map in $\simp^{\op}$.)
  \item Let $\sigma \to \sigma'$ be a coCartesian lift of
    $\overline{\chi}|_{\Delta^{1} \times \Po^{\op}_{[n]}}$. Then for any
    morphism $\phi$ in $\Po_{[n]}$, the morphism $\sigma'(\phi)$ is
    inert in $\mathcal{M}_{\mathfrak{r}}^{\otimes}$.
  \end{enumerate}
\end{defn}

\begin{remark}
  An enriched 0-string is a map $* \simeq \Po_{[0]}^{\op} \to
  \mathcal{M}$ over the map $* \to \RM$ sending $*$ to
  $(0,1)$, i.e. just an object of $\mathcal{M}_{\mathfrak{m}}$. An
  enriched 1-string corresponds to a map $(M, C) \to N$ over $d_{1}
  \colon (0,1,1) \to (0,1)$; if $\mathcal{M}$ is a Lurie-enriched
  \icat{} then this is equivalent to a map $C \to F(M,N)$. In general,
  an enriched $n$-string corresponds to a sequence of maps \[(M_{0},
  C_{1}, \ldots, C_{n}) \to (M_{1}, C_{2}, \ldots, C_{n}) \to \cdots
  \to (M_{n-1}, C_{n}) \to M_{n},\] together with coherence data,
  where each map is the identity on the components after the first
  two. If $\mathcal{M}$ is a Lurie-enriched \icat{}, then this is
  equivalent to a sequence of maps
  \[ C_{1} \to F(M_{0}, M_{1}), \quad C_{2} \to F(M_{1},M_{2}), \quad
  \dots \quad C_{n} \to F(M_{n-1}, M_{n}).\]
\end{remark}

\begin{defn}
  The fibre of $\overline{\StrM}^{\txt{en}}$ at $[n]$ is clearly
  $\Fun_{\RM}(\Po^{\op}_{[n]}, \mathcal{M})$. We write $\StrMen$ for
  the full subcategory of $\overline{\StrM}^{\txt{en}}$ spanned by the
  enriched $n$-strings for all $n$.
\end{defn}

\begin{propn}\label{propn:StrMentoD}
  Let $q \colon \mathcal{M} \to \RM$ be a weakly enriched
  \icat{}. Then:
  \begin{enumerate}[(i)]
  \item The projection $p \colon \StrMen \to
    \simp^{\op}$ is a categorical fibration.
  \item For every $X \in \StrMen$ and every inert
    morphism $\alpha \colon p(X) \to [n]$ in $\simp^{\op}$ there
    exists a $p$-coCartesian morphism $X \to \alpha_{!}X$ over
    $\alpha$.
  \item Let $\overline{\alpha} \colon X \to Y$ be a morphism in
    $\StrMen$ such that $p(\overline{\alpha}) \colon
    [m] \to [n]$ is an inert morphism. Then $\overline{\alpha}$ is
    $p$-coCartesian \IFF{} for all $\phi \colon [k] \to [n]$ in
    $\Po^{\op}_{[n]}$ the induced map $X(\alpha \circ \phi) \to
    Y(\phi)$ is an equivalence.
  \item Suppose $q$ is a coCartesian fibration. Then so is $p$, and a
    morphism $\overline{\alpha} \colon X \to Y$ in $\StrMen$ over $\alpha
    \colon [m] \to [n]$ in $\simp^{\op}$ is
    $p$-coCartesian \IFF{} for every $\phi \colon [k] \to [n]$ in
    $\Po^{\op}_{[n]}$ the induced map $X(\alpha \circ \phi) \to
    Y(\phi)$ is $q$-coCartesian.
  \end{enumerate}
\end{propn}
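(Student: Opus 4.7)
The plan is to exploit the universal property defining $\overline{\StrM}^{\txt{en}}$ and realize $\StrMen$ as a full subcategory, so that most statements can first be verified on the ambient \icat{}. First I would check that $\overline{p} \colon \overline{\StrM}^{\txt{en}} \to \simp^{\op}$ is a categorical fibration: the defining universal property expresses $\overline{\StrM}^{\txt{en}}$ as a mapping-simplicial-set construction built from the categorical fibration $q \colon \mathcal{M} \to \RM$ and the functor $\Theta^{\op} \colon \Po^{\op} \to \simp^{\op}$, and such constructions preserve categorical fibrations by a straightforward analogue of \cite[Proposition 4.7.2.17]{HA}. Since $\StrMen \hookrightarrow \overline{\StrM}^{\txt{en}}$ is a full subcategory, $p$ is automatically an inner fibration; the remaining isomorphism-lifting needed for (i) will follow once coCartesian lifts over inert morphisms are produced in (ii), since equivalences in $\simp^{\op}$ are inert.

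For (ii) and (iii), given $X \in \StrMen$ over $[m]$ and an inert $\alpha \colon [m] \to [n]$ in $\simp^{\op}$ corresponding to an inert inclusion $\bar\alpha \colon [n] \hookrightarrow [m]$ in $\simp$, I would construct the coCartesian lift by restriction along the fully faithful functor $\bar\alpha_{*} \colon \Po_{[n]} \to \Po_{[m]}$ that sends a subinterval of $[n]$ to its image in $[m]$. Concretely, $(\alpha_{!}X)(\phi) := X(\bar\alpha \circ \phi)$, with the structure morphism $X \to \alpha_{!}X$ given by the identity along the image. I would verify the three conditions of the enriched string definition: (1) follows from the equality $\chi \circ \Phi^{\op}_{[n]} = (\chi \circ \Phi^{\op}_{[m]}) \circ \bar\alpha_{*}^{\op}$; (2) follows because composing an inert map with $\bar\alpha$ preserves the condition of preserving the initial element; and (3) follows because the coCartesian lift of $\overline{\chi}|_{\Delta^{1}\times\Po^{\op}_{[m]}}$ restricts, by naturality and essential uniqueness of coCartesian lifts, to the coCartesian lift of $\overline{\chi}|_{\Delta^{1}\times\Po^{\op}_{[n]}}$. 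The coCartesian characterization in (iii) is then an application of the mapping-space criterion of \cite[Proposition 2.4.4.3]{HTT}, using that mapping spaces in $\overline{\StrM}^{\txt{en}}$ may be computed as limits over $\Po^{\op}$ via the defining universal property.

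For (iv), when $q$ is coCartesian, I would extend the construction to arbitrary $\alpha \colon [m] \to [n]$ in $\simp^{\op}$ by using the coCartesian structure of $q$: for each $\phi \colon [k] \to [n]$ in $\Po_{[n]}$, the composite $\bar\alpha \circ \phi \colon [k] \to [m]$ is not necessarily inert, but the universal morphism in $\RM$ relating $\chi\Phi(\bar\alpha\circ\phi)$ to $\chi\Phi(\phi)$ admits a $q$-coCartesian lift starting at $X(\bar\alpha \circ \phi)$, whose target we declare to be $(\alpha_{!}X)(\phi)$. Functoriality in $\phi$ is obtained from the universal property of coCartesian edges, and the component-wise characterization of coCartesian morphisms in (iv) follows by the same mapping-space argument as in (iii), now using that a map out of a coCartesian edge in $\mathcal{M}$ is determined by the induced map from its target.

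The main obstacle I foresee is checking that condition (3) of the enriched-string definition is preserved under pushforward in case (iv), where the pushforward itself is built from $q$-coCartesian lifts along morphisms in $\RM$. This requires showing that two successive coCartesian-lifting procedures --- one horizontally along $\alpha$ in $\simp^{\op}$ and one vertically along $\overline{\chi}$ --- agree up to canonical equivalence. I expect this to reduce, via the 2-out-of-3 property for coCartesian morphisms and the essential uniqueness of such lifts, to a compatibility of pushouts within $\Po$ coming from the interaction of $\Phi$ and the natural transformation $\overline{\chi}$; but the bookkeeping required to organize the two lifting processes simultaneously is where the principal care is needed.
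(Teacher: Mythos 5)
Your proposal is correct in outline and follows essentially the same route as the paper, whose proof of this proposition is simply the citation ``As [HA, Proposition 4.7.2.23]'': one establishes the fibration property for the ambient object $\overline{\StrM}^{\txt{en}}$ via its universal property, produces coCartesian lifts over inert maps by restriction along $\Po^{\op}_{[n]} \to \Po^{\op}_{[m]}$, and handles general maps in the coCartesian case by inserting $q$-coCartesian pushforwards, exactly as in Lurie's argument. The only place to tighten is your case (iv): since $\bar\alpha\circ\phi$ need not be inert, the source of the $q$-coCartesian lift should be $X$ evaluated at the inert map onto the convex hull of the image of $\bar\alpha\circ\phi$ (together with the induced morphism in $\Po^{\op}$ over $\alpha$), rather than at $\bar\alpha\circ\phi$ itself.
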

\begin{proof}
  As \cite[Proposition 4.7.2.23]{HA}.
\end{proof}

\begin{propn}\label{propn:StrMSegCond} 
  Let $q \colon \mathcal{M} \to \RM$ be a weakly enriched
  \icat{}. Then the projection $p \colon \StrMen \to \simp^{\op}$
  satisfies the Segal condition, i.e. for each $[n]$, the map
  \[ \StrMen_{[n]} \to \StrMen_{[1]} \times_{\StrMen_{[0]}} \cdots
  \times_{\StrMen_{[0]}}\StrMen_{[1]} \]
  is an equivalence.
\end{propn}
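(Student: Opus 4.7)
The plan is to reduce the Segal condition for $\StrMen$ to the Segal-type condition satisfied by $\mathcal{M}\to\RM$ as a generalized non-symmetric $\infty$-operad, via a right Kan extension argument along the spine of $\Po_{[n]}$. Specifically, by Proposition~\ref{propn:StrMentoD}(ii), the inert morphisms $\rho_i\colon [n]\to [1]$ ($i=1,\ldots,n$) and the inert morphisms $[n]\to [0]$ at each vertex admit coCartesian lifts in $\StrMen\to\simp^{\op}$, and the Segal map is induced by these lifts. I therefore need to show that an enriched $n$-string $\sigma\colon\Po_{[n]}^{\op}\to\mathcal{M}$ is equivalent to the data of its restrictions to the length-$1$ and length-$0$ subintervals of $[n]$, together with matching conditions on their common boundaries.

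First, I would introduce the spine $\Po_{[n]}^{\sharp}\subseteq\Po_{[n]}$, namely the full subcategory spanned by the one- and two-element subintervals $\{i\}$ and $\{i-1,i\}$ of $[n]$. A compatible family of a $1$-string over each $\{i-1,i\}$ and a $0$-string over each $\{i\}$, with matching restrictions at common vertices, is precisely a functor $\Po_{[n]}^{\sharp,\op}\to\mathcal{M}$ lying over $\chi\circ\Phi_{[n]}^{\op}|_{\Po_{[n]}^{\sharp,\op}}$ and satisfying the analogues of conditions (2) and (3). Thus the Segal target identifies with the full subcategory of $\Fun_{\RM}(\Po_{[n]}^{\sharp,\op},\mathcal{M})$ cut out by these conditions, and the Segal map becomes restriction along $\iota\colon\Po_{[n]}^{\sharp}\hookrightarrow\Po_{[n]}$.

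Second, I would show that an enriched $n$-string $\sigma$ is a $p$-right Kan extension of $\iota^{*}\sigma$. For each subinterval $\alpha\colon [k]\hookrightarrow [n]$ of length $k\geq 2$, the over-category $(\Po_{[n]}^{\sharp})^{\alpha/}$ has as initial part the collection of inert morphisms $\{j\}\to\alpha$ and $\{j-1,j\}\to\alpha$ for $\alpha(0)\leq j-1<j\leq\alpha(k)$. Using condition (2) of the definition of an enriched string, $\sigma$ takes these morphisms to inert morphisms in $\mathcal{M}$; condition (iii) in Definition~\ref{defn:gnsiopd1} applied to the generalized non-symmetric $\infty$-operad $\mathcal{M}\to\RM$ then asserts precisely that $\sigma(\alpha)$ is a $p$-limit of these inert projections. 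This is exactly the $p$-right Kan extension condition, so restriction along $\iota$ is fully faithful on the appropriate subcategories. Conversely, given a compatible spine diagram, the same $p$-limit prescription produces an extension, and the extension automatically satisfies condition (1). Condition (2) then follows from the factorization system on $\mathcal{M}$ together with the fact that inert morphisms in generalized non-symmetric $\infty$-operads compose to inert morphisms.

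The main obstacle will be verifying condition (3): that after coCartesian pushforward along $\overline{\chi}$, the extended functor lands in inert morphisms of the monoidal $\infty$-category $\mathcal{M}_{\mathfrak{r}}^{\otimes}$. The subtlety here is that condition (3) mixes two different $\infty$-operad structures --- the generalized structure on $\mathcal{M}\to\RM$ used above for the right Kan extension, and the honest non-symmetric structure on $\mathcal{M}_{\mathfrak{r}}^{\otimes}\to\simp^{\op}$ where the inertness is tested. The key point is that $\overline{\chi}$ is a natural transformation of coCartesian fibrations whose coCartesian pushforward, by the generalized-to-ordinary operad comparison (that is, by the fact that $\mathcal{M}_{\mathfrak{r}}^{\otimes}$ satisfies the Segal decomposition arising from $\mathcal{M}\to\RM$), commutes with the $p$-limits computing the right Kan extension. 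Once this compatibility is established, condition (3) on the spine propagates to condition (3) on all of $\Po_{[n]}$, completing the proof that the Segal map is an equivalence.
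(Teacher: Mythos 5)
The paper itself gives no argument here (it simply cites \cite[Proposition 4.7.2.13]{HA}), so your proposal is an attempt to supply one, and the overall strategy --- identify the Segal target with functors on the spine of $\Po_{[n]}$ and show an enriched string is a relative right Kan extension of its spine restriction --- is the right one and is essentially Lurie's. However, the justification of the central step is incorrect as written.

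You claim that, by condition (2) of the definition of an enriched string, $\sigma$ carries the morphisms $\{j\}\to\alpha$ and $\{j-1,j\}\to\alpha$ to inert morphisms of $\mathcal{M}$, and that condition (iii) of Definition~\ref{defn:gnsiopd1} then asserts "precisely" that $\sigma(\alpha)$ is a relative limit over the spine. Both halves of this fail. Condition (2) only applies when the left endpoints agree: for $j-1>\alpha(0)$ the inclusion $\{j-1,j\}\hookrightarrow\alpha$ is sent by $\chi\circ\Phi^{\op}_{[n]}$ to a \emph{non-inert} morphism of $\simp^{\op}$ (as the parenthetical in condition (2) itself points out, $[0]\star\phi$ is inert only when $\phi(0)=0$), so $\sigma$ of that morphism cannot be inert. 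Moreover, the decomposition of $\sigma(\alpha)$ provided by condition (iii) is indexed by $\mathcal{G}^{\simp}_{[k+1]/}$, i.e.\ by the edges and interior vertices of $[k+1]=\Phi(\alpha)\star[0]$, whereas your proposed limit is indexed by the spine of $\Po_{[k]}$; under $\chi\circ\Phi$ the singleton $\{j+i\}$ with $i>0$ corresponds to the \emph{long} edge $\{0,i+1\}$ of $[k+1]$, not a spine edge, so the two index diagrams do not match. The relative-limit statement is still true, but proving it requires decomposing each spine value $\sigma(\{j+i,j+i+1\})\simeq(M_{j+i},C_{i+1})$ and $\sigma(\{j+i\})\simeq M_{j+i}$ into its own Segal pieces via condition (iii), identifying the maps $\Map(Y,\sigma\{j+i,j+i+1\})\to\Map(Y,\sigma\{j+i+1\})$ (the action map) and $\Map(Y,\sigma\{j+i+1,j+i+2\})\to\Map(Y,\sigma\{j+i+1\})$ (the inert projection), and observing that the $M$-components telescope out of the iterated fibre product, leaving $\Map(Y,M_{\alpha(0)})\times\prod_i\Map(Y,C_i)$, which is the condition-(iii) decomposition of $\Map(Y,\sigma(\alpha))$. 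That cancellation argument is the actual content of the proof and is absent from your write-up. In the converse direction, your verification that the extension of a spine diagram satisfies conditions (2) and especially (3) is asserted rather than proved; the "key point" you state about $\overline{\chi}$ commuting with the relative limits is exactly what needs an argument, since condition (3) concerns inertness of \emph{all} structure maps of $\sigma'$ in $\mathcal{M}^{\otimes}_{\mathfrak{r}}$, including those over non-inert maps of $\simp^{\op}$.
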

\begin{proof}
  As \cite[Proposition 4.7.2.13]{HA}.
\end{proof}

\begin{propn}\label{propn:StrMentoDM}
  Let $q \colon \mathcal{M} \to \simp^{\op}$ be a weakly enriched
  \icat{}. Then:
  \begin{enumerate}[(i)]
  \item The projection $r \colon \StrMen \to \simp^{\op}_{\mathcal{M}}$ is a
    categorical fibration.
  \item Given $X \in \StrMen$ and an inert morphism $\alpha \colon
    r(X) \to Y$ in $\simp^{\op}_{\mathcal{M}}$, there exists an
    $r$-coCartesian morphism $X \to \alpha_{!}X$ over $\alpha$.
  \item Suppose $X \in \StrMen$, $\alpha \colon r(X) \to Y$ is a
    morphism in $\simp^{\op}_{\mathcal{M}}$, and $\alpha_{0}\colon [m]
    \to [n]$ is the underlying morphism in $\simp^{\op}$. Then a
    morphism $\overline{\alpha} \colon X \to \overline{Y}$ over $\alpha$ is
    $r$-coCartesian \IFF{} $\overline{\alpha}$ induces an equivalence
    $X(\alpha \circ \phi) \to \overline{Y}(\phi)$ for all $\phi \colon [k]
    \to [n]$ in $\Po^{\op}_{[n]}$.
  \item Suppose $q$ is a coCartesian fibration, and let $r_{0}$ denote
    the projection $\simp^{\op}_{\mathcal{M}} \to \simp^{\op}$. Given
    $X \in \StrMen$ and an $r_{0}$-coCartesian morphism $\alpha \colon
    r(X) \to Y$ in $\simp^{\op}_{\mathcal{M}}$, there exists an
    $r$-coCartesian morphism $\overline{\alpha} \colon X \to \alpha_{!}X$
    in $\StrMen$ over $\alpha$. Moreover, if $X \in \StrMen$ and
    $\alpha \colon r(X) \to Y$ in $\simp^{\op}_{\mathcal{M}}$ is
    $r_{0}$-coCartesian over $\alpha_{0} \colon [m] \to [n]$ in
    $\simp^{\op}$, then a morphism $X \to \overline{Y}$ in $\StrMen$ over
    $\alpha$ is $r$-coCartesian \IFF{} the induced map $X(\alpha \circ
    \phi) \to \overline{Y}(\phi)$ in $\mathcal{M}$ is $q$-coCartesian for
    all $\phi \in \Po^{\op}_{[n]}$.
  \end{enumerate}
\end{propn}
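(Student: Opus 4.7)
The strategy is to deduce Proposition~\ref{propn:StrMentoDM} from Proposition~\ref{propn:StrMentoD} by exploiting the factorization $p = r_{0} \circ r$, where $r_{0} \colon \simp^{\op}_{\mathcal{M}_{\mathfrak{m}}} \to \simp^{\op}$ is the canonical projection. Throughout, recall that via the Lemma preceding Proposition~\ref{propn:StrMentoDM} we identify $\StrM \simeq \simp^{\op}_{\mathcal{M}_{\mathfrak{m}}}$, and that $r$ is induced by restricting a functor $\Po^{\op}_{[n]} \to \mathcal{M}$ to the full subcategory $(\Po'_{[n]})^{\op}$.

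For part (i), I would show that $r$ is an inner fibration using the standard result that if $q \circ p$ and $q$ are inner fibrations then $p$ is (here $q = r_{0}$, which is a coCartesian fibration and hence an inner fibration, while $p = r_{0} \circ r$ is a categorical fibration by Proposition~\ref{propn:StrMentoD}(i)). The categorical fibration property amounts to lifting equivalences; this follows because $r_{0}$ is itself a categorical fibration and any equivalence in $\simp^{\op}_{\mathcal{M}_{\mathfrak{m}}}$ projects to one in $\simp^{\op}$, allowing us to pick a lift along $p$ and then observe that the essentially unique $r_{0}$-coCartesian lift picks out the desired image.

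For parts (ii) and (iii), suppose $\alpha \colon r(X) \to Y$ is inert in $\simp^{\op}_{\mathcal{M}_{\mathfrak{m}}}$, with underlying inert map $\alpha_{0} \colon [m] \to [n]$. By Proposition~\ref{propn:StrMentoD}(ii), there is a $p$-coCartesian lift $\overline{\alpha} \colon X \to \alpha_{0,!}X$ over $\alpha_{0}$. Proposition~\ref{propn:StrMentoD}(iii) characterizes $\overline{\alpha}$ via the equivalences $X(\alpha_{0} \circ \phi) \to \alpha_{0,!}X(\phi)$ for $\phi \in \Po^{\op}_{[n]}$; restricting to $\phi \in (\Po'_{[n]})^{\op}$ (the vertex inclusions) shows that $r(\overline{\alpha})$ is an $r_{0}$-coCartesian edge over $\alpha_{0}$ with source $r(X)$, so it agrees with $\alpha$ up to equivalence. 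The standard fact that a $(g \circ f)$-coCartesian edge whose image under $f$ is $g$-coCartesian must itself be $f$-coCartesian then yields the $r$-coCartesian lift. For the characterization (iii), observe that once $r(\overline{\alpha}) \simeq \alpha$ is $r_{0}$-coCartesian, $\overline{\alpha}$ is $r$-coCartesian iff it is $p$-coCartesian, and then Proposition~\ref{propn:StrMentoD}(iii) provides the desired equivalent formulation.

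For part (iv), the assumption that $q$ is coCartesian upgrades Proposition~\ref{propn:StrMentoD}(iv) to give $p$-coCartesian lifts over arbitrary morphisms of $\simp^{\op}$, together with their characterization in terms of the pointwise $q$-coCartesian conditions. Given an $r_{0}$-coCartesian $\alpha \colon r(X) \to Y$ over $\alpha_{0}$, take a $p$-coCartesian lift $\overline{\alpha}$ of $\alpha_{0}$; the same argument as before shows $r(\overline{\alpha}) \simeq \alpha$ and promotes $\overline{\alpha}$ to an $r$-coCartesian lift of $\alpha$. The characterization follows by combining Proposition~\ref{propn:StrMentoD}(iv) with the $f = r$, $g = r_{0}$ version of the composition lemma for coCartesian edges. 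The main obstacle will be the equivalence-lifting half of part (i), where the naive composition of lifts need not have the prescribed image under $r$; handling this requires combining the categorical fibration property of $p$ with the uniqueness of $r_{0}$-coCartesian lifts in $\simp^{\op}_{\mathcal{M}_{\mathfrak{m}}}$, in the spirit of \cite[Proposition 4.7.2.25]{HA}.
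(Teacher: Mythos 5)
Your overall strategy---factoring $p = r_{0}\circ r$ and transporting the conclusions of Proposition~\ref{propn:StrMentoD} along this factorization via the composition criterion for coCartesian edges \cite[Proposition 2.4.1.3]{HTT}---is exactly the route of the proof the paper points to (\cite[Lemma 4.7.2.27]{HA}), and parts (ii)--(iv) of your sketch are essentially right: an inert (resp.\ $r_{0}$-coCartesian) morphism of $\simp^{\op}_{\mathcal{M}_{\mathfrak{m}}}$ is $r_{0}$-coCartesian, a $p$-coCartesian lift of its underlying morphism in $\simp^{\op}$ has $r_{0}$-coCartesian image agreeing with $\alpha$ up to essential uniqueness, and the composition criterion then upgrades it to an $r$-coCartesian lift and yields the stated characterizations. (Note only that you treat (iii) for inert $\alpha$, while the statement is phrased for arbitrary morphisms of $\simp^{\op}_{\mathcal{M}}$.)

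The genuine gap is in part (i). The ``standard result'' you invoke---that $p$ is an inner fibration whenever $q\circ p$ and $q$ are---is false: taking $Z=\Delta^{0}$ it would assert that every functor between \icats{} is an inner fibration, which fails (for instance, a functor $\Delta^{1}\to\mathcal{D}$ into an \icat{} containing a nondegenerate $2$-simplex with the appropriate degenerate boundary admits no filler for the corresponding inner $2$-horn over $\Delta^{1}$). The isofibration half fails by the same cancellation problem; you flag this, but your proposed repair routes through the coCartesian lifts of part (ii), whose construction in your sketch already uses (i) to rectify the lift up to equivalence, so the argument as organized is circular. A proof of (i) that works does not use the factorization at all: $\overline{\StrM}^{\txt{en}}$ and $\StrM$ are defined by relative mapping-object universal properties into the categorical fibration $\mathcal{M}\to\RM$, and the lifting problem for $r$ against a trivial cofibration $A\to B$ of the Joyal model structure transposes, by adjunction, to a lifting problem for $\mathcal{M}\to\RM$ against the pushout-product over $\simp^{\op}$ of $A\to B$ with the cofibration $(\Po')^{\op}\hookrightarrow\Po^{\op}$, which is again trivial; one then restricts to the full subcategory $\StrMen\subseteq\overline{\StrM}^{\txt{en}}$, which is closed under equivalences. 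With (i) established this way, the rest of your argument goes through.
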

\begin{proof}
  As \cite[Lemma 4.7.2.27]{HA}.
\end{proof}

\begin{defn}
  Define $\overline{\StrM}^{\txt{en},+} \to \simp^{\op}$ by
  \[ \Hom_{\simp^{\op}}(K, \overline{\StrM}^{\txt{en},+}) \cong
  \Hom_{\RM}(\Delta^{1} \times K \times_{\simp^{\op}} \Po^{\op},
  \mathcal{M}),\] where the map $\Delta^{1} \times \Po^{\op}
  \to \RM$ is the composite of $\id \times \Phi^{\op} \colon
  \Delta^{1}\times \Po^{\op} \to \Delta^{1} \times \simp^{\op}$ with
  the natural transformation $\overline{\chi}$.
\end{defn}

\begin{defn}
  Suppose $q \colon \mathcal{M} \to \RM$ is a weakly enriched
  \icat{}. Let $\StrM^{\txt{en},+}$ denote the full subcategory of
  $\overline{\StrM}^{\txt{en},+} \to \simp^{\op}$ spanned by objects
  $F \colon \Delta^{1} \times \Po^{\op}_{[n]} \to \mathcal{M}$ such
  that $F|_{\{0\} \times \Po^{\op}_{[n]}}$ is an enriched $n$-string
  and $F$ is a $q$-left Kan extension of $F|_{\{0\} \times
    \Po^{\op}_{[n]}}$.
\end{defn}

\begin{lemma}
  The projection $\StrM^{\txt{en},+} \to \StrM^{\txt{en}}$ is a
  trivial fibration.
\end{lemma}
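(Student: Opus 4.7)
The plan is to deduce this from Lurie's theory of relative Kan extensions, specifically HTT Proposition 4.3.2.15, which says that if $q \colon \mathcal{X} \to \mathcal{Y}$ is a categorical fibration, $K_0 \hookrightarrow K$ is an inclusion of simplicial sets, and $\mathcal{X}^0 \subseteq \Fun_{\mathcal{Y}}(K,\mathcal{X})$ is the full subcategory spanned by $q$-left Kan extensions of their restrictions to $K_0$, then the restriction functor $\mathcal{X}^0 \to \Fun_{\mathcal{Y}}(K_0,\mathcal{X})$ is a trivial Kan fibration onto its essential image, provided the required $q$-left Kan extensions exist for every object in that image.

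First I would unpack the definitions of $\StrM^{\txt{en}}$ and $\StrM^{\txt{en},+}$ fiberwise over $[n] \in \simp^{\op}$: the fiber of $\StrM^{\txt{en},+}$ at $[n]$ is the full subcategory of $\Fun_{\RM}(\Delta^1 \times \Po^{\op}_{[n]},\mathcal{M})$ spanned by functors $F$ whose restriction to $\{0\}\times\Po^{\op}_{[n]}$ is an enriched $n$-string and that are $q$-left Kan extensions of this restriction. The restriction to $\{0\}\times\Po^{\op}_{[n]}$ is precisely the projection to the fiber of $\StrM^{\txt{en}}$ at $[n]$. This matches the hypotheses of HTT 4.3.2.15 exactly, so once the existence statement is verified one concludes directly that the projection is a trivial fibration (that it surjects onto enriched $n$-strings and not merely onto an essential image is automatic since we defined $\StrM^{\txt{en}}$ to consist of enriched strings).

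The technical content is thus the existence of the $q$-left Kan extension: given an enriched $n$-string $\sigma \colon \Po^{\op}_{[n]} \to \mathcal{M}$, I need to construct, for each $\phi \in \Po^{\op}_{[n]}$, a $q$-left Kan extension of $\sigma$ at the object $(1,\phi)$ of $\Delta^1 \times \Po^{\op}_{[n]}$. The relevant under-category $(\{0\}\times\Po^{\op}_{[n]})\times_{\Delta^1\times\Po^{\op}_{[n]}}(\Delta^1\times\Po^{\op}_{[n]})_{/(1,\phi)}$ consists of pairs of a morphism $\psi \to \phi$ in $\Po^{\op}_{[n]}$ together with the unique arrow $0 \to 1$ in $\Delta^1$; this under-category has the pair $(\id_\phi,0\to1)$ as a terminal object. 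Consequently the $q$-left Kan extension at $(1,\phi)$ exists and is computed by a single $q$-coCartesian pushforward of $\sigma(\phi)$ along the morphism $\overline{\chi}_\phi$ in $\RM$. Since $\overline{\chi}_\phi$ lies over an inert morphism in $\simp^{\op}$ (namely the inclusion $d_0 \colon [n] \hookrightarrow [n+1]$ viewed in $\simp^{\op}$), and since $\mathcal{M} \to \RM$ is a fibration of \gnsiopds{} and thus admits coCartesian lifts over inert morphisms, such a pushforward exists.

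Finally, I would check that the resulting functor $F$ is indeed a $q$-left Kan extension at every object: on $\{0\}\times\Po^{\op}_{[n]}$ this is automatic, and at $(1,\phi)$ the criterion reduces (thanks to the terminal object observation above) to requiring that $F((0,\phi)\to(1,\phi))$ be $q$-coCartesian, which holds by construction. The main obstacle is keeping track of the various categorical fibration structures and the compatibility with $\overline{\chi}$; once the terminal object in the under-category is identified, however, the existence of relative Kan extensions reduces to the existence of inert coCartesian lifts guaranteed by the definition of a \gnsiopd{}, and the result follows from HTT 4.3.2.15.
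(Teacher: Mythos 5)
Your proposal is correct and follows essentially the same route as the paper, whose entire proof is the single citation of \cite[Proposition 4.3.2.15]{HTT}; your additional work (identifying the terminal object $\id_{\phi}$ in the slice $(\Po^{\op}_{[n]})_{/\phi}$ and reducing the existence of the $q$-left Kan extension to a $q$-coCartesian lift of the inert edge $\overline{\chi}_{\Phi(\phi)}$) is exactly the verification the paper leaves implicit. No gaps.
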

\begin{proof}
  Immediate from \cite[Proposition 4.3.2.15]{HTT}.
\end{proof}

\begin{defn}
  Let $i \colon \simp^{\op} \to \Delta^{1} \times \Po^{\op}$ be the
  functor that sends $[n]$ to $(1, \id \colon [n] \to [n])$. Then
  composition with $i$ induces a functor $\StrM^{\txt{en},+} \to
  \mathcal{M}_{\mathfrak{r}}^{\otimes}$ over $\simp^{\op}$.
\end{defn}

\begin{lemma}\label{lem:StremPlprInert}
  Let $q \colon \mathcal{M} \to \RM$ be a weakly enriched \icat{}. The
  functor $\StrM^{\txt{en},+} \to \mathcal{M}_{\mathfrak{r}}^{\otimes}$ preserves
  inert morphisms.
\end{lemma}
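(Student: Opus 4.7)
The strategy is to combine the characterization of inert morphisms in $\StrMen$ from Proposition~\ref{propn:StrMentoD}(iii) with the $q$-left Kan extension condition defining objects of $\StrM^{\txt{en},+}$, and then invoke condition (3) in the definition of enriched $n$-string (which is precisely a statement about inertness in $\mathcal{M}_{\mathfrak{r}}^{\otimes}$).

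Let $\alpha \colon X \to Y$ be an inert morphism of $\StrM^{\txt{en},+}$ projecting to an inert $\alpha_0 \colon [m] \to [n]$ in $\simp^{\op}$; write $\beta \colon [n] \to [m]$ for the corresponding inert morphism in $\simp$, regarded as an object of $\Po^{\op}_{[m]}$. The image of $\alpha$ under $\StrM^{\txt{en},+} \to \mathcal{M}_{\mathfrak{r}}^{\otimes}$ is the morphism $X(1,\id_{[m]}) \to Y(1,\id_{[n]})$ lying over $\alpha_0$, and we must show it is $q$-coCartesian. I would begin by transporting $\alpha$ along the trivial fibration $\StrM^{\txt{en},+} \to \StrMen$ and applying Proposition~\ref{propn:StrMentoD}(iii) at $\phi = \id_{[n]} \in \Po^{\op}_{[n]}$, which yields an equivalence $X(0,\beta) \isoto Y(0,\id_{[n]})$ in $\mathcal{M}$.

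Next I would upgrade this equivalence to the $\{1\}$-level via the $q$-left Kan extension condition. For any $F \in \StrM^{\txt{en},+}$ and any $\psi$, the comma category computing the pointwise left Kan extension at $(1,\psi)$ has $\psi$ itself as a terminal object, so the condition reduces to the statement that $F(0,\psi) \to F(1,\psi)$ is $q$-coCartesian over the $d_0$-morphism $\chi(\Phi(\psi)) \to \chi'(\Phi(\psi))$ in $\RM$. Applied to $X$ at $\psi = \beta$ and to $Y$ at $\psi = \id_{[n]}$, both values of $\Phi$ equal $[n]$, so the two coCartesian morphisms sit above the \emph{same} morphism in $\RM$; uniqueness of coCartesian lifts, together with the equivalence from the previous step, then produces an equivalence $X(1,\beta) \isoto Y(1,\id_{[n]})$ in $\mathcal{M}_{\mathfrak{r}}^{\otimes}$.

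Finally, I would use naturality of $\alpha$ to factor the image morphism through $X(1,\beta)$ as
\[
X(1, \id_{[m]}) \longrightarrow X(1, \beta) \xrightarrow{\sim} Y(1, \id_{[n]}),
\]
where the first arrow is the image under $X(1,\blank) \colon \Po^{\op}_{[m]} \to \mathcal{M}_{\mathfrak{r}}^{\otimes}$ of the morphism $\id_{[m]} \to \beta$ in $\Po^{\op}_{[m]}$ (which lies over $\alpha_0$ in $\simp^{\op}$). Condition (3) in the definition of enriched $n$-string, applied to $X$, guarantees that $X(1,\blank)$ sends every morphism to an inert morphism in $\mathcal{M}_{\mathfrak{r}}^{\otimes}$; hence the first arrow is inert over $\alpha_0$, and the composite -- being an inert morphism followed by an equivalence -- is inert, as required. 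The main obstacle will be identifying the image of $\alpha$ with this composite, which amounts to unwinding $\alpha$ as a natural transformation across the fibre-product structure of $\Delta^1 \times K \times_{\simp^{\op}} \Po^{\op}$ and verifying that naturality in the $\Po^{\op}$-variable produces the intermediate factorisation through $X(1,\beta)$; this is routine but requires careful bookkeeping with the source--target conventions for objects of $\Po$.
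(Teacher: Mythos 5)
Your proof is correct and follows the same route the paper intends: the paper's own proof is just the one-line remark that the claim is ``obvious from the definitions and Proposition~\ref{propn:StrMentoD}'', and your argument is precisely the unpacking of that remark --- using Proposition~\ref{propn:StrMentoD}(iii) on the underlying morphism of $\StrM^{\txt{en}}$, the observation that the $q$-left Kan extension condition reduces at $(1,\psi)$ to $F(0,\psi)\to F(1,\psi)$ being $q$-coCartesian (the comma category having $\id_\psi$ as terminal object), and condition (3) of the definition of enriched $n$-string to get inertness of $X(1,\id_{[m]})\to X(1,\beta)$. The factorisation of the image of $\alpha$ through $X(1,\beta)$ that you flag as the remaining bookkeeping does go through as you describe, so no gap remains.
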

\begin{proof}
  This is obvious from the definitions and
  Proposition~\ref{propn:StrMentoD}.
\end{proof}

\subsection{Extracting a Categorical Algebra}\label{subsec:ExtCatAlg}
In this subsection we will extract a categorical algebra from a coCartesian
Lurie-enriched \icat{}, and consider some examples of enriched \icats{}
that arise in this way.

\begin{defn}
  Suppose $q \colon \mathcal{M} \to \RM$ is a weakly enriched
  \icat{}. Let $\StrM^{\txt{en}}_{\iota}$ be defined by the pullback
  \nolabelcsquare{\StrM^{\txt{en}}_{\iota}}{\StrM^{\txt{en}}}{\simp^{\op}_{\iota
      \mathcal{M}_{\mathfrak{m}}}}{\simp^{\op}_{\mathcal{M}_{\mathfrak{m}}}.}
\end{defn}

\begin{lemma}\label{lem:StrMicoCart}
  Suppose $q \colon \mathcal{M} \to \RM$ is a coCartesian weakly
  enriched \icat{}. Then the projection $\StrM^{\txt{en}}_{\iota} \to
  \simp^{\op}_{\iota \mathcal{M}}$ is a coCartesian fibration.
\end{lemma}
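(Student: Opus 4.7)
The plan is to reduce the assertion to Proposition~\ref{propn:StrMentoDM}(iv), which provides coCartesian lifts in $\StrMen \to \simp^{\op}_{\mathcal{M}_{\mathfrak{m}}}$ over morphisms that are coCartesian with respect to the projection $r_{0} \colon \simp^{\op}_{\mathcal{M}_{\mathfrak{m}}} \to \simp^{\op}$. The crucial observation will be that, after restricting to $\simp^{\op}_{\iota\mathcal{M}_{\mathfrak{m}}}$, \emph{every} morphism meets this hypothesis, so the partial coCartesian fibration becomes a full coCartesian fibration upon pullback.

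First, I would observe that since $\iota\mathcal{M}_{\mathfrak{m}}$ is a space (a Kan complex), the coCartesian fibration $\simp^{\op}_{\iota\mathcal{M}_{\mathfrak{m}}} \to \simp^{\op}$ --- corresponding under unstraightening to the functor $[n] \mapsto (\iota\mathcal{M}_{\mathfrak{m}})^{\times(n+1)}$ --- is in fact a left fibration, since all of its fibres are Kan complexes. Consequently, every morphism in $\simp^{\op}_{\iota\mathcal{M}_{\mathfrak{m}}}$ is coCartesian over $\simp^{\op}$. The inclusion $\simp^{\op}_{\iota\mathcal{M}_{\mathfrak{m}}} \hookrightarrow \simp^{\op}_{\mathcal{M}_{\mathfrak{m}}}$ preserves coCartesian arrows over $\simp^{\op}$, so every morphism in $\simp^{\op}_{\iota\mathcal{M}_{\mathfrak{m}}}$ is $r_{0}$-coCartesian when viewed in $\simp^{\op}_{\mathcal{M}_{\mathfrak{m}}}$.

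Now let $X \in \StrM^{\txt{en}}_{\iota}$ and let $\alpha \colon r(X) \to Y$ be any morphism in $\simp^{\op}_{\iota\mathcal{M}_{\mathfrak{m}}}$. By the previous paragraph, $\alpha$ is $r_{0}$-coCartesian in $\simp^{\op}_{\mathcal{M}_{\mathfrak{m}}}$, so by Proposition~\ref{propn:StrMentoDM}(iv) there is an $r$-coCartesian lift $\bar{\alpha} \colon X \to \alpha_{!}X$ in $\StrMen$. The target $\alpha_{!}X$ projects to $Y \in \simp^{\op}_{\iota\mathcal{M}_{\mathfrak{m}}}$, so $\bar{\alpha}$ is a morphism in the pullback $\StrM^{\txt{en}}_{\iota}$. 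Since $r$-coCartesian morphisms pull back to coCartesian morphisms (using that the pullback of a categorical fibration is again a categorical fibration, so $\StrM^{\txt{en}}_{\iota} \to \simp^{\op}_{\iota\mathcal{M}_{\mathfrak{m}}}$ is an inner fibration), the lift $\bar{\alpha}$ is coCartesian with respect to the projection $\StrM^{\txt{en}}_{\iota} \to \simp^{\op}_{\iota\mathcal{M}_{\mathfrak{m}}}$. As $\alpha$ was arbitrary, this projection is a coCartesian fibration.

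No step here should pose serious difficulty; the main content is really just the identification of morphisms in $\simp^{\op}_{\iota\mathcal{M}_{\mathfrak{m}}}$ with $r_{0}$-coCartesian morphisms in $\simp^{\op}_{\mathcal{M}_{\mathfrak{m}}}$, which is essentially definitional once one notes that $\iota\mathcal{M}_{\mathfrak{m}}$ is a space. The hypothesis that $q$ is a coCartesian fibration (not merely pseudo-enriched) enters only through Proposition~\ref{propn:StrMentoDM}(iv), which requires it.
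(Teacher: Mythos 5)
Your proof is correct and follows exactly the paper's argument: the paper likewise observes that $\simp^{\op}_{\iota\mathcal{M}} \to \simp^{\op}$ is a left fibration, so every morphism in $\simp^{\op}_{\iota\mathcal{M}}$ is coCartesian over $\simp^{\op}$, and then invokes Proposition~\ref{propn:StrMentoDM}(iv). Your write-up just spells out the base-change step that the paper leaves implicit.
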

\begin{proof}
  This follows immediately from Proposition~\ref{propn:StrMentoDM}
  since the projection $\pi \colon \simp^{\op}_{\iota \mathcal{M}} \to
  \simp^{\op}$ is a left fibration, so all morphisms in
  $\simp^{\op}_{\iota \mathcal{M}}$ are $\pi$-coCartesian.
\end{proof}

\begin{remark}
  We expect that Lemma~\ref{lem:StrMicoCart} is also true for
  pseudo-enriched \icats{} that are not coCartesian fibrations, but
  since this is not needed for the examples we are interested in we
  will not consider this generalization here.
\end{remark}

\begin{defn}
  Suppose $q \colon \mathcal{M} \to \RM$ is a Lurie-enriched
  \icat{}. Let $\StrM^{\txt{en}}_{\txt{eq}}$ be the full subcategory
  of $\StrM^{\txt{en}}_{\iota}$ spanned by enriched $n$-strings
  $\sigma \colon \Po^{\op}_{[n]} \to \mathcal{M}$ such that for $i =
  1,\ldots, n$, the map \[\sigma(\{i,i+1\} \hookrightarrow
  [n]) \simeq (M_{i}, C_{i}) \to M_{i+1} \simeq \sigma(\{i+1\}
  \hookrightarrow [n])\] exhibits $C_{i}$ as the morphism object
  $F(M_{i},M_{i+1})$.
\end{defn}

\begin{propn}
  Suppose $q \colon \mathcal{M} \to \RM$ is a coCartesian
  Lurie-enriched \icat{}. Then the projection
  $\StrM^{\txt{en}}_{\txt{eq}} \to \simp^{\op}_{\iota \mathcal{M}}$ is
  a trivial fibration.
\end{propn}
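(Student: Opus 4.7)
I would prove this by showing that the projection is a categorical fibration whose every fibre is a contractible Kan complex, together with the existence of enough coCartesian lifts to make it a left fibration---from which it follows that it is a trivial Kan fibration.

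First I would dispose of the categorical fibration claim. The projection $\StrM^{\txt{en}}_{\iota} \to \simp^{\op}_{\iota\mathcal{M}}$ is a coCartesian fibration by Lemma~\ref{lem:StrMicoCart}, and in particular a categorical fibration. Since being a morphism object is invariant under equivalences in $\StrM^{\txt{en}}_{\iota}$, the full subcategory $\StrM^{\txt{en}}_{\txt{eq}}$ is closed under equivalences, so its inclusion into $\StrM^{\txt{en}}_{\iota}$ is a categorical fibration, and hence so is the composite $p$.

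Next I would identify the fibres of $p$ over a point $(M_0,\ldots,M_n)$ of $\simp^{\op}_{\iota\mathcal{M}}$ lying over $[n]$. Using the Segal condition of Proposition~\ref{propn:StrMSegCond} together with the analogous decomposition $\iota\mathcal{M}_{\mathfrak{m}}^{n+1}\simeq \iota\mathcal{M}_{\mathfrak{m}}\times_{\mathrm{pt}}\cdots\times_{\mathrm{pt}} \iota\mathcal{M}_{\mathfrak{m}}$ for $\simp^{\op}_{\iota\mathcal{M}}$, the fibre of $\StrM^{\txt{en}}_{\iota}$ over $(M_0,\ldots,M_n)$ decomposes as $\prod_{i=0}^{n-1}\StrM^{\txt{en}}_{\iota,[1],(M_i,M_{i+1})}$. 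The defining condition of $\StrM^{\txt{en}}_{\txt{eq}}$ is imposed separately on each subinterval $\{i,i+1\}\hookrightarrow[n]$, so it is compatible with this decomposition, reducing the contractibility problem to the case $n=1$. Here, unpacking the definition of an enriched $1$-string shows the fibre over $(M_0,M_1)$ consists of pairs $(C,\alpha)$ with $C\in\mathcal{M}_{\mathfrak{r}}$ and $\alpha\colon M_0\boxtimes C\to M_1$ a morphism in $\mathcal{M}_{\mathfrak{m}}$ (corresponding to the unique active map $(0,1,1)\to(0,1)$ in $\RM$), and the $\StrM^{\txt{en}}_{\txt{eq}}$-condition is exactly that $(C,\alpha)$ is a morphism object for $(M_0,M_1)$. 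The universal property then immediately shows this space is contractible, and it is nonempty since $\mathcal{M}$ is Lurie-enriched.

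Finally, to upgrade ``categorical fibration with contractible fibres'' to ``trivial fibration'', I would verify that $p$ has $p$-coCartesian lifts of every morphism in $\simp^{\op}_{\iota\mathcal{M}}$---making $p$ a left fibration with contractible fibres, hence a trivial Kan fibration. Given $\sigma\in\StrM^{\txt{en}}_{\txt{eq}}$ over $(M_0,\ldots,M_n)$ and $\alpha\colon(M_0,\ldots,M_n)\to(M_{\phi(0)},\ldots,M_{\phi(m)})$ in $\simp^{\op}_{\iota\mathcal{M}}$ coming from $\phi\colon[m]\to[n]$ in $\simp$, the contractibility from the previous paragraph produces a canonical (up to contractible choice) target $\tau$ in the $\StrM^{\txt{en}}_{\txt{eq}}$-fibre over $(M_{\phi(0)},\ldots,M_{\phi(m)})$, and the requisite morphism $\sigma\to\tau$ is obtained by composing the $\StrM^{\txt{en}}_{\iota}$-coCartesian lift of $\alpha$ with the canonical comparison $F(M_{\phi(i)},M_{\phi(i)+1})\otimes\cdots\otimes F(M_{\phi(i+1)-1},M_{\phi(i+1)})\to F(M_{\phi(i)},M_{\phi(i+1)})$ on each subinterval. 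The hard part is precisely this rectification: since $\StrM^{\txt{en}}_{\txt{eq}}$ is \emph{not} closed under coCartesian lifts in $\StrM^{\txt{en}}_{\iota}$ (a tensor product of morphism objects need not be a morphism object), one must verify that these composition maps assemble into a coherent, functorial construction of $p$-coCartesian arrows, invoking the universal property of morphism objects at each stage.
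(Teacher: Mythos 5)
Your identification of the fibres is correct and matches the key observation in the paper's proof: by the universal property of morphism objects, the pair $(F(M,N),\mathrm{ev})$ is a \emph{final} object of the fibre of $\StrMen \to \simp^{\op}_{\mathcal{M}}$ over $(M,N)$, and the Segal condition of Proposition~\ref{propn:StrMSegCond} reduces the general fibre to a product of these, so the objects of $\StrM^{\txt{en}}_{\txt{eq}}$ are exactly the objects of $\StrM^{\txt{en}}_{\iota}$ that are final in their fibre, and the fibres of $p$ are contractible Kan complexes.

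The gap is in your last step, and you have in effect flagged it yourself. A categorical fibration with contractible fibres is not a trivial fibration (consider $\{0\}\amalg\{1\} \to \Delta^{1}$), so you do need some lifting property, but the route you choose --- producing $p$-coCartesian lifts so that $p$ becomes a left fibration --- is both uncompleted and misdirected. Finality of the targets gives you essentially unique morphisms \emph{into} the objects of $\StrM^{\txt{en}}_{\txt{eq}}$ lying over a prescribed morphism of the base, not coCartesian morphisms \emph{out} of them; and, as you note, $\StrM^{\txt{en}}_{\txt{eq}}$ is not closed under the coCartesian lifts of the ambient fibration, so the ``rectification'' you defer to the reader is precisely the content of the proposition and cannot be waved through. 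The paper avoids this entirely: having identified $\StrM^{\txt{en}}_{\txt{eq}}$ as the full subcategory of fibrewise final objects, it invokes the general fact (\cite[Proposition 2.4.4.9(1)]{HTT}) that this full subcategory projects to the base by a trivial fibration --- the lifting problems $\partial\Delta^{n} \to \StrM^{\txt{en}}_{\txt{eq}}$ over $\Delta^{n} \to \simp^{\op}_{\iota\mathcal{M}}$ are solved using the (relative) finality of the image of the last vertex, which is exactly what the universal property of morphism objects supplies. Replacing your third paragraph with this citation, and keeping your fibre analysis as the verification of its hypothesis, would yield a complete proof.
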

\begin{proof}
  The universal property of the morphism object $F(M,N)$ implies that
  the universal map $(M, F(M,N)) \to N$ is the final object in the
  fibre of $\StrMen \to \simp^{\op}_{\mathcal{M}}$ over $(M,N)$. The
  Segal condition (Proposition~\ref{propn:StrMSegCond}) implies that
  $\StrM^{\txt{en}}_{\txt{eq}}$ is precisely the full subcategory of
  $\StrM^{\txt{en}}_{\iota}$ spanned by the objects that are final in
  their fibre. It therefore follows by \cite[Proposition
  2.4.4.9(1)]{HTT} that the projection $\StrM^{\txt{en}}_{\txt{eq}}
  \to \simp^{\op}_{\iota \mathcal{M}}$ is a trivial fibration.
\end{proof}

\begin{defn}
  Suppose $q \colon \mathcal{M} \to \RM$ is a Lurie-enriched
  \icat{}. Let $\StrM^{\txt{en},+}_{\txt{eq}}$ be defined by the
  pullback square
  \nolabelcsquare{\StrM^{\txt{en},+}_{\txt{eq}}}{\StrM^{\txt{en},+}}{\StrM^{\txt{en}}_{\txt{eq}}}{\StrM^{\txt{en}}.}
\end{defn}

\begin{thm}
  Suppose $q \colon \mathcal{M} \to \RM$ is a coCartesian
  Lurie-enriched \icat{}. The
  composite \[\overline{\mathcal{M}}_{\mathfrak{m}} \colon \simp^{\op}_{\iota
    \mathcal{M}} \xleftarrow{\sim} \StrM^{\txt{en}}_{\txt{eq}}
  \xleftarrow{\sim} \StrM^{\txt{en},+}_{\txt{eq}} \to
  \mathcal{M}_{\mathfrak{r}}^{\otimes}\] is a categorical algebra in
  $\mathcal{M}_{\mathfrak{r}}$.
\end{thm}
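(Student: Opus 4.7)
The plan is to construct $\overline{\mathcal{M}}_{\mathfrak{m}}$ by choosing a section $s \colon \simp^{\op}_{\iota\mathcal{M}} \to \StrM^{\txt{en},+}_{\txt{eq}}$ of the composite trivial fibration and then composing with $\StrM^{\txt{en},+}_{\txt{eq}} \hookrightarrow \StrM^{\txt{en},+} \to \mathcal{M}_{\mathfrak{r}}^{\otimes}$. Both arrows labelled $\sim$ in the statement are indeed trivial fibrations: the first is the content of the proposition preceding the theorem, and the second is pulled back from the trivial fibration $\StrM^{\txt{en},+} \to \StrM^{\txt{en}}$. Every map in the composite lies over $\simp^{\op}$, so the resulting functor is over $\simp^{\op}$; thus to check that $\overline{\mathcal{M}}_{\mathfrak{m}}$ is a morphism of generalized non-symmetric $\infty$-operads it remains only to check preservation of coCartesian morphisms over inert morphisms in $\simp^{\op}$.

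Since $\simp^{\op}_{\iota\mathcal{M}} \to \simp^{\op}$ is a left fibration, every morphism $f$ in $\simp^{\op}_{\iota\mathcal{M}}$ lying over an inert morphism $\alpha$ in $\simp^{\op}$ is automatically coCartesian. By Lemma \ref{lem:StremPlprInert}, the functor $\StrM^{\txt{en},+} \to \mathcal{M}_{\mathfrak{r}}^{\otimes}$ preserves inert morphisms, so it will suffice to show that $s(f)$ is a $p$-coCartesian morphism in $\StrM^{\txt{en},+}$ lying over $\alpha$, where $p \colon \StrM^{\txt{en},+} \to \simp^{\op}$ is the projection.

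The key technical step is a closure statement: the full subcategory $\StrM^{\txt{en},+}_{\txt{eq}}$ of $\StrM^{\txt{en},+}$ is stable under the $p$-coCartesian lifts over inert morphisms constructed in Proposition \ref{propn:StrMentoD}(ii). Given an inert $\alpha \colon [m] \to [n]$ corresponding to a sub-interval $\{i,\ldots,i+m\} \subseteq [n]$, part (iii) of that proposition characterises the inert lift $\sigma \to \alpha_!\sigma$ by the condition that each $\sigma(\alpha \circ \phi) \to \alpha_!\sigma(\phi)$ is an equivalence in $\mathcal{M}$; unravelling, this says that $\alpha_!\sigma$ is the restriction of $\sigma$ to the sub-interval, so its defining edges $(M_{i+j-1}, F(M_{i+j-1}, M_{i+j})) \to M_{i+j}$ are literally those of $\sigma$ (relabelled). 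Consequently, if $\sigma$ satisfies the morphism-object condition defining $\StrM^{\txt{en}}_{\txt{eq}}$, so does $\alpha_!\sigma$.

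Once this closure is established, the restricted trivial fibration $\StrM^{\txt{en},+}_{\txt{eq}} \to \simp^{\op}_{\iota\mathcal{M}}$ becomes an equivalence of $\infty$-categories over $\simp^{\op}$ in which both sides admit coCartesian lifts over inert morphisms in $\simp^{\op}$; hence it preserves and reflects them, and any section $s$ takes $f$ to an inert morphism of $\StrM^{\txt{en},+}_{\txt{eq}}$, which in turn is inert in $\StrM^{\txt{en},+}$. Combining with Lemma \ref{lem:StremPlprInert} gives the result. The main obstacle I anticipate is the closure statement in the third paragraph: this is the only point at which the precise form of the morphism-object condition has to be checked against the pointwise characterization of inert lifts, and once it is done the remainder is a formal assembly of Proposition \ref{propn:StrMentoD}, Proposition \ref{propn:StrMentoDM}, and Lemma \ref{lem:StremPlprInert}.
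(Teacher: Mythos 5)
Your proposal is correct and follows essentially the same route as the paper, whose entire proof is the single observation that the composite preserves inert morphisms by Lemma~\ref{lem:StremPlprInert}. The only difference is that you make explicit the intermediate step the paper leaves implicit — that $\StrM^{\txt{en},+}_{\txt{eq}}$ is closed under the inert coCartesian lifts of Proposition~\ref{propn:StrMentoD}, so that a section of the trivial fibration carries morphisms over inert maps to inert morphisms of $\StrM^{\txt{en},+}$ — and your verification of that closure via the pointwise characterization in Proposition~\ref{propn:StrMentoD}(iii) is sound.
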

\begin{proof}
  It follows from Lemma~\ref{lem:StremPlprInert} that this map
  preserves inert morphisms, so it is a categorical algebra. 
\end{proof}

\begin{remark}
  We expect that, as suggested by Remark~\ref{rmk:LurEnrI}, in the
  situation above the underlying \icat{} of
  $\overline{\mathcal{M}}_{\mathfrak{m}}$ is equivalent to
  $\mathcal{M}_{\mathfrak{m}}$. This would imply that the categorical
  algebra $\overline{\mathcal{M}}_{\mathfrak{m}}$ is in fact complete.
  We will not prove this here, however, as this requires developing
  more of the theory of Lurie-enriched \icats{} than is
  appropriate here.
\end{remark}

Using Example~\ref{ex:RightTensLurEnr} we can restate this as:
\begin{cor}
  Suppose $\mathcal{V}$ is a monoidal \icat{} and
  $\mathcal{C}$ is an \icat{} that is right-tensored over
  $\mathcal{V}$ so that the tensor product $C \otimes
  (\blank)$ has a right adjoint $F(C, \blank) \colon \mathcal{C} \to
  \mathcal{V}$ for all $C \in \mathcal{C}$. Then $\mathcal{C}$ is
  enriched in $\mathcal{V}$; more precisely, there is a
  categorical algebra $\overline{\mathcal{C}} \colon \simp^{\op}_{\iota
    \mathcal{C}} \to \mathcal{V}^{\otimes}$ such that
  $\overline{\mathcal{C}}(C,D) \simeq F(C,D)$.
\end{cor}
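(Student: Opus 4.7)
The plan is to deduce the corollary as an essentially formal consequence of the preceding theorem, once we recognize the right-tensored datum as defining a coCartesian Lurie-enriched \icat{}. Concretely, starting from $\mathcal{C}$ right-tensored over $\mathcal{V}$, the action assembles, via straightening, into a $\RM$-algebra $F \colon \RM \to \CatI^{\times}$ with $F(\mathfrak{r}\cdots\mathfrak{r}) \simeq \mathcal{V}^{\times n}$, $F(\mathfrak{m}) \simeq \mathcal{C}$, and $F(\mathfrak{m},\mathfrak{r}) \simeq \mathcal{C}\times\mathcal{V}$, whose associated (un)straightening is a coCartesian fibration $q \colon \mathcal{M} \to \RM$ with $\mathcal{M}_{\mathfrak{m}} \simeq \mathcal{C}$ and $\mathcal{M}_{\mathfrak{r}}^{\otimes} \simeq \mathcal{V}^{\otimes}$. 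Since $q$ is coCartesian it is automatically pseudo-enriched, and Example~\ref{ex:RightTensLurEnr} says that the hypothesis on right adjoints $F(C,\blank)$ is precisely the condition that $q$ be Lurie-enriched.

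Having verified that $q$ is a coCartesian Lurie-enriched \icat{}, the theorem immediately preceding the corollary applies to produce a categorical algebra
\[
\overline{\mathcal{M}}_{\mathfrak{m}} \colon \simp^{\op}_{\iota \mathcal{M}_{\mathfrak{m}}} \xleftarrow{\sim} \StrM^{\txt{en}}_{\txt{eq}} \xleftarrow{\sim} \StrM^{\txt{en},+}_{\txt{eq}} \to \mathcal{M}_{\mathfrak{r}}^{\otimes}.
\]
Transporting along the identifications $\mathcal{M}_{\mathfrak{m}} \simeq \mathcal{C}$ and $\mathcal{M}_{\mathfrak{r}}^{\otimes} \simeq \mathcal{V}^{\otimes}$ yields the desired functor $\overline{\mathcal{C}} \colon \simp^{\op}_{\iota \mathcal{C}} \to \mathcal{V}^{\otimes}$. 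That this preserves inert morphisms — hence is a categorical algebra — is already part of the theorem, so nothing further is required to obtain the enrichment itself.

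It remains to identify the mapping objects. Unwinding the construction, the value $\overline{\mathcal{C}}(C,D)$ is computed by choosing an object of $\StrM^{\txt{en}}_{\txt{eq}}$ over the $1$-simplex $(C,D) \in (\simp^{\op}_{\iota \mathcal{C}})_{[1]}$, lifting it along the trivial fibration $\StrM^{\txt{en},+}_{\txt{eq}} \to \StrM^{\txt{en}}_{\txt{eq}}$, and evaluating at $i([1]) = (1, \id \colon [1] \to [1])$. By definition of $\StrM^{\txt{en}}_{\txt{eq}}$, such an object is an enriched $1$-string $(C, V) \to D$ exhibiting $V \in \mathcal{V}$ as the morphism object $F(C,D)$; and the coCartesian lift along $\overline{\chi}|_{\Delta^1\times \Po^{\op}_{[1]}}$ that defines $\StrM^{\txt{en},+}$ transports this inert-component $V$ into the fibre $\mathcal{M}_{\mathfrak{r}}^{\otimes}_{[1]} \simeq \mathcal{V}$. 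Thus $\overline{\mathcal{C}}(C,D) \simeq F(C,D)$, as claimed.

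The main (and really only) point requiring any genuine verification is the compatibility of Example~\ref{ex:RightTensLurEnr} with our conventions: namely, that the existence of a right adjoint to $C\otimes(\blank)$ for each $C$ is what upgrades the coCartesian fibration $q$ from pseudo-enriched to Lurie-enriched. Once that is observed, the corollary is a direct application of the preceding theorem together with an unwinding of the endpoint of the string construction.
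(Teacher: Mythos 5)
Your proposal is correct and matches the paper's treatment: the corollary is obtained exactly as you describe, by using Example~\ref{ex:RightTensLurEnr} to recognize the right-tensored structure with fibrewise right adjoints as a coCartesian Lurie-enriched \icat{} and then applying the preceding theorem verbatim. Your additional unwinding of the endpoint functor to identify $\overline{\mathcal{C}}(C,D)$ with $F(C,D)$ (via the defining condition on $\StrM^{\txt{en}}_{\txt{eq}}$ and the coCartesian pushforward along $\overline{\chi}$) is accurate and only makes explicit what the paper leaves implicit.
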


This construction allows us to construct several interesting examples
of enriched \icats{}:
\begin{cor}
  Suppose $\mathcal{V}$ is a left-closed monoidal
  \icat{}. Then $\mathcal{V}$ is enriched in itself; more precisely,
  there exists a categorical algebra $\overline{\mathcal{V}}
  \colon \simp^{\op}_{\iota \mathcal{V}} \to \mathcal{V}^{\otimes}$
  such that $\overline{\mathcal{V}}(V,W)$ in $\mathcal{V}$ is the
  internal hom from $V$ to $W$.
\end{cor}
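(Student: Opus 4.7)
The plan is to deduce this from the preceding corollary by taking $\mathcal{C} = \mathcal{V}$ right-tensored over itself via its own monoidal structure. Concretely, the relevant right-tensoring is the pullback $\pi^{*}\mathcal{V}^{\otimes} \to \RM$ along the projection $\pi \colon \RM \to \simp^{\op}$, which was introduced earlier in this section as an example of a (coCartesian) weakly enriched \icat{}. Since $\mathcal{V}^{\otimes}$ is a monoidal \icat{}, $\pi^{*}\mathcal{V}^{\otimes}$ is pseudo-enriched, and it was noted in the section that it is Lurie-enriched precisely when $\mathcal{V}$ is left-closed.

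Next, I would observe that the left-closedness hypothesis is, by definition, the statement that for every $V \in \mathcal{V}$ the functor $V \otimes (\blank) \colon \mathcal{V} \to \mathcal{V}$ admits a right adjoint $F(V, \blank)$; this right adjoint is what we call the internal hom of $\mathcal{V}$. Thus the hypothesis of the preceding corollary is satisfied for $\mathcal{C} = \mathcal{V}$, with $F(V,W)$ equal to the internal hom object.

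Applying the previous corollary then directly produces the desired categorical algebra $\overline{\mathcal{V}} \colon \simp^{\op}_{\iota \mathcal{V}} \to \mathcal{V}^{\otimes}$ with $\overline{\mathcal{V}}(V,W) \simeq F(V,W)$. Moreover, because $\overline{\mathcal{V}}$ is built via the enriched-string construction (the composite $\simp^{\op}_{\iota \mathcal{V}} \xleftarrow{\sim} \StrM^{\txt{en}}_{\txt{eq}} \xleftarrow{\sim} \StrM^{\txt{en},+}_{\txt{eq}} \to \mathcal{M}^{\otimes}_{\mathfrak{r}}$), the composition maps in $\overline{\mathcal{V}}$ are automatically the expected composites $F(U,V) \otimes F(V,W) \to F(U,W)$ adjoint to the iterated evaluation $U \otimes F(U,V) \otimes F(V,W) \to V \otimes F(V,W) \to W$.

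Since this corollary is a pure specialization of the preceding one, there is no genuine obstacle beyond recognizing the self-enrichment as an instance of the right-tensored setup; the only verification to perform is that $\pi^{*}\mathcal{V}^{\otimes}$ is indeed the Lurie-enriched \icat{} associated with $\mathcal{V}$ acting on itself, which is immediate from the definitions.
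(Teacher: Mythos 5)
Your proposal is correct and matches the paper's (implicit) argument exactly: the paper derives this corollary by specializing the preceding one to $\mathcal{C}=\mathcal{V}$ right-tensored over itself via $\pi^{*}\mathcal{V}^{\otimes}\to\RM$, using the earlier example stating that this weakly enriched \icat{} is Lurie-enriched precisely when $\mathcal{V}$ is left-closed. Your additional observation about the composition maps is a harmless bonus not required for the statement.
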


\begin{ex}
  Suppose $\mathcal{V}$ is a presentably $\mathbb{E}_{2}$-monoidal
  \icat{}; then $\CatIV$ is a presentably monoidal \icat{}, and so is
  in particular right-closed. Thus there exists a
  $\mathcal{V}$-\itcat{} $\overline{\Cat}_{\infty}^{\mathcal{V}}$ of
  $\mathcal{V}$-\icats{}. More generally, if $\mathcal{V}$ is
  presentably $\mathbb{E}_{k}$-monoidal (or presentably symmetric
  monoidal), there exists a $\mathcal{V}$-$(\infty,n+1)$-category
  $\overline{\Cat}_{(\infty,n)}^{\mathcal{V}}$ of
  $\mathcal{V}$-$(\infty,n)$-categories for all $n < k$. For example, taking
  $\mathcal{V}$ to be $\mathcal{S}$ there is an
  $(\infty,n+1)$-category $\overline{\Cat}_{(\infty,n)}^{\mathcal{S}}$
  of $(\infty,n)$-categories, and taking $\mathcal{V}$ to be
  $\Set$ there is an $(n+1)$-category
  $\overline{\Cat}_{n}$ of $n$-categories.
\end{ex}

\begin{remark}
  Several homotopy theories that can easily be constructed as 
  spectral presheaves $\Fun^{\Sp}(\mathcal{A}^{\op},
  \overline{\Sp})$, where $\mathcal{A}$ is a small spectral
  category, can (conjecturally) be identified with more familiar
  homotopy theories:
  \begin{enumerate}[(i)]
  \item Suppose $G$ is a finite group, and let $\mathcal{B}^{G}$
    denote the \emph{Burnside (2,1)-category} of $G$; this has objects
    finite $G$-sets, 1-morphisms spans of finite $G$-sets, and
    2-morphisms isomorphisms of spans. We can regard this as a
    category enriched in symmetric monoidal groupoids, via the
    coproduct, and hence as an \icat{} enriched in
    $E_{\infty}$-spaces. Group completion of $E_{\infty}$-spaces is a
    lax monoidal functor from $E_{\infty}$-spaces to (connective)
    spectra, so applying this to the mapping spaces in
    $\mathcal{B}^{G}$ gives a spectral \icat{}
    $\mathcal{B}^{G}_{+}$. The presheaf spectral \icat{}
    $\Fun^{\Sp}(\mathcal{B}^{G,\op}_{+}, \overline{\Sp})$ is the
    spectral \icat{} of \emph{genuine $G$-spectra} --- a version of
    this comparison has recently been proved by Guillou and
    May~\cite{GuillouMay1,GuillouMay2,GuillouMay3} using enriched
    model categories. (It has also been observed by Barwick that (as
    group-completion is a left adjoint) it is not necessary to
    group-complete the mapping spaces in $\mathcal{B}^{G}$ to describe
    $G$-spectra; this is the basis for the \icatl{} description of
    $G$-spectra in \cite{BarwickMackey}.)
  \item Let $\mathcal{B}$ denote the \emph{global Burnside
      (2,1)-category of finite groups}. This has objects finite
    groups, 1-morphisms from $G$ to $H$ are finite free $H$-sets
    equipped with a compatible $G$-action, and 2-morphisms are
    isomorphisms of these. This can also be regarded as enriched in
    symmetric monoidal groupoids via coproducts, and by
    group-completing we obtain a spectral \icat{}
    $\mathcal{B}_{+}$. The presheaf spectral \icat{}
    $\Fun^{\Sp}(\mathcal{B}^{\op}_{+}, \overline{\Sp})$ is the
    spectral \icat{} of \emph{global equivariant spectra} for finite
    groups, as studied by Schwede~\cite{SchwedeGlobal}.
  \end{enumerate}
\end{remark}

\begin{cor}
  Suppose $\mathcal{V}$ is a presentably monoidal \icat{},
  and $\mathcal{C}$ is a right $\mathcal{V}$-module in
  $\PresI$ (with respect to the tensor product of presentable \icats{}).
  Then $\mathcal{C}$ is enriched in $\mathcal{V}$.
\end{cor}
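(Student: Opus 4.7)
The plan is to reduce this to the preceding corollary, which asserts that if $\mathcal{C}$ is right-tensored over $\mathcal{V}$ in such a way that each $C \otimes (\blank)$ admits a right adjoint $F(C,\blank) \colon \mathcal{C} \to \mathcal{V}$, then $\mathcal{C}$ inherits a $\mathcal{V}$-enrichment. So the task is essentially to verify the hypotheses of that corollary from the data of a right $\mathcal{V}$-module structure on $\mathcal{C}$ in $\PresI$.

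First, I would observe that a right $\mathcal{V}$-module structure on $\mathcal{C}$ in the monoidal \icat{} $\PresI$ corresponds, by unwinding the definition of the tensor product of presentable \icats{} from \cite[\S 4.8.1]{HA}, to a right-tensoring $\mathcal{C} \times \mathcal{V} \to \mathcal{C}$ compatible with the monoidal structure on $\mathcal{V}$ and preserving small colimits separately in each variable. In particular, this gives us a coCartesian fibration $\mathcal{M} \to \RM$ exhibiting $\mathcal{C}$ as right-tensored over $\mathcal{V}^{\otimes}$ in the sense of \S\ref{subsec:LurEnr}.

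Next, for each object $C \in \mathcal{C}$, the functor $C \otimes (\blank) \colon \mathcal{V} \to \mathcal{C}$ preserves small colimits, by the module structure preserving colimits in the $\mathcal{V}$-variable. Since $\mathcal{V}$ is presentable and $\mathcal{C}$ is presentable (as an object of $\PresI$), the adjoint functor theorem \cite[Corollary 5.5.2.9]{HTT} supplies a right adjoint $F(C,\blank) \colon \mathcal{C} \to \mathcal{V}$. This verifies exactly the condition in Example~\ref{ex:RightTensLurEnr} that makes the weakly enriched \icat{} $\mathcal{M} \to \RM$ Lurie-enriched.

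Applying the preceding corollary then produces a categorical algebra $\overline{\mathcal{C}} \colon \simp^{\op}_{\iota \mathcal{C}} \to \mathcal{V}^{\otimes}$ with $\overline{\mathcal{C}}(C,D) \simeq F(C,D)$, exhibiting $\mathcal{C}$ as enriched in $\mathcal{V}$. The only mild subtlety is the first step, namely identifying module objects in $\PresI$ with the colimit-preserving (coCartesian) right-tensorings that feed into the Lurie-enriched machinery; once this identification is in hand, the rest is a direct application of the previous corollary together with the adjoint functor theorem.
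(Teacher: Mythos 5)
Your proposal is correct and is exactly the argument the paper intends: the paper states this as an immediate consequence of the preceding corollary and gives no separate proof, the implicit content being precisely your observation that a right $\mathcal{V}$-module in $\PresI$ is a colimit-preserving right-tensoring of a presentable $\mathcal{C}$ over $\mathcal{V}$, so each $C \otimes (\blank)$ admits a right adjoint $F(C,\blank)$ by the adjoint functor theorem, placing us in the situation of Example~\ref{ex:RightTensLurEnr}. Nothing further is needed.
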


\begin{ex}
  By \cite[Proposition 4.8.2.18]{HA}, presentable stable \icats{} are
  precisely $\Sp$-modules in $\PresI$, hence presentable stable \icats{}
  are enriched in spectra. But any stable \icat{} is a full
  subcategory of its Ind-completion, hence it follows that all stable
  \icats{} are enriched in spectra.
\end{ex}

\begin{ex}
  In \cite[\S 6]{DAG7}, Lurie defines \emph{R-linear \icats{}} for an
  $\mathbb{E}_{2}$-ring spectrum $R$ to be left
  $\txt{LMod}_{R}$-modules in $\PresI$. If we instead consider right
  $\txt{LMod}_{R}$-modules we get \icats{} enriched in left
  $R$-modules from $R$-linear \icats{}. Moreover, if $R$ is at least
  $\mathbb{E}_{3}$-monoidal (so that $\txt{LMod}_{R}$ is at least
  $\mathbb{E}_{2}$-monoidal), then these two notions coincide.
\end{ex}

\appendix

\section{Technicalities on $\infty$-Operads}\label{sec:algcolims}
In this appendix we collect the more technical results we need about
\nsiopds{}.

\subsection{Monoidal Envelopes}\label{subsec:monenv}
In this subsection we describe the non-symmetric version of
Lurie's \emph{monoidal envelope} of an \iopd{} $\mathcal{O}$, which
gives a monoidal structure on the \icat{} $\mathcal{O}_{\txt{act}}$ of
active morphisms in $\mathcal{O}$ that we will make use of below to
define operadic colimits.

\begin{defn}
  Let $\txt{Act}(\simp^{\op})$ be the full subcategory of
  $\Fun(\Delta^{1}, \simp^{\op})$ spanned by the active morphisms. If
  $\mathcal{M}$ is a \gnsiopd{}, we define $\txt{Env}(\mathcal{M})$ to
  be the fibre product
  \[ \mathcal{M} \times_{\Fun(\{0\}, \simp^{\op})}
  \txt{Act}(\simp^{\op}).\]
\end{defn}

\begin{propn}\label{propn:EnvDbl}
  The map $\txt{Env}(\mathcal{M}) \to \simp^{\op}$ induced
  by evaluation at $1$ in $\Delta^{1}$ is a double \icat{}.
\end{propn}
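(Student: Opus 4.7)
The plan is to adapt Lurie's argument for the symmetric monoidal envelope (\cite[Proposition 2.2.4.4]{HA}) to the non-symmetric generalized setting, using the active/inert factorization system on $\simp^{\op}$ in place of the one on $\bbGamma^{\op}$. To show the projection $\pi \colon \txt{Env}(\mathcal{M}) \to \simp^{\op}$ is a double \icat{}, I would verify: (a) $\pi$ is a coCartesian fibration, and (b) the generalized Segal condition (ii) of Definition~\ref{defn:gnsiopd}, noting that for a coCartesian fibration the remaining condition (iii) follows automatically (Remark~\ref{rmk:doubleicatsegcond}).

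For (a), given an object $(X,\alpha\colon [n]\to [m])$ of $\txt{Env}(\mathcal{M})$ and a morphism $\chi\colon [m]\to [m']$ in $\simp^{\op}$, first factor the composite $\chi\circ \alpha$ uniquely as $\chi\circ\alpha = \gamma\circ\delta$ with $\delta\colon [n]\to [n']$ inert and $\gamma\colon [n']\to [m']$ active. Since $\mathcal{M}$ is a generalized non-symmetric \iopd{}, axiom (i) for $\mathcal{M}$ furnishes a coCartesian lift $\overline{\delta}\colon X\to X'$ of $\delta$. The pair $(\overline{\delta},\chi)$ then defines a morphism $(X,\alpha)\to (X',\gamma)$ in $\txt{Env}(\mathcal{M})$ over $\chi$, and I would verify it is $\pi$-coCartesian by testing against an arbitrary $(Y,\beta\colon [l]\to [m''])$ and map $\chi'\colon [m']\to [m'']$: any morphism $(X,\alpha)\to (Y,\beta)$ over $\chi'\circ\chi$ is given by a map $X\to Y$ in $\mathcal{M}$ covering some $\xi\colon [n]\to [l]$ with $\beta\circ\xi = \chi'\circ\gamma\circ\delta$. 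The orthogonality of the (inert, active) factorization system on $\simp^{\op}$ yields a unique $\xi'\colon [n']\to [l]$ with $\xi'\circ\delta = \xi$ and $\beta\circ\xi' = \chi'\circ\gamma$, and the coCartesian property of $\overline{\delta}$ in $\mathcal{M}$ then promotes $\xi'$ uniquely to a morphism $X'\to Y$ in $\mathcal{M}$ realizing the required factorization.

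For (b), I would use the combinatorial description of active morphisms: an active map $[k]\to [n]$ in $\simp^{\op}$ corresponds to an ordered tuple $(k_1,\ldots,k_n)$ of non-negative integers with $\sum k_i = k$, realizing $[k]$ as the concatenation of $[k_1],\ldots,[k_n]$ glued along copies of $[0]$. Under the Segal equivalence for $\mathcal{M}$, an object $X\in \mathcal{M}_{[k]}$ corresponds to a compatible tuple in $\mathcal{M}_{[1]}\times_{\mathcal{M}_{[0]}}\cdots\times_{\mathcal{M}_{[0]}}\mathcal{M}_{[1]}$, and these two compatible decompositions combine to produce the desired equivalence
\[
\txt{Env}(\mathcal{M})_{[n]} \isoto \txt{Env}(\mathcal{M})_{[1]}\times_{\txt{Env}(\mathcal{M})_{[0]}}\cdots\times_{\txt{Env}(\mathcal{M})_{[0]}}\txt{Env}(\mathcal{M})_{[1]},
\]
once one observes that $\txt{Env}(\mathcal{M})_{[0]}\simeq \mathcal{M}_{[0]}$ (as the only active map $[k]\to [0]$ in $\simp^{\op}$ has $k=0$).

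The main obstacle is verifying the coCartesian property in (a) — the rest is essentially bookkeeping. Specifically, one must check carefully that the orthogonal lift produced from the factorization system in $\simp^{\op}$ and the coCartesian lift produced from the generalized operad structure of $\mathcal{M}$ fit together uniquely, and that this matches up compatibly when working with spaces of morphisms in $\txt{Env}(\mathcal{M})$ rather than discrete hom-sets. Once (a) is in place, (b) reduces to the essentially elementary combinatorial identification of active maps with tuples, combined with the Segal decomposition already known for $\mathcal{M}$.
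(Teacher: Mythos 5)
Your proposal is correct and is essentially the paper's own argument: the paper proves this proposition simply by citing \cite[Proposition 2.2.4.4]{HA}, and what you have written out is precisely the non-symmetric, generalized adaptation of Lurie's proof that this citation implicitly invokes (coCartesian lifts built from the inert--active factorization of $\chi\circ\alpha$ together with orthogonality against the active morphism $\beta$, plus the Segal decomposition of active maps $[k]\to[n]$ into tuples $(k_1,\ldots,k_n)$). The one point worth making explicit in a full write-up is that $\pi$ is also an inner (indeed categorical) fibration, which follows since $\txt{Env}(\mathcal{M})$ is a pullback of the inner fibration $\mathcal{M}\to\simp^{\op}$ along maps of (nerves of) ordinary categories; otherwise your outline matches the intended proof.
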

\begin{proof}
  As \cite[Proposition 2.2.4.4]{HA}.
\end{proof}

\begin{propn}\label{propn:EnvAdj}
  Suppose $\mathcal{N}$ is a double \icat{} and $\mathcal{M}$ is a
  \gnsiopd{}. The inclusion $\mathcal{M} \to \txt{Env}(\mathcal{M})$ induces an
  equivalence
  \[ \Fun^{\otimes}(\txt{Env}(\mathcal{M}), \mathcal{N}) \to
  \Alg_{\mathcal{M}}(\mathcal{N}).\]
\end{propn}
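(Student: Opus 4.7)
The plan is to imitate the proof of the corresponding statement in the symmetric setting, namely \cite[Proposition 2.2.4.9]{HA}, adapted to work with the non-symmetric categorical patterns $\mathfrak{O}^{\txt{gen}}_{\txt{ns}}$ and $\mathfrak{D}$. First I would verify that the canonical map $\iota \colon \mathcal{M} \to \txt{Env}(\mathcal{M})$ sending $X \in \mathcal{M}_{[n]}$ to $(X, \id_{[n]})$ is a morphism of generalized non-symmetric $\infty$-operads, by checking directly that for an inert edge $X \to Y$ over $\phi \colon [m] \to [n]$ in $\simp^{\op}$, the image $(X, \id_{[m]}) \to (Y, \id_{[n]})$ is coCartesian for the projection $\txt{Env}(\mathcal{M}) \to \simp^{\op}$ established in Proposition~\ref{propn:EnvDbl}. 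Composition with $\iota$ then gives the functor displayed in the proposition.

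To produce an inverse, I would show that any morphism of generalized non-symmetric $\infty$-operads $F \colon \mathcal{M} \to \mathcal{N}$ into a double $\infty$-category $\mathcal{N}$ extends essentially uniquely to a monoidal functor $\tilde F \colon \txt{Env}(\mathcal{M}) \to \mathcal{N}$. On objects, $\tilde F$ must send $(X, f \colon [n] \to [m])$ to $f_! F(X)$, where $f_!$ denotes coCartesian pushforward in $\mathcal{N}$: indeed, there is a coCartesian morphism $(X, \id_{[n]}) \to (X, f)$ in $\txt{Env}(\mathcal{M})$ lying over $f$, and any functor that preserves coCartesian edges and extends $F$ must send this to a coCartesian morphism $F(X) \to f_!F(X)$ in $\mathcal{N}$. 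Since this also pins down the action on arbitrary morphisms in $\txt{Env}(\mathcal{M})$ by factoring them through the active-inert factorization (Definition~\ref{defn:activeinert}) together with $F$'s inert-preservation, the extension is determined up to contractible choice.

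The main obstacle will be to coherently assemble this pointwise recipe into an actual $\infty$-functor, with all higher coherence data. To handle this, I would apply the marked-simplicial-set machinery underlying Theorem~\ref{thm:catpatternmodstr}: view both sides of the displayed equivalence as mapping spaces in the enriched hom-sense in the appropriate Quillen model categories, and exhibit $\txt{Env}$ on the nose as a left adjoint at the model-categorical level, with right adjoint the forgetful inclusion that views a double $\infty$-category as a generalized non-symmetric $\infty$-operad. Concretely, one realizes $\txt{Env}(\mathcal{M})$ as obtained from $\mathcal{M}$ by freely adjoining coCartesian lifts over the active morphisms in $\simp^{\op}$, using the fact that $\txt{Env}(\mathcal{M}) \to \simp^{\op}$ factors through $\txt{Act}(\simp^{\op})$ and that every morphism in $\txt{Env}(\mathcal{M})$ decomposes uniquely into an ``active transport'' part and an image of $\mathcal{M}$. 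That this yields a Quillen adjunction, and in particular an equivalence on mapping spaces after passage to fibrant objects, reduces to Lurie's corresponding argument in \cite[\S 2.2.4]{HA}, since that proof ultimately exploits only the factorization system (inert, active) on the base, which is available in the non-symmetric setting by Definition~\ref{defn:activeinert}.
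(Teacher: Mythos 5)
Your approach matches the paper's: the proof given there is simply ``As \cite[Proposition 2.2.4.9]{HA}'', i.e.\ run Lurie's argument with $\bbGamma^{\op}$ replaced by $\simp^{\op}$, which transfers for exactly the reason you identify, namely that it only uses the inert--active factorization system on the base together with the characterization of coCartesian edges of $\txt{Env}(\mathcal{M})$. One minor caveat: $\txt{Env}$ is not literally a left Quillen adjoint ``on the nose'' to the forgetful inclusion (the displayed functor is restriction along $\mathcal{M}\to\txt{Env}(\mathcal{M})$ and is an equivalence of $\infty$-categories rather than an isomorphism of simplicial sets), but since you ultimately defer to Lurie's relative Kan extension argument this does not affect the substance of the proof.
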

\begin{proof}
  As \cite[Proposition 2.2.4.9]{HA}.
\end{proof}

\begin{cor}
  Suppose $\mathcal{O}$ is a \nsiopd{}. Then
  $\txt{Env}(\mathcal{O})$ is a monoidal \icat{}, and if
  $\mathcal{C}^{\otimes}$ is a monoidal \icat{} then 
  \[ \Fun^{\otimes}(\txt{Env}(\mathcal{O}), \mathcal{C}^{\otimes}) \simeq
  \Alg_{\mathcal{O}}(\mathcal{C}).\]
\end{cor}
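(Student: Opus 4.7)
The claim has two parts: (a) upgrading $\txt{Env}(\mathcal{O})$ from a double \icat{} to a monoidal \icat{}, and (b) the equivalence of functor \icats{}. Both follow quite directly from the two propositions just stated, so the plan is a short deduction.

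For (a), Proposition~\ref{propn:EnvDbl} (applied to the \gnsiopd{} $\mathcal{O}$) already tells us that $\txt{Env}(\mathcal{O}) \to \simp^{\op}$ is a double \icat{}, i.e.\ a coCartesian fibration satisfying the Rezk--Segal condition of Definition~\ref{defn:gnsiopd}. By Remark~\ref{rmk:monicatsegcond} (or inspection of Definitions~\ref{defn:nsiopd2} and~\ref{defn:gnsiopd}), to promote it to a monoidal \icat{} it suffices to check that the fibre $\txt{Env}(\mathcal{O})_{[0]}$ is contractible, since then the iterated fibre product appearing in the generalized Segal condition collapses to a product. The main step is therefore to identify this fibre. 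By definition an object of $\txt{Env}(\mathcal{O})_{[0]}$ is a pair $(X, \phi \colon \pi(X) \to [0])$ with $\phi$ active in $\simp^{\op}$; but the only active morphism in $\simp^{\op}$ with target $[0]$ is the identity on $[0]$, so the fibre is equivalent to $\mathcal{O}_{[0]}$. Since $\mathcal{O}$ is a (non-generalized) \nsiopd{}, condition (ii) of Definition~\ref{defn:nsiopd2} for $n=0$ forces $\mathcal{O}_{[0]} \simeq *$, and hence $\txt{Env}(\mathcal{O})_{[0]} \simeq *$ as required.

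For (b), since a monoidal \icat{} is in particular a double \icat{}, Proposition~\ref{propn:EnvAdj} applied with $\mathcal{M} = \mathcal{O}$ and $\mathcal{N} = \mathcal{C}^{\otimes}$ yields
\[ \Fun^{\otimes}(\txt{Env}(\mathcal{O}), \mathcal{C}^{\otimes}) \simeq \Alg_{\mathcal{O}}(\mathcal{C}^{\otimes}) =: \Alg_{\mathcal{O}}(\mathcal{C}). \]
The only minor point to mention is that $\Fun^{\otimes}$ on the left is unambiguous: between monoidal \icats{} it denotes functors preserving all coCartesian morphisms over $\simp^{\op}$, which agrees with its meaning between double \icats{} introduced in the paragraph after Definition~\ref{defn:doubleicat}.

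There is essentially no obstacle here: once one observes that the only active map to $[0]$ in $\simp^{\op}$ is the identity, the fibre computation is immediate and everything else is a direct citation.
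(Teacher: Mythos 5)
Your proof is correct and follows essentially the same route as the paper: the paper's own argument is precisely the observation that the only object of $\simp$ admitting an active map from $[0]$ is $[0]$ itself, so that $\txt{Env}(\mathcal{O})_{[0]} \simeq \mathcal{O}_{[0]} \simeq *$, after which Propositions~\ref{propn:EnvDbl} and~\ref{propn:EnvAdj} give both claims. Your write-up just spells out the fibre computation and the reduction of the generalized Segal condition in slightly more detail.
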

\begin{proof}
  The only object of $\simp$ that admits an active map from $[0]$ is
  $[0]$, hence for any \gnsiopd{} $\mathcal{M}$ we have
  $\txt{Env}(\mathcal{M})_{[0]} \simeq \mathcal{M}_{[0]}$. In
  particular $\txt{Env}(\mathcal{O})_{[0]} \simeq *$ for a
  \nsiopd{} $\mathcal{O}$ , so the result follows from
  Proposition~\ref{propn:EnvDbl} and Proposition~\ref{propn:EnvAdj}.
\end{proof}

\begin{defn}
  If $\mathcal{O}$ is a \nsiopd{}, the monoidal \icat{}
  $\txt{Env}(\mathcal{O})$ is called the \emph{monoidal envelope}
  of $\mathcal{O}$. This gives a monoidal structure on the
  subcategory $\mathcal{O}_{\txt{act}}$ of
  $\mathcal{O}$ determined by the active morphisms. We
  denote this tensor product on $\mathcal{O}_{\txt{act}}$ by
  $\oplus$.
\end{defn}

\subsection{Operadic Colimits}
We wish to prove that, under reasonable hypotheses, if
$\mathcal{V}$ is a monoidal \icat{} and $f \colon
\mathcal{O} \to \mathcal{P}$ is a morphism of
\nsiopds{} then the functor
\[ f^{*}\colon \Alg_{\mathcal{P}}(\mathcal{V}) \to
\Alg_{\mathcal{O}}(\mathcal{V}) \] given by
composition with $f$ has a left adjoint. This depends on an existence
theorem for \emph{operadic left Kan extensions}, which makes use of
the concept of \emph{operadic colimits} that we introduce in this
subsection.

\begin{defn}
  Suppose $q \colon \mathcal{O} \to \simp^{\op}$ is a
  \nsiopd{}. Given a diagram $p \colon K \to
  \mathcal{O}_{\txt{act}}$ we write
  $\mathcal{O}^{\txt{act}}_{[1],p/} := \mathcal{O}_{[1]}
  \times_{\mathcal{O}}
  (\mathcal{O}_{\txt{act}})_{p/}$. A diagram $\overline{p}
  \colon K^{\triangleright} \to \mathcal{O}_{\txt{act}}$ is
  a \defterm{weak operadic colimit diagram} if the induced map $\psi
  \colon \mathcal{O}^{\txt{act}}_{[1],\overline{p}/} \to
  \mathcal{O}^{\txt{act}}_{[1],p/}$ is a categorical equivalence.

  A diagram $\overline{p} \colon K^{\triangleright} \to
  \mathcal{O}_{\txt{act}}$ is an \defterm{operadic colimit
    diagram} if the composite functors
  \[ K^{\triangleright} \to \mathcal{O}_{\txt{act}}
  \xto{\blank \oplus X} \mathcal{O}_{\txt{act}}\]
  \[ K^{\triangleright} \to \mathcal{O}_{\txt{act}}
  \xto{X \oplus \blank} \mathcal{O}_{\txt{act}}\]
  are weak operadic colimit diagrams for all $X \in
  \mathcal{O}$.
\end{defn}

\begin{remark}
  By \cite[Proposition 2.1.2.1]{HTT}, the map $\psi$ in the definition
  of weak operadic colimits is always a left fibration, hence it is a
  categorical equivalence \IFF{} it is a trivial Kan fibration.
\end{remark}



\begin{lemma}\label{lem:opdcoliminproduct}
  Suppose $\mathcal{O}$ and $\mathcal{P}$ are
  \nsiopds{}, and $\overline{p} \colon K^{\triangleright} \to
  \mathcal{O}_{\txt{act}}$ and $\overline{q} \colon
  L^{\triangleright} \to \mathcal{P}_{\txt{act}}$ are
  weak operadic colimit diagrams. Then the composite 
\[ \overline{r} \colon (K \times_{\simp^{\op}} L)^{\triangleright} \to
K^{\triangleright} \times_{\simp^{\op}} L^{\triangleright} \to
\mathcal{O} \times_{\simp^{\op}} \mathcal{P} \]
is also a weak operadic colimit diagram. Moreover, if $\overline{p}$ and
$\overline{q}$ are operadic colimit diagrams, so is $\overline{r}$.
\end{lemma}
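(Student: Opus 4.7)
The plan is to reduce the claim to two separate assertions about $\overline{p}$ and $\overline{q}$, exploiting the fibre-product structure of $\mathcal{M} := \mathcal{O}\times_{\simp^{\op}}\mathcal{P}$. Writing $\pi_K, \pi_L$ for the projections from $K\times_{\simp^{\op}}L$, the first step is to observe that the active morphism $[n] \to [1]$ in $\simp^{\op}$ is canonical; combined with $\mathcal{M}_{[1]} \simeq \mathcal{O}_{[1]}\times\mathcal{P}_{[1]}$, this implies that an active morphism in $\mathcal{M}$ with target in $\mathcal{M}_{[1]}$ is precisely a pair of active morphisms in $\mathcal{O}$ and $\mathcal{P}$. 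Unwinding the definitions, this yields natural identifications
\[
\mathcal{M}^{\txt{act}}_{[1], r/} \cong \mathcal{O}^{\txt{act}}_{[1], p\circ \pi_K/} \times \mathcal{P}^{\txt{act}}_{[1], q\circ \pi_L/}, \qquad \mathcal{M}^{\txt{act}}_{[1], \overline{r}/} \cong \mathcal{O}^{\txt{act}}_{[1], \overline{p}\circ \pi_{K^\triangleright}/} \times \mathcal{P}^{\txt{act}}_{[1], \overline{q}\circ \pi_{L^\triangleright}/}
\]
under which $\psi_{\overline{r}}$ decomposes as a product of the corresponding restriction maps. Since a product of trivial Kan fibrations is a trivial Kan fibration, it suffices to treat each factor separately.

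Next I would show that $\psi_{\overline{p}\circ \pi_{K^\triangleright}}$ (and symmetrically its $\overline{q}$-counterpart) is a trivial Kan fibration, using the hypothesis that $\psi_{\overline{p}}$ is. Both maps are automatically left fibrations, so fibrewise contractibility suffices. For a fixed $X \in \mathcal{O}_{[1]}$ and a cone $c \colon p\circ\pi_K \to X$ in $\mathcal{O}_{\txt{act}}$, the fibre of $\psi_{\overline{p}\circ \pi_{K^\triangleright}}$ over $(X,c)$ is the space of morphisms $\alpha \colon \overline{p}(\infty_K) \to X$ for which each composite $p(k) \to \overline{p}(\infty_K) \xrightarrow{\alpha} X$ (using the cone edges of $\overline{p}$) matches $c_{(k,l)}$ coherently for every $(k,l) \in K\times_{\simp^{\op}}L$. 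Any such $\alpha$ canonically produces a cone $\tilde{c} \colon p \to X$ by forming these composites for \emph{all} $k \in K$, not merely those in the image of $\pi_K$; this $\tilde{c}$ restricts along $\pi_K$ to $c$, so the assignment $\alpha \mapsto (\alpha, \tilde{c})$ identifies the fibre above with the corresponding fibre of $\psi_{\overline{p}}$, which is contractible by hypothesis.

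The hard part will be making this matching precise at the level of higher simplices, i.e.\ upgrading the bijection of vertices to an equivalence of simplicial sets. This requires carefully unwinding the coherence data describing the cone structures and checking that the canonical extension $c \mapsto \tilde{c}$ is suitably functorial; I would proceed by induction on the simplex dimension using the lifting properties of left fibrations. Once this is in hand, the operadic colimit statement follows formally. By the construction in Appendix~\ref{subsec:monenv}, the monoidal envelope of $\mathcal{M}$ is the fibre product of the envelopes of $\mathcal{O}$ and $\mathcal{P}$ over $\txt{Act}(\simp^{\op})$, so the tensor product $\oplus$ on $\mathcal{M}_{\txt{act}}$ is computed componentwise. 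Consequently, for any $Y = (Y_O, Y_P) \in \mathcal{M}$, the translate $\overline{r} \oplus Y$ is exactly the diagram produced by the recipe of the lemma from $\overline{p} \oplus Y_O$ and $\overline{q} \oplus Y_P$, both of which are weak operadic colimit diagrams by assumption. The first part of the lemma then shows $\overline{r} \oplus Y$ is a weak operadic colimit diagram, and the symmetric argument handles $Y \oplus \overline{r}$.
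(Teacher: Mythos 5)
Your first and last paragraphs are sound and agree with the paper: the product decomposition of $\psi_{\overline{r}}$ into an $\mathcal{O}$-factor and a $\mathcal{P}$-factor is the right starting point, and the reduction of the operadic statement to the weak operadic one via the componentwise $\oplus$ on $\txt{Env}(\mathcal{O}\times_{\simp^{\op}}\mathcal{P})$ is exactly what the paper does. The gap is in your second paragraph. The fibre of $\psi_{\overline{p}\circ\pi_{K^{\triangleright}}}$ over $(X,c)$, with $c$ a cone on $p\circ\pi_{K}$, is the space of cones on $\overline{p}\circ\pi_{K^{\triangleright}}$ restricting to $c$. Your assignment $\alpha\mapsto(\alpha,\tilde{c})$ produces a cone on $\overline{p}$ whose restriction to $K$ is $\tilde{c}=\tilde{c}(\alpha)$, which \emph{varies with} $\alpha$; so you have not identified this fibre with a single fibre of $\psi_{\overline{p}}$, but rather with the preimage of $(X,c)$ under the composite $\mathcal{O}^{\txt{act}}_{[1],\overline{p}/}\to\mathcal{O}^{\txt{act}}_{[1],p/}\to\mathcal{O}^{\txt{act}}_{[1],p\circ\pi_{K}/}$, i.e.\ the total space of the fibres of $\psi_{\overline{p}}$ over \emph{all} cones $\tilde{c}$ on $p$ restricting to $c$ along $\pi_{K}$. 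Contractibility of that space needs, besides your hypothesis on $\psi_{\overline{p}}$, contractibility of the fibre of $\mathcal{O}^{\txt{act}}_{[1],p/}\to\mathcal{O}^{\txt{act}}_{[1],p\circ\pi_{K}/}$ over $c$ --- in effect a cofinality statement about $\pi_{K}\colon K\times_{\simp^{\op}}L\to K$, which is not formal (it is precisely where the interaction of $K$ and $L$ over $\simp^{\op}$ enters) and which you never address. The ``hard part'' you defer to an induction on simplex dimension is therefore not a coherence bookkeeping issue but the actual mathematical content, and no mechanism for supplying it is indicated.

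The paper routes around this per-factor statement entirely. It introduces the larger diagram $(\overline{p},\overline{q})$ on $K^{\triangleright}\times_{\simp^{\op}}L^{\triangleright}$ and factors the restriction map $\mathcal{M}^{\txt{act}}_{[1],(\overline{p},\overline{q})/}\to\mathcal{M}^{\txt{act}}_{[1],r/}$ through $\mathcal{M}^{\txt{act}}_{[1],\overline{r}/}$: the composite decomposes as the product $\psi_{\overline{p}}\times\psi_{\overline{q}}$ and is hence an equivalence, while the first map is an equivalence because the inclusion $(K\times_{\simp^{\op}}L)^{\triangleright}\hookrightarrow K^{\triangleright}\times_{\simp^{\op}}L^{\triangleright}$ is right anodyne (this is checked by the Theorem~A criterion, and is the one genuinely geometric input); the 2-out-of-3 property then yields that $\psi_{\overline{r}}$ is an equivalence. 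If you want to keep your framework, you should replace your fibrewise argument by this factorization-plus-cofinality argument rather than attempt a direct comparison of $\psi_{\overline{p}\circ\pi_{K^{\triangleright}}}$ with $\psi_{\overline{p}}$.
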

\begin{proof}
  Let $r := \overline{r}|_{K\times_{\simp^{\op}} L}$. Then we must
  show that the map $(\mathcal{O}_{[1]} \times
  \mathcal{P}_{[1]})^{\txt{act}}_{\overline{r}/} \to
  (\mathcal{O}_{[1]} \times \mathcal{P}_{[1]})^{\txt{act}}_{r/}$ is a
  categorical equivalence. We have a commutative diagram
  \factortriangle{(\mathcal{O}_{[1]} \times
    \mathcal{P}_{[1]})^{\txt{act}}_{(\overline{p},\overline{q})/}}{(\mathcal{O}_{[1]}
    \times \mathcal{P}_{[1]})^{\txt{act}}_{r/}}{(\mathcal{O}_{[1]}
    \times \mathcal{P}_{[1]})^{\txt{act}}_{\overline{r}/.}}{}{}{} We
  clearly have an equivalence $(\mathcal{O}_{[1]} \times
  \mathcal{P}_{[1]})^{\txt{act}}_{(\overline{p},\overline{q})/} \simeq
  \mathcal{O}^{\txt{act}}_{[1],\overline{p}/} \times
  \mathcal{P}^{\txt{act}}_{[1],\overline{q}/}$, and so the top
  horizontal map is the product of the equivalences
  $\mathcal{O}^{\txt{act}}_{[1],\overline{p}/} \to
  \mathcal{O}^{\txt{act}}_{[1],p/}$ and
  $\mathcal{P}^{\txt{act}}_{[1],\overline{q}/} \to
  \mathcal{P}^{\txt{act}}_{[1],q/}$ and hence is an equivalence. By
  the 2-out-of-3 property it therefore suffices to show that the left
  diagonal map in the diagram is an equivalence. But this is true
  because the inclusion $(K \times L)^{\triangleright} \hookrightarrow
  K^{\triangleright} \times L^{\triangleright}$ is right anodyne. (By
  \cite[Proposition 4.1.2.1]{HTT} it suffices to prove this inclusion
  is cofinal, and the criterion of ``Theorem A'', \cite[Theorem
  4.1.3.1]{HTT}, clearly holds in this case.) It is then clear from
  the definition of the monoidal structure on $(\mathcal{O}
  \times_{\simp^{\op}} \mathcal{P})_{\txt{act}}$ that if
  $\overline{p}$ and $\overline{q}$ are operadic colimits, then so is
  $\overline{r}$.
\end{proof}

\begin{propn}
  Let $\mathcal{O}$ be a \nsiopd{}, and suppose given
  finitely many operadic colimit diagrams $\overline{p}_{i} \colon
  K^{\triangleright}_{i} \to \mathcal{O}_{\txt{act}}$, $i =
  0, \ldots, n$. Let
  $K = \prod_{i} K_{i}$, and let $\overline{p}$ be the composite
  \[ K^{\triangleright} \to \prod_{i} K_{i}^{\triangleright} \to
  \prod_{i} \mathcal{O}_{\txt{act}} \simeq
  \txt{Env}(\mathcal{O})_{[n]} \xto{\oplus}
  \mathcal{O}_{\txt{act}}.\]
  Then $\overline{p}$ is an operadic colimit diagram.
\end{propn}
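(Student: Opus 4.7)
The proof proceeds by induction on $n$. The base case $n = 0$ is immediate: the diagram $\overline{p}$ is literally $\overline{p}_0$ (up to the canonical identification $\txt{Env}(\mathcal{O})_{[1]} \simeq \mathcal{O}_{\txt{act}}$), which is an operadic colimit by hypothesis.

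For the inductive step, I would reduce to the binary case $n=1$ via associativity of the monoidal structure $\oplus$ on $\mathcal{O}_{\txt{act}}$ coming from $\txt{Env}(\mathcal{O})$. Explicitly, assuming the result for fewer than $n+1$ diagrams, set $K' := K_1 \times \cdots \times K_n$ and let $\overline{q} \colon (K')^{\triangleright} \to \mathcal{O}_{\txt{act}}$ be the iterated tensor of $\overline{p}_1, \ldots, \overline{p}_n$, which is an operadic colimit by the inductive hypothesis. Associativity of $\oplus$ identifies $\overline{p}$ with $\overline{p}_0 \oplus \overline{q}$, so it suffices to treat the case of tensoring two operadic colimit diagrams.

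For the binary case, unwinding the definition of operadic colimit reduces the claim to the following: whenever $\overline{p} \colon K^{\triangleright} \to \mathcal{O}_{\txt{act}}$ and $\overline{q} \colon L^{\triangleright} \to \mathcal{O}_{\txt{act}}$ are operadic colimits, the diagram $\overline{p} \oplus \overline{q} \colon (K \times L)^{\triangleright} \to \mathcal{O}_{\txt{act}}$ is a weak operadic colimit (the operadic colimit condition for $\overline{p} \oplus \overline{q}$ then follows by tensoring with any $X \in \mathcal{O}$ on either side and invoking the same statement applied to the operadic colimit $X \oplus \overline{p}$ or $\overline{q} \oplus X$). To check this, I would apply Lemma~\ref{lem:opdcoliminproduct} with $\mathcal{P} = \mathcal{O}$: this yields that the induced diagram $(K \times_{\simp^{\op}} L)^{\triangleright} \to (\mathcal{O} \times_{\simp^{\op}} \mathcal{O})_{\txt{act}}$ is a weak operadic colimit, and then push forward along the coCartesian multiplication functor $\oplus$ realizing the active map $[1] \to [2]$ in the double \icat{} $\txt{Env}(\mathcal{O})$.

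The hard part will be this final pushforward step, which requires showing that the structural functor $\oplus$ carries weak operadic colimit diagrams to weak operadic colimit diagrams, and simultaneously reconciling the plain product $K \times L$ in the statement with the fibered product $K \times_{\simp^{\op}} L$ appearing in Lemma~\ref{lem:opdcoliminproduct}. Both issues come down to cofinality: the inclusion $(K \times L)^{\triangleright} \hookrightarrow K^{\triangleright} \times_{\simp^{\op}} L^{\triangleright}$ used implicitly here should be right anodyne by the same Theorem~A style argument as in the proof of Lemma~\ref{lem:opdcoliminproduct}, and the relevant slice categories $\mathcal{O}^{\txt{act}}_{[1], \overline{p}\oplus\overline{q}/}$ should be identified with the analogous slice for the product diagram in $\mathcal{O} \times_{\simp^{\op}} \mathcal{O}$ by using the monoidal structure on $\txt{Env}(\mathcal{O})$. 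Once these compatibilities are verified, the categorical equivalence criterion for weak operadic colimits transfers directly from the lemma to our situation.
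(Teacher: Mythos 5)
The reduction to the binary case is fine, but the way you propose to handle the binary case does not work, and the binary case is where all the content lies. (Note that the paper gives no argument of its own here --- its proof is the single line ``As \cite[Proposition 3.1.1.8]{HA}'' --- so the intended argument is Lurie's.) The problem is that Lemma~\ref{lem:opdcoliminproduct} lives in the wrong place: it produces a weak operadic colimit diagram indexed by the \emph{fibre} product $K\times_{\simp^{\op}}L$ with values in the \emph{fibre} product operad $\mathcal{O}\times_{\simp^{\op}}\mathcal{O}$, whereas the diagram $\overline{p}$ in the Proposition is indexed by the plain product $K\times L$ and passes through the plain product $\mathcal{O}_{\txt{act}}\times\mathcal{O}_{\txt{act}}\simeq\txt{Env}(\mathcal{O})_{[2]}$. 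These are genuinely different objects: $K\times_{\simp^{\op}}L$ is a subobject of $K\times L$ (so the inclusion you want to be right anodyne points in the wrong direction, and the fibre product can even be empty while $K\times L$ is not, e.g.\ if $p$ and $q$ land over different objects of $\simp^{\op}$), and $\oplus$ is a functor $\txt{Env}(\mathcal{O})_{[2]}\to\txt{Env}(\mathcal{O})_{[1]}$ between fibres of a coCartesian fibration, not a map of \nsiopds{} out of $\mathcal{O}\times_{\simp^{\op}}\mathcal{O}$; multimorphisms in the fibre product operad are pairs of multimorphisms of the same arity, which has nothing to do with concatenation of lists. So there is no ``pushforward along $\oplus$'' to which the Lemma's conclusion could be transported.

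Moreover, the step you defer as the ``hard part'' --- that $\oplus$ carries weak operadic colimit diagrams to weak operadic colimit diagrams --- is essentially the Proposition itself, so even granting the transport the argument would be circular. What is actually needed is a Fubini-type lemma for weak operadic colimits, which is the route taken in \cite{HA}: if $F\colon K^{\triangleright}\times L^{\triangleright}\to\mathcal{O}_{\txt{act}}$ is such that $F|_{\{k\}\times L^{\triangleright}}$ is a weak operadic colimit diagram for every vertex $k$ of $K$ and $F|_{K^{\triangleright}\times\{\infty\}}$ is a weak operadic colimit diagram, then the composite $(K\times L)^{\triangleright}\to K^{\triangleright}\times L^{\triangleright}\xto{F}\mathcal{O}_{\txt{act}}$ is a weak operadic colimit diagram. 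Applied to $F=\oplus\circ(\overline{p}\times\overline{q})$, the hypotheses hold precisely because $\overline{p}$ and $\overline{q}$ are \emph{operadic} rather than merely weak operadic colimits: the relevant slices are $\overline{p}(k)\oplus\overline{q}$ and $\overline{p}\oplus\overline{q}(\infty)$, which is where the ``$\oplus X$ on both sides'' clause of the definition earns its keep. Proving that Fubini lemma, via an analysis of the left fibrations $\mathcal{O}^{\txt{act}}_{[1],-/}$ and a genuine cofinality argument, is the real work, and it is absent from your proposal.
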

\begin{proof}
  As \cite[Proposition 3.1.1.8]{HA}.
\end{proof}

\begin{lemma}
  Suppose $K$ is a sifted simplicial set, and $\mathcal{V}$ is a monoidal \icat{} that is compatible with
  $K$-indexed colimits. Then $\phi_{!} \colon
  \mathcal{V}^{\otimes}_{[n]} \to \mathcal{V}^{\otimes}_{[m]}$ preserves
  $K$-indexed colimits for all $\phi$ in $\simp^{\op}$.
\end{lemma}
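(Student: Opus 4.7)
The plan is to reduce the statement to the two types of morphisms in the active/inert factorization system on $\simp^{\op}$, using that active maps preserve $K$-indexed colimits separately in each variable (by hypothesis) while inert maps are essentially projections.

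First I would factor an arbitrary $\phi \colon [n] \to [m]$ in $\simp^{\op}$ as $\phi \simeq \phi_{\txt{act}} \circ \phi_{\txt{int}}$, where $\phi_{\txt{int}}$ is inert and $\phi_{\txt{act}}$ is active (this is the factorization system on $\simp^{\op}$). Since $\phi_{!} \simeq \phi_{\txt{act},!} \circ \phi_{\txt{int},!}$, it suffices to treat the two cases separately. For an inert morphism $\psi \colon [k] \to [l]$ in $\simp^{\op}$, condition (ii) of Definition~\ref{defn:nsiopd2} identifies $\psi_{!} \colon \mathcal{V}^{\otimes}_{[k]} \simeq \mathcal{V}^{\times k} \to \mathcal{V}^{\otimes}_{[l]} \simeq \mathcal{V}^{\times l}$ with the projection onto the $l$ coordinates determined by the corresponding sub-interval; such a projection preserves all colimits, since colimits in a product are computed componentwise.

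The main work is the active case. Let $\psi \colon [k] \to [l]$ be active in $\simp^{\op}$ and write $\psi_{!} \colon \mathcal{V}^{\times k} \to \mathcal{V}^{\times l}$ for the induced functor. By hypothesis, $\psi_{!}$ preserves $K$-indexed colimits separately in each variable. Given a diagram $p \colon K \to \mathcal{V}^{\times k}$ with components $p_{1}, \ldots, p_{k} \colon K \to \mathcal{V}$, the diagonal $\delta \colon K \to K^{\times k}$ is cofinal because $K$ is sifted (by definition the diagonal $K \to K \times K$ is cofinal, and cofinal maps are closed under composition, so iterating yields cofinality of $\delta$). Consequently
\[
\colim_{K} \psi_{!}(p_{1}(k), \ldots, p_{k}(k)) \simeq \colim_{(k_{1}, \ldots, k_{k}) \in K^{\times k}} \psi_{!}(p_{1}(k_{1}), \ldots, p_{k}(k_{k})),
\]
and the right-hand side may be computed as an iterated colimit, one variable at a time, yielding $\psi_{!}(\colim_{K} p_{1}, \ldots, \colim_{K} p_{k}) \simeq \psi_{!}(\colim_{K} p)$ by separate preservation in each variable plus the componentwise computation of colimits in $\mathcal{V}^{\times k}$.

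The main obstacle, if any, is just bookkeeping around the cofinality of the diagonal of a sifted simplicial set into its $k$-fold product; this is folklore and follows from the two-variable definition by induction, but one should be careful to appeal only to the fact that cofinal maps compose and that $K \to K \times K$ cofinal implies $K \to K \times K \times K$ cofinal (factoring through $K \times K \to K \times K \times K$ given by $\delta \times \id$). Assembling the two cases via the factorization gives the result for arbitrary $\phi$.
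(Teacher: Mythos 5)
Your proof is correct and is essentially the argument the paper relies on: the paper simply cites \cite[Lemma 3.2.3.7]{HA}, whose proof is exactly this reduction via the inert--active factorization followed by the cofinality of the diagonal $K \to K^{\times k}$ for sifted $K$ and the separate preservation of colimits in each variable.
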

\begin{proof}
  As \cite[Lemma 3.2.3.7]{HA}.
\end{proof}

\begin{propn}\label{propn:wocdinmon}
  Let $\mathcal{V}$ be a monoidal \icat{}, and let $\overline{p}
  \colon K^{\triangleright} \to \mathcal{V}^{\otimes}_{[m]}$ be a diagram. Then
  $\overline{p}$ is a weak operadic colimit diagram \IFF{} the composite
  \[ K^{\triangleright} \to \mathcal{V}^{\otimes}_{[m]} \xto{r_{!}}
  \mathcal{V} \]
  is a colimit diagram, where $r$ is the unique active map $[m] \to [1]$.
\end{propn}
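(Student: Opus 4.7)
The plan is to reduce the weak operadic colimit condition to the ordinary colimit condition by using the pushforward functor $r_!$ coming from the coCartesian fibration $\mathcal{V}^{\otimes} \to \simp^{\op}$. By definition, $\overline{p}$ is a weak operadic colimit diagram iff the canonical map
\[
  \psi \colon \mathcal{V}^{\txt{act}}_{[1], \overline{p}/} \longrightarrow \mathcal{V}^{\txt{act}}_{[1], p/}
\]
is a categorical equivalence (where $p = \overline{p}|_K$), and since $\psi$ is always a left fibration, this is equivalent to $\psi$ being a trivial Kan fibration.

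The heart of the argument is the identification
\[
  \mathcal{V}^{\txt{act}}_{[1], p/} \simeq \mathcal{V}_{r_! \circ p /},
\]
natural in the diagram $p \colon K \to \mathcal{V}^{\otimes}_{[m]}$. To establish this, I first observe that any active morphism in $\mathcal{V}^{\otimes}$ from an object of $\mathcal{V}^{\otimes}_{[m]}$ to an object of $\mathcal{V}^{\otimes}_{[1]} = \mathcal{V}$ lies over the unique active map $r \colon [m] \to [1]$. Choosing a coCartesian lift of $r$ for each $p(k)$ assembles into a natural transformation $p \Rightarrow r_! \circ p$, i.e.\ a functor $K \times \Delta^{1} \to \mathcal{V}^{\otimes}$ whose restriction to each $\{k\} \times \Delta^{1}$ is coCartesian over $r$. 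Composition with this natural transformation yields the desired comparison functor $\mathcal{V}_{r_! p/} \to \mathcal{V}^{\txt{act}}_{[1], p/}$; the claim that this is a categorical equivalence reduces, via a standard argument (dual to \cite[Proposition 4.4.4.5]{HTT}), to the universal property of coCartesian edges expressing $\Map^{r}_{\mathcal{V}^{\otimes}}(p(k), Y) \simeq \Map_{\mathcal{V}}(r_! p(k), Y)$ for each $Y \in \mathcal{V}$.

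With this identification in hand, applied to both $p$ and $\overline{p}$ (and using that the chosen coCartesian natural transformation for $\overline{p}$ restricts to the one for $p$), the map $\psi$ is translated into the canonical comparison
\[
  \mathcal{V}_{r_! \circ \overline{p}/} \longrightarrow \mathcal{V}_{r_! \circ p/}.
\]
By the standard characterization of colimits in an \icat{} via slice categories, this is a trivial Kan fibration iff $r_! \circ \overline{p}$ is a colimit diagram in $\mathcal{V}$, which is exactly the condition in the statement.

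The main obstacle will be making the slice-category equivalence in the second step fully rigorous at the level of simplicial sets: one must check that the construction of the natural transformation $p \Rightarrow r_! \circ p$ is functorial in $p$ (so that naturality with respect to the inclusion $K \hookrightarrow K^{\triangleright}$ holds on the nose) and that the resulting map of slice \icats{} is indeed a categorical equivalence, rather than merely inducing equivalences on mapping spaces. This is essentially a packaging of the standard fact that pushforward along coCartesian morphisms represents the slice construction, but care is needed because $\mathcal{V}^{\txt{act}}_{[1], p/}$ is a slice in the active subcategory $\mathcal{V}^{\otimes}_{\txt{act}}$ rather than in $\mathcal{V}^{\otimes}$ itself---one has to check that passing to active morphisms is compatible with the coCartesian-lift construction, which holds because identities and $r$-coCartesian lifts are all active.
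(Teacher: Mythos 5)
Your proof is correct and follows essentially the same route as the paper, which simply defers to the proof of \cite[Proposition 3.1.1.7]{HA}: there too one chooses a coCartesian natural transformation $\overline{p} \Rightarrow r_!\circ\overline{p}$, uses it to identify $\mathcal{V}^{\txt{act}}_{[1],p/}$ with $\mathcal{V}_{r_!\circ p/}$ (compatibly for $p$ and $\overline{p}$), and then invokes the left-fibration/trivial-Kan-fibration characterization of colimit diagrams. The technical caveats you flag (functoriality of the coCartesian lift in $p$, and compatibility with restricting to active morphisms) are exactly the points handled by the cited argument.
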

\begin{proof}
  This follows as in the proof of \cite[Proposition 3.1.1.7]{HA}.
\end{proof}

\begin{cor}\label{cor:ocdinmon}
    Let $\mathcal{V}$ be a monoidal \icat{}, and let $\overline{p}
  \colon K^{\triangleright} \to \mathcal{V}^{\otimes}_{[m]}$ be a diagram. Then
  $\overline{p}$ is an operadic colimit diagram \IFF{} for every object $Y
  \in \mathcal{V}^{\otimes}$ the
  composites
  \[ K^{\triangleright} \to \mathcal{V}^{\otimes}_{[m]} \xto{ \blank \oplus Y}
  \mathcal{V}^{\otimes}_{[n+m]} \xto{r_{!}} \mathcal{C}\]
  \[ K^{\triangleright} \to \mathcal{V}^{\otimes}_{[m]} \xto{Y \oplus
    \blank } \mathcal{V}^{\otimes}_{[n+m]} \xto{r_{!}} \mathcal{V}\]
  are colimit diagrams in $\mathcal{V}$, $Y$ lies over $[n]$
  in $\simp^{\op}$ and $r$ is the unique active map $[n+m] \to
  [1]$.
\end{cor}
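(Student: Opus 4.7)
The plan is straightforward: the corollary is essentially a direct application of Proposition~\ref{propn:wocdinmon} to the two defining conditions of an operadic colimit diagram. First I would unwind the definition: $\overline{p}$ is an operadic colimit diagram iff for every $Y \in \mathcal{V}^{\otimes}$, the two composites
\[ K^{\triangleright} \to \mathcal{V}^{\otimes}_{[m]} \xto{\blank \oplus Y} \mathcal{V}^{\otimes}_{\txt{act}}, \qquad K^{\triangleright} \to \mathcal{V}^{\otimes}_{[m]} \xto{Y \oplus \blank} \mathcal{V}^{\otimes}_{\txt{act}} \]
are weak operadic colimit diagrams. Because $\mathcal{V}^{\otimes}$ is a monoidal (rather than merely non-symmetric operadic) structure, the monoidal envelope of $\mathcal{V}^{\otimes}$ carries objects over $[m]$ and $[n]$ to objects over $[m+n]$, so if $Y$ lies over $[n]$ then both composites factor through $\mathcal{V}^{\otimes}_{[m+n]}$.

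Next, I would apply Proposition~\ref{propn:wocdinmon} to each of these weak operadic colimit conditions. Since the composites land in $\mathcal{V}^{\otimes}_{[m+n]}$, the proposition asserts that they are weak operadic colimit diagrams if and only if further composition with $r_{!} \colon \mathcal{V}^{\otimes}_{[m+n]} \to \mathcal{V}$, for $r \colon [m+n] \to [1]$ the unique active map, yields a colimit diagram in $\mathcal{V}$. Combining the two translations gives exactly the claimed characterization.

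There is no genuine obstacle here; the result is essentially a formal consequence of Proposition~\ref{propn:wocdinmon} and the definition of operadic colimit. The only point worth being careful about is the bookkeeping of indices: verifying that the tensor product $\oplus$ on $\mathcal{V}^{\otimes}_{\txt{act}}$ indeed sends $\mathcal{V}^{\otimes}_{[m]} \times \mathcal{V}^{\otimes}_{[n]}$ to $\mathcal{V}^{\otimes}_{[m+n]}$ (which is immediate from the construction of the monoidal envelope in \S\ref{subsec:monenv}, since active morphisms in $\simp$ are determined by addition of the underlying intervals), and that the ``unique active map'' referenced in both applications of Proposition~\ref{propn:wocdinmon} is the evident one $[m+n] \to [1]$.
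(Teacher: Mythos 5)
Your proposal is correct and is exactly the argument the paper intends: the corollary is stated without proof as an immediate consequence of unwinding the definition of operadic colimit diagram (the two composites with $\blank \oplus Y$ and $Y \oplus \blank$ being weak operadic colimit diagrams for all $Y$) and applying Proposition~\ref{propn:wocdinmon} to each. Your index bookkeeping is also right; note only that the ``$\mathcal{C}$'' appearing as the target of the first composite in the stated corollary is a typo for $\mathcal{V}$, which your reading silently and correctly fixes.
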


\begin{propn}\label{propn:ocdnattr}
  Let $q \colon \mathcal{O} \to \simp^{\op}$ be a \nsiopd{}, and suppose given a map
  $\overline{h} \colon \Delta^{1} \times K^{\triangleright} \to
  \mathcal{O}_{\txt{act}}$; write $\overline{h}_{i} :=
  \overline{h}|_{\{i\}\times K^{\triangleright}}$, $i = 0,1$. Suppose that
  \begin{enumerate}[(a)]
  \item For every vertex $x$ of $K^{\triangleright}$, the restriction
    $\overline{h}|_{\Delta^{1}\times \{x\}}$ is a $q$-coCartesian edge of
    $\mathcal{O}$.
  \item The composite map \[\Delta^{1} \times \{\infty\}
    \hookrightarrow \Delta^{1} \times K^{\triangleright} \xto{\overline{h}}
    \mathcal{O} \xto{q} \simp^{\op}\]
    is an equivalence in $\simp^{\op}$.
  \end{enumerate}
  Then $\overline{h}_{0}$ is a weak operadic colimit diagram \IFF{}
  $\overline{h}_{1}$ is a weak operadic colimit diagram. Moreover, if
  $\mathcal{O}$ is a monoidal \icat{}, then $\overline{h}_{0}$ is
  an operadic colimit diagram \IFF{} $\overline{h}_{1}$ is an operadic
  colimit diagram.
\end{propn}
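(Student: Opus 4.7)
The plan is to follow the strategy of Lurie's proof of \cite[Proposition 3.1.1.15]{HA} in the symmetric setting, verifying that each step goes through in the non-symmetric setup, since the key inputs are properties of coCartesian edges and slice categories that are local to the total \icat{} $\mathcal{O}$ and do not depend on whether the base is $\bbGamma^{\op}$ or $\simp^{\op}$.

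First I would reduce the operadic colimit case to the weak operadic colimit case. Given $\overline{h}$ satisfying (a) and (b) and any object $Y \in \mathcal{O}$, the diagrams $\overline{h} \oplus Y$ and $Y \oplus \overline{h}$ obtained by tensoring in the monoidal envelope $\txt{Env}(\mathcal{O})$ (Proposition~\ref{propn:EnvDbl}) again take values in $\mathcal{O}_{\txt{act}}$ and still satisfy (a)--(b): condition (a) is preserved because $\txt{Env}(\mathcal{O})$ is a double \icat{} over $\simp^{\op}$ whose tensor product sends $q$-coCartesian edges in each variable to $q$-coCartesian edges, while condition (b) is preserved because concatenation in $\simp^{\op}$ with an identity sends equivalences to equivalences.

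For the weak case, I would analyze the commutative square
\csquare{\mathcal{O}^{\txt{act}}_{[1], \overline{h}_1/}}{\mathcal{O}^{\txt{act}}_{[1], \overline{h}_0/}}{\mathcal{O}^{\txt{act}}_{[1], h_1/}}{\mathcal{O}^{\txt{act}}_{[1], h_0/}}{\beta^*}{\psi_1}{\psi_0}{\alpha^*}
where the horizontal maps are induced by precomposition with the natural transformations $\alpha \colon h_0 \Rightarrow h_1$ and $\beta \colon \overline{h}_0 \Rightarrow \overline{h}_1$ determined by $\overline{h}$. All four corners are left fibrations over $\mathcal{O}_{[1]}$, so by the 2-out-of-3 property of categorical equivalences it suffices to show that both $\alpha^*$ and $\beta^*$ are categorical equivalences.

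The main obstacle, and the most delicate step, is to establish this equivalence. I would do so via a non-symmetric analog of \cite[Proposition 3.1.1.16]{HA}: any natural transformation of $L$-indexed diagrams in $\mathcal{O}_{\txt{act}}$ (with $L = K$ for $\alpha$ or $L = K^{\triangleright}$ for $\beta$) whose components are all $q$-coCartesian induces a trivial Kan fibration on the associated slice \icats{} over $\mathcal{O}_{[1]}$. For $\beta$, condition (b) guarantees that the extra component at $\infty$, being $q$-coCartesian over an equivalence in $\simp^{\op}$, is in fact an equivalence in $\mathcal{O}$, ensuring that the proof covers the cone point. The auxiliary statement itself would be proved by checking the right lifting property against boundary inclusions $\partial\Delta^n \hookrightarrow \Delta^n$; at each stage, the $q$-coCartesian-ness of the components produces the required fillers, and uniqueness is forced by the fact that the unique active morphism $[k] \to [1]$ in $\simp^{\op}$ factors uniquely through any active factor $[k] \to [m]$. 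Since every step uses only properties of coCartesian edges in the inner fibration $q \colon \mathcal{O} \to \simp^{\op}$ and never the symmetry of the base, Lurie's argument transfers verbatim.
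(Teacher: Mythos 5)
Your proposal is correct and follows exactly the route the paper takes: the paper's proof of this proposition consists of the single line ``As \cite[Proposition 3.1.1.15]{HA}'', i.e.\ it transports Lurie's argument verbatim to the non-symmetric setting, which is precisely what you have sketched (the reduction of the operadic case to the weak case by tensoring in the envelope, the square of slice categories with the 2-out-of-3 argument, and the key lemma that a coCartesian natural transformation induces an equivalence on the relevant slices). Nothing further is needed.
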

\begin{proof}
  As \cite[Proposition 3.1.1.15]{HA}.
\end{proof}

\begin{cor}\label{cor:StrMonPrOCD}
  Let $\mathcal{V}$ and $\mathcal{W}$ be
  monoidal \icats{} compatible with small colimits, and suppose $F
  \colon \mathcal{V}^{\otimes} \to \mathcal{W}^{\otimes}$ is a
  monoidal functor such that $F_{[1]} \colon \mathcal{V} \to \mathcal{W}$
  preserves colimits. Then composition with $F$ preserves operadic
  colimit diagrams.
\end{cor}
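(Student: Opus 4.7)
The plan is to reduce the statement about operadic colimits to a statement about ordinary colimits in $\mathcal{V}$ and $\mathcal{W}$, using the characterization provided by Corollary~\ref{cor:ocdinmon}, and then exploit monoidality of $F$ together with the colimit-preservation hypotheses.

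First, I would fix an operadic colimit diagram $\overline{p} \colon K^{\triangleright} \to \mathcal{V}^{\otimes}_{[m]}$; the task is to show $F \circ \overline{p} \colon K^{\triangleright} \to \mathcal{W}^{\otimes}_{[m]}$ is also an operadic colimit diagram. By Corollary~\ref{cor:ocdinmon}, this amounts to checking that for every $Y' \in \mathcal{W}^{\otimes}_{[n]}$, the composites
\[
K^{\triangleright} \xto{F \circ \overline{p}} \mathcal{W}^{\otimes}_{[m]} \xto{\blank \oplus Y'} \mathcal{W}^{\otimes}_{[n+m]} \xto{r_{!}} \mathcal{W}
\]
and the analogous composite with $Y' \oplus \blank$ are colimit diagrams in $\mathcal{W}$.

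The key observation is that the monoidal structure simplifies this considerably. Writing $Y'$ in terms of its components under the Segal equivalence $\mathcal{W}^{\otimes}_{[n]} \simeq \mathcal{W}^{\times n}$ and setting $W' := r_{!}Y' \in \mathcal{W}$ (the iterated tensor product of its components), the compatibility of $r_{!}$ with the monoidal envelope structure gives, for any $Z \in \mathcal{W}^{\otimes}_{[m]}$, a natural equivalence $r_{!}(Z \oplus Y') \simeq r_{!}(Z) \otimes W'$. Moreover, since $F$ is a monoidal functor, it commutes with the $\oplus$ on envelopes and with the coCartesian pushforward $r_{!}$ up to natural equivalence, so $r_{!} \circ F \simeq F_{[1]} \circ r_{!}$ as functors $\mathcal{V}^{\otimes}_{[m]} \to \mathcal{W}$. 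Putting these together, the composite displayed above is equivalent to
\[
\bigl(F_{[1]} \circ r_{!} \circ \overline{p}(\blank)\bigr) \otimes W' \colon K^{\triangleright} \to \mathcal{W}.
\]

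Now since $\overline{p}$ is an operadic colimit diagram, it is in particular a weak operadic colimit diagram, so by Proposition~\ref{propn:wocdinmon} the composite $r_{!} \circ \overline{p} \colon K^{\triangleright} \to \mathcal{V}$ is a colimit diagram. Because $F_{[1]}$ preserves colimits by hypothesis, $F_{[1]} \circ r_{!} \circ \overline{p}$ is a colimit diagram in $\mathcal{W}$. Finally, since $\mathcal{W}$ is compatible with small colimits, the functor $\blank \otimes W'$ preserves colimits in the left variable, so the full composite is a colimit diagram in $\mathcal{W}$. The symmetric argument with $Y' \oplus \blank$ is identical. By Corollary~\ref{cor:ocdinmon}, $F \circ \overline{p}$ is an operadic colimit diagram. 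I do not anticipate a major obstacle: the only subtlety is the naturality of the equivalences $r_{!} \circ F \simeq F_{[1]} \circ r_{!}$ and $r_{!}(\blank \oplus Y') \simeq r_{!}(\blank) \otimes W'$ as functors (rather than merely objectwise), which is exactly the content of $F$ being a monoidal functor and of the compatibility between $\oplus$ on the envelope and the tensor product on $\mathcal{W}$.
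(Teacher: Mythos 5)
Your proof is correct and follows essentially the same route as the paper's: reduce via Corollary~\ref{cor:ocdinmon} to checking colimit diagrams in $\mathcal{W}$, rewrite $r_{!}(\blank \oplus Y')$ as $r'_{!}(\blank)\otimes r''_{!}(Y')$, use monoidality of $F$ to commute it past $r'_{!}$, and finish with colimit-preservation of $F_{[1]}$ and of the tensor product in each variable. The only thing you elide is the initial reduction (via Proposition~\ref{propn:ocdnattr}) to the case where $\overline{p}$ lands in a single fibre $\mathcal{V}^{\otimes}_{[m]}$, which is needed since an operadic colimit diagram is a priori a diagram in $\mathcal{V}^{\otimes}_{\txt{act}}$ rather than in a fibre; this is a minor omission.
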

\begin{proof}
  Suppose $\overline{p} \colon K^{\triangleright}\to \mathcal{V}^{\otimes}$ is an operadic
  colimit diagram. We wish to show that the composite map
  $K^{\triangleright} \to \mathcal{W}^{\otimes}$ is also an operadic
  colimit diagram. By Proposition \ref{propn:ocdnattr} we may assume
  that $\overline{p}$
  lands in a fibre $\mathcal{V}^{\otimes}_{[m]}$. We now apply
  Corollary \ref{cor:ocdinmon} to conclude that it suffices to show
  that the composites
  \[ K^{\triangleright} \to \mathcal{V}^{\otimes}_{[m]} \to
  \mathcal{W}^{\otimes}_{[m]} \xto{\blank
    \oplus Y} \mathcal{W}^{\otimes}_{[n+m]} \xto{r_{!}} \mathcal{W}\]
  \[ K^{\triangleright} \to \mathcal{V}^{\otimes}_{[m]} \to
  \mathcal{W}^{\otimes}_{[m]} \xto{Y \oplus \blank}
  \mathcal{W}^{\otimes}_{[n+m]} \xto{r_{!}} \mathcal{W},\] where $r$
  is the unique active map $[n+m] \to [1]$, are colimit diagrams, for
  all $[n]$ and all $Y \in \mathcal{V}^{\otimes}_{[n]}$. Observe that
  the functors $r_{!}(\blank \oplus Y)$ and $r_{!}(Y \oplus \blank)$
  are equivalently given by $\mu_{!}(r'_{!}(\blank) \oplus
  r''_{!}(Y))$ and $\mu_{!}(r''_{!}(Y) \oplus r'_{!}(\blank))$, where
  $r' \colon [m] \to [1]$, $r'' \colon [n] \to [1]$ and $\mu \colon
  [2] \to [1]$ are the unique active maps between these objects. Since
  $\mu_{!}$ preserves colimits in each variable in both
  $\mathcal{V}^{\otimes}$ and $\mathcal{W}^{\otimes}$, it suffices to
  show that
  \[ K^{\triangleright} \to \mathcal{W}^{\otimes}_{[m]} \xto{r'_{!}}
  \mathcal{W}\] is a colimit diagram. But we have a commutative
  diagram
  \csquare{\mathcal{V}^{\otimes}_{[m]}}{\mathcal{W}^{\otimes}_{[m]}}{\mathcal{V}}{\mathcal{W}}{F^{\otimes}_{[m]}}{r'_!}{r'_!}{F}
  so this is true since $K^{\triangleright} \to
  \mathcal{V}^{\otimes}_{[m]} \to \mathcal{V}$ is a colimit diagram
  and $F_{[1]}$ preserves colimits.
\end{proof}

\begin{propn}\label{propn:OpdColimsExist}
  Let $q \colon \mathcal{V}^{\otimes} \to \simp^{\op}$ be a monoidal
  \icat{} compatible with $K$-indexed colimits for some simplicial set
  $K$. Suppose given a diagram $\overline{p} \colon K^{\triangleright} \to
  \mathcal{V}^{\otimes}_{\txt{act}}$ that sends the cone point
  $\infty$ to an object in $\mathcal{V}$. Let $\overline{q}
  \colon K^{\triangleright} \to \mathcal{V}^{\otimes}$ be a
  coCartesian lift of $\overline{p}$ along the active maps to $[1]$. Then
  $\overline{p}$ is an operadic colimit diagram \IFF{} $\overline{q}$ is a
  colimit diagram. In particular, given a diagram $p \colon
  K^{\triangleright} \to \mathcal{V}^{\otimes}_{\txt{act}}$ there
  exists an operadic colimit diagram $\overline{p} \colon
  K^{\triangleright} \to \mathcal{V}^{\otimes}_{\txt{act}}$ extending
  $p$ that sends $\infty$ to an object of $\mathcal{V}$.
\end{propn}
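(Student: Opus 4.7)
The plan is to reduce the statement to the case already handled by Proposition~\ref{propn:wocdinmon} and Corollary~\ref{cor:ocdinmon}, by using Proposition~\ref{propn:ocdnattr} to compare $\overline{p}$ with its coCartesian lift $\overline{q}$.

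First, I would make the relationship between $\overline{p}$ and $\overline{q}$ explicit as a natural transformation. Using the coCartesian fibration structure on $\mathcal{V}^{\otimes} \to \simp^{\op}$, I would build a map $\overline{h} \colon \Delta^{1} \times K^{\triangleright} \to \mathcal{V}^{\otimes}_{\txt{act}}$ whose restriction to $\{0\} \times K^{\triangleright}$ is $\overline{p}$, whose restriction to $\{1\} \times K^{\triangleright}$ is $\overline{q}$, and such that each edge $\overline{h}|_{\Delta^{1}\times\{x\}}$ is a coCartesian morphism over the unique active map $r_{n_x} \colon [n_x] \to [1]$, where $\overline{p}(x)$ lies over $[n_x]$. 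Since $\overline{p}(\infty) \in \mathcal{V}$ by hypothesis, we have $n_{\infty}=1$ and the coCartesian edge at $\infty$ is an equivalence, so $q \circ \overline{h}|_{\Delta^{1} \times \{\infty\}}$ is an equivalence in $\simp^{\op}$. Thus hypotheses (a) and (b) of Proposition~\ref{propn:ocdnattr} are satisfied, and that proposition gives: $\overline{p}$ is an operadic colimit diagram iff $\overline{q}$ is.

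Next, since $\overline{q}$ lands in $\mathcal{V}^{\otimes}_{[1]} = \mathcal{V}$, Proposition~\ref{propn:wocdinmon} says $\overline{q}$ is a weak operadic colimit iff it is a colimit diagram in $\mathcal{V}$. For the full (not just weak) operadic version, I would apply Corollary~\ref{cor:ocdinmon}: being an operadic colimit amounts to requiring that, for every $Y \in \mathcal{V}^{\otimes}_{[n]}$, the composites $r_!(\overline{q}\oplus Y)$ and $r_!(Y \oplus \overline{q})$ are colimit diagrams in $\mathcal{V}$. By the compatibility of $\mathcal{V}$ with $K$-indexed colimits, tensor product preserves $K$-indexed colimits separately in each variable, so these composites are automatically colimit diagrams whenever $\overline{q}$ itself is. Conversely, taking $Y = I_{\mathcal{V}}$ recovers $\overline{q}$ up to the unit equivalence, so the operadic colimit condition is equivalent to $\overline{q}$ being a colimit. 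This completes the proof of the main equivalence.

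For the existence statement, starting from a diagram $p \colon K \to \mathcal{V}^{\otimes}_{\txt{act}}$, I would first form, using the coCartesian fibration structure, a natural transformation from $p$ to its coCartesian lift $q \colon K \to \mathcal{V}$ over the active maps to $[1]$. Since $\mathcal{V}$ has $K$-indexed colimits, we may extend $q$ to a colimit diagram $\overline{q} \colon K^{\triangleright} \to \mathcal{V}$. I would then lift this extension back to a diagram $\overline{h} \colon \Delta^{1}\times K^{\triangleright} \to \mathcal{V}^{\otimes}_{\txt{act}}$ restricting to the coCartesian natural transformation above on $\Delta^{1}\times K$ and to the degenerate edge at $\overline{q}(\infty)$ on $\Delta^{1}\times\{\infty\}$ (the existence of this lift is a standard anodyne-extension argument using that $\mathcal{V}^{\otimes}_{\txt{act}} \to \simp^{\op}_{\txt{act}}$ is coCartesian). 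Taking $\overline{p} := \overline{h}|_{\{0\} \times K^{\triangleright}}$ gives an extension of $p$ to $K^{\triangleright}$ with $\overline{p}(\infty) \in \mathcal{V}$, and by the first part $\overline{p}$ is an operadic colimit diagram since $\overline{q}$ is a colimit diagram by construction.

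The main obstacle will be the explicit construction of the natural transformation $\overline{h}$ in the existence argument: this requires setting up the coCartesian lifts functorially over $K^{\triangleright}$, which is essentially an anodyne lifting problem for the coCartesian fibration $\mathcal{V}^{\otimes}_{\txt{act}} \to \simp^{\op}_{\txt{act}}$. The comparison via Proposition~\ref{propn:ocdnattr} and the reduction to a colimit in $\mathcal{V}$ via Corollary~\ref{cor:ocdinmon} are essentially formal once this construction is in place.
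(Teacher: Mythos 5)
Your proposal is correct and takes essentially the same route as the paper's proof, which simply defers to \cite[Proposition 3.1.1.20]{HA}: Lurie's argument there is exactly your reduction, namely transport along a coCartesian natural transformation to the underlying \icat{} (Proposition~\ref{propn:ocdnattr}), identification of (weak) operadic colimit diagrams lying in a fibre with ordinary colimit diagrams using the compatibility hypothesis (Proposition~\ref{propn:wocdinmon} and Corollary~\ref{cor:ocdinmon}), and the same coCartesian-lifting construction for the existence statement.
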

\begin{proof}
  As \cite[Proposition 3.1.1.20]{HA}.
\end{proof}

\subsection{Operadic Kan Extensions}\label{subsec:OpdKanExt}
We now discuss operadic Kan extensions in the non-symmetric case. Here
we work in slightly more generality than for the corresponding results
in \cite{HA} --- the proof of Lurie's existence theorem can also be
used to construct operadic Kan extensions along a restricted class of
morphisms of \gnsiopds{}.

\begin{defn}
  Let $\mathcal{C}$ be an \icat{}. A \defterm{$\mathcal{C}$-family of
    (generalized) \nsiopds{}} is a categorical fibration
  $\pi \colon \mathcal{O} \to \simp^{\op} \times
  \mathcal{C}$ such that:
  \begin{enumerate}[(i)]
  \item For $C \in \mathcal{C}$, $X \in \mathcal{O}_{C}$, and
    $\alpha$ an inert morphism in $\simp^{\op}$ from the
    image of $X$, there exists a coCartesian morphism $X \to Y$ over
    $\alpha$ in $\mathcal{O}_{C}$.
  \item For $X \in \mathcal{O}_{C}$ with image $[n] \in
    \simp^{\op}$ let $p_{X} \colon K_{[n]}^{\triangleleft} \to
    \mathcal{O}$ be a coCartesian lift of $p_{[n]} \colon
    K_{[n]} \to \simp^{\op}$ (or
    consider a lift of $\mathcal{G}^{\txt{ns}}_{[n]/} \to \simp^{\op}$
    for a \gnsiopd{}). Then $p_{X}$ is a $\pi$-limit diagram.
  \item For each $C \in \mathcal{C}$, the induced map
    $\mathcal{O}_{C} \to \simp^{\op}$ is a
    (generalized) \nsiopd{}.
  \end{enumerate}
  A $\Delta^{1}$-family will also be referred to as a
  \defterm{correspondence of (generalized) \nsiopds{}}.
\end{defn}

\begin{defn}
  We say a correspondence $\mathcal{M} \to \simp^{\op} \times
  \Delta^{1}$ of generalized \nsiopds{} is \emph{constant
    over $[0]$} if the restriction $\mathcal{M}_{[0]} \to \Delta^{1}$
  is a coCartesian fibration whose associated functor $\Delta^{1} \to
  \CatI$ is an equivalence.
\end{defn}

\begin{defn}
  Let $\mathcal{M} \to \simp^{\op} \times \Delta^{1}$ be a
  correspondence from a \gnsiopd{} $\mathcal{A}$ to a \gnsiopd{}
  $\mathcal{B}$ that is constant over $[0]$ and such that
  $\mathcal{A}_{[0]}$ and $\mathcal{B}_{[0]}$ are Kan complexes, let
  $\mathcal{O}$ be a \nsiopd{}, and let $\overline{F} \colon
  \mathcal{M} \to \mathcal{O}$ be a map of \gnsiopds{}. The
  map $\overline{F}$ is an \defterm{operadic left Kan extension} of $F =
  \overline{F}|_{\mathcal{A}}$ if for every $B \in \mathcal{B}_{[1]}$ the
  composite map
  \[ \left((\mathcal{M}_{\txt{act}})_{/B}
    \times_{\mathcal{M}}
    \mathcal{A}\right)^{\triangleright} \to
  (\mathcal{M}_{/B})^{\triangleright} \to
  \mathcal{M} \xto{\overline{F}} \mathcal{O} \]
  is an operadic colimit diagram.
\end{defn}

\begin{thm}\label{thm:OpdcKanExt}\ 
  \begin{enumerate}[(i)]
  \item Suppose given a $\Delta^{1}$-family of \gnsiopds{}
    $\mathcal{M} \to \simp^{\op} \times \Delta^{1}$ constant over
    $[0]$ and such that $\mathcal{M}_{[0],i}$ is a Kan complex for $i
    = 0,1$, a \nsiopd{} $\mathcal{O}$ and a commutative
    diagram of \gnsiopd{} family maps \csquare{\mathcal{M}
      \times_{\Delta^{1}}
      \{0\}}{\mathcal{O}}{\mathcal{M}}{\simp^{\op}.}{f}{}{}{}
    Then there exists an operadic left Kan extension $\overline{f}$ of $f$
    \IFF{} for every $B$ in $\mathcal{M} \times_{\Delta^{1}} \{1\}$,
    the diagram
    \[ (\mathcal{M}_{\txt{act}})_{/B} \times_{\Delta^{1}} \{0\} \to
    \mathcal{M} \times_{\Delta^{1}} \{0\} \xto{f}
    \mathcal{O} \] can be extended to an operadic colimit
    diagram lifting
  \[ \left((\mathcal{M}_{\txt{act}})_{/B} \times_{\Delta^{1}}
    \{0\}\right)^{\triangleright} \to \mathcal{M} \to \simp^{\op}.\]
\item Suppose given a $\Delta^{n}$-family of \gnsiopds{} $\mathcal{M}
  \to \simp^{\op} \times \Delta^{n}$ with $n \geq 1$ such that all
  sub-$\Delta^{1}$-families are constant over $[0]$ and the fibres
  $\mathcal{M}_{[0],i}$ are all Kan complexes, a
  \nsiopd{} $\mathcal{O}$, and a commutative diagram of
  \gnsiopd{} family maps \csquare{\mathcal{M} \times_{\Delta^{n}}
    \Lambda^n_0}{\mathcal{O}}{\mathcal{M}}{\simp^{\op}}{f}{}{}{}
  such that the restriction of $f$ to $\mathcal{M} \times_{\Delta^{n}}
  \Delta^{\{0,1\}}$ is an operadic left Kan extension of
  $f|_{\mathcal{M} \times_{\Delta^{n}} \{0\}}$. Then there exists a
  morphism $\overline{f} \colon \mathcal{M} \to \mathcal{O}$
  extending $f$.
\end{enumerate}
\end{thm}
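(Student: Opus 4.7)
The plan is to adapt the proof of the symmetric analogue, Lurie's \cite[Theorem 3.1.2.3]{HA}, to the non-symmetric setting over $\simp^{\op}$, with the additional modifications needed to handle the generalized $\infty$-operad framework in which the fibres over $[0]$ may be non-trivial. The role of the ``constant over $[0]$'' hypothesis is precisely to ensure that the data on $\mathcal{M}_{[0]}$ is determined by the restriction and does not obstruct the extension, so that the essential content of the problem lives over the ``positive-arity'' part of $\simp^{\op}$; together with the Kan-complex condition on the $[0]$-fibres this guarantees that the slice categories $(\mathcal{M}_{\txt{act}})_{/B} \times_{\Delta^{1}} \{0\}$ behave as they do in the \nsiopd{} case.

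For part (i), one direction is essentially tautological: an operadic left Kan extension restricts over each $B \in \mathcal{M}_{[1]} \times_{\Delta^{1}} \{1\}$ to a cone whose underlying diagram is an operadic colimit, providing the desired extension. For the converse I would build $\overline{f}$ by an inductive construction. The main step is constructing $\overline{f}$ on objects $B \in \mathcal{M}_{[1]} \times_{\Delta^1} \{1\}$ (the content of the hypothesis) and checking that the resulting family can be assembled coherently. For $B \in \mathcal{M}_{[n]} \times_{\Delta^1} \{1\}$ with $n \geq 2$, I would invoke the Segal condition to identify $B$ with a tuple of objects of $\mathcal{M}_{[1]} \times_{\Delta^1} \{1\}$, and use Lemma~\ref{lem:opdcoliminproduct} together with the monoidal envelope (Propositions~\ref{propn:EnvDbl}, \ref{propn:EnvAdj}) to combine the $[1]$-level operadic colimits into the required operadic colimit at level $[n]$. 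The resulting coherence data (morphisms, higher simplices) is then produced by a standard lifting argument along inner-anodyne inclusions, following \cite[Proposition 4.3.2.15]{HTT} applied fibrewise over $\simp^{\op}\times \Delta^{1}$; Proposition~\ref{propn:ocdnattr} ensures that operadic colimits transport along coCartesian edges, so that the construction respects the inert morphisms.

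For part (ii), the argument is an induction on $n$, using part (i) at each stage. Explicitly, one filters $\Delta^{n}$ by its faces containing $\Delta^{\{0,1\}}$ and extends $f$ along each such face by applying part (i) to the relevant $\Delta^{\{0,1,i\}}$-subfamily. The crucial input is that being an operadic left Kan extension is a property of the $\Delta^{\{0,1\}}$-restriction that, once established, makes all further extension data essentially uniquely determined, so the lifting problem becomes one of filling inner horns in the relevant over-categories.

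The main obstacle I anticipate is the careful bookkeeping required in the generalized, non-symmetric setting. Unlike in \cite{HA}, the fibres over $[0]$ in $\simp^{\op}$ need not be trivial, and the ``constant over $[0]$'' hypothesis must be used throughout to ensure that the operadic colimits constructed at level $[1]$ really determine the extension at all higher arities. The most delicate point will be verifying that the Segal-assembled extensions at level $[n]$ are compatible with all morphisms in $\mathcal{G}^{\simp}_{[n]/}$ (not merely with the inert projections $\rho_{i}$), which requires using the interaction between the operadic envelope and the generalized Segal condition; this is precisely where the hypothesis that $\mathcal{M}_{[0],i}$ is a Kan complex enters, since it ensures that the ``unary'' data along $\mathcal{M}_{[0]}$ contributes no non-trivial constraints.
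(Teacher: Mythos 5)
Your proposal matches the paper's approach exactly: the paper's entire proof of this theorem is the citation ``As \cite[Theorem 3.1.2.3]{HA}'', i.e.\ the argument is Lurie's proof of the symmetric version transported to $\simp^{\op}$, with the ``constant over $[0]$'' and Kan-complex hypotheses playing precisely the role you describe in making the generalized fibres over $[0]$ harmless. Your sketch of how the adaptation goes is consistent with that, so there is nothing further to compare.
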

\begin{proof}
  As \cite[Theorem 3.1.2.3]{HA}.
\end{proof}

\subsection{Free Algebras}
Let $\mathcal{V}$ be a monoidal \icat{} compatible with
small colimits and let $f \colon \mathcal{A} \to \mathcal{B}$ be a
functor of \gnsiopds{} that is an equivalence over $[0]$ and such that
$\mathcal{A}_{[0]}$ and $\mathcal{B}_{[0]}$ are Kan complexes.  Using the
existence theorem for operadic left Kan extensions, we can now
construct an adjoint to the functor
\[ f^{*} \colon \Alg_{\mathcal{B}}(\mathcal{V}) \to
\Alg_{\mathcal{A}}(\mathcal{V}) \] given by composition with
$f$. This is given by forming \emph{free} algebras:

\begin{defn}
  Let $\mathcal{A}$ and $\mathcal{B}$ be \gnsiopds{}, let
  $\mathcal{O}$ be a \nsiopd{}, and let $f \colon
  \mathcal{A} \to \mathcal{B}$ be a map of \gnsiopds{} that is an equivalence over $[0]$ and such that
$\mathcal{A}_{[0]}$ and $\mathcal{B}_{[0]}$ are Kan complexes. Suppose $A \in
  \Alg_{\mathcal{A}}(\mathcal{O})$, $B \in
  \Alg_{\mathcal{B}}(\mathcal{O})$, and $\phi
  \colon A \to f^{*}B$ is a map of $\mathcal{A}$-algebras in
  $\mathcal{O}$. For $b \in \mathcal{B}_{[1]}$, let
  $(\mathcal{A}_{\txt{act}})_{/b} := \mathcal{A} \times_{\mathcal{B}}
  (\mathcal{B}_{\txt{act}})_{/b}$. Then $A$ and $B$ induce maps
  $\alpha, \beta \colon (\mathcal{A}_{\txt{act}})_{/b} \to
  \mathcal{O}_{\txt{act}}$ and $\phi$ determines a natural
  transformation $\eta \colon \alpha \to \beta$. The map $\beta$
  clearly extends to $\overline{\beta} \colon
  (\mathcal{A}_{\txt{act}})_{/b} \to
  (\mathcal{O}_{\txt{act}})_{/B(b)}$. Since the projection
  \[(\mathcal{O}_{\txt{act}})_{/B(b)} \to
  \mathcal{O}_{\txt{act}} \times_{\simp^{\op}_{\txt{act}}}
  (\simp^{\op}_{\txt{act}})_{/[n]}\] (where $b$ lies over $[n] \in
  \simp^{\op}$) is a right fibration, we can lift $\eta$ to an
  essentially unique map $\overline{\eta} \colon \overline{\alpha} \to \overline{\beta}$
  (over $\simp^{\op}$). We say that $\phi$ \defterm{exhibits $B$ as a
    free $\mathcal{B}$-algebra generated by $A$} if for every $b \in
  \mathcal{B}_{[1]}$ the map $\overline{\alpha}$ determines an operadic
  $q$-colimit diagram $(\mathcal{A}_{\txt{act}})_{/b}^{\triangleright}
  \to \mathcal{O}$.
\end{defn}

\begin{remark}\label{rmk:FreeIsKan}
  The map $\phi \colon A \to f^{*}B$ above determines a map
  \[ H \colon (\mathcal{A} \times \Delta^{1})
  \amalg_{\mathcal{A} \times \{1\}} \mathcal{B}
  \to \mathcal{O} \times \Delta^{1}.\]
  Choose a factorization of $H$ as 
  \[ H \colon (\mathcal{A} \times \Delta^{1})
  \amalg_{\mathcal{A} \times \{1\}} \mathcal{B}
  \xto{H'} \mathcal{M} \xto{H''} \mathcal{O}
  \times \Delta^{1},\]
  where $H'$ is a categorical equivalence and $\mathcal{M}$
  is an \icat{}. The composite map $\mathcal{M} \to
  \simp^{\op} \times \Delta^{1}$ exhibits
  $\mathcal{M}$ as a correspondence of \nsiopds{}. Then the
  map $\phi$ exhibits $B$ as a free $\mathcal{B}$-algebra
  generated by $A$ \IFF{} $H''$ is an operadic left Kan extension.
\end{remark}

\begin{propn}
  Suppose $\phi \colon A \to f^{*}B$ exhibits $B$ as a free
  $\mathcal{B}$-algebra in $\mathcal{O}$ generated
  by $A$. Then for every $B' \in
  \Alg_{\mathcal{B}}(\mathcal{O})$
  composition with $\phi$ induces a homotopy equivalence
  \[
  \Map(B, B') \to \Map(A, f^{*}B').\]
\end{propn}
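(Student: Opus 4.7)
The strategy is to encode both mapping spaces as restrictions of algebras over a single $\Delta^{2}$-family of \gnsiopds{} and then invoke the essential uniqueness of operadic left Kan extensions from Theorem~\ref{thm:OpdcKanExt}(ii). By Remark~\ref{rmk:FreeIsKan} the free algebra data $\phi \colon A \to f^{*}B$ is packaged as a correspondence $\mathcal{M} \to \simp^{\op} \times \Delta^{1}$ (constant over $[0]$, with $\mathcal{A}_{[0]}, \mathcal{B}_{[0]}$ Kan complexes) equipped with a map $H'' \colon \mathcal{M} \to \mathcal{O} \times \Delta^{1}$ whose restriction to the $\{0,1\}$-edge is an operadic left Kan extension of $H''|_{\mathcal{A}}$.

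I would next assemble $\mathcal{M}$ and the trivial correspondence $\mathcal{B} \times \Delta^{1}$ into a $\Delta^{2}$-family $\mathcal{N} \to \simp^{\op} \times \Delta^{2}$ with fibres $\mathcal{N}_{0} = \mathcal{A}$, $\mathcal{N}_{1} = \mathcal{N}_{2} = \mathcal{B}$, where both of the edges $\Delta^{\{0,1\}}$ and $\Delta^{\{0,2\}}$ carry copies of $\mathcal{M}$ (each encoding $f$) and the edge $\Delta^{\{1,2\}}$ carries the trivial correspondence on $\mathcal{B}$. By construction every sub-$\Delta^{1}$-family is constant over $[0]$, so the hypotheses of Theorem~\ref{thm:OpdcKanExt}(ii) are met. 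A map $\mathcal{N} \to \mathcal{O} \times \Delta^{2}$ of family maps amounts to choices of $A' \in \Alg_{\mathcal{A}}(\mathcal{O})$, $B_{1}, B_{2} \in \Alg_{\mathcal{B}}(\mathcal{O})$, morphisms $\phi_{1} \colon A' \to f^{*}B_{1}$, $\phi_{2} \colon A' \to f^{*}B_{2}$, $\beta \colon B_{1} \to B_{2}$, and a 2-cell compatibility $\phi_{2} \simeq f^{*}\beta \circ \phi_{1}$.

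Let $\mathcal{X}$ denote the space of such data in which $A' = A$, $B_{1} = B$, $\phi_{1} = \phi$, and $B_{2} = B'$ are fixed. Restriction to $\Delta^{\{0,2\}}$ yields a map $r_{02} \colon \mathcal{X} \to Y := \Map(A, f^{*}B')$ sending the data to $\phi_{2}$, while restriction to $\Delta^{\{1,2\}}$ yields $r_{12} \colon \mathcal{X} \to Z := \Map(B, B')$ sending the data to $\beta$. The composite $Z \to \mathcal{X} \to Y$ obtained from a section of $r_{12}$ sends $\beta$ to $f^{*}\beta \circ \phi$, so if both $r_{02}$ and $r_{12}$ are equivalences the desired conclusion follows. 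The equivalence $r_{02}$ is exactly Theorem~\ref{thm:OpdcKanExt}(ii): restriction from $\mathcal{N}$-algebras to $\mathcal{N}|_{\Lambda^{2}_{0}}$-algebras extending the fixed data is a trivial fibration because the $\{0,1\}$-edge of $\mathcal{N}$ is an operadic left Kan extension by hypothesis, and the fibre over a choice of $\phi_{2}$ is a space of extensions to $\Delta^{2}$. The equivalence $r_{12}$ follows tautologically once one observes that the $\{1,2\}$-edge of $\mathcal{N}$ is the trivial correspondence: the fibre of $r_{12}$ over $\beta$ parametrizes a $\phi_{2}$ together with a 2-cell exhibiting it as $f^{*}\beta \circ \phi$, and such pairs form a contractible space (the contractibility may also be deduced by a second application of Theorem~\ref{thm:OpdcKanExt}(ii) to a reshuffled family).

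The principal obstacle is organizational: constructing the $\Delta^{2}$-family $\mathcal{N}$ explicitly and verifying that it satisfies all the axioms of a family of \gnsiopds{}, in particular the constancy-over-$[0]$ condition on each sub-$\Delta^{1}$-family and the required behaviour of coCartesian lifts over inert morphisms. This amounts to a standard but delicate gluing along the common $\mathcal{A}$- and $\mathcal{B}$-faces using the fact that $\mathcal{M}$ was itself built as a correspondence of \gnsiopds{} between $\mathcal{A}$ and $\mathcal{B}$. Once $\mathcal{N}$ is in hand, the content of the argument reduces cleanly to Theorem~\ref{thm:OpdcKanExt}(ii) and the observation that sections along a trivial correspondence are contractible over the endpoints.
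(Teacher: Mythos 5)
The paper's own proof of this proposition is simply a citation to Lurie's Higher Algebra (Proposition 3.1.3.2), and your argument is a faithful reconstruction of the strategy behind that citation: encode the data as families of correspondences of \gnsiopds{} and reduce everything to the horn-filling statement for operadic left Kan extensions, Theorem~\ref{thm:OpdcKanExt}(ii). The overall architecture --- the $\Delta^{2}$-family $\mathcal{N}$ with two copies of $\mathcal{M}$ and one trivial correspondence, the factorization of the comparison map through the space $\mathcal{X}$, and the identification of the composite with $\beta \mapsto f^{*}\beta \circ \phi$ --- is correct and is essentially how the cited argument is organized.

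There is, however, a genuine gap in the step where you conclude that $r_{02}$ is an equivalence. Theorem~\ref{thm:OpdcKanExt}(ii) applied to $\mathcal{N}$ with $n = 2$ is purely an existence statement: it says that every map defined on $\mathcal{N} \times_{\Delta^{2}} \Lambda^{2}_{0}$ (i.e.\ every $\phi_{2} \colon A \to f^{*}B'$ together with the fixed operadic left Kan extension data on the $\{0,1\}$-edge) extends to all of $\mathcal{N}$. This shows only that $r_{02}$ is surjective on $\pi_{0}$ of its fibres --- i.e.\ that every map $A \to f^{*}B'$ factors through $\phi$ up to homotopy --- which is the essential-surjectivity half of the universal property. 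To conclude that $r_{02}$ is a trivial fibration (equivalently, that the space of factorizations is contractible, which is the uniqueness half and the actual content of the proposition) you must solve lifting problems against $\partial\Delta^{m} \hookrightarrow \Delta^{m}$ for all $m \geq 0$; unwinding, each of these is an extension problem for a $\Delta^{n}$-family with $n > 2$ obtained by triangulating $\mathcal{N} \times \Delta^{m}$, in which the initial horn is a $\Lambda^{n}_{0}$ handled by Theorem~\ref{thm:OpdcKanExt}(ii) and the remaining horns are inner horns handled by the inner-fibration property. The same remark applies to your ``tautological'' claim for $r_{12}$: the relevant horn there is the inner horn $\Lambda^{2}_{1}$, to which Theorem~\ref{thm:OpdcKanExt}(ii) does not apply, and contractibility of its fibres again requires the lifting argument for all $m$ rather than a single horn filling. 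The tools you have identified do suffice to close this, but the argument as written only establishes surjectivity of $\pi_{0}\Map(B,B') \to \pi_{0}\Map(A, f^{*}B')$, not that the map is a homotopy equivalence.
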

\begin{proof}
  As \cite[Proposition 3.1.3.2]{HA}.
\end{proof}

\begin{propn}
  Suppose $A \in
  \Alg_{\mathcal{A}}(\mathcal{O})$. Then
  there exists a free $\mathcal{B}$-algebra $B$ generated by
  $A$ \IFF{} for every $b \in \mathcal{B}_{[1]}$ the induced
  map
  \[ (\mathcal{A}_{\txt{act}})_{/b} \to
  \mathcal{A}_{\txt{act}} \xto{A} \mathcal{O} \]
  can be extended to an operadic colimit diagram lying over
  \[ (\mathcal{A}_{\txt{act}})_{/b}^{\triangleright} \to
  \mathcal{B}_{\txt{act}} \to \simp^{\op}_{\txt{act}}.\]
\end{propn}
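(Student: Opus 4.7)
The plan is to translate the statement about free algebras into one about operadic left Kan extensions and then invoke the existence theorem for such extensions, namely Theorem~\ref{thm:OpdcKanExt}(i). By Remark~\ref{rmk:FreeIsKan}, a map $\phi \colon A \to f^{*}B$ exhibits $B$ as a free $\mathcal{B}$-algebra generated by $A$ if and only if a chosen factorization
\[ (\mathcal{A} \times \Delta^{1}) \amalg_{\mathcal{A} \times \{1\}} \mathcal{B} \xto{H'} \mathcal{M} \xto{H''} \mathcal{O} \times \Delta^{1}\]
has $H''$ an operadic left Kan extension of its restriction to $\mathcal{M} \times_{\Delta^{1}}\{0\}$. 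Hence, saying that some free $\mathcal{B}$-algebra generated by $A$ exists is equivalent to saying that the restriction of $H''$ to $\mathcal{M}\times_{\Delta^{1}}\{0\}$ (which encodes $A$) admits such an extension.

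Next I would verify that the correspondence $\mathcal{M} \to \simp^{\op}\times \Delta^{1}$ satisfies the hypotheses of Theorem~\ref{thm:OpdcKanExt}(i): it must be constant over $[0]$, with $\mathcal{M}_{[0],i}$ a Kan complex for $i = 0,1$. Both follow directly from our standing hypothesis on $f$: by construction $\mathcal{M}\times_{\Delta^{1}}\{0\} \simeq \mathcal{A}$ and $\mathcal{M}\times_{\Delta^{1}}\{1\} \simeq \mathcal{B}$, so $\mathcal{M}_{[0],0} \simeq \mathcal{A}_{[0]}$ and $\mathcal{M}_{[0],1}\simeq \mathcal{B}_{[0]}$ are Kan complexes by assumption, and since $f$ is an equivalence over $[0]$, the induced functor $\mathcal{M}_{[0]}\to \Delta^{1}$ is classified by an equivalence in $\CatI$, i.e.\ $\mathcal{M}$ is constant over $[0]$.

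Given these hypotheses, Theorem~\ref{thm:OpdcKanExt}(i) tells us that the desired operadic left Kan extension $\overline{H}''$ exists if and only if, for every object $b$ of $\mathcal{M}\times_{\Delta^{1}}\{1\} \simeq \mathcal{B}$, the diagram
\[ (\mathcal{M}_{\txt{act}})_{/b} \times_{\Delta^{1}} \{0\} \to \mathcal{M}\times_{\Delta^{1}}\{0\} \to \mathcal{O}\]
extends to an operadic colimit diagram lifting the projection \[\left((\mathcal{M}_{\txt{act}})_{/b}\times_{\Delta^{1}}\{0\}\right)^{\triangleright} \to \mathcal{M} \to \simp^{\op}.\] It remains only to identify this with the condition in the proposition. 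Since $\mathcal{M}\times_{\Delta^{1}}\{0\}\simeq \mathcal{A}$ and active morphisms in $\mathcal{M}$ from an object over $0$ to an object $b$ over $1$ are precisely active morphisms $a \to b$ in $\mathcal{B}$ with $a$ in the image of $f$, the slice $(\mathcal{M}_{\txt{act}})_{/b}\times_{\Delta^{1}}\{0\}$ is naturally equivalent to $(\mathcal{A}_{\txt{act}})_{/b} := \mathcal{A}\times_{\mathcal{B}}(\mathcal{B}_{\txt{act}})_{/b}$, and under this identification the restriction of $H''$ corresponds to the composite $(\mathcal{A}_{\txt{act}})_{/b}\to \mathcal{A}_{\txt{act}} \xto{A} \mathcal{O}$. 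This yields exactly the criterion stated in the proposition.

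The main (essentially bookkeeping) obstacle is checking the identification $(\mathcal{M}_{\txt{act}})_{/b}\times_{\Delta^{1}}\{0\} \simeq (\mathcal{A}_{\txt{act}})_{/b}$ compatibly with the projections to $\simp^{\op}$ and with the diagrams to $\mathcal{O}$; this is a direct consequence of the construction of $\mathcal{M}$ as the pushout-replacement along $f$, but deserves a careful, if routine, verification.
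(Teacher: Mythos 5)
Your proposal is correct and follows essentially the same route as the paper, whose proof is simply a citation of \cite[Proposition 3.1.3.3]{HA}: one forms the correspondence $\mathcal{M}$ associated to $f$ (a fibrant replacement of $(\mathcal{A}\times\Delta^{1})\amalg_{\mathcal{A}\times\{1\}}\mathcal{B}$ over $\simp^{\op}\times\Delta^{1}$, constructed from $f$ alone rather than from a candidate $\phi\colon A \to f^{*}B$, so that both directions of the equivalence reduce to the existence of an operadic left Kan extension), checks the hypotheses of Theorem~\ref{thm:OpdcKanExt}(i), and identifies $(\mathcal{M}_{\txt{act}})_{/b}\times_{\Delta^{1}}\{0\}$ with $(\mathcal{A}_{\txt{act}})_{/b}$. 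The bookkeeping step you flag at the end is indeed the only substantive verification, and it goes through as you describe.
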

\begin{proof}
  As \cite[Proposition 3.1.3.3]{HA}.
\end{proof}

\begin{cor}
  Let $\mathcal{O}$ be a \nsiopd{}, and suppose $f \colon
  \mathcal{A} \to \mathcal{B}$ is a map of
  \gnsiopds{} that is an equivalence over $[0]$ and such that
$\mathcal{A}_{[0]}$ and $\mathcal{B}_{[0]}$ are Kan complexes. The functor $f^{*} \colon
  \Alg_{\mathcal{B}}(\mathcal{O})  \to
  \Alg_{\mathcal{A}}(\mathcal{O})$
  admits a left adjoint $f_{!}$, provided that for every $\mathcal{A}$-algebra
  $A$ in $\mathcal{O}$ and every $b \in
  \mathcal{B}_{*}$, the diagram
  \[ (\mathcal{A}_{\txt{act}})_{/b} \to
  \mathcal{A}_{\txt{act}} \xto{A} \mathcal{O} \]
  can be extended to an operadic colimit diagram lying over
  \[ (\mathcal{A}_{\txt{act}})_{/b}^{\triangleright} \to
  \mathcal{B}_{\txt{act}} \to \simp^{\op}_{\txt{act}}.\]
\end{cor}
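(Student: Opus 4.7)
The plan is to produce the left adjoint objectwise and then invoke a standard characterization of adjoints. Given an $\mathcal{A}$-algebra $A\in\Alg_{\mathcal{A}}(\mathcal{O})$, the hypothesis on operadic colimit extensions is exactly the condition appearing in the preceding proposition, so that proposition yields a $\mathcal{B}$-algebra $B\in\Alg_{\mathcal{B}}(\mathcal{O})$ together with a map $\phi_A\colon A\to f^{*}B$ exhibiting $B$ as a free $\mathcal{B}$-algebra generated by $A$. Set $f_{!}A:=B$ and let $\phi_A$ serve as the candidate unit morphism.

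The universal property established in the first proposition above says precisely that for every $B'\in\Alg_{\mathcal{B}}(\mathcal{O})$, composition with $\phi_A$ induces an equivalence
\[ \Map_{\Alg_{\mathcal{B}}(\mathcal{O})}(f_{!}A,B')\isoto \Map_{\Alg_{\mathcal{A}}(\mathcal{O})}(A,f^{*}B'). \]
This is exactly the condition of \cite[Proposition 5.2.2.12]{HTT} (or its dual) for exhibiting $f_{!}$ as a left adjoint to $f^{*}$: one has, for each object $A$ in the source, an object $f_{!}A$ in the target together with a morphism $A\to f^{*}f_{!}A$ that is universal. By that proposition, such pointwise data automatically assemble into a functor $f_{!}\colon\Alg_{\mathcal{A}}(\mathcal{O})\to\Alg_{\mathcal{B}}(\mathcal{O})$ together with a unit transformation $\id\to f^{*}f_{!}$ exhibiting $f_{!}\dashv f^{*}$.

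The main (and essentially only) nontrivial step is the verification of the universal property of $\phi_A$, and that is exactly what has been supplied by the two preceding propositions: the existence result (which uses the hypothesis on extendability to operadic colimit diagrams, in turn made possible by Theorem~\ref{thm:OpdcKanExt} via the interpretation in Remark~\ref{rmk:FreeIsKan}) and the mapping-space property. Once those are in hand the statement is a formal application of the pointwise-adjoint criterion, so there is no further obstacle.
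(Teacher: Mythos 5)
Your proposal is correct and follows essentially the same route as the paper, whose proof is simply the citation ``As [HA, Corollary 3.1.3.4]'': Lurie's argument there is exactly the combination of the two preceding propositions (existence of the free algebra under the colimit-extension hypothesis, and its mapping-space universal property) with the pointwise criterion for the existence of a left adjoint from HTT. The only quibble is the precise HTT reference for that criterion, but the mechanism you invoke is the intended one.
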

\begin{proof}
  As \cite[Corollary 3.1.3.4]{HA}.
\end{proof}

Combining this with Proposition \ref{propn:OpdColimsExist}, we get:
\begin{thm}\label{thm:FreeFtrExists}
  Suppose $\mathcal{V}$ is a monoidal \icat{} compatible
  with $\kappa$-small colimits for some uncountable regular cardinal
  $\kappa$, and $f \colon \mathcal{A} \to \mathcal{B}$ is a map of
  \gnsiopds{} that is an equivalence over $[0]$ and such that
$\mathcal{A}_{[0]}$ and $\mathcal{B}_{[0]}$ are Kan complexes, with
  $\mathcal{A}$ and $\mathcal{B}$ essentially $\kappa$-small. Then the
  functor $f^{*} \colon
  \Alg_{\mathcal{B}}(\mathcal{V}) \to
  \Alg_{\mathcal{A}}(\mathcal{V})$ admits a
  left adjoint $f_{!}$.
\end{thm}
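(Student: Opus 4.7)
The plan is to deduce this theorem as a direct combination of the preceding corollary (which gives the existence of $f_{!}$ under the hypothesis that certain operadic colimits exist in $\mathcal{V}^{\otimes}$) with Proposition~\ref{propn:OpdColimsExist} (which provides such operadic colimits under compatibility assumptions on the monoidal \icat{}). So the entire task reduces to verifying that the hypothesis of the corollary is satisfied in this setting.

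Concretely, fix an $\mathcal{A}$-algebra $A \in \Alg_{\mathcal{A}}(\mathcal{V})$ and an object $b \in \mathcal{B}_{[1]}$. I need to extend the diagram
\[ p_{b} \colon (\mathcal{A}_{\txt{act}})_{/b} \to \mathcal{A}_{\txt{act}} \xto{A} \mathcal{V}^{\otimes}_{\txt{act}} \]
to an operadic colimit diagram lying over $(\mathcal{A}_{\txt{act}})_{/b}^{\triangleright} \to \mathcal{B}_{\txt{act}} \to \simp^{\op}_{\txt{act}}$. Note the map on cones sends $\infty$ to $b$, which lies over $[1]$, so the extension of $p_{b}$ should send the cone point to an object of $\mathcal{V} = \mathcal{V}^{\otimes}_{[1]}$. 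This is exactly the scenario covered by the ``in particular'' clause of Proposition~\ref{propn:OpdColimsExist}: it produces an operadic colimit diagram extending $p_{b}$ that sends $\infty$ to an object of $\mathcal{V}$, provided $\mathcal{V}^{\otimes}$ is compatible with $(\mathcal{A}_{\txt{act}})_{/b}$-indexed colimits.

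The remaining step, and the one requiring the smallness hypothesis in the statement, is to check that $(\mathcal{A}_{\txt{act}})_{/b}$ is $\kappa$-small as a simplicial set (up to equivalence). Since $\mathcal{A}$ is essentially $\kappa$-small, so is $\mathcal{A}_{\txt{act}}$ (being a subcategory of $\mathcal{A}$), and since $b$ is a single object of $\mathcal{B}$ with $\mathcal{B}$ essentially $\kappa$-small, the overcategory $(\mathcal{B}_{\txt{act}})_{/b}$ is essentially $\kappa$-small as well (the mapping spaces $\Map_{\mathcal{B}}(x,b)$ are $\kappa$-small since $\mathcal{B}$ is, and $\kappa$ uncountable ensures that the Grothendieck construction assembling these over the $\kappa$-many objects remains $\kappa$-small). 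The fiber product $\mathcal{A} \times_{\mathcal{B}} (\mathcal{B}_{\txt{act}})_{/b}$ is then also $\kappa$-small. Hence $\mathcal{V}^{\otimes}$ being compatible with $\kappa$-small colimits supplies the required colimit, and Proposition~\ref{propn:OpdColimsExist} yields the operadic colimit diagram we need.

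Applying the corollary then produces the desired left adjoint $f_{!} \colon \Alg_{\mathcal{A}}(\mathcal{V}) \to \Alg_{\mathcal{B}}(\mathcal{V})$. The only nontrivial bookkeeping, and what I would expect to be the main obstacle to making this fully rigorous, is the smallness estimate for $(\mathcal{A}_{\txt{act}})_{/b}$; the rest is a transparent composition of the two previously established results.
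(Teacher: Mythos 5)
Your proposal is correct and is exactly the paper's argument: the theorem is stated as an immediate consequence of the preceding corollary combined with Proposition~\ref{propn:OpdColimsExist}, with the essential $\kappa$-smallness of $\mathcal{A}$ and $\mathcal{B}$ ensuring that $(\mathcal{A}_{\txt{act}})_{/b} \simeq \mathcal{A} \times_{\mathcal{B}} (\mathcal{B}_{\txt{act}})_{/b}$ is essentially $\kappa$-small, so that compatibility with $\kappa$-small colimits applies. The paper leaves the smallness bookkeeping implicit, so your spelling it out is the only (harmless) difference.
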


\begin{lemma}\label{lem:StrMonPrFree}
  Suppose $\mathcal{V}$ and $\mathcal{W}$ are
  monoidal \icats{} which are compatible with small colimits, and let $F \colon
  \mathcal{V}^{\otimes} \to \mathcal{W}^{\otimes}$ be a
  monoidal functor such that $F_{[1]} \colon \mathcal{V} \to
  \mathcal{W}$ preserves colimits. Then for every \gnsiopd{}
  $\mathcal{M}$ the induced functor
  \[F_{*} \colon \Alg_{\mathcal{M}}(\mathcal{V}) \to
  \Alg_{\mathcal{M}}(\mathcal{W})\] preserves free algebras,
  i.e. for all maps of \gnsiopds{} $f \colon \mathcal{N} \to
  \mathcal{M}$ that are equivalences over $[0]$ and such that
  $\mathcal{M}_{[0]}$ and $\mathcal{N}_{[0]}$ are Kan complexes, the
  natural map $f_{!}F_{*} \to F_{*}f_{!}$ (adjoint to $F_{*} \to
  F_{*}f^{*}f_{!} \simeq f^{*}F_{*}f_{!}$) is an equivalence.
\end{lemma}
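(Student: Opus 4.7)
The plan is to prove this by unfolding the characterization of free algebras as those given by operadic left Kan extensions (equivalently, fibrewise operadic colimits), and then invoking Corollary~\ref{cor:StrMonPrOCD}, which says that monoidal functors whose underlying functor preserves colimits also preserve operadic colimit diagrams.

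More precisely, I would proceed as follows. First, fix an $\mathcal{N}$-algebra $A \in \Alg_{\mathcal{N}}(\mathcal{V})$ and let $B = f_{!}A \in \Alg_{\mathcal{M}}(\mathcal{V})$, with unit morphism $\phi \colon A \to f^{*}B$. By the existence theorem for operadic left Kan extensions (Theorem~\ref{thm:OpdcKanExt}), together with the characterization of free algebras preceding Theorem~\ref{thm:FreeFtrExists}, the map $\phi$ exhibits $B$ as the free $\mathcal{M}$-algebra generated by $A$, which means that for every $m \in \mathcal{M}_{[1]}$ the induced diagram
\[
\overline{\alpha}_{m} \colon \bigl((\mathcal{N}_{\txt{act}})_{/m}\bigr)^{\triangleright} \to \mathcal{V}^{\otimes}
\]
extending $A|_{(\mathcal{N}_{\txt{act}})_{/m}}$ and sending the cone point to $B(m)$ is an operadic colimit diagram. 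Composing the unit $\phi$ with $F$ gives a morphism $F_{*}A \to F_{*}f^{*}B \simeq f^{*}F_{*}B$, which adjoints to the canonical comparison map $\psi \colon f_{!}F_{*}A \to F_{*}f_{!}A$. We must show that $\psi$ is an equivalence.

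The key step is to check, using the same characterization, that $F_{*}B$ is already the free $\mathcal{M}$-algebra generated by $F_{*}A$, so that the canonical map is an equivalence. For each $m \in \mathcal{M}_{[1]}$, the diagram for $F_{*}B$ is simply the composition of $\overline{\alpha}_{m}$ with $F \colon \mathcal{V}^{\otimes} \to \mathcal{W}^{\otimes}$. By hypothesis $F$ is a monoidal functor and $F_{[1]}$ preserves colimits, so Corollary~\ref{cor:StrMonPrOCD} applies and tells us that this composite remains an operadic colimit diagram in $\mathcal{W}^{\otimes}$. Hence $F_{*}B$ satisfies the defining condition to be the free $\mathcal{M}$-algebra on $F_{*}A$, which by uniqueness of left adjoints gives $\psi \colon f_{!}F_{*}A \isoto F_{*}f_{!}A$.

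The main obstacle here is purely bookkeeping: one must verify that the slice diagrams $(\mathcal{N}_{\txt{act}})_{/m} \to \mathcal{V}^{\otimes}_{\txt{act}}$ used to test the free-algebra condition for $B$ in $\mathcal{V}$ are literally transported by postcomposition with $F$ to the corresponding test diagrams for $F_{*}B$ in $\mathcal{W}$, and that the extension to $\infty$ lifts through $F$ to the same cone point $F(B(m))$. This is immediate from the naturality of Remark~\ref{rmk:FreeIsKan}'s description via correspondences together with the fact that $F$ is a map over $\simp^{\op}$; there is no subtle colimit-preservation issue beyond what is already packaged into Corollary~\ref{cor:StrMonPrOCD}.
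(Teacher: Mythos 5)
Your proposal is correct and is exactly the paper's argument: the paper's proof consists of the single sentence ``This follows immediately from Corollary~\ref{cor:StrMonPrOCD},'' and your write-up simply unpacks why that corollary suffices, namely that the free-algebra condition is tested by operadic colimit diagrams over the slices $(\mathcal{N}_{\txt{act}})_{/m}$ and postcomposition with $F$ preserves these. No discrepancy to report.
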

\begin{proof}
  This follows immediately from Corollary \ref{cor:StrMonPrOCD}.
\end{proof}

We can also give a more explicit description of the left adjoint
$\tau_{\mathcal{M},!}$, where $\mathcal{M}$ is a \gnsiopd{} such that
$\mathcal{M}_{[0]}$ is a Kan complex. Recall that by
Proposition~\ref{propn:TrivAlgEq} if $\mathcal{O}$ is a \nsiopd{} then
we have $\Alg_{\mathcal{M}_{\txt{triv}}}(\mathcal{O}) \simeq
\Fun(\mathcal{M}_{[1]}, \mathcal{O}_{[1]})$. We can therefore regard
$\tau_{\mathcal{M},!}$ as a functor
\[ \Fun(\mathcal{M}_{[1]}, \mathcal{O}_{[1]}) \to
\Alg_{\mathcal{M}}(\mathcal{O}).\]

\begin{defn}
  For $[n] \in \simp^{\op}$ and $X \in
  \mathcal{M}_{[1]}$, let
  $\mathcal{P}_{X,n}^{\mathcal{M}}$ be the full subcategory
  of $\mathcal{M}_{\txt{triv}}
  \times_{\mathcal{M}} \mathcal{M}_{/X}$ of
  morphisms $Y \to X$ over the active map $[n] \to [1]$.
\end{defn}

Suppose $\mathcal{V}$ is a monoidal \icat{} and $F \colon
\mathcal{M}_{[1]} \to \mathcal{V}$ is a functor. Let
$\overline{F}$ be the associated $\mathcal{M}_{\txt{triv}}$-algebra in
$\mathcal{V}$. We have a canonical map $h \colon
\mathcal{P}^{\mathcal{M}}_{X,n} \times \Delta^{1} \to \mathcal{M}$, a
natural transformation from $\mathcal{P}^{\mathcal{M}}_{X,n} \to
\mathcal{M}_{\txt{triv}} \hookrightarrow \mathcal{M}$ to the constant
functor at $X$. Since $\mathcal{V}^{\otimes}\to \simp^{\op}$ is
coCartesian, from $\overline{F} \circ h$ we get a coCartesian natural
transformation $\overline{h}$ from a functor $g \colon
\mathcal{P}^{\mathcal{M}}_{X,n} \to \mathcal{V}$ to
the constant functor at $F(X)$. We let
$\mathrm{P}^{n}_{\mathcal{M},X}(F)$ denote a colimit of $g$, if it
exists.

\begin{propn}\label{propn:FreeAlgMonad}
  Suppose $\mathcal{V}$ is a monoidal \icat{} compatible
  with $\kappa$-small colimits, and $\mathcal{M}$ is a $\kappa$-small
  \gnsiopd{} such that $\mathcal{M}_{[0]}$ is a Kan complex. Suppose moreover that $A$ is an
  $\mathcal{M}$-algebra in $\mathcal{V}$ and $F \colon
  \mathcal{M}_{[1]} \to \mathcal{V}$ is a
  functor. Then a map $F \to (\tau_{\mathcal{M}})^{*}A$ is adjoint to
  an equivalence $\tau_{\mathcal{M},!}F \isoto A$ if and only if for
  every $X \in \mathcal{M}_{[1]}$ the maps
  $\txt{P}^{n}_{\mathcal{M},X}(F) \to A(X)$ exhibit $A(X)$ as a
  coproduct
  \[  \coprod_{[n] \in \simp^{\op}}
  \txt{P}^{n}_{\mathcal{M},X}(F) \to A(X)\]
\end{propn}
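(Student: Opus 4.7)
The plan is as follows. By the construction of $\tau_{\mathcal{M},!}$ via operadic left Kan extension in the proof of Theorem~\ref{thm:FreeFtrExists}, the adjoint morphism $\tau_{\mathcal{M},!}F \to A$ is an equivalence if and only if the following holds: factoring the correspondence associated to the map $F \to \tau_{\mathcal{M}}^{*}A$ as in Remark~\ref{rmk:FreeIsKan}, the resulting functor from the factored correspondence to $\mathcal{V}^{\otimes}$ is an operadic left Kan extension of $\overline{F}$. By the definition of operadic left Kan extension, this is the condition that for every $X \in \mathcal{M}_{[1]}$, the diagram
\[ \left((\mathcal{M}_{\triv,\txt{act}})_{/X}\right)^{\triangleright} \to \mathcal{V}^{\otimes} \]
constructed from $\overline{F}$ together with the cone morphism to $A(X)$ is an operadic colimit diagram lying over the canonical diagram in $\simp^{\op}_{\txt{act}}$.

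The next step is to recognize that the indexing $\infty$-category splits as a disjoint union. An object of $(\mathcal{M}_{\triv,\txt{act}})_{/X}$ is an active morphism $Y \to X$ in $\mathcal{M}$ with $Y \in \mathcal{M}_{\triv}$; in particular $Y$ lies over some $[n] \in \simp^{\op}$ and the morphism lies over the unique active map $[n] \to [1]$. A morphism in this overcategory lies over an inert map in $\simp^{\op}$, but compatibility with the two active morphisms to $[1]$ forces this inert map to be the identity. Hence there is an equivalence $(\mathcal{M}_{\triv,\txt{act}})_{/X} \simeq \coprod_{[n] \in \simp^{\op}} \mathcal{P}^{\mathcal{M}}_{X,n}$ identifying this with a disjoint union of the categories defined just before the statement.

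The third step is to reduce the operadic colimit condition to an ordinary colimit condition in $\mathcal{V}$. Choosing coCartesian lifts in $\mathcal{V}^{\otimes}$ of the active maps $[n] \to [1]$ in $\simp^{\op}$, together with the identity at the cone point $X$, yields a map
\[ h \colon \Delta^{1} \times \left((\mathcal{M}_{\triv,\txt{act}})_{/X}\right)^{\triangleright} \to \mathcal{V}^{\otimes} \]
whose restriction to $\{0\}$ is the original cone, whose restriction to each $\Delta^{1} \times \{x\}$ is $q$-coCartesian, and whose restriction to $\Delta^{1}\times\{\infty\}$ is the identity of $[1]$. By Proposition~\ref{propn:ocdnattr}, the original cone is an operadic colimit diagram if and only if $h|_{\{1\}}$ is. But $h|_{\{1\}}$ lies entirely in $\mathcal{V}^{\otimes}_{[1]} = \mathcal{V}$, and restricted to each $\mathcal{P}^{\mathcal{M}}_{X,n}$ coincides by construction with the functor $g$ from the definition of $\txt{P}^{n}_{\mathcal{M},X}(F)$. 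By Corollary~\ref{cor:ocdinmon} (applicable since $\mathcal{V}$ is compatible with small colimits), operadic colimit diagrams in $\mathcal{V}^{\otimes}_{[1]}$ are the same as colimit diagrams in $\mathcal{V}$, and a colimit over a disjoint union is the coproduct of the colimits in each piece. The condition therefore becomes exactly that the induced map $\coprod_{[n]} \txt{P}^{n}_{\mathcal{M},X}(F) \to A(X)$ is an equivalence, as desired.

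The most delicate step is the last: producing the $\Delta^{1}$-family $h$ so that it uniformly satisfies the hypotheses of Proposition~\ref{propn:ocdnattr} (in particular at the cone point), and verifying that the resulting slice $h|_{\{1\}}$ restricted to each piece $\mathcal{P}^{\mathcal{M}}_{X,n}$ agrees on the nose with the functor $g$ from the definition of $\txt{P}^{n}_{\mathcal{M},X}(F)$, with structural maps to $A(X)$ coming from the given morphism $F \to \tau_{\mathcal{M}}^{*}A$.
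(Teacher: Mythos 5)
Your argument is correct and is essentially the intended one: the paper's own ``proof'' is just the citation to \cite[Proposition 3.1.3.13]{HA}, and your three steps --- translating the adjoint-equivalence condition into the operadic-colimit condition from the definition of free algebras, decomposing $\mathcal{M}_{\triv}\times_{\mathcal{M}}(\mathcal{M}_{\txt{act}})_{/X}$ as $\coprod_{[n]}\mathcal{P}^{\mathcal{M}}_{X,n}$ using that a morphism lying over a map of $\simp^{\op}$ that is both inert and active must lie over an identity, and then pushing forward coCartesianly along the active maps to $[1]$ to reduce to ordinary colimits in $\mathcal{V}$ via Proposition~\ref{propn:ocdnattr} and Corollary~\ref{cor:ocdinmon} (equivalently Proposition~\ref{propn:OpdColimsExist}) --- are exactly the content of Lurie's argument transported to the non-symmetric setting. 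The only caveats are cosmetic: compatibility with $\kappa$-small (rather than all small) colimits suffices since $\mathcal{M}$ is $\kappa$-small, and the functorial choice of coCartesian lifts you flag as delicate is the same standard construction already used in the proofs of the cited propositions.
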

\begin{proof}
  As \cite[Proposition 3.1.3.13]{HA}.
\end{proof}

\subsection{Colimits of Algebras in Monoidal $\infty$-Categories}
In this subsection we show that colimits exist in the \icats{}
$\Alg_{\mathcal{O}}(\mathcal{V})$ for all
small \nsiopds{} $\mathcal{O}$ when $\mathcal{V}$ is a
monoidal \icat{} compatible with small colimits. We first consider the
case of sifted colimits:

\begin{lemma}
  Suppose $K$ is a sifted simplicial set and
  $\mathcal{V}$ is a
  monoidal \icat{} that is compatible with $K$-indexed
  colimits. Then for every $\phi \colon [n] \to [m]$ in
  $\simp^{\op}$ the associated functor $\phi_{!} \colon
  \mathcal{V}_{[n]}^{\otimes} \to \mathcal{V}^{\otimes}_{[m]}$ preserves
  $K$-indexed colimits.
\end{lemma}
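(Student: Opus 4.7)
The plan is to reduce the claim to separate handling of active and inert morphisms, using the (inert, active) factorization system on $\simp^{\op}$ recorded right after Definition~\ref{defn:activeinert}. Any $\phi \colon [n] \to [m]$ in $\simp^{\op}$ factors as $\phi = \alpha \circ \iota$ with $\iota \colon [n] \to [k]$ inert and $\alpha \colon [k] \to [m]$ active, so it suffices to treat the two cases separately.

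For the inert piece, under the Segal equivalences $\mathcal{V}^{\otimes}_{[n]} \simeq \mathcal{V}^{\times n}$ and $\mathcal{V}^{\otimes}_{[k]} \simeq \mathcal{V}^{\times k}$, one checks that $\iota_{!}$ is the projection onto the block of consecutive factors selected by the corresponding subinterval inclusion $[k] \hookrightarrow [n]$ in $\simp$ (indeed, each Segal projection $\rho_{i}$ on the target composes with $\iota$ to give a Segal projection $\rho_{j+i}$ on the source). Since colimits in a product are computed componentwise, this projection preserves all small colimits, in particular $K$-indexed ones (which exist in $\mathcal{V}^{\times n}$ and $\mathcal{V}^{\times k}$ by hypothesis (1) of compatibility).

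For the active piece, hypothesis (2) of compatibility gives exactly that $\alpha_{!} \colon \mathcal{V}^{\times k} \to \mathcal{V}^{\times m}$ preserves $K$-indexed colimits separately in each variable; to finish I would upgrade this to preservation of $K$-indexed colimits on $\mathcal{V}^{\times k}$. This is where siftedness enters: because $K$ is sifted the diagonal $K \to K^{\times r}$ is cofinal for every $r \geq 1$, so for any diagram $F = (F_{1}, \ldots, F_{k}) \colon K \to \mathcal{V}^{\times k}$,
\[
\colim_{t \in K} \alpha_{!}(F_{1}(t), \ldots, F_{k}(t)) \simeq \colim_{(t_{1},\ldots,t_{k}) \in K^{\times k}} \alpha_{!}(F_{1}(t_{1}), \ldots, F_{k}(t_{k})),
\]
and the right-hand side equals $\alpha_{!}(\colim F_{1}, \ldots, \colim F_{k})$ by iterating preservation of $K$-indexed colimits in each variable (and checking it componentwise in the target $\mathcal{V}^{\times m}$, where colimits are again computed componentwise). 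Composing, $\phi_{!} = \alpha_{!} \circ \iota_{!}$ preserves $K$-indexed colimits, as required. I do not anticipate any real obstacle here: the only nontrivial ingredient is the standard ``separately in each variable vs.\ jointly'' principle for sifted colimits, which is an immediate consequence of the defining cofinality property of sifted diagrams.
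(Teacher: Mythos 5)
Your proof is correct and is essentially the argument the paper relies on: the paper simply cites \cite[Lemma 3.2.3.7]{HA}, whose proof proceeds exactly as you do, factoring $\phi$ through the inert--active factorization system, handling the inert piece via the Segal projections, and upgrading ``preserves $K$-indexed colimits separately in each variable'' to joint preservation for the active piece by cofinality of the diagonal $K \to K^{\times k}$ for sifted $K$. No gaps.
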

\begin{proof}
  As \cite[Lemma 3.2.3.7]{HA}.
\end{proof}

\begin{lemma}\label{lem:ColimCoCartEdge}
  Suppose $p \colon X \to S$ is a coCartesian fibration, and let $\overline{r} \colon K^{\triangleright}
  \to \Fun(\Delta^{1}, X)$ be a colimit diagram such that for every $i \in K$
  the edge $\overline{r}(i,0) \to \overline{r}(i,1)$ is coCartesian. Then the
  edge $\overline{r}(\infty,0) \to \overline{r}(\infty,1)$ is also coCartesian.
\end{lemma}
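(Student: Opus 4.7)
The plan is to use the mapping-space characterization of $p$-coCartesian edges from \cite[Proposition 2.4.4.3]{HTT}. Writing $a_{i}:=\overline{r}(i,0)$, $b_{i}:=\overline{r}(i,1)$, and $f_{i}$ for the edge between them (for $i$ ranging over $K^{\triangleright}$), the goal is to show that for every $c\in X$ the commutative square with top-left $\Map_{X}(b_{\infty},c)$, top-right $\Map_{X}(a_{\infty},c)$, bottom-left $\Map_{S}(p(b_{\infty}),p(c))$, and bottom-right $\Map_{S}(p(a_{\infty}),p(c))$ is a homotopy pullback of spaces. As a preliminary step, the hypothesis that $\overline{r}$ is a colimit diagram in $\Fun(\Delta^{1},X)$, together with \cite[Corollary 5.1.2.3]{HTT} (colimits in functor $\infty$-categories are computed pointwise when they exist), gives canonical equivalences $a_{\infty}\simeq\colim_{i}a_{i}$ and $b_{\infty}\simeq\colim_{i}b_{i}$ in $X$, hence $\Map_{X}(a_{\infty},c)\simeq\lim_{i}\Map_{X}(a_{i},c)$ and similarly for $b_{\infty}$.

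Next I would use that for each $i\in K$ the $p$-coCartesianness of $f_{i}$ gives (again by Proposition 2.4.4.3) a homotopy pullback square at level $i$. Taking the limit of these squares over $i\in K$, and using that limits commute with finite limits in $\mathcal{S}$, yields a homotopy pullback square whose top row coincides (via the colimit identifications above) with the top row of the target square. I am thus reduced to showing that the induced map from the target square to this limit-over-$K$ square identifies the pullbacks, which can be checked fiberwise over $\Map_{S}(p(a_{\infty}),p(c))$: for each $\psi$ in this space, the fibers of the top horizontal arrows of the two squares over $\psi$ should agree.

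The main obstacle is that the canonical map $\Map_{S}(p(b_{\infty}),p(c))\to\lim_{i}\Map_{S}(p(b_{i}),p(c))$ (and similarly for $a$) is typically \emph{not} an equivalence, since $p$ need not preserve the colimits $a_{\infty}$ and $b_{\infty}$ in $S$. The key observation that saves the argument is that for a fixed $\psi\in\Map_{S}(p(a_{\infty}),p(c))$ the restrictions $\psi_{i}:=\psi\circ p(a_{i}\to a_{\infty})$ form a coherently compatible family, and applying the $p$-coCartesianness of each $f_{i}$ to these $\psi_{i}$ gives fiber equivalences $\Map_{X}^{\psi_{i}}(a_{i},c)\simeq\Map_{X}^{\phi_{i}}(b_{i},c)$ with $\phi_{i}:=\psi_{i}$ considered upstairs via the appropriate cone. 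These equivalences assemble over $i$ into the desired fiber identification at $\infty$, because both sides of the target square's fiber equivalence are computed as limits of the same compatible families of coCartesian-witness fibers, so the extra information carried by $\Map_{S}(p(b_{\infty}),p(c))$ beyond its image in $\lim_{i}\Map_{S}(p(b_{i}),p(c))$ drops out. The desired pullback square then follows by pasting.
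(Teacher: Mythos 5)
Your opening moves are the same as the paper's: identify $\overline{r}(\infty,0)$ and $\overline{r}(\infty,1)$ as pointwise colimits, write down the mapping-space square of \cite[Proposition 2.4.4.3]{HTT}, and try to exhibit it as the limit over $K$ of the Cartesian squares attached to the edges $\overline{r}(i)$. You also correctly isolate the point that the paper's two-line proof passes over: the limit of those squares has bottom row $\lim_{i}\Map_{S}(p(b_{i}),p(c))\to\lim_{i}\Map_{S}(p(a_{i}),p(c))$, and the comparison maps $\Map_{S}(p(b_{\infty}),p(c))\to\lim_{i}\Map_{S}(p(b_{i}),p(c))$ need not be equivalences, since $p$ has no reason to preserve the colimits $a_{\infty}$ and $b_{\infty}$. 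Up to there the argument is sound, and sharper than the paper's own wording.

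The last step, however, is a genuine gap. The claim that ``both sides of the target square's fiber equivalence are computed as limits of the same compatible families of coCartesian-witness fibers'' is false: for $\psi\in\Map_{S}(p(b_{\infty}),p(c))$ with restrictions $\psi_{i}$, the limit $\lim_{i}\Map^{\psi_{i}}_{X}(b_{i},c)$ is not $\Map^{\psi}_{X}(b_{\infty},c)$ but rather $\Map_{X}(b_{\infty},c)\times_{\Map_{S}(p(b_{\infty}),p(c))}F_{\psi}$, where $F_{\psi}$ is the fiber of $\Map_{S}(p(b_{\infty}),p(c))\to\lim_{i}\Map_{S}(p(b_{i}),p(c))$ over $(\psi_{i})_{i}$; the fiber you actually need is only the part of this lying over the point $\psi\in F_{\psi}$. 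What the limit of the coCartesian squares gives you is an equivalence of these larger spaces lying over the (generally non-invertible) map $F_{\psi}\to F_{\psi\circ p(f)}$, and an equivalence of total spaces over a non-equivalence of bases does not restrict to an equivalence of fibers. So ``the extra information drops out'' is precisely the assertion that carries the content of the lemma, and no argument for it is given; I do not see how to produce one inside this framework. The clean repair is to test coCartesianness against morphisms of $X$ instead of morphisms of $S$: by the factorization system of \cite[Example 5.2.8.15]{HTT}, an edge $f\colon a\to b$ is $p$-coCartesian if and only if for every edge $g\colon c\to d$ of $X$ with $p(g)$ an equivalence, the square with corners $\Map_{X}(b,c)$, $\Map_{X}(a,c)$, $\Map_{X}(b,d)$, $\Map_{X}(a,d)$ is Cartesian. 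That criterion involves only mapping spaces out of $a$ and $b$ in $X$, all of which convert the pointwise colimits into limits, so the square for $\overline{r}(\infty)$ genuinely is the limit over $K$ of the squares for the $\overline{r}(i)$, and ``limits commute'' then finishes the proof.
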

\begin{proof}
  Since colimits in functor categories are pointwise, we must show
  that for all $x \in X$ the diagram
  \nolabelcsquare{\Map_{X}(\colim_{i} \overline{r}(i,1),
    x)}{\Map_{X}(\colim_{i} \overline{r}(i,0), x)}{\Map_{S}(\colim_{i}
    p\overline{r}(i,1), p(x))}{\Map_{S}(\colim_{i} p \overline{r}(i,0), p(x))}
  is Cartesian, which is clear since limits commute.
\end{proof}

To describe sifted colimits of algebras, we need the following result,
which is due to Jacob Lurie --- we thank him for explaining the proof
to us.
\begin{thm}\label{thm:MysteryResult}
  Let $K$ be a weakly contractible simplicial set. Suppose $p \colon X
  \to S$ is a coCartesian fibration such that for all $s \in S$ the
  fibre $X_{s}$ admits $K$-indexed colimits, and for all edges $f
  \colon s \to t$ in $S$ the functor $f_{!} \colon X_{s} \to X_{t}$
  preserves $K$-indexed colimits. Then for any map $g \colon T \to S$,
  \begin{enumerate}[(i)]
  \item the \icat{} $\Fun_{S}(T, X)$ admits $K$-indexed colimits,
  \item a map $K^{\triangleright} \to \Fun_{S}(T, X)$ is a colimit
    diagram \IFF{} for all $t \in T$ the composite
    \[ K^{\triangleright} \to \Fun_{S}(T, X) \to X_{g(t)} \]
    is a colimit diagram,
  \item if $E$ is a set of edges of $T$, the full subcategory of
    $\Fun_{S}(T, X)$ spanned by functors that take the edges in $E$
    to coCartesian edges of $X$ is closed under $K$-indexed colimits in
    $\Fun_{S}(T, X)$.
  \end{enumerate}
\end{thm}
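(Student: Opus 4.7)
The plan is to prove (i), (ii), and (iii) by giving a pointwise description of $K$-indexed colimits in $\Fun_S(T,X)$, reducing to an existence theorem for $p$-colimit diagrams from \cite[\S 4.3.1]{HTT}. First I would reduce to the case $g = \id_T$ and $T = S$, using the identification $\Fun_S(T, X) \simeq \Fun_T(T, g^{*}X)$ and the fact that the pullback $g^{*}X \to T$ is again a coCartesian fibration with the same fibres, satisfying the same hypothesis that coCartesian pushforwards preserve $K$-colimits. So it suffices to prove the result for the \icat{} of sections $\Fun_S(S, X)$ of the given coCartesian fibration $p$.

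The heart of the proof is the following pointwise construction, which establishes (i) and the nontrivial direction of (ii). Given a diagram $H \colon K \to \Fun_S(S, X)$, adjoint to $\tilde{H} \colon K \times S \to X$ over $S$, I would extend it to $\widetilde{\bar{H}} \colon K^{\triangleright} \times S \to X$ over $S$, where the map $K^{\triangleright} \times S \to S$ is chosen (using weak contractibility of $K$) to factor through the projection $K^{\triangleright} \times S \to S$, so that the cone point $\infty$ is mapped to $\id_S$. For each $s \in S$, define $\widetilde{\bar{H}}|_{K^{\triangleright}\times\{s\}}$ to be a colimit diagram in the fibre $X_s$ extending $\tilde{H}|_{K \times \{s\}}$, which exists by the fibrewise hypothesis. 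The coherence over edges $e \colon s \to s'$ of $S$ assembles because $e_{!} \colon X_s \to X_{s'}$ preserves $K$-indexed colimits, so the induced natural transformation $e_{!}\tilde{H}|_{K \times \{s\}} \to \tilde{H}|_{K \times \{s'\}}$ passes to colimits and produces a canonical edge over $e$ between the pointwise colimits. Organized as a $p$-colimit diagram in the sense of \cite[\S 4.3.1]{HTT}, the resulting $\widetilde{\bar{H}}$ satisfies the universal property of a colimit in $\Fun_S(S, X)$: the criterion \cite[Proposition 4.3.1.10]{HTT} identifies $p$-colimits, when restricted to a single fibre, with fibrewise colimits, and the pointwise argument assembles these into the desired colimit diagram.

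The converse in (ii) follows formally: the evaluation functor $\Fun_S(T, X) \to X_{g(t)}$ commutes with $K$-colimits since these are both computed pointwise, and conversely a diagram that is a pointwise colimit must be a colimit by the uniqueness of the object produced by the construction above. For (iii), suppose $\bar{H} \colon K^{\triangleright} \to \Fun_S(T, X)$ is a colimit diagram whose restriction to $K$ sends each $e \in E$ to a natural transformation with coCartesian components; fixing $e \colon t \to t'$ in $E$, the diagrams $\bar{H}(-)(t) \colon K^{\triangleright} \to X_{g(t)}$ and $\bar{H}(-)(t') \colon K^{\triangleright} \to X_{g(t')}$ are colimit diagrams by (ii), and naturality gives a morphism between them in $\Fun(K^{\triangleright}, X)$ whose values on $K$ are coCartesian edges; Lemma \ref{lem:ColimCoCartEdge} then ensures that the edge at the cone point is coCartesian as well. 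The main obstacle is the construction in the middle paragraph: assembling fibrewise colimits into a functor on all of $S$ (not merely on objects) requires carefully handling the coCartesian transport data, and this is precisely where both weak contractibility of $K$ (to choose the extension $K^{\triangleright} \times S \to S$) and preservation of $K$-colimits by coCartesian pushforward come in.
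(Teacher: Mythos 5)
Your overall picture --- colimits in $\Fun_S(T,X)$ should exist and be detected by evaluation at each $t\in T$, with the relative-colimit machinery of \cite[\S 4.3.1]{HTT}, weak contractibility of $K$, and Lemma~\ref{lem:ColimCoCartEdge} as the inputs --- matches the paper's, and your treatment of part (iii) and of the ``only if'' direction of (ii) is essentially the argument given there. The gap is in the central existence step. You propose to build the colimit cone by hand: take the fibrewise colimit of $\tilde{H}|_{K\times\{s\}}$ in each $X_s$ and ``assemble'' these into a map $K^{\triangleright}\times S\to X$ over $S$ using the comparison edges induced by the colimit-preserving pushforwards $e_!$. In the quasicategorical setting this does not construct the required diagram: specifying values on objects and canonical edges, even compatibly, is not a functor, and you yourself flag the coherent assembly of the transport data as ``the main obstacle'' without resolving it. Your appeal to \cite[Proposition 4.3.1.10]{HTT} does not close it: that proposition compares a diagram $K^{\triangleright}\to X$ lying in a single fibre of $p$ with its fibrewise colimit, whereas what you need is a statement about diagrams $K^{\triangleright}\to\Fun_S(T,X)$, i.e.\ about maps $K^{\triangleright}\times T\to X$ over $S$.

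The paper closes exactly this gap by changing which fibration the machinery is applied to. One observes that $\Fun_S(T,X)$ is the fibre over $g$ of the coCartesian fibration $p_*\colon\Fun(T,X)\to\Fun(T,S)$ (\cite[Proposition 3.1.2.1]{HTT}); the existence theorem \cite[Corollary 4.3.1.11]{HTT} applied to $p_*$ then performs the coherent assembly for you, giving (i). Colimits in $\Fun(T,X)$ are computed pointwise by \cite[Corollary 5.1.2.3]{HTT}, and since $K$ is weakly contractible the constant diagram in $\Fun(T,S)$ at $g$ is a colimit diagram, so $p_*$-colimit diagrams lying in the fibre are the same as honest colimit diagrams in $\Fun(T,X)$; a second application of \cite[Proposition 4.3.1.10]{HTT}, now to $p\colon X\to S$ and again using weak contractibility, converts ``colimit in $X$'' into ``colimit in $X_{g(t)}$'', which is (ii). If you want to keep your pointwise construction you must either route it through this functor-category fibration, or set it up as a $p$-left Kan extension along $K\times T\hookrightarrow K^{\triangleright}\times T$ and verify the existence and cofinality hypotheses of \cite[\S 4.3.2]{HTT}; as written the argument is incomplete. (A minor point: weak contractibility of $K$ is not needed to ``choose the extension $K^{\triangleright}\times S\to S$'' --- that map is just the projection; it is needed to know that constant $K^{\triangleright}$-diagrams in $S$ and in $\Fun(T,S)$ are colimit diagrams.)
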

\begin{proof}
  The \icat{} $\Fun_{S}(T, X)$ is a fibre of the functor $p_{*} \colon
  \Fun(T, X) \to \Fun(T, S)$ induced by composition with $p$. The
  functor $p_{*}$ is a coCartesian fibration by \cite[Proposition
  3.1.2.1]{HTT}. Since the functors $f_{!}$ preserve $K$-indexed
  colimits, by \cite[Proposition 4.3.1.10]{HTT} a diagram $\overline{q}
  \colon K^{\triangleright} \to \Fun_{S}(T, X)$ is a colimit diagram
  \IFF{} the composite $\overline{q}' \colon K^{\triangleright} \to
  \Fun_{S}(T, X) \to \Fun(T, X)$ is a $p_{*}$-colimit diagram. By
  \cite[Corollary 4.3.1.11]{HTT}, $K$-indexed $p_{*}$-colimits exist
  in $\Fun(T, X)$, which proves (i).

  Moreover, a diagram in $\Fun(T,X)$ is a colimit diagram \IFF{} it is
  a $p_{*}$-colimit diagram and its image in $\Fun(T, S)$ is a colimit
  diagram. Since $\overline{q}'$ lands in one of the fibres of $p_{*}$, the
  projection to $\Fun(T, S)$ is constant, which means it is a colimit
  as $K$ is weakly contractible. Thus $\overline{q}'$ is a $p_{*}$-colimit
  diagram \IFF{} it is a colimit diagram in $\Fun(T,X)$. By
  \cite[Corollary 5.1.2.3]{HTT} this means that $\overline{q}'$ is a
  colimit diagram \IFF{} for all $t \in T$ the induced maps
  $K^{\triangleright} \to X$ are colimit diagrams. A diagram in $X$ is
  a colimit \IFF{} it is a $p$-colimit and the projection to $S$ is a
  colimit. Since $K$ is weakly contractible, applying
  \cite[Proposition 4.3.1.10]{HTT} we see that this is true \IFF{} the
  induced map $K^{\triangleright} \to X_{g(t)}$ is a colimit diagram
  in $X_{g(t)}$. This proves (ii).

  Suppose $e \colon t \to t'$ is an edge of $T$ and $q \colon K \to
  \Fun_{S}(T, X)$ is a diagram such that for all vertices $k \in K$
  the functor $q(k) \colon T \to X$ takes $e$ to a $p$-coCartesian
  edge of $X$. Let $\overline{q} \colon K^{\triangleright} \to \Fun_{S}(T,
  X)$ be a colimit diagram extending $q$. To prove (iii) we must show
  that the functor $\overline{q}(\infty)$ also takes $e$ to a coCartesian
  edge of $X$. From our description of colimits in $\Fun_{S}(T, X)$ it
  follows that this is equivalent to showing that coCartesian edges of
  $X$ are closed under colimits, which is true by
  Lemma~\ref{lem:ColimCoCartEdge}.
\end{proof}

\begin{cor}\label{cor:AlgSiftedColim}
  Suppose $K$ is a sifted simplicial set and
  $\mathcal{V}$ is a monoidal \icat{} that
  is compatible with $K$-indexed colimits. Then for any generalized
  \nsiopd{} $p \colon \mathcal{M} \to \simp^{\op}$, we have:
  \begin{enumerate}[(i)]
  \item The \icat{}
    $\Fun_{\simp^{\op}}(\mathcal{M},
    \mathcal{V}^{\otimes})$ admits $K$-indexed colimits.
  \item A map $K^{\triangleright} \to
    \Fun_{\simp^{\op}}(\mathcal{M},
    \mathcal{V}^{\otimes})$ is a colimit diagram \IFF{} for every $X
    \in \mathcal{M}$ the induced diagram
    $K^{\triangleright} \to \mathcal{V}^{\otimes}_{p(X)}$ is a colimit diagram.
  \item The full subcategory
    $\Alg_{\mathcal{M}}(\mathcal{V})$ of
    $\Fun_{\simp^{\op}}(\mathcal{M},
    \mathcal{V}^{\otimes})$ is stable under $K$-indexed colimits.
  \item A map $K^{\triangleright} \to
    \Fun_{\simp^{\op}}(\mathcal{M},
    \mathcal{V}^{\otimes})$ is a colimit diagram \IFF{}, for every $X
    \in \mathcal{M}_{[1]}$, the induced diagram
    $K^{\triangleright} \to \mathcal{V}$ is a colimit
    diagram.
  \item The restriction functor
    $\Alg_{\mathcal{M}}(\mathcal{V})
    \to \Fun(\mathcal{M}_{[1]}, \mathcal{V})$
    detects $K$-indexed colimits.
  \end{enumerate}
\end{cor}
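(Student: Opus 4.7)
The plan is to obtain all five statements from a single application of Theorem~\ref{thm:MysteryResult} to the coCartesian fibration $p \colon \mathcal{V}^{\otimes} \to \simp^{\op}$, pulled back along $\mathcal{M} \to \simp^{\op}$, and then to leverage the Segal condition for the generalized \nsiopd{} $\mathcal{M}$ to reduce pointwise criteria from $\mathcal{M}$ to $\mathcal{M}_{[1]}$.

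First I would verify the hypotheses of Theorem~\ref{thm:MysteryResult} for $p \colon \mathcal{V}^{\otimes} \to \simp^{\op}$. The fibres are $\mathcal{V}^{\otimes}_{[n]} \simeq \mathcal{V}^{\times n}$, which admit $K$-indexed colimits since $\mathcal{V}$ does (and in a product these are computed componentwise). Weak contractibility of $K$ is automatic since sifted simplicial sets are weakly contractible by \cite[Proposition 5.5.8.7]{HTT}. Finally, the functors $\phi_{!}\colon \mathcal{V}^{\otimes}_{[n]}\to \mathcal{V}^{\otimes}_{[m]}$ preserve $K$-indexed colimits by the lemma immediately preceding the statement (since $\mathcal{V}$ is compatible with $K$-indexed colimits and $K$ is sifted). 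Thus parts (i) and (ii) of Theorem~\ref{thm:MysteryResult} give parts (i) and (ii) of our corollary, and part (iii) of that theorem, applied with $E$ equal to the collection of inert morphisms of $\mathcal{M}$, yields part (iii).

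For parts (iv) and (v) the key is that a functor $F \colon \mathcal{M}\to \mathcal{V}^{\otimes}$ satisfying the algebra condition is essentially determined on $\mathcal{M}$ by its restriction to $\mathcal{M}_{[1]}$, via the Segal decomposition. More precisely, given $X \in \mathcal{M}_{[n]}$, the inert morphisms from $X$ to objects $X_i \in \mathcal{M}_{[1]}$ (and to $\mathcal{M}_{[0]}$, which is carried to the contractible fibre $\mathcal{V}^{\otimes}_{[0]}$) are carried by $F$ to coCartesian edges, so under the equivalence $\mathcal{V}^{\otimes}_{[n]}\simeq\mathcal{V}^{\times n}$ we have $F(X)\simeq(F(X_1),\ldots,F(X_n))$. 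Since $K$-indexed colimits in $\mathcal{V}^{\times n}$ are computed factorwise, a diagram $K^{\triangleright}\to \mathcal{V}^{\otimes}_{[n]}$ obtained from an algebra-valued diagram is a colimit diagram \IFF{} each of the $n$ induced diagrams into $\mathcal{V}$ is. Combining this with part (ii), the pointwise colimit criterion over all of $\mathcal{M}$ reduces to the pointwise criterion over $\mathcal{M}_{[1]}$, which gives (iv). Statement (v) is then immediate: given $\overline{q}\colon K^{\triangleright}\to \Alg_{\mathcal{M}}(\mathcal{V})$, its image in $\Fun(\mathcal{M}_{[1]},\mathcal{V})$ being a colimit diagram is equivalent by (iv) to $\overline{q}$ being a colimit diagram in $\Fun_{\simp^{\op}}(\mathcal{M},\mathcal{V}^{\otimes})$, which by (iii) is equivalent to $\overline{q}$ being a colimit diagram in the full subcategory $\Alg_{\mathcal{M}}(\mathcal{V})$.

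The only step requiring genuine care is the reduction in (iv)--(v) from $\mathcal{M}$ to $\mathcal{M}_{[1]}$: one must argue that the inert coCartesian images of a colimit diagram in an algebra reassemble into a colimit diagram at objects of $\mathcal{M}_{[n]}$. This is not literally automatic from Theorem~\ref{thm:MysteryResult}, which only gives (ii); the extra input is that coCartesian lifts of inerts are preserved by the diagram (by (iii)), so the value at each $X\in \mathcal{M}_{[n]}$ is pinned down by values at $\mathcal{M}_{[1]}$ via the equivalence $\mathcal{V}^{\otimes}_{[n]}\simeq\mathcal{V}^{\times n}$, and the product decomposition of $K$-colimits in $\mathcal{V}^{\times n}$ closes the argument. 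This Segal-style reduction is the main, though not deep, obstacle.
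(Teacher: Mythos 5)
Your proposal is correct and follows essentially the same route as the paper: parts (i)--(iii) come from Theorem~\ref{thm:MysteryResult} (using that sifted simplicial sets are weakly contractible and that the preceding lemma gives preservation of $K$-indexed colimits by the $\phi_{!}$), and (iv)--(v) come from the Segal-style reduction to $\mathcal{M}_{[1]}$, which is exactly the argument the paper delegates to the proof of \cite[Proposition 3.2.3.1]{HA}. You also correctly read (iv) as a statement about algebra-valued diagrams, which is the intended reading.
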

\begin{proof}
  Sifted simplicial sets are weakly contractible by \cite[Proposition
  5.5.8.7]{HTT} so (i)--(iii) follow from
  Theorem~\ref{thm:MysteryResult} (which is implicit in the proof of
  \cite[Proposition 3.2.3.1]{HA}). Then (iv) and (v) follow as in the
  proof of \cite[Proposition 3.2.3.1]{HA}.
\end{proof}

We now use this to show that the adjunction $\tau_{\mathcal{M},!}
\dashv \tau_{\mathcal{M}}^{*}$ is monadic; we first check that
$\tau_{\mathcal{M}}^{*}$ is conservative:
\begin{lemma}\label{lem:AlgCons}
  Suppose $\mathcal{V}$ is a monoidal \icat{} and
  $\mathcal{M}$ is a \gnsiopd{}. Then the forgetful functor
  \[\tau_{\mathcal{M}}^{*} \colon
  \Alg_{\mathcal{M}}(\mathcal{V}) \to
  \Alg_{\mathcal{M}_{\txt{triv}}}(\mathcal{V})
  \simeq \Fun(\mathcal{M}_{[1]}, \mathcal{V})\] is conservative.
\end{lemma}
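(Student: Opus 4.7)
The plan is to show that a morphism in $\Alg_{\mathcal{M}}(\mathcal{V})$ is an equivalence precisely when it is pointwise an equivalence, and then observe that pointwise equivalence on $\mathcal{M}$ is forced by pointwise equivalence on $\mathcal{M}_{[1]}$ together with the Segal condition.

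More precisely, recall that $\Alg_{\mathcal{M}}(\mathcal{V})$ is by definition a full subcategory of $\Fun_{\simp^{\op}}(\mathcal{M},\mathcal{V}^{\otimes})$. Since equivalences in a functor $\infty$-category are detected pointwise, a morphism $\varphi\colon A\to B$ in $\Alg_{\mathcal{M}}(\mathcal{V})$ is an equivalence if and only if $\varphi_{X}\colon A(X)\to B(X)$ is an equivalence in $\mathcal{V}^{\otimes}$ (equivalently in the fibre $\mathcal{V}^{\otimes}_{p(X)}$) for every $X\in\mathcal{M}$. Under the identification $\Alg_{\mathcal{M}_{\triv}}(\mathcal{V})\simeq\Fun(\mathcal{M}_{[1]},\mathcal{V})$ from Proposition~\ref{propn:TrivAlgEq}, the functor $\tau_{\mathcal{M}}^{*}$ sends $\varphi$ to the restriction $\varphi|_{\mathcal{M}_{[1]}}$ (viewed via the fibrewise equivalence $\mathcal{V}^{\otimes}_{[1]}\simeq\mathcal{V}$). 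So the hypothesis that $\tau_{\mathcal{M}}^{*}\varphi$ is an equivalence says exactly that $\varphi_{Y}$ is an equivalence for every $Y\in\mathcal{M}_{[1]}$.

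To bootstrap from $Y\in\mathcal{M}_{[1]}$ to arbitrary $X\in\mathcal{M}$, I would use the Segal conditions for $\mathcal{M}$ and $\mathcal{V}^{\otimes}$ together with the fact that $A$ and $B$ preserve inert morphisms. For $X$ lying over $[n]\in\simp^{\op}$ with $n\geq 1$, let $X\to X_{i}$ ($i=1,\dots,n$) be coCartesian lifts of the inert maps $\rho_{i}\colon[n]\to[1]$; then $X_{i}\in\mathcal{M}_{[1]}$ and, since $A$ and $B$ are algebras, the resulting diagrams identify $A(X)$ and $B(X)$ with $(A(X_{1}),\ldots,A(X_{n}))$ and $(B(X_{1}),\ldots,B(X_{n}))$ under the equivalence $\mathcal{V}^{\otimes}_{[n]}\simeq\mathcal{V}^{\times n}$. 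Under this identification $\varphi_{X}$ corresponds to the tuple $(\varphi_{X_{1}},\ldots,\varphi_{X_{n}})$, which is an equivalence by assumption. For $X$ over $[0]$ the fibre $\mathcal{V}^{\otimes}_{[0]}\simeq\ast$ is contractible, so $\varphi_{X}$ is automatically an equivalence.

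The only mildly delicate point is making sure that the identification of $\tau_{\mathcal{M}}^{*}$ with ``restriction to $\mathcal{M}_{[1]}$'' is the correct one under Proposition~\ref{propn:TrivAlgEq}; once this is in hand the rest is a direct unwinding of the Segal condition, so I do not expect any real obstacle. The argument is essentially the non-symmetric, generalized analogue of \cite[Lemma 3.2.2.6]{HA}.
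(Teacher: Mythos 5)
Your proof is correct and follows essentially the same route as the paper: both reduce to the fact that equivalences in $\Fun_{\simp^{\op}}(\mathcal{M},\mathcal{V}^{\otimes})$ are detected objectwise (the paper cites its Theorem on relative functor categories with $K=\Delta^{0}$ for this) and then pass from objects of $\mathcal{M}_{[1]}$ to all of $\mathcal{M}$. In fact you spell out the final Segal-condition/inert-morphism bootstrap more explicitly than the paper does, which leaves that step implicit.
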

\begin{proof}
  The \icat{} $\Alg_{\mathcal{M}}(\mathcal{V})$
  is a full subcategory of $\Fun_{\simp^{\op}}(\mathcal{M},
  \mathcal{V}^{\otimes})$. Therefore a map of algebras $f \colon A \to
  B$ is an equivalence in
  $\Alg_{\mathcal{M}}(\mathcal{V})$ \IFF{} it
  is an equivalence in $\Fun_{\simp^{\op}}(\mathcal{M},
  \mathcal{V}^{\otimes})$. Applying Theorem~\ref{thm:MysteryResult} to
  $\Delta^{0}$-indexed colimits, we see that a morphism $f \colon A
  \to B$ is an equivalence in $\Fun_{\simp^{\op}}(\mathcal{M}, \mathcal{V}^{\otimes})$ \IFF{} $f_{X} \colon A(X) \to B(X)$ is an
  equivalence in $\mathcal{V}^{\otimes}$ for all $X \in
  \mathcal{M}$. Thus equivalences are detected after
  restricting to $\mathcal{M}_{\txt{triv}}$.
\end{proof}

\begin{cor}\label{cor:AlgMonad}
  Suppose $\mathcal{V}$ is a monoidal \icat{} compatible with small
  colimits, and $\mathcal{M}$ is a \gnsiopd{} such that
  $\mathcal{M}_{[0]}$ is a Kan complex. Then the adjunction
  \[ (\tau_{\mathcal{M}})_{!} \colon
  \Alg_{\mathcal{M}_{\txt{triv}}}(\mathcal{V})
  \rightleftarrows
  \Alg_{\mathcal{M}}(\mathcal{V})
  \colon (\tau_{\mathcal{M}})^{*} \] is monadic.
\end{cor}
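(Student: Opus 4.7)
The plan is to verify the hypotheses of the $\infty$-categorical Barr--Beck--Lurie monadicity theorem (\cite[Theorem 4.7.3.5]{HA}) for the adjunction $(\tau_{\mathcal{M}})_{!} \dashv (\tau_{\mathcal{M}})^{*}$. The existence of the left adjoint is already in hand: the hypothesis that $\mathcal{M}_{[0]}$ is a Kan complex, combined with the fact that $\tau_{\mathcal{M}} \colon \mathcal{M}_{\triv} \to \mathcal{M}$ is by construction an equivalence over $[0]$, means that Theorem~\ref{thm:FreeFtrExists} applies and produces $(\tau_{\mathcal{M}})_{!}$. So it remains to check the two conditions (a) that $(\tau_{\mathcal{M}})^{*}$ is conservative, and (b) that $(\tau_{\mathcal{M}})^{*}$ preserves geometric realizations of $(\tau_{\mathcal{M}})^{*}$-split simplicial objects; in fact, we will verify the stronger condition that it preserves all sifted colimits.

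For condition (a), I would appeal directly to Lemma~\ref{lem:AlgCons}: the composite of $(\tau_{\mathcal{M}})^{*}$ with the equivalence $\Alg_{\mathcal{M}_{\triv}}(\mathcal{V}) \simeq \Fun(\mathcal{M}_{[1]}, \mathcal{V})$ of Proposition~\ref{propn:TrivAlgEq} is precisely the forgetful functor considered there, which is conservative. Since the equivalence is trivially conservative, this gives conservativity of $(\tau_{\mathcal{M}})^{*}$ itself.

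For condition (b), I would invoke Corollary~\ref{cor:AlgSiftedColim}. Since $\mathcal{V}$ is compatible with small colimits, in particular with $K$-indexed colimits for every sifted simplicial set $K$, part (v) of that corollary says that the restriction functor $\Alg_{\mathcal{M}}(\mathcal{V}) \to \Fun(\mathcal{M}_{[1]}, \mathcal{V})$ detects (and so preserves) sifted colimits. Under the identification of Proposition~\ref{propn:TrivAlgEq} this restriction functor is $(\tau_{\mathcal{M}})^{*}$, so $(\tau_{\mathcal{M}})^{*}$ preserves sifted colimits. Since simplicial objects are sifted diagrams (by \cite[Proposition 5.5.8.4]{HTT}), geometric realizations are a fortiori preserved.

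No step is really a main obstacle here --- all the substantive work has been done in Lemma~\ref{lem:AlgCons}, Corollary~\ref{cor:AlgSiftedColim}, and Theorem~\ref{thm:FreeFtrExists}. The mildly subtle point is simply matching conventions: ensuring that the description of $\tau_{\mathcal{M}}^{*}$ as the restriction along $\mathcal{M}_{\triv} \hookrightarrow \mathcal{M}$ is identified via Proposition~\ref{propn:TrivAlgEq} with evaluation $\Alg_{\mathcal{M}}(\mathcal{V}) \to \Fun(\mathcal{M}_{[1]}, \mathcal{V})$, so that both conservativity and sifted-colimit preservation transfer across this equivalence without further argument.
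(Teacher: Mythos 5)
Your proposal is correct and follows exactly the paper's own argument: conservativity via Lemma~\ref{lem:AlgCons}, preservation of sifted colimits via Corollary~\ref{cor:AlgSiftedColim}, and then the Barr--Beck--Lurie monadicity theorem. The extra care you take in identifying $\tau_{\mathcal{M}}^{*}$ with the restriction functor to $\Fun(\mathcal{M}_{[1]},\mathcal{V})$ via Proposition~\ref{propn:TrivAlgEq} is implicit in the paper but adds nothing substantively different.
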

\begin{proof}
  We showed that the functor $\tau_{\mathcal{M}}^{*}$ is conservative
  in Lemma~\ref{lem:AlgCons}, and that it preserves sifted colimits in
  Corollary~\ref{cor:AlgSiftedColim}. The adjunction
  $(\tau_{\mathcal{M}})_{!} \dashv \tau_{\mathcal{M}}^{*}$ is
  therefore monadic by \cite[Corollary 5.5.2.9]{HTT}.
\end{proof}

\begin{cor}\label{cor:AlgOcolim}
  Suppose $\mathcal{V}$ is a monoidal \icat{} compatible
  with small colimits and $\mathcal{M}$ is a \gnsiopd{} such that
  $\mathcal{M}_{[0]}$ is a Kan complex. Then
  $\Alg_{\mathcal{M}}(\mathcal{V})$ has all small
  colimits. Moreover, if $\mathcal{V}$ is presentable, so is
  $\Alg_{\mathcal{M}}(\mathcal{V})$.
\end{cor}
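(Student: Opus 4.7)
The plan is to deduce both assertions from the monadic adjunction
\[ \tau_{\mathcal{M},!} : \Alg_{\mathcal{M}_{\txt{triv}}}(\mathcal{V}) \rightleftarrows \Alg_{\mathcal{M}}(\mathcal{V}) : \tau_{\mathcal{M}}^{*} \]
provided by Corollary~\ref{cor:AlgMonad}, together with Corollary~\ref{cor:AlgSiftedColim}, by applying standard results on categories of algebras over a monad. Recall that via Proposition~\ref{propn:TrivAlgEq} the base category is $\Alg_{\mathcal{M}_{\txt{triv}}}(\mathcal{V}) \simeq \Fun(\mathcal{M}_{[1]}, \mathcal{V})$; since $\mathcal{V}$ admits all small colimits (its tensor product being compatible with them), so does this functor category, with colimits computed pointwise.

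For the existence of all small colimits, I would first observe that the forgetful functor $\tau_{\mathcal{M}}^{*}$ preserves sifted colimits by Corollary~\ref{cor:AlgSiftedColim}, and therefore the induced monad $T := \tau_{\mathcal{M}}^{*} \tau_{\mathcal{M},!}$ preserves sifted colimits (since $\tau_{\mathcal{M},!}$, being a left adjoint, preserves all colimits). Corollary~\ref{cor:AlgSiftedColim} already gives sifted colimits in $\Alg_{\mathcal{M}}(\mathcal{V})$, computed pointwise on the underlying functor. It then remains to produce finite coproducts, after which \cite[Proposition 5.5.8.15]{HTT} supplies all small colimits (since small colimits are generated by sifted colimits and finite coproducts). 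Coproducts are produced by the familiar bar-construction argument: coproducts of free algebras exist because $\tau_{\mathcal{M},!}$ preserves them, and an arbitrary family of algebras $\{A_{\alpha}\}$ can be resolved by free algebras $T^{\bullet+1}\tau_{\mathcal{M}}^{*}A_{\alpha}$ via the monadicity equivalence, so $\coprod_{\alpha} A_{\alpha}$ is realized as the geometric realization of the termwise coproducts of these free resolutions.

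For the presentability statement, assume $\mathcal{V}$ is presentable, so that $\Fun(\mathcal{M}_{[1]}, \mathcal{V})$ is presentable (taking $\mathcal{M}$ small, as is implicit here). Since the monad $T$ preserves sifted colimits it in particular preserves $\kappa$-filtered colimits for some regular cardinal $\kappa$, i.e. $T$ is accessible. By \cite[Corollary 4.2.3.5]{HA}, the \icat{} of $T$-modules in a presentable \icat{} under an accessible monad is again presentable, and by monadicity this is $\Alg_{\mathcal{M}}(\mathcal{V})$.

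The main obstacle is really just bookkeeping: verifying that the generic monadic-descent results in \cite{HA}, particularly \cite[Corollary 4.2.3.5]{HA}, apply in this setting, which amounts to checking the preservation properties of $\tau_{\mathcal{M}}^{*}$ and $\tau_{\mathcal{M},!}$ that are already in hand. There is no genuine difficulty beyond assembling the pieces.
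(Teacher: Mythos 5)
Your proposal is correct and follows essentially the same route as the paper's proof, which applies Lemma~\ref{lem:MonadColims} and Proposition~\ref{propn:MonadPres} to the monadic adjunction $\tau_{\mathcal{M},!} \dashv \tau_{\mathcal{M}}^{*}$ of Corollary~\ref{cor:AlgMonad}: sifted colimits come from Corollary~\ref{cor:AlgSiftedColim}, and finite coproducts are produced by exactly your bar-resolution argument (resolve each algebra by free algebras via \cite[Proposition 4.7.4.14]{HA} and realize the termwise coproducts). The one place you diverge is the presentability step, where you outsource to a general statement about accessible monads; be aware that \cite[Corollary 4.2.3.5]{HA} concerns colimits in the \icat{} of modules over an algebra object in a monoidal \icat{} whose tensor product is compatible with colimits, and it does not apply verbatim to a monad --- i.e.\ to an algebra object in $\Fun(\mathcal{C},\mathcal{C})$ under composition, which is not compatible with colimits in the second variable. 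The fact you need is nevertheless true for a monad preserving sifted colimits, and is precisely what the paper's Proposition~\ref{propn:MonadPres} establishes directly: the free functor preserves $\kappa$-compact objects by Lemma~\ref{lem:ladjcompact}, and every algebra is a geometric realization of free ones, so every object of $\Alg_{\mathcal{M}}(\mathcal{V})$ is a colimit of $\kappa$-compact objects. Either cite a result stated for accessible monads (and check its hypotheses against $T = \tau_{\mathcal{M}}^{*}\tau_{\mathcal{M},!}$) or include this short direct argument.
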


This is an immediate consequence of the following general facts about
monadic adjunctions:
\begin{lemma}\label{lem:MonadColims}
  Suppose $F \colon \mathcal{C} \rightleftarrows \mathcal{D} :\! U$ is
  a monadic adjunction such that $\mathcal{C}$ has all small colimits,
  $\mathcal{D}$ has sifted colimits, and $U$ preserves sifted colimits. Then
  $\mathcal{D}$ has all small colimits.
\end{lemma}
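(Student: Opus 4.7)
The plan is to reduce to constructing finite coproducts in $\mathcal{D}$, using the principle (stated in the preliminaries) that small colimits are generated by sifted colimits and finite coproducts, together with the hypothesis that $\mathcal{D}$ has sifted colimits.

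To construct finite coproducts, I will exploit monadicity. By the Barr--Beck--Lurie theorem, every object $D \in \mathcal{D}$ is the geometric realization (hence a sifted colimit) of a simplicial object whose terms are free algebras $F(C)$ for objects $C \in \mathcal{C}$ --- namely, the bar resolution $B_\bullet(F,UF,U)(D)$. Thus, given $D_1, \ldots, D_n \in \mathcal{D}$, I can write each $D_i \simeq \colim_{\alpha_i \in \mathcal{I}_i} F(C_{\alpha_i})$ with $\mathcal{I}_i$ sifted.

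Next I observe that free algebras admit finite coproducts: since $F$ is a left adjoint and $\mathcal{C}$ has small colimits, $F(C_1) \sqcup \cdots \sqcup F(C_n) \simeq F(C_1 \sqcup \cdots \sqcup C_n)$. Combining these, I propose the candidate
\[
\coprod_{i=1}^n D_i := \colim_{(\alpha_1, \ldots, \alpha_n) \in \mathcal{I}_1 \times \cdots \times \mathcal{I}_n} F\bigl(C_{\alpha_1} \sqcup \cdots \sqcup C_{\alpha_n}\bigr).
\]
A finite product of sifted \icats{} is again sifted (since the diagonal of a product is a product of diagonals, each cofinal), so this colimit exists in $\mathcal{D}$. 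To verify the universal property, for any $E \in \mathcal{D}$ I compute
\[
\Map_{\mathcal{D}}\Bigl(\colim_{(\alpha_1,\ldots,\alpha_n)} F(\textstyle\coprod_i C_{\alpha_i}), E\Bigr) \simeq \lim_{(\alpha_1,\ldots,\alpha_n)} \prod_i \Map_{\mathcal{C}}(C_{\alpha_i}, UE),
\]
using the $F \dashv U$ adjunction and the universal property of $\coprod$ in $\mathcal{C}$. Since limits over a product of \icats{} distribute over products in spaces, this equals $\prod_i \lim_{\alpha_i} \Map_{\mathcal{C}}(C_{\alpha_i}, UE) \simeq \prod_i \Map_{\mathcal{D}}(D_i, E)$, confirming the coproduct property.

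The main conceptual input is monadicity via the bar construction, but the one technical point to treat with care is the claim that a finite product of sifted \icats{} is sifted --- this follows because the diagonal $\mathcal{I}_1 \times \cdots \times \mathcal{I}_n \to (\mathcal{I}_1 \times \cdots \times \mathcal{I}_n)^{\times 2}$ factors through the product of the diagonals of each $\mathcal{I}_i$, each of which is cofinal, and cofinal maps are closed under finite products. Everything else is a formal manipulation of the adjunction and the universal property, so no further obstacle arises.
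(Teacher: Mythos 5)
Your proof is correct and follows essentially the same route as the paper's: reduce to finite coproducts, resolve each object by a simplicial object of free algebras via monadicity, use that $F$ preserves coproducts so free algebras admit them, and then commute the finite coproduct past the sifted colimit. The only difference is packaging --- the paper forms the levelwise coproduct of the simplicial resolutions and cites \cite[Lemma 5.5.2.3]{HTT} for the fact that its geometric realization is the coproduct, whereas you index over $(\Delta^{\op})^{\times n}$ and verify the universal property by a direct mapping-space computation, which requires (as you note) that a finite product of sifted simplicial sets is sifted and, implicitly, that sifted simplicial sets are weakly contractible so that the limit over the product genuinely factors as the product of the limits.
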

\begin{proof}
  Since $\mathcal{D}$ by assumption has all sifted colimits, it
  suffices to prove that $\mathcal{D}$ has finite coproducts. Since
  $\mathcal{C}$ has coproducts and $F$ preserves colimits, the \icat{}
  $\mathcal{D}$ has coproducts for objects in the essential image of
  $F$.

  Let $A^{1}$, \ldots, $A^{n}$ be a finite collection of objects in
  $\mathcal{D}$. By \cite[Proposition 4.7.4.14]{HA}, there exist
  simplicial objects $A^{i}_{\bullet}$ in $\mathcal{D}$ such that each
  $A^{i}_{k}$ is in the essential image of $F$ and $|A^{i}_{\bullet}|
  \simeq A^{i}$. Since coproducts of elements in the essential image
  of $F$ exist, we can form a simplicial diagram
  $\coprod_{i}A^{i}_{\bullet}$. By \cite[Lemma 5.5.2.3]{HTT}, the
  geometric realization $|\coprod_{i}A^{i}_{\bullet}|$ is a coproduct
  of the $A^{i}$'s.
\end{proof}

\begin{propn}\label{propn:MonadPres}
  Suppose $F \colon \mathcal{C} \rightleftarrows \mathcal{D} :\! U$ is
  a monadic adjunction such that $\mathcal{C}$ is
  $\kappa$-presentable, $\mathcal{D}$ has small colimits,
  and the right adjoint $U$ preserves $\kappa$-filtered colimits. Then
  $\mathcal{D}$ is $\kappa$-presentable.
\end{propn}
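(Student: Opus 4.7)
The plan is to verify the criterion from \cite[\S 5.5]{HTT} that $\mathcal{D}$ is $\kappa$-presentable iff it is small-cocomplete and has an essentially small subcategory of $\kappa$-compact objects whose $\kappa$-small colimit closure generates $\mathcal{D}$ under $\kappa$-filtered colimits. Small cocompleteness is given, so it only remains to produce the generators.

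First, Lemma~\ref{lem:ladjcompact}(i) applied to the hypothesis that $U$ preserves $\kappa$-filtered colimits implies that $F$ carries $\kappa$-compact objects to $\kappa$-compact objects. Since $\mathcal{C}^\kappa$ is essentially small, so is $F(\mathcal{C}^\kappa) \subseteq \mathcal{D}^\kappa$; this will be our candidate generating family.

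Next, I would show that every $X \in \mathcal{D}$ lies in the small-colimit closure of $F(\mathcal{C}^\kappa)$. By monadicity, \cite[Proposition 4.7.4.14]{HA} produces (as in the proof of Lemma~\ref{lem:MonadColims}) a simplicial resolution $B_\bullet$ with $|B_\bullet| \simeq X$ and each $B_n$ in the essential image of $F$. Writing $B_n \simeq F(C_n)$ and expressing $C_n$ as a small $\kappa$-filtered colimit of objects of $\mathcal{C}^\kappa$, the fact that $F$ preserves colimits writes $B_n$ as a small colimit of objects in $F(\mathcal{C}^\kappa)$; geometric realization then presents $X$ itself as an iterated small colimit of such objects.

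The final step is to upgrade this generation to $\kappa$-accessibility: replace $F(\mathcal{C}^\kappa)$ by its closure $\mathcal{D}_0$ under $\kappa$-small colimits in $\mathcal{D}$, which remains essentially small by \cite[Proposition 5.3.4.13]{HTT} and whose objects remain $\kappa$-compact by \cite[Corollary 5.3.4.15]{HTT}. Since every small colimit decomposes as a $\kappa$-filtered colimit of $\kappa$-small colimits, generation of $\mathcal{D}$ under small colimits by $F(\mathcal{C}^\kappa)$ will imply generation of $\mathcal{D}$ under $\kappa$-filtered colimits by $\mathcal{D}_0$, giving $\mathcal{D} \simeq \mathrm{Ind}_\kappa(\mathcal{D}_0)$. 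The main point requiring care is precisely this last upgrade — formally rewriting an arbitrary simplicial colimit as a $\kappa$-filtered colimit of $\kappa$-small pieces — which is essentially a reorganization argument using the presentation of $\mathrm{Ind}_\kappa(\mathcal{D}_0)$ as a localization of presheaves, but is the only nonformal ingredient beyond the monadic resolution.
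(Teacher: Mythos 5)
Your proposal follows the same route as the paper's proof: apply Lemma~\ref{lem:ladjcompact} to see that $F$ preserves $\kappa$-compact objects, use the monadic bar resolution of \cite[Proposition 4.7.4.14]{HA} to write every object of $\mathcal{D}$ as a colimit of objects in the essential image of $F$, and conclude that $\mathcal{D}$ is cocomplete and generated under small colimits by an essentially small family of $\kappa$-compact objects, hence $\kappa$-presentable. The final ``upgrade'' step you flag as the only nonformal ingredient is exactly the standard criterion the paper invokes implicitly in its last sentence, so the two arguments coincide.
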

\begin{proof}
  Since $\mathcal{C}$ is $\kappa$-presentable, every object of
  $\mathcal{C}$ is a colimit of $\kappa$-compact objects. Since $U$
  preserves $\kappa$-filtered colimits, $F$ preserves $\kappa$-compact
  objects by Lemma~\ref{lem:ladjcompact}. Therefore every object in
  the essential image of $F$ is a colimit of $\kappa$-compact
  objects. But by \cite[Proposition 4.7.4.14]{HA}, every object of
  $\mathcal{D}$ is a colimit of objects in the essential image of $F$,
  so every object of $\mathcal{D}$ is a colimit of $\kappa$-compact
  objects. Since by assumption $\mathcal{D}$ has all small colimits,
  this implies that $\mathcal{D}$ is $\kappa$-presentable.
\end{proof}
 
\begin{proof}[Proof of Corollary~\ref{cor:AlgOcolim}]
  Apply Lemma~\ref{lem:MonadColims} and
  Proposition~\ref{propn:MonadPres} to the monadic adjunction
  $\tau_{\mathcal{M},!} \dashv \tau_{\mathcal{M}}^{*}$.
\end{proof}

\begin{propn}\label{propn:StrMonColim1}
  Let $\mathcal{M}$ be a \gnsiopd{} such that $\mathcal{M}_{[0]}$ is a
  Kan complex, and let $\mathcal{V}$ and
  $\mathcal{W}$ be monoidal \icats{} compatible with small
  colimits. Suppose $F \colon \mathcal{V}^{\otimes} \to
  \mathcal{W}^{\otimes}$ is a  monoidal functor such that $F_{[1]}
  \colon \mathcal{V} \to \mathcal{W}$ preserves colimits. Then the
  induced functor \[F_{*} \colon
  \Alg_{\mathcal{M}}(\mathcal{V}) \to
  \Alg_{\mathcal{M}}(\mathcal{W})\] preserves colimits.
\end{propn}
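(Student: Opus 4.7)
The plan is to reduce the statement to the two classes of colimits that generate all small colimits, namely sifted colimits and finite coproducts (as noted in the introduction), and handle each separately.

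For sifted colimits the argument is immediate from Corollary~\ref{cor:AlgSiftedColim}. That result tells us that the restriction functor $\Alg_{\mathcal{M}}(\mathcal{V}) \to \Fun(\mathcal{M}_{[1]}, \mathcal{V})$ detects $K$-indexed colimits for any sifted $K$, and similarly for $\mathcal{W}$. Since $F_{*}$ commutes with restriction to $\mathcal{M}_{[1]}$, where it acts by postcomposition with $F_{[1]}$, and $F_{[1]}$ preserves colimits by hypothesis, it follows that $F_{*}$ preserves sifted colimits.

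For finite coproducts, I would exploit the monadic adjunction $\tau_{\mathcal{M},!} \dashv \tau_{\mathcal{M}}^{*}$ from Corollary~\ref{cor:AlgMonad}. First observe that $F_{*}$ preserves coproducts of \emph{free} algebras: given $A_{1},\dots,A_{n} \in \Fun(\mathcal{M}_{[1]}, \mathcal{V})$, since $\tau_{\mathcal{M},!}$ is a left adjoint we have $\coprod_{i} \tau_{\mathcal{M},!} A_{i} \simeq \tau_{\mathcal{M},!}\!\left(\coprod_{i} A_{i}\right)$, and by Lemma~\ref{lem:StrMonPrFree} the functor $F_{*}$ commutes with $\tau_{\mathcal{M},!}$, so
\[ F_{*}\!\left(\coprod_{i} \tau_{\mathcal{M},!} A_{i}\right) \simeq \tau_{\mathcal{M},!}\!\left(F_{[1]} \circ \coprod_{i} A_{i}\right) \simeq \tau_{\mathcal{M},!}\!\left(\coprod_{i} F_{[1]} \circ A_{i}\right) \simeq \coprod_{i} F_{*}\tau_{\mathcal{M},!}A_{i}, \]
using that $F_{[1]}$ preserves coproducts in $\mathcal{V}$.

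To extend this to arbitrary finite coproducts, I would use the argument in the proof of Lemma~\ref{lem:MonadColims}: by \cite[Proposition 4.7.4.14]{HA} applied to the monadic adjunction, every object of $\Alg_{\mathcal{M}}(\mathcal{V})$ can be written as the geometric realization of a simplicial object whose terms are free algebras, and coproducts in $\Alg_{\mathcal{M}}(\mathcal{V})$ are computed by first forming the termwise coproduct of such resolutions (which exists by the previous paragraph) and then taking its geometric realization. Since $F_{*}$ preserves sifted colimits (in particular, geometric realizations) and preserves coproducts of free algebras, it preserves this construction, hence preserves finite coproducts. Combining this with the sifted case, $F_{*}$ preserves all small colimits. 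The only step requiring care is checking that the resolution argument goes through on the nose in the $\infty$-categorical setting, but this is exactly the content of the proof of Lemma~\ref{lem:MonadColims}, so no new obstacle arises.
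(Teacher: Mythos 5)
Your proposal is correct and follows essentially the same route as the paper's proof: sifted colimits are handled via the fact that $\tau_{\mathcal{M}}^{*}$ detects them (Corollary~\ref{cor:AlgSiftedColim}) together with colimit-preservation of $F_{[1]}$, and finite coproducts are handled by combining Lemma~\ref{lem:StrMonPrFree} (preservation of free algebras) with free resolutions and the sifted case. No gaps beyond what the paper itself leaves implicit.
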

\begin{proof}
  Write $F_{*}^{\txt{triv}}$ for the induced functor
  $\Alg_{\mathcal{M}_{\txt{triv}}}(\mathcal{V}) \to
  \Alg_{\mathcal{M}_{\txt{triv}}}(\mathcal{W})$. Under the
  equivalences $\Alg_{\mathcal{M}_{\txt{triv}}}(\mathcal{V})
  \simeq \Fun(\mathcal{M}_{[1]}, \mathcal{V})$ and
  $\Alg_{\mathcal{M}_{\txt{triv}}}(\mathcal{W}) \simeq
  \Fun(\mathcal{M}_{[1]}, \mathcal{W})$ this corresponds to
  composition with $F_{[1]}$, and so preserves colimits. Clearly
  $\tau^{*}_{\mathcal{M}}F_{*} \simeq
  F_{*}^{\txt{triv}}\tau^{*}_{\mathcal{M}}$. Since
  $\tau^{*}_{\mathcal{M}}$ detects sifted colimits, it follows that
  $F_{*}$ preserves sifted colimits. To prove that it preserves all
  colimits, it thus remains to prove it preserves finite coproducts.

  Since $F$ is a monoidal functor, by Lemma
  \ref{lem:StrMonPrFree} the functor $F_{*}$
  preserves free algebras, i.e. $F_{*} \tau_{\mathcal{M},!} \simeq
  \tau_{\mathcal{M},!}F_{*}^{\txt{triv}}$. Therefore
  $F_{*}$ preseves colimits of free algebras.  Let $A$ and $B$ be objects of
  $\Alg_{\mathcal{M}}(\mathcal{V})$ and let
  $A_{\bullet}$ and $B_{\bullet}$ be free resolutions of $A$ and
  $B$. Then we have natural equivalences
  \[
  \begin{split}
    F_{*}(A \amalg B) &   \simeq F_{*}(|A_{\bullet} \amalg B_{\bullet}|)
    \simeq |F_{*}(A_{\bullet} \amalg B_{\bullet})| \simeq
    |F_{*}(A_{\bullet}) \amalg F_{*}(B_{\bullet})| \\
    &  \simeq
    |F_{*}(A_{\bullet})| \amalg |F_{*}(B_{\bullet})| \simeq
    F_{*}(|A_{\bullet}|) \amalg F_{*}(|B_{\bullet}|) \simeq F_{*}(A)
    \amalg F_{*}(B),
  \end{split}
  \]
  so $F_{*}$ does indeed preserve coproducts.
\end{proof}

\begin{propn}\label{propn:rightadjlaxmon}
  Suppose $\mathcal{V}$ and $\mathcal{W}$ are
  presentably monoidal \icats{}
  and $F \colon \mathcal{V}^{\otimes} \to
  \mathcal{W}^{\otimes}$ is a  monoidal functor such that the
  underlying functor $F_{[1]} \colon \mathcal{V} \to \mathcal{W}$ preserves
  colimits. Let $g \colon \mathcal{W} \to \mathcal{V}$ be a right
  adjoint of $F_{[0]}$. Then there exists a lax monoidal functor
  $G \colon \mathcal{W}^{\otimes} \to \mathcal{V}^{\otimes}$
  extending $g$ such that for any small \nsiopd{} $\mathcal{O}$ we have an adjunction
  \[ F_{*} \colon
  \Alg_{\mathcal{O}}(\mathcal{V}) \rightleftarrows
  \Alg_{\mathcal{O}}(\mathcal{W}) \,\colon\! G_{*}.\]
\end{propn}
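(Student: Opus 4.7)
The plan is to construct $G$ as a relative right adjoint to $F$ over $\simp^{\op}$, verify that $G$ preserves inert morphisms (so that it defines a lax monoidal functor, i.e.\ a morphism of \nsiopds{}), and finally extract the algebra adjunction by whiskering the unit and counit with an arbitrary algebra.

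First, I would observe that because $F$ is monoidal, for each $[n]$ the fibre functor $F_{[n]} \colon \mathcal{V}^{\otimes}_{[n]} \to \mathcal{W}^{\otimes}_{[n]}$ corresponds under the Segal equivalences $\mathcal{V}^{\otimes}_{[n]} \simeq \mathcal{V}^{\times n}$ and $\mathcal{W}^{\otimes}_{[n]} \simeq \mathcal{W}^{\times n}$ to the product $F_{[1]}^{\times n}$. Since $F_{[1]}$ preserves colimits and both $\mathcal{V}$ and $\mathcal{W}$ are presentable, $F_{[1]}$ admits a right adjoint $g$, and hence $F_{[n]}$ admits a right adjoint given by $g^{\times n}$. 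Combining this with the fact that $F$ preserves coCartesian arrows (being monoidal), the dual form of \cite[Proposition 7.3.2.6]{HA} produces a functor $G \colon \mathcal{W}^{\otimes} \to \mathcal{V}^{\otimes}$ over $\simp^{\op}$ that is right adjoint to $F$ relative to $\simp^{\op}$, together with a unit $\eta \colon \id_{\mathcal{V}^{\otimes}} \to GF$ and counit $\epsilon \colon FG \to \id_{\mathcal{W}^{\otimes}}$ satisfying the triangle identities.

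Next I would verify that $G$ preserves inert morphisms. Given an inert $\phi \colon [n] \to [m]$ in $\simp^{\op}$, the coCartesian transport $\phi_{!}$ on either side corresponds under the Segal identifications to projection onto a sub-product. Because $g^{\times n}$ is computed coordinate-wise, it commutes with such projections; concretely, the mate of the coherence equivalence $\phi_{!}F_{[n]} \simeq F_{[m]}\phi_{!}$ witnessing that $F$ is monoidal is an equivalence $\phi_{!}G_{[n]} \simeq G_{[m]}\phi_{!}$, which says exactly that $G$ preserves the coCartesian lift of $\phi$. Hence $G$ is a morphism of \nsiopds{}, so composition with $G$ yields a functor $G_{*} \colon \Alg_{\mathcal{O}}(\mathcal{W}) \to \Alg_{\mathcal{O}}(\mathcal{V})$ for every \nsiopd{} $\mathcal{O}$.

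Finally, for an algebra $A \colon \mathcal{O} \to \mathcal{V}^{\otimes}$ (respectively $B \colon \mathcal{O} \to \mathcal{W}^{\otimes}$), whiskering $\eta$ and $\epsilon$ with $A$ and $B$ gives natural transformations $A \to G_{*}F_{*}A$ and $F_{*}G_{*}B \to B$ in the categories $\Fun_{\simp^{\op}}(\mathcal{O}, \mathcal{V}^{\otimes})$ and $\Fun_{\simp^{\op}}(\mathcal{O}, \mathcal{W}^{\otimes})$, and the triangle identities for the relative adjunction $F \dashv G$ transfer directly. Since both $F$ and $G$ preserve inert morphisms, these transformations restrict to the full subcategories $\Alg_{\mathcal{O}}(\mathcal{V})$ and $\Alg_{\mathcal{O}}(\mathcal{W})$, producing the desired adjunction $F_{*} \dashv G_{*}$.

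The main obstacle is the verification in the second paragraph that $G$ preserves inert morphisms: although it is morally obvious that $g^{\times n}$ commutes with coordinate projections, making this rigorous requires a careful mate-correspondence argument threading through the Segal identifications for the two coCartesian fibrations. One could alternatively package this by working directly at the level of Segal objects in $\CatI$ (using that $F_{[1]} \dashv g$ lives in $\PresI$ so that the natural transformation of simplicial objects $\simp^{\op} \to \CatI$ defining $F$ admits a levelwise right adjoint), but the fibrational argument above seems more direct.
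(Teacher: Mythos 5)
Your argument is correct, but it runs in the opposite direction to the paper's. You build the lax monoidal functor $G$ first, as a right adjoint to $F$ relative to $\simp^{\op}$ via (the dual of) \cite[Proposition 7.3.2.6]{HA}, check that it preserves inert morphisms by a mate computation through the Segal identifications, and then obtain $F_{*} \dashv G_{*}$ by restricting the induced adjunction on $\Fun_{\simp^{\op}}(\mathcal{O}, \blank)$ to the full subcategories of algebras. The paper instead constructs the adjunction on algebra $\infty$-categories first: it invokes Proposition~\ref{propn:StrMonColim1} and Corollary~\ref{cor:AlgOcolim} to see that $F_{*}$ is a colimit-preserving functor between presentable $\infty$-categories, applies the adjoint functor theorem to get right adjoints $R_{\mathcal{O}}$ natural in $\mathcal{O}$, and then extracts $G$ by evaluating $R$ on the small suboperads $\mathcal{W}^{\otimes}_{\kappa}$ spanned by $\kappa$-compact objects and exhausting over $\kappa$ (this automatically makes $G$ a map of \nsiopds{}, sidestepping your inert-preservation check); finally it identifies $G_{[1]}$ with $g$ by uniqueness of adjoints, taking $\mathcal{O} = \simp^{\op}_{\txt{int}}$. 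Your route is more elementary and slightly more general: it never uses presentability of the algebra categories or the monadicity/free-resolution input behind Proposition~\ref{propn:StrMonColim1}, needs only that $F_{[1]}$ admits a right adjoint rather than full presentability of $\mathcal{V}$ and $\mathcal{W}$, imposes no smallness on $\mathcal{O}$, and delivers the adjunction as a relative adjunction in one stroke (it is essentially the non-symmetric analogue of \cite[Corollary 7.3.2.7]{HA}, and is the same device the paper itself uses one level up in Proposition~\ref{propn:rightadjlaxmonalgfib}). The price is the mate verification you flag in your second paragraph, which the paper's exhaustion argument gets for free; your coordinate-wise justification of that mate is sound, since inert transports are projections and the fibrewise adjoints are products of $g$, but it is the one step that would need to be written out carefully.
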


\begin{proof}
  By Proposition~\ref{propn:StrMonColim1} the functor $F_{*}
  \colon \Alg_{\mathcal{O}}(\mathcal{V}) \to
  \Alg_{\mathcal{O}}(\mathcal{W})$ is
  colimit-preserving, and by Corollary~\ref{cor:AlgOcolim} these
  \icats{} of $\mathcal{O}$-algebras are presentable. It
  follows by \cite[Corollary 5.5.2.9]{HTT} that $F_{*}$ has
  a right adjoint \[R_{\mathcal{O}} \colon
  \Alg_{\mathcal{O}}(\mathcal{W}) \to
  \Alg_{\mathcal{O}}(\mathcal{V}).\] Moreover,
  since $F_{*}$ is natural in $\mathcal{O}$ so is
  $R^{\mathcal{O}}$, by \cite[Corollary
  5.2.2.5]{HTT}. Taking the underlying spaces of the \icats{} of
  algebras, we see that $R_{(\blank)}$ induces a natural
  transformation $\rho \colon \Map(\blank, \mathcal{W}^{\otimes}) \to
  \Map(\blank, \mathcal{V}^{\otimes})$ of functors $(\OpdIns)^{\op} \to
  \mathcal{S}$. The full subcategory $\mathcal{W}^{\otimes}_{\kappa}$
  of $\mathcal{W}^{\otimes}$ spanned by objects coming from the full
  subcategory $\mathcal{W}^{\kappa} \subseteq \mathcal{W}$ spanned by
  $\kappa$-compact objects is a small \nsiopd{}. Applying
  $R_{\mathcal{W}^{\otimes}_{\kappa}}$ to the inclusion
  $\mathcal{W}_{\kappa}^{\otimes} \to \mathcal{W}^{\otimes}$ gives
  compatible maps $G^{\kappa} \colon
  \mathcal{W}^{\otimes}_{\kappa} \to \mathcal{V}^{\otimes}$. Combining
  these gives $G \colon \mathcal{W}^{\otimes} \to
  \mathcal{V}^{\otimes}$. Since every map $\mathcal{O} \to
  \mathcal{W}^{\otimes}$ where $\mathcal{O}$ is a small
  \nsiopd{} factors through $\mathcal{W}^{\otimes}_{\kappa}$ for some
  $\kappa$, we see that $\rho$ is given by composition with
  $G$. Moreover, the functor $R_{(\blank)}$ must also be
  given by composition with $G$, since
  $\Alg_{\mathcal{O}}(\mathcal{W})$ is the
  \icat{} associated to the simplicial space
  $\Map(\mathcal{O} \otimes \Delta^{\bullet},
  \mathcal{W}^{\otimes})$.

  It remains to show that $G$ is indeed a lax monoidal
  extension of $g$. This follows from taking $\mathcal{O}$ to be
  the trivial \nsiopd{} $\simp^{\op}_{\text{int}}$: then
  $\Alg_{\simp^{\op}_{\text{int}}}(\mathcal{V}) \simeq
  \mathcal{V}$ and
  $\Alg_{\simp^{\op}_{\text{int}}}(\mathcal{W}) \simeq
  \mathcal{W}$, and under these identifications $F_{*}$ corresponds to $F_{[1]}$ and
  $G_{*}$ to the functor $G_{[1]}$. Thus
  $g$ and $G_{[1]}$ are both right adjoint to $F$ and
  so must be equivalent.
\end{proof}

In the case of monoidal localizations we can explicitly identify this
lax monoidal structure on the right adjoint:
\begin{lemma}\label{lem:monlocadj}
  Suppose $\mathcal{O}$ is a small \nsiopd{},
  $\mathcal{V}$ is a monoidal \icat{} and $L \colon
  \mathcal{V} \to \mathcal{W}$ is a monoidal localization with fully
  faithful right adjoint $i \colon \mathcal{W} \hookrightarrow
  \mathcal{V}$. Then the monoidal functor $L^{\otimes}$ and the lax
  monoidal inclusion $i^{\otimes} \colon \mathcal{W}^{\otimes} \hookrightarrow
  \mathcal{V}^{\otimes}$ of Proposition~\ref{propn:moncomploc} induce
  an adjunction
  \[ L^{\otimes}_{*} : \Alg_{\mathcal{O}}(\mathcal{V}) \rightleftarrows
  \Alg_{\mathcal{O}}(\mathcal{W}) : i^{\otimes}_{*}. \]
  Moreover, $i^{\otimes}_{*}$ is fully faithful.
\end{lemma}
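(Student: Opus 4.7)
The plan is to derive everything directly from Proposition~\ref{propn:moncomploc}, which already provides the adjunction $L^{\otimes} \dashv i^{\otimes}$ inside $(\CatI)_{/\simp^{\op}}$. The abstract construction from Proposition~\ref{propn:rightadjlaxmon} is not actually needed here; the point is that the adjoint pair of Proposition~\ref{propn:moncomploc} is already cut out at the level of the ambient monoidal \icats{}, so we only need to transport it along post-composition and then restrict to the full subcategories of algebras.

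First, I would recall that post-composition with an adjoint pair in any \icat{} with internal mapping objects induces an adjoint pair of post-composition functors. Applied to $L^{\otimes} \dashv i^{\otimes}$ over $\simp^{\op}$ (with unit $\eta\colon \id \to i^{\otimes}L^{\otimes}$ and counit $\epsilon\colon L^{\otimes}i^{\otimes}\to \id$ from Proposition~\ref{propn:moncomploc}), this yields an adjunction
\[ L^{\otimes}\circ(\blank) : \Fun_{\simp^{\op}}(\mathcal{O},\mathcal{V}^{\otimes}) \rightleftarrows \Fun_{\simp^{\op}}(\mathcal{O},\mathcal{W}^{\otimes}) : i^{\otimes}\circ(\blank) \]
for any small \nsiopd{} $\mathcal{O}$, with unit and counit obtained by whiskering $\eta$ and $\epsilon$ with maps out of $\mathcal{O}$.

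Next I would check that this adjunction restricts to the full subcategories of $\mathcal{O}$-algebras. By Proposition~\ref{propn:moncomploc}(iii), $L^{\otimes}$ is monoidal and $i^{\otimes}$ is lax monoidal; in particular both preserve inert morphisms, so post-composition carries $\Alg_{\mathcal{O}}(\mathcal{V})$ into $\Alg_{\mathcal{O}}(\mathcal{W})$ and vice versa. Since the algebra \icats{} sit as full subcategories of the functor \icats{} over $\simp^{\op}$, the restricted functors $L^{\otimes}_{*}$ and $i^{\otimes}_{*}$ inherit the adjunction: the universal mapping-space equivalence restricts fibrewise to the full subcategories that both functors preserve.

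Finally, for the fully faithfulness of $i^{\otimes}_{*}$, I would argue that $i^{\otimes}$ itself is fully faithful — this is built into Proposition~\ref{propn:moncomploc}, where $\mathcal{W}^{\otimes}$ is defined as a full subcategory of $\mathcal{V}^{\otimes}$ — so the counit $\epsilon$ of $L^{\otimes}\dashv i^{\otimes}$ is an equivalence. Whiskering $\epsilon$ with any $\mathcal{O}$-algebra $\mathcal{D}\colon \mathcal{O}\to \mathcal{W}^{\otimes}$ gives that the counit $L^{\otimes}_{*}i^{\otimes}_{*}\mathcal{D}\to \mathcal{D}$ of the induced adjunction is an equivalence, which is equivalent to the right adjoint $i^{\otimes}_{*}$ being fully faithful. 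There is no serious obstacle in this argument — the only slightly delicate point is verifying that post-composition in the \icatl{} setting genuinely produces an adjunction on functor \icats{} with the whiskered unit and counit, but this is a standard feature of the $2$-categorical structure of $\CatI$.
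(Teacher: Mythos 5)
Your proof is correct and follows essentially the same route as the paper: post-compose with the adjunction $L^{\otimes}\dashv i^{\otimes}$ of Proposition~\ref{propn:moncomploc} to get an adjunction on $\Fun_{\simp^{\op}}(\mathcal{O},\blank)$, observe it restricts to the full subcategories of algebras since both functors preserve inert morphisms, and deduce fully faithfulness of $i^{\otimes}_{*}$ from the counit being an equivalence. The only (harmless) difference is in the last step: you whisker the counit equivalence of the ambient adjunction directly, whereas the paper first invokes Lemma~\ref{lem:AlgCons} to reduce to the objectwise statement that $LiA(X)\to A(X)$ is an equivalence — both rest on the same fact that $i$ is fully faithful.
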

\begin{proof}
  Since $L^{\otimes}$ is left adjoint to $i^{\otimes}$, it is
  easy to see that we get an adjunction
  \[ L^{\otimes}_{*} : \Fun_{\simp^{\op}}(\mathcal{O},
  \mathcal{V}^{\otimes}) \rightleftarrows
  \Fun_{\simp^{\op}}(\mathcal{O}, \mathcal{W}^{\otimes}) :
  i^{\otimes}_{*}.\]
  But this clearly restricts to an adjunction between the full
  subcategories $\Alg_{\mathcal{O}}(\mathcal{V})$ and
  $\Alg_{\mathcal{O}}(\mathcal{W})$, as
  required. 

  To prove that $i^{\otimes}_{*}$ is fully faithful, it suffices to
  show that for every $\mathcal{O}$-algebra $A$ in
  $\mathcal{W}$ the counit $L^{\otimes}_{*}i^{\otimes}_{*}A \to A$ is
  an equivalence. By Lemma~\ref{lem:AlgCons} we need only show that
  the induced natural transformation of functors $\mathcal{O}_{[1]} \to \mathcal{W}$ is an
  equivalence, i.e. that for every $X \in \mathcal{O}_{[1]}$ the map $LiA(X)
  \to A(X)$ is an equivalence in $\mathcal{W}$, which is true since
  $i$ is fully faithful.
\end{proof}

\subsection{Approximations of $\infty$-Operads}
In this subsection we use Lurie's theory of \emph{approximations} to
give a criterion for a map $\mathcal{M} \to \mathcal{O}$ to 
exhibit a \nsiopd{} $\mathcal{O}$ as the operadic
localization $L_{\txt{gen}}\mathcal{M}$ of a \gnsiopd{} $\mathcal{M}$.

\begin{defn}\label{defn:approx}
  Suppose $\mathcal{M}$ is a \gnsiopd{}, $\mathcal{O}$ is a
  \nsiopd{},  and $f \colon \mathcal{M} \to \mathcal{O}$ is a
  fibration of \gnsiopds{}. Then $f$ is an \defterm{approximation} if for
  all $C \in \mathcal{M}$ and $\alpha \colon X \to f(C)$ active in
  $\mathcal{O}$ there exists an $f$-Cartesian morphism
  $\overline{\alpha} \colon \overline{X} \to C$ lifting $\alpha$, and a
  \defterm{weak approximation} if given $C \in \mathcal{M}$ and
  $\alpha \colon X \to f(C)$ an arbitrary morphism in
  $\mathcal{O}$, the full subcategory of \[\mathcal{M}_{/C}
  \times_{\mathcal{O}_{/f(C)}}
  \mathcal{O}_{X//f(C)}\] corresponding to pairs $(\beta
  \colon C' \to C, \gamma \colon X \to f(C'))$ with $\gamma$ inert is
  weakly contractible. More generally, a map $f \colon \mathcal{M} \to
  \mathcal{O}$ is a (\defterm{weak}) \defterm{approximation}
  if it factors as a composition \[\mathcal{M} \xto{f'} \mathcal{M}'
  \xto{f''} \mathcal{O}\] where $f'$ is an equivalence of
  \gnsiopds{} and $f''$ is a categorical fibration that is a (weak)
  approximation.
\end{defn}

\begin{propn}
  An approximation is a weak approximation.
\end{propn}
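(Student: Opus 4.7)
Here is my plan. Without loss of generality I may assume $f \colon \mathcal{M} \to \mathcal{O}$ is itself a fibration (and not merely factors through one up to an equivalence), since weak approximations are plainly closed under composition with trivial fibrations. Fix $C \in \mathcal{M}$ and an arbitrary morphism $\alpha \colon X \to f(C)$ in $\mathcal{O}$, and let $\mathcal{E}$ denote the full subcategory of
\[
\mathcal{M}_{/C} \times_{\mathcal{O}_{/f(C)}} \mathcal{O}_{X//f(C)}
\]
spanned by pairs $(\beta \colon C' \to C,\,\gamma \colon X \to f(C'))$ with $\gamma$ inert. By the inert-active factorization system on $\mathcal{O}$ I may factor $\alpha \simeq \alpha' \circ \gamma_{0}$ with $\gamma_{0} \colon X \to X_{0}$ inert and $\alpha' \colon X_{0} \to f(C)$ active; the approximation hypothesis then yields an $f$-Cartesian lift $\overline{\alpha'} \colon C_{0} \to C$ of $\alpha'$, with $f(C_{0}) = X_{0}$. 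The pair $(\overline{\alpha'},\gamma_{0})$ is naturally an object of $\mathcal{E}$.

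The key claim is that $(\overline{\alpha'},\gamma_{0})$ is a \emph{final} object of $\mathcal{E}$; this immediately implies that $\mathcal{E}$ is weakly contractible, as required. To establish finality I compute, for an arbitrary $(\beta_{1} \colon C_{1} \to C,\,\gamma_{1} \colon X \to f(C_{1})) \in \mathcal{E}$, the mapping space to $(\overline{\alpha'},\gamma_{0})$ as the homotopy pullback
\[
\Map_{\mathcal{M}_{/C}}(\beta_{1},\overline{\alpha'}) \times_{\Map_{\mathcal{O}_{/f(C)}}(f(\beta_{1}),\alpha')} \Map_{\mathcal{O}_{X//f(C)}}(\gamma_{1},\gamma_{0}).
\]
Since $\overline{\alpha'}$ is $f$-Cartesian, the left-hand map is an equivalence, so the pullback collapses to $\Map_{\mathcal{O}_{X//f(C)}}(\gamma_{1},\gamma_{0})$.

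This remaining space is the space of diagonal fillers $\phi \colon f(C_{1}) \to X_{0}$ in the commutative square
\[
\begin{tikzpicture}
\matrix (m) [matrix of math nodes,row sep=2.5em,column sep=2.5em,text height=1.5ex,text depth=0.25ex]
{ X & X_{0} \\ f(C_{1}) & f(C). \\ };
\path[->,font=\footnotesize]
(m-1-1) edge node[auto] {$\gamma_{0}$} (m-1-2)
(m-1-1) edge node[left] {$\gamma_{1}$} (m-2-1)
(m-1-2) edge node[auto] {$\alpha'$} (m-2-2)
(m-2-1) edge node[below] {$f(\beta_{1})$} (m-2-2);
\end{tikzpicture}
\]
Here $\gamma_{1}$ is inert and $\alpha'$ is active, so the orthogonality clause of the inert-active factorization system on $\mathcal{O}$ (Lemma 3.1.11 in the excerpt) guarantees that the space of such fillers is contractible. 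Thus $(\overline{\alpha'},\gamma_{0})$ is final in $\mathcal{E}$, and the proof is complete.

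The only non-routine step is verifying that the mapping space in $\mathcal{E}$ genuinely reduces, via the $f$-Cartesian property of $\overline{\alpha'}$, to the orthogonal lifting space in $\mathcal{O}$; once that reduction is made, orthogonality of the factorization system delivers contractibility immediately. The rest of the argument is bookkeeping with the inert-active factorization.
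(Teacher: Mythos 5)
Your argument is correct: the paper itself only defers to \cite[Lemma 2.3.3.10]{HA} here, and your proof is essentially the argument behind that citation --- factor $\alpha$ as inert followed by active, take an $f$-Cartesian lift of the active part, and verify that the resulting pair is a final (hence contracting) object of $\mathcal{E}$ by collapsing the mapping space, via the Cartesian property, to the space of diagonal fillers of an inert-against-active square, which is contractible by orthogonality. One small housekeeping point: your reference to ``Lemma 3.1.11'' matches nothing in the paper; the orthogonality you invoke is supplied by the (unnumbered) lemma stating that the active and inert morphisms form a factorization system on any generalized non-symmetric $\infty$-operad.
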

\begin{proof}
  As \cite[Lemma 2.3.3.10]{HA}.
\end{proof}

\begin{propn}\label{propn:wkapproxcrit}
  A fibration of \gnsiopds{} $f \colon \mathcal{M} \to
  \mathcal{O}$, where $\mathcal{O}$ is a
  \nsiopd{}, is a weak approximation \IFF{} for every object $C \in
  \mathcal{M}$ and every active morphism $\alpha \colon X \to f(C)$ in
  $\mathcal{O}$, the \icat{} $\mathcal{M}_{/C}
  \times_{\mathcal{O}_{/f(C)}} \{X\}$
  is weakly contractible.
\end{propn}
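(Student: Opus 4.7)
The plan is to use the active–inert factorization system to reduce the condition defining a weak approximation, which a priori involves arbitrary morphisms $\alpha \colon X \to f(C)$ in $\mathcal{O}$, to the special case of active $\alpha$. Since weak approximation is preserved by equivalences of \gnsiopds{} on the source, I may assume $f$ is a categorical fibration. Fix $C \in \mathcal{M}$ and $\alpha \colon X \to f(C)$, and let
\[ \mathcal{D}_\alpha \subseteq \mathcal{M}_{/C} \times_{\mathcal{O}_{/f(C)}} \mathcal{O}_{X//f(C)} \]
be the full subcategory spanned by pairs $(\beta \colon C' \to C, \gamma \colon X \to f(C'))$ with $\gamma$ inert, and let $\mathcal{D}_\alpha^{0} \subseteq \mathcal{D}_\alpha$ be the further full subcategory where $f(\beta)$ is active. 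The whole argument turns on comparing $\mathcal{D}_\alpha$ (which controls weak approximation) with the fibre $\mathcal{F}_a := \mathcal{M}_{/C} \times_{\mathcal{O}_{/f(C)}} \{(Y, a)\}$ associated to the active part $a \colon Y \to f(C)$ of $\alpha = a \circ i$.

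The first step is to show that the inclusion $\mathcal{D}_\alpha^{0} \hookrightarrow \mathcal{D}_\alpha$ admits a left adjoint, and so is cofinal. The left adjoint is given by inert–active factorization in $\mathcal{M}$: given $(\beta, \gamma) \in \mathcal{D}_\alpha$, factor $\beta$ in $\mathcal{M}$ as $C' \xto{\bar{i}} C'' \xto{\bar{a}} C$ with $\bar{i}$ inert and $\bar{a}$ active, and send $(\beta, \gamma)$ to $(\bar{a}, f(\bar{i}) \circ \gamma) \in \mathcal{D}_\alpha^{0}$ (using that $f$ preserves the factorization and that composites of inerts are inert). The universal property follows from the uniqueness of the active–inert factorization in $\mathcal{M}$: any morphism in $\mathcal{D}_\alpha$ from $(\beta, \gamma)$ to an object whose first component has active image under $f$ must factor essentially uniquely through the inert part $\bar{i}$ of $\beta$.

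The second step is to identify $\mathcal{D}_\alpha^{0}$ with the fibre $\mathcal{F}_a$. For any $(\beta, \gamma) \in \mathcal{D}_\alpha^{0}$, uniqueness of the active–inert factorization in $\mathcal{O}$ applied to $\alpha = f(\beta) \circ \gamma$ forces $f(C') \simeq Y$, $\gamma \simeq i$, and $f(\beta) \simeq a$. Consequently the projection $\mathcal{D}_\alpha^{0} \to \mathcal{O}_{X//f(C)}$ lands in the full subcategory of factorizations of $\alpha$ equivalent to $(i, a)$, which is contractible, and the fibre of this projection at $(i, a)$ is precisely $\mathcal{F}_a$; it follows that $\mathcal{D}_\alpha^{0} \to \mathcal{F}_a$ is an equivalence.

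Combining the two steps yields $|\mathcal{D}_\alpha| \simeq |\mathcal{D}_\alpha^{0}| \simeq |\mathcal{F}_a|$, so $\mathcal{D}_\alpha$ is weakly contractible if and only if $\mathcal{F}_a$ is. The forward direction of the proposition then follows by specializing to active $\alpha$ (for which $a = \alpha$), while the reverse direction follows by applying the hypothesis to the active part $a$ of each general $\alpha$. The main obstacle I anticipate is the first step: rigorously constructing the left adjoint and verifying its universal property at the quasicategorical level, given the iterated slice and fibre product structure of $\mathcal{D}_\alpha$, will require a careful model-level argument in the spirit of \cite[\S 2.3.3]{HA}, handling the coherent compatibility of the chosen factorizations with morphisms in $\mathcal{M}_{/C}$ and in $\mathcal{O}_{X//f(C)}$ simultaneously.
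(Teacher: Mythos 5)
Your proposal is correct and takes essentially the same approach as the paper: the paper's proof is simply the citation to \cite[Proposition 2.3.3.11]{HA}, and your argument --- reducing to the full subcategory of pairs with active $\beta$ via a left adjoint supplied by the active--inert factorization system, then identifying that subcategory with the fibre over the active part of $\alpha$ --- is a faithful reconstruction of Lurie's proof transposed to the non-symmetric setting. The adjunction you flag as the main obstacle does go through by the standard uniqueness argument for factorization systems (a morphism $\delta$ from $(\beta,\gamma)$ to an object with active structure map has its inert part identified with $\bar{i}$ by uniqueness of the factorization of $\beta$), so there is no gap.
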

\begin{proof}
  As \cite[Proposition 2.3.3.11]{HA}.
\end{proof}

\begin{propn}
  Let $f \colon \mathcal{M} \to
  \mathcal{O}$ be a fibration of \gnsiopds{}, where
  $\mathcal{O}$ is a \nsiopd{}. If
  $\mathcal{O}_{[1]}$ is a Kan complex, then $f$ is a weak
  approximation \IFF{} $f$ is an approximation.
\end{propn}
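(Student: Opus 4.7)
Since an approximation is automatically a weak approximation by the previous proposition, the task is to establish the converse: if $f\colon \mathcal{M} \to \mathcal{O}$ is a weak approximation and $\mathcal{O}_{[1]}$ is a Kan complex, then $f$ is an approximation. By Definition~\ref{defn:approx}, what must be shown is that for every $C \in \mathcal{M}$ and every active morphism $\alpha \colon X \to f(C)$ in $\mathcal{O}$, there exists an $f$-Cartesian lift $\overline{\alpha}\colon \overline{X} \to C$ of $\alpha$. Note that since $\mathcal{O}$ is a \nsiopd{} with $\mathcal{O}_{[1]}$ a Kan complex, the Segal condition forces $\mathcal{O}_{[n]} \simeq \mathcal{O}_{[1]}^{\times n}$ to be a Kan complex for every $n$, so every morphism in $\mathcal{O}$ that projects to $\id_{[n]}$ in $\simp^{\op}$ is necessarily an equivalence.

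The plan proceeds in three steps. \emph{Step 1 (Existence):} By Proposition~\ref{propn:wkapproxcrit}, the \icat{} $\mathcal{K}_{\alpha} := \mathcal{M}_{/C} \times_{\mathcal{O}_{/f(C)}} \{\alpha\}$ is weakly contractible, hence non-empty, and a choice of object provides a lift $\overline{\alpha}\colon \overline{X} \to C$ of $\alpha$. \emph{Step 2 (Cartesianness criterion):} To verify that $\overline{\alpha}$ is $f$-Cartesian, apply \cite[Proposition 2.4.4.3]{HTT}: it suffices that for every $D \in \mathcal{M}$, the map
\[
\Theta_D \colon \Map_{\mathcal{M}}(D, \overline{X}) \to \Map_{\mathcal{M}}(D, C) \times_{\Map_{\mathcal{O}}(f(D), f(C))} \Map_{\mathcal{O}}(f(D), X)
\]
be an equivalence. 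The fibre of $\Theta_D$ over a pair $(\gamma\colon D \to C,\ \beta\colon f(D) \to X)$ with $f(\gamma) \simeq \alpha\circ \beta$ is the space of lifts of $\beta$ through $\overline{\alpha}$. \emph{Step 3 (Reduction via inert-active factorization):} Using the inert-active factorization system on $\mathcal{O}$ and the fact that $f$, as a fibration of \gnsiopds{}, admits coCartesian lifts of inert morphisms, factor $\beta = \beta_{\txt{act}} \circ \beta_{\txt{int}}$ with $\beta_{\txt{int}}\colon f(D) \to Y$ inert and $\beta_{\txt{act}}\colon Y \to X$ active. Since the class of active morphisms is closed under composition, $\alpha \circ \beta_{\txt{act}}$ is active, and the inert-active factorization of $f(\gamma)$ then identifies $\Theta_D$ (over the given fibre) with the fibre of $\mathcal{M}_{/\overline{X}} \to \mathcal{O}_{/X}$ over $\beta_{\txt{act}}$, which by Proposition~\ref{propn:wkapproxcrit} is weakly contractible.

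\textbf{Main obstacle.} The delicate point is upgrading \emph{weak} contractibility of the fibre (a statement about its geometric realization) to actual contractibility of the mapping space — that is, forcing the fibre $\mathcal{K}_\alpha$, and more generally the fibres of $\mathcal{M}_{/\overline{X}} \to \mathcal{O}_{/X}$ over active morphisms, to be Kan complexes. This is exactly where the hypothesis on $\mathcal{O}_{[1]}$ enters: any morphism in such a fibre projects under $f$ to a self-morphism of $Y \in \mathcal{O}_{[m]}$ lying over $\id_{[m]}$ in $\simp^{\op}$, and this self-morphism, belonging to the Kan complex $\mathcal{O}_{[m]}$, is automatically invertible. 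The key lemma needed is that this invertibility downstairs, combined with weak approximation applied to suitable slices, forces invertibility upstairs in $\mathcal{M}$ — making the fibre a Kan complex whose weak contractibility then yields genuine contractibility, from which Cartesianness of $\overline{\alpha}$ follows. This bootstrapping from weak to strict contractibility, carefully threading through the factorization system on both $\mathcal{O}$ and $\mathcal{M}$, is the technical heart of the argument.
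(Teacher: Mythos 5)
The paper's own ``proof'' of this proposition is a one-line citation to \cite[Corollary 2.3.3.17]{HA}, so your sketch has to stand on its own, and it has a genuine gap at exactly the point you flag as the ``technical heart.'' Two things go wrong. First, in Step 3 the object you need to control is the fibre of $\Theta_D$ over $(\gamma,\beta)$, which is a \emph{mapping space} of lifts with \emph{fixed} source $D$; what Proposition~\ref{propn:wkapproxcrit} controls is the \emph{weak homotopy type} (classifying space) of the comma \icat{} $\mathcal{M}_{/\overline{X}}\times_{\mathcal{O}_{/X}}\{\beta_{\txt{act}}\}$, whose objects are morphisms $D'\to\overline{X}$ with \emph{varying} source. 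These are different invariants of different objects, and weak contractibility of the latter says nothing directly about contractibility of the former. Second, the proposed repair --- that the Kan-complex hypothesis on $\mathcal{O}_{[1]}$ forces the lift category $\mathcal{K}_\alpha=\mathcal{M}_{/C}\times_{\mathcal{O}_{/f(C)}}\{\alpha\}$ to be a Kan complex, so that weak contractibility upgrades to genuine contractibility --- is false. A morphism of $\mathcal{K}_\alpha$ does lie over an equivalence of $\mathcal{O}$, but it is a morphism in a fibre of $\mathcal{M}$, and the fibres $\mathcal{M}_{[n]}$ of a \gnsiopd{} are arbitrary \icats{}; the hypothesis constrains $\mathcal{O}$ only, never $\mathcal{M}$. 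Concretely, take $\mathcal{O}=\simp^{\op}$ and $\mathcal{M}=\simp^{\op}\times\Delta^{1}$ with the projection: this is a fibration of \gnsiopds{} and a weak approximation (one computes $\mathcal{K}_\alpha\simeq(\Delta^{1})_{/\epsilon}$, which is $\Delta^{0}$ or $\Delta^{1}$, hence weakly contractible), the conclusion of the proposition holds (each $\mathcal{K}_\alpha$ has a final object, giving the Cartesian lift $(\alpha,\id)$), and yet $\mathcal{K}_\alpha\simeq\Delta^{1}$ is not a Kan complex.

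What the statement actually requires is strictly weaker than what you aim to prove: a (locally) $f$-Cartesian lift of $\alpha$ exists \IFF{} $\mathcal{K}_\alpha$ admits a \emph{final object}, and the content of the proposition is precisely that weak contractibility of the categories $\mathcal{K}_{\alpha'}$ for the various active $\alpha'$, combined with the Kan condition on $\mathcal{O}_{[1]}$, produces such a final object. Since a weakly contractible \icat{} need not have a final object (a span $\bullet\leftarrow\bullet\to\bullet$ is weakly contractible), some argument genuinely exploiting the hypothesis on $\mathcal{O}$ is unavoidable, and your sketch does not supply one; the route through ``the fibre is a contractible Kan complex'' cannot be made to work. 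To complete the proof you would need to reproduce Lurie's argument for \cite[Corollary 2.3.3.17]{HA} in the non-symmetric setting, or give a direct construction of the final object of $\mathcal{K}_\alpha$.
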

\begin{proof}
  As \cite[Corollary 2.3.3.17]{HA}.
\end{proof}

\begin{thm}\label{thm:Approx}
  Suppose $f \colon \mathcal{M} \to \mathcal{O}$ is a weak
  approximation such that $f_{[1]} \colon \mathcal{M}_{[1]} \to
  \mathcal{O}$ is a categorical equivalence. Then for
  any \nsiopd{} $\mathcal{P}$, the induced map
  \[ f^{*} \colon \Alg_{\mathcal{O}}(\mathcal{P})
  \to \Alg_{\mathcal{M}}(\mathcal{P}) \]
  is an equivalence.
\end{thm}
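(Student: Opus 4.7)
The plan is to adapt Lurie's proof of the corresponding result for symmetric $\infty$-operads (\cite[Theorem 2.3.3.23]{HA}) to the non-symmetric setting, where every appearance of $\bbGamma^{\op}$ is systematically replaced by $\simp^{\op}$ and the inert/active factorization from Definition~\ref{defn:activeinert} takes the place of its symmetric analogue. The argument proceeds in three stages.

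First, using the definition of (weak) approximation, factor $f$ as $\mathcal{M} \xrightarrow{\sim} \mathcal{M}' \xrightarrow{f'} \mathcal{O}$ where the first map is an equivalence of generalized \nsiopds{} and $f'$ is a categorical fibration and a weak approximation. Since equivalences in $\OpdInsg$ induce equivalences on $\Alg(\mathcal{P})$, we may assume from the start that $f$ is itself a categorical fibration, a weak approximation, and that $f_{[1]}$ is a categorical equivalence.

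Second, construct a candidate inverse to $f^*$ as follows. Given an $\mathcal{M}$-algebra $A$, define $f_*A \colon \mathcal{O} \to \mathcal{P}$ on an object $o \in \mathcal{O}_{[n]}$ by decomposing it via its inert images $o \to o_i$ over the maps $\rho_i$, lifting each $o_i$ (essentially uniquely, since $f_{[1]}$ is an equivalence) to $\tilde o_i \in \mathcal{M}_{[1]}$, and setting $(f_*A)(o)$ to be the object of $\mathcal{P}_{[n]}$ with inert decomposition $(A(\tilde o_1), \ldots, A(\tilde o_n))$. To define $f_*A$ on active morphisms, use the weak approximation hypothesis via Proposition~\ref{propn:wkapproxcrit}: for every $C \in \mathcal{M}$ and every active $\alpha \colon X \to f(C)$, the fibre $\mathcal{M}_{/C} \times_{\mathcal{O}_{/f(C)}} \{X\}$ is weakly contractible, so the value on $\alpha$ is uniquely specified (up to contractible choice) by $A$ on a lift of $\alpha$ to $\mathcal{M}$. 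This is exactly the data needed to promote $f_*A$ to a map of \nsiopds{}, and by construction $f^*f_*A \simeq A$.

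The main obstacle is verifying fully faithfulness of $f^*$ on mapping spaces. For $\mathcal{O}$-algebras $B, B'$, one must show $\Map(B, B') \xrightarrow{\sim} \Map(f^*B, f^*B')$. Using conditions (ii) and (iii) of Definition~\ref{defn:nsiopd2}, a natural transformation between $\mathcal{O}$-algebras is determined by its components over $[1]$ together with compatibility with active morphisms; the corresponding statement holds for $\mathcal{M}$-algebras by Definition~\ref{defn:gnsiopd1}. Since $f_{[1]}$ is a categorical equivalence, the two restrictions over $[1]$ agree, and the weak contractibility of $\mathcal{M}_{/C} \times_{\mathcal{O}_{/f(C)}} \{X\}$ for active $\alpha$ translates the active-compatibility data bijectively between the two sides (this is the content of a cofinality argument identifying the relevant comma categories). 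Combining this with essential surjectivity from the construction of $f_*$ yields the desired equivalence $f^* \colon \Alg_{\mathcal{O}}(\mathcal{P}) \isoto \Alg_{\mathcal{M}}(\mathcal{P})$.
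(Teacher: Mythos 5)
Your proposal takes exactly the route the paper does: the paper's entire proof of this theorem is the citation ``As [HA, Theorem 2.3.3.23]'', i.e.\ a mutatis mutandis transport of Lurie's argument for symmetric $\infty$-operads with $\bbGamma^{\op}$ replaced by $\simp^{\op}$, which is precisely what you propose (and your sketch of the mechanism --- reduction to a categorical fibration, values on objects forced by inert decomposition and the equivalence $f_{[1]}$, values on active morphisms controlled by the weak contractibility criterion of Proposition~\ref{propn:wkapproxcrit} --- is a faithful outline of that argument). No discrepancy to report.
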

\begin{proof}
  As \cite[Theorem 2.3.3.23]{HA}.
\end{proof}

\begin{cor}\label{cor:wapproxloceq}
  Suppose $f \colon \mathcal{M} \to \mathcal{O}$ is a weak
  approximation such that $f_{[1]}$ is a categorical equivalence. Then
  the induced map of \nsiopds{} $L_{\txt{gen}}\mathcal{M} \to
  \mathcal{O}$ is an equivalence.
\end{cor}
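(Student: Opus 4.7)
The plan is to deduce the corollary from Theorem~\ref{thm:Approx} by a Yoneda-style argument, using the adjunction $L_{\txt{gen}} \dashv \text{(inclusion)}$ of Corollary~\ref{cor:GenOpdLoc} to convert the equivalence of algebra \icats{} into a statement about mapping spaces into \nsiopds{}.

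First, I would apply Theorem~\ref{thm:Approx} directly: since $f \colon \mathcal{M} \to \mathcal{O}$ is a weak approximation with $f_{[1]}$ a categorical equivalence, the induced functor
\[ f^{*} \colon \Alg_{\mathcal{O}}(\mathcal{P}) \to \Alg_{\mathcal{M}}(\mathcal{P}) \]
is an equivalence of \icats{} for every \nsiopd{} $\mathcal{P}$. Passing to underlying \igpds{} (the subspaces of equivalences), this yields an equivalence $\iota\Alg_{\mathcal{O}}(\mathcal{P}) \isoto \iota\Alg_{\mathcal{M}}(\mathcal{P})$ of mapping spaces in $\OpdIns$ and $\OpdInsg$, respectively.

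Next, I would translate this into a statement about $L_{\txt{gen}}\mathcal{M}$. By Corollary~\ref{cor:GenOpdLoc}, the inclusion $\OpdIns \hookrightarrow \OpdInsg$ admits the left adjoint $L_{\txt{gen}}$, and the natural map $\mathcal{M} \to L_{\txt{gen}}\mathcal{M}$ is the unit of this adjunction. Therefore, for every \nsiopd{} $\mathcal{P}$, there is a natural equivalence
\[ \Map_{\OpdIns}(L_{\txt{gen}}\mathcal{M}, \mathcal{P}) \simeq \Map_{\OpdInsg}(\mathcal{M}, \mathcal{P}) \simeq \iota \Alg_{\mathcal{M}}(\mathcal{P}). \]
Combined with the previous step, composition with $f$ (and with the unit $\mathcal{M} \to L_{\txt{gen}}\mathcal{M}$) exhibits an equivalence
\[ \Map_{\OpdIns}(\mathcal{O}, \mathcal{P}) \simeq \iota \Alg_{\mathcal{O}}(\mathcal{P}) \isoto \iota \Alg_{\mathcal{M}}(\mathcal{P}) \simeq \Map_{\OpdIns}(L_{\txt{gen}}\mathcal{M}, \mathcal{P}), \]
and a diagram chase shows that this composite equivalence is precisely the map induced by the canonical functor $L_{\txt{gen}}\mathcal{M} \to \mathcal{O}$ (adjoint to $f$).

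Finally, I would invoke the Yoneda lemma in $\OpdIns$: the map $L_{\txt{gen}}\mathcal{M} \to \mathcal{O}$ induces an equivalence on mapping spaces into every object of $\OpdIns$, so it is itself an equivalence. None of this should be difficult, as the substance of the argument is in Theorem~\ref{thm:Approx}; the only thing to be careful about is checking that the abstract Yoneda equivalence coincides with the map induced by $f$ under the universal property of $L_{\txt{gen}}$, which is immediate from the naturality of the adjunction.
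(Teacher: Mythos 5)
Your argument is correct and is precisely the (unwritten) argument the paper intends: the corollary is stated without proof as an immediate consequence of Theorem~\ref{thm:Approx}, obtained by identifying mapping spaces in $\OpdIns$ and $\OpdInsg$ with the interiors of the algebra \icats{}, using the adjunction of Corollary~\ref{cor:GenOpdLoc}, and applying Yoneda. Your extra care in checking that the composite equivalence is induced by the adjoint map $L_{\txt{gen}}\mathcal{M} \to \mathcal{O}$ is exactly the right point to verify, and it does follow from naturality of the unit.
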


\begin{propn}\label{propn:ApproxFree}
  Suppose $f \colon \mathcal{O} \to \mathcal{P}$
  is a map of \nsiopds{}, and $\mathcal{P}_{[1]}$ is a Kan
  complex. The commutative diagram
  \csquare{\Alg_{\mathcal{P}}(\mathcal{S})}{\Alg_{\mathcal{O}}(\mathcal{S})}{\Fun(\mathcal{P},
    \mathcal{S})}{\Fun(\mathcal{O},
    \mathcal{S})}{f^{*}}{\tau_{\mathcal{P}}^{*}}{\tau_{\mathcal{O}}^{*}}{f_{[1]}^*}
  induces a natural transformation $\alpha \colon
  \tau_{\mathcal{O},!}  \circ f^{*}_{[1]} \to f^{*} \circ
  \tau_{\mathcal{P},!}$. If $\alpha$ induces an equivalence
  $\tau_{\mathcal{O},!}f_{[1]}^{*}A \isoto
  f^{*}\tau_{\mathcal{P},!}A$ where $A$ is the constant
  functor $\mathcal{P} \to \mathcal{S}$ with value
  $*$, then $f$ is an approximation.
\end{propn}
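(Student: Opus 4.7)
Since $\mathcal{P}_{[1]}$ is a Kan complex, the preceding result that weak approximations coincide with approximations when the $[1]$-fibre of the target is a Kan complex reduces the task to showing $f$ is a weak approximation. By the factorization built into Definition~\ref{defn:approx}, I may moreover assume $f$ is itself a categorical fibration. Proposition~\ref{propn:wkapproxcrit} then reduces the claim to showing that for every $C \in \mathcal{O}$ and every active morphism $\alpha \colon X \to f(C)$ in $\mathcal{P}$, the $\infty$-category $K_\alpha := \mathcal{O}_{/C} \times_{\mathcal{P}_{/f(C)}} \{X\}$ is weakly contractible.

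I would reduce to $C \in \mathcal{O}_{[1]}$ first: if $C$ lies over $[k]$, the Segal condition splits $C$ as $(C_1, \ldots, C_k)$ and any active $\alpha$ as a tuple $(\alpha_i \colon X_i \to f(C_i))$, giving $K_\alpha \simeq \prod_i K_{\alpha_i}$; finite products preserve weak contractibility. For $C \in \mathcal{O}_{[1]}$ and $X \in \mathcal{P}_{[n]}$, I identify $K_\alpha$ with the strict fibre over $\alpha$ of the projection $p_n \colon \mathcal{P}^{\mathcal{O}}_{C,n} \to \mathcal{P}^{\mathcal{P}}_{f(C),n}$: since $\alpha$ is active, any lift is active, and the active-to-$[1]$ factorization forces the $\simp^{\op}$-component of any morphism in $K_\alpha$ to be the identity of $[n]$, placing it in the inert-only data that defines $\mathcal{P}^{\mathcal{O}}_{C,n}$.

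Next, Proposition~\ref{propn:FreeAlgMonad} applied to the constant functor $A = *$ (so that $f_{[1]}^* A$ is also constant at $*$) yields $(\tau_{\mathcal{O},!} f_{[1]}^* A)(C) \simeq \coprod_n |\mathcal{P}^{\mathcal{O}}_{C,n}|$ and $(f^* \tau_{\mathcal{P},!} A)(C) \simeq \coprod_n |\mathcal{P}^{\mathcal{P}}_{f(C),n}|$, with $\alpha$ inducing $|p_n|$ on the $n$-th summand. By Lemma~\ref{lem:AlgCons}, the assumption that $\alpha$ is an algebra equivalence forces each $|p_n|$ to be an equivalence of spaces for every $C \in \mathcal{O}_{[1]}$ and every $n$.

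The main obstacle is passing from this classifying-space equivalence to weak contractibility of each strict fibre. First, $\mathcal{P}^{\mathcal{P}}_{f(C),n}$ is a Kan complex, since all its morphisms lie over $\id_{[n]}$ in the Kan complex $\mathcal{P}_{[n]} \simeq \mathcal{P}_{[1]}^{\times n}$. Using that $f$ is a categorical fibration, every equivalence in $\mathcal{P}_{[n]}$ lifts to an equivalence in $\mathcal{O}_{[n]}$, which lets me produce a $p_n$-coCartesian lift of any morphism in the target; hence $p_n$ is a left fibration. Under straightening, $p_n$ corresponds to a functor $F_n \colon \mathcal{P}^{\mathcal{P}}_{f(C),n} \to \mathcal{S}$ with $F_n(\alpha) \simeq K_\alpha$, and the canonical map $\colim F_n \to \mathcal{P}^{\mathcal{P}}_{f(C),n}$ is exactly $|p_n|$; its being an equivalence forces $F_n$ to be pointwise contractible, which completes the proof.
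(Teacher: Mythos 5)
Your overall strategy is exactly the one behind the paper's proof, which simply defers to \cite[Proposition 2.3.4.8]{HA}: reduce to weak approximations via the Kan-complex hypothesis on $\mathcal{P}_{[1]}$, apply the criterion of Proposition~\ref{propn:wkapproxcrit}, identify the categories $K_\alpha$ with the fibres of $p_{n} \colon \mathcal{P}^{\mathcal{O}}_{C,n} \to \mathcal{P}^{\mathcal{P}}_{f(C),n}$, and use Proposition~\ref{propn:FreeAlgMonad} together with Lemma~\ref{lem:AlgCons} to translate the hypothesis into the statement that each $|p_n|$ is an equivalence. All of that, including the reduction to $C \in \mathcal{O}_{[1]}$ and the observation that $\mathcal{P}^{\mathcal{P}}_{f(C),n}$ is a Kan complex, is correct and matches the cited argument.

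The one step that fails as written is the claim that $p_n$ is a \emph{left} fibration. A left fibration has Kan complexes as fibres, but the fibres of $p_n$ are precisely the categories $K_\alpha$ you are trying to control, and these are genuine \icats{} in general: a morphism in $K_\alpha$ is a morphism of $\mathcal{O}$ lying over the degenerate edge at $X$ in $\mathcal{P}$, i.e.\ a morphism in the fibre $\mathcal{O} \times_{\mathcal{P}} \{X\}$, and nothing forces such morphisms to be equivalences (take, e.g., $\mathcal{P} = \simp^{\op}$ and $\mathcal{O} = \mathcal{V}^{\otimes}$ for a non-groupoidal monoidal \icat{} $\mathcal{V}$). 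Consequently the straightening of $p_n$ is valued in $\CatI$, not $\mathcal{S}$, and the colimit of that straightening is the localization of $\mathcal{P}^{\mathcal{O}}_{C,n}$ at the coCartesian edges, which is not $|\mathcal{P}^{\mathcal{O}}_{C,n}|$ unless every edge is coCartesian. The repair is short: your lifting argument does show that $p_n$ is a coCartesian fibration (it is a categorical fibration over a Kan complex, so equivalences in the base lift to equivalences, which are coCartesian). Post-composing the $\CatI$-valued straightening $F_n$ with the colimit-preserving functor $|\blank| \colon \CatI \to \mathcal{S}$ gives $|\mathcal{P}^{\mathcal{O}}_{C,n}| \simeq \colim_{\alpha} |K_\alpha|$, and since the base is a Kan complex this colimit is the total space of a fibration over $\mathcal{P}^{\mathcal{P}}_{f(C),n}$ with fibres $|K_\alpha|$. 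The long exact sequence (or descent in $\mathcal{S}$) then shows that $|p_n|$ being an equivalence forces $|K_\alpha| \simeq *$ for every object $\alpha$, which is the desired conclusion. With that substitution your proof is complete.
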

\begin{proof}
  As \cite[Proposition 2.3.4.8]{HA}.
\end{proof}

\begin{cor}\label{cor:OpdLocFree}
  Let $\mathcal{O}$ be a \nsiopd{} such that
  $\mathcal{O}_{[1]}$ is a Kan complex, and let $f \colon
  \mathcal{M} \to \mathcal{O}$ be a map of
  \gnsiopds{} such that $f_{[1]} \colon \mathcal{M}_{[1]} \to
  \mathcal{O}_{[1]}$ is an equivalence. Write $A$ for the
  constant functor $\mathcal{M}_{[1]} \simeq
  \mathcal{O}_{[1]} \to \mathcal{S}$ with
  value $*$. If the natural map $\tau_{\mathcal{M},!}A \to
  f^{*}\tau_{\mathcal{O},!}A$ is an equivalence, then $f$
  exhibits $\mathcal{O}$ as the operadic localization of $\mathcal{M}$.
\end{cor}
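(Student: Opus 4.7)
The plan is to reduce to Theorem~\ref{thm:Approx} by establishing that $f \colon \mathcal{M} \to \mathcal{O}$ is a (weak) approximation. Since $\mathcal{O}_{[1]}$ is a Kan complex, once we know $f$ is a weak approximation we are automatically in the situation of the sharper statement: $f$ is an approximation and, combined with the hypothesis that $f_{[1]}$ is a categorical equivalence, Theorem~\ref{thm:Approx} yields that $f^{*} \colon \Alg_{\mathcal{O}}(\mathcal{P}) \to \Alg_{\mathcal{M}}(\mathcal{P})$ is an equivalence for every \nsiopd{} $\mathcal{P}$. This is precisely the universal property that exhibits $\mathcal{O}$ as $L_{\txt{gen}}\mathcal{M}$.

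To show that $f$ is an approximation, I would apply Proposition~\ref{propn:ApproxFree}. Because $f_{[1]}$ is an equivalence, pullback along $f_{[1]}$ sends the constant functor $A \colon \mathcal{O}_{[1]} \to \mathcal{S}$ with value $*$ to the analogous constant functor on $\mathcal{M}_{[1]}$ (also denoted $A$), so the natural transformation $\tau_{\mathcal{M},!}f_{[1]}^{*}A \to f^{*}\tau_{\mathcal{O},!}A$ of Proposition~\ref{propn:ApproxFree} is canonically identified with the natural transformation $\tau_{\mathcal{M},!}A \to f^{*}\tau_{\mathcal{O},!}A$ of the corollary. The hypothesis of the corollary is thus exactly the hypothesis needed to apply Proposition~\ref{propn:ApproxFree}, which concludes that $f$ is an approximation.

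The main obstacle is that Proposition~\ref{propn:ApproxFree} is stated for a map $f \colon \mathcal{O} \to \mathcal{P}$ of \nsiopds{}, whereas in our setting the source $\mathcal{M}$ is only a \gnsiopd{}. However, the proof of that proposition (modelled on \cite[Proposition 2.3.4.8]{HA}) passes through the criterion of Proposition~\ref{propn:wkapproxcrit} for being a weak approximation, namely that the \icats{} $\mathcal{M}_{/C} \times_{\mathcal{O}_{/f(C)}} \{X\}$ are weakly contractible for every $C \in \mathcal{M}$ and every active morphism $X \to f(C)$. Via Proposition~\ref{propn:FreeAlgMonad}, the condition $\tau_{\mathcal{M},!}A \isoto f^{*}\tau_{\mathcal{O},!}A$ unpacks into precisely the assertion that the colimits $\txt{P}^{n}_{\mathcal{M},X}(A)$ of the constant diagram with value $*$ over these slice categories compute the corresponding colimits for $\mathcal{O}$; this is equivalent to the contractibility condition after identifying the nerves of these slice categories. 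Since this bookkeeping is entirely local in the target and does not use that $\mathcal{M}$ has a contractible fibre over $[0]$, the argument carries through for $\mathcal{M}$ a \gnsiopd{}, giving the corollary.
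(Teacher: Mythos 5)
Your proposal reaches the right conclusion but takes a genuinely different route from the paper at the one step that actually requires care. The paper never applies Proposition~\ref{propn:ApproxFree} to $f$ itself: it applies it to the induced map $f' \colon L_{\txt{gen}}\mathcal{M} \to \mathcal{O}$, which is a map of \nsiopds{} and therefore falls within the proposition's stated hypotheses. The free-algebra hypothesis transfers from $f$ to $f'$ because $\Alg_{\mathcal{M}}(\mathcal{V}) \simeq \Alg_{L_{\txt{gen}}\mathcal{M}}(\mathcal{V})$ for every \nsiopd{} $\mathcal{V}$, compatibly with restriction to trivial algebras, so $\tau_{\mathcal{M},!}$ and $\tau_{L_{\txt{gen}}\mathcal{M},!}$ are identified; Proposition~\ref{propn:ApproxFree} then shows $f'$ is an approximation, and Theorem~\ref{thm:Approx} shows it is an equivalence. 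You instead propose to prove a version of Proposition~\ref{propn:ApproxFree} with \gnsiopd{} source by re-running its proof through Propositions~\ref{propn:FreeAlgMonad} and \ref{propn:wkapproxcrit}. This can be made to work --- both of those results are indeed stated for generalized sources, and the translation between ``$\txt{P}^{n}_{\mathcal{M},X}(A) \to \txt{P}^{n}_{\mathcal{O},f(X)}(A)$ is an equivalence'' and the contractibility of the slice fibres uses only that $\mathcal{O}_{[1]}$ is a Kan complex --- but it is exactly the verification the paper's detour through $L_{\txt{gen}}\mathcal{M}$ is designed to avoid, and your write-up leaves it as an assertion rather than carrying it out: you would need to check that the comparison map respects the arity decomposition $\coprod_{[n]}$ and that the fibres of the resulting map of classifying spaces are the slice \icats{} of Proposition~\ref{propn:wkapproxcrit}. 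Two smaller points: Proposition~\ref{propn:FreeAlgMonad} (and indeed the existence of $\tau_{\mathcal{M},!}$) requires $\mathcal{M}_{[0]}$ to be a Kan complex, a hypothesis that is only implicit in the corollary and that you should surface when you invoke it; and for the final step, passing from ``$f^{*}$ is an equivalence on $\Alg_{(\blank)}(\mathcal{P})$ for all \nsiopds{} $\mathcal{P}$'' to the universal property of $L_{\txt{gen}}\mathcal{M}$, your appeal to the universal property is correct since the mapping spaces in $\OpdIns$ and $\OpdInsg$ are the underlying spaces of these algebra \icats{}.
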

\begin{proof}
  Applying Proposition~\ref{propn:ApproxFree} to the induced map $f'
  \colon L_{\txt{gen}}\mathcal{M} \to \mathcal{O}$, we see
  that this map is an approximation and induces an equivalence
  $L\mathcal{M}_{[1]} \to \mathcal{O}$. By Theorem~\ref{thm:Approx},
  it follows that $f'$ is an equivalence.
\end{proof}

\begin{cor}\label{cor:OpdLocCof}
  Let $\mathcal{O}$ be a \nsiopd{} such that $\mathcal{O}_{[1]}$
  is a Kan complex, and $f \colon \mathcal{M} \to
  \mathcal{O}$ be a map of \gnsiopds{} such that $f_{[1]}
  \colon \mathcal{M}_{[1]} \to \mathcal{O}_{[1]}$ is an equivalence and
  $\mathcal{M}_{[0]}$ is a Kan complex. If the induced map
  $(\mathcal{M}_{\txt{act}})_{/x} \to
  (\mathcal{O}_{\txt{act}})_{/x}$ is cofinal for all $x \in
  \mathcal{M}_{[1]} \simeq \mathcal{O}_{[1]}$, then $f$ exhibits
  $\mathcal{O}$ as the operadic localization of
  $\mathcal{M}$.
\end{cor}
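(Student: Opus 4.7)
The plan is to verify the hypothesis of Corollary~\ref{cor:OpdLocFree}. Let $A$ denote the constant functor with value $*$ on $\mathcal{M}_{[1]} \simeq \mathcal{O}_{[1]}$. It then suffices to show that the unit map $\tau_{\mathcal{M},!}A \to f^{*}\tau_{\mathcal{O},!}A$ is an equivalence in $\Alg_{\mathcal{M}}(\mathcal{S})$. By Lemma~\ref{lem:AlgCons}, the forgetful functor $\tau_{\mathcal{M}}^{*}$ is conservative, so we may check this pointwise on $\mathcal{M}_{[1]}$.

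For each $x \in \mathcal{M}_{[1]}$, I would use Proposition~\ref{propn:FreeAlgMonad} to compute both sides as classifying spaces. We have $(\tau_{\mathcal{M},!}A)(x) \simeq \coprod_{n} \txt{P}^{n}_{\mathcal{M},x}(A)$, where $\txt{P}^{n}_{\mathcal{M},x}(A)$ is a colimit of the coCartesian lift to $\mathcal{S}$ of the canonical diagram indexed by $\mathcal{P}^{\mathcal{M}}_{x,n}$. Since $A$ takes the constant value $* = I_{\mathcal{S}}$ and the tensor product on $\mathcal{S}$ is Cartesian (hence unital on the unit), the coCartesian lift is again constant at $*$, and so $\txt{P}^{n}_{\mathcal{M},x}(A) \simeq |\mathcal{P}^{\mathcal{M}}_{x,n}|$. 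Thus $(\tau_{\mathcal{M},!}A)(x) \simeq |\mathcal{M}_{\txt{triv}} \times_{\mathcal{M}} (\mathcal{M}_{\txt{act}})_{/x}|$, since this pullback decomposes as $\coprod_{n} \mathcal{P}^{\mathcal{M}}_{x,n}$ (as the morphisms of the pullback must lie over both inert and active morphisms in $\simp^{\op}$, hence over identities). The analogous equivalence holds for $(f^{*}\tau_{\mathcal{O},!}A)(x)$, and the comparison map is the one induced by the natural functor $\mathcal{M}_{\txt{triv}} \times_{\mathcal{M}} (\mathcal{M}_{\txt{act}})_{/x} \to \mathcal{O}_{\txt{triv}} \times_{\mathcal{O}} (\mathcal{O}_{\txt{act}})_{/f(x)}$ on classifying spaces.

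The main obstacle is the final step: deducing this equivalence of classifying spaces from the cofinality hypothesis on $(\mathcal{M}_{\txt{act}})_{/x} \to (\mathcal{O}_{\txt{act}})_{/f(x)}$. The hypothesis directly gives an equivalence on classifying spaces of the full active over-categories, but the indexing categories appearing above are the subcategories with the same objects and morphisms restricted to lie over equivalences in $\simp^{\op}$. One must argue that passing to the pullback preserves the equivalence, which I would approach by analyzing the fibrewise decomposition over the discrete skeleton of $\simp^{\op}_{\txt{act}}$: each $\mathcal{P}^{\mathcal{M}}_{x,n}$ is the preimage of $[n]$ under the projection $(\mathcal{M}_{\txt{act}})_{/x} \to \simp^{\op}_{\txt{act}}$, and one needs to show the cofinal comparison map respects this projection and restricts to equivalences on each fibre. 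This likely requires a Quillen Theorem A-style argument, using the hypothesis that $\mathcal{M}_{[0]}$ and $\mathcal{O}_{[1]}$ are Kan complexes together with the Segal-type equivalence $\mathcal{M}_{[n]} \simeq \mathcal{M}_{[1]} \times_{\mathcal{M}_{[0]}} \cdots \times_{\mathcal{M}_{[0]}} \mathcal{M}_{[1]}$ to control the homotopy type of each $\mathcal{P}^{\mathcal{M}}_{x,n}$.
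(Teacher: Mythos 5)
Your reduction to Corollary~\ref{cor:OpdLocFree}, the use of Lemma~\ref{lem:AlgCons} to check the unit map pointwise, and the computation of $(\tau_{\mathcal{M},!}A)(x)$ via Proposition~\ref{propn:FreeAlgMonad} as $\coprod_{n}|\mathcal{P}^{\mathcal{M}}_{x,n}|$ are all correct, and the first two steps coincide with the paper's own proof. The genuine gap is exactly the point you flag at the end, and it is not closable by a Quillen Theorem~A argument from the stated hypothesis alone. The cofinality hypothesis concerns the full active over-categories $(\mathcal{M}_{\txt{act}})_{/x} \to (\mathcal{O}_{\txt{act}})_{/x}$, whereas your colimit is indexed by $\coprod_{n}\mathcal{P}^{\mathcal{M}}_{x,n} \simeq \mathcal{M}_{\txt{triv}}\times_{\mathcal{M}}(\mathcal{M}_{\txt{act}})_{/x}$, whose morphisms lie over equivalences in $\simp^{\op}$. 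These two indexing categories have genuinely different classifying spaces in general: for $\mathcal{M}=\simp^{\op}$ and $x=[1]$, the category $(\simp^{\op}_{\txt{act}})_{/[1]}$ has a terminal object and is contractible, while $\coprod_{n}\mathcal{P}^{\simp^{\op}}_{[1],n}$ is the discrete space $\coprod_{n}*$ (which is the correct value of the free associative algebra on a point). A cofinal map is a weak homotopy equivalence, but nothing about cofinality controls the induced map on the wide subcategories of morphisms lying over equivalences in $\simp^{\op}$, so the last step of your argument does not follow as written.

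For comparison: the paper's proof does not pass through the $\coprod_{n}\mathcal{P}^{n}$ decomposition at all; it cites the operadic left Kan extension description from \S\ref{subsec:OpdKanExt} to identify the map in question with $\colim_{(\mathcal{M}_{\txt{act}})_{/x}}* \to \colim_{(\mathcal{O}_{\txt{act}})_{/x}}*$ and applies cofinality verbatim. Reconciling that identification with Proposition~\ref{propn:FreeAlgMonad} runs into precisely the discrepancy you noticed, so your concern is not an artifact of your chosen route --- the paper's one-line finish absorbs the same subtlety. What actually makes everything work in the places where the corollary is applied is that cofinality is there verified by showing $(\mathcal{M}_{\txt{act}})_{/x} \to (\mathcal{O}_{\txt{act}})_{/x}$ is a \emph{categorical equivalence} compatible with the projections to $(\simp^{\op}_{\txt{act}})_{/[1]}$; such an equivalence restricts to an equivalence of the subcategories of morphisms lying over equivalences in $\simp^{\op}$, hence to equivalences $\mathcal{P}^{\mathcal{M}}_{x,n} \to \mathcal{P}^{\mathcal{O}}_{x,n}$ for each $n$, which is exactly what your computation needs. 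To complete your proof you should either strengthen the input to this fibrewise statement over $(\simp^{\op}_{\txt{act}})_{/[1]}$ or prove directly that the restricted comparison map is a weak homotopy equivalence; as it stands the final step is missing.
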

\begin{proof}
  By Corollary~\ref{cor:OpdLocFree} it suffices to show that the
  natural map of $\mathcal{M}$-algebras $\tau_{\mathcal{M},!}A \to
  f^{*}\tau_{\mathcal{O},!}A$ is an equivalence. Since
  $\tau_{\mathcal{M}}^{*}$ detects equivalences by
  Lemma~\ref{lem:AlgCons}, to see this it
  suffices to show that for all $x \in \mathcal{M}_{[1]}$ the map of spaces
  $(\tau_{\mathcal{M},!}A)(x) \to
  (\tau_{\mathcal{O},!}A)(x)$ is an equivalence. Since
  $\mathcal{M}_{[0]}$ is a Kan complex, we can describe
  $\tau_{\mathcal{M},!}A$ using the results of \S\ref{subsec:OpdKanExt}.
  We thus see that this map can be identified with the map
  \[ \colim_{(\mathcal{M}_{\txt{act}})_{/x}} * \to
  \colim_{(\mathcal{O}_{\txt{act}})_{/x}} * \]
  of colimits induced by $(\mathcal{M}_{\txt{act}})_{/x} \to
  (\mathcal{O}_{\txt{act}})_{/x}$. If this map is cofinal,
  then the induced map on colimits is an equivalence.
\end{proof}
\begin{remark}
  The same argument shows that for any presentably monoidal \icat{}
  $\mathcal{V}$ the natural map $\tau_{\mathcal{M},!}F \to
  f^{*}\tau_{\mathcal{O},!}F$ is an equivalence for any
  functor $F \colon \mathcal{M}_{[1]} \to \mathcal{V}$. It follows
  that $\tau_{\mathcal{M},!}$ and $\tau_{\mathcal{O},!}$ are
  given by the same monad on $\Fun(\mathcal{M}_{[1]}, \mathcal{V})$,
  hence the \icats{} of algebras
  $\Alg_{\mathcal{M}}(\mathcal{V})$ and
  $\Alg_{\mathcal{O}}(\mathcal{V})$ must be
  equivalent, since they are both \icats{} of algebras for this
  monad. An alternative proof of Corollary~\ref{cor:OpdLocCof} (not
  using the notion of approximation) results by embedding any small
  \nsiopd{} $\mathcal{P}$ in a presentably monoidal \icat{}
  $\widehat{\mathcal{P}}$ and showing that
  $\Alg_{\mathcal{M}}(\mathcal{P})$ and
  $\Alg_{\mathcal{O}}(\mathcal{P})$ are the same
  subcategory of $\Alg_{\mathcal{M}}(\widehat{\mathcal{P}})
  \simeq
  \Alg_{\mathcal{O}}(\widehat{\mathcal{P}})$.
\end{remark}

\setcounter{section}{4}

\section{Erratum}
Our proof of Corollary~\ref{cor:OpdLocFree} does not make sense: it
assumes that we know we have an equivalence
$\mathcal{M}_{[1]} \simeq (L_{\text{gen}}\mathcal{M})_{[1]}$, which we
have not established. We thank Hongyi Chu for pointing this out to us.

Moreover, the statement of Corollary~\ref{cor:OpdLocCof} is also
incorrect: The formula for $\tau_{\mathcal{M},!}$ does not involve a
colimit over $(\mathcal{M}_{\text{act}})_{/x}$, but over the fibre
product
$\mathcal{M}_{\text{triv,act}/x} := \mathcal{M}_{\text{triv,act}}
\times_{\mathcal{M}_{\text{act}}}(\mathcal{M}_{\text{act}})_{/x}$. This
\icat{} has as its objects active morphisms $\alpha \colon M \to x$ in
$\mathcal{M}$ and as morphisms commutative triangles
\opctriangle{M}{M'}{x}{i}{\alpha}{\beta} where $\alpha$ and $\beta$
are active and $i$ lies over an inert morphism in $\simp^{\op}$. This
forces $i$ to lie over an identity morphism in $\simp^{\op}$, and thus
if $\mathcal{M}_{[1]}$ is a space, the morphism $i$ is forced to be an
equivalence. Hence in this case the \icat{}
$\mathcal{M}_{\text{triv,act}/x}$ is an $\infty$-groupoid. The
correct statement of Corollary~\ref{cor:OpdLocCof} is then:
\begin{cor}\label{cor:OpdLocCofCorrect}
  Let $\mathcal{O}$ be a \nsiopd{} such that $\mathcal{O}_{[1]}$
  is a space, and let $f \colon \mathcal{M} \to
  \mathcal{O}$ be a map of \gnsiopds{} such that $f_{[1]}
  \colon \mathcal{M}_{[1]} \to \mathcal{O}_{[1]}$ is an equivalence and
  $\mathcal{M}_{[0]}$ is a space. If the induced map of spaces
  $\mathcal{M}_{\text{\textup{triv,act}}/x} \to \mathcal{O}_{\text{\textup{triv,act}}/f(x)}$
  is an equivalence for all $x \in
  \mathcal{M}_{[1]} \simeq \mathcal{O}_{[1]}$, then $f$ exhibits
  $\mathcal{O}$ as the operadic localization of
  $\mathcal{M}$.
\end{cor}
We can prove this without invoking Proposition~\ref{propn:ApproxFree}
or Corollary~\ref{cor:OpdLocFree}, by instead using the following
non-symmetric analogue of (a special case of) \cite{HA}*{Proposition
  2.3.3.11}, whose proof also goes through in the non-symmetric setting.
\begin{propn}\label{propn:nicewapproxcrit}
  Let $\mathcal{O}$ be a \nsiopd{}, $\mathcal{M}$ a \gnsiopd{}, and $f
  \colon \mathcal{M} \to \mathcal{O}$ a morphism of \gnsiopds{}. Then
  $f$ is a weak approximation \IFF{} for every object $C \in
  \mathcal{M}$ and every active morphism $\alpha \colon X \to f(C)$ in
  $\mathcal{O}$, the fibre product $\mathcal{M}_{/C}
  \times_{\mathcal{O}_{/f(C)}} \{\alpha\}$ is weakly contractible.\qed
\end{propn}

\begin{proof}[Proof of Corollary~\ref{cor:OpdLocCofCorrect}]
  We check that the criterion of
  Proposition~\ref{propn:nicewapproxcrit} holds. Since
  $\mathcal{M}_{[1]}$ and $\mathcal{C}_{[1]}$ are by assumption
  spaces, the \icat{}
  $\mathcal{M}_{/C} \times_{\mathcal{O}_{/f(C)}} \{\alpha\}$ is a
  space, and can be identified with the fibre at $\alpha$ of the
  morphism of spaces
  $\mathcal{M}_{\text{triv,act}/C} \to
  (\mathcal{O}_{\txt{act}})_{/f(C)}$. The criterion is thus equivalent
  to this morphism being an equivalence for all $C \in
  \mathcal{M}$. If $C \to C_{i}$ are inert morphisms over $\rho_{i}$,
  then as $\mathcal{M}_{[0]}$ is a space we have an equivalence
  $\mathcal{M}_{\text{triv,act}/C} \simeq \prod_{i}
  \mathcal{M}_{\text{triv,act}/C_{i}}$, and similarly for
  $\mathcal{O}$. We are hence reduced to showing the morphism is an
  equivalence for $C \in \mathcal{M}_{[1]}$, which is true by
  assumption. The map $f$ is thus a weak approximation, and now
  applying Corollary~\ref{cor:wapproxloceq} completes the proof.
\end{proof}

Corollary~\ref{cor:OpdLocCof} was used twice in the paper, in the
proofs of Theorem~\ref{thm:opdlocDopX} and of
Proposition~\ref{propn:BMloceq}. The proof of
Theorem~\ref{thm:opdlocDopX} also implies the criterion of
Corollary~\ref{cor:OpdLocCofCorrect} (it establishes an equivalence of
\icats{} of which we require the equivalence of underlying spaces),
though it can probably be simplified. The proof of
Proposition~\ref{propn:BMloceq} is also unchanged.

\begin{bibdiv}
\begin{biblist}
\bib{BacardSegEnrI}{article}{
  author={Bacard, Hugo V.},
  date={2010},
  title={Segal enriched categories I},
  eprint={arXiv:1009.3673},
}

\bib{BacardToward}{article}{
  author={Bacard, Hugo V.},
  title={Toward weakly enriched categories: co-{S}egal categories},
  journal={J. Pure Appl. Algebra},
  volume={218},
  date={2014},
  number={6},
  pages={1130--1170},
}

\bib{BaezDolanTQFT}{article}{
  author={Baez, John C.},
  author={Dolan, James},
  title={Higher-dimensional algebra and topological quantum field theory},
  journal={J. Math. Phys.},
  volume={36},
  date={1995},
  number={11},
  pages={6073--6105},
}

\bib{BaezShulman}{article}{
  author={Baez, John C.},
  author={Shulman, Michael},
  title={Lectures on $n$-categories and cohomology},
  conference={ title={Towards higher categories}, },
  book={ series={IMA Vol. Math. Appl.}, volume={152}, publisher={Springer}, place={New York}, },
  date={2010},
  pages={1--68},
}

\bib{BarwickOpCat}{article}{
  author={Barwick, Clark},
  title={From operator categories to topological operads},
  eprint={arXiv:1302.5756},
  year={2013},
}

\bib{BarwickMackey}{article}{
  author={Barwick, Clark},
  title={Spectral {M}ackey functors and equivariant algebraic {K}-theory ({I})},
  eprint={arXiv:1404.0108},
  year={2014},
}

\bib{BergerMoerdijkEnr}{article}{
  author={Berger, Clemens},
  author={Moerdijk, Ieke},
  title={On the homotopy theory of enriched categories},
  journal={Q. J. Math.},
  volume={64},
  date={2013},
  number={3},
  pages={805--846},
  eprint={arXiv:1201.2134},
}

\bib{BergnerSimpCat}{article}{
  author={Bergner, Julia E.},
  title={A model category structure on the category of simplicial categories},
  journal={Trans. Amer. Math. Soc.},
  volume={359},
  date={2007},
  number={5},
  pages={2043--2058},
}

\bib{Bergner3Mod}{article}{
  author={Bergner, Julia E.},
  title={Three models for the homotopy theory of homotopy theories},
  journal={Topology},
  volume={46},
  date={2007},
  number={4},
  pages={397--436},
}

\bib{CisinskiMoerdijkSimplOpd}{article}{
  author={Cisinski, Denis-Charles},
  author={Moerdijk, Ieke},
  title={Dendroidal sets and simplicial operads},
  journal={J. Topol.},
  volume={6},
  date={2013},
  number={3},
  pages={705--756},
  eprint={arXiv:1109.1004},
}

\bib{CruttwellShulman}{article}{
  author={Cruttwell, G. S. H.},
  author={Shulman, Michael A.},
  title={A unified framework for generalized multicategories},
  journal={Theory Appl. Categ.},
  volume={24},
  date={2010},
  pages={No. 21, 580--655},
}

\bib{DuggerSpivakMap}{article}{
  author={Dugger, Daniel},
  author={Spivak, David I.},
  title={Mapping spaces in quasi-categories},
  journal={Algebr. Geom. Topol.},
  volume={11},
  date={2011},
  number={1},
  pages={263--325},
}

\bib{DwyerKanCalc}{article}{
  author={Dwyer, W. G.},
  author={Kan, D. M.},
  title={Calculating simplicial localizations},
  journal={J. Pure Appl. Algebra},
  volume={18},
  date={1980},
  number={1},
  pages={17--35},
}

\bib{DwyerKanSimplGpd}{article}{
  author={Dwyer, W. G.},
  author={Kan, D. M.},
  title={Homotopy theory and simplicial groupoids},
  journal={Nederl. Akad. Wetensch. Indag. Math.},
  volume={46},
  date={1984},
  number={4},
  pages={379--385},
}

\bib{freepres}{article}{
  author={Gepner, David},
  author={Haugseng, Rune},
  author={Nikolaus, Thomas},
  title={Lax colimits and free fibrations in $\infty $-categories},
  date={2015},
  eprint={1501.02161},
}

\bib{GuillouMay3}{article}{
  author={Guillou, Bertrand},
  author={May, J. P.},
  title={Permutative $G$-categories in equivariant infinite loop space theory},
  eprint={arXiv:1207.3459},
  date={2012},
}

\bib{GuillouMay2}{article}{
  author={Guillou, Bertrand},
  author={May, J. P.},
  title={Models of $G$-spectra as presheaves of spectra},
  eprint={arXiv:1110.3571},
  date={2011},
}

\bib{GuillouMay1}{article}{
  author={Guillou, Bertrand},
  author={May, J. P.},
  title={Enriched model categories and diagram categories},
  eprint={arXiv:1110.3567},
  date={2011},
}

\bib{enrcomp}{article}{
  author={Haugseng, Rune},
  title={Rectifying enriched $\infty $-categories},
  date={2013},
  eprint={arXiv:1312.3881},
}

\bib{HeutsHinichMoerdijkDendrComp}{article}{
  author={Heuts, Gijs},
  author={Hinich, Vladimir},
  author={Moerdijk, Ieke},
  title={On the equivalence between Lurie's model and the dendroidal model for infinity-operads},
  date={2014},
  eprint={arXiv:1305.3658},
}

\bib{JoyalTierney}{article}{
  author={Joyal, Andr{\'e}},
  author={Tierney, Myles},
  title={Quasi-categories vs Segal spaces},
  conference={ title={Categories in algebra, geometry and mathematical physics}, },
  book={ series={Contemp. Math.}, volume={431}, publisher={Amer. Math. Soc.}, place={Providence, RI}, },
  date={2007},
  pages={277--326},
}

\bib{KellyLackVCatPres}{article}{
  author={Kelly, G. M.},
  author={Lack, Stephen},
  title={$\scr V$-Cat is locally presentable or locally bounded if $\scr V$ is so},
  journal={Theory Appl. Categ.},
  volume={8},
  date={2001},
  pages={555--575},
}

\bib{LeinsterGenEnr}{article}{
  author={Leinster, Tom},
  title={Generalized enrichment of categories},
  note={Category theory 1999 (Coimbra)},
  journal={J. Pure Appl. Algebra},
  volume={168},
  date={2002},
  number={2-3},
  pages={391--406},
}

\bib{LeinsterHigherOpds}{book}{
  author={Leinster, Tom},
  title={Higher operads, higher categories},
  series={London Mathematical Society Lecture Note Series},
  volume={298},
  publisher={Cambridge University Press},
  place={Cambridge},
  date={2004},
  pages={xiv+433},
}

\bib{DAG7}{article}{
  author={Lurie, Jacob},
  title={Derived algebraic geometry {VII}: spectral schemes},
  date={2011},
  eprint={http://math.harvard.edu/~lurie/papers/DAG-VII.pdf},
}

\bib{HTT}{book}{
  author={Lurie, Jacob},
  title={Higher Topos Theory},
  series={Annals of Mathematics Studies},
  publisher={Princeton University Press},
  address={Princeton, NJ},
  date={2009},
  volume={170},
  note={Available at \url {http://math.harvard.edu/~lurie/papers/highertopoi.pdf}},
}

\bib{LurieGoodwillie}{article}{
  author={Lurie, Jacob},
  title={($\infty $,2)-Categories and the {G}oodwillie Calculus {I}},
  date={2009},
  eprint={http://math.harvard.edu/~lurie/papers/GoodwillieI.pdf},
}

\bib{HA}{book}{
  author={Lurie, Jacob},
  title={Higher Algebra},
  date={2014},
  note={Available at \url {http://math.harvard.edu/~lurie/papers/higheralgebra.pdf}},
}

\bib{LyubashenkoAInfty}{article}{
  author={Lyubashenko, Volodymyr},
  title={Category of $A_\infty $-categories},
  journal={Homology Homotopy Appl.},
  volume={5},
  date={2003},
  number={1},
  pages={1--48},
}

\bib{MayThomason}{article}{
  author={May, J. P.},
  author={Thomason, R.},
  title={The uniqueness of infinite loop space machines},
  journal={Topology},
  volume={17},
  date={1978},
  number={3},
  pages={205--224},
}

\bib{MoerdijkWeiss}{article}{
  author={Moerdijk, Ieke},
  author={Weiss, Ittay},
  title={Dendroidal sets},
  journal={Algebr. Geom. Topol.},
  volume={7},
  date={2007},
  pages={1441--1470},
}

\bib{MuroEnr}{article}{
  author={Muro, Fernando},
  title={{D}wyer-{K}an homotopy theory of enriched categories},
  date={2014},
  eprint={arXiv:1201.1575},
}

\bib{Pellissier}{article}{
  author={Pellissier, Régis},
  title={Catégories enrichies faibles},
  date={2003},
  eprint={arXiv:math/0308246},
}

\bib{RezkCSS}{article}{
  author={Rezk, Charles},
  title={A model for the homotopy theory of homotopy theory},
  journal={Trans. Amer. Math. Soc.},
  volume={353},
  date={2001},
  number={3},
  pages={973--1007 (electronic)},
}

\bib{SchwedeGlobal}{book}{
  author={Schwede, Stefan},
  title={Global Homotopy Theory},
  date={2013},
  note={Available from \url {http://www.math.uni-bonn.de/~schwede/}},
}

\bib{SegalCatCohlgy}{article}{
  author={Segal, Graeme},
  title={Categories and cohomology theories},
  journal={Topology},
  volume={13},
  date={1974},
  pages={293--312},
}

\bib{SimpsonBDStab}{article}{
  author={Simpson, Carlos},
  title={On the {B}reen-{B}aez-{D}olan stabilization hypothesis for {T}amsamani's weak $n$-categories},
  date={1998},
  eprint={arXiv:math/9810058},
}

\bib{SimpsonSegCats}{book}{
  author={Simpson, Carlos},
  title={Homotopy theory of higher categories},
  series={New Mathematical Monographs},
  volume={19},
  publisher={Cambridge University Press},
  place={Cambridge},
  date={2012},
  pages={xviii+634},
  eprint={arXiv:1001.4071},
}

\bib{StanculescuEnr}{article}{
  author={Stanculescu, Alexandru E.},
  title={Constructing model categories with prescribed fibrant objects},
  date={2012},
  eprint={arXiv:1208.6005},
}

\bib{TabuadaDGCat}{article}{
  author={Tabuada, Gonçalo},
  title={Une structure de cat\'egorie de mod\`eles de Quillen sur la cat\'egorie des dg-cat\'egories},
  journal={C. R. Math. Acad. Sci. Paris},
  volume={340},
  date={2005},
  number={1},
  pages={15--19},
}

\bib{TabuadaSpCat}{article}{
  author={Tabuada, Gon{\c {c}}alo},
  title={Homotopy theory of spectral categories},
  journal={Adv. Math.},
  volume={221},
  date={2009},
  number={4},
  pages={1122--1143},
}

\bib{Tamsamani}{article}{
  author={Tamsamani, Zouhair},
  title={Sur des notions de $n$-cat\'egorie et $n$-groupo\"\i de non strictes via des ensembles multi-simpliciaux},
  journal={$K$-Theory},
  volume={16},
  date={1999},
  number={1},
  pages={51--99},
}

\bib{ToenMorita}{article}{
  author={To{\"e}n, Bertrand},
  title={The homotopy theory of $dg$-categories and derived Morita theory},
  journal={Invent. Math.},
  volume={167},
  date={2007},
  number={3},
  pages={615--667},
}
\end{biblist}
\end{bibdiv}

\end{document}